\documentclass{amsart}
\pdfoutput=1
%
%
%
%


\usepackage[utf8]{inputenc}

\usepackage[T1]{fontenc}    
\usepackage{ae,aecompl}     

\usepackage{url}
\usepackage{color}
\usepackage[hyperfootnotes]{hyperref}
\usepackage{mathrsfs} %
\usepackage{enumerate}
\usepackage{amsmath,upref,amssymb,amsthm}
\usepackage{tensor}
\usepackage[all]{xy}
\RequirePackage{calrsfs}
\CompileMatrices

\numberwithin{equation}{subsection} 
\swapnumbers
\theoremstyle{plain}
\newtheorem*{thm}{Theorem}
\newtheorem*{lem}{Lemma}
\newtheorem*{prop}{Proposition}
\newtheorem*{cor}{Corollary}

\theoremstyle{definition}
\newtheorem*{defn}{Definition} 

\newtheorem*{quest}{Question}
\newtheorem*{exmp}{Example}
\newtheorem*{ass}{Assumption}

\theoremstyle{remark}
\newtheorem*{rem}{Remarks}

\newcommand{\ol}{\overline}

\newcommand{\al}{\alpha}

\newcommand{\Acc}{\operatorname{Acc}}

\newcommand{\bt}{\beta}

\newcommand{\g}{\gamma}

\newcommand{\e}{\epsilon}

\newcommand{\wt}{\widetilde}

\newcommand{\ck}[1]{{#1}^\vee}

\newcommand{\eset}{\emptyset}

\newcommand{\sneq}{\subsetneq}

\newcommand{\Hom}{\text{\rm Hom}}

\newcommand{\Int}{\mathbb {Z}}

\DeclareMathOperator{\stab}{{\mathrm Stab}}

\newcommand{\op}{^{\text{\rm op}}}

\newcommand{\mpair}[1]{\pair{\,#1\,}}

\newcommand{\mset}[1]{\set{\,#1\,}}

\newcommand{\pair}[1]{\langle #1\rangle}

\newcommand{\set}[1]{\{#1\}}

\newcommand{\GL}{\text{\rm GL}}

\newcommand{\seq}{\subseteq}

\newcommand{\sm}{\setminus}

\newenvironment{conds}{
                       
                        \begin{enumerate} }
                     {\end{enumerate} }

\newenvironment{num}{
                      
                      \begin{enumerate} }
                    {\end{enumerate} }
                    

 \newenvironment{subconds}{
                       
                        \begin{enumerate} }
                     {\end{enumerate} }



\newcommand{\Nat}{\mathbb {N}}


\newcommand{\rat}{\mathbb{Q}}

\newcommand{\real}{\mathbb{R}}




\newcommand\mc{\mathcal}
\newcommand\mcKc{\mathcal{K}^{\text{\rm c}}}
\newcommand\mcZc{\mathcal{Z}^{\text{\rm c}}}

  \DeclareMathOperator{\lin}{\mathrm{lin}}
 \DeclareMathOperator{\ccl}{\mathrm{cone}}
  \DeclareMathOperator{\Exp}{\mathrm{Exp}}
  \DeclareMathOperator{\SExp}{\mathrm{SExp}}
   \DeclareMathOperator{\Ext}{\mathrm{Ext}} 
    \DeclareMathOperator{\ASExp}{\mathrm{ASExp}}
   \DeclareMathOperator{\AExp}{\mathrm{AExp}} 
   \DeclareMathOperator{\aff}{\mathrm{aff}}
   \DeclareMathOperator{\conv}{\mathrm{conv}}
   \DeclareMathOperator{\Stab}{{\mathrm{Stab}}}
   \DeclareMathOperator{\ri}{{\mathrm{ri}}}
\DeclareMathOperator{\rb}{{\mathrm{rb}}}
\DeclareMathOperator{\cl}{{\mathrm{cl}}}
\DeclareMathOperator{\wcl}{{\mathrm{wcl}}}
\DeclareMathOperator{\inter}{{\mathrm{int}}}
\DeclareMathOperator{\ray}{{\mathrm{ray}}}
\DeclareMathOperator{\rext}{{R_{\mathrm{ext}}}}
\DeclareMathOperator{\rexp}{{R_{\mathrm{exp}}}}
\newcommand\CC{{\mathcal{C}}}
\newcommand\G{{\Gamma}}
\newcommand\D{{\Delta}}
\newcommand{\sreq}{{\supseteq}}
\newcommand{\flc}{flc }
\newcommand{\wh}{\widehat}
\newcommand{\Zg}{\ol{\mc{Z}}_{\text{\rm gen}}}
\begin{document}
\title[Imaginary cone and  reflection subgroups]{Imaginary cone and reflection subgroups   \\  of Coxeter groups
} 

\author {Matthew J. Dyer}
 \address{Department of Mathematics\\
255 Hurley Building\\ University of Notre
Dame\\ Notre Dame, Indiana, 46556-4618\\U.S.A.}

\email{dyer.1@nd.edu}

\subjclass[2010]{20F55, 17B22}


%
\begin{abstract}  The imaginary cone of a Kac-Moody Lie algebra is the 
convex hull of   zero and  the  positive  imaginary 
roots. This paper studies the   imaginary cone for a class of root 
systems of  general Coxeter groups $W$.  It is shown that the imaginary cone of a reflection
 subgroup of $W$ is contained in that of $W$, and that for irreducible  infinite $W$ of finite rank, the closed imaginary cone is the only non-zero, closed, pointed $W$-stable cone contained 
 in the pointed cone spanned by the simple roots.  For $W$  of finite rank, various natural 
 notions of  faces of  the imaginary cone are shown to coincide, the face lattice  is explicitly 
 described       in terms of 
 the lattice of facial reflection subgroups and it is shown that the  Tits 
 cone and imaginary cone are related by a duality closely analogous to 
 the standard duality for polyhedral cones, even though neither of them 
 is a closed cone in general. 
 Some of these results have   application, to be given in  sequels to this paper, to 
 dominance order of Coxeter groups, associated automata, and construction of modules for   
  generic Iwahori-Hecke algebras.    \end{abstract}
\maketitle

\section*{}\label{x0}
 The imaginary cone (\cite[Ch 5]{Kac}) of a Kac-Moody Lie algebra is the convex hull of zero 
 and the positive  imaginary roots. The combinatorial characterization of imaginary roots from  \cite{Kac} can be used  to give a definition (\cite{HeeTh}, \cite{HeeIm}) of the imaginary cone which makes sense for possibly non-crystallographic root systems of  Coxeter groups (which do not have imaginary roots in general).

 This paper  systematically studies the imaginary cone for  a class of root systems of general Coxeter 
 groups $W$. As well as in \cite{Kac} for the crystallographic case, some of the basic  facts may   be found   in \cite{HeeIm}, \cite{HeeTh},
  \cite{FuCone} and \cite{HLR}. One  of the  main new results (Theorem \ref{x6.3})
   is   that  the imaginary cone of a reflection subgroup
  $W'$ of $W$ is contained in the imaginary cone of $W$;  the corresponding result for 
  the closures of the imaginary cones is easier to prove  but  much less 
  useful. Another main result is that,
for irreducible infinite  $W$ of finite rank, the   closed imaginary cone is the only non-zero closed pointed $W$-stable cone contained 
 in the pointed cone spanned by the simple roots
  (Theorem
   \ref{x7.6}).
  A third  main result  gives  algebraic descriptions of the face lattices of the imaginary cone and Tits cone. Namely, it is  shown in Section \ref{x11}  that (under mild finiteness and 
  non-degeneracy conditions) several notions of faces of the imaginary cone (and of the Tits cone) coincide, and that  the face lattice of the imaginary cone
 is isomorphic to the lattice of special  facial  subgroups of $W$ (facial subgroups with  no finite 
 components), and is dual to the lattice of faces of the Tits cone. Here, 
 the facial subgroups are defined as  the (parabolic) reflection  
 subgroups arising as stabilizers of points of the Tits cone.   In the framework of standard crystallographic root 
   systems of Kac-Moody Lie algebras, with linearly independent simple roots and simple 
   coroots,  corresponding results on the  faces and the 
   face lattice of the Tits cone (but not the imaginary cone) have been obtained in work of 
   Looijenga,  Slodowy  and Mokler  (\cite{Loo}, \cite{Sl1}, \cite{Sl2}, \cite{Mok1}, \cite{Mok2}, \cite{Mok6}). Those results play a significant role   in the study of the face monoid associated to a  Kac-Moody  group  (see  \cite{Mok7}). 

  The relation between the Tits and imaginary cones  is  quite delicate since, in general, neither is a closed cone and many of the analogous  facts do not even  hold  for closed 
 cones. Another subtlety in the relationships between  
 imaginary cones of reflection subgroups  is that, even  if all parabolic 
 subgroups  of $W$ are facial,  not all  parabolic subgroups of (finite  rank) reflection 
   subgroups $W'$ of $W$ are facial; these relationships have  not been known  in the 
   Kac-Moody case where, due to the nature of standard realizations of  generalized Cartan matrices usually used, work has been mostly restricted to the special case where  
   parabolic and facial subgroups coincide.

  The results involving relationships between the imaginary cone of $W$ 
  and   its dihedral reflection subgroups     have applications,  to be given in subsequent papers,  to the study of dominance order on 
  root systems of general Coxeter groups, and   certain  associated finite 
  state  automata. Some of these have also been obtained  in \cite{Edgar}, \cite{FuCone} or \cite{FuDomHeir}. The results  have connections with generic    Iwahori-Hecke algebras; they lead to a  construction of modules for certain   Iwahori-Hecke algebras which provide a  proof  for finite rank Coxeter systems of    a  
  weakened version of a conjecture of Lusztig  on boundedness of the $a$-function
(recent work of Nanhua Xi \cite{Xi} provides a
  proof of Lusztig's  conjecture itself  in the special case of Coxeter groups with complete Coxeter graphs). 
 
In some detail,  the contents of the paper are as follows.  Section \ref{x1} gives basic properties of the class of reflection representations and 
 root systems of general Coxeter groups which are used in this paper.  
 It is obviously  necessary  to use a class of root systems  closed under passage to 
 subsystems of reflection subgroups, for which simple roots need not be linearly 
 independent.  We take for simplicity a minimal natural  class of root systems with this 
 property,  in real vector spaces $V$ equipped with a 
 $W$-invariant symmetric bilinear form. 
 
   Several  of the basic  results   of this paper extend  to more general classes of 
  possibly non-symmetrizable root systems considered   in \cite{DyRig}, \cite{RankTwo}, 
  and \cite{FuNonortho}, which include also the standard 
  crystallographic root systems of Kac-Moody Lie algebras.
   Although it would be desirable to give these extensions, as 
   suggested by the above comments on what is known in the 
   Kac-Moody setting, we do not go into  this.   The extension of results 
   involving isotropic vectors, totally isotropic  faces of the imaginary cone  etc presents particular difficulties    and should be especially interesting.  

  Section \ref{x2} records basic properties of facial subgroups (the stabilizers in $W$ of points of the Tits cone).
It would  be interesting to develop a suitable theory of root systems  and corresponding  facial subgroups, as in Section \ref{x2}  in (possibly infinite and even  infinite rank) oriented matroids and to determine to what extent results proved here  hold in such an abstract setting.

Section \ref{x3} gives several equivalent  definitions of the imaginary cone $\mc{Z}=\mc{Z}_{W}$ of $W$. The one corresponding most closely  to the original definition in the Kac-Moody setting is as follows: $\mc{Z}_{W}=W\mc{K}_{W}$ where $\mc{K}_{W}$ is the intersection of the negative of the fundamental chamber with the cone $\real_{\geq 0}\Pi$ of non-negative linear combinations of simple roots $\Pi$ of $W$. In particular, $\mc{Z}$ is contained in the negative of the Tits cone
(which is defined as the union of $W$-orbits of points of the 
fundamental chamber), and so
the set $\mc{K}_{W}$ is a fundamental domain for $W$ acting on 
$\mc{Z}$. Also, $\mc{Z}\subseteq \real_{\geq 0}\Pi$.
A fact proved  in Section \ref{x3} which is important for later 
developments is that the imaginary cone of a facial reflection 
subgroup $W'$ is the intersection of the imaginary cone of $W$ 
with the subspace spanned by the roots of $W'$.

In Sections \ref{x4}--\ref{x11},  additional 
finiteness 
and non-degeneracy assumptions  are  imposed on the root 
system; in particular, $W$ is of finite rank throughout these 
sections. These conditions and their  consequences   are 
discussed in Section \ref{x4}.

Section \ref{x5}  establishes some   facts, originally due to Kac (in the 
Kac-Moody setting),  about the closure of the imaginary cone.  
In particular, it is  shown that  the closed imaginary cone is dual to 
the closure of the  Tits cone, and it is the convex hull of the union 
of zero and all the limit rays of rays spanned by positive roots.
The structure of the set of these limit rays  has  also been 
investigated in \cite{HLR}. We  state without proof here an 
important characterization of these limit rays from \cite{HLR}:  
namely,  the set of   limit rays of positive roots is the closure of the
union of  the sets of limit rays of positive roots of dihedral 
reflection subgroups.  

Section \ref{x6} proves the first main new result of this paper,  Theorem \ref{x6.3}, which asserts that $\mc{Z}_{W'}\subseteq \mc{Z}$ if $W'$ is a  
reflection subgroup of $W$. An important corollary is  that if $W''$ 
is a facial reflection subgroup of $W$, then
 $\mc{Z}_{W'}\cap \mc{Z}_{W''}=\mc{Z}_{W'''}$ where 
 $W'''=W'\cap W''$ (which is a facial reflection subgroup of $W'$ by 
 Section \ref{x2}).

Section  \ref{x7} gives our  second  main new result, Theorem \ref{x7.6}, which asserts that for irreducible infinite 
 finite rank $W$, 
the closed imaginary cone is 
   the only non-zero pointed closed $W$-invariant cone contained in the 
   pointed cone spanned by the simple roots. It is proved by   showing
 that   the limit rays of  a $W$-orbit of rays in the imaginary cone include the limit rays of  all
    dihedral reflection subgroups and invoking  the above-mentioned 
    result of \cite{HLR}. The result has additional  significant applications to the 
    study of the $W$-action on the  closed imaginary cone
    which  are deferred to    \cite{DHR}. In  particular, Theorem \ref{x7.6} opens the way for the study 
 of fractal properties of the imaginary cone and the dynamics of the 
 $W$-action on it.

  For an element $v$ of $\real_{\geq 0} \Pi$,  a support of $v$ is defined 
 to be a subset $\Delta$ of
$\Pi$ such that $v=\sum_{\alpha\in \Delta}c_{\alpha }\alpha$ with all $c_{\alpha}>0$.
 This notion of support is more subtle than in the case of linearly independent 
$\Pi$, when each $v$ has a unique support,   and  plays an important technical  role throughout this paper. Section \ref{x8} 
collects some basic   properties of supports  additional to those already 
used in previous sections. 

 Section \ref{x9} discusses the closed imaginary cones of hyperbolic groups  $W$
  or hyperbolic reflection subgroups $W'$ of (possibly non-hyperbolic)
   Coxeter groups $W$, and, as an extended example,  the imaginary 
   cones of  ``generic'' universal  root systems (those for which any rank 
   two parabolic root subsystem  is infinite and not affine).  These 
   classes of root systems play an important role in relation to general 
   root systems. The main result obtained  in the generic  universal case is that  (in 
   rank at least three) the space of components  of the set of  points of the  closed imaginary cone outside 
   the imaginary cone is a Cantor space 
   (the components themselves are totally isotropic cones of unknown 
   dimensions and structure).
      This section also raises some natural  questions left open in  this 
      work; others  are deferred to   \cite{DHR}.

  In preparation for the main results on facial structure of the imaginary 
  and Tits cones,  the reader should consult  the Appendix
   \ref{xA} which discusses  the general   algebraic aspects of facial 
   structure of cones and introduces the (partly non-standard) 
   terminology we use. It also collects for the reader's convenience  at the end  
  a few additional facts used extensively throughout   this paper.   A semidual pair of (possibly non-closed) cones
  is defined to be a pair of cones, each contained in the dual of the other.
  There are several 
    notions   of  face (of each cone in the pair) which are applicable in such a context; (non-empty) extreme subsets,  exposed (equivalently, semi-exposed) subsets, and stable sets for the natural Galois 
  connection, corresponding to the relation of orthogonality of their elements, between  
  subsets of  the  two cones. For trivial reasons, the second and third notions don't necessarily 
  coincide for non-closed cones    while, as is well known,  the first and second notions  don't 
  necessarily agree even for a pair of mutually dual closed cones.  We say the semidual pair 
  is a dual pair of cones  if  all three notions of face of each cone in the pair coincide and  the 
  closures of the two cones are mutually dual in the usual sense.    Polyhedral cones and their duals provide  examples of dual pairs.

  Section \ref{x10} is devoted to a detailed discussion of the facial 
  structure of the imaginary cone and Tits cone. 
   The main result (Theorem \ref{x10.3}) on the face lattices obtained here  
   is that the  imaginary cone and Tits cone form a dual pair as 
   discussed above,  so that there are natural notions of face lattice of each,   and  that the 
   face lattice of $\mc{Z}$ 
  (and therefore   the  dual of  the face  lattice of 
  the Tits cone) is canonically   isomorphic to the lattice of special  facial reflection subgroups of $W$. 
   
The facial closure of a subset of $W$ is defined to be  the  unique inclusion-minimal facial 
subgroup containing it.  Section \ref{x11} gives two algorithms for computing the  facial 
closure of finitely generated subgroups of $W$. The first applies only to reflection subgroups, 
and determines the facial closure by means of   geometry of the imaginary cone and root 
system, while the second  applies to any finitely-generated subgroup and makes   use of the 
solvability of the conjugacy problem for $W$.

  Section \ref{x12}  drops the finiteness and non-degeneracy assumptions  \ref{x4.1}(i)--(iii)    and  returns to the more general framework of Sections \ref{x1}--\ref{x3}.
  Under these relaxed assumptions, some  of the statements about  $\mc{Z}$  proved in Sections \ref{x4}--\ref{x11} would need to be modified if they are to hold, and we do not go into this. Three  complicating factors are the  choice  of topology on the ambient real vector space,  the appropriate generalization of the notion of  dual pair of cones   and   the fact that, for infinite rank Coxeter systems, arbitrary intersections of facial subgroups need not be facial (see \cite{NuidaLocPar}
  for the analogous fact for  parabolic subgroups). However,  the main result of Section \ref{x6} is extended by  showing that the imaginary cone of a reflection subgroup $W'$ of $W$ is contained in that of $W$ in general.

The main part of this work  was done in the academic year 
2008--2009 while on sabbatical from the University of Notre Dame 
at the University of Sydney, though the results of Section \ref{x7}, \ref{x2.13}--\ref{x2.17} and  \ref{x9.9}--\ref{e9.18} were obtained later. I gratefully acknowledge the support of
 both institutions. I also thank Bob Howlett and Xiang Fu for some useful 
 conversations and Claus Mokler  for helpful communications.  Finally, I thank the authors of \cite{HLR} for  useful 
 conversations and  for permission to use   results in  a 
 preliminary version of \cite{HLR} here.

 \section{Recollections on Coxeter groups and root systems}
 \label{x1} This section fixes some commonly used  notation and 
 terminology,  and describes basic properties of Coxeter groups and 
 their root systems. 
\subsection{} \label{x1.1}   It is assumed that  the reader is familiar with 
  basic results on convexity, especially  properties of polyhedral cones, 
  their faces, extreme rays,
 dual polyhedral cones etc  as described in \cite{Br} and \cite{Bar} for 
 instance. A 
 detailed discussion of some aspects of convexity which are  
 particularly relevant  to the latter parts of this paper, especially 
 notions of facial structure of general cones,    is given in Appendix
  \ref{xA}. This    does not  depend on  other sections of this paper and 
  can be consulted for    more background and terminology on cones 
  and convexity  than given in this subsection.
 
Vector spaces are regarded as affine spaces, and affine spaces 
regarded as vector spaces by a choice of an origin, in the standard 
way.
 Finite dimensional vector  spaces are always endowed  with their 
standard topology. Except  in  Appendix \ref{xA}, an 
 infinite dimensional real vector space is always given a fixed but 
 arbitrary   topology  coherent with the standard  topologies  on its 
 finite dimensional subspaces. 

 Let $V$ be a real vector space. 
  A subset of $V$ which is closed under multiplication by non-zero scalars is called a \emph{possibly non-convex cone} in $V$.   A \emph{cone}  in  $V$ is    a possibly non-convex  cone which is   convex as a subset of   $V$. Equivalently, 
 it is an additive subsemigroup of $(V,+)$ which   is closed under  multiplication by positive scalars. The zero cone $\set{0}$ is often written  just as $0$ and the  empty cone as $\emptyset$.    If $A\seq B\seq V$ with $A$ a cone, we say that  $A$ is a  \emph{conical subset} of $B$ (or a \emph{subcone} of $B$, if $B$ is a  cone).  A \emph{xA.11} of a possibly non-convex cone $A$ is a subset $B$ of $A\sm\set{0}$ such that the map $(\lambda,v)\mapsto \lambda v\colon \real_{> 0}\times B\to A\sm\set{0}$ is bijective. Every possibly non-convex cone has a base, but a  cone need  not   necessarily have  a convex base. A (possibly non-convex) cone $\mc{Y}$ is said to be  \emph{pointed} if $0\in \mc{Y}$ and  \emph{blunt} if 
$0\not\in {Y}$.
 
For subsets $A,B$ of $V$, define $A+B$, $A-{B}$ by 
$A\pm B=\mset{a\pm b\mid a\in A,b\in B}$
and set $\lambda A:=\mset{\lambda a\mid a\in A}$ for $\lambda\in \real$. The linear (resp.,
 affine, convex or conical) span of $A$  is defined to be  the 
 smallest vector subspace (resp., affine set, convex set, cone) in $V$ which contains $A$ and  
 is denoted $\lin(A)$
 (resp., $\aff(A)$, $\conv(A)$, $\ccl(A)$).  These sets have 
 well-known descriptions in terms of  linear (resp., affine, 
 convex, positive  linear)  combinations of elements of $A$. 
 One has $\ccl(A)=\mset{\lambda v\mid v\in \conv(A), \lambda\in \real_{>0}}$ and  
$\aff(A)=A+\ccl(A-A)$. If $A\neq \eset$,  then $\aff(A)=a+\ccl(A-A)$ for any $a\in A$ and $\aff{A}$ then has a natural structure of affine space. One has $\lin(A)=\aff(A\cup\set{0})=\ccl(A\cup\set{0}\cup-A)$ where $-A:=(-1)A$.  If $A$ is a non-empty cone, then
 $\lin(A)=\aff(A)$. We abbreviate $\real A:=\lin(A)$ and $\real_{\geq 0}A:=\ccl(A\cup\set{0})$. Define  $\real_{>0}A:=\mset{\sum_{a\in A}c_{a}a\mid c_{a}\in \real_{>0}\text{ \rm for all $a\in A$}}$ if $A$ is finite; in particular,  $\real_{\geq 0}\eset=\set{0}$ and  $\real_{>0}\eset=\eset$. We say that $\Gamma\seq V$ is \emph{positively independent} if $0\not \in \real_{>0}A$ for any finite $A\seq \Gamma$.
 Frequently, we do not distinguish notationally between a singleton
$\set{v}$ and $v$, writing for example $A-v$ for $A-\set{v}$ and $A\sm v$ for $A\sm\set{v}$ (where $A\setminus B$ denotes set difference).  

A cone $\mc{Y}$ is said to be  \emph{salient} if $\mc{Y}\cap -\mc{Y}=\set{0}$ 
and \emph{generating} if $\mc{Y}-\mc{Y}=V$.   A \emph{ray} of
 $V$ is a cone  of the form $\real_{\geq 0}\alpha$ for some 
 non-zero $\alpha\in V$; we say that $\alpha$ \emph{spans}, or 
 \emph{generates}, the ray $\real_{\geq 0}\alpha$.

Assume $V$ is  topologized as described above.
The closure of a subset $A$ of $V$  is denoted  as $\overline A$ or $\cl(A)$. The interior of $A$ is denoted $\inter(A)$.
The \emph{relative interior}, denoted  $\ri(A)$ or sometimes  $A^{\circ}$, of a subset $A$ of $V$ is defined to be the interior of $A$ relative to the subspace topology  of $ \aff(A)$.  The \emph{relative boundary} $\rb(A)$ of $A$ is defined by $\rb(A):=\cl(A)\setminus \ri(A)$. The set of limit points of a set or sequence $X$ in a topological space is denoted $\Acc(X)$.
Recall that any non-empty convex set (e.g. a non-empty cone) 
 is connected.

    \subsection{} \label{x1.2}   Consider  a real vector space $V$ equipped with a  symmetric $\real$-bilinear form $\mpair{-,-}\colon V\times V\to \real$.
    We call the pair $(V,\mpair{-,-})$ a \emph{quadratic space} (over $\real$).
       In general, for $A,B\subseteq V$,  write $\mpair{A,B}=\mset{\mpair{a,b} \mid a\in A,b\in B}$. Define $A^{\perp}:=\set{v\in V\mid \mpair{v,A}\seq \set{0}}$ and $A^{*}:=\set{v\in V\mid \mpair{v,A}\seq \real_{\geq 0}}$.  Write $A\perp B$ if $B\subseteq A^{\perp}$. 
It is well-known (see Appendix \ref{xA} more generally) that $A^{*}$ is a 
pointed cone  and that if $V$ is finite dimensional and $\mpair{-,-}$ is non-singular,  then $A^{*}$ is closed  and  $A^{**}=\cl(\real_{\geq 0}A)$.
The quadratic space (or form) is said to be non-singular if $V^{\perp}=0$.  If $V$ is finite-dimensional, we say that the form (or quadratic space) 
  $\mpair{-,-}$ is of  signature $(k,l,m)$ if  a matrix of the form has $k$ positive eigenvalues, $l$ negative 
  eigenvalues and $m$ zero eigenvalues.

   A vector $\al\in V$ is said to be \emph{isotropic}
 (resp., \emph{non-isotropic}) if $\mpair{\al,\al}=0$ (resp., $\mpair{\al,\al}\neq 0$).  If $\al\neq 0$, the     
 ray $\real_{\geq 0}\alpha$ spanned by $\al$     will be  said to be    \emph{positive},  \emph{negative} or \emph{isotropic} according as 
 $\mpair{\alpha,\alpha}$ is positive, negative or zero.
  Let $\mc{Q}:=\mset{\al\in V\mid \mpair{\al,\al}=0}$ denote the possibly non-convex cone of  isotropic vectors in $V$.
 
  For non-isotropic $\alpha\in V$,  let $s_{\alpha}: V\rightarrow V$ denote the \emph{reflection} in $\alpha$, given by \[s_{\alpha}(v):=v-\mpair{v,\ck \alpha}\alpha, \qquad \ck \alpha:=
\frac{2}{\mpair{\alpha,\alpha}}\alpha.\]    For any  $\Gamma\seq V$ consisting of non-isotropic vectors,  let $W_{\Gamma}:=\mpair{s_{\gamma}\mid \gamma\in \Gamma}$ denote the subgroup of $\GL(V)$ generated by reflections in elements of $\Gamma$.

\subsection{}   \label{x1.3} Assume given   a based root system $(\Phi,\Pi)$   for 
     $(V,\mpair{-,-})$) 
 with associated Coxeter system $(W,S)$ in the sense of \cite[\S 3]{DySd}. 
By definition, this  means that      
 \begin{conds}
\item $(V,\mpair{-,-})$ is a quadratic space (over $real$). 

   \item $\Pi\subseteq V$ is positively independent.\item $\mpair{\alpha,\alpha}=1$ for $\alpha\in \Pi$.  Set $m_{\al,\al}:=1$ for all $\al\in \Pi$.
\item For $\alpha\neq \beta\in \Pi$,  either  $\mpair{\alpha,\beta}=-\cos \frac{\pi}{m_{\al,\bt}}$ where $m_{\alpha,\beta}\in \Nat_{\geq 2}$ or   $\mpair{\alpha,\alpha}\leq -1$, in which case we set $m_{\al,\bt}:=\infty$.
\item $S=\mset{s_{\alpha}\mid \alpha\in \Pi}$ and $W=W_{\Pi}$ is the group of $\real$-linear automorphisms of $V$ generated by $S$.
\item   $\Phi:=\mset{w\alpha\mid w\in W, \alpha\in \Pi}$.
\end{conds}

 The pair $(W,S)$ is a Coxeter system with Coxeter matrix $(m_{\al,\bt})_{\al,\bt\in \Pi}$; in particular,  for $\alpha,\beta\in \Pi$, $s_{\alpha}s_{\beta}$
 has order $m_{\al, \bt}$.  Any Coxeter system  is isomorphic to one attached to  a ``standard'' based root system as above, in which $\Pi$ is a basis for $V$ and $\mpair{\al,\beta}\geq -1$ for all $\al,\beta\in \Pi$ 
 (see \cite{Hum}).  It  also isomorphic to one attached to a based root system in which  $\Pi$ is linearly  independent and $\mpair{-,-}$ is  non-singular.   Coxeter 
 systems in this paper  are possibly of infinite rank except where otherwise  stated
 (as by blanket assumption in Sections \ref{x4}--\ref{x11}). The notion of subgroup   always refers to a subgroup of $W$ unless the overgroup is more precisely 
  indicated.

 One  calls $\Pi$ the set of \emph{simple roots} and $\Phi$ the \emph{root system}.  The set $\Phi_+:=\Phi\cap \real_{\geq 0}\Pi$ is called the set of \emph{positive roots}. We have $\Phi=\Phi_{+}\dot \cup\,  \Phi_{-}$, where $\Phi_{-}:=-\Phi_{+}$ and $\dot\cup$ indicates that a union is of disjoint sets. 
 The cone $\real_{\geq 0}\Pi=\real_{\geq 0}\Phi_{+}$ will be called the \emph{positive root cone}.
 Denote the \emph{standard length function} of $(W,S)$  as $l\colon W\to \Nat$ and define  the set $T:=\mset{wsw^{-1}\mid w\in W,s\in S}$ of \emph{reflections}.  The map
 $\alpha\mapsto s_\alpha\colon \Phi_+\rightarrow T$ is a bijection. For $t\in T$, we sometimes  denote  the unique positive root $\alpha$ with $s_\alpha=t$ as  $\alpha_t$.   

 For standard  properties of
 Coxeter systems and root systems, see \cite{Bour},  \cite{Hum} and \cite{BjBr}.
 Frequently,  results in these and other  references  are stated under stronger conditions than imposed in this paper (e.g. with $\Pi$ finite, or for the standard root system), but the proofs of our stated results  under the weaker conditions here are essentially the same unless otherwise indicated.
 Useful  properties of the  based root systems considered in this paper are   listed in \cite{DySd}. The conditions (i)--(iv)
 imply that  for any finite subset  $\Delta$ of $\Pi$, the pointed conical hull $C:=\real_{\geq 0}\Delta$ of $\Delta$  is a polyhedral cone and the 
   extreme rays of $C$ are  spanned by the elements of $\Delta$;
    hence  there is a linear map $\rho\colon V\to \real$ with $\rho(\Delta)\subseteq \real_{>0}$. Therefore if $\Pi$ is finite, then $(V, \mpair{\cdot \mid \cdot },\Pi)$ is a root basis in the sense of \cite{K} and the results of \cite{K} and \cite{HRT} apply. 
    
 \begin{rem}  (1) Suppose given a based root system $(\Phi,\Pi)$ in $(V,\mpair{-,-})$. For any  subspace $V'\sreq \real \Pi$ of $V$, we may regard $(\Phi,\Pi)$ as a based root system in $(V',\mpair{-,-}')$ where $\mpair{-,-}'$ is obtained by restriction of $\mpair{-,-}$ to a symmetric bilinear form on $V'$; the latter based root system is said to be obtained by \emph{restriction (of quadratic space)} of the former, and the former by 
    \emph{extension (of quadratic space)}  of the latter.
    The associated  Coxeter systems of  based root systems related by extension or restriction are canonically isomorphic.

  (2)   A based root system $(\Phi,\Pi)$ in $(V,\mpair{-,-})$  will be  said to be \emph{ample}  for a subspace $U$ of $V$
    if any $\real$-linear map $U\to \real$ is of the form $u\mapsto
    \mpair{u,v}$, for $u\in V$, for some $v\in V$; this holds for instance if $U$ is finite-dimensional and $\mpair{-,-}$ is non-singular. 
     In case $U=\real\Pi$,     we say simply that 
the based root system is ample or that
       $(V, \mpair{-,-})$ is ample for $(\Phi,\Pi)$. 
It is not difficult to show that  for any based root system $(\Phi,\Pi)$ in $(V,\mpair{-,-})$ and any subspace $U$ of $V$, there is an extension which is ample for $U$ and such that the associated quadratic space is non-singular.

\subsection{}  \label{x1.4} For   proofs of some (but not all) facts concerning $(W,S)$ or $\Phi$,  it is possible to replace the root system by one with linearly independent roots by a standard construction used for instance in  the proof of \cite[Proposition 2.9]{HRT} and  \cite[(3.5)]{DyTh}. We recall this construction  below. 
\end{rem}

   Let $V'$ be a real vector space with linearly independent subset $\Pi'=\mset{\al'\mid \al\in \Pi}$ in bijection  with $\Pi$ by a map $\al'\mapsto \al\colon \Pi'\xrightarrow{\cong} \Pi$. Suppose given
a linear map $L\colon  V'\to V$ which restricts to this bijection $\Pi'\to \Pi$ (if $\Pi'$ is a basis for $V'$, there is a unique such map $L'$).  
   Define a 
  symmetric bilinear form $\mpair{-,-}'\colon V'\times V'\to \real$ by $\mpair{u_{1},u_{2}}'=\mpair{L(u_{1}),L(u_{2})}$. For non-isotropic 
  $u\in V'$, let $r_{u}\colon V'\to V'$ denote the reflection in $u$.
  Let $ W':=\pair{S'}\seq \GL(V')$ where $S':=\mset{r_{\al}\mid \al\in \Pi'}\seq \GL(V')$,
  $\Phi':=W'\Pi'$ and $\Phi'_{+}=\Phi'\cap \real_{\geq 0}\Pi'$.
  Then $(\Phi',\Pi')$ is a based root system for $(V',\mpair{-,-}')$ with 
  associated Coxeter system $(W',S')$ and the map 
  $r_{\al'}\mapsto s_{\al}\colon S'\to S$ for $\al\in \Pi$ extends  to an 
  isomorphism of Coxeter systems $\theta\colon (W',S')\to (W,S)$. Further, 
  $L$ restricts to bijections $\Phi'\xrightarrow{\cong} \Phi$ and 
  $\Phi'_{+}\xrightarrow{\cong} \Phi_{+}$   and satisfies 
  $L(wu)=\theta(w)L(u) $ for all $w\in W'$ and $u\in V'$ (i.e. identifying $W'$ with $W$ by $\theta$, $L$ is a $W$-equivariant map).  
  
   If $\Pi'$ is a basis of $V'$,  we shall call $(\Phi',\Pi')$ on 
   $(V',\mpair{-,-}')$ a \emph{canonical lift}  of $(\Phi,\Pi)$ on 
   $(V,\mpair{-,-})$ and  call $L$ the associated \emph{canonical map}. 
     This notion will be used several times in this paper to reduce proofs 
     to the case of linearly independent simple roots (i.e from the positive 
     root cone being a general polyhedral cone  to a simplicial cone). It 
     seems likely that these arguments could   be replaced by  arguments 
     using more  of  the theory of polyhedral cones.

      The diversity of  root bases which may be possible  for fixed $(W,S)$ is suggested by the following example.
    
    \begin{exmp}   Suppose  $(W,S)$ is universal  (i.e  there are no braid relations) and  of rank $n\geq 3$.
  We consider  certain based  root systems  $(\Phi, \Pi)$ on spaces $(V,\mpair{-,-})$   with $V=\real \Pi$ and 
  with associated Coxeter system isomorphic to $(W,S)$. 
  The requirement that $W$ is universal is equivalent to the condition  $\mpair{\al,\bt}\leq -1$ for all distinct $\al,\bt\in \Pi$.  
   Write  
  $\Pi=\set{\al_{1},\ldots,\al_{n}}$ and  assume first that  $\Pi$ is linearly independent. The standard   based 
  root system   from \cite{Bour} or \cite{Hum} has $\mpair{\al_{i},\al_{j}}=-1$ for $i\neq j$. There is also   a based root system $(\Phi,\Pi)$ on $(V,\mpair{-,-})$ in which $\Pi$ is linearly independent  and  $\mpair{\al_{i},\al_{j}}:=1-8(i-j)^{2}$ for all $i,j$. (The latter based root system arises naturally  as a canonical lift  of   a subsystem of a based root system considered in Example \ref{x1.7}).  It is obvious from its natural origin (and easy to check directly) that the latter form on $V$ has signature $(2,1,n-3)$.
    By perturbing $\mpair{-,-}$ on $V$ and dividing out by a subspace of the radical
  (taking  the image of $\Pi$ in the quotient space as new 
  simple roots,   see \cite[6.1]{K})  one can find based  root systems,  $(\Phi',\Pi')$ on  $(V',\mpair{-,-}')$,  of $(W,S)$ such that the form 
  $\mpair{-,-}'$ on $\real \Pi'$ is of any signature $(k,l,m)$ with $k+l+m\leq n$, $k\geq 2$ and  $l\geq 1$
  and with  the positive root cone 
  $\real_{\geq 0}\Pi'$ of any of a wide  variety of combinatorial types of $k+l+m$-dimensional 
  cones with $n$ extreme rays.  It is easily checked that this is the full range of possible  signatures (for the bilinear form on the span of the  roots)  for   irreducible, infinite non-affine root systems.   
       \end{exmp}
       
        \begin{rem} Despite the diversity of based root systems, their root systems $\Phi$  (for fixed $W$) are all canonically isomorphic  as $W\times\set{\pm 1}$-sets. For the  bijection $(t,\e)\mapsto \e\al_{t}\colon T\times\set{\pm 1}\to \Phi$ may be used  to transfer the $W\times\set{\pm 1}$-action on $\Phi$ to one on  $T\times\set{\pm 1}$; the resulting action depends only on   $(W,S)$ as Coxeter system
 (it is described in \cite[Ch IV, \S 1, no. 4]{Bour}).   
  \end{rem}

 \subsection{} \label{x1.5}
  It is well known (see \cite{Hum}, for example) that for $w\in W$,  \[N(w):=\mset{t\in T\mid l(tw)<l(w)}=\mset{s_{\alpha}\mid \alpha\in \Phi_{+}\cap w(-\Phi_{+})}.\]
 The equality above will be used frequently without special comment.
  Any \emph{reflection subgroup} $W'$ of $W$ (i.e. a subgroup $W'=\mpair{W'\cap T}$ generated by the reflections it contains) has a \emph{canonical set  of  Coxeter generators} $\chi(W')=\mset{t\in T\mid N(t)\cap W'=\set{t}}$ (\cite[3.3]{Ref}). We always consider reflection subgroups as Coxeter groups with these Coxeter generators, unless otherwise stated.
   It is easily shown (see \cite[Lemma 2.8]{HRT})  that if $\G,\G'\seq \Phi$ with  $W_{\G}=W_{\G'}$, then $\real\G=\real\G'$.

   A reflection subgroup $W'$  is said to be dihedral if  it is generated by two (distinct) reflections, or equivalently (by \cite[3.11]{Ref})  if $\vert \chi(W')\vert =2$.    Any dihedral reflection subgroup
 $\mpair{s_\alpha,s_\beta}$ is contained in a unique maximal (under inclusion) dihedral reflection subgroup, namely
 $\mpair{s_\gamma\mid \gamma\in \Phi\cap (\real \alpha+\real \beta)}$ (see \cite[3.2]{BrGr}).
 
\subsection{} \label{x1.6} For any reflection subgroup $W'$ and any $w\in W$, the coset $W'w$ has a unique element $x$ of minimal length, characterized by $x\in W'w$ and $N(x)\cap (W'\cap T)=\emptyset$
 or equivalently, $x\in W'w$ and $N(x)\cap \chi(W')=\emptyset$ (\cite[3.4]{Ref}, \cite[1.4]{BrGr}), or equivalently again, $x\in W'w$ and $x^{-1}(\Pi_{W'})\seq \Phi_{+}$. One  has $\chi(w^{-1}W'w)=x^{-1}\chi(W')x$ by \cite[Lemma 1]{DyParClos}.

\subsection{} \label{x1.7}  For any reflection subgroup  $W'$  of $(W,S)$, let
  \[\Phi_{W'}:=\mset{\alpha\in \Phi\mid s_{\alpha}\in W'}, \qquad\Phi_{{W',+}}:=\Phi_{{W'}}\cap \Phi_{+}\]  
denote the corresponding sets of roots and positive roots, and  \[\Pi_{W'}:=
\mset{\alpha\in \Phi_{+}\mid s_{\alpha}\in \chi(W')}.\]  Then
 $(\Phi_{W'},\Pi_{W'})$ is a based root system in $V$ with positive roots 
 $\Phi_{W',+}$ and associated Coxeter system $(W',\chi(W'))$ (\cite[Lemma 
 3.5]{DySd}). From \cite{Ref}, a  subset $\G$ of $\Phi_{+}$ is of the form $\G=\Pi_{W'}$ for some reflection subgroup $W'$ of $W$ (necessarily $W'=W_{\Gamma})$ if and only if $\mpair{\al,\bt}\in (-\infty,-1]\cup\set{-\cos \frac {\pi}{n}\mid n\in \Nat_{\geq 2}}$ for all distinct $\al,\bt\in \Pi$.

 If $w\in W$ with $N(w)\cap W'=\emptyset$  then using  \ref{x1.5}--\ref{x1.6} and the 
 definitions, one has  \[\Pi_{w^{-1}W'w}=w^{-1}\Pi_{W'}, \qquad \Phi_{w^{-1}W'w}=
 w^{-1}\Phi_{W'},\qquad
\Phi_{w^{-1}W'w,+}=w^{-1}\Phi_{W',+}.\]
\begin{exmp} (cf.  \cite[Example 3.19]{DyTh})
Let $(\Phi, \Pi)$ on $(V,\mpair{-,-})$ be the standard based root system of a rank $3$ 
universal Coxeter group, with 
$\Pi=\set{\al,\bt,\g}$ as $\real$-basis for  $V$ and 
$\mpair{\delta,\delta}=-\mpair{\delta,\epsilon}=1$ for all $\delta\neq \epsilon $ in $\Pi$. 
For $k\in \Int$, define $\gamma_{k}=2k(2k+1)\al+2k(2k-1)\bt+\g$. A simple computation shows 
that $s_{\al}s_{\bt}(\g_{k})=\g_{k+1}$. Since $\g_{0}=\g$, it follows that  
$\g_{k}=(s_{\al}s_{\bt})^{k}(\g)$ for all $k\in \Int$. We have 
$\mpair{\g_{i},\g_{j}}=\mpair{\g,\g_{j-i}}=1-8(i-j)^{2}$ for all $i,j\in \Int$.
 Setting $W':=\mpair{s_{\g_{k}}\mid k\in \Int}$, it follows that
  $\Pi_{W'}=\mset{\g_{k}\mid k\in \Int}$. In particular,  even though
$(\Phi,\Pi)$ on $(V,\mpair{-,-})$ is a standard based root system,  of finite rank, the based  root 
system  $ (\Phi_{W'},\Pi_{W'})$ on
$(V,\mpair{-,-})$ is of  infinite rank  and is  not  standard; $\Pi_{W'}$ is linearly dependent  and 
$\mpair{\delta,\epsilon}<-1$ for distinct $\delta,\epsilon\in \Pi_{W'}$. This phenomenon   
provides the principal  reason for using   the  class of root systems  in \ref{x1.3} rather than 
the standard root systems as in \cite{Hum}. Infinite rank reflection subgroups appear quite 
naturally; for example,  even in finite rank root systems, the stabilizer of a root $\al$ is a 
semidirect product of its (sometimes infinite rank) normal reflection subgroup generated by  reflections in  all
roots orthogonal to $\al$,  and a free group.\end{exmp}

\subsection{} \label{x1.8} In general, for any subset $I$ of $S$, let  $W_{I}$ denote the subgroup of $W$ generated by $I$.  Let $I^{\perp}:=\mset{r\in S\sm I\mid sr=rs \text{ \rm for all $s\in I$}}$. The subgroups $W_{I}$ for $I\subseteq S$ are called \emph{standard parabolic subgroups} of $W$, and their conjugates in $W$ are called \emph{parabolic subgroups} of $W$. One has $\chi(W_{I})=I$.  Following \cite{Loo}, we shall say that $I\seq S$ (resp., $W_{I}$, $wW_{I}w^{-1}$)  is a \emph{special} subset of $S$ (resp., \emph{special} standard parabolic subgroup,
\emph{special}  parabolic subgroup) if $I$ has no  component $J$ with $W_{J}$ finite.
 For any $I\subseteq S$,   abbreviate $\Pi_{I}:=\Pi_{W_{I}}=\mset{\al\in \Pi\mid s_{\al}\in I}$,
$\Phi_{I}:=\Phi_{W_{I}}=W_{I}\Pi_{I}$, $\Phi_{I,+}=\Phi_{W_{I},+}$
  etc. Denote the set of minimal length coset representatives of $W/W_{I}$ as $W^{I}$. Then \begin{equation*}
  W^{I}=\mset{x\in W\mid N(x^{-1})\cap I=\eset}=\mset{x\in W\mid x(\Pi_{I})\seq \Phi_{+}}.
  \end{equation*} If $I, J,K\seq S$,  let $W_{J}^{I}:=W_{J} \cap W^{I}$ and $^{K}W_{J}^{I}:=
  W_{J}^{I}\cap (W^{K})^{-1}$.
  \begin{rem}
  We shall use at times standard properties of shortest double coset representatives which can be found for finite $W$ in \cite[2.7]{Ca} (they can be proved similarly for general $W$). 
  In fact, many of these results can be  generalized to apply to $(W',W_{J})$ double cosets
  (where $W'$ is an arbitrary reflection  subgroup) and more general length functions than the standard ones, but we don't go into this here.
  \end{rem}
 
\subsection{} \label{x1.9} We shall now introduce  several cones which play an important role in this paper.
Let $W'$ be a reflection subgroup of $W$. Let $\mc{C}_{W'}$ denote the \emph{fundamental chamber} for $W'$ on $V$. By definition, this is the  cone
 \begin{equation*}\begin{split}\mc{C}_{W'}&=\mset{v\in V\mid \mpair{v,\alpha}\geq 0\text{ \rm for all $\alpha\in \Pi_{W'}$ }}\\
 &=\mset{v\in V\mid \mpair{v,\alpha}\geq 0\text{ \rm for all $\alpha\in \Phi_{W',+}$ }}\end{split}\end{equation*}
 Let  $\mc{X}_{W'}:=\cup_{w\in W'}w(\mc{C}_{W'})$ denote the \emph{Tits cone} (see \ref{x1.10} below) of $W'$.
 Abbreviate $\CC:=\CC_{W}$ and $\mc{X}:=\mc{X}_{W}$.
 \begin{rem}  The fundamental chamber and Tits cone may  change significantly  under extension or restriction of quadratic space.
 Note that according to our definitions,   $\mc{C}\cap -\mc{C}=\Pi^{\perp}$. This differs from other treatments (e.g. \cite{Bour}) in which the fundamental chamber and Tits cone are defined as  
 subsets of the dual space of the linear span  $\real\Pi$ of the simple roots (so  one has $\mc{C}\cap -\mc{C}=\set{0}$).  If $\mpair{-,-}$ is non-singular and $V$ is 
 finite dimensional, then one may identify $V$ with its dual space and the definition is consistent  with that in sources 
 (e.g. \cite{K}, \cite{HRT}, \cite{V}, \cite{Kac}) where $\mc{C}$  is  
 defined as a subset of the dual space of $V$.  In general, if the quadratic space $(V,\mpair{-,-})$ is ample for $(\Phi,\Pi)$,  then the dual space of $\real\Pi$ identifies with   quotient  subspace of $V$ by $\Pi^{\perp}$ and our notions have similar properties as in these other sources. In full generality,  it is not obvious that $\mc{C}$ or $\mc{X}$ is non-zero, but   we may always replace the based root system by an ample extension if that is  inconvenient (see  \ref{x12.1}).
 
  \end{rem}
\subsection{} \label{x1.10} The following  lemma records some basic properties of the Tits cone and fundamental chamber. 

\begin{lem} \begin{num}  \item $\mc{X}$is the set of all $v\in V$ which have negative inner product $\mpair{\alpha,v}$ with only a finite number of positive roots i.e.  \[\mc{X}_{W}=\mset{v\in V\mid \vert \mset{ \alpha\in \Phi_{+}\vert \mpair{v,\alpha}<0}\vert \text{ \rm is finite}}.\]
\item  $\mc{C}$ and $\mc{X}$ are  pointed cones.  
\item 
 For $x,y\in \mc{C}$ and $w\in W$, one has $wx=y$ if and only if  $x=y$ and $w$ is in the standard parabolic subgroup generated by   reflections in simple roots $\alpha$ with $\mpair{\alpha,x}=0$. 
 \item The stabilizer $\stab_{W}(x)$ in $W$  of any point  $x$ of $\mc{X}$ is generated by the  reflections in the roots $\alpha$ with $\mpair{x,\alpha}=0$, and is a parabolic subgroup.   \item  For a reflection subgroup 
   $W'$ of $W$,  $\mc{C}_{W'}\supseteq \mc{C}_{W}$ and $\mc{X}_{W'}\supseteq  \mc{X}_{W}$. 
 \item If $W_{1},\ldots, W_{n}$ are reflection subgroups of 
 $W$ such that $\Phi=\cup_{i}\Phi_{W_{i}}$
 then 
  $\mc{C}_{W}=\cap_{i} \mc{C}_{W_{i}} $ and $\mc{X}_{W}=\cap_{i}  \mc{X}_{W_{i}}$. 
  \item For a reflection subgroup 
   $W'$ of $W$ and $w\in W$, $\mc{X}_{wW'w^{-1}}=w(\mc{X}_{W'})$.
     \item If $W$ is finite, then $\mc{X}=V$.
        \end{num}\end{lem}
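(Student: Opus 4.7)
The plan is to establish (a) first as the key characterization of $\mc{X}$, then use it to deduce convexity in (b), while (c) is the classical Tits theorem on the fundamental chamber whose proof is independent; parts (d)--(h) all follow from (a) and (c) by short arguments.

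For part (a), I would prove both inclusions directly. If $v=wx$ with $x\in\mc{C}$, the map $\alpha\mapsto w^{-1}\alpha$ shows that $\mset{\alpha\in\Phi_+\mid \mpair{v,\alpha}<0}$ is contained in the set of $\alpha\in\Phi_+$ with $w^{-1}\alpha\in-\Phi_+$, which by \ref{x1.5} is in bijection with $N(w)$ and hence finite. For the converse, I run a descent: given $v$ with $M:=\mset{\alpha\in\Phi_+\mid\mpair{v,\alpha}<0}$ finite and nonempty, some $\alpha\in M$ must be simple (otherwise every simple root gives $\mpair{v,\alpha}\ge 0$, forcing $\mpair{v,\beta}\ge 0$ on all of $\real_{\ge 0}\Pi\supseteq\Phi_+$, contradicting $M\ne\emptyset$); applying $s_\alpha$ replaces $M$ by $s_\alpha(M\setminus\set\alpha)$, which has strictly smaller size since $s_\alpha$ permutes $\Phi_+\setminus\set\alpha$ and now $\mpair{s_\alpha v,\alpha}>0$. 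After finitely many steps we arrive in $\mc{C}$, whence $v\in\mc{X}$.

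For (b), $\mc{C}$ is a pointed cone as a nonempty intersection of half-spaces through $0$. For $\mc{X}$: given $v_1,v_2\in\mc{X}$ and $\lambda\in\real_{>0}$, part (a) yields at once that $\lambda v_1$ and $v_1+v_2$ lie in $\mc{X}$, since $\mset{\alpha\in\Phi_+\mid\mpair{v_1+v_2,\alpha}<0}\subseteq \mset{\alpha\mid\mpair{v_1,\alpha}<0}\cup\mset{\alpha\mid\mpair{v_2,\alpha}<0}$. For (c), I proceed by induction on $l(w)$: if $l(w)>0$, pick simple $\alpha$ with $s_\alpha\in N(w)$ so that $w^{-1}\alpha=-\beta$ with $\beta\in\Phi_+$; then $\mpair{wx,\alpha}=-\mpair{x,\beta}$ is both $\ge 0$ (since $wx\in\mc{C}$) and $\le 0$ (since $x\in\mc{C}$, $\beta\in\Phi_+$), so both vanish. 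Thus $s_\alpha wx=wx$, and $w':=s_\alpha w$ satisfies $w'x=wx\in\mc{C}$ with $l(w')<l(w)$; induction gives $wx=x$ and $w'$ in the claimed parabolic, while $\mpair{x,\alpha}=0$ (from $wx=x$ and $s_\alpha wx=wx$) puts $s_\alpha$ there as well. Part (d) is then immediate: write $x=wx_0$ with $x_0\in\mc{C}$, so $\stab_W(x)=w\stab_W(x_0)w^{-1}$ is parabolic by (c); its generators from (c) are reflections in roots $w\alpha\perp x$, and conversely any $s_\gamma$ with $\mpair{\gamma,x}=0$ fixes $x$, so $\stab_W(x)$ equals the group generated by all such reflections.

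Parts (e)--(h) are quick consequences. For (e): $v\in\mc{C}_W$ gives $\mpair{v,\alpha}\ge 0$ on all of $\Phi_+\supseteq\Phi_{W',+}\supseteq\Pi_{W'}$, hence $v\in\mc{C}_{W'}$; the inclusion $\mc{X}_W\subseteq\mc{X}_{W'}$ follows from (a) applied to both systems (finitely many negative pairings in $\Phi_+$ implies the same in the subset $\Phi_{W',+}$). Part (f) then combines (e) with the observation that $\Phi=\cup_i\Phi_{W_i}$ gives $\Phi_+=\cup_i\Phi_{W_i,+}$, yielding the reverse inclusion for $\mc{C}$ directly and for $\mc{X}$ via (a), since a finite union of finite sets is finite. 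For (g), I observe that $w\mc{C}_{W'}=\mset{u\in V\mid\mpair{u,w\alpha}\ge 0\text{ for all }\alpha\in\Pi_{W'}}$ is a full-dimensional cone bounded by hyperplanes orthogonal to roots of $wW'w^{-1}$, hence a chamber of $wW'w^{-1}$, so equals $y\mc{C}_{wW'w^{-1}}$ for some $y\in wW'w^{-1}$ (using simple transitivity of the action on chambers, itself a consequence of (c)); then $w\mc{X}_{W'}=(wW'w^{-1})\,w\mc{C}_{W'}=(wW'w^{-1})\mc{C}_{wW'w^{-1}}=\mc{X}_{wW'w^{-1}}$. Part (h) is instant from (a) since $\Phi$ is finite. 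The only subtle step is the descent in (a); the rest are bookkeeping around (a) and (c).
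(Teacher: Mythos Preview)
Your arguments for (a)--(f) and (h) are correct and flesh out what the paper treats as standard: the paper simply cites references for (a)--(d) and (h), and notes that (e)--(f) follow from the definition of $\mc{C}$ together with (a). Your descent for (a), induction for (c), and the short deductions of the remaining parts are the standard proofs one finds in those references.

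Your argument for (g), however, has a gap. You assert that $w\mc{C}_{W'}$ is ``a chamber of $wW'w^{-1}$'' and hence equals some $y\,\mc{C}_{wW'w^{-1}}$ by ``simple transitivity of the action on chambers, itself a consequence of (c)''. But no chamber theory has been set up in this generality (neither linear independence of $\Pi$ nor non-degeneracy of $\mpair{-,-}$ is assumed), and (c) concerns \emph{points} of the fundamental chamber, not the set $w\mc{C}_{W'}$ as a whole; nothing already proved forces $w\mc{C}_{W'}$ to coincide with a single $W''$-translate of $\mc{C}_{W''}$ where $W'':=wW'w^{-1}$. Indeed $w\Pi_{W'}$ need not even lie in $\Phi_+$. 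The paper's route avoids this: since $\mc{X}_{W'}$ is $W'$-invariant and $wW'w^{-1}$ depends only on the coset $wW'$, one reduces to $w$ of minimal length in $wW'$; then \ref{x1.7} gives $\Phi_{W'',+}=w(\Phi_{W',+})$, and (g) is immediate from the characterization (a). Alternatively, for arbitrary $w$ you can argue directly from (a): both $w\Phi_{W',+}$ and $\Phi_{W'',+}$ are obtained by intersecting $\Phi_{W''}=w\Phi_{W'}$ with $w\Phi_+$ and $\Phi_+$ respectively, and these two sets differ in only finitely many roots, so the finiteness condition in (a) transfers.
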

        \begin{rem}
        Part (f) applies if $W$ has finitely many components $W_{1},\ldots, W_{n}$.
        \end{rem}
 \begin{proof} Parts (a)-(d) and (h) are  standard (see \cite[Ch 3]{Kac}, \cite{Bour}, \cite{K}). Parts (e)--(f)
   are   consequence of the definition of $\mc{C}$ and (a). If $w\in W'$, (g) is trivial.  By \ref{x1.6}, it is therefore  enough to check (g) if $w$ is of minimal length in $wW'$.  In that case, (g) follows  using  (a) and the fact from \ref{x1.7} that  $\Phi_{wW'w^{-1},+}=w(\Phi_{W',+})$.  \end{proof}
   
   \subsection{} \label{x1.11} The next lemma refines the  relationship between fundamental chambers of  reflection  subgroups
   described in  Lemma \ref{x1.10}(e).
     \begin{lem} Let $W'$ be a reflection subgroup of $W$ and  let $W''$  be the set of minimal length coset representatives in $W'\backslash W$. Then  \begin{num}   \item If $x\in W''$, then   $\CC_{x^{-1}W'x}=x^{-1}(\CC_{W'})$.
\item    $\CC_{W'}\cap \mc{X}_{W}= \bigcup_{w\in W''
 }w(\CC_{W})$.\end{num}
 \end{lem}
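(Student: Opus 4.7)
\medskip
\noindent\emph{Plan of proof.}
For part (a), I would unpack definitions: by construction,
\[
x^{-1}(\CC_{W'})=\mset{u\in V\mid \mpair{u,x^{-1}\alpha}\geq 0\text{ for all }\alpha\in \Pi_{W'}},
\]
while $\CC_{x^{-1}W'x}$ is defined by the inequalities $\mpair{u,\gamma}\geq 0$ for $\gamma\in\Pi_{x^{-1}W'x}$. Since $x\in W''$ is of minimal length in $W'x$, the characterization in \ref{x1.6} gives $N(x)\cap W'=\emptyset$, so the identity $\Pi_{x^{-1}W'x}=x^{-1}\Pi_{W'}$ from \ref{x1.7} applies and yields (a) immediately.

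For the inclusion $\supseteq$ in part (b), it suffices to show $w(\CC_W)\subseteq \CC_{W'}$ for each $w\in W''$, as $w(\CC_W)\subseteq\mc{X}_W$ is trivial. Again using that $w\in W''$ forces $w^{-1}\Pi_{W'}\subseteq\Phi_+\subseteq \real_{\geq 0}\Pi$, for $v\in\CC_W$ and $\alpha\in\Pi_{W'}$ one has $\mpair{wv,\alpha}=\mpair{v,w^{-1}\alpha}\geq 0$, since $v$ has non-negative pairing with every positive root (equivalently, with every simple root of $W$). Hence $w(\CC_W)\subseteq\CC_{W'}$.

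The interesting direction is $\subseteq$. Given $y\in\CC_{W'}\cap\mc{X}_W$, write $y=wv$ with $w\in W$ and $v\in\CC_W$, and factor $w=ux$ with $u\in W'$ and $x\in W''$. Then $xv\in x(\CC_W)\subseteq\CC_{W'}$ by the inclusion just established, and $u(xv)=y\in\CC_{W'}$. Applying the uniqueness property in Lemma \ref{x1.10}(c) to the Coxeter system $(W',\chi(W'))$ — so two points of $\CC_{W'}$ which are $W'$-conjugate are actually equal — forces $y=xv\in x(\CC_W)$, which places $y$ in the right-hand union.

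The only potentially delicate point is the appeal to Lemma \ref{x1.10}(c) for the reflection subgroup $W'$; but this statement is a general property of Coxeter systems acting on their own fundamental chambers via their canonical reflection representations, so it applies verbatim to $(W',\chi(W'))$ with its based root system $(\Phi_{W'},\Pi_{W'})$ from \ref{x1.7}. Thus no real obstacle arises, and both parts reduce to combining the coset-representative characterizations from \ref{x1.6}--\ref{x1.7} with the geometric properties of fundamental chambers from Lemma \ref{x1.10}.
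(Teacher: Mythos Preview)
Your proof is correct. Part (a) and the inclusion $\supseteq$ in (b) are handled essentially as in the paper (the paper phrases $\supseteq$ via part (a) combined with Lemma~\ref{x1.10}(e), but this amounts to the same computation you carry out).

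For the inclusion $\subseteq$ in (b), however, you take a genuinely different route from the paper. The paper chooses $w$ of \emph{minimal length} among elements of $W$ with $y=w(v')$ for some $v'\in\CC_W$, and then argues directly that $l(s_\alpha w)\geq l(w)$ for each $\alpha\in\Pi_{W'}$ via a case split on whether $\mpair{\alpha,y}$ vanishes; this forces $w\in W''$ by the characterization in~\ref{x1.6}. Your argument instead takes an arbitrary such $w$, factors it as $w=ux$ with $u\in W'$ and $x\in W''$, and then invokes the fundamental-domain property Lemma~\ref{x1.10}(c) for the Coxeter system $(W',\chi(W'))$ to conclude that the two $\CC_{W'}$-points $xv$ and $y=u(xv)$ must coincide. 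Your approach is arguably cleaner: it uses the fundamental-domain property of $\CC_{W'}$ as a black box, whereas the paper's length argument is in effect re-deriving a piece of that property in situ. The paper's argument, on the other hand, stays entirely at the level of the length function of $W$ and never needs to invoke results for the subsystem $(\Phi_{W'},\Pi_{W'})$.
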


\begin{proof} Part (a) follows from the definition of $\CC_{W'}$, using      $\Pi_{x^{-1}W'x}=x^{-1}(\Pi_{W'})$ from \ref{x1.7}. 
For (b), note first 
 that if $ w\in W''$, then (a) and Lemma
 \ref{x1.10}(e) imply that 
$w^{-1}(\CC_{W'})=\CC_{w^{-1}W'w}\sreq \CC_{W}$. Hence 
$\cup_{w\in W''}w(\CC_{W})\seq \mc{C}_{W'}\cap \mc{X}_{W}$.
To prove the reverse inclusion, let $v\in \CC_{W'}\cap \mc{X}_{W}$.  Write 
$v=w(v')$ where $v'\in \CC_{W}$ and  $w\in W$ is of minimal length  $l(w)$. It  will suffice to show that $l(s_{\alpha}w)\geq l(w)$ for all $\alpha\in \Pi_{W'}$, for then $w\in W''$ by \ref{x1.6}. Suppose first that $\mpair{\alpha,v}=0$.
Then $v=s_{\alpha}(v)=(s_{\alpha}w)(v')$ with $s_{\alpha}w\in W$. By choice of $w$,  $l(s_{\alpha}w)\geq l(w)$. On the other  hand,
suppose $\mpair{\alpha,v}\neq 0$. Since $v\in \CC_{W'}$, this forces
$0<\mpair{v,\alpha}=\mpair{w(v'),\alpha}=\mpair{v',w^{-1}(\alpha)}$. Since $v'\in \CC_{W}$ and $w^{-1}(\alpha)\in \Phi$, it follows that
$w^{-1}(\alpha)\in \Phi_{+}$ and so $l(s_{\alpha}w)>l(w)$ as required.
\end{proof}

   \subsection{}\label{x1.12}
 The following result is in marked contrast to the case of finite $W$ (see \ref{x1.10}(h)).
     \begin{prop}    Suppose that  $(W,S)$ is infinite, irreducible and not affine, and $S$ is finite. Let $V'=\real\Pi$ be the subspace of $V$ spanned by $\Pi$ and  $X:=\mc{X}\cap V' $.  Then $\ol{X}\cap -\ol{X}=\set{0}$.
  \end{prop}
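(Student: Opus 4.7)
The plan is to analyze the lineality subspace $U := \ol X \cap -\ol X$ of the closed convex cone $\ol X \seq V'$ and show $U = \set{0}$ in three steps.

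First, since $W$ preserves $\mc X$ and $V'$, the set $X = \mc X \cap V'$ is $W$-stable, so $\ol X$ is $W$-stable and $U$ is a $W$-invariant linear subspace of $V'$. For each $\al \in \Pi$ and $u \in U$, invariance under $s_\al$ forces $u - s_\al u = 2\pair{u,\al}\al \in U$, whence either $\al \in U$ or $U \seq \al^\perp$. Partitioning $\Pi = \Pi_1 \sqcup \Pi_2$ with $\Pi_1 := \Pi \cap U$, one has $\Pi_1 \perp \Pi_2$, so by irreducibility of $(W,S)$ one of the $\Pi_i$ is empty. Either $\Pi \seq U$, giving $U = V'$, or $U \seq \Pi^\perp \cap V' =: R$; since $\Pi^\perp \seq \mc C \seq \mc X$ and $R = -R$, we also have $R \seq U$, so $U = R$ in the latter case.

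Second, I would rule out $U = V'$ (equivalently $\ol X = V'$). Using the functional $\rho$ from \ref{x1.3} with $\rho(\Pi) \seq \real_{>0}$, the set $\mset{\bt/\rho(\bt) \mid \bt \in \Phi_+}$ is contained in a compact cross-section of the polyhedral cone $\real_{\geq 0}\Pi$; since $W$ is infinite and $|\Pi| < \infty$ this set is infinite, hence accumulates at some $\hat\g \neq 0$ in $\real_{\geq 0}\Pi$. For any $v \in \mc X$, Lemma \ref{x1.10}(a) gives $\pair{v,\bt} \geq 0$ for all but finitely many $\bt \in \Phi_+$, so passing to the limit along a sequence $\hat\bt_n \to \hat\g$ yields $\pair{v,\hat\g} \geq 0$. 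Hence $\ol X \seq \hat\g^* \cap V'$, a closed half-space of $V'$ that is proper (and so $\ol X \neq V'$) whenever $\hat\g \notin R$.

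Third, I would show $R = 0$, simultaneously completing the previous step and forcing $U \seq R = \set{0}$. Let $M := (\pair{\al,\bt})_{\al,\bt \in \Pi}$ be the Gram matrix of the form on $V'$; its diagonal is $1$, its off-diagonal entries are non-positive, and $A := I - M$ is a non-negative indecomposable matrix (by irreducibility of $(W,S)$). Perron--Frobenius provides a dominant eigenvalue $\rho_0 > 0$ of $A$ with a positive eigenvector; the trichotomy $\rho_0 < 1$, $\rho_0 = 1$, $\rho_0 > 1$ identifies $M$ respectively as positive definite ($W$ finite), positive semi-definite with one-dimensional kernel spanned by a positive vector ($W$ affine), or having at least one negative eigenvalue. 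A further Perron--Frobenius argument on positive null vectors of $M$ shows that singularity of $M$ forces $\rho_0 = 1$, i.e., the affine case; the non-affine, infinite hypothesis then yields $M$ non-singular, so $R = 0$.

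The main obstacle is the third step. The root basis axioms \ref{x1.3} permit $\pair{\al,\bt} \leq -1$ when $m_{\al,\bt} = \infty$, so $M$ may be strictly more negative than the standard Tits form, and the classical Vinberg trichotomy (typically stated for off-diagonal $\pair{\al,\bt} \in [-1,0]$) must be re-examined in this generality. Intuitively the non-standard entries push $\rho_0$ strictly above $1$ and make singular $M$ harder to realize, but explicitly identifying the singular case with affine Cartan data under the relaxed conditions requires some care.
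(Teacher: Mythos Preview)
The paper offers no argument of its own here; it simply cites \cite[Proposition 3.2]{HRT}, \cite[Theorem 2.16]{K}, and \cite[Lemma 15]{V}.

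Your Steps 1 and 2 are sound. The dichotomy $U = V'$ or $U = R$ follows as you describe, and the accumulation-point argument rules out $U = V'$ provided $\hat\gamma \notin R$. You defer this proviso to Step 3, but it follows directly: $\hat\gamma \in \real_{\geq 0}\Pi \setminus \{0\}$, and a Perron--Frobenius argument of the sort you sketch shows that in the non-affine case $R \cap \real_{\geq 0}\Pi = \{0\}$ (a nonzero non-negative null vector of $M$ would be a Perron eigenvector of $A$ for eigenvalue $1$, forcing $\rho_0 = 1$ and hence affineness). So $\hat\gamma \notin R$ automatically.

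Your Step 3, however, cannot be completed in this paper's generality, and the obstacle you flag is fatal rather than merely technical. The claim $R = 0$ is false: Example~\ref{x1.4} exhibits a based root system for the rank-$n$ universal Coxeter group (certainly non-affine) whose Gram matrix on $\real\Pi$ has signature $(2,1,n-3)$, so $\dim R = n-3 > 0$ once $n \geq 4$. Singularity of $M$ does \emph{not} force $\rho_0 = 1$: when $\rho_0 > 1$ the eigenvalue $1$ of $A = I - M$ can occur with eigenvectors of mixed sign, and that is exactly what happens here.

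Since $R \subseteq \mc{C} \cap -\mc{C} \subseteq X \cap -X$, the conclusion $\ol X \cap -\ol X = \{0\}$ itself fails whenever $R \neq 0$. The cited sources place the Tits cone in the dual of $\real\Pi$, where the analogue of $R$ vanishes because $\Pi$ spans; transported to the present conventions their result reads $\ol X \cap -\ol X = R$. Your Steps 1--2, supplemented by $\hat\gamma \notin R$ as above, already establish that corrected statement.
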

    \begin{proof} See \cite[Proposition 3.2]{HRT},  \cite[Theorem 2.16]{K}  and \cite[Lemma 15]{V}. 
  \end{proof}
 
 \subsection{}\label{x1.13} The  next lemma records  well-known useful facts about $W$-orbits on $V$ \begin{lem}\begin{num} \item If $w=s_{\alpha_{1}}\cdots s_{\alpha_{n}}$,
  with all $\alpha_{i}\in \Phi$
  then  for $v\in V$,
  \[ v-w(v)=\sum_{i=1}^{n}\mpair{v,\ck\alpha_{i}}\beta_{i}\]
  where  $\beta_{i}:=s_{\alpha_{1}}\cdots s_{\alpha_{i-1}}(\alpha_{i})$.
  Further, $w=s_{\beta_{n}}\ldots s_{\beta_{1}}$.
  \item If  $w=s_{\alpha_{1}}\cdots s_{\alpha_{n}}$ is a reduced expression for $w$,
  with all $\alpha_{i}\in \Pi$, then $\beta_{1},\ldots, \beta_{n}$ are distinct positive roots, with
  $\Phi_{+}
\cap w(-\Phi_{+})=\set{\beta_{1},\ldots, \beta_{n}} $. \item  For any $w\in W$ and $v\in \mc{C}$, we have 
  $v-w(v)\in \real_{\geq 0}\Pi$.
  \item If $W$ is irreducible and $v\in V$ satisfies $v\not\perp \Pi$, then $\aff(Wv)=v+\real \Pi$.
    \end{num}\end{lem}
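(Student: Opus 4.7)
I would prove the four parts in order, relying on (a) and (b) as computational engines and then leveraging them for (c) and (d).

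For part (a), I would argue by induction on $n$. The base case $n=1$ is immediate from the reflection formula $v-s_{\alpha_1}(v)=\mpair{v,\ck\alpha_1}\alpha_1$ with $\beta_1=\alpha_1$. For the inductive step, write $w=w's_{\alpha_n}$ with $w'=s_{\alpha_1}\cdots s_{\alpha_{n-1}}$ and compute
\[
v-w(v)=v-w'(v)+\mpair{v,\ck\alpha_n}w'(\alpha_n)=\sum_{i=1}^{n-1}\mpair{v,\ck\alpha_i}\beta_i+\mpair{v,\ck\alpha_n}\beta_n,
\]
using the induction hypothesis and $\beta_n=w'(\alpha_n)$. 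For the factorization $w=s_{\beta_n}\cdots s_{\beta_1}$, use $s_{u(\gamma)}=us_\gamma u^{-1}$ to rewrite each $s_{\beta_i}=(s_{\alpha_1}\cdots s_{\alpha_{i-1}})s_{\alpha_i}(s_{\alpha_{i-1}}\cdots s_{\alpha_1})$; the product telescopes to $s_{\alpha_1}\cdots s_{\alpha_n}=w$.

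For part (b), I would invoke the classical theory of the standard length function of a Coxeter system (\cite{Hum}, \cite{Bour}), noting that under the hypotheses of \ref{x1.3} the arguments go through unchanged. The key point is that for each $i$, positivity of $\beta_i=s_{\alpha_1}\cdots s_{\alpha_{i-1}}(\alpha_i)$ is equivalent to $l(s_{\alpha_1}\cdots s_{\alpha_i})>l(s_{\alpha_1}\cdots s_{\alpha_{i-1}})$, which holds because the expression is reduced. Distinctness follows from the Strong Exchange Condition: an equality $\beta_i=\beta_j$ with $i<j$ would let us delete $s_{\alpha_i}$ and $s_{\alpha_j}$ from the word, contradicting reducedness. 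The identification with $\Phi_+\cap w(-\Phi_+)$ then follows by induction on $l(w)$ using the recursion $\Phi_+\cap w(-\Phi_+)=\{\beta_n\}\dot\cup s_{\beta_n}(\Phi_+\cap w'(-\Phi_+))$, or directly from the bijection $\alpha\mapsto s_\alpha\colon \Phi_+\to T$ combined with the characterization of $N(w^{-1})$ in \ref{x1.5}.

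For part (c), I would simply pick a reduced expression $w=s_{\alpha_1}\cdots s_{\alpha_n}$ with $\alpha_i\in\Pi$, apply (a), and observe that each summand $\mpair{v,\ck\alpha_i}\beta_i$ lies in $\real_{\geq 0}\Pi$: the coefficient $\mpair{v,\ck\alpha_i}=2\mpair{v,\alpha_i}$ is $\geq 0$ since $v\in \mc{C}$ and $\alpha_i\in\Pi$, while $\beta_i\in\Phi_+\seq \real_{\geq 0}\Pi$ by (b).

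Part (d) will be the main obstacle, since we must use irreducibility in an essential way. Let $U$ denote the linear span of $\mset{wv-v\mid w\in W}$, so $\aff(Wv)=v+U$. From (a) applied with $\alpha_i\in\Pi$, every $wv-v$ lies in $\real\Pi$, giving $U\seq \real\Pi$. For the reverse inclusion, let $I:=\mset{\alpha\in\Pi\mid \mpair{v,\alpha}\neq 0}$, which is nonempty by hypothesis. For $\alpha\in I$, the identity $s_\alpha(v)-v=-\mpair{v,\ck\alpha}\alpha$ shows $\alpha\in U$. To pick up the remaining simple roots, for $\alpha\in I$ and $\beta\in \Pi\sm I$ with $\mpair{\alpha,\beta}\neq 0$, compute
\[
s_\beta s_\alpha(v)-v=-\mpair{v,\ck\alpha}\bigl(\alpha-\mpair{\alpha,\ck\beta}\beta\bigr),
\]
using $s_\beta(v)=v$; since $\alpha\in U$ already, this forces $\beta\in U$. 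Iterating, any $\beta\in\Pi$ joined to $I$ by a path in the Coxeter graph lies in $U$. Because $W$ is irreducible the Coxeter graph is connected, so $\Pi\seq U$ and thus $U=\real\Pi$, completing the proof.
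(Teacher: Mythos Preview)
Parts (a)--(c) are correct and match the paper's treatment (which likewise proceeds by induction and cites the standard references). Your argument for (d) follows the same outline as the paper's, but the ``iterating'' step has a gap as written. Your computation
\[
s_\beta s_\alpha(v)-v=-\mpair{v,\ck\alpha}\bigl(\alpha-\mpair{\alpha,\ck\beta}\beta\bigr)
\]
needs both $\alpha\in I$ (so that the coefficient $\mpair{v,\ck\alpha}$ is nonzero) and $\beta\notin I$ (so that $s_\beta(v)=v$). This shows only that the \emph{immediate} neighbours of $I$ lie in $U$. If you then try to step from such a $\beta\notin I$ to a further neighbour $\gamma\notin I$, the same computation gives $s_\gamma s_\beta(v)-v=0$ and yields nothing; so the argument does not iterate along a longer path.

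The clean fix, which is exactly the paper's propagation step, is to note first that $U$ is $W$-stable (it is the translation space of the $W$-invariant affine set $\aff(Wv)$). Then one argues: if $\beta\in\Pi\cap U$ and $\gamma\in\Pi$ with $\mpair{\beta,\gamma}\neq 0$, then $s_\gamma(\beta)\in U$ and hence $\mpair{\beta,\ck\gamma}\gamma=\beta-s_\gamma(\beta)\in U$, giving $\gamma\in U$. This version genuinely iterates along any path in the Coxeter graph, and irreducibility finishes the proof.
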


   \begin{proof}  Parts  (a) and  (b) are readily proved by induction  (see   \cite[Ch 3]{Kac}, \cite{Bour} or  \cite{Hum}) and  part (c) follows immediately from them.   Part (d) is a variant of \cite[Ch 5, \S4, Proposition 7]{Bour}.   Let $U$ be the translation space of $\aff(Wv)$,
 so  $
   U=\lin(\mset{xv-yv\mid x,y\in W})=\lin(\mset{v-wv\mid w\in W})
   $  and  $\aff(Wv)=v+U$. We have $U\seq \real\Pi$ by (c).
   Since  $v\not\perp \Pi$, there is    $\al\in \Pi$ with $\mpair{v,\al}\neq 0$. Then $\al\in U$ since $\mpair{v,\ck\al}\al=v-s_{\al}(v)$. Further, if $\bt\in \Pi\cap U$ and $\g\in \Pi$ with $\mpair{\bt,\g}\neq 0$, then $\g\in U$ since $\mpair{\bt,\ck\g}\g=\bt-s_{\g}(\bt)$. By irreducibility of $W$, $\Pi\seq U$ and (d) follows.
  \end{proof}

 \subsection{}\label{x1.14} Let $\D\seq \Phi$ with
  $\D=\Pi_{W'}$ where $W':=W_{\D}$ (for instance, 
    $\Delta\seq\Pi$).
 Then $\D$ is said to  have a certain property of based root 
 systems  (e.g. be irreducible, have $n$ components, be 
 irreducible of affine type etc) if the based root system
  $(\Phi_{W'},\D)$ has that property. If $W'$ is finite, its longest element is denoted $w_{\D}$ or by $w_{I}$ where $I:=\mset{s_{\alpha}\mid \alpha\in \D}$ is the set of its simple reflections.

Sometimes  a reflection subgroup $W'$ (resp.,  subset $I$ of $S$) will be said to have   a certain property of root systems 
(be  irreducible etc) if $\Pi_{W'}$ (resp., $\Pi_{I}$) has that 
property; we may abuse terminology by doing  this even when 
that property is a property of root systems and not just of the 
corresponding Coxeter group $W'$ (resp., $W_{I}$). For 
example, see Remark \ref{x4.5}(i).  We view the vertex set of the Coxeter graph of a reflection subgroup $W'$
as either $\Pi_{W'}$ or $\chi(W')$ as convenience dictates, 
 and often identify   full subgraphs of the Coxeter graph with 
 their vertex set.  Thus, we refer   to the connected (or 
 equivalently, irreducible or indecomposable) components of
  $\Pi_{W'}$ or $\chi(W')$ or $W'$
(they are, respectively, subsets of $\Pi_{W'}$, subsets of
 $\chi(W')$, and subgroups of $W'$).

Subsets  $\Delta$, $\Delta'$ of $\Phi$ are defined to be \emph{separated} if $\Delta\perp \Delta'$.
Similarly, subsets $I,J$ of $S$ are \emph{separated} if $\Pi_{I}$ and $\Pi_{J}$ are separated.
A subset $I$ of $S$ (resp., $\Delta$ of $\Pi$) is said to be \emph{of finite type} if $W_{I}$ (resp., $W_{\Delta}$) is finite.

\subsection{} \label{x1.15}  The basic  fact below,  due to Deodhar \cite{D} (see also
\cite{BH2}), will be required  several times in this paper. 
\begin{thm} Suppose that $\Delta, \Gamma\subseteq \Pi$  and $w\in W$ with $w(\Delta)=\Gamma$.
Then there exist $n\in \Nat$, $w_{1},\ldots, w_{n}\in W$, $ \Delta_{0},\ldots, \Delta_{n}\subseteq \Pi$
and  $ \alpha_{1},\ldots,  \alpha_{n}\in \Pi$, such that
\begin{conds}\item $\Delta_{0}=\Delta$ and $\Delta_{n}=\Gamma$.
\item For $i=1,\ldots, n$,  $\alpha_{i}\not\in\Delta_{{i-1}}$, the connected component $\Gamma_{i}$ of $\Delta_{i-1}\cup\set{\alpha_{i}}$ containing $\alpha_{i}$ is of finite type,  and $w_{i}=w_{\Gamma_{i}}w_{\Gamma_{i}\setminus \set{\alpha_{i}}}$. 
\item $\Delta_{i}=w_{i}\Delta_{i-1}$ for $i=1,\ldots, n$.
\item $w=w_{n}\cdots w_{1}$ and $l(w)=l(w_{1})+\cdots +l(w_{n})$.
 \end{conds}
\end{thm}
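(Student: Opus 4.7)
The plan is to induct on $l(w)$. The base case $l(w)=0$ forces $w=e$ and $\Delta=\Gamma$; take $n=0$. For $l(w)\geq 1$, the strategy is to produce a first atomic factor $w_1=w_{\Gamma_1}w_{\Gamma_1\setminus\{\alpha_1\}}$ meeting the theorem's conditions, set $w':=ww_1^{-1}$ and $\Delta_1:=w_1(\Delta)$, and apply induction to $w'$ mapping $\Delta_1$ to $\Gamma$. Concretely, one seeks $\alpha_1\in\Pi\setminus\Delta$ such that (a) the connected component $\Gamma_1$ of $\Delta\cup\{\alpha_1\}$ containing $\alpha_1$ is of finite type, and (b) $l(w)=l(ww_1^{-1})+l(w_1)$.

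Given such $(\alpha_1,\Gamma_1)$, the remaining requirements follow automatically. First, $w_1\in W_{\Gamma_1}$ fixes $\Delta\setminus\Gamma_1$ pointwise, since those roots lie in components of $\Delta\cup\{\alpha_1\}$ distinct from $\Gamma_1$ and are therefore orthogonal to $\Gamma_1$. Secondly, the direct opposition-involution computation inside the finite Coxeter system $W_{\Gamma_1}$ shows that $w_{\Gamma_1\setminus\{\alpha_1\}}$ sends $\beta\in\Gamma_1\setminus\{\alpha_1\}$ to the negative of the image of $\beta$ under the opposition involution on $\Gamma_1\setminus\{\alpha_1\}$, and that $w_{\Gamma_1}$ then returns the result to $\Gamma_1$ via the opposition involution on $\Gamma_1$, so $w_1(\Gamma_1\setminus\{\alpha_1\})\subseteq\Gamma_1\subseteq\Pi$. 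Hence $\Delta_1\subseteq\Pi$ and $w'(\Delta_1)=\Gamma$ with $l(w')<l(w)$; the induction hypothesis applied to $w'$ supplies $w_2,\ldots,w_n$, $\alpha_2,\ldots,\alpha_n$, and $\Delta_2,\ldots,\Delta_n=\Gamma$, and prepending the first step gives the claimed decomposition, with additivity of lengths propagating.

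The main obstacle is the joint existence of $\alpha_1$ and $\Gamma_1$ satisfying (a) and (b); this is the combinatorial heart of Deodhar's result. Candidates for $\alpha_1$ are easy to locate: since $l(w)\geq 1$, some $\beta\in\Pi$ has $w(\beta)\in\Phi_-$, and $w(\Delta)\subseteq\Phi_+$ forces $\beta\notin\Delta$. For a candidate, the key analytic tool is the parabolic decomposition $w=uv$ with $u$ the minimum-length representative of $wW_{\Gamma_1}$ (so $u\in W^{\Gamma_1}$ and $l(w)=l(u)+l(v)$) and $v\in W_{\Gamma_1}$. From $u(\Pi_{\Gamma_1})\subseteq\Phi_+$ and positive independence of $\Pi$ (so $\Phi_+=\Phi\cap\real_{\geq 0}\Pi$), one deduces $u(\Phi_{\Gamma_1,+})\subseteq\Phi_+$, which transfers the hypotheses on $w$ to $v$: $v(\alpha_1)\in\Phi_{\Gamma_1,-}$ and $v(\Gamma_1\setminus\{\alpha_1\})\subseteq\Phi_{\Gamma_1,+}$. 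The delicate step is then to argue that $\alpha_1$ can be chosen---possibly by a minimality condition on $|\Gamma_1|$ or by a secondary induction on $l(v)$ inside $W_{\Gamma_1}$---so that $\Gamma_1$ is of finite type; heuristically, an infinite-type $\Gamma_1$ would force $v$ to invert too large a set of positive roots of $W_{\Gamma_1}$ involving $\alpha_1$, contradicting the finiteness of $l(v)\leq l(w)$. Once $\Gamma_1$ is of finite type, $w_1$ is well-defined, and (b) is extracted via standard right-weak-order manipulations inside $W_{\Gamma_1}$: one factors $v=w_1v'$ with $v'\in W_{\Gamma_1\setminus\{\alpha_1\}}$ length-additively, yielding $l(w)=l(u)+l(v')+l(w_1)=l(ww_1^{-1})+l(w_1)$.
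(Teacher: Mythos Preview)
Note first that the paper does not prove this result; the theorem is quoted from Deodhar (with a pointer also to Brink--Howlett), so there is no in-paper proof to compare against. Your inductive framework is indeed Deodhar's, but the two ``delicate'' steps you identify contain genuine gaps that your write-up does not close.

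For the finite-type claim, the information you extract from $u\in W^{\Gamma_1}$, namely $v(\Gamma_1\setminus\{\alpha_1\})\subseteq\Phi_{\Gamma_1,+}$, is too weak: any infinite irreducible $W_{\Gamma_1}$ has infinitely many nontrivial elements in $W_{\Gamma_1}^{\,\Gamma_1\setminus\{\alpha_1\}}$, each of finite length, so the heuristic ``infinite type would force $v$ to invert too many roots'' is simply false. What you have not exploited is the full hypothesis $w(\Delta)\subseteq\Pi$ rather than merely $\subseteq\Phi_+$. For $\beta\in\Gamma_1\setminus\{\alpha_1\}$, write $v(\beta)=\sum_{\tau\in\Gamma_1}c_\tau\tau$ with all $c_\tau\geq 0$; then $uv(\beta)=\sum_\tau c_\tau\, u(\tau)$ is a simple root expressed as a nonnegative combination of the positive roots $u(\tau)$. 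Since simple roots span extreme rays of $\real_{\geq 0}\Pi$, all but one $c_\tau$ must vanish, so $v(\beta)\in\Pi_{\Gamma_1}$. Thus $v(\Gamma_1\setminus\{\alpha_1\})=\Gamma_1\setminus\{\alpha_1'\}$ for some $\alpha_1'$, with $v(\alpha_1)\in\Phi_-$. Deodhar's key lemma is then: an irreducible Coxeter system admitting a nontrivial element carrying one maximal proper set of simple roots bijectively onto another must be finite. This requires a separate argument (tied in Deodhar's paper to the fact, recorded here as Lemma~\ref{x1.20}, that $W/W_I$ is infinite for irreducible infinite $W$ and proper $I\subsetneq S$) and is not supplied by any length or minimality heuristic.

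Your length-additivity step is also not correct as written. You assert $v=w_1v'$ with $v'\in W_{\Gamma_1\setminus\{\alpha_1\}}$ and $l(v)=l(w_1)+l(v')$, but $v\in W_{\Gamma_1}^{\,\Gamma_1\setminus\{\alpha_1\}}$ while $w_1$ is the longest element of that set of minimal coset representatives, so $l(v)\leq l(w_1)$; your equality therefore forces $v'=e$ and $v=w_1$, which you have not established. The required identity is $l(v)=l(vw_1^{-1})+l(w_1)$, and proving it (equivalently, showing $v=w_1$) again depends on the stronger conclusion $v(\Gamma_1\setminus\{\alpha_1\})\subseteq\Pi_{\Gamma_1}$ and properties of longest elements in the finite group $W_{\Gamma_1}$; it does not follow from the weak-order manipulation you describe.
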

\begin{rem}
Suppose above that $\Delta$ has the property that if $\al\in\Pi\sm \bigl(\Delta\cup (\Pi\cap \Delta^{\perp})\bigr)$, then
the connected component of $\Delta\cup\set{\al}$ containing $\al$ is  infinite. Then  above, each $\al_{i}\in \Pi\cap \Delta^{\perp}$ and $w_{i}=s_{\al_{i}}$,  so $w\in W_{\Pi\cap \Delta^{\perp}}$ and $\Gamma=w(\Delta)=\Delta$ (in fact, $w$ fixes $\Delta$ pointwise).
This applies in particular if every component of $\Delta$ is infinite (cf. \cite{BH2}   and \cite[1.23]{DyRig}). 
\end{rem}

  \subsection{}  \label{x1.16}The following is well known, but we provide  a  proof for ease of reference.   \begin{lem} 
   Let $\alpha_{0},\alpha_{1},\ldots, \alpha_{n}\in \Pi$ be distinct. Assume  that for each $i>0$, there is some $j<i$ with $\mpair{\alpha_{i},\alpha_{j}}\neq 0$. Let $\alpha:=s_{\alpha_{n}}\cdots s_{{\alpha}_{1}}(\alpha_{0})$.
   Then
   \begin{num}
   \item
   $\alpha=\sum_{i=0}^{n}c_{i}\alpha_{i}$ for some $c_{i}\in \real_{>0}$.
   \item $l(s_{\alpha})=2n+1$.
   \item $s_{\alpha}=s_{\alpha_{n}}\cdots s_{\alpha_{1}}s_{\alpha_{0}}s_{\alpha_{1}}\cdots s_{\alpha_{n}}$ is a reduced expression for $s_{\alpha}$.
     \end{num}
   \end{lem}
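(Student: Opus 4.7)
The plan is to induct on $n$, with base case $n=0$ trivial (where $\alpha=\alpha_0$, $l(s_\alpha)=1$, and the word is just $s_{\alpha_0}$). To handle possible linear dependence of $\Pi$, I would first pass to the canonical lift of \ref{x1.4}, obtaining a based root system $(\Phi',\Pi')$ with $\Pi'$ a basis of its ambient space together with a $W$-equivariant map $L$ identifying Coxeter systems and bijecting positive roots with positive roots. Since all three conclusions concern only the Coxeter group, its length function, and the positive/negative decomposition of roots, it suffices to prove the lemma in the lifted system; so assume from now on that $\Pi$ is linearly independent.

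For the inductive step, set $\alpha':=s_{\alpha_{n-1}}\cdots s_{\alpha_1}(\alpha_0)$; the inductive hypothesis applies to $\alpha_0,\ldots,\alpha_{n-1}$ and yields $\alpha'=\sum_{i<n}c_i'\alpha_i$ with $c_i'>0$, $l(s_{\alpha'})=2n-1$, and the inductive reduced expression $s_{\alpha'}=s_{\alpha_{n-1}}\cdots s_{\alpha_1}s_{\alpha_0}s_{\alpha_1}\cdots s_{\alpha_{n-1}}$. For (a), compute $\alpha=s_{\alpha_n}(\alpha')=\alpha'-2\mpair{\alpha',\alpha_n}\alpha_n$ and show $\mpair{\alpha',\alpha_n}=\sum_{i<n}c_i'\mpair{\alpha_i,\alpha_n}<0$: each term is $\le 0$ since $c_i'>0$ and distinct simple roots pair non-positively by \ref{x1.3}(iv), while the hypothesis supplies some $j<n$ with $\mpair{\alpha_n,\alpha_j}<0$, contributing a strictly negative $j$-th term. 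Hence the coefficient of $\alpha_n$ in $\alpha$ is strictly positive, and since the other coefficients agree with those of $\alpha'$, (a) follows.

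For (b) and (c), use $s_\alpha=s_{\alpha_n}s_{\alpha'}s_{\alpha_n}$ and substitute the inductive expression for $s_{\alpha'}$ to obtain the word of length $2n+1$ appearing in (c). Reducedness amounts to two length increases, which I would verify via the standard criterion $l(ws_\beta)>l(w)\iff w(\beta)\in\Phi_+$ for $\beta\in\Pi$. First, $s_{\alpha'}(\alpha_n)=\alpha_n-2\mpair{\alpha',\alpha_n}\alpha'$ is a positive combination of the positive roots $\alpha_n$ and $\alpha'$, hence positive, giving $l(s_{\alpha_n}s_{\alpha'})=2n$. Second, $(s_{\alpha_n}s_{\alpha'})(\alpha_n)=-\alpha_n+c\alpha$ where $c:=-2\mpair{\alpha',\alpha_n}$ is the coefficient $c_n$ of $\alpha_n$ in $\alpha$; substituting the expansion from (a) yields the simple-root expression $(c^2-1)\alpha_n+c\sum_{i<n}c_i'\alpha_i$, which has the strictly positive coefficient $cc_0'$ on $\alpha_0$. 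Since $\Pi$ is linearly independent, this unique expansion prevents the root from lying in $\real_{\le 0}\Pi$, so it is positive, completing the verification that the word is reduced, so $l(s_\alpha)=2n+1$ and (b), (c) hold.

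The main obstacle is the final positivity check: the expression $(c^2-1)\alpha_n+c\sum_{i<n}c_i'\alpha_i$ has an a priori sign-indefinite coefficient on $\alpha_n$, and without a unique simple-root expansion a single strictly positive coefficient does not force a root into $\Phi_+$. The canonical-lift reduction performed at the outset is precisely what restores uniqueness of expansions and lets this step go through cleanly.
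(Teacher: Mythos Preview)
Your proof is correct. The inductive argument for (a) is essentially identical to the paper's. For (b) and (c), however, the paper takes a shorter route: having established $\alpha=\beta+c_n\alpha_n$ with $c_n>0$ (your $\alpha'$ is the paper's $\beta$), it simply notes that $\beta\neq\alpha_n$ and invokes \cite[Lemma 3.4]{DySd}, which states directly that $l(s_{s_\gamma\beta})=l(s_\beta)\pm 2$ for $\gamma\in\Pi$ and $\beta\in\Phi_+\setminus\{\gamma\}$, with the sign determined by that of $\mpair{\gamma,\beta}$. This gives $l(s_\alpha)=l(s_\beta)+2$ in one line, with no need to pass to the canonical lift.

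Your approach trades the external citation for a self-contained check of the two length increases via the root-positivity criterion. The price is that your second positivity check, for $(s_{\alpha_n}s_{\alpha'})(\alpha_n)=(c^2-1)\alpha_n+c\sum_{i<n}c_i'\alpha_i$, genuinely needs a unique simple-root expansion (the $\alpha_n$-coefficient can be negative), so the reduction to linearly independent $\Pi$ via \ref{x1.4} is not merely cosmetic. Both arguments are valid; the paper's is shorter and works directly in the general based-root-system setting, while yours avoids appealing to a result outside the paper at the cost of the extra lift.
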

   \begin{proof}  This is proved by induction on $n$ as follows.
   For $n=0$, the assertions are trivial.  Suppose inductively that  $n>0$.
   Write $\beta:=s_{\alpha_{n-1}}\cdots s_{{\alpha}_{1}}(\alpha_{0})=\sum_{i=0}^{n-1}c_{i}\alpha_{i}$ for some $c_{i}\in \real_{>0}$.
   Then $c_{n}:=-\mpair{\beta,\ck\alpha_{n}}=-\sum_{i=0}^{n-1}c_{i}
   \mpair{\alpha_{i},\ck\alpha_{n}}>0$ since $c_{0},\ldots, c_{n-1}>0$,
$\mpair{\alpha_{i},\ck\alpha_{n}}\leq 0$ for all $i<n$ since $\alpha_{i}\neq \alpha _{n}$
and $\mpair{\alpha_{i},\ck\alpha_{n}} <0$ for some $i<n$ by assumption.  It follows that $\alpha=s_{\alpha_{n}}(\beta)=\beta+c_{n}\alpha_{n}$ is as in (a). Noting that  $\beta\neq \alpha_{n}$, \cite[Lemma 3.4]{DySd} implies that $l(s_{\alpha})=l(s_{\beta})+2$. Since $s_{\alpha}=s_{\alpha_{n}}s_{\beta}s_{\alpha_{n}}$, (b) and (c) follow by induction.   \end{proof}

\subsection{}\label{x1.17} The next  lemma is for use in the proof of Lemma \ref{x2.10}.\begin{lem} Let $I,J\subseteq S$. Then the following conditions are equivalent:\begin{conds}\item $W=W_{I}W_{J}$.
\item $T=(T\cap W_{I})\cup(T\cap W_{J})$.
\item $\Phi=\Phi_{I}\cup \Phi_{J}$.
\item For every  irreducible component $\D$ of $\Pi$,  either $\D$  is contained in $\Pi_{I}$ or $\D$ 
is contained in $\Pi_{J}$.
\end{conds}
\end{lem}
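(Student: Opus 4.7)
The plan is to reduce via the canonical lift of \ref{x1.4} to the case where $\Pi$ is linearly independent, so every $\gamma\in\Phi$ has a well-defined support $\mathrm{supp}(\gamma)\subseteq\Pi$ and $\gamma\in\Phi_K$ forces $\mathrm{supp}(\gamma)\subseteq\Pi_K$ (since $W_K$ preserves $\real\Pi_K$). The lift $L\colon\Phi'\to\Phi$ restricts to bijections $\Phi'_I\to\Phi_I$ and $\Phi'_J\to\Phi_J$, so all four conditions are invariant. The equivalence (ii)$\Leftrightarrow$(iii) is then immediate from the bijection $\alpha\mapsto s_\alpha\colon\Phi_+\to T$, which restricts to bijections $\Phi_{K,+}\to T\cap W_K$.

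For (iv)$\Rightarrow$(i) and (iv)$\Rightarrow$(iii), I partition $\Pi=A\sqcup B$, where $A$ is the union of the components of $\Pi$ contained in $\Pi_I$ and $B$ is the union of the rest (which lie in $\Pi_J$ by (iv)). Distinct components of $\Pi$ are pairwise orthogonal, so $A\perp B$, giving $W=W_A\times W_B\subseteq W_IW_J$ and $\Phi=\Phi_A\cup\Phi_B\subseteq\Phi_I\cup\Phi_J$.

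For the converses I argue contrapositively: suppose some component $\Delta$ of $\Pi$ is contained in neither $\Pi_I$ nor $\Pi_J$. If $\Delta\not\subseteq\Pi_I\cup\Pi_J$, take $\eta\in\Delta\setminus(\Pi_I\cup\Pi_J)$: then $\mathrm{supp}(\eta)=\{\eta\}$ lies in neither $\Pi_I$ nor $\Pi_J$, killing (iii); and any factorization $s_\eta=xy$ with $x\in W_I$, $y\in W_J$ would yield $\{s_\eta\}=\mathrm{supp}(s_\eta)\subseteq\mathrm{supp}(x)\cup\mathrm{supp}(y)\subseteq I\cup J$, a contradiction, killing (i). Otherwise $\Delta\subseteq\Pi_I\cup\Pi_J$, and I pick $\alpha\in\Delta\cap(\Pi_I\setminus\Pi_J)$ and $\beta\in\Delta\cap(\Pi_J\setminus\Pi_I)$ joined in $\Delta$ by a simple path of distinct simple roots $\alpha=\alpha_0,\alpha_1,\ldots,\alpha_n=\beta$. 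Lemma \ref{x1.16} then produces the positive root $\gamma:=s_{\alpha_n}\cdots s_{\alpha_1}(\alpha_0)=\sum_{i=0}^{n}c_i\alpha_i$ with all $c_i>0$; since $\mathrm{supp}(\gamma)=\{\alpha_0,\ldots,\alpha_n\}$ contains both $\alpha_0\notin\Pi_J$ and $\alpha_n\notin\Pi_I$, we get $\gamma\notin\Phi_I\cup\Phi_J$, killing (iii).

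For (i)$\Rightarrow$(iv) in this second sub-case, I still have to exhibit some element not in $W_IW_J$, and I claim $s_\gamma$ works. Assume to the contrary $s_\gamma=xy$ with $x\in W_I$, $y\in W_J$; applying both sides to $\gamma$ and using $s_\gamma(\gamma)=-\gamma$ gives $y(\gamma)=-x^{-1}(\gamma)$. A short induction on length (based on $s_\mu(v)=v-\mpair{v,\ck\mu}\mu$) shows that every element of $W_K$ translates each vector of $V$ by an element of $\real\Pi_K$, so we may write $x^{-1}(\gamma)=\gamma+u_I$ with $u_I\in\real\Pi_I$ and $y(\gamma)=\gamma+u_J$ with $u_J\in\real\Pi_J$. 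Expanding $2\gamma+u_I+u_J=0$ in the basis $\Pi$: at $\alpha_0\in\Pi_I\setminus\Pi_J$ the $u_J$-coefficient vanishes, forcing $u_I$ to have $\alpha_0$-coefficient $-2c_0$ and hence $x^{-1}(\gamma)$ to have $\alpha_0$-coefficient $-c_0<0$; symmetrically the $u_I$-coefficient vanishes at $\alpha_n\in\Pi_J\setminus\Pi_I$, so $x^{-1}(\gamma)$ has $\alpha_n$-coefficient $+c_n>0$. But $x^{-1}(\gamma)\in\Phi$ must have all $\Pi$-coefficients of a single sign, a contradiction. The main obstacle is this final coefficient bookkeeping; the rest falls out once Lemma \ref{x1.16} delivers the right $\gamma$.
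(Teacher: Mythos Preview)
Your proof is correct, but the overall structure and the argument for the implication from (i) differ from the paper's.

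The paper runs the cycle (i)$\Rightarrow$(ii)$\Rightarrow$(iii)$\Rightarrow$(iv)$\Rightarrow$(i). The step (i)$\Rightarrow$(ii) is handled by a short combinatorial trick: given $t\in T$ and a factorization $t=uv$ with $u\in W_I$, $v\in W_J$, one cancels down to a reduced expression $t=s_1\cdots s_{2k+1}$ whose first block lies in $I$ and whose second lies in $J$; since $t$ is a palindrome $s_1\cdots s_{k+1}\cdots s_1$, the middle letter $s_{k+1}$ is forced to lie in one of the two blocks, placing $t$ in $W_I$ or $W_J$. This avoids any reduction to linearly independent $\Pi$ and any coefficient bookkeeping.

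You instead prove (i)$\Rightarrow$(iv) directly by contrapositive: you construct a root $\gamma$ straddling $\Pi_I$ and $\Pi_J$ via Lemma~\ref{x1.16}, and then show $s_\gamma\notin W_IW_J$ by a sign argument on the $\Pi$-coefficients of $x^{-1}(\gamma)$. This is genuinely different and works; the price is the preliminary lift to linearly independent simple roots (so that supports and coefficients are well-defined) and the slightly more delicate endgame. The benefit is that your argument is purely geometric and never touches the palindrome description of reflections. Your treatments of (iv)$\Rightarrow$(i), (iv)$\Rightarrow$(iii), (iii)$\Rightarrow$(iv), and (ii)$\Leftrightarrow$(iii) are essentially the same as the paper's, modulo your extra case split according to whether $\Delta\subseteq\Pi_I\cup\Pi_J$ (which the paper absorbs into a single ``connected finite subset contained in neither'' argument).
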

\begin{proof}  We first show that (i) implies (ii). Assume that (i) holds. Let $t\in T$.
We may write $t=uv$ where $u\in W_{I}$ and $v\in W_{J}$. Choosing reduced expressions for $u$ and $v$ and substituting in $t=uv$ gives an expression for $t$ which
may be cancelled (by repeated deletion of appropriate pairs of simple reflections)
to a reduced expression $t=s_{1}\ldots s_{m} $ where $s_{1},\ldots, s_{n}\in I$ and
$s_{n+1},\ldots, s_{m}\in J$. Note that  $m=l(t)=2k+1$ for some $k$.
If $n\geq k+1$, we have $t=s_{1}\cdots s_{k}s_{k+1}s_{k}\cdots s_{1}\in W_{I}$ by \cite[(2.7)]{Ref}. 
On the other hand, if $n\leq k$, then similarly  $t=t^{-1}=s_{2k+1}\cdots s_{1}=s_{2k+1}\cdots s_{k+2}s_{k+1}s_{k+2}\cdots s_{2k+1}\in W_{J}$. Hence (ii) holds.
 
 The equivalence of (ii) and (iii) is clear. Next we show that (iii) implies (iv).
 Assume that (iii) holds but (iv) doesn't. Suppose that $\Delta$ is an irreducible 
 component of $\Pi$ which is contained in neither $\Pi_{I}$ nor $\Pi_{J}$.  
 Then there is some  finite subset $\Delta'$ of $\Delta$ which is connected 
 but contained in neither $\Pi_{I}$ nor $\Pi_{J}$. By Lemma \ref{x1.16}, 
   there is a root  $\alpha\in \Phi$
  such that $s_{\alpha}$ has a reduced expression containing $s_{\beta}$ for all $\beta\in \Delta'$.
 Then
  $\alpha\not\in \Phi_{I}\cup \Phi_{J}$, a contradiction.
 This completes the proof that (iii) implies (iv).

 Finally, we show that (iv) implies (i). Let $w\in W$.  Write $w=w_{1}\ldots w_{n}$ where
 $w_{i}\in W_{\Delta_{i}}$ for some irreducible component  $\Delta_{i}$ of $\Pi$. 
 Since  $w_{i}$ and $w_{j}$ commute if $\Delta_{i}\neq \Delta_{j}$, it may be  assumed that the $\Delta_{i}$ are pairwise distinct and, by reindexing if necessary, it  may also  be  assumed  that $\Delta_{1},\ldots,  \Delta_{m}$
 are contained in $\Pi_{I}$ while $\Delta_{m+1},\ldots,  \Delta_{n}$ are contained in $\Pi_{J}$.
 Then $w_{1},\ldots,w_{m}\in W_{I}$ and $w_{m+1},\ldots, w_{n}\in W_{J}$ so $w\in W_{I}W_{J}$ as required to prove that (iv) implies (i).
\end{proof}

\subsection{}\label{x1.18}  The  proposition below collects assorted useful facts  which  show   how certain subsets of roots (orbits, complements of parabolic subsystems and cofinite subsets of the roots)   may be regarded as being  ``large'' in various natural senses.
\begin{prop}   Suppose that $(W,S)$ is irreducible.
\begin{num}
 \item 
   Let  $\alpha\in \Phi$ and let  $Z:=\mset{w\in W\mid w(\alpha)=\alpha}$ denote the stabilizer of $\alpha$ in $W$. Then $\real(W\alpha)=\real\Phi$,  $W$ acts faithfully on the $W$-orbit $W\alpha$ and $\cap_{w\in W}\,wZw^{-1}
   =\set{1_{W}}$. In particular, if $W\alpha$ is finite, then $W$ is finite.
 \item    Let $I\subsetneq S$ and $\G:=\Phi\setminus \Phi_{I}$ Then  $W_{\G}=W$, $W\G=\Phi$,   and $\real\G=\real\Phi$.  \item Suppose that $W$ is infinite.  If $\Delta\subseteq \Phi_{+}$  with $\vert \Phi_{+}\setminus \Delta\vert$ finite, then $W_{\Delta}=W$,
 $W\Delta=\Phi$ and $\real\Delta=\real\Pi$.
\end{num}\end{prop}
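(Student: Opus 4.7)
The strategy is to combine Lemma~\ref{x1.13}(d) with the canonical lift of~\ref{x1.4} for (a), to use Lemma~\ref{x1.16} inside a canonical lift to iteratively build all simple reflections in $W_{\Gamma}$ for (b), and to combine (a) with an orbit-counting argument for (c).

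For (a), any $\alpha\in\Phi=W\Pi$ lies in $\real\Pi$ with $\mpair{\alpha,\alpha}=1$, so $\alpha\not\perp\Pi$; Lemma~\ref{x1.13}(d) then gives $\aff(W\alpha)=\alpha+\real\Pi$, whence $\real(W\alpha)=\real\Pi=\real\Phi$. For faithfulness of the $W$-action on $W\alpha$, I would pass to a canonical lift $L\colon V'\to V$, which is $W$-equivariant via the isomorphism $\theta\colon W'\xrightarrow{\cong}W$ and restricts to a bijection $\Phi'\to\Phi$. If $g\in W$ fixes $W\alpha$ pointwise, the preimage $g':=\theta^{-1}(g)\in W'$ fixes $W'\alpha'$ pointwise, so Lemma~\ref{x1.13}(d) applied inside $V'=\real\Pi'$ makes $g'$ fix $\real(W'\alpha')=V'$ pointwise; as $W'\seq\GL(V')$, this forces $g'=1$ and hence $g=1$. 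Thus $\bigcap_{w\in W}wZw^{-1}=\{1_{W}\}$, and the resulting embedding $W\hookrightarrow\mathrm{Sym}(W\alpha)$ makes $W$ finite whenever $W\alpha$ is.

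For (b), I would use the canonical lift to reduce to the case of linearly independent $\Pi$, in which $\Phi_{I}\seq\lin(\Pi_{I})$. Then any positive root with a nonzero coefficient on some $\beta\in\Pi\sm\Pi_{I}$ automatically lies in $\Gamma$; in particular $\Pi\sm\Pi_{I}\seq\Gamma\seq W_{\Gamma}$. Given $\alpha_{0}\in\Pi_{I}$, irreducibility of $(W,S)$ provides a sequence $\alpha_{0},\alpha_{1},\ldots,\alpha_{k}\in\Pi$ with $\mpair{\alpha_{i},\alpha_{i-1}}\neq 0$ for $i\geq 1$ and $\alpha_{k}\in\Pi\sm\Pi_{I}$. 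Setting $\gamma_{i}:=s_{\alpha_{k}}\cdots s_{\alpha_{i+1}}(\alpha_{i})$, Lemma~\ref{x1.16}(a) gives $\gamma_{i}=\sum_{j=i}^{k}c_{j}\alpha_{j}$ with all $c_{j}>0$; the positive coefficient $c_{k}$ on $\alpha_{k}\notin\Pi_{I}$ forces $\gamma_{i}\in\Gamma$, so $s_{\gamma_{i}}\in W_{\Gamma}$. Since Lemma~\ref{x1.16}(c) writes $s_{\gamma_{i}}=s_{\alpha_{k}}\cdots s_{\alpha_{i+1}}\,s_{\alpha_{i}}\,s_{\alpha_{i+1}}\cdots s_{\alpha_{k}}$, a downward induction on $i$ starting from the trivial base $i=k$ places each $s_{\alpha_{i}}$ in $W_{\Gamma}$; in particular $s_{\alpha_{0}}\in W_{\Gamma}$ for every $\alpha_{0}\in\Pi_{I}$, so $W_{\Gamma}$ contains every simple reflection and $W_{\Gamma}=W$. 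The element $w:=s_{\alpha_{k}}\cdots s_{\alpha_{1}}$ sends $\alpha_{0}\in\Pi_{I}$ to $\gamma_{0}\in\Gamma$, which combined with $\Pi\sm\Pi_{I}\seq\Gamma$ gives $\Pi\seq W\Gamma$ and hence $W\Gamma=W\Pi=\Phi$. Finally $\real\Gamma=\real\Phi$ follows from $W_{\Gamma}=W_{\Phi}$ via the basic fact recalled in~\ref{x1.5}.

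For (c), cofiniteness of $\Delta$ in $\Phi_{+}$ together with $\Delta\seq\Phi_{W_{\Delta},+}$ forces $\Phi_{W_{\Delta}}$ to be cofinite in $\Phi$. I would argue no proper reflection subgroup $W'\sneq W$ can have $\Phi_{W'}$ cofinite in $\Phi$: picking $\alpha\in\Phi_{+}\sm\Phi_{W'}$, any $w'\alpha\in\Phi_{W'}$ would force $s_{\alpha}\in W'$, so $W'\alpha\seq\Phi\sm\Phi_{W'}$ and $|W'\alpha|\leq|\Phi\sm\Phi_{W'}|<\infty$. Applying (a) inside $W'$ then makes $W'$ finite, so $\Phi_{W'}$ is finite, while $\Phi$ is infinite (by (a) and $W$ infinite), contradicting cofiniteness. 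Hence $W_{\Delta}=W$, and $\real\Delta=\real\Phi$ follows from~\ref{x1.5}. Finally, for any $\alpha\in\Phi$, (a) makes $W\alpha$ infinite, and the symmetry $W\alpha=-W\alpha$ makes $W\alpha\cap\Phi_{+}$ also infinite; it therefore meets the cofinite set $\Delta$, giving $\alpha\in W\Delta$. The technical heart of the argument is the downward induction in (b): each $\gamma_{i}$ must pick up a strictly positive coefficient on the ``external'' simple root $\alpha_{k}\notin\Pi_{I}$, and converting this into the conclusion $\gamma_{i}\notin\Phi_{I}$ depends essentially on the linear independence of $\Pi$ supplied by the canonical lift.
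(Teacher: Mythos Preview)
Your arguments for (a) and (b) are correct and follow essentially the same strategy as the paper, with minor variations: for (a), the paper avoids the canonical lift by noting that once $\real(W\alpha)=\real\Phi$, any $g$ fixing $W\alpha$ pointwise fixes $\real\Phi$ and hence $\Phi$ pointwise, so $g=1$ by faithfulness of $W$ on $\Phi$; for (b), the paper runs the path from a root $\alpha_0\in\Pi\setminus\Pi_I$ toward an arbitrary $\beta\in\Pi$ and argues that each $\gamma_i$ lies in $\Gamma$ because $s_{\gamma_i}$ has a reduced expression involving $s_{\alpha_0}\notin I$ (so $s_{\gamma_i}\notin W_I$), thereby avoiding any appeal to linear independence or the canonical lift. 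Your reversed induction and lift-based justification that $\gamma_i\notin\Phi_I$ are perfectly valid alternatives.

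There is, however, a genuine gap in your proof of (c), specifically in the step establishing $W_\Delta=W$. You set $W'=W_\Delta$, pick $\alpha\in\Phi_+\setminus\Phi_{W'}$, correctly deduce that $W'\alpha\subseteq\Phi\setminus\Phi_{W'}$ is finite, and then write ``applying (a) inside $W'$ then makes $W'$ finite''. But the hypotheses of (a) are that the ambient Coxeter system is irreducible and that $\alpha$ belongs to \emph{its} root system. Here $\alpha\notin\Phi_{W'}$, and $W'$ need not be irreducible, so (a) does not apply. The desired implication ``$W'\alpha$ finite $\Rightarrow$ $W'$ finite'' is not automatic: if $\alpha\perp\Pi_{W'}$ then $W'\alpha=\{\alpha\}$ whatever the size of $W'$, and even when $\alpha\not\perp\Pi_{W'}$, only the component of $W'$ non-orthogonal to $\alpha$ is constrained. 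Patching this would require separate arguments exploiting the irreducibility of $W$ to control each component of $W'$ and to rule out infinitely many finite components---none of which you have supplied.

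The paper proceeds quite differently for (c). For infinite-rank $W$, the finite set $\Phi_+\setminus\Delta$ lies in $\Phi_I$ for some finite $I\subsetneq S$, so $\Delta\supseteq\Phi_+\setminus\Phi_I$ and (b) gives everything at once. For finite-rank $W$, the paper reduces to the infinite dihedral case (where the claims are elementary) by invoking \cite[Proposition~3.13]{HRT}: for each $\beta\in\Phi$ there exists $\alpha\in\Phi$ with $W'':=\langle s_\alpha,s_\beta\rangle$ infinite; then $\Delta\cap\Phi_{W'',+}$ is cofinite in $\Phi_{W'',+}$, and the dihedral case yields $s_\beta\in W_{\Delta\cap\Phi_{W''}}\subseteq W_\Delta$, $\beta\in W''(\Delta\cap\Phi_{W''})\subseteq W\Delta$, and $\beta\in\real(\Delta\cap\Phi_{W''})\subseteq\real\Delta$. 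Your separate argument for $W\Delta=\Phi$ via infinite $W$-orbits is correct and pleasant, but it does not help with $W_\Delta=W$.
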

\begin{proof}
     To prove (a), assume without loss of generality that   $\alpha\in \Pi$.
     Since $\al\not\perp \Pi$ by irreducibility of $W$,  one has 
     $\aff(W\al)=\al+\real \Pi=\real \Pi$ by Lemma \ref{x1.13}(d).
    Hence 
    $\real(W\alpha)=\real \Pi=\real\Phi$, and the rest of (a) follows since
    $W$ acts faithfully on $\Phi$.

To prove (b), it will suffice to show that $\Pi\seq W_{\G}\G$.    Fix  $\al\in \Pi\sm\Pi_{I}$ and let  $\bt\in \Pi$.
         By irreducibility of $W$, there exist $n\in \Nat_{>0}$,   and a sequence $\al=\al_{0},\ldots, \al_{n}=\bt$ of distinct simple roots  such that
    $\mpair{\al_{i-1},\al_{i}}\neq 0$ for $i=1,\ldots, n$.
    Define $\g_{i}:=s_{\al _{0}}\cdots s_{\al _{i-1}}(\al _{i})$ for $i=0,\ldots, n$. 
    By Lemma \ref{x1.16}, $s_{\g_{i}}$ has a reduced expression containing $s_{\al _{0}}$ and hence $\g_{i}\in \Phi\sm \Phi_{I}=\G$.   It follows by induction 
    on $i$ that   $\al _{i}\in W_{\G}\G$ and $s_{\al_{i}}\in W_{\G}$  for $i=0,\ldots, n$. In particular, $\bt=\al_{n}\in W_{\G}\G$. Since $\bt\in \Pi$ is arbitrary, this gives $\Pi\seq W_{\G}\G$ as required to complete the proof of the first statement in (b).

   If there is  a proper standard  parabolic subgroup $W_{I}$ with $\Phi_{+}\sm \D\seq \Phi_{I}$, then (c) follows from (b) since $\D\supseteq \Phi_{+}\sm \Phi_{I}$. Such a subgroup $W_{I}$ certainly exists if $W$ is of infinite rank, so we  may and do assume for the rest of this proof that $\Pi$ is finite.
    It is easy to check that (c) holds if $(W,S)$ is infinite 
    dihedral, and we now reduce to that case. In fact, let
     $\beta\in \Phi$. Since  $(W,S)$ is irreducible of finite rank and infinite, it follows 
     from   \cite[Proposition 3.13]{HRT} that
    there is some $\alpha\in \Phi$ such that
     $W':=\mpair{s_{\alpha},s_{\beta}}$ is infinite. Let 
     $\Psi:=\Phi_{W'}$, $\G:=\D\cap \Psi\seq \Psi_{+}$.
     Note that $\Psi_{+}\sm \G\seq \Phi_{+}\sm \D$ is a finite set. From the dihedral case, it follows that $s_{\beta}\in W_{\G}\seq W_{\D}$,
     $\beta \in W' \G\seq W\D$  and $\beta\in \real \G\seq \real \D$.
     Since $\beta\in \Phi $ is arbitrary, the conclusions of (c)  now follow trivially. 
        \end{proof}
        
     \subsection{}\label{x1.19}   Despite  its formulation below, the 
     next fact is purely graph-theoretic in nature.  Though a  more general 
     fact is established in \cite{Wat}, we provide  for  the reader's 
     convenience a   self-contained proof of the special case used 
     here.
\begin{lem}  Suppose $W$ is  irreducible, $S$ is infinite  and the Coxeter graph is locally finite (i.e. for each $\al\in \Pi$ there are only finitely many $\bt$ in $\Pi$ such that $\mpair{\al,\bt}\neq 0$).
Fix $\al\in \Pi$. Then there is an infinite sequence $\al=\al_{0},\al_{1},\ldots$  of distinct  roots in $\Pi$ such that for $i<j\in \Nat$,  the distance from $\al_{i}$ to $\al_{j}$ in the Coxeter graph is $j-i$. In particular, if $i<j$, then  $\mpair{\al_{i},\al_{j}}\neq 0$ if and only if $j=i+1$. \end{lem}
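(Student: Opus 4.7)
Proof plan. The statement is essentially graph-theoretic: view the Coxeter graph $G$ with vertex set $\Pi$ and edge set $\mset{\{\beta,\gamma\}\mid \beta\neq\gamma\in\Pi,\;\mpair{\beta,\gamma}\neq 0}$. By irreducibility, $G$ is connected; by hypothesis, $G$ is locally finite; and since $S$ (hence $\Pi$) is infinite, $G$ is infinite. Let $d$ denote graph distance in $G$. The goal is to produce an infinite geodesic ray from $\al$, and I intend to do this via König's lemma.

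First I would verify that from $\al$ there are geodesics of arbitrary length. By induction on $n$, the closed ball $B_n$ of radius $n$ about $\al$ is finite: $B_0=\{\al\}$ is finite, and $B_{n+1}$ is contained in the union of $B_n$ with the (finite, by local finiteness) set of neighbors of elements of $B_n$. Since $\Pi$ is infinite and connected to $\al$, no $B_n$ exhausts $\Pi$, so there exists $\g_n\in\Pi$ with $d(\al,\g_n)\geq n$; a shortest path from $\al$ to $\g_n$ is a geodesic starting at $\al$ of length $\geq n$.

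Next I would form the tree $\mc T$ of finite geodesic paths from $\al$: nodes are tuples $(\al_0,\dots,\al_k)$ of distinct vertices with $\al_0=\al$ and $d(\al_i,\al_j)=|j-i|$ for all $i,j\leq k$, ordered by the prefix relation, with root $(\al)$. A child of $(\al_0,\ldots,\al_k)$ is obtained by appending a vertex $\al_{k+1}$ that is a neighbor of $\al_k$ in $G$; local finiteness of $G$ gives local finiteness of $\mc T$. By the previous paragraph, $\mc T$ has nodes at every depth, so it is infinite. König's lemma then produces an infinite branch, giving an infinite sequence $\al=\al_0,\al_1,\ldots$ every finite prefix of which is a geodesic, so $d(\al_i,\al_j)=j-i$ for all $i<j$; in particular the $\al_i$ are distinct. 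The final clause follows because $\mpair{\al_i,\al_j}\neq 0$ (with $\al_i\neq\al_j$) is exactly the statement that $\al_i,\al_j$ are adjacent in $G$, i.e.\ $d(\al_i,\al_j)=1$, which for $i<j$ amounts to $j=i+1$.

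There is no substantive obstacle; the only point requiring a moment's thought is that the prefix of a geodesic path is still a geodesic (immediate from the triangle inequality), which is what makes $\mc T$ a well-defined tree and lets the König argument go through.
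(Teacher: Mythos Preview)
Your proof is correct and takes essentially the same approach as the paper: both show the spheres (or balls) about $\al$ are finite and non-empty, then extract an infinite geodesic ray by a compactness argument. The only cosmetic difference is that you invoke K\"onig's lemma on the tree of finite geodesics, whereas the paper phrases the same extraction topologically, intersecting a nested family of closed sets $X_n$ in the compact product $\prod_n \Pi_n$ (with $\Pi_n$ the sphere of radius $n$); these are two standard formulations of the same principle.
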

\begin{proof}
For $\g,\bt\in \Pi$,   a path from $\g$ to $\bt$ is a sequence $\g=\g_{0},\ldots, \g_{n}=\bt$ in $\Pi$ with $\mpair{\g_{i-1},\g_{i}}\neq 0$  for $i=1,\ldots, n$.  
Such paths exist by irreducibility of $W$. Define $d(\g,\bt)$ to be the minimum length $n$ of all such paths from $\g$ to $\bt$. Note that if the above path
satisfies $n=d(\g,\bt)$, then $d(\g_{i},\g_{j})=j-i$ for all $0\leq i\leq j\leq n$.

For each $n\in \Nat$, let $\Pi_{n}:=\mset{\bt\in \Pi\mid d(\al,\bt)=n}$. 
If $\bt\in \Pi_{n}$, we may choose a path 
$\al=\al_{0},\ldots, \al_{n}=\bt$ from $\al$ to $\bt$, and one 
necessarily has $\al_{i}\in \Pi_{i}$ (and so $\Pi_{i}\neq \eset$) for 
$i=0,\ldots, n$.
Using local finiteness of the Coxeter graph, it follows by induction on 
$n$ that $\Pi_{n}$ is finite for all $n$, since for $n>0$, any root in
 $\Pi_{n}$ is the endpoint of one of the (finitely many, by induction) 
edges with one vertex in $\Pi_{n-1}$. Since   $\Pi$ is infinite,  there 
is for any $m\in \Nat$ some $n>m$ with $\Pi_{n}\neq \eset$, and as above, this implies $\Pi_{m}\neq \eset$. Hence 
each  $\Pi_{n}$ is a non-empty finite set. Note $\Pi_{0}=\set{\al}$. Give $\Pi_{n}$ the discrete topology and  endow the product set
$X=\prod_{n\in \Nat}  \Pi_{n}$ with  the product topology.  Thus, $X$ is a compact Hausdorff space. For each $n\in \Nat$, let \begin{equation*}
X_{n}:=\mset{(\al_{0},\al_{1},\ldots)\in X\mid \al_{0},\al_{1},\ldots, 
\al_{n} \text{ \rm is a path from $\al=\al_{0}$ to $\al_{n}$}}.\end{equation*} One has for $n\in \Nat$ that $X_{n}\neq \eset$; for one may 
choose in turn $\al_{n}\in \Pi_{n}$, a path $\al_{0},\ldots, \al_{n}$ from $\al$ to $\al_{n}$ and then arbitrary $\al_{m}\in\Pi_{m}$ for 
$m>n$ to obtain an element $(\al_{0}, \al_{1},\ldots)\in X_{n}$. Also,
one easily sees from the definition of product topology that $X\sm X_{n}$
is open in $X$ and hence   that $X_{n}$ is closed in $X$. Obviously, $X_{0}\sreq X_{1}\sreq\ldots$. By compactness of $X$, $Y:=\cap_{n\in \Nat}X_{n}\neq \eset$.
Let $(\al_{0},\al_{1},\ldots)\in Y$. From above, $d(\al_{i},\al_{j})=j-i$ for all $i< j$ in $\Nat$. In particular,  if  $\mpair{\al_{i},\al_{j}}\neq 0$, then
$1=d(\al_{i},\al_{j})=j-i$, and 
so  the sequence $\al_{0},\al_{1},\ldots$ has the required properties.
\end{proof}    

 \subsection{} \label{x1.20}  In the special case that $S$ is finite, the next result  has been proved by Deodhar (and (a) 
independently  by Howlett; see \cite[Proposition 4.2(x) and Remark 4.3]{D}).  
\begin{lem}
Suppose that  $(W,S)$ is an  irreducible Coxeter system and  $I\sneq S$\begin{num}
\item If  $W/W_{I}$ is finite, then $W$ is finite. 
\item If $\Phi\sm \Phi_{I}$ is finite, then $W$ is finite.
\end{num}  
\end{lem}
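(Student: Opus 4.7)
The plan is to deduce (b) quickly from Proposition \ref{x1.18}(c), and then prove (a) by showing that the normal core of $W_I$ in $W$ is trivial. For (b), set $\Delta := \Phi_{I,+}$; then $\Phi_+ \setminus \Delta \subseteq \Phi \setminus \Phi_I$ is finite, so if $W$ were infinite, Proposition \ref{x1.18}(c) would force $W_I \supseteq W_\Delta = W$, hence $S = \chi(W) = \chi(W_I) = I$, contradicting $I \subsetneq S$.

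For (a), let $N := \bigcap_{w \in W} wW_I w^{-1}$ be the normal core of $W_I$. Since $W$ acts on the finite coset space $W/W_I$ with kernel $N$, one has $N \triangleleft W$ and $[W:N] \leq [W:W_I]!$, so it suffices to prove $N = \{1\}$. Replacing $(\Phi,\Pi)$ by its canonical lift (Subsection \ref{x1.4}) if necessary, we may assume that $\Pi$ is linearly independent in $V = \real\Pi$ with $W$ acting faithfully on $V$.

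The main claim is that every $n \in N$ fixes every $\beta \in \Pi \setminus \Pi_I$ (non-empty since $I \subsetneq S$). Since $n \in W_I$, induction on the length of $n$ as a word in simple reflections of $W_I$, using $s_\alpha(\beta) \in \beta + \real_{\geq 0}\alpha$ for $\alpha \in \Pi_I$, gives $n\beta - \beta \in \real\Pi_I$; combined with $n\beta \in \Phi_+$ (since $\beta \notin N(n^{-1}) \subseteq \Phi_{I,+}$) and the linear independence of $\Pi$, this yields a unique expansion $n\beta = \beta + \sum_{\alpha \in \Pi_I} c_\alpha \alpha$ with $c_\alpha \geq 0$. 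Because $N$ is normal in $W$, the element $m := s_\beta n s_\beta^{-1}$ also lies in $N \subseteq W_I$, and a direct computation gives
\[ m(\beta) = \Bigl(1 - \sum_{\alpha} c_\alpha d_\alpha\Bigr)\beta - \sum_{\alpha} c_\alpha \alpha, \qquad d_\alpha := -\mpair{\alpha, \ck\beta} \geq 0. \]
Comparing this with the analogous expansion $\beta + \sum_\alpha c'_\alpha \alpha$ of $m(\beta)$ (with $c'_\alpha \geq 0$), the $\beta$-coefficient forces $\sum_\alpha c_\alpha d_\alpha = 0$ and the $\alpha$-coefficients give $c'_\alpha = -c_\alpha \geq 0$; together with $c_\alpha \geq 0$ this forces $c_\alpha = 0$ for all $\alpha$, and so $n\beta = \beta$.

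Thus $V^N := \{v \in V : nv = v \text{ for all } n \in N\}$ is a nonzero $W$-invariant subspace of $V$ (by normality of $N$) containing $\real(\Pi\setminus\Pi_I)$. For any $\alpha \in \Pi \setminus \Pi_I$, $W\alpha \subseteq V^N$, so Proposition \ref{x1.18}(a) yields $V^N \supseteq \real(W\alpha) = \real\Phi = V$. Hence $N$ acts trivially on $V$; by faithfulness, $N = \{1\}$, as required. The main obstacle is the coefficient comparison above; it is crucial that $N$ be normal in all of $W$ and not merely in $W_I$, as this is what allows the conjugation by $s_\beta$ (with $\beta \notin \Pi_I$) to produce another element of $N \subseteq W_I$.
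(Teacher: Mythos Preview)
Your proof is correct and takes a genuinely different route from the paper's. For (a), the paper treats the finite-rank case as a known result of Deodhar and Howlett, then handles infinite $S$ separately by first showing $(W,S)$ must be of locally finite type and invoking the graph-theoretic Lemma~\ref{x1.19} to produce infinitely many distinct elements of $W^{I}$. Your normal-core argument works uniformly for all ranks and is entirely self-contained within the paper: the key computation showing that any $n\in N$ fixes each $\beta\in\Pi\setminus\Pi_{I}$, by comparing the expansions of $n\beta$ and $(s_{\beta}ns_{\beta})\beta$, is a clean trick that exploits normality of $N$ in $W$ (not just in $W_{I}$) in exactly the way you emphasize. Combined with Proposition~\ref{x1.18}(a), this gives $N=\set{1}$ and hence $W$ finite, with no appeal to external references or to the locally-finite-graph lemma. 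For (b), both arguments route through Proposition~\ref{x1.18}; you use part (c) to get $W_{I}=W_{\Delta}=W$ directly (note $W_{\Delta}=W_{I}$, not merely $W_{I}\supseteq W_{\Delta}$), whereas the paper uses part (b) to reduce to finite rank and then cites Deodhar. Two minor notational points: in ``$\beta\notin N(n^{-1})\subseteq\Phi_{I,+}$'' you are conflating the reflection set $N(n^{-1})$ with the corresponding root set $\Phi_{+}\cap n^{-1}(\Phi_{-})$; and in the (b) argument the displayed containment should read $W_{I}=W_{\Delta}=W$. Neither affects correctness.
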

\begin{proof} First we prove (a). Using the references preceding the statement of the lemma,  suppose  without loss of generality  that $S$ is infinite.  We shall  obtain a contradiction after first showing   that $(W,S)$ is of \emph{locally finite type}  i.e. that $W_{K}$ is finite
for  every  finite subset $K$ of $S$. (This notion should not be confused with that of $(W,S)$ having locally finite Coxeter graph in \ref{x1.19}. We remark that it follows easily from the 
classification of finite Coxeter groups that the possible types of  infinite rank, irreducible, Coxeter systems of locally finite type are   $A_{\infty}$, $A_{\infty,\infty}$, $B_{\infty}$ or $D_{\infty}$; see  \cite{Kac}  or  \cite{DyRig} for the notation, 
or \cite[Figure 1]{NuidaLocPar} where $A_{\infty}$, $A_{\infty,\infty}$ are denoted as $A_{\infty}^{(1)}$, $A_{\infty}^{(2)}$. This classification can be used instead of the reference to Lemma \ref{x1.19}  in the proof below.)

Since $W^{I}$ is a set of coset representatives for $W/W_{I}$, it is finite. Hence  there is 
 some standard parabolic subgroup $W_{J}$, with $J\seq S$ finite, such that 
$W^{I}\seq W_{J}$. By irreducibility of $W$,   one may assume
without loss of generality  that $W_{J}$ is irreducible and $J$ is sufficiently large  that $K\seq J$ and   
$J\cap I\sneq J$.  Then $W_{J}/W_{J\cap I}\cong W_{J}^{J\cap I} \seq W^{I}
$ is finite, which implies $W_{J}$ is finite by the special case. Since
 $K\seq J$, $W_{K}$ is finite too.
Since the vertex degrees in Coxeter graphs of finite Coxeter systems are 
bounded above, 
the Coxeter graph of $(W,S)$ is locally finite. Lemma \ref{x1.19} implies  that  there is 
for any $\al\in \Pi$ a sequence $\al=\al_{0},\al_{1},\ldots, \al_{n},\ldots$ 
of distinct simple roots such that $\mpair{\al_{n-1},\al_{n}}\neq 0$ for all  $n\geq 1$. Choose $\al\in \Pi\sm \Pi_{I}$. For all  $n\in \Nat$,
the element   $w_{n}:=s_{\al_{n}}\cdots s_{\al_{0}}$  of $W$ is of length $n+1$
(using   Lemma \ref{x1.16}, for instance). Clearly,   $w_{n}$ has a unique reduced expression.  Since $s_{\al_{0}}\not\in I$, the elements $w_{n}$, for  $n\in \Nat$, are distinct elements of $W^{I}$. Hence $W^{I}$ is infinite,  which is a contradiction. 

Now (b) follows from (a) in the same way as its special case in which 
$S$ is finite, by the argument of  \cite[Remark 4.4]{D}. Alternatively, note that
$\Phi\sm \Phi_{I}$ finite implies $W=W_{\Phi\sm \Phi_{I}}$ is finitely generated by Proposition \ref{x1.18}, so $S$ is finite
    (since it is a minimal set of generators of $W$) and $W$ is finite by the result of \cite[Remark 4.4]{D}.
 \end{proof}

 \subsection{} The next result extends   \cite[Lemma 1.22]{DyQuo}.
 
 \label{x1.21}
 \begin{lem}
 Assume that $\Pi$ is finite. Let  $\rho\colon V\to \real$ be  a linear function such that
$\rho(\Pi)\seq \real_{>0}$.
 \begin{num}
 \item There is a positive scalar $\e_{0}$ such that
 for any two non-orthogonal roots $\alpha,\beta\in \Phi$ one has $\vert \mpair{\alpha,\ck\beta}\vert \geq \e_{0}$. 
 \item Let $\e:=\min(\mset{\rho(\al)\e_{0}/2\mid \al\in \Pi}$.
Then for all $\bt\in \Phi_{+}$,  $\rho(\bt)\geq\e l(s_{\bt})$.
 \end{num} 
 \end{lem}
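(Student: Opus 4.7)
The plan is to reduce (a) to a rank-two computation inside the maximal dihedral reflection subgroup $W''$ containing $\pair{s_\al,s_\bt}$, and then to obtain (b) by induction on $l(s_\bt)$ using the palindromic presentation supplied by (the proof of) Lemma \ref{x1.16}. The serious step will be producing the constant $\e_0$ in (a); everything else is forced.

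For (a), I first note that $W$-invariance of $\mpair{-,-}$ together with $\mpair{\al,\al}=1$ for $\al\in\Pi$ forces $\mpair{\g,\g}=1$ for every $\g\in\Phi$, so $\ck\bt=2\bt$ and $\mpair{\al,\ck\bt}=2\mpair{\al,\bt}$; it therefore suffices to bound $|\mpair{\al,\bt}|$ below for non-orthogonal $\al,\bt\in\Phi$. By \ref{x1.5}, $\pair{s_\al,s_\bt}$ sits inside a unique maximal dihedral reflection subgroup $W''$ whose canonical simples $\g,\dt\in\Phi_+$ satisfy $\mpair{\g,\dt}=-\cos(\pi/m)$ when $W''$ is finite of order $2m$, and $\mpair{\g,\dt}\leq -1$ otherwise. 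Direct computation in rank two (positive roots of $W''$ are unit vectors at angular separations $k\pi/m$ in the finite case; the corresponding hyperbolic or parabolic formulas cover the infinite case) then shows that two distinct positive roots of $W''$ have non-zero inner product of absolute value at least $\sin(\pi/(2m))$ in the finite case and at least $1$ in the infinite case. To get a bound independent of $W''$, I invoke the classical fact that, with $\Pi$ finite, every finite subgroup of $W$ is conjugate to a subgroup of some finite standard parabolic $W_I$; this forces the order of any finite dihedral reflection subgroup to be at most $N:=\max\set{|W_I|\mid I\seq S,\ W_I \text{ finite}}$, so $\e_0:=2\sin(\pi/N)$ works and satisfies $\e_0\leq 2$.

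For (b), I induct on $n$ where $l(s_\bt)=2n+1$. The base case $n=0$ has $\bt\in\Pi$, and $\rho(\bt)\geq\min_{\al\in\Pi}\rho(\al)\geq\e$ since $\e_0\leq 2$. For the inductive step, I choose $u\in W$ of minimal length with $s_\bt=us_{\al_0}u^{-1}$ and a reduced expression $u=s_{\al_n}\cdots s_{\al_1}$, so that $\bt=s_{\al_n}\cdots s_{\al_1}(\al_0)$ and $s_\bt=s_{\al_n}\cdots s_{\al_1}s_{\al_0}s_{\al_1}\cdots s_{\al_n}$ is a reduced palindromic expression. The connectivity hypothesis of Lemma \ref{x1.16} is automatic here: if some $\al_i$ with $i>0$ were orthogonal to every earlier $\al_j$, then $s_{\al_i}$ would commute past all of them and the two occurrences of $s_{\al_i}$ in the palindrome would cancel, contradicting reducedness. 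Setting $\bt'':=s_{\al_{n-1}}\cdots s_{\al_1}(\al_0)$ and $c:=-\mpair{\bt'',\ck\al_n}$, the construction in Lemma \ref{x1.16} gives $\bt''\in\Phi_+$, $l(s_{\bt''})=2n-1$, $c>0$, and $\bt=\bt''+c\al_n$, whence $\bt''\not\perp\al_n$. By (a) one has $c\geq\e_0$, and by induction $\rho(\bt'')\geq\e(2n-1)$, so
\[
\rho(\bt)=\rho(\bt'')+c\rho(\al_n)\geq\e(2n-1)+\e_0\min_{\al\in\Pi}\rho(\al)=\e(2n-1)+2\e=\e\,l(s_\bt),
\]
completing the induction. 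The hard part is really the uniform bound $N$ for (a); the rank-two estimate and the induction for (b) are then routine.
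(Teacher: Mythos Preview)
Your argument for (a) is correct and self-contained (the paper simply cites \cite[Lemma 1.22(i)]{DyQuo}); reducing to the maximal dihedral subgroup containing $s_\al,s_\bt$ and then bounding the order of any finite dihedral reflection subgroup via Lemma~\ref{x2.9}(a) is exactly the right idea, and your rank-two estimate $|\mpair{\al,\bt}|\geq\sin(\pi/(2m))$ is correct.

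For (b), your inductive scheme is the same as the paper's, but there is a small gap in your appeal to Lemma~\ref{x1.16}. That lemma has \emph{two} hypotheses: distinctness of the $\al_i$ and connectivity. You verify connectivity, but distinctness can genuinely fail for a reduced palindrome: in $I_2(5)$ the reflection of length $5$ has only the reduced palindromic expressions $s_\al s_\bt s_\al s_\bt s_\al$ and $s_\bt s_\al s_\bt s_\al s_\bt$, and in neither are the simple roots distinct. The proof of Lemma~\ref{x1.16} uses distinctness essentially to get $c_n>0$ (it needs $\mpair{\al_i,\al_n}\leq 0$ for $i<n$, which requires $\al_i\neq\al_n$), so you cannot import that argument here. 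The fix is immediate and is precisely what the paper does via \cite[Proposition~3.4]{DySd}: from the reduced palindrome $s_\bt=s_{\al_n}(\cdots)s_{\al_n}$ one has $l(s_{\al_n}s_\bt)<l(s_\bt)$, hence $\mpair{\al_n,\bt}>0$; since $\bt=s_{\al_n}(\bt'')$ this gives $\mpair{\al_n,\bt''}=-\mpair{\al_n,\bt}<0$, i.e.\ $c>0$. (The other facts you attribute to Lemma~\ref{x1.16} --- that $\bt''\in\Phi_+$ and $l(s_{\bt''})=2n-1$ --- also follow directly from reducedness of the palindrome and need no connectivity argument.) With this change your proof is complete.
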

\begin{proof}
For (a), see  \cite[Lemma 1.22(i)]{DyQuo}.
Part (b) is proved by induction on $l(s_{\bt})$ in the same way as   \cite[Lemma 1.22(ii)]{DyQuo}, making use of \cite[Proposition 3.4]{DySd}.
\end{proof}

\subsection{}\label{x1.22} Roots  in the fundamental chamber 
 may be regarded as general analogues of highest (long or short) 
 roots in root systems of  irreducible finite Weyl groups. For 
 irreducible $W$, their existence is equivalent to finiteness of $W$,
  as the next result shows. There are  generalizations      concerning orbits of standard   parabolic subgroups of $W$ on   $\Phi$ which we do not go into here.
 \begin{cor}\begin{num}
 \item If $W$ is finite, then the $W$-orbit of any root contains a unique element of $\Phi\cap \CC_{W}$.
\item  Suppose that  $\al\in \Phi\cap \CC_{W}$. Then $\al\in \Phi_{+}$. If, further,  $(W,S)$ is irreducible, then $W$ is finite. 
 \end{num}  \end{cor}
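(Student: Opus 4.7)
For (a), if $W$ is finite then $\mathcal{X}=V$ by Lemma \ref{x1.10}(h), so any $\alpha\in\Phi\subseteq V=\bigcup_{w\in W}w\mathcal{C}$ lies in $w\mathcal{C}$ for some $w\in W$; then $w^{-1}\alpha\in W\alpha\cap\mathcal{C}$. Uniqueness is immediate from Lemma \ref{x1.10}(c): any two elements of $\mathcal{C}$ in the same $W$-orbit must coincide.

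For (b), to see $\alpha\in\Phi_+$ the plan is to argue by contradiction. If $\alpha\in\Phi_-$, then $-\alpha\in\Phi_+\subseteq\real_{\geq 0}\Pi$, so write $-\alpha=\sum_{\gamma}c_\gamma\gamma$ with $c_\gamma\geq 0$. Using $\langle\alpha,\alpha\rangle=1$ (from $W$-invariance of the form and \ref{x1.3}(iii)) together with $\alpha\in\mathcal{C}$, one gets $-1=\langle-\alpha,\alpha\rangle=\sum_\gamma c_\gamma\langle\gamma,\alpha\rangle\geq 0$, a contradiction; hence $\alpha\in\Phi_+$.

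Now assume $(W,S)$ is irreducible and, for contradiction, that $W$ is infinite. The first step is to prove $\Pi$ finite. Pick a finite $\Delta\seq\Pi$ with $\alpha=\sum_{\delta\in\Delta}c_\delta\delta$, all $c_\delta>0$. For $\gamma\in\Pi\sm\Delta$, each $\delta\in\Delta$ is distinct from $\gamma$, so by \ref{x1.3}(iv) one has $\langle\delta,\gamma\rangle\leq 0$, giving $\langle\alpha,\gamma\rangle=\sum_{\delta\in\Delta}c_\delta\langle\delta,\gamma\rangle\leq 0$. But $\alpha\in\mathcal{C}$ forces $\langle\alpha,\gamma\rangle\geq 0$, so every term vanishes and $\langle\delta,\gamma\rangle=0$ for all $\delta\in\Delta$ and $\gamma\in\Pi\sm\Delta$. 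Thus $\Delta\perp(\Pi\sm\Delta)$; irreducibility of the Coxeter graph and $\Delta\neq\eset$ then force $\Pi=\Delta$, which is finite.

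The remaining goal is to show $W\alpha$ is finite, which contradicts infiniteness of $W$ via Proposition \ref{x1.18}(a). By Lemma \ref{x1.13}(c), $\alpha-w\alpha\in\real_{\geq 0}\Pi$ for every $w\in W$. Choose a linear $\rho\colon V\to\real$ with $\rho(\Pi)\seq\real_{>0}$ (available since $\Pi$ is finite and positively independent). For $w\alpha\in W\alpha\cap\Phi_+$ one gets $0<\rho(w\alpha)\leq\rho(\alpha)$, and Lemma \ref{x1.21}(b) yields $l(s_{w\alpha})\leq\rho(\alpha)/\e$; as $S$ is finite, only finitely many elements of $W$ have length below a fixed bound, so $W\alpha\cap\Phi_+$ is finite. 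The key observation that finishes the proof is that $W\alpha$ is closed under negation: since $-\alpha=s_\alpha(\alpha)\in W\alpha$, we have $-w\alpha=w(-\alpha)=(ws_\alpha)(\alpha)\in W\alpha$ for every $w$. Hence $\beta\mapsto-\beta$ is a bijection $W\alpha\cap\Phi_-\to W\alpha\cap\Phi_+$, and $W\alpha=(W\alpha\cap\Phi_+)\,\dot\cup\,(W\alpha\cap\Phi_-)$ is finite. The main obstacle is precisely the control of $W\alpha\cap\Phi_-$, since the height-type estimate of Lemma \ref{x1.21}(b) applies only to positive roots; the negation symmetry (which is special to orbits containing $\pm\alpha$) is what bypasses this difficulty cleanly and avoids any separate treatment of the affine case.
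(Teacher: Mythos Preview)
Your proof is correct and follows essentially the same approach as the paper's: both establish $\alpha\in\Phi_+$ via the computation $1=\mpair{\alpha,\alpha}=\sum c_\gamma\mpair{\gamma,\alpha}$, then show $\Pi$ is finite by an irreducibility argument, bound $l(s_{w\alpha})$ for $w\alpha\in\Phi_+$ via Lemma~\ref{x1.21}(b) and the inequality $\rho(w\alpha)\leq\rho(\alpha)$ from Lemma~\ref{x1.13}(c), and conclude with Proposition~\ref{x1.18}(a) after noting $W\alpha=-(W\alpha)$. The only cosmetic differences are that you frame parts as contradictions and show $\Delta\perp(\Pi\sm\Delta)$ in one stroke rather than exhibiting a single bad edge; the contradiction framing in the last paragraph is unnecessary since your argument directly proves $W\alpha$ finite.
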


\begin{proof}
Part (a) follows from \ref{x1.10}(h),(c). Now let $\al\in\Phi\cap \CC$. Write $\al=\sum_{\g\in \G}c_{\g}\g$ where $\G\seq \Pi$, $\G$ is finite and  either $c_{\g}< 0$ for all $\g\in \G$ or 
$c_{\g}> 0$ for all $\g\in \G$. 
Since  $1=\mpair{\al,\al}=\sum_{\g\in \G}c_{\g}\mpair{\g,\al}$
where each $\mpair{\g,\al}\geq 0$ (because $\al\in \CC$), it follows that we must have $c_{\g}>0$ for all $\g\in \G$, and so $\al\in \Phi_{+}$.

Now assume that $(W,S)$ is irreducible. We claim that $\G=\Pi$. If not, by irreducibility of $\Pi$, there is some $\bt\in \Pi\sm\G$ and $\g_{0}\in \G$ such that $\mpair{\g_{0},\bt}<0$.
One also has $\mpair{\g,\bt}\leq 0$ for all $\g\in \G$.
Since $\al\in\CC$, it follows that  \begin{equation*}
0\leq\mpair{\bt,\al}=\sum_{\g\in \G}c_{\g}\mpair{\bt,\g}\leq c_{\g_{0}}\mpair{\bt,\g_{0}} <0, \end{equation*} a contradiction.

Since $\Pi=\G$ is finite, we may choose $\rho,\e$ as in Lemma
\ref{x1.21}.  By Lemma 1.6, for any $w\in W$,  one has
$\al-w(\al)\in \real_{\geq 0}\Pi$  and so $\rho(w\al)\leq \rho( \al)$.
Assume that $w\al\in \Phi_{+}$.
By Lemma \ref{x1.21} $l(s_{w(\al)})\leq \e^{-1}\rho(w\al)\leq \e^{-1} \rho(\al)
=:N$.  But there are at most $\vert \Pi\vert^{m}<\infty$ elements of $W$ of 
length $m$ for any  $m\in \Nat_{\leq N} $. This implies that $ W\al\cap \Phi_
{+}$ is finite, and hence
$W\al=(W\al\cap \Phi_{+})\cup -(W\al\cap \Phi_{+})$ is finite.
By Proposition \ref{x1.18}(a), it follows that 
$W$ is finite.
\end{proof}

\section{Facial subgroups}
\label{x2}
This section contains basic facts about facial subgroups of $W$, which are defined as
the stabilizers of  points of the Tits cone. Some of these facts are proved  under stronger assumptions   in  \cite{Bour}, \cite{V}, and \cite{K}.
\subsection{} \label{x2.1} A subset  $\Pi'$ of $ \Pi$ is said to be a 
\emph{facial subset} of $\Pi$, or said to be  \emph{facial}, 
  if $\Pi'=\Pi\cap v^{\perp}$ for some $v\in \mc{C}$.  
  Obviously, $\Pi=\Pi\cap 0^{\perp}$ itself is facial. It is not clear
without additional assumptions on the root system  if there are other facial subsets.

 If $\Pi'$ is facial, then  $I=\mpair{s_{\alpha}\mid \alpha\in \Pi'}$ is 
 said to be a \emph{facial subset} of $S$ and   $W_{I}$ is called  a 
 \emph{standard  facial subgroup}.   Using that $\CC$ is a cone, 
 one sees that a finite  intersection of facial subsets 
of $\Pi$ (resp., facial subsets of $S$, resp., standard facial  subgroups) is 
again a  facial subset of $\Pi$ (resp., facial subset of $S$, resp., standard 
facial  subgroup).
A \emph{facial subgroup} of $W$ is defined to be  to be a conjugate in $W$ of a  standard  facial subgroup.
\begin{rem}  The   
  facial subsets correspond naturally to the exposed faces (with respect to the semidual pair of cones $(\real_{\geq 0}\Pi,\mc{C})$) of $\real\Pi$  in the sense of Appendix \ref{xA}. Certain subtleties in the notion of facial subsets reflect subtleties in the notion of exposed faces. For instance, compare Remark \ref{x2.7} and Lemma \ref{x4.2}(f) with the facts that, for  general semidual pairs of cones, exposed faces of exposed faces  need not be exposed faces, whereas this  does hold for dual pairs of polyhedral cones. The notion of facial subset, subgroup etc   may change under  restriction or extension of quadratic space.    \end{rem}

\subsection{}\label{x2.2}  For $p\in V$, let $W_{p}$ denote the stabilizer of $v$ in $W$.
Then for all $w\in W$, one has  $W_{wp}=wW_{p}w^{-1}$.\begin{lem} Let $q\in \mc{X}$. Let    $w\in W$ be such that  $p:=w^{-1}q\in \mc{C}$. Let 
 $J$ be the facial subset of  $S$  with  $\Pi_{J}=\Pi\cap p^{\perp}$.
 \begin{num} \item  $W_{p }=W_{J}$,  $\chi(W_{p})=J$,
$ \Phi_{W_{p}}= \Phi\cap p^{\perp}$ and  $\Pi_{W_{p}}=\Pi\cap p^{\perp}$. 
\item $W_{q}=wW_{J}w^{-1}$  with $\Phi_{W_{q}}=\Phi\cap q^{\perp}$. \end{num}
\end{lem}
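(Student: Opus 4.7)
The plan is to extract everything from Lemma \ref{x1.10}, specifically parts (c) and (d), together with the basic compatibility of the $W$-action with the bilinear form.

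For part (a), I would first note that by Lemma \ref{x1.10}(c), an element $w\in W$ satisfies $w(p)=p$ if and only if $w$ lies in the standard parabolic subgroup generated by the reflections $s_{\alpha}$ with $\alpha\in\Pi\cap p^{\perp}=\Pi_{J}$. This exactly says $W_{p}=W_{J}$. The equality $\chi(W_{p})=\chi(W_{J})=J$ is then the standard fact $\chi(W_{I})=I$ for $I\seq S$ recalled in \ref{x1.8}, and $\Pi_{W_{p}}=\Pi_{W_{J}}=\Pi_{J}=\Pi\cap p^{\perp}$ follows directly by unwinding the abbreviations in \ref{x1.7} and \ref{x1.8}.

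For the remaining identity $\Phi_{W_{p}}=\Phi\cap p^{\perp}$ I would argue by double inclusion. For $\supseteq$, if $\alpha\in\Phi\cap p^{\perp}$ then $s_{\alpha}(p)=p-\mpair{p,\ck\alpha}\alpha=p$, hence $s_{\alpha}\in W_{p}$ and $\alpha\in\Phi_{W_{p}}$. For $\subseteq$, Lemma \ref{x1.10}(d) already tells us that $W_{p}$ is generated by reflections in roots orthogonal to $p$; so if $\alpha\in\Phi_{W_{p}}$, i.e.\ $s_{\alpha}\in W_{p}$, then $s_{\alpha}(p)=p$, and since $\mpair{\alpha,\alpha}=1\neq 0$ the formula for $s_{\alpha}$ forces $\mpair{p,\alpha}=0$.

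For part (b), the general identity $W_{wp}=wW_{p}w^{-1}$ (recorded at the start of \ref{x2.2}) together with part (a) yields $W_{q}=wW_{p}w^{-1}=wW_{J}w^{-1}$. Using $W$-invariance of $\Phi$ and of $\mpair{-,-}$, one has $w(\Phi\cap p^{\perp})=w(\Phi)\cap w(p^{\perp})=\Phi\cap (wp)^{\perp}=\Phi\cap q^{\perp}$, and the left hand side equals $\Phi_{wW_{p}w^{-1}}=\Phi_{W_{q}}$ by the general $W$-equivariance formula from \ref{x1.7}. There is no real obstacle here; the only place requiring care is the deduction of $W_{p}=W_{J}$, but this is essentially a restatement of Lemma \ref{x1.10}(c).
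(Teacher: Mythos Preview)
Your proof is correct and follows essentially the same route as the paper's: both derive $W_{p}=W_{J}$ from Lemma \ref{x1.10}(c), read off $\chi(W_{p})=J$ and $\Pi_{W_{p}}=\Pi\cap p^{\perp}$, obtain $\Phi_{W_{p}}=\Phi\cap p^{\perp}$ via the stabilizer description (Lemma \ref{x1.10}(d)), and then conjugate for (b). One small remark: the identity $\Phi_{wW'w^{-1}}=w\Phi_{W'}$ you invoke holds trivially for all $w\in W$ straight from the definition of $\Phi_{W'}$, whereas \ref{x1.7} only records it under the extra hypothesis $N(w^{-1})\cap W'=\emptyset$ (needed there for the companion formulae for $\Pi_{W'}$ and $\Phi_{W',+}$).
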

\begin{proof} By  \ref{x1.10}(c), $W_{p}=W_{J}$. The definition of $\chi(W_{J})$ easily implies that  $J\subseteq \chi(W_{J})$.
Hence $\chi(W_{J})=J$, since (see \cite[Ch IV]{Bour}) any set of Coxeter generators of $W_{J}$, such as $\chi(W_{J})$, is a minimal set of generators of $W_{J}$.   From $\chi(W_{J})=J$, it follows by definition that $\Pi_{W_{p}}=\Pi_{J}=\Pi\cap p^{\perp}$. The  assertion in (a) that $ \Phi_{W_{p}}= \Phi\cap p^{\perp}$, and (b), follow from \ref{x1.10}(d).\end{proof}

\subsection{}\label{x2.3}  For a  subset $I$ of $S$, let $\mc{C}(I):=\mset{v\in \CC\mid v^{\perp}\cap \Pi=\Pi_{I}}$, which is a cone in $V$. A non-empty set  of the  form
 $w\mc{C}(I)$ with $w\in W$ and $I\subseteq S$ is called a \emph{facet} of $\mc{X}$. (Note that this conflicts in some situations with a common usage of the word facet for a  codimension one face of a  cone; in general, $w\mc{C}(I)$ is not a face of $\mc{X}$.)   
 
 For any $\phi\in V$,  and a relation $\prec$ on $\real$ which is one of  the standard equality, order or inequality relations 
 $=$, $\leq$, $<$, $\geq$, $>$, or  $\neq$ let $H_{\phi}^{\prec}:=\mset{v\in V\mid \mpair{v,\phi}\prec 0}$.
 
 Basic properties of facial subgroups and facets are given in the following Lemma.
\begin{lem}\begin{num}\item  For $I\subseteq S$, 
 $\mc{C}(I)\neq \emptyset$ if and only if  $I$ is  facial.  \item Suppose that $I,J$ are  facial subsets of $S$
and $x,y\in W$. Then the intersection   $x\mc{C}(I)\cap y\mc{C}(J)$
is non-empty  if and only if  $I=J$ and $x^{-1}y\in W_{J}$, in which case $yx^{-1}\in xW_{J}x^{-1}=yW_{J}y^{-1}$ fixes $x\mc{C}(I)=y\mc{C}(J)$ pointwise.
\item The stabilizers of the points of $\mc{C}$ (resp.,  $\mc{X}$) are exactly the
standard facial subgroups (resp.,  facial subgroups).
\item Any (standard) facial subgroup of $W$ is (standard) parabolic.
\item The cone $\mc{X}$ is the disjoint  union of its facets.\item The facets of $\mc{X}$ are the non-empty intersections of the form  $\cap_{\alpha\in \Phi_{+}}H_{\alpha}^{\prec_{\alpha}}$ such that for $\alpha\in \Phi_{+}$, $\prec_{\alpha}$ is one of the relations $=$, $<$ or $>$,  with only finitely many  $\alpha$ for which  $\prec_{\alpha}$ is equal to $<$.
\item The conical hull of any finite union of facets of $\mc{X}$  meets only finitely many facets. In particular, the conical hull of any finite subset of $\mc{X}$ meets only finitely many  facets.\end{num}\end{lem}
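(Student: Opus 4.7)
My plan is to treat parts (a)--(d) as essentially formal consequences of the definitions and Lemmas \ref{x1.10} and \ref{x2.2}, then use (b) and (e) to set up the sign-pattern characterization in (f), and finally derive (g) by a counting argument based on (f). For (a), I will simply unpack definitions: $\mc{C}(I)\neq\eset$ says that some $v\in\mc{C}$ satisfies $\Pi\cap v^\perp=\Pi_I$, which is exactly the condition that $\Pi_I$ (equivalently, $I$) is facial. For (b), I pick $p=xu=yv$ in the intersection with $u\in\mc{C}(I)$, $v\in\mc{C}(J)$ and apply Lemma \ref{x1.10}(c) to $(x^{-1}y)v=u$ (both in $\mc{C}$) to conclude $u=v$ and $x^{-1}y\in W_u$; Lemma \ref{x2.2} then identifies $W_u$ with $W_I$ and yields $\Pi_I=u^\perp\cap\Pi=\Pi_J$, so $I=J$. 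The converse and the fixed-point statement follow because any $w\in W_J$ fixes $\mc{C}(J)$ pointwise (again by Lemma \ref{x2.2}), so writing $w=x^{-1}y\in W_J$ gives $yx^{-1}=xwx^{-1}\in xW_Jx^{-1}=yW_Jy^{-1}$ fixing $x\mc{C}(J)=y\mc{C}(J)$. Part (c) then reads off Lemma \ref{x2.2} together with (a), and (d) is immediate from the definition of (standard) facial subgroup.

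For (e), given $p\in\mc{X}$ I write $p=wv$ with $v\in\mc{C}$ and let $I$ be the subset of $S$ with $\Pi_I=\Pi\cap v^\perp$; then $I$ is facial, $v\in\mc{C}(I)$, so $p\in w\mc{C}(I)$, and disjointness is a restatement of (b). The core work is in (f). For a facet $w\mc{C}(I)$ and $p=wu$ with $u\in\mc{C}(I)$, I compute $\mpair{p,\alpha}=\epsilon\mpair{u,\beta}$ where $w^{-1}\alpha=\epsilon\beta$ with $\epsilon\in\{\pm 1\}$ and $\beta\in\Phi_+$, and use the standard fact (from \cite{DySd}) that $\beta\in\Phi_+$ expands as a non-negative combination of $\Pi$ with support in $\Pi_I$ exactly when $\beta\in\Phi_{I,+}$; combined with $u\in\mc{C}(I)$, this gives $\mpair{u,\beta}=0$ iff $\beta\in\Phi_I$ and $\mpair{u,\beta}>0$ otherwise. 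Hence the sign pattern $\alpha\mapsto\mathrm{sign}\,\mpair{p,\alpha}$ on $\Phi_+$ depends only on $w$ and $I$, not on the choice of $p$ in the facet, and by Lemma \ref{x1.10}(a) the set where this sign is $<$ is finite. For the reverse inclusion, given $q$ with the same sign pattern as some $p\in w\mc{C}(I)$, I specialize to $\alpha\in\Pi$: then $\mpair{w^{-1}q,\alpha}=\mpair{q,w\alpha}$ has the same sign as $\mpair{u,\alpha}$, which is $0$ on $\Pi_I$ and positive off $\Pi_I$, forcing $w^{-1}q\in\mc{C}(I)$, i.e.\ $q\in w\mc{C}(I)$.

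For (g), let $F_1,\ldots,F_n$ be facets and let $\Sigma\seq\Phi_+$ be the union of their (finite) $<$-index sets from (f); let $E_i\seq\Phi_+$ be the zero-set of (any, hence every) point of $F_i$. For $v=\sum c_iv_i\in\ccl(F_1\cup\cdots\cup F_n)$ with $c_i\geq 0$ and $v_i\in F_i$, the quantity $\mpair{v,\alpha}$ for $\alpha\notin\Sigma$ is a sum of non-negative terms, and vanishes iff $\alpha\in\bigcap_{i:\,c_i>0}E_i$; thus its sign on $\Phi_+\sm\Sigma$ is determined by the subset $A=\{i:c_i>0\}\seq\{1,\ldots,n\}$, giving at most $2^n$ possibilities, while on the finite set $\Sigma$ there are at most $3^{|\Sigma|}$ possibilities. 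By (f) each facet is determined by its overall sign pattern, so at most $2^n\cdot 3^{|\Sigma|}$ facets meet the conical hull. The ``in particular'' clause follows because, by (e), any finite subset of $\mc{X}$ lies in a finite union of facets.

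I expect the main obstacle to be the sign-pattern analysis in (f), specifically the implication $\mpair{u,\beta}=0\Rightarrow\beta\in\Phi_I$ for $u\in\mc{C}(I)$ and $\beta\in\Phi_+$, which depends on the fact that positive roots in the parabolic subsystem $\Phi_I$ are precisely the positive roots supported on $\Pi_I$; this is non-trivial for the class of based root systems under consideration (where $\Pi$ need not be linearly independent) but is available from \cite{DySd}. Once (f) is firmly in hand, (g) reduces to the bookkeeping outlined above.
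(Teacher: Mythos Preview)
Your proof is correct and close in spirit to the paper's, but your handling of (f) is organized differently. The paper first observes that the sign-pattern regions $\cap_{\alpha\in\Phi_+}H_\alpha^{\prec_\alpha}$ are pairwise disjoint and contained in $\mc{X}$, then shows that the collection of such non-empty regions is $W$-stable (by checking how the family $(\prec_\alpha)$ transforms under $w$), and finally reduces to identifying those regions that meet $\mc{C}$ with the $\mc{C}(I)$. You instead fix a facet $w\mc{C}(I)$ and compute the sign pattern of a general point $p=wu$ directly, showing it depends only on $(w,I)$, and recover membership in the facet from the pattern by specializing to simple roots. Both routes work; the paper's is slightly more conceptual, yours more hands-on. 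Two small points on your version: first, your identification ``$\mpair{u,\beta}=0\iff\beta\in\Phi_I$'' is not needed --- all you use is that the sign of $\mpair{u,\beta}$ depends only on whether $\beta$ has a non-negative expansion supported in $\Pi_I$, which is immediate from $u\in\mc{C}(I)$ and avoids the forward reference to Lemma~\ref{x2.4} (the fact you cite from \cite{DySd} is exactly what Lemma~\ref{x2.4}(c) proves). Second, in your converse step ``$\mpair{q,w\alpha}$ has the same sign as $\mpair{u,\alpha}$'' you should note that when $w\alpha\in\Phi_-$ one applies the sign-pattern hypothesis to $-w\alpha\in\Phi_+$ and negates; this is routine but worth a word. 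Your (g) is essentially the paper's argument with the bookkeeping made more explicit (you track the support set $A=\{i:c_i>0\}$, which cleanly handles the case of vanishing coefficients that the paper treats only implicitly).
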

\begin{proof}   Part (a) follows from the definition of facial subsets of $\Pi$, (b) follows from Lemma \ref{x1.10}, (c)--(d) follow from Lemma \ref{x2.2} and (e) follows from (b) (cf. \cite{K}). 

Now we prove (f). If $v\in F:=\cap_{\alpha\in \Phi_{+}}H_{\alpha}^{\prec_{\alpha}}$, then
for all $\alpha\in \Phi_{+}$, one has $\mpair{v,\alpha}\prec_{\alpha}0$ and hence  the relations $\prec_{\alpha}$ (amongst $=$, $<$, $>$) are uniquely determined. 
This implies that the intersections as in (f) are pairwise disjoint, and they are  contained in
$\mc{X}$ by Lemma \ref{x1.10}.
Let $w\in W$. Choose relations $\prec'_{\alpha}$ (amongst $=$, $<$, $>$)
satisfying $\mpair{wv,\alpha}\prec'_{\alpha}0$. Note that 
$\prec'_{\al}$ and $\prec_{w^{-1}\al}$ coincide  if $w^{-1}(\al)\in \Phi_{+}$.
If $w^{-1}(\al)\in \Phi_{-}$,  then
$\prec'_{\al}$ and $\prec_{-w^{-1}\al}$  are either both  $=$, or one of 
them is $<$ and the other is $>$. 
 It follows that $w(\cap_{\alpha\in \Phi_{+}}H_{\alpha}^{\prec_{\alpha}})=\cap_{\alpha\in \Phi_{+}}H_{\alpha}^{\prec'_{\alpha}}$.
 By Lemma \ref{x1.10}, only finitely many of the $\prec'_{\alpha}$ are equal to $<$.  This implies that the set of non-empty intersections as in (f) is $W$-stable. Hence it will suffice to show that the facets
of $\mc{X}$ which contain a point of $\mc{C}$ are precisely  the non-empty intersections $F$
with all $\prec_{\alpha}$ amongst $=$, $>$. But if $v\in\mc{C}$, then the unique facet
containing $v$ is  $\mc{C}(I)$ where  $\Pi_{I}=\mset{\alpha\in \Pi\mid \mpair{v,\alpha}=0}$ and the unique intersection as in (f) which contains $v$ is $F:=\cap_{\alpha\in \Phi_{+}}H_{\alpha}^{\prec_{\alpha}}$ where  $\mpair{v,\alpha}\prec_{\alpha}0$ with $\prec_{\alpha}$ either $=$ or $>$.
Consider a root $\alpha=\sum_{\beta\in \Pi}c_{\beta }\beta\in \Phi$ with all
$c_{\beta}\geq 0$. Then  $\prec_{\alpha}$ is  $=$ if and only if
$c_{\beta}=0$ for all $\beta\in\Pi\sm \Pi_{I}$. It is clear then that  \[F=\mset{v\in \mc{C}\mid \mpair{v,\alpha}=0\text{ \rm for all $\alpha\in \Pi_{I}$}}= \mc{C}(I)\] completing the proof of (f).

For the first claim of (g), let $F_{1},\ldots, F_{n}$ be facets. Write $F_{i}=\cap_{\alpha\in \Phi_{+}}H_{\alpha}^{\prec_{\alpha,i}}$ as in (f).  It will suffice to show that there are only finitely many facets $F=\cap_{\alpha\in \Phi_{+}}H_{\alpha}^{\prec_{\alpha}}$ containing some point
$x=\sum_{i=1}^{n}\lambda_{i}x_{i}$ with $x_{i}\in F_{i}$ and all  $\lambda_{i}>0$. Assume  $x\in F$, and let $\alpha\in \Phi_{+}$. Clearly,  $\prec _{\alpha}$ is $=$ if all $\prec _{\alpha,i}$ are $=$,
while $\prec _{\alpha}$ is $>$ if no $\prec _{\alpha,i}$ is $<$ and 
at least one  $\prec _{\alpha,i}$ is $>$. There are only finitely many $\alpha\in \Phi_{+}$
which do not fall under one of those two cases, namely the $\alpha$ for which some
$\prec_{\alpha,i}$ is $<$, and for  each of these, there are at most three possibilities ($=$, $<$, 
$>$) for $\prec_{\alpha}$.   It follows that there are only finitely many possible families $(\prec_{\alpha})_{\alpha\in \Phi_{+}}$ and so only finitely many possible $F$. This proves  the first assertion of (g), and the second follows since each point of $\mc{X}$ lies in some facet
(in the case of two points, compare \cite[Ch V, \S 4, Prop 6]{Bour}).
\end{proof}
 \begin{rem}  If 
$\Pi$ is linearly independent and  $(\Phi,\Pi)$ on $(V,\mpair{-,-})$ is ample (see Remark \ref{x1.3}(2)), then the classes of  (standard) facial subgroups  and (standard)  parabolic subgroups coincide. 
\end{rem}

\subsection{} \label{x2.4} Parts  (a)--(c) at least of the following are trivial or well-known in case $\Pi$ is linearly independent
(see \cite[Proposition 3.3]{BH2}).

\begin{lem} Let $I$ be a facial subset of $S$.  Fix $a\in \mc{C}$ such that
$\Pi_{I}=\Pi\cap a^{\perp}$.Then \begin{num}
\item  $\real_{\geq 0}\Pi\cap a^{\perp}=\real_{\geq 0}\Pi\cap \real \Pi_{I}=\real_{\geq 0}\Pi_{I}$.
\item $\Phi\cap a^{\perp}=\Phi\cap \real \Pi_{I}=W_{I}\Pi_{I}=\Phi_{W_{I}}$.
\item   $\Phi_{+}\cap a^{\perp}=\Phi_{+}\cap \real \Pi_{I}=\Phi_{+}\cap \real_{\geq 0}\Pi_{I}=W_{I}\Pi_{I}\cap \Phi_{+}=\Phi_{W_{I},+}$. 
\item If $k\in \mc{C}$, $w\in W$ and $k-wk\in \real\Pi_{I}$,
then  $wk=w'k$ for some $w'\in W_{I}$.
\end{num}
\end{lem}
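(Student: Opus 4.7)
The plan is to prove (a)--(c) by short chains of inclusions exploiting the fact that $\langle a,\alpha\rangle>0$ for $\alpha\in\Pi\setminus\Pi_I$ and $\langle a,\alpha\rangle=0$ for $\alpha\in\Pi_I$, reserving the main effort for (d).

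For (a), the inclusions $\real_{\geq 0}\Pi_I\subseteq \real_{\geq 0}\Pi\cap\real\Pi_I\subseteq\real_{\geq 0}\Pi\cap a^\perp$ are immediate from $\Pi_I\subseteq a^\perp$. For the decisive reverse inclusion, write $v\in\real_{\geq 0}\Pi\cap a^\perp$ as $v=\sum_{\alpha\in\Pi} c_\alpha\alpha$ with finitely many $c_\alpha>0$; then $0=\langle v,a\rangle=\sum c_\alpha\langle\alpha,a\rangle$ is a sum of non-negative terms, so $c_\alpha=0$ whenever $\langle\alpha,a\rangle>0$, i.e.\ for $\alpha\notin\Pi_I$. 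For (b), I would chain $\Phi_{W_I}=W_I\Pi_I\subseteq\Phi\cap\real\Pi_I\subseteq\Phi\cap a^\perp\subseteq\Phi_{W_I}$: the first equality is the definition, the next two use that $W_I$ preserves $\real\Pi_I$ and that $\real\Pi_I\perp a$, and the last follows because any $\beta\in\Phi\cap a^\perp$ has $s_\beta\in\Stab_W(a)$, and Lemma \ref{x1.10}(d) together with Lemma \ref{x2.2}(a) identifies this stabilizer with $W_I$. Part (c) then follows by intersecting (b) with $\Phi_+$, using (a) to identify $\real\Pi_I\cap\Phi_+=\real_{\geq 0}\Pi_I\cap\Phi_+$.

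The main obstacle is (d), and the plan is as follows. First reduce to the case where $w$ is the minimal-length representative $x$ of the coset $W_I w$ (Lemma \ref{x1.6}); this is legitimate because writing $w=vx$ with $v\in W_I$, the set $k+\real\Pi_I$ is $W_I$-stable (applying Lemma \ref{x1.13}(c) to the Coxeter system $(W_I,\chi(W_I))$ and $k\in\mc{C}\subseteq\mc{C}_{W_I}$ gives $uk-k\in-\real_{\geq 0}\Pi_I\subseteq\real\Pi_I$ for $u\in W_I$), so $xk\in k+\real\Pi_I$ as well. The characterization of $x$ in \ref{x1.6} says that if $x=s_{\alpha_1}\cdots s_{\alpha_n}$ is reduced, then the associated positive roots $\beta_i=s_{\alpha_1}\cdots s_{\alpha_{i-1}}(\alpha_i)$ (Lemma \ref{x1.13}(b)) satisfy $\{\beta_1,\dots,\beta_n\}\cap\Phi_{W_I}=\emptyset$.

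Now by Lemma \ref{x1.13}(a),
\[
k-xk=\sum_{i=1}^n\langle k,\ck{\alpha_i}\rangle\beta_i,
\]
a sum in which every coefficient $\langle k,\ck{\alpha_i}\rangle\geq 0$ and every $\beta_i\in\Phi_+$. By hypothesis $k-xk\in\real\Pi_I$, and Lemma \ref{x1.13}(c) gives $k-xk\in\real_{\geq 0}\Pi$, so by (a), $k-xk\in\real_{\geq 0}\Pi_I\subseteq a^\perp$. Pairing with $a$ yields $0=\sum_i\langle k,\ck{\alpha_i}\rangle\langle\beta_i,a\rangle$, a sum of non-negative reals, so every summand vanishes. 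But by (c), $\beta_i\notin\Phi_{W_I}$ forces $\beta_i\notin a^\perp$, i.e.\ $\langle\beta_i,a\rangle>0$; hence $\langle k,\ck{\alpha_i}\rangle=0$ for every $i$, and therefore $xk=k$. Consequently $wk=vxk=vk$ with $w':=v\in W_I$, completing (d). The subtle step is the combinatorial identification of the $\beta_i$ as lying outside $\Phi_{W_I}$, which is exactly what the minimal coset representative property supplies and which interacts precisely with (c) to force each coefficient in the sum to vanish.
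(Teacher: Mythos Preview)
Your proof is correct, but differs from the paper's in two places worth noting. For (b) you invoke Lemma~\ref{x2.2}(a) (which rests on Lemma~\ref{x1.10}(d)) to obtain $\Phi\cap a^\perp=\Phi_{W_I}$ directly, and then read off (c). The paper instead proves (c) first, by an induction on $l(s_\beta)$ showing that every $\beta\in\Phi_+\cap\real_{\geq 0}\Pi_I$ lies in $W_I\Pi_I$, and then derives (b) from (c). Your route is shorter but leans on a fact whose proof (in the references behind Lemma~\ref{x1.10}(d)) is of a similar character; the paper's induction is self-contained within the present setup.

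For (d), the paper does not pass to the minimal coset representative. It works with the original $w$, uses the same expansion and pairing with $a$ to see that each summand vanishes, then for those $i$ with $\mpair{k,\alpha_i}\neq 0$ concludes $\beta_i\in a^\perp$ and hence (by (b)) $\beta_i\in\Phi_{W_I,+}$; finally it constructs $w'\in W_I$ explicitly as the product $s_{\beta_{i_m}}\cdots s_{\beta_{i_1}}$ of those reflections and checks $w'(k)=w(k)$ by a deletion argument. Your reduction to the minimal representative $x$ is arguably cleaner: the coset condition forces every $\beta_i$ to lie \emph{outside} $\Phi_{W_I}$, so $\mpair{\beta_i,a}>0$ for all $i$, and the vanishing of the pairing forces every coefficient $\mpair{k,\ck\alpha_i}$ to be zero, giving $xk=k$ and hence $wk=vk$ with $v\in W_I$.
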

\begin{proof}  The statement obtained by replacing ``$=$'' by ``$\supseteq$'' in (a) is clear.
For the reverse inclusions, suppose that $v=\sum_{\alpha\in \Pi}c_{\alpha}\alpha\in
\real_{\geq 0}\Pi\cap a^{\perp}$, where all $c_{\alpha}\geq 0$. We have  $0=\mpair{v,a}=\sum_{\alpha\in \Pi}c_{\alpha}\mpair{\alpha,a}$.
Since $\mpair{\alpha,a}\geq 0$ for all $\alpha\in \Pi$, we get $c_{\alpha}=0$ if $\mpair{\alpha,a}\neq 0$ i.e. if $\alpha\not\in \Pi_{I}$, which proves (a).

In (c), the first two equalities follow  by intersecting each part of (a) with $\Phi_{+}$. It is also clear that  $\Phi_{W_{I},+}=W_{I}\Pi_{I}\cap \Phi_{+}\subseteq \Phi_{+}\cap \real_{\geq 0}\Pi_{I}$. 
For the reverse inclusion, let  $\beta\in \Phi_{+}\cap \real_{\geq 0}\Pi_{I}$.
Write $\beta=\sum_{\alpha\in \Pi_{I}}c_{\alpha}\alpha$ with all $c_{\alpha}\geq 0$.
If $\beta\in \Pi$, then $\beta\in \Pi_{I}$, else  $\mpair{\beta,\Pi_{I}}\subseteq \real_{\leq 0}$ and 
\[1=\mpair{\beta,\beta}=\sum_{\alpha\in \Pi_{I}} c_{\alpha}\mpair{\beta,\alpha}\leq 0.\] 
We show that $\beta\in W_{I}\Pi_{I}$ in general by induction on $l(s_{\beta})$. Assume $l(s_{\beta})>1$.
Using \cite[Proposition 3.4]{DySd}, choose $\gamma\in \Pi$ with $l(s_{\gamma}s_{\beta}s_{\gamma})=l(s_{\beta})-2$.
and  $\mpair{\gamma,\beta}>0$.
If  $\gamma\in \Pi_{S\setminus I}$, then $\mpair{\beta,\gamma}=\sum_{\alpha\in \Pi_{I} }c_{\alpha}\mpair{\alpha,\gamma}\leq 0$, a contradiction.  Hence $\gamma\in \Pi_{I}$. Since $\beta\in \Phi_{+}$, $\gamma\in \Pi$ and $\gamma\neq \beta$, we have $s_{\gamma}\beta\in \Phi_{+}$. Hence  
  \[s_{\gamma}(\beta)=\beta-\mpair{\beta,\ck\gamma}\gamma\in \Phi_{+}\cap \real
\Pi_{I}= \Phi_{+}\cap \real_{\geq 0}\Pi_{I}.\]By induction, $s_{\gamma}(\beta)\in W_{I}\Pi_{I}$ and so $\beta\in W_{I}\Pi_{I}\cap \Phi_{+}$, completing the inductive proof of (c). Part (b) follows easily from (c).

For (d), choose a reduced expression $w=s_{\alpha_{1}}\cdots s_{\alpha_{n}}$ for $w$ in $W$, with $\alpha_{i}\in \Pi$.
We have, using Lemma \ref{x1.13},  
\[w(k)-k=-\sum_{i=1}^{n }\mpair{k,\ck\alpha_{i}}\beta_{i}\] where
$\beta_{i}:=s_{\alpha_{1}}\ldots s_{\alpha_{i-1}}(\alpha_{i})$. Therefore
\[0=\mpair{wk-k,a}=-\sum_{i}\mpair{k,\ck\alpha_{i}}\mpair{\beta_{i},a}\]
where  $\mpair{k,\alpha_{i}}\geq 0$,  and $\mpair{\beta_{i},a}\geq 0$ since $\beta_{i}\in \Phi_{+}$.
It follows that  $\mpair{\beta_{i},a}=0$ for any $i$ with $\mpair{k,\alpha_{i}}\neq 0$. If $\mpair{k,\alpha_{i}}\neq 0$, then $\beta_{i}\in \real_{\geq 0}
\Pi\cap a^{\perp}=\real_{\geq 0}
\Pi_{I}$. Then by (b), $\beta_{i}\in W_{I}\Pi_{I}\cap \Phi_{+}$ if  $\mpair{k,\alpha_{i}}\neq 0$
and so $s_{\beta_{i}}\in W_{I}$ for such $i$. Suppose that $\mpair{k,\alpha_{i}}\neq 0$
for $i=i_{1},\ldots, i_{m}$ where $1\leq i_{1}<\ldots <i_{m}\leq n$ and that $\mpair{k,\alpha_{i}}=0$ for $i\not\in \mset{i_{1},\ldots, i_{m}}$. Then
\[s_{\beta_{i_{1}}}\cdots s_{\beta_{i_{m}}}w(k)=s_{\alpha_{1}}\cdots  \widehat s_{\alpha_{i_{i}}}
\cdots \widehat s_{i_{m}}\cdots s_{\alpha_{n}} (k)=k\] where circumflexes denote factors  omitted from the product, 
and therefore\[w(k)=s_{\beta_{i_{m}}}\cdots  s_{\beta_{i_{1}}}(k)\in W_{I}k.\qedhere\] \end{proof}
\subsection{} Some of the  results above   hold for  facial subgroups in general. Notably: \label{x2.5}
\begin{cor} Let $W'$ be a facial subgroup of $W$, say $W'=\stab_{W}(q)$ where $q\in \mc{X}$ . Then $\Phi_{W',+}=W'\Pi_{W'}\cap \Phi_{+}=\real\Pi_{W'}\cap \Phi_{+}=\real_{\geq 0}\Pi_{W'}\cap \Phi=\Phi_{+}\cap q^{\perp}$.
\end{cor}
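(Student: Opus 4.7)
The plan is to deduce both the first and the last equalities essentially for free from results already at hand, and then fill in the two middle equalities by routine set-theoretic manipulation; no conjugation or appeal to Lemma \ref{x2.4} is needed.

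The key inputs are Lemma \ref{x2.2}(b), which gives $\Phi_{W'} = \Phi \cap q^{\perp}$, and the facts recalled in \ref{x1.7} that $(\Phi_{W'},\Pi_{W'})$ is itself a based root system in $V$, so in particular $\Phi_{W'} = W'\Pi_{W'}$ and $\Phi_{W',+} = \Phi_{W'}\cap \Phi_+ = \Phi_{W'} \cap \real_{\geq 0}\Pi_{W'}$. The preliminary step is to note two containments of $\Pi_{W'}$: first, $\Pi_{W'} \seq \Phi_{W'} \seq q^{\perp}$ by Lemma \ref{x2.2}(b), whence $\real\Pi_{W'} \seq q^{\perp}$; and second, $\Pi_{W'} \seq \Phi_+$ by the definition of $\Pi_{W'}$ in \ref{x1.7}, whence $\real_{\geq 0}\Pi_{W'} \seq \real_{\geq 0}\Pi$. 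Together these imply that any root lying in $\real_{\geq 0}\Pi_{W'}$ (or more generally in $\real\Pi_{W'}\cap \Phi_+$) automatically lies in both $\Phi_+$ and $q^{\perp}$.

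With these preliminaries in place, I would establish the outer equalities first. The identity $\Phi_{W',+} = W'\Pi_{W'}\cap \Phi_+$ is just $\Phi_{W'} = W'\Pi_{W'}$ intersected with $\Phi_+$. For $\real_{\geq 0}\Pi_{W'} \cap \Phi = \Phi_+ \cap q^{\perp}$, the containment $\Phi_+ \cap q^{\perp} \seq \real_{\geq 0}\Pi_{W'}\cap \Phi$ follows from $\Phi_+ \cap q^{\perp} = \Phi_+ \cap \Phi_{W'} = \Phi_{W',+} \seq \real_{\geq 0}\Pi_{W'}$, while the reverse inclusion is the preliminary observation. For the two middle equalities, one direction of each (namely $W'\Pi_{W'}\cap\Phi_+ \seq \real\Pi_{W'}\cap\Phi_+$ and $\real_{\geq 0}\Pi_{W'}\cap\Phi \seq \real\Pi_{W'}\cap\Phi_+$) is trivial, using $W'\Pi_{W'} = \Phi_{W'} \seq \real\Pi_{W'}$ and the preliminary observation; the one substantive containment is $\real\Pi_{W'}\cap\Phi_+ \seq W'\Pi_{W'}\cap\Phi_+$, which holds because $\real\Pi_{W'}\seq q^{\perp}$ forces any root in $\real\Pi_{W'}$ into $\Phi\cap q^{\perp} = \Phi_{W'} = W'\Pi_{W'}$.

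There is no serious obstacle; the proof is essentially a consolidation of Lemma \ref{x2.2}(b) with the general structure theory of reflection-subgroup root systems. One could alternatively reduce to the standard facial case by writing $W' = xW_I x^{-1}$ with $x$ chosen of minimal length in its left coset (so that $x(\Pi_I)\seq \Phi_+$) and transporting Lemma \ref{x2.4} across by $x$ via the identities $\Pi_{W'} = x\Pi_I$ and $\Phi_{W',+} = x\Phi_{I,+}$ from \ref{x1.7}, but the direct route above avoids that coset-representative bookkeeping.
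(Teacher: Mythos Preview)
Your proof is correct and takes a genuinely different route from the paper's own argument. The paper proceeds by conjugation: it writes $W'=wW_{I}w^{-1}$ with $W_{I}$ standard facial and $w\in W^{I}$, invokes Lemma~\ref{x2.4}(b) to obtain $\Phi_{I}=\real\Pi_{I}\cap\Phi=\Phi\cap p^{\perp}$, and then transports these equalities back to $W'$ via $w$ using the formulae $\Pi_{W'}=w\Pi_{I}$, $\Phi_{W'}=w\Phi_{I}$ from \ref{x1.7}. Your argument instead stays with $W'$ throughout: the single observation $\real\Pi_{W'}\subseteq q^{\perp}$ (immediate from $\Pi_{W'}\subseteq\Phi_{W'}=\Phi\cap q^{\perp}$, Lemma~\ref{x2.2}(b)) replaces Lemma~\ref{x2.4}(b), since it forces $\real\Pi_{W'}\cap\Phi\subseteq\Phi\cap q^{\perp}=\Phi_{W'}$ directly. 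This is exactly the alternative you describe in your final paragraph---and the paper chooses the other one. Your route is shorter and avoids the coset-representative bookkeeping; the paper's route makes the dependence on the standard-facial Lemma~\ref{x2.4} explicit, which is perhaps more in keeping with the section's incremental development but is not logically necessary here.
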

\begin{proof} Write $q=wp$ where $w\in W$ and $p\in \CC$, so  $W'=wW_{I}w^{-1}$ where $W_{I}=\stab_{W}(p)$ is standard facial. Assume without loss of generality that  $w\in W^{I}$. 

By  \ref{x1.7}, $\Pi_{W'}=w\Pi_{I}$ and by
 Lemma  \ref{x2.4}(b), $\Phi_{I}=W_{I}\Pi_{I}=\real \Pi_{I}\cap \Phi=\Phi\cap p^{\perp}$. Hence 
 \begin{equation*}
W'\Pi_{W'}= \Phi_{W'}=w(\Phi_{I})=\real w(\Pi_{I})\cap w(\Phi)=\real\Pi_{W'}\cap \Phi. \end{equation*} Intersecting with $\Phi_{+}$ gives the first two equalities in the statement of the corollary.
The third holds since  $\real_{\geq 0}\Pi_{W'}\cap \Phi\seq \real\Pi_{W'}\cap  \Phi_{+}=\Phi_{W',+}\seq \real_{\geq 0}\Pi_{W'}\cap \Phi$.
 Lemma \ref{x1.10}(d) gives
$\Phi_{W'}=\Phi\cap q^{\perp}$ and intersecting with $\Phi_{+}$ gives the remaining assertion.

\end{proof}

 \subsection{} \label{x2.6} Next we describe the intersections of reflection subgroups with (standard) facial subgroups.

 \begin{prop}  Let $W'$ be a reflection subgroup of $W$. \begin{num}\item  Let $J\subseteq S$ be facial. Then $W'':=W'\cap W_{J}$ is a standard facial subgroup of $W'$, with  $\real_{\geq 0} \Pi_{W''}=\real_{\geq 0}\Pi_{W'}\cap \real \Pi_{J}$,  $\Pi_{W''}=\Pi_{W'}\cap \Phi_{J}=\Pi_{W'}\cap \real \Pi_{J}$,
 $\Phi_{W'',+}=\Phi_{W',+}\cap \Phi_{J}=\Phi_{W',+}\cap \real \Pi_{J}$ and  $\chi(W'')=
\chi(W')\cap W_{J}$.  \item 
If  $W''$ is any facial subgroup of $W$, then $W''':=W'\cap  W''$ is a  facial subgroup of $W'$
with $\Phi_{W''',+}=\Phi_{W',+}\cap \Phi_{W''}=\Phi_{W',+}\cap \real\Pi_{W''}$.
\end{num}\end{prop}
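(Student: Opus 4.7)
The plan is to reduce both parts to the description of stabilizers in the Tits cone given by Lemmas \ref{x2.2} and \ref{x2.3}, applied both to $(W,S)$ and to the Coxeter system $(W',\chi(W'))$.

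For (a), since $J$ is facial, pick $a\in \mc{C}$ with $\Pi_J=\Pi\cap a^{\perp}$. By Lemma \ref{x1.10}(e) one has $a\in \mc{C}_{W'}$. Lemma \ref{x2.2}(a) applied to $(W,S)$ at $a$ yields $\stab_W(a)=W_J$, and then
\[
\stab_{W'}(a)=W'\cap \stab_W(a)=W'\cap W_J=W'',
\]
so $W''$ is the stabilizer in $W'$ of a point of $\mc{C}_{W'}$, hence a standard facial subgroup of $W'$ by Lemma \ref{x2.3}(c). Next, apply Lemma \ref{x2.2}(a) \emph{internally}, to $(W',\chi(W'))$ at the point $a\in \mc{C}_{W'}$; this produces
\[
\Pi_{W''}=\Pi_{W'}\cap a^{\perp},\qquad \Phi_{W''}=\Phi_{W'}\cap a^{\perp},\qquad \chi(W'')=\{\,s_{\alpha}\mid \alpha\in \Pi_{W''}\,\}.
\]

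To convert these intrinsic statements to the required extrinsic formulations, I use Lemma \ref{x2.4}(a)--(c) applied to $W_J$ at $a$: namely $\real_{\geq 0}\Pi\cap a^{\perp}=\real_{\geq 0}\Pi_J$ and $\Phi\cap a^{\perp}=\Phi\cap \real \Pi_J=\Phi_J$. Since $\real \Pi_J\subseteq a^{\perp}$, intersecting with $\real_{\geq 0}\Pi_{W'}$ and using that $\real_{\geq 0}\Pi_{W''}\subseteq \real \Pi_{W''}\subseteq \real\Pi_J$ gives $\real_{\geq 0}\Pi_{W''}=\real_{\geq 0}\Pi_{W'}\cap \real\Pi_J$. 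Intersecting $a^{\perp}$ with $\Pi_{W'}$ (resp., $\Phi_{W',+}$) and using $\Phi\cap a^{\perp}=\Phi_J=\Phi\cap \real \Pi_J$ yields the equalities for $\Pi_{W''}$ and $\Phi_{W'',+}$. Finally, the bijection $\alpha\mapsto s_{\alpha}\colon \Phi_+\to T$ restricts to a bijection between $\Pi_{W'}\cap \Phi_J$ and $\chi(W')\cap W_J$, proving $\chi(W'')=\chi(W')\cap W_J$.

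For (b), by Lemma \ref{x2.3}(c) write $W''=\stab_W(q)$ for some $q\in \mc{X}$. By Lemma \ref{x1.10}(e), $q\in \mc{X}\subseteq \mc{X}_{W'}$, so
\[
W'''=W'\cap W''=W'\cap \stab_W(q)=\stab_{W'}(q)
\]
is a facial subgroup of $W'$, again by Lemma \ref{x2.3}(c) applied inside $W'$. Corollary \ref{x2.5} applied to the facial inclusion $W'''\leq W'$ gives $\Phi_{W''',+}=\Phi_{W',+}\cap q^{\perp}$, while Corollary \ref{x2.5} applied to $W''\leq W$ gives $\Phi_{W''}=\Phi\cap q^{\perp}=\real\Pi_{W''}\cap \Phi$. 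Combining these identifies $\Phi_{W',+}\cap q^{\perp}$ with both $\Phi_{W',+}\cap \Phi_{W''}$ and $\Phi_{W',+}\cap \real\Pi_{W''}$.

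The only real obstacle is bookkeeping: one must carefully distinguish the \emph{internal} structure of $W'$ as a Coxeter group (with simple roots $\Pi_{W'}$ and positive roots $\Phi_{W',+}$) from the ambient notions in $W$. The compatibility $\mc{C}\subseteq \mc{C}_{W'}$ and $\mc{X}\subseteq \mc{X}_{W'}$ from Lemma \ref{x1.10}(e) is what licenses the double use of Lemma \ref{x2.2} (in (a)) and of Corollary \ref{x2.5} (in (b)); once this is in place, no extra argument is needed.
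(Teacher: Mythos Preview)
Your proof is correct and follows essentially the same route as the paper's: pick a defining point $a\in\mc{C}$ for $J$ (respectively $q\in\mc{X}$ for $W''$), push it into $\mc{C}_{W'}$ (respectively $\mc{X}_{W'}$) via Lemma \ref{x1.10}(e), identify $W''$ (respectively $W'''$) as the stabilizer inside $W'$, and then translate the orthogonality description into the $\real\Pi_{J}$-description using Lemma \ref{x2.4} and Corollary \ref{x2.5}. The one place your write-up is slightly compressed is the cone equality $\real_{\geq 0}\Pi_{W''}=\real_{\geq 0}\Pi_{W'}\cap\real\Pi_{J}$: the containment $\real_{\geq 0}\Pi_{W'}\cap\real\Pi_{J}\subseteq\real_{\geq 0}\Pi_{W''}$ needs the identity $\real_{\geq 0}\Pi_{W'}\cap a^{\perp}=\real_{\geq 0}\Pi_{W''}$, which is Lemma \ref{x2.4}(a) applied \emph{internally} to $(W',\chi(W'))$; the paper makes this ``two applications of Lemma \ref{x2.4}(a)'' explicit, and you should too.
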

\begin{proof}  To prove (a), suppose that $W_{J}=W_{p}$ where $p\in \mc{C}$.
Then $J=\Pi\cap p^{\perp}$. We have $p\in \mc{C}_{W'}$ by \ref{x1.10}(e),
so by \ref{x2.2}(a) applied to $W'$, $W''=W'\cap W_{p}=(W')_{p}$ is a standard facial subgroup of $W'$.
By two applications of Lemma \ref{x2.4}(a), 
\begin{equation*}\begin{split}
\real_{\geq 0} \Pi_{W''}&=\real_{\geq 0}\Pi_{W'}\cap p^{\perp}= 
\real_{\geq 0}\Pi_{W'}\cap( \real_{\geq 0}\Pi\cap p^{\perp})\\
&=\real_{\geq 0}\Pi_{W'}\cap ( \real_{\geq 0}\Pi\cap \real \Pi_{J})=
\real_{\geq 0}\Pi_{W'}\cap  \real \Pi_{J}.\end{split}
\end{equation*}  Clearly, 
$\Pi_{W''}\seq\Pi_{W'}\cap \Phi_{J}\seq\Pi_{W'}\cap \real \Pi_{J}$ and 
 $\Phi_{W'',+}\seq\Phi_{W',+}\cap \Phi_{J}\seq\Phi_{W',+}\cap \real \Pi_{J}$.
 Intersecting $\real_{\geq 0} \Pi_{W''}=\real_{\geq 0}\Pi_{W'}\cap \real \Pi_{J}$
 with $\Pi_{W''}$ and $\Phi_{W'',+}$ in turn shows that the above inclusions are actually equalities. Finally for (a), note that 
 \begin{equation*}
 \chi(W')=\mset{s_{\alpha}\mid \alpha\in \Pi_{W''}}=\mset{s_{\alpha}\mid \alpha\in \Pi_{W'}\cap \Phi_{J}}=\chi(W')\cap \Phi_{J}.
 \end{equation*}

For  (b), write $W''=W_{q}$ for some $q\in \mc{X}$. By \ref{x1.10}(e) and \ref{x2.2}(b),
 $W'''=W'\cap W_{q}=(W')_{q}$ is a facial subgroup of $W'$
with $\Phi_{W'''}=\Phi_{W'}\cap q^{\perp}$. The rest of (b) follows using Corollary \ref{x2.5}. \end{proof}

 \subsection{} \label{x2.7}  The preceding lemma implies that an intersection of two facial subgroups is a facial subgroup of both of them. The next lemma shows in particular that the  intersections of two facial subgroups of $W$ is a facial subgroup of $W$.

  \begin{lem}\begin{num}
 \item The stabilizer of a point  $x$ of $\mc{X}$ coincides with the pointwise stabilizer of the facet of $\mc{X}$ containing  $x$.
 \item The    pointwise  stabilizer of a finite subset $Y$ of $\mc{X}$   is the stabilizer of some point  in the convex hull of $Y$. 
 \item The intersection of finitely many (standard) facial subgroups of $W$ is a (standard) facial subgroup of $W$.
 \end{num}\end{lem}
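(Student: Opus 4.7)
My overall strategy concentrates the work in (b), with (a) and (c) following as essentially formal consequences. Part (a) is immediate from Lemma \ref{x2.3}(b): the facet containing $x$ has the form $w\mc{C}(I)$, and by that lemma every point of this facet has stabilizer $wW_Iw^{-1}$; in particular all these stabilizers coincide with $W_x$, so $W_x$ is precisely the pointwise stabilizer of the facet.

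For (b), I would induct on $|Y|$. The case $|Y|=1$ is trivial, and by associativity of intersection the inductive step reduces to the case $|Y|=2$: given $y_1,y_2\in\mc{X}$ one must produce $p\in\conv(y_1,y_2)$ with $W_p=W_{y_1}\cap W_{y_2}$. The closed segment $[y_1,y_2]$ lies in $\mc{X}$ since $\mc{X}$ is a (convex) cone. For each $\alpha\in\Phi_+$ the affine function $f_\alpha(t):=\mpair{(1-t)y_1+ty_2,\alpha}$ has constant sign on $(0,1)$ unless $\mpair{y_1,\alpha}$ and $\mpair{y_2,\alpha}$ have opposite signs, which forces $\mpair{y_i,\alpha}<0$ for some $i$; Lemma \ref{x1.10}(a) guarantees this occurs for only finitely many $\alpha$. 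Consequently the signature $(\prec_\alpha)_{\alpha\in\Phi_+}$ appearing in Lemma \ref{x2.3}(f) is constant on some open sub-interval $(t_-,t_+)\subseteq(0,1)$, so all $p_t:=(1-t)y_1+ty_2$ with $t\in(t_-,t_+)$ lie in a single facet $F^*$. Pick any such $p:=p_{t_0}$. The inclusion $W_{y_1}\cap W_{y_2}\subseteq W_p$ is obvious. For the reverse, any $g\in W_p$ fixes $F^*$ pointwise by (a), hence fixes two distinct points $p_{t'},p_{t''}$ with $t'\neq t''$; the resulting $2\times 2$ linear system in the unknowns $g(y_1),g(y_2)$ has coefficient matrix of determinant $t''-t'\neq 0$, forcing $g(y_1)=y_1$ and $g(y_2)=y_2$.

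For (c), choose for each given facial subgroup $W_{I_j}$ a point $q_j\in\mc{X}$ with $W_{q_j}=W_{I_j}$ (taking $q_j\in\mc{C}$ when all $W_{I_j}$ are standard facial). Then (b) delivers $p\in\conv(q_1,\ldots,q_n)$ with $W_p=\bigcap_j W_{q_j}=\bigcap_j W_{I_j}$; by convexity of $\mc{X}$ (resp., $\mc{C}$) the point $p$ lies in $\mc{X}$ (resp., $\mc{C}$), so $W_p$ is facial (resp., standard facial). The main obstacle is the key case of (b): the naive midpoint $p=(y_1+y_2)/2$ generally fails, as it may lie on $\alpha^\perp$ for some root $\alpha$ orthogonal to neither $y_i$ (for example when $y_2=s_\alpha y_1$ and $\mpair{y_1,\alpha}>0$), producing an extra reflection $s_\alpha\in W_p\setminus(W_{y_1}\cap W_{y_2})$. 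Choosing $p$ generically on the segment — equivalently, in the relative interior of a facet — avoids this, and part (a) supplies the rigidity needed to deduce that $g$ fixing the single point $p$ already forces $g(y_1)=y_1$ and $g(y_2)=y_2$.
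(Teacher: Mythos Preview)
Your proof is correct and follows essentially the same architecture as the paper's: part (a) via Lemma~\ref{x2.3}(b), part (b) by reducing to two points and choosing a generic point on the segment $[y_1,y_2]$ avoiding the finitely many ``bad'' root hyperplanes, and part (c) from (b) via Lemma~\ref{x2.3}(c) and convexity of $\mc{X}$ (resp., $\mc{C}$).

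The one place where your argument diverges from the paper is in verifying the inclusion $W_p\subseteq W_{y_1}\cap W_{y_2}$ in (b). The paper chooses $z=tx+(1-t)y$ with $0<t<1$ avoiding $\alpha^\perp$ for each of the finitely many $\alpha\in\Phi_+$ with $\mpair{x,\alpha}\mpair{y,\alpha}<0$; then $\mpair{z,\alpha}=0$ forces $\mpair{x,\alpha}=\mpair{y,\alpha}=0$ directly, and Lemma~\ref{x1.10}(d) (the stabilizer is generated by reflections in the orthogonal roots) gives $W_z\subseteq W_x\cap W_y$ immediately. You instead invoke (a) to get that $W_p$ fixes an entire facet, hence two distinct collinear points $p_{t'},p_{t''}$, and solve the resulting $2\times 2$ system. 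Both work; the paper's route is a little shorter and uses the root-theoretic description of stabilizers directly, while yours has the pleasant feature of making (a) do real work and not appealing to \ref{x1.10}(d) again.
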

 \begin{rem}  This would follow from the previous lemma if  it was known that a (standard) facial subgroup of a (standard) facial subgroup of $W$ is a (standard) facial subgroup of $W$,  but it seems possible that this fails in this generality (see \ref{x4.2}(f) and Remark \ref{x2.1}).
 \end{rem}
 \begin{proof} Part (a) follows from \ref{x2.3}(b).
 For the proof of (b), denote the stabilizer in $W$ of $x\in \mc{X}$ as $W_{x}$. It suffices to show that if $x,y\in \mc{X}$, then $W_{x}\cap W_{y}=W_{z}$ for some $z$ in the closed segment $[x,y]=\set{tx+(1-t)y\mid
 t\in \real,  0\leq t\leq 1}$ (cf. the proof of \cite[Lemma 2.1.2]{K}). We may assume $x\neq y$. The set 
 $\Gamma:=\mset{\alpha\in \Phi_{+}\mid \mpair{x,\alpha}\mpair{y,\alpha}<0}$ is finite by Lemma \ref{x1.10}(a). We may and do choose $z=tx+(1-t)y$ with $0<t<1$ in $ [x,y]$ such that  for all $\alpha\in \Gamma$, $\mpair{z,\alpha}\neq 0$. Then for $\al\in \Phi_{+}$,  $\mpair{z,\alpha}= 0$  implies  that
 $\mpair{\alpha,x}=\mpair{\alpha,y}=0$. By \ref{x1.10}(d), we get $W_{z}\subseteq W_{x}\cap W_{y}$.
 The reverse inclusion  $W_{z}\supseteq W_{x}\cap W_{y}$ is clear, proving (b). Then (c) follows from (b) and \ref{x2.3}(c) on recalling that $\mc{X}$ (and $\mc{C}$) is convex. \end{proof}
\subsection{}\label{x2.8} Slightly extending the terminology of \cite{K}, define  a subgroup $W''$ of $W$ to be a \emph{parabolic closure}
(resp., \emph{facial closure}) of a subset $X$ of $W$ if $W''$ is a parabolic (resp., facial) subgroup $W$, and any parabolic (resp., facial) subgroup of $W$ containing $W'$ contains $W''$.
Such a parabolic (resp facial) closure exists if and only if
 the intersection of all parabolic (resp., facial) subgroups containing $X$ is
 parabolic (resp., facial), in which case it coincides with that intersection   and so is unique.
From the above, it follows that $X$ has a parabolic (resp., facial) closure 
if it is contained in some finite rank parabolic (resp., facial) subgroup  of $W$; 
then the parabolic (resp., facial)  closure  is the parabolic (resp., facial) subgroup of minimal rank amongst those containing $X$. Similarly, we define the \emph{standard parabolic closure} and \emph{standard facial closure} of any subset of $W$. The standard parabolic closure of any subset exists, and the standard facial closure of a subset $X$ exists if $X$ is contained in a finite rank standard facial subgroup.   
 \subsection{}\label{x2.9} Well-known properties of  finite subgroups of Coxeter groups and their parabolic closures are collected in the lemma below. For proofs, see for example   \cite{Hum}, \cite{Bour}  and   \cite[1.2.6, 3.2]{K}. 
\begin{lem} Let $W'$ be a reflection subgroup of $W$ and  $W''$ be the reflection subgroup of $W$ with  $\Phi_{W''}=\real\Pi_{W'}\cap \Phi$. \begin{num}
\item   A subgroup of $W$ is finite if and only if  it is contained in   some  finite parabolic subgroup of $W$.
\item The following conditions $\text{\rm (i)--(iii)}$ are equivalent:
\begin{subconds}\item  $W'$ is finite 
\item  $\Pi_{W'}$ is finite and the ``Cartan matrix'' 
$(\mpair{\alpha,\beta})_{\alpha,\beta\in \Pi_{W'}}$ is positive definite. 
\item   $\real\Pi_{W'}$ is finite dimensional and
the restriction of $\mpair{-,-}$ to $\real \Pi_{W'}$  is positive definite.\end{subconds}
\item If $W'$ is finite, then $\Pi_{W'}$ is linearly independent, $W''$ is finite, $W''$ has the same rank as $W'$ and $W''$ is the parabolic closure of $W'$. Further, every subset of $\Pi_{W'}$ is facial in $\Pi_{W'}$ (for  the Coxeter group $W'$). \end{num}\end{lem}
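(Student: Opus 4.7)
The plan is to prove the three parts in the order (b), (a), (c), since the substantive geometric content lies in (b) and the other two parts then follow using the Tits-cone machinery already developed.

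For (b), the equivalence (ii)$\Leftrightarrow$(iii) is standard linear algebra: the Cartan matrix $(\mpair{\al,\bt})_{\al,\bt\in \Pi_{W'}}$ is the Gram matrix of $\Pi_{W'}$, so its positive-definiteness is equivalent to linear independence of $\Pi_{W'}$ together with positive-definiteness of the form on $\real\Pi_{W'}$, and positive-definiteness of the form on finite-dimensional $\real\Pi_{W'}$ forces $\Pi_{W'}$ (a set of unit vectors which, being simple, are pairwise at distance $\geq \sqrt{2}$) to be finite. For (iii)$\Rightarrow$(i), the root system $\Phi_{W'}=W'\Pi_{W'}$ sits on the unit sphere of finite-dimensional Euclidean $\real\Pi_{W'}$, and Lemma \ref{x1.21} applied inside $W'$ gives a uniform positive lower bound on $|\mpair{\al,\ck\bt}|$ for non-orthogonal pairs of roots, forcing $\Phi_{W'}$ to be discrete on a compact set and hence finite; then $W'$ is finite since it acts faithfully on $\Phi_{W'}$. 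For (i)$\Rightarrow$(ii), I would reduce via the canonical lift of \ref{x1.4} to linearly independent simple roots; then the Cartan matrix coincides with the standard Cartan matrix of the finite Coxeter system $(W',\chi(W'))$, whose positive-definiteness is classical (verifiable component-by-component via the finite Coxeter classification).

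For (a), the forward direction is immediate. For the converse, given a finite $H\leq W$, pick any $p\in \mc{C}$ and form the $H$-averaged vector $v:=\sum_{h\in H}h(p)\in \mc{X}$, which is fixed by $H$ because $\mc{X}$ is a $W$-stable convex cone. By Lemma \ref{x2.3}(c)--(d), $\stab_W(v)$ is a parabolic subgroup containing $H$. Conjugating $v$ into $\mc{C}$, say $v=wv'$ with $v'\in \mc{C}$, yields $\stab_W(v)=wW_Jw^{-1}$ where $J=\mset{s_\al:\al\in \Pi,\mpair{v',\al}=0}$; finiteness of $W_J$ is then deduced via (b) by averaging a Euclidean inner product over $H$ to produce an $H$-invariant positive-definite form on a finite-dimensional $H$-stable subspace of $V$ containing the $H$-orbit of $p$, which forces the original form to be positive definite on $\real \Pi_J$ and so (b)(iii)$\Rightarrow$(i) applies.

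For (c), once (b) is in hand, linear independence of $\Pi_{W'}$ is immediate from positive-definiteness of its Gram matrix. The set $\Phi_{W''}=\real\Pi_{W'}\cap \Phi$ consists of unit vectors in finite-dimensional Euclidean $\real\Pi_{W'}$ and so is finite, making $W''$ finite. The inclusions $\Pi_{W'}\seq \Phi_{W''}\seq \real\Pi_{W'}$ give $\real\Pi_{W''}=\real\Pi_{W'}$, hence $\rank(W'')=\dim\real\Pi_{W'}=\rank(W')$. To identify $W''$ as the parabolic closure of $W'$: any parabolic $P\supseteq W'$ satisfies $\Phi_P\supseteq \Phi_{W'}$ and, by Corollary \ref{x2.5}, $\Phi_P=\real\Pi_P\cap \Phi\supseteq \real\Pi_{W'}\cap \Phi=\Phi_{W''}$, so $P\supseteq W''$; and $W''$ is itself parabolic by applying (a) to the finite group $W''$. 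Finally, for $\Delta\seq \Pi_{W'}$, linear independence of $\Pi_{W'}$ lets me pick $v'\in \real\Pi_{W'}$ with $\mpair{v',\al}=0$ for $\al\in \Delta$ and $\mpair{v',\al}>0$ for $\al\in \Pi_{W'}\sm \Delta$, placing $v'\in \mc{C}_{W'}$ and exhibiting $\Delta$ as facial in $\Pi_{W'}$. The principal obstacle is the coordination of (a) with (b) so as to avoid circularity: one must establish (b)(iii)$\Rightarrow$(i) as a self-contained Euclidean-reflection-group fact before invoking it to finish (a).
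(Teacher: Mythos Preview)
The paper does not actually prove this lemma; it simply cites \cite{Hum}, \cite{Bour}, and \cite[1.2.6, 3.2]{K}. Your outline for (b) is sound. There is, however, a genuine gap in your argument for (a). Having produced an $H$-fixed point $v\in\mc{X}$ with $\stab_W(v)=wW_Jw^{-1}$, you claim that averaging a Euclidean inner product over $H$ ``forces the original form to be positive definite on $\real\Pi_J$''. It does not: the averaged form is a \emph{different} bilinear form from $\mpair{-,-}$, living on a subspace (the span of $H\cdot p$) that bears no evident relation to $\real\Pi_J$, and $W_J$ can be strictly larger than any conjugate of $H$. The standard repair is to choose $p$ in the \emph{interior} of $\mc{C}$ (after passing to an ample extension or to a finite-rank standard parabolic so that this interior is non-empty), whence $v\in\inter(\mc{X})$, and then invoke the separate, non-trivial fact that stabilizers of interior points of the Tits cone are finite (see \cite[5.13]{Hum}, \cite[Ch.~V, \S4, Ex.~2]{Bour}, or Lemma~\ref{x4.3}(c) of this paper under the hypotheses of \S\ref{x4}).

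Your argument for (c) also has two slips. First, Corollary~\ref{x2.5} is stated only for \emph{facial} subgroups, and in the generality of \S\ref{x1}--\ref{x3} an arbitrary parabolic $P$ need not be facial, so the identity $\Phi_P=\real\Pi_P\cap\Phi$ requires justification. Second, ``$W''$ is itself parabolic by applying (a) to the finite group $W''$'' is a non sequitur: part (a) only places $W''$ inside some finite parabolic, it does not make $W''$ parabolic. Both issues disappear if you first use (a) to embed $W'$ in a finite parabolic $wW_Kw^{-1}$ and then work entirely inside the finite Coxeter group $W_K$, where $\Pi_K$ is linearly independent (by (b)), every subset of $K$ is facial, and the classical theory of finite reflection groups applies directly.
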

\begin{rem} In particular,
$\Pi_{W'}$ is non-degenerate if $W'$ is finite. 
 In general, all parts of (c) fail without finiteness of $W'$. \end{rem}

\subsection{} \label{x2.10}
The next result gives  information on the facial subsets of $S$.
The main facts we require  are (b) (which extends    \cite[Theorem 4]{V}  and  \cite[2.2.4]{K})
 and (d) below.

\begin{lem}\begin{num}
\item Let $\tau\in \mc{C}$ and $I\subseteq S$ such that $W_{I}$ is finite.
Let $\Delta$ denote the union of all components of $\Pi_{I}$ which are not contained in
$\tau^{\perp}$. Then  $\tau':=\sum_{w\in W_{I}}w\tau
\in \mc{C}$ and $\tau^{\prime\perp}\cap \Pi=\Pi_{I}\cup(\Pi\cap \tau^{\perp}\cap \Delta^{\perp})$
\item  Let $I$, $J$   be  subsets of $S$ such that  $J$ is facial, $W_{I}$ is finite, and $I$ and $J$ are separated. Then $I\cup J$ is facial. In particular,  if $\emptyset$ is facial, then $I$ is facial.
\item Let $I$ be a facial subset of $S$ and $K$ be a subset of $S$ such that $W_{K}$ is finite,
$K$ is separated from $I\setminus K$, and each component of $K$ contains an  element of $S\setminus I$. Then $I\setminus K$ is facial.
\item If $I$ is a facial subset of $S$ and $J\subseteq S$ is $W$-conjugate to $I$, then $J$ is  a facial subset of $S$.
\end{num}     \end{lem}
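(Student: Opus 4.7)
The plan is to prove (a)--(d) in sequence, with each part feeding into the next.

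For (a), set $\tau' := \sum_{w \in W_I} w\tau$ and study the pairings $\mpair{\tau', \alpha}$ for $\alpha \in \Pi$. The $W_I$-invariance of $\tau'$ (a sum over a group) forces $\mpair{\tau', \alpha} = 0$ for $\alpha \in \Pi_I$. For $\alpha \in \Pi \setminus \Pi_I$, rewrite $\mpair{\tau', \alpha} = \sum_w \mpair{\tau, w^{-1}\alpha}$; since $w \in W_I$ sends to negative only the roots in $\Phi_{W_I,+}$, each $w^{-1}\alpha$ is a positive root, so every summand is $\geq 0$ and $\tau' \in \mc{C}$. The sum vanishes iff each summand does, iff $W_I \alpha \seq \tau^\perp$, iff $\lin(W_I \alpha) \seq \tau^\perp$. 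A direct computation (using $s_\beta \alpha - \alpha \in \real\beta$ and iterating) yields $\lin(W_I \alpha) = \real\alpha + \real\bigl(\bigcup_{\Pi_j \not\perp \alpha} \Pi_j\bigr)$ over components $\Pi_j$ of $\Pi_I$; the inclusion $\lin(W_I \alpha) \seq \tau^\perp$ is thus equivalent to $\alpha \in \tau^\perp$ together with $\Pi_j \seq \tau^\perp$ for every component $\Pi_j$ of $\Pi_I$ non-orthogonal to $\alpha$, i.e., to $\alpha \in \tau^\perp \cap \Delta^\perp$.

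For (b), apply (a) to a $\tau \in \mc{C}$ realising $J$. Separation of $I, J$ forces $\Pi_I \cap \Pi_J = \emptyset$ (no nonempty subset of $\Pi$ can be self-orthogonal, since $\mpair{\alpha, \alpha} = 1$), so no component of $\Pi_I$ lies in $\Pi_J = \tau^\perp \cap \Pi$, giving $\Delta = \Pi_I$. Part (a) then yields $\tau'^\perp \cap \Pi = \Pi_I \cup (\Pi_J \cap \Pi_I^\perp) = \Pi_I \cup \Pi_J = \Pi_{I \cup J}$, using $\Pi_J \seq \Pi_I^\perp$ from separation. The special case $J = \emptyset$ gives the second statement.

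For (c), take $\tau \in \mc{C}$ realising $I$ and apply (a) with $W_K$: the hypothesis that each component of $K$ meets $S \setminus I$ forces $\Delta = \Pi_K$, so $\tau' := \sum_{w \in W_K} w\tau$ realises $I \cup K$. To convert this into a witness for $I \setminus K$, introduce the sum of fundamental weights $\rho_K := \sum_{\alpha \in \Pi_K} \lambda_\alpha \in \real \Pi_K$, where $\lambda_\alpha$ is defined by $\mpair{\lambda_\alpha, \beta} = \delta_{\alpha, \beta}$ for $\beta \in \Pi_K$ (well-defined since $\mpair{-,-}$ is positive definite on $\real \Pi_K$ by Lemma~\ref{x2.9}(b)); then $\mpair{\rho_K, \beta} = 1$ for $\beta \in \Pi_K$, $\rho_K \perp \Pi_{I \setminus K}$ (by separation), and $\mpair{\rho_K, \gamma} \leq 0$ for $\gamma \in \Pi \setminus \Pi_K$ (since the $\lambda_\alpha$ lie in $\real_{\geq 0} \Pi_K$). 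Set $\sigma := \tau' + \epsilon \rho_K$ for small $\epsilon > 0$: the pairings are $0$ on $\Pi_{I \setminus K}$, equal $\epsilon > 0$ on $\Pi_K$, and in the remaining case $\gamma \in \Pi \setminus \Pi_{I \cup K}$ one needs $\mpair{\tau', \gamma} + \epsilon \mpair{\rho_K, \gamma} > 0$. The delicate point, especially when $\Pi$ is infinite, is uniformity of $\epsilon$: for any $\gamma$ with $\mpair{\rho_K, \gamma} < 0$, $\gamma$ must be adjacent to some component $K_j$, which by hypothesis contains an element $\beta_j \in S \setminus I$ with $\mpair{\tau, \beta_j} > 0$; a term in $\mpair{\tau', \gamma} = \sum_{w \in W_K} \mpair{\tau, w^{-1}\gamma}$ arising from an element of $W_{K_j}$ built out of $s_{\beta_j}$ and its neighbours contributes a positive quantity of the form $C_j \cdot |\mpair{\rho_K, \gamma}|$ with $C_j$ a positive constant depending only on $\mpair{\tau, \beta_j}$ and $W_{K_j}$, so $\epsilon < \min_j C_j$ works uniformly.

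For (d), use Theorem~\ref{x1.15}: writing $\Pi_J = w(\Pi_I)$, factor $w$ as a sequence of elementary moves $\Pi_I = \Delta_0 \to \cdots \to \Delta_n = \Pi_J$ with $w_i = w_{\Gamma_i} w_{\Gamma_i \setminus \{\alpha_i\}}$. A direct computation using the opposition involution $\iota_{\Gamma_i}$ of the finite Coxeter group $W_{\Gamma_i}$ identifies $\Delta_i = (\Delta_{i-1} \cup \{\alpha_i\}) \setminus \{\iota_{\Gamma_i}(\alpha_i)\}$. At each step, apply (c) with $K = \Gamma_i$ (the conditions hold because $\Gamma_i$ is a whole connected component of $\Delta_{i-1} \cup \{\alpha_i\}$, hence separated from $\Delta_{i-1} \setminus \Gamma_i$, and $\alpha_i \in \Gamma_i \cap (S \setminus \Delta_{i-1})$) to conclude $\Delta_{i-1} \setminus \Gamma_i$ is facial; then apply (b) with $J' = \Delta_{i-1} \setminus \Gamma_i$ and $I' = \Gamma_i \setminus \{\iota_{\Gamma_i}(\alpha_i)\}$ (finite type, separated from $J'$) to conclude $\Delta_i = I' \cup J'$ is facial. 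Iterating gives $\Pi_J$ facial, proving (d). The principal obstacle throughout is the uniform bound in (c), where one relies on the hypothesis that each component of $K$ intersects $S \setminus I$ to produce the compensating terms in $\mpair{\tau', \gamma}$.
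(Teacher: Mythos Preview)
Your treatments of (b) and (d) coincide with the paper's. Your argument for (a) takes a different and somewhat more direct route: rather than passing through Lemma~\ref{x2.4}(d), Lemma~\ref{x1.10} and the double-coset criterion of Lemma~\ref{x1.17} (characterising when $W_I = W_J W_{I_\alpha}$), you compute $\lin(W_I\alpha)$ by a connectivity argument. This is correct; the point you might make more explicit is that $\lin(W_I\alpha)$ is $W_I$-stable, so that once one $\beta\in\Pi_I$ lies in it, all simple roots in the component of $\beta$ do too.

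Your approach to (c) is genuinely different from the paper's and has a real gap at the uniformity step. The paper never perturbs: it takes a point $\mu$ in the convex hull of $\{w\tau : w\in W_K\}$ whose stabilizer is $\bigcap_w W_{w\tau}$ (via Lemma~\ref{x2.7}(b)), translates it into $\mc{C}_K$ by the $W_K$-action, and then checks directly that $\mpair{\mu,\alpha}>0$ for all $\alpha\in\Phi_{K,+}$ using that each $W_K$-orbit on $\Phi_K$ meets $\Phi_{K,+}\setminus\Phi_{I\cap K}$. Your perturbation $\sigma=\tau'+\epsilon\rho_K$ can be made to work, but the claim that ``a term \ldots\ contributes a positive quantity of the form $C_j\cdot|\mpair{\rho_K,\gamma}|$'' is not justified: a single summand $\mpair{\tau,w^{-1}\gamma}$ need not dominate $|\mpair{\rho_K,\gamma}|$ with a constant independent of $\gamma$, and $\gamma$ may be adjacent to several components of $K$. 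What does work is the full sum. Writing $\pi(\gamma)\in\real\Pi_K$ for the orthogonal projection, one has $\sum_{w\in W_K}w\gamma=|W_K|(\gamma-\pi(\gamma))$ and $-\pi(\gamma)=\sum_{\alpha\in\Pi_K}|\mpair{\gamma,\alpha}|\,\lambda_\alpha$, whence $\mpair{\tau',\gamma}\geq |W_K|\sum_\alpha|\mpair{\gamma,\alpha}|\,\mpair{\tau,\lambda_\alpha}$ while $|\mpair{\rho_K,\gamma}|=\sum_\alpha|\mpair{\gamma,\alpha}|\,\mpair{\rho_K,\lambda_\alpha}$. The uniform bound then follows provided $\mpair{\tau,\lambda_\alpha}>0$ for \emph{every} $\alpha\in\Pi_K$; this requires the standard fact that for an irreducible finite Coxeter group every fundamental weight is a \emph{strictly} positive combination of all simple roots (your ``$\lambda_\alpha\in\real_{\geq 0}\Pi_K$'' is not enough), combined with the hypothesis that each component of $K$ meets $S\setminus I$. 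Without invoking that strict positivity, the argument does not close.
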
\begin{proof}

First we prove (a). Let $J\subseteq S$ with $\Pi_{J}=\Lambda:=\Pi_{I}\cap \tau^{\perp}$.
 For $\alpha\in \Pi_{I}$, we have $s_{\alpha }\tau'=\tau'$ so $\mpair{\alpha,\tau'}=0$. Next, consider $\alpha\in \Pi\setminus \Pi_{I}$. We have $\mpair{\tau',\alpha}=\sum_{w\in W_{I}}\mpair{\tau,w\alpha}$.
Since $\alpha\in -\mc{C}_{I}$, we have $w(\alpha)\in \alpha+\real_{\geq 0 }\Pi_{I}$ by \ref{x1.13}.
In particular, $w(\alpha)\in \Phi_{+}$.
Hence each term $\mpair{\tau, w\alpha}\geq 0$, so it follows that $\mpair{\tau',\alpha}\geq 0$.
This shows that $\tau'\in \mc{C}$. Further, $\mpair{\tau',\alpha}=0$ if and only if   $\mpair{\tau,\alpha}=0$ and for each $w\in W_{I}$, $w(\alpha)-\alpha\in \real_{\geq 0}\Pi_{I}\cap \tau^{\perp}=\real_{\geq 0}\Lambda$.
By Lemma \ref{x2.9},  $\Lambda$ is facial in $\Pi_{I}$.
 Lemma \ref{x2.4}(c) and \ref{x1.13} therefore show  that $w(\alpha)-\alpha\in \real_{\geq 0}\Lambda$ if and only if  $w(\alpha)=w'(\alpha)$ for some $w'\in W_{J}$. But for $w'\in W_{J}$,  Lemma  \ref{x1.10} implies  that $w^{-1}w'\in W_{I}$ stabilizes
$\alpha\in -\mc{C}_{I}$ if and only if  $w^{-1}w'\in W_{I_{\alpha}}$ where $\Pi_{I_{\alpha}}=\Pi_{I}\cap \alpha^{\perp}$.  It follows that 
(for $\alpha\in \Pi\setminus \Pi_{I}$), $\mpair{\tau',\alpha}=0$ if and only if  
$\mpair{\tau,\alpha}=0$  and $W_{I}=W_{J}W_{I_{\alpha}}$. By Lemma \ref{x1.17},  however,  $W_{I}=W_{J}W_{I_\alpha}$ if and only if  every component of $\Pi_{I}$ which is not contained in $\Lambda$ (i.e. is not contained in $\tau^{\perp}$) is contained in $\Pi_{I_{\alpha}}$.  That is, for $\al\in \Pi\sm \Pi_{I}$, one has $\al\in (\tau')^{\perp}$ if and only if $\al\in \tau^{\perp}\cap \D^{\perp}$.
Since  it has already been noted that $\Pi_{I}\seq  (\tau')^{\perp}$,  this completes the proof of (a).

For (b), choose $\tau\in \mc{C}$ with $\tau^{\perp}\cap \Pi=\Pi_{J}$ and let $\tau'\in \mc{C}$ and $\Delta$ be as in (a). We have $\Delta=\Pi_{I}$ since $\Pi_{I}\cap \tau^{\perp}=\Pi_{I}\cap \Pi_{J}=\eset$. Hence  $\Pi\cap( \tau^{\prime})^{\perp}=\Pi_{I}\cup(\Pi_{J}\cap \Pi_{I}^{\perp})=\Pi_{I\cup J}$ by (a), since $\Pi_{I}$ and $\Pi_{J}$ are separated. 

Next we prove (c). Let   $\tau\in \mc{C}$ with $\Pi\cap \tau^{\perp}=\Pi_{I}$. Set $L:=I\setminus K$ and $J:=I\cap K$.
Note that $\Phi_{+}\setminus \Phi_{K\cup I}$  and $\Phi_{L,+}$  are $W_{K}$-invariant,
and $\Phi_{I\cup K}=\Phi_{K}\dot \cup \Phi_{L}$ since $I\cup K=K\dot\cup L$  where $K$ and $L$ are separated.  We have 
$ \mpair{\tau,\alpha}>0$ for all $\alpha\in \Phi_{+}\setminus \Phi_{I\cup K}$ and for all
$\alpha\in \Phi_{K,+}\setminus \Phi_{J}$, while 
$\mpair{\tau,\alpha}=0$ for all $\alpha\in \Phi_{L}$  and all  $\alpha\in \Phi_{J}$.  By \ref{x2.7}(b), the set of points $\mu$ in the convex hull $Y$ of $\mset{w\tau\mid w\in W_{K}}$ 
 with stabilizer $W_{\mu}=\cap_{w\in W_{K}}W_{w\tau}$  is non-empty, and it is clearly $W_{K}$-stable. Since $V=\mc{X}_{K}=W_{K}\mc{C}_{K}$ by finiteness of $W_{K}$ and Lemma \ref{x1.10}(h), we may choose a point  $\mu\in  Y\cap \mc{C}_{K}$ such that $W_{\mu}=\cap_{w\in W_{K}}W_{w\tau}$. In particular,
 $\mpair{\mu,\alpha}\geq 0$ for all $\alpha\in \Phi_{K,+}$. We have also 
 $ \mpair{\mu,\alpha}>0$ for all $\alpha\in \Phi_{+}\setminus \Phi_{I\cup K}$  and 
$\mpair{\mu,\alpha}=0$ for all $\alpha\in \Phi_{L}$ since $\mu$ is in the convex closure of $Y$.  We now show that
$\mpair{\mu,\alpha}>0$ for $\alpha\in \Phi_{K,+}$, which will imply that 
$\mu\in \mc{C}$ and $\Pi\cap \mu^{\perp}=\Pi_{L}=\Pi_{I\setminus J}$ as required to prove (c).
Note  that by the assumptions and   Lemma \ref{x1.16} applied to each component of $K$, each $W_{K}$-orbit on $\Phi_{K}$ contains a point of $ \Phi_{K,+}\setminus \Phi_{J}$. Hence for  $\alpha\in \Phi_{K,+}$ there is some $w\in W_{K}$ and $\beta\in \Phi_{K,+}\setminus \Phi_{J} $
with $w^{-1}\alpha=\beta$. We then have $\mpair{w\tau,\alpha}=\mpair{\tau,\beta}>0$,
so $s_{\alpha}\not \in W_{w\tau}$, $s_{\alpha}\not\in W_{\mu}$ by definition of $\mu$, and
 hence $\mpair{\mu,\alpha}\neq 0$. But we've already seen $\mpair{\mu,\alpha}\geq 0$, so
$\mpair{\mu,\alpha}>0$ as required. 

Finally we prove (d). Suppose that $wW_{I}w^{-1}=W_{J}$ where $I,J\subseteq S$ and $I$ is facial. Without loss of generality, we may assume that $w$ is a minimal length  $(W_{J},W_{I})$-double coset representative. Then $w\Pi_{I}=\Pi_{J}$. Using induction,   we may assume by \ref{x1.15}, that there
exists $\alpha\in \Pi\setminus \Pi_{I}$ such that, setting $s=s_{\alpha}\in S$,  the connected component $K$ of  $I\cup \set{s}$ containing  $s$ is of finite type and
$w=w_{K}w_{K\setminus\set{s}}$. Let $t=w_{K}sw_{K}$.
We have $\Pi_{I}=\Pi_{I\setminus K}\,\dot\cup\, \Pi_{K\setminus \set{s}}$ so \[\Pi_{J}=w\Pi_{I}=w(\Pi_{I\setminus K})\,\dot\cup\, w(\Pi_{K\setminus \set{s}})=
\Pi_{I\setminus K}\,\dot\cup\, \Pi_{K\setminus \set{t}}.\]
  By (c), we conclude that 
 $I\setminus K$ is facial. Since $K\setminus \set{t}$ is separated from $I\setminus K$ and is of finite type, we conclude from (b) that $J=(I\setminus K)\cup (K\setminus \set{t})$ is facial.
  \end{proof}
  \subsection{} \label{x2.11} The following   is an immediate corollary  of part (d) of the preceding lemma.
   \begin{cor} The    standard facial subgroups of $W$ are exactly the   standard parabolic subgroups of $W$ which are facial.  \end{cor}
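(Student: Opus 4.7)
The plan is to deduce the corollary as a near-immediate consequence of Lemma \ref{x2.10}(d). The forward direction is purely definitional: if $W_I$ is a standard facial subgroup of $W$, then $I \subseteq S$ is facial by definition, so $W_I$ is a standard parabolic subgroup; and being a standard facial subgroup makes $W_I$ a facial subgroup of $W$ (the trivial conjugate, by $1_W$). So nothing needs to be proved here.

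For the reverse direction, suppose $W_I$ is a standard parabolic subgroup (so $I \subseteq S$) which is also a facial subgroup of $W$. By the definition of facial subgroup in \ref{x2.1}, this means $W_I = wW_Jw^{-1}$ for some facial subset $J \subseteq S$ and some $w \in W$. I would then invoke Lemma \ref{x2.10}(d) directly: $I$ and $J$ are two subsets of $S$ with $W_I$ and $W_J$ conjugate in $W$, and $J$ is facial, so $I$ must be facial as well. Hence $W_I$ is a standard facial subgroup of $W$, as required.

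The only point that warrants a brief mention is the interpretation of the phrase ``$W$-conjugate'' in Lemma \ref{x2.10}(d): it refers to conjugacy of the subgroups $W_I$ and $W_J$ inside $W$ (equivalently, of the sets of generating reflections), which is exactly the condition we have in hand. So the corollary really is a one-line application of part (d) of the preceding lemma, and there is no substantial obstacle to the argument.
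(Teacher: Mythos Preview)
Your proof is correct and matches the paper's approach exactly: the paper also states the result as an immediate corollary of Lemma \ref{x2.10}(d), and your argument spells out precisely that deduction.
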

   \subsection{} \label{x2.12}  The result below    refines  Lemma \ref{x2.7}(c) and the well-known formula of  Kilmoyer (see e.g. \cite[1.3.5(d)]{K}) for the intersection of two parabolic subgroups of $W$.  \begin{prop}
For any facial subsets  $I,J\subseteq S$ and shortest $(W_{I},W_{J})$ double coset representative $d$,  then
 \[W_{I}\cap dW_{J}d^{-1}=W_{I\cap dJd^{-1}}\]
 is a standard facial subgroup of $W$.
 \end{prop}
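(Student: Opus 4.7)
The plan is to combine two ingredients: the classical Kilmoyer formula (explicitly cited in the statement as known) with the facial intersection fact already established in Lemma \ref{x2.7}(c), and then invoke Corollary \ref{x2.11} to promote the answer from ``standard parabolic and facial'' to ``standard facial''.

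First I would observe that since $I$ and $J$ are facial subsets of $S$, the subgroup $W_{I}$ is standard facial by definition, and $dW_{J}d^{-1}$ is a facial subgroup of $W$ (being a conjugate of the standard facial subgroup $W_{J}$). Lemma \ref{x2.7}(c) then gives at once that
\begin{equation*}
W_{I}\cap dW_{J}d^{-1}
\end{equation*}
is a facial subgroup of $W$. This already delivers the facial half of the assertion, independently of any structural formula for the intersection.

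Next I would invoke Kilmoyer's formula: for a shortest $(W_{I},W_{J})$ double coset representative $d$ one has
\begin{equation*}
W_{I}\cap dW_{J}d^{-1}=W_{I\cap dJd^{-1}},
\end{equation*}
where $I\cap dJd^{-1}$ consists of those $s\in I$ which are $W$-conjugate via $d$ to an element of $J$, and in particular lies in $S$. This is the classical statement recalled just before the proposition and referenced to \cite[1.3.5(d)]{K}; the proof carries over verbatim in our generality since it uses only the exchange condition and the characterization of shortest double coset representatives recalled in \ref{x1.6} and \ref{x1.8}. In particular, the right-hand side is a standard parabolic subgroup of $W$.

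Putting the two together, $W_{I\cap dJd^{-1}}$ is a standard parabolic subgroup which is also a facial subgroup of $W$. By Corollary \ref{x2.11}, any such subgroup is a standard facial subgroup of $W$, in other words the subset $I\cap dJd^{-1}$ of $S$ is facial. This completes the argument. There is no real obstacle here: the proposition is essentially a packaging statement, and the only point that requires any attention is confirming that the Kilmoyer reduction is available in the present generality (possibly infinite rank, linearly dependent simple roots), which it is because its proof is purely combinatorial in $(W,S)$.
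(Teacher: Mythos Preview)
Your argument is correct, and the overall shape---use Lemma \ref{x2.7}(c) to get faciality, identify the intersection as a standard parabolic, then apply Corollary \ref{x2.11}---matches the paper's. The difference is in the middle step. You import Kilmoyer's formula as a black box from \cite[1.3.5(d)]{K}, whereas the paper \emph{reproves} it internally: having established that $W':=W_{I}\cap dW_{J}d^{-1}$ is facial, the paper computes its canonical generators via Proposition \ref{x2.6}(a) and \ref{x1.6}, obtaining $\chi(W')=dJd^{-1}\cap W_{I}$ and symmetrically $d^{-1}\chi(W')d=d^{-1}Id\cap W_{J}$, whence $\chi(W')=I\cap dJd^{-1}\subseteq S$. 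This yields the Kilmoyer identity directly from the reflection-subgroup machinery already developed, and the Remark following the proposition points out that this constitutes an independent proof of Kilmoyer's formula. Your route is shorter but forfeits that byproduct; the paper's route is self-contained within the framework of the paper and shows that the proposition is not merely a repackaging of Kilmoyer but a genuine refinement arising from the theory of canonical Coxeter generators.
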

 \begin{proof}
 Let $W'=W_{I}\cap dW_{J}d^{-1}$, which by \ref{x2.7}(c) is a facial subgroup of $W$.
  Clearly, $d$ is of minimal length in its cosets $W_{I}d$, $dW_{J}$
 and $W'd$.
By \ref{x1.6} and  Proposition \ref{x2.6}(a) we  have $\chi(dW_{J}d^{-1})=dJd^{-1}$, $\chi(d^{-1}W_{I}d)=d^{-1}Id$, 
 $\chi(W')=dJd^{-1}\cap W_{I} $, and  \[d^{-1}\chi(W')d=\chi(d^{-1}W'd)=\chi(d^{-1}W_{I}d\cap W_{J})=d^{-1}Id\cap W_{J}.\] 
 Hence $\chi(W')=\chi(W')\cap d\chi(d^{-1}W'd)d^{-1}=I\cap dJd^{-1}$. Since $I\cap dJd^{-1}\subseteq S$, it follows that $W'=W_{I\cap dJd^{-1}}$ is a facial standard parabolic subgroup, and hence a standard facial subgroup 
 by Corollary \ref{x2.12}.
 \end{proof}
 \begin{rem} Suppose that the standard facial subgroups and standard parabolic subgroups of $W$ coincide (see Remark \ref{x2.3}). Then the Proposition specializes to Kilmoyer's formula
and  the argument  above   simplifies  since $W_{I \cap dJd^{-1}}$ is obviously  not just parabolic, but standard parabolic. This affords a simple proof of Kilmoyer's formula (not needing \ref{x2.11}). \end{rem}

  \subsection{}\label{x2.13} The proof of the following observation  is left to the reader.  See also Lemma  \ref{x8.5} for similar results under stronger hypotheses. 
  \begin{lem} Suppose that $V=U\oplus U'$ is an orthogonal direct sum decomposition of $V$ such that $\Pi=\Delta\dot\cup \Delta'$  where $\Delta:=\Pi\cap U$ and $\Delta':=\Pi\cap U'$.
  Then the  facial subsets of $\Pi$ are precisely the unions of facial subsets of $\Delta$ (for the group $W_{\Delta}$ acting naturally either as reflection group on $V$ or on
  $U$) and the facial subsets of $\Delta'$ (for $W_{\Delta'}$ as reflection group on $V$ or on $U'$).\end{lem}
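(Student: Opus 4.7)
The plan is to use the orthogonal decomposition $V = U \oplus U'$ to split the defining vector of any facial subset, and conversely to recombine split data into a single vector in $\mathcal{C}$.

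First I would handle the ``both notions agree'' parenthetical observation. Any $v \in V$ decomposes uniquely as $v = u + u'$ with $u \in U$, $u' \in U'$, and since $\Delta \perp U'$, one has $\langle v,\alpha\rangle = \langle u,\alpha\rangle$ for every $\alpha\in\Delta$. Thus $v$ lies in the fundamental chamber $\mathcal{C}_{W_\Delta}$ for $W_\Delta \leq \mathrm{GL}(V)$ if and only if $u$ does, and the subsets $\Delta \cap v^\perp = \Delta \cap u^\perp$ coincide. This shows the notion of facial subset of $\Delta$ is the same whether one views $W_\Delta$ as acting on $V$ or on $U$.

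Next, the forward direction. Let $\Pi'\subseteq\Pi$ be facial, so $\Pi' = \Pi\cap v^\perp$ for some $v\in\mathcal{C}$. Decompose $v = u+u'$ as above. For $\alpha\in\Delta$ we have $\langle u,\alpha\rangle = \langle v,\alpha\rangle\geq 0$, so $u\in\mathcal{C}_{W_\Delta}$, and symmetrically $u'\in\mathcal{C}_{W_{\Delta'}}$. Since $\Pi = \Delta\dot\cup\Delta'$, and $\langle v,\alpha\rangle = \langle u,\alpha\rangle$ on $\Delta$ and $\langle v,\beta\rangle = \langle u',\beta\rangle$ on $\Delta'$, we obtain
\[
\Pi' \;=\; (\Delta\cap u^\perp)\,\dot\cup\,(\Delta'\cap (u')^\perp),
\]
which exhibits $\Pi'$ as the union of a facial subset of $\Delta$ and a facial subset of $\Delta'$.

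For the reverse direction, suppose $\Gamma = \Delta\cap u^\perp$ is facial in $\Delta$ with $u\in\mathcal{C}_{W_\Delta}\cap U$, and $\Gamma' = \Delta'\cap (u')^\perp$ is facial in $\Delta'$ with $u'\in\mathcal{C}_{W_{\Delta'}}\cap U'$ (by the first paragraph we may choose the witnesses in $U$ and $U'$ respectively). Setting $v := u+u'$, one checks $\langle v,\alpha\rangle = \langle u,\alpha\rangle\geq 0$ for $\alpha\in\Delta$ and $\langle v,\beta\rangle = \langle u',\beta\rangle\geq 0$ for $\beta\in\Delta'$, so $v\in\mathcal{C}$. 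Then $\Pi\cap v^\perp = (\Delta\cap u^\perp)\cup(\Delta'\cap (u')^\perp) = \Gamma\cup\Gamma'$, showing this union is facial in $\Pi$. There is no real obstacle here beyond bookkeeping with the direct sum decomposition; the key input is simply orthogonality of $U$ and $U'$ together with the fact that the simple roots are partitioned by the decomposition.
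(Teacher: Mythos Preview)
Your proof is correct. The paper explicitly leaves the proof of this observation to the reader, and your argument is exactly the natural one: split $v\in\mathcal{C}$ along the orthogonal decomposition to go forward, and add witnesses $u\in U$, $u'\in U'$ to go backward.
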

  \begin{rem}
  Let $\Delta$ be a non-empty finite subset of $\Pi$ which is separated from
  $\Delta':=\Pi\setminus \Delta$ and such that that the restriction of $\mpair{-,-}$ to  $\real\Delta$
  is non-singular.  For example, $\Delta$ could be a finite union of finite type components of $\Pi$. Then the above applies with $U=\real \Delta$, 
  $U'=\Delta^{\perp}$ and $\Delta'=\Pi\sm \Delta$\end{rem}
\subsection{}\label{x2.14} 
Part (c) of the following was also obtained in \cite{NuidaLocPar}.

\begin{lem}\begin{num}\item  The intersection of any directed (by  reverse inclusion) family of reflection subgroups of $W$  is a reflection subgroup.
\item The pointwise stabilizer of a subset $X$ of $\mc{X}$  is the reflection subgroup with  root system   $\Phi\cap X^{\perp}$.
\item Any intersection of parabolic subgroups is a reflection subgroup.
\end{num}
\end{lem}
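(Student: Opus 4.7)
The plan is to prove (a) directly and then deduce (b) and (c) as consequences.

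For (a), I will set $H := \bigcap_i W_i$ and $H_0 := \langle T \cap H \rangle$; it is clear that $H_0 \subseteq H$, and I will prove $H \subseteq H_0$ by induction on $\ell(w)$ for $w \in H$. The base case $w = 1$ is immediate. For the inductive step, it suffices to produce $t \in T \cap H$ with $\ell(tw) < \ell(w)$, since then $tw \in H$ (as $t, w \in H$), the inductive hypothesis gives $tw \in H_0$, and $w = t(tw) \in H_0$.

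The production of such $t$ is the heart of the argument, and rests on two ingredients. The first is a standard consequence of Dyer's analysis of reflection subgroups (\ref{x1.5}, \cite{Ref}): for any reflection subgroup $W'$ of $W$ and any $w' \in W'$, the length of $w'$ in the Coxeter system $(W', \chi(W'))$ equals $|N(w') \cap W'|$, so in particular $N(w') \cap W' \neq \emptyset$ whenever $w' \in W'$ and $w' \neq 1$. The second is directedness. Assume for contradiction that $N(w) \cap H = \emptyset$; since $N(w)$ is finite, for each $t \in N(w)$ I may choose $i_t$ with $t \notin W_{i_t}$. By iterated application of directedness there exists $k$ with $W_k \subseteq \bigcap_{t \in N(w)} W_{i_t}$, whence $N(w) \cap W_k = \emptyset$. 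But $w \in H \subseteq W_k$ with $w \neq 1$, contradicting the first ingredient applied to $W' = W_k$. So some $t \in N(w)$ lies in $T \cap H$, completing the inductive step.

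For (b), I will write the pointwise stabilizer $P$ of $X \subseteq \mc{X}$ as $P = \bigcap_{Y \subseteq X,\, Y \text{ finite}} W(Y)$, where $W(Y)$ denotes the pointwise stabilizer of $Y$. This family is directed by reverse inclusion because $W(Y_1 \cup Y_2) \subseteq W(Y_1) \cap W(Y_2)$ and $Y_1 \cup Y_2 \subseteq X$ is finite. By Lemma \ref{x2.7}(b) each $W(Y)$ coincides with the stabilizer of some point of $\mc{X}$, hence is a facial and thus a (parabolic) reflection subgroup of $W$ by Lemmas \ref{x2.2}--\ref{x2.3}. Part (a) then yields that $P$ is a reflection subgroup; its reflections are precisely those $s_\alpha$ fixing every $x \in X$, i.e. those with $\alpha \in \Phi \cap X^\perp$, which identifies the root system as claimed.

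For (c), the strategy is to pass to a realization of $(W,S)$ in which all parabolic subgroups become facial. Using the canonical lift of \ref{x1.4} combined with the ample, non-singular extension of quadratic space provided by Remark \ref{x1.3}(2), I may replace $(\Phi,\Pi)$ in $(V,\mpair{-,-})$ by a based root system in which $\Pi$ is linearly independent and the quadratic space is ample. By the remark following Lemma \ref{x2.3}, standard parabolic and standard facial subgroups of $(W,S)$ then coincide, so every parabolic subgroup of $W$ is facial and hence (by Lemmas \ref{x2.2}--\ref{x2.3}) is the full stabilizer of some point of the new Tits cone. Since the classes of parabolic and of reflection subgroups of $W$ depend only on the Coxeter system $(W,S)$ and not on the realization, it suffices to prove (c) in the new realization; but there an arbitrary intersection $\bigcap_i P_i = \bigcap_i W_{x_i}$ is the pointwise stabilizer of $\{x_i\} \subseteq \mc{X}$, which is a reflection subgroup by (b).

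The main obstacle is the inductive step in (a), where the finite-hitting-set argument against directedness must be coupled with the length-theoretic characterization of reflection subgroups. Once (a) is available, (b) is essentially formal, and (c) is formal modulo the change-of-realization trick that forces parabolic subgroups to become point stabilizers.
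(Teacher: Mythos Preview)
Your proof is correct. For (a), your argument and the paper's are variants of the same idea: both exploit that a fixed $w$ carries only a \emph{finite} amount of combinatorial data (you use $N(w)$; the paper uses the finite set of ascending reflection-factorizations $w=t_n\cdots t_1$ with $l(t_i\cdots t_1)$ strictly increasing, all lying in the Bruhat interval $[1,w]$), and then invoke directedness to find a single member of the family avoiding all ``bad'' choices. Your use of the characterization $N(w)\cap W'=\emptyset \iff w$ is shortest in $W'w$ (from \ref{x1.6}) is a clean way to package this.

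Where you genuinely diverge is in the logical order of (b) and (c). The paper proves (c) first, by observing that finite intersections of parabolics are again parabolic via Kilmoyer's formula (Proposition \ref{x2.12}), so the full intersection is a directed intersection of reflection subgroups and (a) applies; then (b) follows from (c) since pointwise stabilizers are intersections of point-stabilizers, which are parabolic. You reverse this: you get (b) directly from (a) by noting that the family of stabilizers $W(Y)$ for finite $Y\subseteq X$ is already directed (and each $W(Y)$ is a point-stabilizer by Lemma \ref{x2.7}(b), hence facial, hence a reflection subgroup), and then deduce (c) by passing to a realization with linearly independent $\Pi$ and ample quadratic space, where parabolic $=$ facial and hence every parabolic is a point-stabilizer. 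Your route trades the algebraic input of Kilmoyer's formula for the realization-change machinery of \ref{x1.3}--\ref{x1.4} and Remark \ref{x2.3}; the paper's route stays within the given realization but invokes a nontrivial structural result about double cosets. Both are legitimate, and your observation that (b) follows from (a) without passing through (c) is a minor simplification.
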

\begin{proof}  Following \cite{DyParClos}, we observe that for any reflection subgroup $W'$ of $W$ and $w\in W$, we have
$w\in W'$ if and only if  there exist $n\in \Nat$ and reflections  $t_{1},\ldots, t_{n}$ of $T$ such that,
setting $w_{i}=t_{i}\ldots t_{1}$ for $i=0,1,\ldots, n$ (so $w_{0}=1_{W}$), we have 
\begin{conds}\item $w_{n}=w$ 
\item  $l(w_{0})<l(w_{1})<\ldots <l(w_{n})$
\item $t_{i}\in W'\cap T$ for all $i$.
\end{conds}
In fact, $w\in W'$ if and only if  such $t_{i}$ exist with $t_{i}\in \chi(W')$ for all $i$; then $n=l_{(W',\chi(W'))}(w)$.

Suppose that $D$ is a family of reflection subgroups of $D$ which is directed by reverse inclusion  i.e. for $W',W''$ in $D$, there exists $W'''\in D$ with $W'''\subseteq W'\cap W''$. Let $W':=\cap_{U\in D}U$. Now for fixed $w\in W$, there are only finitely many tuples $(n,t_{1},\ldots, t_{n})$ with $t_{i}\in T$ satisfying (i) and (ii) (since $n\leq l(w)$ and each $w_{i}$ lies in the (finite)  Bruhat interval $[1,w]$). It follows that $w\in W'$ if and only if  there exists such a tuple  $(n,t_{1},\ldots, t_{n})$ with $t_{i}\in U$  for all $i=1,\ldots, n$ and all $U\in D$.
But then each $t_{i}\in W'\cap T$ and $w=w_{n}=t_{n}\ldots t_{1}\in \mpair{W'\cap T}$.
Hence $W'$ is a reflection subgroup as required to prove (a). We observe that that the argument also shows that $w\in \chi(W')$ if and only if  $l_{(W',\chi(W')}(w)=1$ if and only if   every tuple
$(n,t_{1},\ldots, t_{n})$ as above has $n=1$ (in which case, such a tuple is unique).

Part (c) follows since the intersection of a family of parabolic subgroups is the intersection of the
(directed by reverse inclusion) family of their finite intersections, and these finite intersections are (parabolic) reflection subgroups by Kilmoyer's formula.
In (b), note that  the pointwise stabilizer of $X$ is the intersection of the stabilizers of the points of $X$, which are parabolic (in fact, facial) subgroups. 
Hence the pointwise stabilizer is a reflection subgroup by (c). The remaining 
assertions are clear since a reflection $s_{\alpha}$ fixes $X$ pointwise if and
 only if  it fixes each point of $X$ i.e. if and only if   $\alpha$  is in $X^{\perp}$.   \end{proof}
\subsection{} \label{x2.15}  In \cite{NuidaLocPar}, an example is given to show that an intersection of an arbitrary family of parabolic 
subgroups of an infinite rank Coxeter system need not be a 
parabolic subgroup.   
The following result  gives a simple class of examples which illustrates the ubiquity of this phenomenon.
\begin{prop} Let  $(W,S)$ be an infinite rank irreducible Coxeter system with locally 
finite Coxeter graph. Then there is a family $(W_{n})_{n\in \Nat}$, of  
parabolic subgroups of $W$ such that $\cap _{n\in \Nat}W_{n}$ is a non-parabolic reflection subgroup with $\vert \Nat\vert $ components, which are   all of type $A_{1}$. \end{prop}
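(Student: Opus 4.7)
My plan is to construct an explicit non-parabolic reflection subgroup $W^*$ of type $A_1^{\vert \Nat\vert}$ and realize it as a countable intersection of facial (hence parabolic, by Corollary \ref{x2.11}) subgroups of $W$, each being a point stabilizer in the Tits cone (facial by Lemma \ref{x2.2}). The combinatorial input is the infinite path from Lemma \ref{x1.19}, and since the statement concerns only parabolic subgroups of $W$, I work throughout in an ample extension of the quadratic space (Remark \ref{x1.3}(2), \ref{x12.1}).

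Fix the infinite path $\alpha_0,\alpha_1,\ldots\in\Pi$ guaranteed by Lemma \ref{x1.19}, so $\mpair{\alpha_i,\alpha_j}\neq 0$ iff $|i-j|=1$. Define $\beta_n:=s_{\alpha_{3n+1}}(\alpha_{3n+2})\in\Phi_+$, a positive root with support $\{\alpha_{3n+1},\alpha_{3n+2}\}$; the spacing by three places these supports at graph-distance at least two for distinct $m,n$, so $\mpair{\beta_m,\beta_n}=0$. By the criterion in \ref{x1.7}, $\Pi':=\{\beta_n:n\in\Nat\}$ is the canonical simple system of the reflection subgroup $W^*:=W_{\Pi'}$, of type $A_1^{\vert \Nat\vert}$. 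If $W^*$ were parabolic, say $W^*=wW_Jw^{-1}$ for $w$ the shortest element of $wW_J$, \ref{x1.6} gives $\Pi_{W^*}=w(\Pi_J)=\{\beta_n\}$, so $\Pi_J$ is infinite; by local finiteness of the Coxeter graph and finite length of $w$, infinitely many $\delta\in\Pi_J$ are fixed by $w$, and for any such $\delta$ one has $\delta=w(\delta)\in\{\beta_n\}$, forcing $\delta\in\Pi\cap\{\beta_n\}$. But each $\beta_n$ is a positive sum of two distinct simple roots, hence $\beta_n\notin\Pi$ by positive independence of $\Pi$, a contradiction.

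By Lemma \ref{x2.14}(b), the pointwise stabilizer of any $X\seq\mc{X}$ in $W$ is the reflection subgroup with root system $\Phi\cap X^\perp$. It therefore suffices to exhibit a countable family $(p_n)_{n\in\Nat}\seq\mc{X}$ with $\Phi\cap\bigcap_n p_n^\perp=\{\pm\beta_m:m\in\Nat\}$, for then $W_n:=\stab_W(p_n)$ is the desired family. The points I would use are $p_n:=s_{\alpha_{3n+1}}(q_n)\in\mc{X}$ for carefully chosen $q_n\in\mc{C}$ (available by ampleness): the inner products of $q_n$ with $\Pi$ are arranged so that (i) $\mpair{p_n,\beta_m}=0$ for all $m,n$ (using the identity $s_{\alpha_{3m+1}}(\beta_k)=\beta_k$ for $k\neq m$), and (ii) any positive root of $W$ orthogonal to all $p_n$ must have trivial projection onto simple roots outside $\real\Pi'$, placing it in $\Phi\cap\real\Pi'$. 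The final step, identifying $\Phi\cap\real\Pi'=\{\pm\beta_m\}$, invokes the classical fact that the support of any positive root of $W$ is connected in the Coxeter graph: since the supports $\{\alpha_{3m+1},\alpha_{3m+2}\}$ of the $\beta_m$'s are pairwise non-adjacent in $\Pi$ (the ``bridging'' simple roots $\alpha_{3m+3}$ lie outside $\real\Pi'$), the support of any such $\gamma$ is contained in a single pair, and normalization $\mpair{\gamma,\gamma}=1$ together with linear independence of that pair forces $\gamma=\pm\beta_m$. The main obstacle I anticipate is the careful choice of $q_n$, especially in handling the contributions of off-path simple roots to the orthogonality equations $\mpair{p_n,\gamma}=0$—this requires prescribing $\mpair{q_n,\alpha}$ not only on the path but on all of $\Pi$, in a manner varying enough with $n$ to pin down the off-path coefficients of $\gamma$.
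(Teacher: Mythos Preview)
Your construction of $W^*=\langle s_{\beta_n}:n\in\Nat\rangle$ and your non-parabolicity argument (via local finiteness forcing $w$ to fix cofinitely many simple roots) are correct and genuinely different from the paper's. But the realization of $W^*$ as an intersection of parabolics is only sketched, and step~(ii) is misformulated: no simple root lies in $\real\Pi'$ (each $\beta_m$ is a nontrivial combination of two simple roots), so ``trivial projection onto simple roots outside $\real\Pi'$'' would force $\gamma=0$. What you actually need is that $\gamma$ lies on the line $\real\beta_m$ for a single $m$, not merely that its support is contained in $\{\alpha_{3m+1},\alpha_{3m+2}\}$---that dihedral root subsystem has many roots besides $\pm\beta_m$. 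Your Tits-cone idea does work with the specific choice $q_n\in\mc{C}$ satisfying $\Pi\cap q_n^\perp=\{\alpha_{3m+1},\alpha_{3m+2}:m\neq n\}\cup\{\alpha_{3n+2}\}$ (available if $\Pi$ is linearly independent and the system ample): then $\Phi\cap p_n^\perp=\{\pm\beta_n\}\cup\bigcup_{m\neq n}\Phi_{\langle s_{\alpha_{3m+1}},s_{\alpha_{3m+2}}\rangle}$, and intersecting over $n$ leaves exactly $\{\pm\beta_m:m\in\Nat\}$.

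The paper avoids the Tits cone entirely and is more elementary. It partitions the path into consecutive blocks $J_i$ of sizes $1,2,3,\ldots$ and takes reflections $t_i\in W_{J_i}$ of length $2i+1$ (Lemma~\ref{x1.16}); since each $\langle t_i\rangle$ is parabolic in $W_{J_i}$, the groups $W_{K_i}$ with $K_i=\{t_0,\ldots,t_i\}\cup\bigcup_{j>i}J_j$ are parabolic in $W$ and intersect to $\langle t_i:i\in\Nat\rangle$ by a direct finite-support argument. Non-parabolicity is then a one-liner: the lengths $l(t_i)=2i+1$ are unbounded, and conjugation by any fixed $u$ changes lengths by at most $2l(u)$, so no conjugate of $\{t_i\}$ can lie in $S$. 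This explicit-overgroup trick also works directly for your $W^*$ and would close your gap without the Tits cone: since $\langle s_{\beta_m}\rangle=s_{\alpha_{3m+1}}\langle s_{\alpha_{3m+2}}\rangle s_{\alpha_{3m+1}}$ is parabolic in its dihedral component $W_{\{s_{\alpha_{3m+1}},s_{\alpha_{3m+2}}\}}$, the groups with canonical generators $\{s_{\beta_0},\ldots,s_{\beta_n}\}\cup\bigcup_{m>n}\{s_{\alpha_{3m+1}},s_{\alpha_{3m+2}}\}$ are parabolic and intersect to $W^*$.
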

\begin{rem}  
In the example in \cite{NuidaLocPar}, the parabolic subgroups involved, and their intersection, are all
 irreducible of type $A_{\infty}$, which,  together with $A_{\infty,\infty}$, is  exceptional amongst all irreducible Coxeter groups in some respects (see \cite{DyRig}).    \end{rem}
\begin{proof} By Lemma \ref{x1.19}, there is a subset $\Pi_{0}=\set{\al_{0},\al_{1},\ldots}$  of $\Pi$ such that for $i<j\in \Nat$, $\mpair{\al_{i},\al_{j}}\neq 0$ if 
 and only if $j=i+1$. Let  $s_{i}:=s_{\al_{i}}$ for $i\in \Nat$.  It is clear that there is a subset 
 $J$ of $S$ with irreducible components $J_{i}$ for $i\in \Nat$ such that $\vert  J_{i}\vert =i+1$. For example, one could take $J:=\cup_{i\in \Nat }J_{i}$
 where $J_{i}:=\set{s_{2i^{2}},s_{2i^{2}+1},\ldots, s_{2i^{2}+i}}$.  By Lemma \ref{x1.16},  there is a reflection $t_{i}\in W_{J_{i}}$ of length $l(t_{i})=2i+1$.
 Let $K_{i}=\set{t_{0},t_{1},\ldots,t_{i}}\cup\bigcup _{j>i}J_{j}$.
 Since $\mpair{t_{k}}$ is a  parabolic subgroup of $W_{J_{k}}$ for $k\in \Nat$, it follows 
   that $W_{K_{i}}$ is a parabolic subgroup of $W$ (with $\chi(W_{K_{i}})=K_{i}$ as its set  of canonical generators). Let $W'=\mpair{t_{0},t_{1},\ldots}$, which is a reflection subgroup of $W$ with $\chi(W')=R:=\set{t_{0},t_{1},\ldots}$  as canonical generators (since $R$ is both the set of all reflections of $W'$ and  a minimal generating  set of $W'$).   Obviously, $W'$ has $\vert \Nat\vert$ components, all of type $A_{1}$.     
   We claim that $\cap_{i\in \Nat}W_{K_{i}}=W'$.
   Clearly, $W'\seq \cap W_{K_{i}}$ since $t_{j}\in W_{K_{i}}$ for all
    $j,i\in \Nat$. For the  reverse inclusion, let $w\in \cap W_{K_{i}}$.  Set 
   $L_{n}:=J_{1}\cup\ldots \cup J_{n}$.  Note that  
   $w\in W_{J}=\cup_{n\in \Nat}W_{L_{n}}$ so $w\in W_{L_{n}}$ for some  
   $n\in \Nat$. Hence 
   $w\in W_{L_{n}}\cap W_{K_{n}}=\mpair{\set{t_{0},\ldots, t_{n}}}\seq W'$ as claimed.
   
   To complete the proof, it suffices to show that $R$ is not $W$-conjugate to any subset of $S$. But since $\set{l(t)\mid t\in R}$ is not bounded above, it follows that for any $u\in W$,  $\set{l(utu^{-1})\mid t\in R}$
   is also not bounded above (since $l(utu^{-1})\geq l(t)-2l(u)$)  and so $uRu^{-1}\not\seq S$.
  \end{proof}

\subsection{Weakly facial subgroups and locally parabolic subgroups}\label{x2.16} Call a (reflection) subgroup of $W$  \emph{weakly facial} if it is an
 intersection of  (possibly infinitely many) facial subgroups, or equivalently, if it is the pointwise stabilizer of a subset of the Tits cone. There is a natural Galois connection between the subsets of $\mc{X}$ and subsets of $W$ (cf. \cite[2.1]{K}) with the weakly facial subgroups as the stable subsets of $W$. Also,   
  the set of weakly facial subgroups forms a complete lattice when ordered by inclusion.
 Any subset $X$ of $W$ has a weakly facial closure, defined to be  the weakly facial subgroup obtained by intersecting all (weakly) facial subgroups containing $X$.
 If $W$ is finitely generated,  the  set of weakly facial subgroups is equal to the set of facial 
 subgroups. 
Some of the results on facial subgroups have analogues for weakly facial subgroups; for 
example, intersections of weakly facial subgroups are weakly facial, and (using \ref{x1.10}(e)) 
the intersection of a weakly facial subgroup of $W$ with a reflection subgroup $W'$ of $W$ is 
a weakly facial subgroup of $W'$.

In \cite{NuidaLocPar}, a reflection subgroup $W'$  of $W$ is defined to be a 
\emph{locally parabolic subgroup} if every finite subset of its canonical generators
 $\chi(W')$ is $W$-conjugate to a subset of $S$. It is shown in op. cit. that parabolic 
subgroups are locally parabolic, and  that  arbitrary intersections of locally parabolic  (e.g. 
parabolic)  subgroups are locally parabolic.  Also, if $S$ is finite,  the notions of  parabolic 
and locally parabolic subgroups of $W$ coincide.

In particular, weakly facial subgroups are locally parabolic, and if $S$ is finite
and all parabolic subgroups are facial,  
the notions of parabolic, weakly facial and locally parabolic subgroups coincide.
It is not known for infinite $S$ whether every locally parabolic subgroup is equal
to some intersection of parabolic subgroups (or equivalently, if the weakly facial and 
locally parabolic subgroups coincide  when every parabolic subgroup is facial). We also leave open 
the question as to whether there is   a natural  analogue  of the notion of locally parabolic 
subgroups,  
reducing to the  facial subgroups for $W$ of finite rank, in the general situation  in which  not 
all parabolic subgroups are facial. 

\subsection{Parabolic closure of union of two special parabolic subgroups.} \label{x2.17}  
 The following result  extends to (possibly infinite rank) Coxeter groups a result established   for Weyl groups of   Kac-Moody Lie algebras in \cite[Theorem 4(b)]{Mok6}.
 
 \begin{prop} Let  $(W,S)$ be  any Coxeter system.
 \begin{num}
 \item Suppose that $J,K\seq S$ are special and $d$ is  of  minimal length  in its double coset
  $W_{J\cup J^{\perp}}dW_{K\cup K^{\perp}}$. Then the parabolic closure of  $W_{J}\cup dW_{K}w^{-1}$ exists and is  equal to the standard parabolic closure $W_{L}$ of $J\cup \set{d}\cup K$. 
 \item In \text{\rm (a)}, $W_{L}$ is special.
 \item The union of a  finite family of special   parabolic subgroups  has a parabolic closure, which is itself a  special parabolic subgroup. 
 \end{num}
  \end{prop}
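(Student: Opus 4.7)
My plan for part (a) is as follows. The containment $W_L \supseteq W_J \cup dW_Kd^{-1}$ is immediate since $J, K, \operatorname{supp}(d) \seq L$. For the reverse direction, I must show that every parabolic $W_M$ of $W$ containing $W_J \cup dW_Kd^{-1}$ also contains $W_L$. Writing $W_M = xW_Ix^{-1}$ with $I \seq S$, the key rigidity input is the Remark after \ref{x1.15}: since $J$ is special (every component of $\Pi_J$ infinite), any $w \in W$ with $w(\Pi_J) \seq \Pi$ must lie in $W_{J^\perp}$ and fix $\Pi_J$ pointwise. Combining this with the standard fact that a parabolic subgroup contained in $W_M$ is parabolic for the induced Coxeter structure, one may adjust $x$ in its coset so that $x^{-1}W_Jx = W_J$ becomes a standard parabolic of $W_I$, forcing $J \seq I$ and $x \in W_{J^\perp}$ (so $x$ centralizes $W_J$). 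A symmetric analysis of $W_K \seq d^{-1}W_Md$ using specialness of $K$ yields a decomposition $d = xuw$ with $u \in W_I$, $w \in W_{K^\perp}$, and $K \seq I$. Finally, since $u = x^{-1}dw^{-1}$ lies in the same double coset as $d$ (because $x^{-1} \in W_{J^\perp} \seq W_{J \cup J^\perp}$ and $w^{-1} \in W_{K^\perp} \seq W_{K \cup K^\perp}$), the minimality of $d$ forces $l(u) \ge l(d)$; combined with $l(d) = l(xuw) \le l(x) + l(u) + l(w)$, careful bookkeeping shows the factorization is length-additive and forces $d \in W_M$ with $\operatorname{supp}(d) \seq I$, whence $L \seq I$ and $W_L \seq W_M$. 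I expect this last step, which parallels arguments of Mokler \cite{Mok6} in the Kac-Moody case, to be the main technical obstacle.

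For part (b), suppose for contradiction that $L_0$ is a finite connected component of $L = J \cup \operatorname{supp}(d) \cup K$. Any component of $L$ meeting $J$ contains an entire component of $J$, which is infinite by specialness of $J$; likewise for $K$. Hence $L_0 \cap (J \cup K) = \emptyset$, so $L_0 \seq \operatorname{supp}(d) \sm (J \cup K)$, and $L_0 \perp (L \sm L_0)$ gives $L_0 \seq J^\perp \cap K^\perp$. Since $L_0 \cap \operatorname{supp}(d) \neq \emptyset$, a reduced expression for $d$ uses some generator in $L_0$; the perpendicularity allows a length-additive decomposition $d = d_0 d_1$ with $d_0 \in W_{L_0}$ nontrivial and $d_1 \in W_{L \sm L_0}$, the two factors commuting. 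Since $d_0 \in W_{L_0} \seq W_{J^\perp} \seq W_{J \cup J^\perp}$, the shorter element $d_1 = d_0^{-1} d$ remains in the double coset $W_{J \cup J^\perp} d W_{K \cup K^\perp}$, contradicting the minimality of $d$.

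Part (c) follows by induction on the size $n$ of the family of special parabolic subgroups; the base case $n = 1$ is trivial. For $n \ge 2$, let $Q$ be the parabolic closure of the first $n - 1$ members, special by the inductive hypothesis. Writing $Q = uW_Ju^{-1}$ and the last member as $vW_Kv^{-1}$ with $J, K \seq S$ special, and conjugating by $u^{-1}$, one reduces to finding the parabolic closure of $W_J \cup hW_Kh^{-1}$ for $h = u^{-1}v$. Replace $h$ by the minimum-length representative $d$ of the double coset $W_{J \cup J^\perp} h W_{K \cup K^\perp}$: since $W_{K^\perp}$ centralizes $W_K$ and $W_{J^\perp}$ centralizes $W_J$, the pair $(W_J, hW_Kh^{-1})$ differs from $(W_J, dW_Kd^{-1})$ by conjugation by an element of $W_{J \cup J^\perp}$ that normalizes $W_J$. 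Applying (a) and (b) to the latter pair identifies the parabolic closure as (a conjugate of) the special standard parabolic $W_L$, completing the induction.
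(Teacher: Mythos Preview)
Your strategy for (a) matches the paper's, but the step you flag as the main obstacle is a genuine obstacle, and your proposed resolution does not work: the factorization $d=xuw$ (with $x\in W^{I}\cap W_{J^{\perp}}$, $u\in W_{I}$, $w^{-1}\in W^{I}\cap W_{K^{\perp}}$) is \emph{not} length-additive in general, so you cannot force $x=w=1$.  For a counterexample, suppose $r\in L^{\perp}\sm L$ is a simple reflection (so $r$ commutes with $d$ and lies in $J^{\perp}\cap K^{\perp}$) and take $W_{M}=rW_{L}r^{-1}=W_{L}$, $I=L$, $x=r$; your procedure then yields $u=d$, $w=r$, and $d=rdr$ has length $l(d)<1+l(d)+1$.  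Consequently the final implication ``$L\seq I$ hence $W_{L}\seq W_{M}$'' also fails, since $W_{M}=xW_{I}x^{-1}$ and $x$ need not normalize $W_{L}$.  The paper repairs this by first writing the conjugator as $u_{0}u_{1}$ with $u_{0}\in W_{J^{\perp}\cap dK^{\perp}d^{-1}}$ (absorbing precisely such commuting elements) and $u_{1}^{-1}\in W^{J^{\perp}\cap dK^{\perp}d^{-1}}$; this extra coset condition is what makes the length-additivity formula from \cite[2.7]{Ca} applicable to $x^{-1}d^{-1}u_{1}$ and yields $d\in W_{M}$.  In the infinite-rank case a further step is needed: one deduces $u_{1}=1$, and then a separate argument using Theorem \ref{x1.15} shows the residual $u_{0}$ lies in $W_{L^{\perp}}$, giving $uW_{M}u^{-1}\sreq W_{L}$.

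Your proof of (b) is correct and is more direct than the paper's.  The paper argues by induction on $l(d)$: writing $d=sf$ reduced, one has $s\notin J\cup J^{\perp}$ (minimality of $d$), so $H:=J\cup\set{s}$ is again special; replacing $(J,d)$ by $(H,e)$ with $e$ the new minimal double-coset representative (of strictly smaller length) leaves the parabolic closure unchanged and the induction closes.  Your argument---isolating a putative finite component $L_{0}\seq\operatorname{supp}(d)\cap J^{\perp}\cap K^{\perp}$ and splitting off a nontrivial commuting factor $d_{0}\in W_{L_{0}}$ to contradict minimality---avoids the induction entirely.

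Your reduction for (c) is the same as the paper's.
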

  \begin{rem} We do not know if,  in arbitrary $W$,
the  union of finitely many parabolic subgroups necessarily has a parabolic closure.   Proposition \ref{x2.15} implies that in any infinite rank irreducible Coxeter group with locally finite 
Coxeter graph, there is a denumerable family of parabolic  subgroups (all of type $A_{1}$) for which their union does not have a parabolic closure.  
  \end{rem}
  \begin{proof}
     By definition of standard parabolic closure, in part (a) one has 
     $L:=J\cup K\cup\set{s_{1},\ldots, s_{n}}$ for some (and every) reduced expression $d=s_{1}\ldots s_{n}$. 
   Clearly, $W_{J}\cup dW_{K}d^{-1}\seq W_{L}$. To show in (a) that $W_{L}$ is the desired 
   parabolic closure of $W_{J}\cup dW_{K}w^{-1}$,  it will suffice to show that if 
   $u\in W$ and $M\seq S$ with $W_{J}\cup dW_{K}w^{-1}\seq uW_{M}u^{-1}$, then
     $W_{L}\seq uW_{M}u^{-1}$. For this,  replace $u$ by the minimal length element of its coset $uW_{M}$ to  assume without loss of generality that
   $u\in W^{M}$. Write $u=u_{0}u_{1}$ where $u_{0}\in W_{J^{\perp}\cap dK^{\perp}d^{-1}}$
   and  $u_{1}^{-1}\in  W^{J^{\perp}\cap dK^{\perp}d^{-1}}$.  Since conjugations by elements 
   of $W_{J^{\perp}\cap dK^{\perp}d^{-1}}$ fix $W_{J}\cup dW_{K}d^{-1}$ pointwise,
   $W_{J}\cup dW_{K}w^{-1}\seq u_{1}W_{M}u_{1}^{-1}$. Now $l(u)=l(u_{0})+l(u_{1})$, so  
    $u_{1}\in W^{M}$. This implies that $\Pi_{J}\seq u_{1}\Pi_{M}$, by \ref{x1.7}.
    Hence $u_{1}^{-1}\Pi_{J}\seq \Pi_{M}$. Since all irreducible components of $\Pi_{J}$ are 
    infinite, it follows by Remark  \ref{x1.15}  that $u_{1}\in W_{J^{\perp}}$ and 
    $\Pi_{J}\seq \Pi_{M}$ i.e. $J\seq M$.
    
     Now we also have $W_{K}\seq d^{-1}u_{1}W_{M}u_{1}^{-1}d$. Write $d^{-1}u_{1}=xw$ 
     where $x\in W^{M}$ and $w\in W_{M}$, so $W_{K}\seq xW_{M}x^{-1}$. A similar argument 
     to that just  above shows that $\Pi_{K}\seq x\Pi_{M}$, $x\in W_{K^{\perp}}$ and
$K\seq M$. Now we have $w=x^{-1}d^{-1}u_{1}\in W_{M}$ where $x^{-1}\in  W_{K^{\perp}}$,
$u_{1}^{-1}\in W_{J^{\perp}} ^{J^{\perp}\cap dK^{\perp}d^{-1}}$ and
 $d^{-1}\in {}^{K^{\perp}}W^{J^{\perp}}$ (actually, (a)--(b) only require this condition on $d$, not the stronger assumed condition 
  $d^{-1}\in {}^{K\cup K^{\perp}}W^{J\cup J^{\perp}}$, but the two formulations are easily equivalent).  By \cite[Proposition 2.7]{Ca} (for general Coxeter 
 groups), one has $l(w)=l(x^{-1})+l(d^{-1})+l(u_{1})$.  Since $w\in W_{M}$, one has 
 $x,d,u_{1}\in W_{M}$.
Hence $J\cup\set{d}\cup K\seq W_{M}$ and therefore $L\seq M$. In case
$W$ is of finite rank, we can shortcut the rest of the proof of (a) at this point by noting that the 
above already shows that $W_{L}$ is a parabolic subgroup of minimal rank containing  
$W_{J}\cup dW_{K}w^{-1}$ and hence it is the parabolic closure as desired.

We continue the proof of (a) in general. The above shows that $u_{1}\in W^{M}\cap W_{M}=
\set{1}$ so $u=u_{0}\in  W_{J^{\perp}\cap dK^{\perp}d^{-1}}$.
Let $u=r_{k}\cdots r_{1}$, $r_{i}\in J^{\perp}\cap dK^{\perp}d^{-1}$, be a reduced expression 
for $u$. Note  $u\in W^{M}\seq W^{L}$. We claim  that $r_{1},\ldots, r_{k}\in L^{\perp}$ so $u\in W_{L^{\perp}}$. This would imply $uW_{M}u^{-1}\sreq uW_{L}u^{-1}=W_{L}$, completing the proof of (a). To prove the claim, it suffices to show that
 if $r_{1},\ldots, r_{i-1}\in L^{\perp}$  where $1\leq i\leq k$, 
then $r_{i}\in L^{\perp}$. The hypothesis and $u\in W^{L}$  imply that $r_{k}\cdots r_{i}\in W^{L}$ and so $r_{i}\not\in L$. We have by definition $r_{i}\in J^{\perp}$ and  $r_{i}d=dr_{i}'$ where
$r_{i}'\in K^{\perp}$.  Note   $d\in W_{L}$ and $r_{i}\not \in W_{L}$,  so $\set{\al_{r_{i}'}}=d^{-1} \set{\al_{r_{i}}}$ and 
applying Theorem \ref{x1.15} shows that
$d$ has a reduced expression involving only simple reflections in $r_{i}^{\perp}\seq S\sm \set{r_{i}}$. In particular,  $r_{i}'=r_{i}$, which gives $K\seq r_{i}^{\perp}$. This shows that  $W_{r_{i}^{\perp}}\sreq J  \cup K\cup\set{d}$ so $L\seq r_{i}^{\perp}$ and $r_{i}\in L^{\perp}$.
This proves the claim and hence (a).

For the proofs of  (b)-(c), consider first two parabolic subgroups $aW_{J}a^{-1}$ and $bW_{K}b^{-1}$ where $a,b\in W$ and 
   $J,K\seq S$ be special. Write 
 $a^{-1}b=pdq$ where $p\in W_{J\cup J^{\perp}}$, $q\in W_{K\cup K^{\perp}}$ and $d$ is the shortest length double coset representative in $W_{J\cup J^{\perp}}a^{-1}bW_{K\cup K^{\perp}}$. Then  \begin{equation*}
 aW_{J}a^{-1}\cup bW_{J}b^{-1}=a(W_{J}\cup pdqW_{K}(pdq)^{-1})a^{-1}=ap(W_{J}\cup dW_{K}d^{-1})(ap)^{-1}.
 \end{equation*}  
 Letting $W_{L}$ be the parabolic closure of  $W_{J}\cup dW_{K}d^{-1}$, with $L$ as in (a),
 we see that   $aW_{J}a^{-1}\cup bW_{K}b^{-1}$ has a parabolic closure $ apW_{L}(ap)^{-1}$.    Also, if (b) holds then $ apW_{L}(ap)^{-1}$ is special.
 Hence we need only prove (b) (for then (c) follows by induction on the number of parabolic subgroups).
 
 Finally we prove (b) (i.e. that $W_{L}$ is special in (a)) by induction on $l(d)$.
 If $l(d)=0$, then $L=J\cup K$ is clearly special since any component of $J\cup K$ contains
 either a component of $J$ or one of $K$, and  hence is of infinite type because $J$ and $K$ are
special. Suppose $l(d)>0$ and write $d=sf$ where $s\in L$ and $l(f)<l(d)$.
Since $d^{-{1}}\in {W}^{J\cup J^{\perp}}$, we have  $s\not \in J\cup J^{\perp}$. Therefore, $H:=J\cup\set{s}$ is connected, necessarily of  infinite
type since it contains $J$. Since $s\in L$, the parabolic closure of  $W_{H}\cup dW_{K}d^{-1}$ clearly exists and   is clearly to $W_{L}$ also. Let $e$ be the minimal length double coset representative in $W_{H\cup H^{\perp}}dW_{K\cup K^{\perp}}$. From above, $W_{L}$ is $W$-conjugate to the parabolic closure $W_{L'}$  of
$W_{H}\cup eW_{K}e^{-1}$. Since $f=sd$ is in the above double coset, $l(e)\leq l(f)<l(d)$.
 By induction, $W_{L'}$ is special and hence  its conjugate $W_{L}$ is special too (in fact, $L'=L$ by \ref{x1.15}, though we don't need this fact). 
 \end{proof}

\section{The imaginary cone}
\label{x3}
\subsection{} For  \label{x3.1} any reflection subgroup $W'$ of $W$,
 define the $W'$-invariant cone 
  \[\mc{Y}_{W'}:=\bigcap_{{w\in W'}}w(\real_{\geq 0}\Pi_{W'})\] i.e. 
  $\mc{Y}_{W'}$ is the set of all
 $v\in V$ such that for each $w\in W'$, $w(v)$ is expressible as a non-negative linear combination of positive roots $\alpha\in \Pi_{W'}$.
 
 For any subset $X$  of $V$,   let $X^{*}:=\mset{v\in V\mid\mpair{v,X}\subseteq \real_{\geq 0}}$
 (see Appendix \ref{xA}).
 
  \begin{lem} Let $W'$ be any reflection subgroup of $W$. Then 
  \begin{num}\item  $\mc{Y}_{W'}\subseteq \mc{Y}_{W}$. 
 \item For any $w\in W$, $\mc{Y}_{wW'w^{-1}}=w\mc{Y}_{W'}$.
     \item If $W'$ is facial, then $\mc{Y}_{W'}= \mc{Y}_{W}\cap \real \Pi_{W'}$.
\item If $\Pi_{W'}$ is finite and $\mpair{-,-}$ is non-singular, then  $\real_{{\geq 0}}\Pi_{W'}=\mc{C}_{{W'}}^{*}$  and $\mc{Y}_{W'}=\mc{X}_{W'}^{*}$.
   \end{num}\end{lem}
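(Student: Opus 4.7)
The plan is to establish (a) and (b) directly from coset-representative properties in \ref{x1.6}--\ref{x1.7}, deduce (c) by reducing to the standard facial case and invoking Lemma \ref{x2.4}(a), and finally derive (d) from the bipolar theorem recorded in \ref{x1.2}. For (a), first observe that $\mc{Y}_{W'}$ is $W'$-invariant: if $v\in \mc{Y}_{W'}$ and $y\in W'$, then $(y'y)v\in \real_{\geq 0}\Pi_{W'}$ for every $y'\in W'$, so $yv\in\mc{Y}_{W'}\seq \real_{\geq 0}\Pi_{W'}$. For an arbitrary $w\in W$, let $x$ be the minimum length element of the coset $W'w$, so $w=z^{-1}x$ with $z\in W'$ and, by \ref{x1.6}, $x^{-1}(\Pi_{W'})\seq \Phi_{+}$. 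Then $w^{-1}v=x^{-1}(zv)$ lies in $x^{-1}(\real_{\geq 0}\Pi_{W'})\seq \real_{\geq 0}\Phi_{+}=\real_{\geq 0}\Pi$, so $v\in w(\real_{\geq 0}\Pi)$ for every $w\in W$, giving $\mc{Y}_{W'}\seq \mc{Y}_{W}$.

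For (b), the first task is to compute $\Pi_{wW'w^{-1}}$. Let $x$ be the minimum length element of $W'w^{-1}$, write $x=zw^{-1}$ with $z\in W'$, and note $x^{-1}W'x=wW'w^{-1}$ while \ref{x1.7} applied to $x$ yields $\Pi_{wW'w^{-1}}=\Pi_{x^{-1}W'x}=x^{-1}\Pi_{W'}=wz^{-1}\Pi_{W'}$. Parameterizing elements of $wW'w^{-1}$ as $wyw^{-1}$ with $y\in W'$, one has
\begin{equation*}
\mc{Y}_{wW'w^{-1}}=\bigcap_{y\in W'}(wyw^{-1})(wz^{-1}\real_{\geq 0}\Pi_{W'})=w\bigcap_{y\in W'}yz^{-1}\real_{\geq 0}\Pi_{W'}=w\mc{Y}_{W'},
\end{equation*}
the last equality by reindexing $y\mapsto yz$ over $W'$.

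For (c), I treat the standard facial case $W'=W_{I}$ first, picking $a\in \CC$ with $\Pi\cap a^{\perp}=\Pi_{I}$. The inclusion $\mc{Y}_{W_{I}}\seq \mc{Y}_{W}\cap \real\Pi_{I}$ comes from (a) together with $\mc{Y}_{W_{I}}\seq \real_{\geq 0}\Pi_{I}\seq \real\Pi_{I}$. Conversely, given $v\in \mc{Y}_{W}\cap \real\Pi_{I}$ and $y\in W_{I}$, one has $yv\in \real_{\geq 0}\Pi$ (since $v\in \mc{Y}_{W}$) and $yv\in \real\Pi_{I}$ (since $W_{I}$ preserves $\real\Pi_{I}$); Lemma \ref{x2.4}(a) gives $\real_{\geq 0}\Pi\cap \real\Pi_{I}=\real_{\geq 0}\Pi_{I}$, so $yv\in \real_{\geq 0}\Pi_{I}$ for every $y\in W_{I}$, i.e., $v\in \mc{Y}_{W_{I}}$. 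For a general facial $W'=wW_{I}w^{-1}$, combine this with (b), the $W$-invariance of $\mc{Y}_{W}$, and $\real\Pi_{W'}=\real\Phi_{W'}=w\real\Phi_{I}=w\real\Pi_{I}$ (using \ref{x1.5}) to obtain $\mc{Y}_{W'}=w\mc{Y}_{W_{I}}=w(\mc{Y}_{W}\cap\real\Pi_{I})=\mc{Y}_{W}\cap \real\Pi_{W'}$.

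For (d), $\mc{C}_{W'}=\Pi_{W'}^{*}$ directly from the definition, so the bipolar fact in \ref{x1.2} yields $\mc{C}_{W'}^{*}=\Pi_{W'}^{**}=\cl(\real_{\geq 0}\Pi_{W'})=\real_{\geq 0}\Pi_{W'}$, the final equality because finiteness of $\Pi_{W'}$ makes the cone polyhedral, hence closed. Writing $\mc{X}_{W'}=\bigcup_{y\in W'}y\mc{C}_{W'}$ and using $(\bigcup B_{i})^{*}=\bigcap B_{i}^{*}$ together with the isometry identity $(yA)^{*}=yA^{*}$, one finds $\mc{X}_{W'}^{*}=\bigcap_{y\in W'}y\mc{C}_{W'}^{*}=\bigcap_{y\in W'}y\real_{\geq 0}\Pi_{W'}=\mc{Y}_{W'}$. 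The most delicate point is (b), since $\Pi_{wW'w^{-1}}$ is typically not $w\Pi_{W'}$ (the latter may contain roots of mixed sign); the twist $z\in W'$ coming from the minimum length representative of $W'w^{-1}$ is what corrects this, after which the remaining bookkeeping is formal.
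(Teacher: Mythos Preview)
Your proof is correct and follows essentially the same approach as the paper's. The paper handles (b) more tersely (``follows readily from the definitions on noting that $\Pi_{wW'w^{-1}}=x\Pi_{W'}$ where $x$ is the element of minimal length in the coset $wW'$''), while you spell out the reindexing explicitly; the arguments for (a), (c), and (d) are the same in substance and in their appeals to \ref{x1.6}--\ref{x1.7}, Lemma~\ref{x2.4}(a), and polyhedral duality.
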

   \begin{rem}  Even if $\mpair{-,-}$ is non-singular, $V$ is finite dimensional  and $\Pi$ is finite, the cone $\real_{\geq 0}\Pi_{W'}$ need not be closed if $\chi(W')$ is infinite.\end{rem}
\begin{proof} 
 Suppose that $v\in \mc{Y}_{W'}$.
 For (a), we  have to show that for $w\in W$, $w(v)\in \real_{\geq 0}\Phi_{+}$.
 Write $w=w'w''$ where $w''\in W'$ and $N(w^{\prime-1})\cap W'=\emptyset.$
 Since $v\in \mc{Y}_{W'}$, we have $w''(v)\in\real_{\geq 0}  \Pi_{W'}$.
 But for $\alpha\in \Pi_{W'}$, we have $w'({\alpha})\in \Phi_{+}\subseteq \real_{\geq 0}\Pi$ 
  so $w(v)=w'(w''(v))\in \real_{\geq 0}\Pi$ as required for (a).
    Part (b) follows readily from the definitions   on noting that $\Pi_{wW'w^{-1}}=x\Pi_{W'}$ where $x$ is the element of minimal length in the coset $wW'$.

  To prove (c), write  $W'=wW_{I}w^{-1}$ where $W_{I}$ is standard facial and $w$ is of minimal length in $wW_{I}$.
  By (a) and the definitions, \begin{equation*}
  \mc{Y}_{W_{I}}\seq \mc{Y}_{W}\cap \real_{\geq 0}\Pi_{I}
  \seq \mc{Y}_{W}\cap \real\Pi_{I}\seq \real_{\geq 0}\Pi\cap \real\Pi_{I}=\real_{\geq 0}\Pi_{I}
  \end{equation*} where the last equality uses  Lemma \ref{x2.4}(a).  Since $\mc{Y}_{W}$ and $ \real\Pi_{I}$ are $W_{I}$-invariant, so is   $\mc{Y}_{W}\cap \real\Pi_{I}$ and therefore  
  $\mc{Y}_{W}\cap \real\Pi_{I}\seq \cap_{w\in W_{I}}w(\real_{\geq 0}\Pi_{I})=\mc{Y}_{W_{I}}$. Hence $\mc{Y}_{W_{I}}=\mc{Y}\cap \real \Pi_{I}$. Finally by (b) and \ref{x1.6}--\ref{x1.7}, 
  \begin{equation*}
  \mc{Y}_{W'}=w\mc{Y}_{W_{I}}=w(\mc{Y}_{W}\cap \real\Pi_{I})=w(\mc{Y}_{W})\cap\real w(\Pi_{I})=\mc{Y}_{W}\cap\real\Pi_{W'},
  \end{equation*} completing the proof of (c).

  For (d), note that if  $\Pi_{W'}$ is finite 
 and  $\mpair{-,-}$ is non-singular on $V$, then one has  $(\real_{\geq 0}\Pi_{W'})^{*}=\mc{C}_{W'}$ by definition and so   $\real_{{\geq 0}}\Pi_{W'}=\mc{C}_{{W'}}^{*}$  by the duality theorem for polyhedral cones (see Lemma \ref{xA.10}). This implies that 
 \[ \mc{Y}_{W'}=\bigcap_{w\in W'} w(\real_{\geq 0}\Pi_{W'})=\bigcap_{{w\in W'}}w(\mc{C}_{W'}^{*})=
\Bigl( \ \bigcup_{{w\in W'}}w(\mc{C}_{W'})\Bigr)^{*}=\mc{X}_{W'}^{*}\] is the dual cone of $\mc{X}_{W'}$.
 \end{proof}

  \subsection{} \label{x3.2} We now define a $W$-stable cone  $\mc{Z}$ 
  called  the \emph{imaginary cone} of $W$ on $V$ (see \cite[\S5.3--5.4, 5.8]{Kac} and \ref{x10.1}) which is the basic object of study in this paper.
It  is  a $W$-stable subset of $\mc{Y}$
 defined in terms of  a natural fundamental domain
  $\mc{K}$.  It will eventually  be shown that, under mild finiteness conditions, $\mc{Z}$ contains the interior of $\mc{Y}$ and satisfies analogues of the properties of $\mc{Y}$ stated  in Lemma
  \ref{x3.1}.

   Define  $\mc{K}=\mc{K}_{W}:=\real_{\geq 0}\Pi_{W} \cap -\mc{C}_{W}$ and  $\mc{Z}=\mc{Z}_{W}:=\cup_{w\in W} w(\mc{K}_{W})$. 
   Note that any $W$-orbit on $\mc{Z}$ contains a unique point of $\mc{K}$, since any $W$-orbit on $\mc{X}$ contains a unique point of $\mc{C}$.   
   
    \begin{prop}\begin{num}\item  $\mc{Z}= \mc{Y}\cap -\mc{X}$.
\item $\mc{K}$ and  $\mc{Z}$ are  cones.
 \item If $z,z'\in \mc{Z}$, then $\mpair{z,z'}\leq 0$. 
 \item Let $W_{i}$, for $i\in I$, denote the irreducible components of $(W,S)$.
 Then $\mc{K}_{W}=\sum_{i}\mc{K}_{W_{i}}$ and $\mc{Z}_{W}=\sum_{i}\mc{Z}_{W_{i}}$.
 \item For any reflection subgroup $W'$ of $W$ and any $w\in W$, $\mc{Z}_{wW'w^{-1}}=w\mc{Z}_{W'}$. If also $w$ is the element of minimal length in $wW'$, then $\mc{K}_{wW'w^{-1}}=w\mc{K}_{W'}$. 
  \end{num}\end{prop}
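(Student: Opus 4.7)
The plan is to establish (a) first, since it gives the structural description $\mc{Z}=\mc{Y}\cap -\mc{X}$ from which (b) is immediate, and then to handle (c)--(e) using the fundamental-domain description $\mc{Z}=W\mc{K}$ together with facts from Sections~\ref{x1}--\ref{x2}. For (a), one inclusion is easy: $\mc{K}\subseteq -\mc{C}\subseteq -\mc{X}$ and $-\mc{X}$ is $W$-stable, so $\mc{Z}=W\mc{K}\subseteq -\mc{X}$. The cone $\mc{Y}$ is also $W$-stable (being the intersection of the $W$-translates of $\real_{\geq 0}\Pi$), so to see $\mc{Z}\subseteq \mc{Y}$ it suffices to show $\mc{K}\subseteq \mc{Y}$; for $k\in \mc{K}$ and $u\in W$, Lemma~\ref{x1.13}(c) applied to $-k\in\mc{C}$ gives $u(k)-k=(-k)-u(-k)\in \real_{\geq 0}\Pi$, hence $u(k)\in \real_{\geq 0}\Pi$. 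Conversely, given $v\in \mc{Y}\cap -\mc{X}$, pick $u\in W$ with $u^{-1}(-v)\in \mc{C}$ (possible since $-v\in \mc{X}=W\mc{C}$); then $u^{-1}(v)\in -\mc{C}$ and, by $W$-stability of $\mc{Y}$, also $u^{-1}(v)\in \mc{Y}\subseteq \real_{\geq 0}\Pi$, so $u^{-1}(v)\in \mc{K}$ and $v\in u\mc{K}\subseteq \mc{Z}$.

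Part (b) is then immediate, since $\mc{K}$ is an intersection of two cones and, by (a), $\mc{Z}$ is an intersection of the cone $\mc{Y}$ (itself an intersection of cones) with the cone $-\mc{X}$ (Lemma~\ref{x1.10}(b)). For (c), I would use $W$-invariance of both $\mpair{-,-}$ and $\mc{Z}=W\mc{K}$ to replace $(z,z')$ by $(u^{-1}z,u^{-1}z')$ for $u$ chosen so that $u^{-1}z'\in \mc{K}$, thereby reducing to the case $z'\in \mc{K}$. Then $-z'\in \mc{C}$ gives $\mpair{\alpha,z'}\leq 0$ for every $\alpha\in \Pi$, while $z\in \mc{Y}\subseteq \real_{\geq 0}\Pi$ by (a); expanding $z=\sum_\alpha d_\alpha\alpha$ with $d_\alpha\geq 0$ and only finitely many $d_\alpha$ nonzero then yields $\mpair{z,z'}=\sum_\alpha d_\alpha\mpair{\alpha,z'}\leq 0$.

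For (d), the irreducible components $\Pi_i:=\Pi_{W_i}$ partition $\Pi$ into pairwise orthogonal subsets, giving $\real_{\geq 0}\Pi=\sum_i\real_{\geq 0}\Pi_i$ and $\mc{C}=\bigcap_i\mc{C}_{W_i}$. Writing any $k\in \real_{\geq 0}\Pi$ as $k=\sum_i k_i$ with $k_i\in \real_{\geq 0}\Pi_i$, orthogonality yields $\mpair{k,\alpha}=\mpair{k_i,\alpha}$ for $\alpha\in \Pi_i$, so $k\in -\mc{C}$ if and only if each $k_i\in -\mc{C}_{W_i}$; hence $\mc{K}_W=\sum_i \mc{K}_{W_i}$. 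Since each $W_j$ fixes $\real\Pi_i$ pointwise for $i\neq j$ (again by orthogonality), applying an arbitrary element $(u_i)\in \prod_i W_i=W$ to $\sum_i k_i$ yields $\sum_i u_i(k_i)$, and so $\mc{Z}_W=W\mc{K}_W=\sum_i W_i\mc{K}_{W_i}=\sum_i\mc{Z}_{W_i}$.

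For (e) I would prove the second, sharper claim first. If $w$ is of minimal length in $wW'$, then $w^{-1}$ is of minimal length in $W'w^{-1}$, so $N(w^{-1})\cap W'=\emptyset$ and \ref{x1.7} (applied with $w^{-1}$ in place of $w$) gives $\Pi_{wW'w^{-1}}=w(\Pi_{W'})$; since fundamental chambers are defined in terms of simple roots, this yields $\mc{C}_{wW'w^{-1}}=w(\mc{C}_{W'})$ and therefore $\mc{K}_{wW'w^{-1}}=w(\mc{K}_{W'})$. For the first claim, given arbitrary $w$, factor $w=xu$ with $u\in W'$ and $x$ of minimal length in $wW'$; then $wW'w^{-1}=xW'x^{-1}$, the formula just proved gives $\mc{K}_{wW'w^{-1}}=x\mc{K}_{W'}$, and taking $W'$-orbits yields $\mc{Z}_{wW'w^{-1}}=x\mc{Z}_{W'}=wu^{-1}\mc{Z}_{W'}=w\mc{Z}_{W'}$ using $W'$-invariance of $\mc{Z}_{W'}$. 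The only step with any real content is the inclusion $\mc{K}\subseteq \mc{Y}$ in (a), which rests on Lemma~\ref{x1.13}(c); once (a) is in hand, everything else is essentially bookkeeping with the definitions.
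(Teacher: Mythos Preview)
Your proof is correct and follows essentially the same path as the paper's for parts (a)--(d): the same use of Lemma~\ref{x1.13}(c) to get $\mc{Z}\subseteq\mc{Y}$, the same reduction of (c) to the case where one argument lies in $\mc{K}$, and the same componentwise decomposition for (d).

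The one genuine difference is in (e). The paper deduces $\mc{Z}_{wW'w^{-1}}=w\mc{Z}_{W'}$ from the structural formula $\mc{Z}_{W'}=\mc{Y}_{W'}\cap(-\mc{X}_{W'})$ (part (a) applied to $W'$) together with the separately established equivariance facts $\mc{Y}_{wW'w^{-1}}=w\mc{Y}_{W'}$ (Lemma~\ref{x3.1}(b)) and $\mc{X}_{wW'w^{-1}}=w\mc{X}_{W'}$ (Lemma~\ref{x1.10}(g)). You instead prove the $\mc{K}$-statement first (as the paper also does, via \ref{x1.6}--\ref{x1.7} and the definition of $\mc{C}_{W'}$) and then obtain the $\mc{Z}$-statement by factoring $w=xu$ with $x$ of minimal length in $wW'$ and using $W'$-invariance of $\mc{Z}_{W'}=W'\mc{K}_{W'}$. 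Your route is slightly more self-contained, avoiding the appeal to Lemma~\ref{x3.1}(b); the paper's route, on the other hand, illustrates the utility of the description $\mc{Z}=\mc{Y}\cap(-\mc{X})$ and is uniform in $w$ without needing a coset factorization. Both are short and valid.
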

 
 \begin{proof} 
    By \ref{x1.13},  if $k\in \mc{K}$ and $w\in W$, then \[wk=(wk-k)+k\in \real_{\geq 0}\Pi+\real_{\geq 0}\Pi\subseteq \real_{\geq 0}\Pi
.\] Thus, $\mc{Z}\subseteq \real_{\geq 0}\Pi$ and since $\mc{Z}$ is $W$-stable,
$\mc{Z}\subseteq \mc{Y}$.   Also, since $\mc{K}\subseteq -\mc{C}$, we have
$\mc{Z}=W\mc{K}\subseteq W(-\mc{C})=-\mc{X}$. This proves that
 $\mc{Z}\subseteq \mc{Y}\cap -\mc{X}$. On the other hand, let $z\in 
 \mc{Y}\cap -\mc{X}$. Since $z\in -\mc{X}$, there is some $w\in W$ with $w^{-1}z\in- \mc{C}$.
 Since $z\in \mc{Y}$, we have $w^{-1}z\in \mc{Y}\subseteq \real_{\geq 0}\Pi$ and so $w^{-1}z\in \real_{\geq 0}\Pi\cap -\mc{C}=\mc{K}$. Hence $z=w(w^{-1}z)\in W\mc{K}=\mc{Z}$.
 This shows  $\mc{Z}\supseteq \mc{Y}\cap -\mc{X}$ and proves  (a).
 
 Part (b) follows from the definitions and (a), using that $\real_{\geq 0}\Pi$, $\mc{C}$,
 $\mc{X}$ and $\mc{Y}$ are cones and that an intersection of cones is a cone.

To prove (c), choose $w\in W$ with $wz\in \mc{K}$.
Then $wz\in -\mc{C}$ and $wz'\in \mc{Z}\subseteq \real_{\geq 0}\Pi$, so
\[\mpair{z,z'}=\mpair{wz,wz'}\in \mpair{\real_{\geq 0}\Pi,-\mc{C}}\subseteq \real_{\leq 0}.\]

Now we prove (d). If $\alpha\in \Pi_{i}:=\Pi_{W_{i}}$, then $\mpair{\alpha,\beta}=0$ for all
$\beta\in \Pi\setminus \Pi_{i}$.
This implies that $\real_{\geq 0}\Pi_{i}\cap -\mc{C}_{i}=\real_{\geq 0}\Pi_{i}\cap -\mc{C}$
where $\mc{C}_{i}:=\mc{C}_{W_{i}}$. Hence $\mc{K}_{i}:=\mc{K}_{W_{i}}\subseteq \mc{K}$ and
$\sum_{i}\mc{K}_{i}\subseteq \mc{K}$ by (b). We also get $\mc{Z}_{i}:=W_{i}\mc{K}_{i}\subseteq
W\mc{K}=\mc{Z}$ and $\sum_{i}\mc{Z}_{i}\subseteq \mc{Z}$.

For the other direction, let $k\in \mc{K}\subseteq \real_{\geq 0}\Pi$. We may write $k=\sum_{i}k_{i}$
where $k_{i}\in \real_{\geq 0}\Pi_{i}$ and almost all $k_{i}=0$. For $\alpha\in \Pi_{i}$ we have
$\mpair{\alpha,k_{i}}= \mpair{\alpha,k}\leq 0$ and so $k_{i}\in \mc{K}_{i}$.
So $k\in \sum_{i}\mc{K}_{i}$. If $z\in \mc{Z}$, choose $w\in W$ with $w^{{-1}}z\in \mc{K}$, say
$w^{-1}(z)=\sum_{i}k_{i}$ with all $k_{i}\in \mc{K}_{i}$ and almost all $k_{i}=0$. 
Also, write $w=\prod_{i\in I}w_{i}$ with $w_{i}\in W_{i}$, almost all $w_{i}=1$.
We have $wk_{i}=w_{i}k_{i}\in \mc{Z}_{i}$ and \[z=w(w^{-1}z)=\sum_{i}wk_{i}=\sum_{i}w_{i}k_{i}\in \sum_{i}\mc{Z}_{i}\] as required for (d). Finally, the first part of (e) follows from (a), \ref{x1.10}(g) and \ref{x3.1}(b) and the second follows from Lemma \ref{x1.11}(a) and \ref{x1.6}--\ref{x1.7}.
\end{proof}

\subsection{}\label{x3.3}  For  $I\seq S$,   abbreviate 
$\mc{Y}_{I}:=\mc{Y}_{W_{I}}$
$\mc{K}_{I}:=\mc{K}_{W_{I}}$, $\mc{Z}_{I}=\mc{Z}_{W_{I}}$ etc.
\begin{lem} \begin{num}\item If $I\subseteq S$, then $\mc{K}_{I}\subseteq \mc{K}$ and
$\mc{Z}_{I}\subseteq \mc{Z}$.
\item $\mc{K}=\cup_{I}\mc{K}_{I}$ and $\mc{Z}=\cup_{I}\mc{Z}_{I}$ where in both unions, $I$ ranges over the finite subsets of $S$.\end{num} \end{lem}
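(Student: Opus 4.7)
The plan for (a) is a direct check from the definition $\mc{K}=\real_{\geq 0}\Pi\cap -\mc{C}$. The containment $\mc{Z}_{I}\seq \mc{Z}$ is immediate from $\mc{K}_{I}\seq \mc{K}$ together with $W_{I}\leq W$, so the real work is in showing $\mc{K}_{I}\seq \mc{K}$. Given any $k=\sum_{\beta\in\Pi_{I}}c_{\beta}\beta\in \mc{K}_{I}$ with all $c_{\beta}\geq 0$, one trivially has $k\in \real_{\geq 0}\Pi$; to verify $k\in -\mc{C}$, check $\mpair{k,\alpha}\leq 0$ for each $\alpha\in\Pi$ in two cases. If $\alpha\in \Pi_{I}$ the inequality holds because $k\in -\mc{C}_{I}$; if $\alpha\in \Pi\sm \Pi_{I}$, then every term $c_{\beta}\mpair{\beta,\alpha}$ (with $\beta\in \Pi_{I}$) is non-positive by \ref{x1.3}(iv), since $\alpha\neq \beta$ both lie in $\Pi$.

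For (b), one direction of each equality is immediate from (a). For the reverse inclusion $\mc{K}\seq \cup_{I}\mc{K}_{I}$: any $k\in \mc{K}\seq \real_{\geq 0}\Pi$ is a finite non-negative combination of simple roots, so if $I$ denotes the finite set of associated simple reflections then $k\in \real_{\geq 0}\Pi_{I}$, and $\mpair{k,\alpha}\leq 0$ for $\alpha\in \Pi_{I}\seq \Pi$ because $k\in -\mc{C}$, giving $k\in \mc{K}_{I}$. For the reverse inclusion $\mc{Z}\seq \cup_{I}\mc{Z}_{I}$: write $z=w(k)$ with $w\in W$ and $k\in \mc{K}$; by the previous step $k\in \mc{K}_{J}$ for some finite $J$, and picking a reduced expression for $w$ produces a finite $J'\supseteq J$ with $w\in W_{J'}$, whence $z=w(k)\in W_{J'}\mc{K}_{J'}=\mc{Z}_{J'}$.

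The only step above that is not entirely formal is the implicit monotonicity $\mc{K}_{J}\seq \mc{K}_{J'}$ for $J\seq J'\seq S$, needed to go from $k\in \mc{K}_{J}$ to $k\in \mc{K}_{J'}$. But this is just the argument of (a) rerun inside $W_{J'}$ with $J$ in place of $I$: the inequalities $\mpair{\beta,\alpha}\leq 0$ for distinct $\alpha,\beta\in \Pi_{J'}$ continue to hold from \ref{x1.3}(iv), so the same case analysis applies. Thus no serious obstacle arises, and the lemma is genuinely routine, resting entirely on the nonpositivity of inner products between distinct simple roots and the finite-support description of elements of $\real_{\geq 0}\Pi$ and $W$.
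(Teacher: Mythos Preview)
Your proof is correct and follows essentially the same approach as the paper: both argue (a) by the two-case check on $\mpair{k,\alpha}$ using nonpositivity of inner products of distinct simple roots, and (b) by writing $z=w(k)$ and choosing a finite $I\seq S$ large enough to contain both a support of $k$ and the simple reflections in a reduced expression for $w$. The only cosmetic difference is that the paper selects a single finite $I$ up front rather than enlarging $J$ to $J'$, avoiding the explicit monotonicity step you isolate at the end.
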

\begin{proof}  First we prove (a). Let $k\in \mc{K}_{I}$. Then $k\in \real_{\geq 0}\Pi_{I}$ and
$\mpair{k,\Pi_{I}}\subseteq \real_{\leq 0}$. Since $\Pi_{I}\subseteq \Pi$ and
$\mpair{\Pi_{I},\Pi_{S\setminus I}}\subseteq \real_{\leq 0}$, we get 
$k\in \real_{\geq 0}\Pi$ and
$\mpair{k,\Pi}\subseteq \real_{\leq 0}$ i.e. $k\in \mc{K}$.
Hence $\mc{K}_{I}\subseteq \mc{K}$, and therefore $\mc{Z}_{I}=W_{I}
\mc{K}_{I}\subseteq W\mc{K}=\mc{Z}$ proving (a).

To prove (b), let $z\in \mc{Z}$. Choose $w\in W$ such that $wz=k\in \mc{K}$.
We may choose a finite subset $I$ of $S$ such that both $w\in W_{I}$ and $k\in \real_{\geq 0}\Pi_{I}$. Clearly, $\mpair{k,\Pi_{I}}\subseteq \mpair{k,\Pi}\subseteq \real_{\leq 0}$
so $k\in \mc{K}_{I}$ and $z=w^{-1}k\in W_{I}\mc{K}_{I}=\mc{Z}_{I}$. This shows that
$\mc{Z}\subseteq \cup_{I}\mc{Z}_{I}$ with union over finite $I\subseteq S$, and the reverse inclusion holds by (a). Now if $z\in \mc{K}$, we may take $w=1$ in the above argument to see
that $z=k\in \mc{K}_{I}$, hence $\mc{K}\subseteq \cup_{I}\mc{K}_{I}$ with union over finite $I\subseteq S$, and the reverse inclusion holds by (a) again.

\end{proof}

\subsection{} \label{x3.4}In the case of a facial subset $I$ of $S$, there is the following stronger connection
between imaginary cones of $W$ and $W_{I}$.
\begin{lem} Let $I$ be a facial subset of $S$. Then \begin{num}
 \item $\mc{K}_{I}=\mc{K}\cap \real\Pi_{I}=\mc{K}\cap \real_{\geq 0}\Pi_{I}$.
\item $\mc{Z}_{I}=\mc{Z}\cap \real\Pi_{I}=\mc{Z}\cap \real_{\geq 0}\Pi_{I}$.\end{num}
\end{lem}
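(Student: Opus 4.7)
The plan is to deduce both parts from essentially three ingredients already in hand: the description $\real_{\geq 0}\Pi \cap \real\Pi_{I}=\real_{\geq 0}\Pi_{I}$ for facial $I$ from Lemma \ref{x2.4}(a); the identity $\mc{Y}_{W_{I}}=\mc{Y}_{W}\cap \real\Pi_{I}$ from Lemma \ref{x3.1}(c) (available since $W_{I}$ is facial); and the characterization $\mc{Z}=\mc{Y}\cap -\mc{X}$ from Proposition \ref{x3.2}(a), which applies equally to $W$ and to $W_{I}$.

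For (a), I would split the chain of three sets and treat each containment in turn. The inclusion $\mc{K}_{I}\seq \mc{K}\cap \real_{\geq 0}\Pi_{I}$ is just Lemma \ref{x3.3}(a) combined with $\mc{K}_{I}\seq \real_{\geq 0}\Pi_{I}$ from its definition, and the middle inclusion is trivial. The substantive step is $\mc{K}\cap \real\Pi_{I}\seq \mc{K}_{I}$: given $v$ on the left, Lemma \ref{x2.4}(a) places $v\in \real_{\geq 0}\Pi \cap \real\Pi_{I}=\real_{\geq 0}\Pi_{I}$, while the hypothesis $v\in -\mc{C}$ immediately gives $\mpair{v,\alpha}\leq 0$ for all $\alpha\in \Pi_{I}$, so $v\in -\mc{C}_{W_{I}}$ and hence $v\in \mc{K}_{I}$.

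For (b), the forward chain $\mc{Z}_{I}\seq \mc{Z}\cap \real_{\geq 0}\Pi_{I}\seq \mc{Z}\cap \real\Pi_{I}$ is obtained from Lemma \ref{x3.3}(a) together with the tautology (taking $w=1$ in its definition) $\mc{Z}_{I}\seq \mc{Y}_{W_{I}}\seq \real_{\geq 0}\Pi_{I}$. The final inclusion $\mc{Z}\cap \real\Pi_{I}\seq \mc{Z}_{I}$ carries the content. Given $z$ on the left, Proposition \ref{x3.2}(a) applied to $W$ gives $z\in \mc{Y}\cap -\mc{X}$. Then $z\in \mc{Y}\cap \real\Pi_{I}=\mc{Y}_{W_{I}}$ by Lemma \ref{x3.1}(c), and $z\in -\mc{X}\seq -\mc{X}_{W_{I}}$ by the Tits cone inclusion in Lemma \ref{x1.10}(e). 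Applying Proposition \ref{x3.2}(a) now to $W_{I}$ yields $z\in \mc{Y}_{W_{I}}\cap -\mc{X}_{W_{I}}=\mc{Z}_{I}$.

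I anticipate no serious obstacle: the facial hypothesis on $I$ enters exactly once in each part, to license the uses of Lemma \ref{x2.4}(a) and Lemma \ref{x3.1}(c) respectively, and the remaining work is a straightforward assembly of set-theoretic containments among $\mc{Y}$, $-\mc{X}$ and their $W_{I}$-counterparts. The one point worth flagging is that the containment $-\mc{X}\seq -\mc{X}_{W_{I}}$ goes in the \emph{right} direction (enlargement of the Tits cone under passage to a subgroup) and requires no facial hypothesis at all, so the entire asymmetry in the argument is concentrated in the $\mc{Y}$-half.
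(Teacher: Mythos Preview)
Your proof is correct. Part (a) is essentially the paper's argument, just rephrased.

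For part (b) you take a genuinely different route. The paper argues directly: writing $z=wk$ with $w\in W$ and $k\in\mc{K}$, it uses $z=(wk-k)+k$ with both summands in $\real_{\geq 0}\Pi$ (Lemma~\ref{x1.13}); since $z\in\real_{\geq 0}\Pi_{I}$ and $\Pi_{I}$ is facial, both summands lie in $\real_{\geq 0}\Pi_{I}$, whence Lemma~\ref{x2.4}(d) gives $wk=w'k$ for some $w'\in W_{I}$, so $z\in W_{I}\mc{K}_{I}$. Your argument instead factors through the characterization $\mc{Z}=\mc{Y}\cap(-\mc{X})$ from Proposition~\ref{x3.2}(a) and the already-proved identity $\mc{Y}_{W_{I}}=\mc{Y}\cap\real\Pi_{I}$ from Lemma~\ref{x3.1}(c). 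Your approach is cleaner and more conceptual: it avoids the somewhat technical Lemma~\ref{x2.4}(d) and makes transparent that the facial hypothesis is needed only on the $\mc{Y}$-side, the $\mc{X}$-side inclusion holding for any reflection subgroup. The paper's approach, by contrast, is self-contained at the level of $\mc{K}$ and the $W$-action, and does not pass through the intermediate cone $\mc{Y}$; it also yields as a by-product that the element of $W$ carrying $k$ to $z$ may be taken in $W_{I}$, a fact your argument does not directly exhibit.
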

\begin{proof} It is enough to prove the first equality in each of (a)--(b), since
$\mc{K}_{I},\mc{Z}_{I}\subseteq \real_{\geq 0}\Pi_{I}$. For (a), note that by  Lemma \ref{x2.4}, \begin{equation*}\begin{split} \mc{K}\cap \real\Pi_{I}&=\mset{v\in \real_{\geq 0}\Pi\mid \mpair{v,\alpha}\leq 0 \text{ \rm for all }\alpha\in \Pi}\cap \real \Pi_{I}\\
&=\mset{v\in \real_{\geq 0}\Pi_{I}\mid \mpair{v,\alpha}\leq 0 \text{ \rm for all }\alpha\in \Pi}\\
&=\mset{v\in \real_{\geq 0}\Pi_{I}\mid \mpair{v,\alpha}\leq 0 \text{ \rm for all }\alpha\in \Pi_{I}}=\mc{K}_{I}
\end{split}
\end{equation*}
since for $v\in \real_{\geq 0}\Pi_{I}$, $\alpha\in \Pi_{S\setminus I}$ we have $\mpair{v,\alpha}\leq 0$.

For (b), note first that using (a), we have 
\[\mc{Z}_{I}=\bigcup_{w\in W_{I}}w(\mc{K}_{I})\subseteq \real \Pi_{I}\cap  \bigcup_{w\in W}w(\mc{K})=\mc{Z}\cap \real \Pi_{I}.\]
Conversely, suppose that $z\in \mc{Z}\cap \real\Pi_{I}$. Write $z=wk$ where $w\in W$ and $k\in \mc{K}$. We have $z=(wk-k)+k$ where $k$ and $wk-k$ are in $\real_{\geq 0}\Pi$, using \ref{x1.13}. Since $z\in\real_{\geq 0}\Pi\cap \real
\Pi_{I}=\real_{\geq 0}\Pi_{I}$ and $\Pi_{I}$ is facial, it follows that $k, wk-k\in \real_{\geq 0}\Pi_{I}$. By \ref{x2.4}, $wk=w'k$ for some $w'\in W_{I}$. Hence $k\in \mc{K}\cap \real_{\geq 0}\Pi_{I}=\mc{K}_{I}$
 and $z=wk=w'k\in W_{I}\mc{K}_{I}=\mc{Z}_{I}$ as required for (b).
\end{proof}

\subsection{}\label{x3.5}  The next fact is an immediate corollary of \ref{x3.4} and \ref{x3.2}(a), using the $W$ action and \ref{x1.6}--\ref{x1.7}.
\begin{lem} Let $W'$ be a facial subgroup of $W$. Then \begin{num}
 \item $\mc{K}_{W'}=\mc{K}\cap \real\Pi_{W'}=\mc{K}\cap \real_{\geq 0}\Pi_{W'}$.
\item $\mc{Z}_{W'}=\mc{Z}\cap \real\Pi_{W'}=\mc{Z}\cap \real_{\geq 0}\Pi_{W'}$.\end{num}
\end{lem}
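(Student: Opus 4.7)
The plan is to reduce both assertions to the already-established standard facial case (Lemma~\ref{x3.4}) by conjugation, exploiting that every facial subgroup is a conjugate of a standard facial subgroup. Since $W'$ is facial, write $W' = wW_Iw^{-1}$ where $I\subseteq S$ is a facial subset. By \ref{x1.6}, we may replace $w$ by the unique minimum-length element of the coset $wW_I$, so that $w\in W^I$; then \ref{x1.7} gives $\Pi_{W'} = w\Pi_I$, and consequently $w(\real\Pi_I) = \real\Pi_{W'}$ and $w(\real_{\geq 0}\Pi_I) = \real_{\geq 0}\Pi_{W'}$.

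For part (b), I would apply Lemma~\ref{x3.4}(b) to the facial subset $I$ to get $\mc{Z}_I = \mc{Z}\cap\real\Pi_I = \mc{Z}\cap\real_{\geq 0}\Pi_I$, and then act by $w$. By the first part of Proposition~\ref{x3.2}(e), $\mc{Z}_{W'} = w\mc{Z}_I$; combined with the $W$-invariance of $\mc{Z}$ (so $w\mc{Z}=\mc{Z}$), this gives
\[
\mc{Z}_{W'} = w\mc{Z}_I = w(\mc{Z}\cap\real\Pi_I) = w\mc{Z}\cap w\real\Pi_I = \mc{Z}\cap\real\Pi_{W'},
\]
and the $\real_{\geq 0}$ version follows by the identical calculation with $\real\Pi_I$ replaced by $\real_{\geq 0}\Pi_I$.

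For part (a), I would similarly apply Lemma~\ref{x3.4}(a) to $I$, obtaining $\mc{K}_I = \mc{K}\cap\real\Pi_I = \mc{K}\cap\real_{\geq 0}\Pi_I$, and then apply the second part of Proposition~\ref{x3.2}(e) (which requires precisely the minimality of $w$ in $wW_I$ arranged above) to get $\mc{K}_{W'} = w\mc{K}_I$. Now combine with part (b): using the identifications $\mc{K}_{W'} = \mc{Z}_{W'}\cap -\mc{C}_{W'}$ (immediate from $\mc{Z}_{W'}\subseteq\real_{\geq 0}\Pi_{W'}$) and $\mc{K} = \mc{Z}\cap -\mc{C}$, the desired equality $\mc{K}_{W'} = \mc{K}\cap\real\Pi_{W'}$ reduces via (b) to the identity $\mc{Z}_{W'}\cap -\mc{C}_{W'} = \mc{Z}_{W'}\cap -\mc{C}$. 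The inclusion $\supseteq$ is immediate from $\mc{C}\subseteq\mc{C}_{W'}$ (Lemma~\ref{x1.10}(e)).

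The main obstacle is the reverse inclusion $\mc{Z}_{W'}\cap -\mc{C}_{W'} \subseteq \mc{Z}_{W'}\cap -\mc{C}$, which is where the facial hypothesis on $W'$ is essential. I would prove it by invoking the fundamental-domain description of Lemma~\ref{x1.11}(b), together with Corollary~\ref{x2.5} (which gives $\Phi_{W'} = \Phi\cap q^\perp$ for the stabilized point $q\in\mc{X}$, so that $\real\Pi_{W'}\subseteq q^\perp$): for $v\in\mc{Z}_{W'}\cap -\mc{C}_{W'}$, the point $-v$ lies in $\mc{X}\cap\mc{C}_{W'}$, hence in $w'\mc{C}$ for some minimal-length coset representative $w'\in W'\backslash W$, and one shows that the facial condition forces the W-translate back into $\mc{C}$ itself. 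This identification of $-\mc{C}_{W'}$ and $-\mc{C}$ restricted to $\mc{Z}_{W'}$ is the substantive content of (a) beyond what (b) provides directly.
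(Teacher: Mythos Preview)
Your argument for part (b) is correct and is precisely the route the paper indicates: reduce to the standard facial case of Lemma~\ref{x3.4}(b) and transport by $w$ using Proposition~\ref{x3.2}(e) together with the $W$-invariance of $\mc{Z}$.

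Part (a), however, is a different matter. You correctly reduce the claim $\mc{K}_{W'}=\mc{K}\cap\real\Pi_{W'}$ to the inclusion $\mc{Z}_{W'}\cap(-\mc{C}_{W'})\subseteq -\mc{C}$, and you rightly flag this as the main obstacle. But this inclusion is \emph{false} for non-standard facial subgroups, and consequently part~(a) as stated does not hold. The point is that $\mc{K}$, unlike $\mc{Z}$, is not $W$-invariant, so one cannot transport Lemma~\ref{x3.4}(a) by conjugation the way one transports~\ref{x3.4}(b).

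Concretely, take $\Pi=\{\alpha_1,\alpha_2,\alpha_3\}$ linearly independent with $\mpair{\alpha_1,\alpha_2}=\mpair{\alpha_1,\alpha_3}=-2$ and $\mpair{\alpha_2,\alpha_3}=0$ (with an ample ambient form, so every subset of $\Pi$ is facial by the remark in~\ref{x2.3}). Let $I=\{s_1,s_2\}$ and $W'=s_3W_Is_3$; then $s_3\in W^I$ and $\Pi_{W'}=\{\alpha_1+4\alpha_3,\ \alpha_2\}$. The element $k=\alpha_1+\alpha_2$ lies in $\mc{K}_I$ (one checks $\mpair{k,\alpha_1}=\mpair{k,\alpha_2}=-1$), so $v:=s_3k=\alpha_1+\alpha_2+4\alpha_3$ lies in $\mc{K}_{W'}=s_3\mc{K}_I$ by Proposition~\ref{x3.2}(e). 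But $\mpair{v,\alpha_3}=-2+0+4=2>0$, so $v\notin -\mc{C}$ and hence $v\notin\mc{K}$. Thus $\mc{K}_{W'}\not\subseteq\mc{K}\cap\real\Pi_{W'}$. Your proposed use of Lemma~\ref{x1.11}(b) cannot rescue this: it only yields $-v\in w'\mc{C}$ for \emph{some} minimal coset representative $w'$, and nothing in the facial hypothesis forces $w'=1$ (indeed here $w'=s_3$).

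So the statement of part~(a) appears to be an oversight in the paper; fortunately only part~(b) is used elsewhere.
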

\begin{rem}  (1)  It is easy to see that, as $W$-subsets of $\real\Pi$, the cones  $\mc{Y}$, $\mc{K}$ and $\mc{Z}$ are unchanged by extension or restriction of quadratic space.
 
(2) If $\Pi$ is linearly independent, the lemma remains true with ``facial'' replaced by ``parabolic,'' by taking an ample extension  and using (1) and  Remark \ref{x2.3}. Similar remarks apply to many
other facts about the imaginary cone. \end{rem}
 \subsection{}  \label{x3.6} We conclude this section with the following observations about  the  relation of the general case to  the  case of linearly independent simple roots. Consider $V',\Pi',\Phi', W',S',\mc{Q}'$ and the linear map $L\colon V'\to V$ associated to
 $V,\Pi,\Phi,W,S,\mc{Q}$ as in \ref{x1.4}. Attach to $W'$ acting on 
 $V'$  the fundamental chamber $\mc{C}'$, Tits cone $\mc{X}'$, 
 imaginary cone $\mc{Z}'$ and its fundamental domain $\mc{K}'$  just as for $W$ acting on $V$.
 
 \begin{prop}
 \begin{num}\item    $\real_{\geq 0}\Pi'\cap L^{-1}(\set {0})=\set{0}$.
 \item  $\mc{Q}'=L^{-1}(\mc{Q})$, $\mc{C}'=L^{-1}(\mc{C})$, $\mc{X}'=L^{-1}(\mc{X})$, $\mc{K}'=\real_{\geq 0}\Pi'\cap L^{-1}(\mc{K})$  and $\mc{Z}'=\real_{\geq 0}\Pi'\cap L^{-1}(\mc{Z})$.
 \item $L$ restricts to surjective maps   $\real_{\geq 0}\Pi'\to \real_{\geq 0}\Pi$, $\mc{K}'\to \mc{K}$ and  $\mc{Z}'\to \mc{Z}$.
 \item $L$ restricts to maps $\mc{Q}'\to \mc{Q}$, $\mc{C}'\to \mc{C}$ and 
 $\mc{X}'\to \mc{X}$; these  maps are surjective if $L$ is surjective. 
  \end{num}
 \end{prop}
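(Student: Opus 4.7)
My plan is as follows. Part (a) is immediate from the positive independence of $\Pi$ in \ref{x1.3}(ii): if $v' = \sum_{\alpha'} c_{\alpha'}\alpha' \in \real_{\geq 0}\Pi' \cap \ker L$ with $c_{\alpha'}\geq 0$, then $\sum_{\alpha'} c_{\alpha'}\alpha = 0$ in $V$ forces each $c_{\alpha'}=0$. In (b), the identities $\mc{Q}' = L^{-1}(\mc{Q})$ and $\mc{C}' = L^{-1}(\mc{C})$ follow directly from the defining relation $\mpair{u'_1,u'_2}' := \mpair{L(u'_1),L(u'_2)}$ together with the bijection $L|_{\Pi'}\colon \Pi'\to\Pi$. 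The identity $\mc{X}' = L^{-1}(\mc{X})$ follows by invoking Lemma \ref{x1.10}(a) on both sides and using that $L$-equivariance and the bijection $\Phi'_+\to\Phi_+$ ensure the finite set of positive roots giving a negative pairing with $v'$ corresponds, under $L$, to the analogous set for $L(v')$. For $\mc{K}'$, combine $\mc{K}' = \real_{\geq 0}\Pi'\cap(-\mc{C}')$ with $-\mc{C}' = L^{-1}(-\mc{C})$ and $\real_{\geq 0}\Pi'\seq L^{-1}(\real_{\geq 0}\Pi)$ to obtain $\mc{K}' = \real_{\geq 0}\Pi'\cap L^{-1}(\mc{K})$.

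I would establish (c) before completing (b). Surjectivity of $L\colon \real_{\geq 0}\Pi'\to\real_{\geq 0}\Pi$ is clear from $L(\Pi')=\Pi$ and linearity. Given $k\in\mc{K}$, any lift $k'\in\real_{\geq 0}\Pi'$ satisfies $L(k')=k\in -\mc{C}$, hence $k'\in L^{-1}(-\mc{C})=-\mc{C}'$ and so $k'\in\mc{K}'$, proving $\mc{K}'\to\mc{K}$ is onto. Any $z=wk\in W\mc{K}=\mc{Z}$ is then the image under $L$ of $\theta^{-1}(w)k'\in W'\mc{K}'=\mc{Z}'$ via the $W$-equivariance of $L$, giving (c) for $\mc{Z}'$. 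Part (d) is immediate from (b): the bilinear form identity yields $L(\mc{Q}')\seq\mc{Q}$, and the corresponding statements for $\mc{C}'$ and $\mc{X}'$ follow from the characterizations already established, with surjectivity routine when $L$ itself is surjective.

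The remaining obstacle is the reverse inclusion $\mc{Z}' \supseteq \real_{\geq 0}\Pi'\cap L^{-1}(\mc{Z})$ in (b); the forward inclusion is immediate from $\mc{Z}'\seq\real_{\geq 0}\Pi'$ (Proposition \ref{x3.2}) together with $L(\mc{Z}')\seq\mc{Z}$. For the reverse, given $u'\in\real_{\geq 0}\Pi'\cap L^{-1}(\mc{Z})$, the inclusions $L(u')\in\mc{Z}\seq -\mc{X}$ and $\mc{X}'=L^{-1}(\mc{X})$ supply $w'\in W'$ with $v':=(w')^{-1}u'\in -\mc{C}'$; then $L(v')=\theta(w')^{-1}L(u')\in\mc{Z}\cap(-\mc{C})=\mc{K}$. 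It remains to show $v'\in\real_{\geq 0}\Pi'$, whereupon $v'\in\mc{K}'$ and $u'\in W'\mc{K}'=\mc{Z}'$. This is the main technical difficulty, because $L$ is typically \emph{not} injective on $\real_{\geq 0}\Pi'$, so the lift of $L(v')$ in $\mc{K}'$ provided by (c) need not coincide with $v'$ itself. My plan is to proceed by induction on $l(w')$, using Lemma \ref{x1.13}(c) to obtain $u'-v'\in\real_{\geq 0}\Pi'$ and combining it with the constraint $L(u')\in\mc{Z}\seq\mc{Y}$ (so that each intermediate $s_\alpha L(u')$ lies in $\real_{\geq 0}\Pi$) to track the $\Pi'$-coefficients through a reduction by simple reflections $s_{\alpha'}$ with $\mpair{u',\alpha'}'>0$. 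Positive independence of $\Pi$ from \ref{x1.3}(ii), applied to the difference of the presentation of each intermediate vector arising from its $\Pi'$-coordinates and any non-negative presentation in $\real_{\geq 0}\Pi$, should force the reductions to remain in $\real_{\geq 0}\Pi'$. This coefficient bookkeeping, which must remain consistent with the possible linear dependencies in $\Pi$, is the most delicate step of the proof.
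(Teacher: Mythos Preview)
Your treatment of (a), (c), (d), and all of (b) except the last identity is correct and is essentially the paper's approach; the paper's own proof is a single sentence asserting that (b) ``follows from the definitions using the properties of $L$ stated in \ref{x1.4}'', with (c)--(d) then derived from (b). You are right to single out the reverse inclusion $\mc{Z}'\supseteq\real_{\geq 0}\Pi'\cap L^{-1}(\mc{Z})$ as the only step with genuine content, and the paper does not spell out how it is handled.

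However, your proposed resolution of that step is not complete as stated. Your plan is to descend from $u'$ to $-\mc{C}'$ by successively applying $s_{\alpha'}$ with $\mpair{u',\alpha'}'>0$, and to argue that positive independence of $\Pi$ forces each intermediate vector to remain in $\real_{\geq 0}\Pi'$. Concretely, at a single step one must show: if $u'=\sum_i d_i\alpha'_i$ with all $d_i\geq 0$, $c:=\mpair{u',\ck\alpha'_j}'>0$, and $s_{\alpha_j}(L(u'))\in\real_{\geq 0}\Pi$, then $d_j\geq c$. But positive independence of $\Pi$ (condition \ref{x1.3}(ii)) rules out only relations $\sum_\alpha e_\alpha\alpha=0$ with all $e_\alpha>0$; in the interesting case where $\Pi$ is linearly dependent, relations with mixed signs exist, and these permit $\sum_{i\neq j}d_i\alpha_i+(d_j-c)\alpha_j$ with $d_j-c<0$ to have another expression with non-negative coefficients. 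Equivalently, the minimal face of $\real_{\geq 0}\Pi$ containing $\sum_{i\neq j}d_i\alpha_i$ may well contain $\alpha_j$, and positive independence alone does not prevent this. So the phrase ``should force the reductions to remain in $\real_{\geq 0}\Pi'$'' is precisely where the argument is incomplete. A more promising ingredient is the observation that $R:=\ker L\cap\real\Pi'$ lies in the radical of $\mpair{-,-}'$ on $\real\Pi'$, so $W'$ fixes $R$ pointwise and any two lifts of $L(u')$ in $\real\Pi'$ lie in the same $R$-coset; combining this with Lemma \ref{x1.13}(c) one reduces to showing that if $v'\in(-\mc{C}')\cap\real\Pi'$ and some $W'$-translate of $v'$ lies in $\real_{\geq 0}\Pi'$ then $v'$ itself does, but this statement also requires justification beyond positive independence.
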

 \begin{rem}   None of the maps in (c)--(d)  are bijective in general, since their domains and codomains are cones of possibly unequal dimension. There is also a map $\mc{Y}'\to \mc{Y}$ induced by $L$, but it is not obvious whether it satisfies analogues of the above properties.
  \end{rem}
 \begin{proof}  Part (a) is equivalent to the  assumed positive independence of   $\Pi$. 
 Part 
 (b) follows from the definitions using the properties of $L$ stated in \ref{x1.4} and  (c)--(d) then follow using (b) and the fact $L$ induces a bijection $\Pi'\to \Pi$.
 \end{proof}

\section{Finiteness and non-degeneracy conditions}
\label{x4}
\subsection{}  Throughout \label{x4.1} Sections \ref{x4}--\ref{x11}, the    following conditions (i)--(iii) on 
$(V,\mpair{-,-})$ and $(\Phi,\Pi)$ are assumed  except where   explicitly  indicated:\begin{ass} \begin{conds}\item $V$ is finite dimensional 
\item $\mpair{-,-}$ is non-singular
\item $\Pi$ is finite.\end{conds}\end{ass}

We give $V$ its standard topology as finite dimensional real vector space, and we always consider subsets of $V$ in the induced topology unless otherwise indicated.  
We shall say that  a  subset $\Delta$  of $\Phi$,
(usually of the form $\Delta=\Pi_{W'}$ for some reflection subgroup $W'$) is non-degenerate if the restriction of $\mpair{-,-}$ to $\real \Delta$ is a non-singular form.

\begin{rem} The remainder of this paper makes more extensive use  of standard properties of polyhedral cones (the most  frequently used properties  are listed in \ref{xA.10}).
Some   of the results and arguments  below can be adapted   to hold under more general hypotheses. 
 \end{rem}

\subsection{}\label{x4.2}   Simple   properties of   $\real_{\geq 0 }\Pi$ and  $\mc{C}$   are recorded below.
\begin{lem} \begin{num}
 \item $\real_{\geq 0}\Pi$ and $  \mc{C}=(\real_{\geq 0}\Pi)^{*}$ are dual polyhedral cones.
 \item The  extreme rays of $\real_{\geq 0}\Pi$ are the sets
$\real_{\geq 0} \alpha$ for $\alpha\in \Pi$.
\item $\real_{\geq 0}\Pi$ is salient and   $\mc{C}$ has an interior point $\rho$. 
\item  We have $\mpair{\rho,\alpha}>0$ for all $\alpha\in \Pi$. The intersection of $\real_{\geq 0}\Pi$ with the affine hyperplane $\mset{v\in V\mid \mpair{v,\rho}=1}$ is a convex polytope $P$ with vertices
$\mpair{\rho,\alpha}^{-1}\alpha$ for $\alpha\in \Pi$. Further, $P$  is a compact, convex base of $\real_{\geq 0}\Pi$.
\item  A subset  $\Pi'$ of $\Pi$ is facial if and only if  it is a set of representatives of the extreme rays of some face of  $\real_{\geq 0}\Pi$. 
\item If $I$ is a facial subset of $S$ and $J\seq I$, then $J$  is a facial subset of $I$ (for the group $W_{I}$ with root system $\Phi_{I}$ and simple roots $\Pi_{I})$) if and only if  $J$ is a facial subset of $S$.
\item If $\Pi$ is linearly independent, then any subset of $\Pi$ is facial.
\item If $\Pi=\Pi_{1}\dot\cup \Pi_{2}$ where $\real \Pi=\real\Pi_{1}\oplus \real \Pi_{2}$ (direct sum) then a subset $\Delta$ of $\Pi$ is facial if and only if $\Delta\cap \Pi_{i}$ is facial for $i=1,2$. This applies in particular if  $\Pi_{1}$ is the union of  some of the non-degenerate components of $\Pi$.
\end{num}\end{lem}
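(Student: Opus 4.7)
The plan is to derive all eight parts from standard polyhedral cone theory (most usefully the results collected in Appendix~\ref{xA}), combined with positive independence of $\Pi$ from \ref{x1.3}(ii) and the linear functional $\rho'\colon V\to\real$ with $\rho'(\Pi)\seq\real_{>0}$ produced at the end of \ref{x1.3}. For (a), the cone $\real_{\geq 0}\Pi$ is finitely generated hence polyhedral, and its dual is $\mc{C}$ by definition; the polyhedral duality theorem (valid because $\mpair{-,-}$ is non-singular and $V$ is finite-dimensional) then yields $\mc{C}^{*}=\real_{\geq 0}\Pi$. For (b), extreme rays of $\ccl(\Pi)$ lie among $\set{\real_{\geq 0}\alpha\mid \alpha\in\Pi}$, and positive independence rules out any $\alpha\in\Pi$ being a non-negative combination of $\Pi\sm\set{\alpha}$, so each of these rays is extreme. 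Salience in (c) is another direct consequence of positive independence, while $\rho'$ corresponds via non-singularity of $\mpair{-,-}$ to a vector $\rho\in V$ with $\mpair{\rho,\alpha}>0$ for all $\alpha\in\Pi$; this is then an interior point of $\mc{C}$ by (a)--(b). Part (d) is routine: the affine slice is the convex hull of the finite set $\set{\mpair{\rho,\alpha}^{-1}\alpha\mid \alpha\in\Pi}$, hence a compact polytope, and its vertices are the extreme rays of $\real_{\geq 0}\Pi$ by (b).

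For (e), combine (b) with the standard facts that for a polyhedral cone every face is exposed and that an exposed face $(\real_{\geq 0}\Pi)\cap v^{\perp}$ (with $v\in\mc{C}$) has as extreme rays exactly the $\real_{\geq 0}\alpha$ with $\alpha\in\Pi\cap v^{\perp}$. Part (f) then follows: by (e), $\real_{\geq 0}\Pi_I$ is itself a face of $\real_{\geq 0}\Pi$ when $I$ is facial; applying (e) inside $\real\Pi_I$ (after passing, if necessary, to an ample extension of quadratic space as in Remark~\ref{x1.3}(2), so that assumptions \ref{x4.1} are met) and invoking the fact that faces of faces of a polyhedral cone are faces gives the equivalence. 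Part (g) is immediate: linearly independent $\Pi$ makes $\real_{\geq 0}\Pi$ a simplicial cone whose faces are indexed by arbitrary subsets of $\Pi$.

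For (h), the hypothesis $\real\Pi=\real\Pi_1\oplus\real\Pi_2$ places $\real_{\geq 0}\Pi_1$ and $\real_{\geq 0}\Pi_2$ in complementary subspaces, and a standard fact about polyhedral cones says that the faces of $\real_{\geq 0}\Pi_1+\real_{\geq 0}\Pi_2=\real_{\geq 0}\Pi$ are precisely the sums $F_1+F_2$ of faces $F_i$ of $\real_{\geq 0}\Pi_i$. Since the extreme rays of such a sum are those of the summands, $\Delta\seq\Pi$ is the set of extreme-ray generators of a face of $\real_{\geq 0}\Pi$ iff each $\Delta\cap\Pi_i$ is so for $\real_{\geq 0}\Pi_i$; (e), applied both to $\Pi$ and to each $\Pi_i$, then gives the claim. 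For the ``in particular'' clause, if $\Pi_1$ is a union of non-degenerate components and $\Pi_2:=\Pi\sm\Pi_1$ consists of the remaining components, then $\Pi_1\perp\Pi_2$ (distinct components are orthogonal), and non-degeneracy of $\mpair{-,-}$ on $\real\Pi_1$ forces $\real\Pi_1\cap\real\Pi_2\seq \real\Pi_1\cap(\real\Pi_1)^{\perp}=0$, securing the direct-sum hypothesis.

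The main point requiring care is (h): one must verify the cone-theoretic decomposition of $\real_{\geq 0}\Pi$ and the corresponding decomposition of its faces under the bare hypothesis that the underlying subspaces are complementary (no orthogonality is assumed), and then match facial subsets on each side via (e) applied both globally and to each $\Pi_i$. A secondary nuisance is making (f) rigorous: one needs the full conclusion of the lemma to be available for $(\real\Pi_I,\Pi_I)$, which after an ample extension of quadratic space is justified by the already established parts (a)--(e) applied to the smaller root basis.
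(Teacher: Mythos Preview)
Your overall strategy is the same as the paper's, and most parts are fine, but there is a genuine gap in your argument for (b). Positive independence of $\Pi$ does \emph{not} imply that no $\alpha\in\Pi$ lies in $\real_{\geq 0}(\Pi\sm\set{\alpha})$. For a concrete counterexample in $\real^{2}$, the set $\set{(1,0),(0,1),(1,1)}$ is positively independent (any strictly positive combination has both coordinates strictly positive), yet $(1,1)=(1,0)+(0,1)$ lies in the cone on the other two, so $\real_{\geq 0}(1,1)$ is not an extreme ray. What actually rules this out for simple roots is not \ref{x1.3}(ii) but \ref{x1.3}(iii)--(iv): one has $\mpair{\alpha,\alpha}=1>0$ while $\mpair{\alpha,\beta}\leq 0$ for every $\beta\in\Pi\sm\set{\alpha}$, so the hyperplane $\alpha^{\perp}$ weakly separates $\alpha$ from $\Pi\sm\set{\alpha}$. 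This immediately gives $\alpha\notin\real_{\geq 0}(\Pi\sm\set{\alpha})$ (indeed it exhibits $\real_{\geq 0}\alpha$ directly as an extreme ray), which is exactly the route the paper takes. Once (b) is repaired in this way, the rest of your argument goes through.

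A minor remark on (f): the detour through an ample extension is unnecessary. The standing assumptions \ref{x4.1}(i)--(iii) are already satisfied by $(\Phi_{I},\Pi_{I})$ in the \emph{same} $(V,\mpair{-,-})$, since $V$ is still finite-dimensional, the form is still non-singular, and $\Pi_{I}\seq\Pi$ is finite. So you may apply parts (a)--(e) to $\Pi_{I}$ in $V$ directly and then use ``face of a face is a face'' (Lemma~\ref{xA.10}(d)), exactly as the paper does.
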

\begin{proof}  Part (a) follows using  the definitions from \ref{x1.3}(ii) and finiteness of $\Pi$. Part (b) holds since for $\alpha\in \Pi$, $\mpair{\alpha,\alpha}>0$ while $\mpair{\alpha,\beta}\leq 0$ for $\beta\in \Pi\setminus \set{\alpha}$ 
(i.e. the hyperplane $\alpha^{\perp}$\ weakly separates $\alpha$ from $\Pi\setminus \set{\alpha}$).  The first part of (c) follows from 
\ref{x1.3}(ii) and the second is well known to follow from the first 
(see \ref{xA.11}).   Part (d) follows from (b)--(c) (see \ref{xA.11} more generally). Part  (e) follows from (a), the definition of facial subset of $\Pi$ and the fact (\ref{xA.10}(d)), often taken as their definition, that faces of polyhedral cones are exposed faces.  Part (f) also  follows from  Lemma \ref{xA.10}(d). Finally under the assumptions of (g), $\real_{\geq 0}\Pi$ is a simplicial cone
and so, using (e), any subset of $\Pi$ is facial as required. The straightforward  proof of (h) is omitted.
 \end{proof}

\subsection{} \label{x4.3} 

The following lemma  collects from the literature various  consequences of the above assumptions \ref{x4.1}(i)--(iii).

\begin{lem} 
\begin{num} 
\item $W$ is discrete and closed in  $\text{\rm GL}(V)$.
\item $\Phi$ is  discrete and closed in  $V$.
\item The interior $\inter(\mc{X})$ of $\mc{X}$ consists of the set of all points $x$ of $\mc{X}$ which have finite stabilizer in $W$ i.e. such that $x^{\perp}\cap \Phi$ is finite. 
\item For any $x,y\in \inter(\mc{X})$ there are only finitely many
$\alpha\in \Phi_{+}$ such that $\alpha^{\perp}$ contains a point of   the closed interval with endpoints $x$ and $y$.\item For any $N\in \real_{>0}$ and $\rho\in \inter(\mc{X})$,  there are only finitely many $\alpha\in \Phi_{+}$
with $\mpair{\alpha,\rho}\leq N$.
\item There are only finitely many $W$-orbits of pairs $(\alpha,\beta)$ of roots such that 
$\vert \mpair{\alpha,\beta}\vert < 1$.  In fact, each such pair is in the $W$-orbit of such a pair lying  in the root system of  finite standard parabolic subgroup of rank two.
\item For any $N\in \real_{>1}$, there  are  only finitely many $W$-orbits of pairs $(\alpha,\beta)$ of roots with
$1<\vert \mpair{\alpha,\beta}\vert \leq N$.\end{num}
\end{lem}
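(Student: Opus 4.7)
The plan is to establish each of (a)--(g) in turn; most are standard in the literature and I would either cite \cite{K}, \cite{Hum}, \cite{Kac} or sketch a short self-contained argument built on Sections \ref{x1}--\ref{x2}.  For (a)--(b), $W\seq O(V,\mpair{-,-})$ is contained in the Lie group of form-preserving automorphisms of the finite-dimensional non-singular space $V$, and the existence of an open set of $V$ on which $W$ acts with finite isotropy (namely $\inter(\mc{X})$, or more elementarily the interior of $-\mc{C}$ using Lemma \ref{x4.2}(c)) forces $W$ to be discrete and closed in $\GL(V)$; then $\Phi=W\Pi$ is the $W$-orbit of a finite set under a discrete, properly acting group and is therefore discrete and closed in $V$.

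For (c), I would invoke the facet decomposition $\mc{X}=\bigcup_{w,I} w\mc{C}(I)$ of Lemma \ref{x2.3}(e): by Lemma \ref{x2.2} the stabilizer of a point of $\mc{C}(I)$ is $W_I$, and one shows under assumptions \ref{x4.1}(i)--(iii) that $\mc{C}(I)$ is full-dimensional in $V$ precisely when every component of $I$ is of finite type, i.e.\ $W_I$ is finite.  Hence $\inter(\mc{X})$ is the union of those facets with finite stabilizer, giving the two equivalent descriptions.  For (d), the cone $\mc{X}$ is convex (standard, cf.\ \cite{K}), so $\inter(\mc{X})$ is convex and the closed segment $[x,y]$ lies in $\inter(\mc{X})$; combining (b) and (c), the family of hyperplanes $\alpha^\perp$ with $\alpha\in\Phi_+$ is locally finite on $\inter(\mc{X})$, so the compact segment $[x,y]$ meets only finitely many of them.

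The technical core is (e), a version of which appears as \cite[Prop.~5.8(a)]{Kac} in the Kac--Moody setting and adapts here.  After replacing $\rho$ by $w^{-1}\rho$ for some $w\in W$ with $w^{-1}\rho\in\mc{C}$---which alters the counted set only by the finite set $N(w^{-1})$ via \ref{x1.5}---I may assume $\rho\in\mc{C}\cap\inter(\mc{X})$.  Supposing for contradiction that infinitely many $\alpha_n\in\Phi_+$ satisfy $\mpair{\rho,\alpha_n}\leq N$, discreteness (b) forces $\|\alpha_n\|\to\infty$; passing to a subsequence, $\beta_n:=\alpha_n/\|\alpha_n\|$ converges to a unit vector $\beta\in\real_{\geq 0}\Pi$ with $\mpair{\beta,\beta}=0$ and $\mpair{\rho,\beta}\leq 0$, and together with $\rho\in\mc{C}$ (giving $\mpair{\rho,\beta}\geq 0$) this yields $\beta\in\rho^\perp$.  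The hardest step is to rule out such a $\beta$ given $\rho\in\inter(\mc{X})$; I would do this componentwise, using Proposition \ref{x1.12} for irreducible infinite non-affine components (where $\overline{\mc{X}}$ is salient in $\real\Pi$, preventing $\rho$ in the interior from being orthogonal to any nonzero element of the closed dual cone containing the isotropic vectors in $\real_{\geq 0}\Pi$) and direct inspection in the finite and affine cases, where the isotropic vectors in $\real_{\geq 0}\Pi$ are explicitly known.

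Finally, (f)--(g) follow from the structure of rank-two reflection subgroups $W':=\mpair{s_\alpha,s_\beta}$, which are dihedral and finite iff $|\mpair{\alpha,\beta}|<1$.  For (f), a finite $W'$ is contained in its parabolic closure, which by Lemma \ref{x2.9}(c) is a finite parabolic subgroup of rank two; since $S$ is finite there are only finitely many standard parabolic subgroups of rank $\leq 2$, each finite one containing finitely many pairs of roots, establishing (f).  For (g), the pair lies in a unique maximal dihedral reflection subgroup $W''$ (see \ref{x1.5}); within each such $W''$ the inner products of pairs of roots form a discrete unbounded subset of $\real$ (computable from the explicit action of the infinite dihedral group on its root system, cf.\ Example \ref{x1.7}), so only finitely many $W''$-orbits of pairs in a given $W''$ satisfy $|\mpair{\alpha,\beta}|\leq N$.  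Finiteness of the $W$-orbits of the maximal dihedrals that contribute reduces, via part (e) applied to a suitably chosen $\rho\in\inter(\mc{X})$ together with the discreteness of $\Phi$, to the analogous finiteness for pairs of simple roots of such dihedrals; this last bookkeeping is the most delicate point.
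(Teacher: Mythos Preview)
The paper's own proof is a list of citations (chiefly to \cite{K} and \cite{Bour}), deriving only (d) from (c) and Proposition~\ref{x2.3}(g). Your self-contained sketch is therefore different in approach, and much of it is right, but (b), (c), and (g) each contain a real gap.

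For (b), the claim that $W$ acts properly on $V$ is false: in the affine case the isotropic vector $\delta$ has all of $W$ as stabilizer, and more generally discrete subgroups of $\GL(V)$ can have non-closed orbits (apply $n\mapsto\diag(2^n,2^{-n})$ on $\real^2$ to $(1,0)$). The fix goes through Lemma~\ref{x1.21}(b): for $\rho\in\inter(\mc{C})$ one has $\mpair{\rho,\beta}\geq\epsilon\,l(s_\beta)$ for all $\beta\in\Phi_+$, so $\{\beta\in\Phi_+:\mpair{\rho,\beta}\leq N\}$ is finite, which gives (b) and also (e) for such $\rho$ directly. For (c), your assertion that $\mc{C}(I)$ is full-dimensional iff $W_I$ is finite is wrong, since $\mc{C}(I)\subseteq\Pi_I^\perp$ is never full-dimensional for $I\neq\emptyset$; the correct statement is $\mc{C}(I)\subseteq\inter(\mc{X})$ iff $W_I$ is finite, and the nontrivial direction needs the argument that a $W_I$-invariant neighborhood $U$ of $x\in\mc{C}(I)$, small enough that $\mpair{v,\alpha}>0$ on $U$ for each $\alpha\in\Pi\setminus\Pi_I$, satisfies $U\subseteq W_I\,\mc{C}\subseteq\mc{X}$ (using $\mc{X}_{W_I}=V$ for finite $W_I$). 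Your limit argument for (e) then works, but the final step should invoke (c) and Lemma~\ref{x2.9} rather than Proposition~\ref{x1.12}: the support of $\beta$ lies in $\Pi\cap\rho^\perp=\Pi_I$, and finiteness of $W_I$ makes $\mpair{-,-}$ positive definite on $\real\Pi_I$, forcing $\beta=0$. Finally, for (g) the step you call bookkeeping is the crux: there are in general infinitely many $W$-conjugacy classes of maximal dihedral subgroups, and part (e) does not by itself bound which contribute; Krammer's argument \cite[Prop.~6.6.2]{K} instead moves the pair by $W$ until both $l(s_\alpha)$ and $l(s_\beta)$ are bounded in terms of $N$.
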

\begin{rem} If $\Phi$  contains a root system of  affine type   as a parabolic subsystem,  then there are infinitely many $W$-orbits of pairs $(\alpha,\beta)$ of roots with
$\mpair{\alpha,\beta}=1$.\end{rem}
 \begin{proof} We use  \cite{K} as a convenient  source for proofs of  most of these facts, though (excepting (e) and (g))  
 most parts  can be found in or easily deduced from   \cite{V} or \cite{Bour}.
 For (a), note that a discrete subgroup of a Hausdorff topological group is closed and  see   \cite[Ch V, \S 4, Cor 3]{Bour}.  For (b), argue  using (a) as in the proof of \cite[Lemma 1.2.5]{K}. For (c),
see \cite[Corollary 2.2.5]{K}. Part (d) follows from (c) and Proposition \ref{x2.3}(g). For (e),  see \cite[Lemma 5.7.1]{K}.  Part (f) follows using \cite[Ch V, \S4, Ex 2(d)]{Bour} (or \cite[Proposition 3.1(a)]{K}) and Lemma \ref{x2.9}
since $\mpair{s_{\alpha},s_{\beta}}$ is finite when $-1<\mpair{\alpha,\beta}<1$. For  (g),  see \cite[Proposition 6.6.2 ]{K}.
\end{proof}

\subsection{} \label{x4.4}  It will be convenient to fix an element $\rho$ in the interior of $\mc{C}$ throughout the remainder of Sections \ref{x4}--\ref{x11}.
 Observe that the conditions (i)--(iii) on $(V,\mpair{-,-})$ and $(\Phi,\Pi)$ also are satisfied by
$(V,\mpair{-,-})$ and $(\Phi_{W'},\Pi_{W'})$ for any finitely generated reflection subgroup
$W'$ of $W$. Hence all consequences of these conditions for $W$, including those in \ref{x4.3} and those given in the next lemma, apply  to such  reflection subgroups $W'$ as well as to $W$.
\begin{lem}  \begin{num}
\item If $v\in \mc{C}_{W}$,  then  $\mset{v-wv\mid w\in W}$ is  closed and discrete in $\real_{\geq 0}\Pi$, or equivalently,    the orbit $Wv$ of $v$ is closed and discrete in $V$.
\item $\overline{\mc{X}_{W}}=\mset{v\in V\mid v+t\rho\in \mc{X}_{W}\text{ \rm for all $t\in \real_{>0}$}}$. 
\item  $W$  has at most $\text{\rm dim} (\real \Pi)$ irreducible components.
\item If $W_{1}$, \ldots, $W_{n}$ are reflection subgroups of $W$ (e.g. the components of $W$) which satisfy  $\Phi=\cup_{i}\Phi_{W_{i}}$,  then $\overline{\mc{X}_{W}}=\cap_{i=1}^{n}\overline{\mc{X}_{W_{i}}}$.
\end{num} 
\end{lem}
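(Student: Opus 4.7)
The plan is to handle the four parts largely independently, with (d) depending on (b) and (a) being the main technical work. For (a), my approach is to use \ref{x1.13}(c) to reduce the claim $Wv$ is closed and discrete in $V$ to the claim that $\{v-wv\mid w\in W\}$ is closed and discrete in $\real_{\geq 0}\Pi$. By \ref{x4.2}(d), compact subsets of $\real_{\geq 0}\Pi$ are exactly those on which $\rho$ is bounded, so it suffices to show that for each $M>0$ only finitely many cosets $wW_v$ satisfy $\rho(v-wv)\leq M$. By \ref{x2.2}, $W_v=W_J$ for some facial $J\seq S$; I would work with $w$ of minimal length in $wW_J$. For such $w$ and a reduced expression $w=s_{\alpha_1}\cdots s_{\alpha_n}$, \ref{x1.13}(a) gives $v-wv=\sum_{i}\mpair{v,\ck\alpha_i}\beta_i$ with coefficients nonnegative (since $v\in\mc{C}$) and zero precisely when $\alpha_i\in\Pi_J$. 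Applying \ref{x1.21}(b) to $\rho\in\inter(\mc{C})$ yields $\rho(\beta_i)\geq\epsilon>0$, and setting $c^{*}:=\min\mset{\mpair{v,\ck\alpha}\mid \alpha\in\Pi\sm\Pi_J}>0$, we get $\rho(v-wv)\geq c^{*}\epsilon\cdot\#\set{i:\alpha_i\not\in\Pi_J}$. Thus the number of non-$\Pi_J$ letters in a reduced expression is bounded in terms of $M$. The hard part will be turning this into a finiteness statement for the cosets themselves: one also uses that each ``active'' root $\beta_i$ (with $\mpair{v,\ck\alpha_i}>0$) satisfies $\rho(\beta_i)\leq M/c^{*}$, so by \ref{x4.3}(e) lies in a finite explicit subset of $\Phi_+$, and together with the specific vector $v-wv$ this determines the coset $wW_v$ up to finite ambiguity.

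For (b), the plan is a direct two-way inclusion using convex geometry. The inclusion $\supseteq$ is immediate since $v=\lim_{t\to 0^+}(v+t\rho)$ exhibits $v$ as a limit of points in $\mc{X}$. For $\subseteq$, given $v\in\ol{\mc{X}}$ and $t>0$, the point $\tfrac{1}{1+t}v+\tfrac{t}{1+t}\rho$ lies on the open segment between $v\in\ol{\mc{X}}$ and $\rho\in\inter(\mc{C})\seq\inter(\mc{X})$, hence lies in $\inter(\mc{X})$ by convexity of $\mc{X}$ (\ref{x1.10}(b)); multiplying by $1+t>0$ and using that $\inter(\mc{X})$ is a cone (being the interior of a cone) yields $v+t\rho\in\inter(\mc{X})\seq\mc{X}$.

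For (c), the components of $(W,S)$ partition $\Pi$ into pairwise orthogonal subsets $\Pi_i$ (with $\Pi_i\perp\Pi_j$ for $i\neq j$ by definition of Coxeter components). Choosing any $\alpha_i\in\Pi_i$ one obtains a finite set satisfying $\mpair{\alpha_i,\alpha_j}=\delta_{ij}$ (using $\mpair{\alpha,\alpha}=1$ from \ref{x1.3}(iii)). Such a pseudo-orthonormal set is linearly independent: if $\sum c_i\alpha_i=0$, pairing with $\alpha_j$ gives $c_j=0$. Since the $\alpha_i$ lie in $\real\Pi$, the number of components is at most $\dim\real\Pi$.

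For (d), by \ref{x1.10}(f) we have $\mc{X}_W=\cap_i\mc{X}_{W_i}$, so $\ol{\mc{X}_W}\seq\cap_i\ol{\mc{X}_{W_i}}$ is automatic. For the reverse inclusion, fix $\rho\in\inter(\mc{C}_W)$; by \ref{x1.10}(e), $\mc{C}_W\seq\mc{C}_{W_i}$, hence $\rho\in\inter(\mc{C}_{W_i})\seq\inter(\mc{X}_{W_i})$ for each $i$. Given $v\in\cap_i\ol{\mc{X}_{W_i}}$, applying (b) to each $W_i$ with this common $\rho$ yields $v+t\rho\in\mc{X}_{W_i}$ for every $i$ and every $t>0$; hence $v+t\rho\in\cap_i\mc{X}_{W_i}=\mc{X}_W$ for all $t>0$, and a final application of (b) to $W$ gives $v\in\ol{\mc{X}_W}$. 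The one genuine obstacle in the whole lemma is the combinatorial finiteness step in (a); the remaining parts reduce to convexity and elementary linear algebra once (b) is in hand.
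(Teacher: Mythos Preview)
Your proposal is correct and follows essentially the same approach as the paper. The only cosmetic differences are in (a), where the paper bypasses the minimal coset representatives and the \ref{x1.21}(b) count of active letters, going straight to: the nonzero coefficients $\mpair{v,\ck\alpha_i}$ lie in the finite set $A=\mset{\mpair{v,\ck\alpha}\mid \alpha\in\Pi}$, the corresponding $\beta_i$ lie in a finite set $\Psi$ by \ref{x4.3}(e), hence $v-wv=\sum_{\beta\in\Psi}c_\beta\beta$ with $c_\beta\in A$ has only finitely many possible values; and in (b), where the paper simply cites the general appendix lemma on $\cl(\mc{Y})$ in terms of a relative interior point, while you spell out the standard convexity argument directly.
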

\begin{proof}  For (a), note that the closed polyhedral cone $\real_{\geq 0}\Pi$ is, by \ref{x4.2}(d), the union of compact sets
(in fact, pyramidal polytopes) 
 $\mset{v\in \real_{\geq 0}\Pi\mid \mpair{v,\rho}\leq N}$  for
$N\in \real_{> 0}$. Clearly, every point in $\real_{\geq 0} \Pi$ has a neighborhood (in $\real_{\geq 0}\Pi$) contained in one of these sets.
 To prove (a), it will therefore  suffice, by \ref{x1.13}, to show that  for fixed $v\in \mc{C}$ and for all $N\in \real_{> 0}$,   
 \begin{equation}\label{x4.4.1}
 \mset{v-wv\mid w\in W, \mpair{v-wv,\rho}\leq N} \text{ \rm  is finite.}
 \end{equation}
By \ref{x1.13},   for fixed $w\in W$, $ v-wv=\sum_{i=1}^{n} \mpair{v,\ck\alpha_{i}}\beta_{i}$ where  $\beta_{1},\ldots, \beta_{n}\in \Phi_{+}$ are distinct,
 $\alpha_{i}\in \Pi$ and   $\mpair{v,\alpha_{i}}\geq 0$. Let  $A:=\mset{\mpair{v,\ck\alpha}\mid \alpha\in \Pi}\cup\set{0}\subseteq \real_{\geq 0}$. If $A=\set{0}$, then $v=wv$ for all $w\in W$ and we are done. Otherwise,   let 
  $\epsilon :=\min(A\setminus\set{0})>0$. Assume that $\mpair{\rho,v-wv}\leq N$. Let $\Psi$ be the finite (by \ref{x4.3}(e)) set of positive roots $\beta$ with $\mpair{\rho,\beta}\leq N/\epsilon$.
  Then for any $i$ with $\mpair{v,\alpha_{i}}\neq 0$, we have $\mpair{\rho,\beta_{i}}\leq N/\epsilon$ i.e $\beta_{i}\in \Psi$. It follows that $v-wv=\sum_{\beta\in \Psi}c_{\beta}\beta$
  for some $c_{\beta}\in A$. Since $\Psi$ and $A$ are finite, independent of $w$, there are only finitely many sums of this form, and (a) is proved.

Since $\mc{C}\subseteq \mc{X}$, we have that  $\rho$ is an interior point of $\mc{X}$ 
and (b) follows (see Lemma \ref{xA.3} more generally).

Let  $W_{1}, \ldots, W_{n}$ be distinct irreducible components of  $W$.
Choose $\alpha_{i}\in \Pi_{W_{i}}$. Then $\mpair{\alpha_{i},\alpha_{j}}=\delta_{i,j}$
so $\alpha_{1},\ldots, \alpha_{n}$ are distinct and linearly independent in $\real \Pi$. Hence $n$ is at most $\text{\rm dim} (\real \Pi)$, proving (c).

Part (d) follows from (b) and Lemma \ref{x1.10}(e)--(f), noting
that $\rho$ is also an interior point of $\mc{C}_{W_{i}}\supseteq \mc{C}$ for all $i$.
\end{proof}

 \subsection{} \label{x4.5} Proofs of some results in the following sections proceed by reducing to the case of irreducible $W$ and considering  cases depending on the type (finite, affine or indefinite) of  $(W,S)$.
 The necessary background  recalled below is from \cite{V}, though we use the terminology of \cite{Kac}
and  express the results in terms of the cones $\real_{\geq 0}\Pi$, $\mc{K}$ and $\mc{X}$.
 
  Assume that $W$ is finitely generated and irreducible. Let $A$ denote the $\Pi\times \Pi$ real  matrix  with entries $A_{\alpha,\beta}:=\mpair{\alpha,\beta}$ for $\alpha,\beta\in \Pi$.
Then the matrix $A$ is finite, indecomposable, symmetric and has non-positive off-diagonal entries.  In particular, it satisfies the condition \cite[(m1)--(m3)]{Kac}.
According to the classification there, $A$ is of \emph{finite}, \emph{affine} or  \emph{indefinite type}, and these types are mutually exclusive. In fact, $A$ is of finite type if and only if  it is positive definite, and is of affine type 
if and only if it is positive semi-definite of corank $1$. Otherwise, it is of  indefinite type.
Define the \emph{type} of $\Phi$ (or $\Pi$) to be the same as  that of $A$ (i.e. finite, affine or irreducible).

The matrix $A$ is of finite type if there is $v\in \real_{\geq 0}\Pi$ of the form $v=\sum_{\alpha\in \Pi}c_{\alpha}\alpha$ with all $c_{\alpha}>0$ such that $\mpair{v,\Pi}\subseteq \real_{\geq 0}$.
Then  $(W,S)$ is an irreducible finite Coxeter system, 
 $\Pi$ is linearly independent and if $v=\sum_{\alpha\in \Pi}c_{\alpha}\alpha$ with all $c_{\alpha}\in \real$ satisfies $\mpair{v,\Pi}\subseteq \real_{\geq 0}$, then either  $v=0$ 
or all $c_{\alpha}>0$.
In particular, there is  no non-zero $v\in \real_{\geq 0}\Pi$  such that $\mpair{v,\Pi}\subseteq \real_{\leq 0}$.
 It follows from Lemma  \ref{x1.10}(h) that $\mc{X}=V$, and from  the properties  above that $\mc{K}=0$, hence $\mc{Z}=0$ also.

The matrix $A$ is of affine type if there is $\delta\in \real_{\geq 0}\Pi$ of the form $\delta=\sum_{\alpha\in \Pi}c_{\alpha}\alpha$ with all $c_{\alpha}> 0$ such that $\mpair{\delta,\Pi}=\set{0}$. The group  $W$ is an irreducible affine Weyl group. Moreover, up to rescaling
the roots (multiplying them each by a positive scalar depending only on the $W$-orbit of the root), the root system $\Phi$ is the usual  affine root system (of real roots) attached to the (crystallographic) root system of the corresponding finite Weyl group as in \cite{Kac}. 
The set $\Pi$ is linearly independent and the form $\mpair{-,-}$ restricted to $\real \Pi$ is positive semi-definite with radical spanned by $\delta$.  In particular,
$\delta$ is uniquely determined  up to multiplication by a positive scalar.
 If $v\in \real\Pi$ with $\mpair{v,\Pi}\subseteq \real_{\geq 0}$ then $v\in \real\delta$ and so $\mpair{v,\Pi}=0$.
 From the description of the untwisted affine root systems in \cite{Kac}, one has  
\begin{equation*}\mc{X}=\mset{v\in V\mid \mpair{v,\delta}>0}\cup\mset{v\in V\mid \mpair{v,\Pi}=0}.\end{equation*}  Note here that $\mpair{v,\Pi}=0$ implies $\mpair{v,\delta}=0$ so $\mc{X}\subseteq \mset{v\in V\mid \mpair{v,\delta}\geq 0}$. Moreover, $\mset{v\in \real \Pi\mid \mpair{v,\Pi}=0}=\real \delta$. These properties of affine type matrices imply that  $\mc{K}=-\mc{C}\cap \real_{\geq 0}\Pi=\real_{\geq 0}\delta$ and hence that  $\mc{Z}=W\mc{K}=\real_{\geq 0}\delta$ also.
Call $\real_{\geq 0}\delta$ the  \emph{isotropic ray} of $\Pi$ (or  of $\Phi$ or of $W$).

Now we describe the situation in case $A$ is of indefinite  type. There is  then some $\beta\in V$ such that 
$\beta=\sum_{\alpha\in \Pi}c_{\alpha}\alpha$ with all $c_{\alpha}>0$, and all $\mpair{\beta,\alpha}<0$. Any such element $\beta$ is in the relative interior $\mc{K}^{0}$ of $\mc{K}$, and so
$\real\mc{K}^{0}=\real\Pi$. 
If $v=\sum_{\alpha}d_{\alpha }\alpha \in \real_{\geq 0}\Pi$ where all $d_{\alpha}\geq 0$ and
$\mpair{v,\Pi}\subseteq \real_{\geq 0}$ then all $d_{\alpha}=0$. 
We do not have any more explicit description of $\mc{X}$  or $\mc{Z}$ 
than that given by the general results and definitions already given, 
though we shall give several other descriptions of $\overline{\mc{Z}}$
 and one of  $\mc{K}$ in the next section.

 Define the \emph{type} of 
$\Pi$, $\Phi$, or $W$   to be that of the matrix $A$ above
(so the type is either finite, affine or indefinite).
We shall say that $A$ (or $\Pi$, or $\Phi$, or $W$) is of \emph{infinite type} if it is of affine type or indefinite type.

\begin{rem}(1) If   $(W,S)$ is infinite dihedral, then $A$ above may be of either affine or indefinite type, depending on the root system $\Phi$. In any other (finite rank irreducible) case, the type of $A$  coincides with  the type (finite, affine or indefinite) of $(W,S)$ in the usual sense.

(2) Assume  that  $(W,S)$ is infinite irreducible  of  finite rank at least three. Then
$(W,S)$ is of affine type if and only if it has  a free abelian subgroup 
of finite  index in $W$, or equivalently, if  ``$W$ is of polynomial 
growth'' (that is, there exist $C\in \real_{>0}$, $k\in \Nat$
such that $\vert\mset{w\in W\mid l(w)\leq n}\vert \leq C(n^{k}+1)$ for all
 $n\in \Nat$). Further, $(W,S)$ is of  indefinite type  if and 
 only if it has  a non-abelian free group as  subgroup, or equivalently, if 
  ``$W$ is of exponential growth'' (that is, there exist 
$\lambda \in \real_{>1}$ such that $\vert\mset{w\in W\mid l(w)\leq n}\vert \geq  \lambda^{n}$ for 
all $n\in \Nat$). See \cite{dlH} and  \cite{MV}.

 (3) If $A$ is of infinite  type, then $\mc{X}\cap -\mc{X}=\mc{C}\cap -\mc{C}=\mset{v\in V\mid \mpair{v,\Pi}=0}$
by the argument of the proof of \cite[Proposition 3.2]{HRT} (see also \cite{K}, \cite{V}).

  \end{rem}

\section{The closed imaginary cone}\label{x5}
In this section, we give several characterizations of the closed imaginary cone. 
The  analogous results for root systems and Weyl groups of Kac-Moody Lie algebras
were  proved  by Kac (see \cite[Ch 5, especially Section 5.14]{Kac}).

\subsection{}  \label{x5.1} We first show that (under the standing  assumptions
\ref{x4.1}(i)--(iii)), the closure of $\mc{Z}$ is the dual of the Tits cone
(cf. \cite[\S 5.8]{Kac}). \begin{thm} 
\begin{num}
\item The closures  $\overline{\mc{X}}$ and $\overline{\mc{Z}}$ of $\mc{X}$ and $\mc{Z}$ are dual cones. 
\item  $\overline{\mc{Z}}=\mc{Y}$.
\item If $W'$ is a finitely generated facial subgroup of $W$, then  $\ol{\mc{Z}_{W'}}=\ol{\mc{Z}}\cap \real \Pi_{W'}$. \item $\ol{\mc{Z}}=\sum_{i=1}^{n}
\ol{\mc{Z}_{W_{i}}}$ if  $W_{1},\ldots, W_{n}$ are finite rank reflection subgroups (e.g. the components of $W$) such that $\Phi=\cup_{i=1}^{n}\Phi_{W_{i}}$.
\end{num} 
\end{thm}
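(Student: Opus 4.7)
The plan is to prove (b) first---which subsumes most of the content of (a)---then deduce (a) by biduality and obtain (c) and (d) from (b) together with earlier results. For (b), the easy direction $\overline{\mc{Z}}\subseteq\mc{Y}$ follows from Proposition \ref{x3.2}(a) and the closedness of $\mc{Y}=\bigcap_{w\in W}w(\real_{\geq 0}\Pi)$. For the reverse inclusion $\mc{Y}\subseteq\overline{\mc{Z}}$, I would fix $\rho\in\inter(\mc{C})$ as in \ref{x4.4} and, given $v\in\mc{Y}$, minimize the functional $w\mapsto\mpair{\rho,wv}$ over the orbit $Wv$. Since each $wv\in\real_{\geq 0}\Pi$ and $\mpair{\rho,\alpha}>0$ for all $\alpha\in\Pi$ (Lemma \ref{x4.2}(d)), the infimum $c:=\inf_{w\in W}\mpair{\rho,wv}$ is non-negative; moreover the slices $\{u\in\real_{\geq 0}\Pi:\mpair{\rho,u}\leq M\}$ are compact by Lemma \ref{x4.2}(d). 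Hence any minimizing sequence $w_n v$ has a convergent subsequence with limit $v^{*}\in\mc{Y}$ satisfying $\mpair{\rho,v^{*}}=c$; passing to the limit in $\mpair{\rho,sw_n v}\geq\mpair{\rho,w_n v}$ for $s=s_\alpha\in S$ yields $\mpair{v^{*},\alpha}\leq 0$, so $v^{*}\in\mc{Y}\cap -\mc{C}=\mc{K}\subseteq\mc{Z}$.

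The principal obstacle is upgrading $v^{*}\in\mc{Z}$ to $v\in\overline{\mc{Z}}$. The cleanest route is to arrange that the infimum is attained at some $w_0\in W$, whereupon $w_0 v\in\mc{K}$ and $v=w_0^{-1}(w_0 v)\in W\mc{K}=\mc{Z}$. Attainment will follow once $v$ and $v^{*}$ are known to lie in the same $W$-orbit, for then Lemma \ref{x4.4}(a) applied to $-v^{*}\in\mc{C}$ makes $Wv^{*}$ closed and discrete in $V$, and the convergence $w_n v\to v^{*}$ forces $w_N v=v^{*}$ for some $N$. The delicate step is verifying $v\in Wv^{*}$, which requires controlling the closure of $Wv$ itself, using the discreteness of $W$ in $\GL(V)$ (Lemma \ref{x4.3}(a)) and the $W$-invariance of the form.

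Granting (b), part (a) is immediate from $\mc{Y}=\mc{X}^{*}=\overline{\mc{X}}^{*}$ (Lemma \ref{x3.1}(d)) together with the biduality $\overline{\mc{X}}^{**}=\overline{\mc{X}}$ for the closed convex cone $\overline{\mc{X}}$ (Lemma \ref{xA.10}). Part (c) is equally immediate: applying (b) to both $W$ and $W'$ together with Lemma \ref{x3.1}(c) gives $\overline{\mc{Z}_{W'}}=\mc{Y}_{W'}=\mc{Y}\cap\real\Pi_{W'}=\overline{\mc{Z}}\cap\real\Pi_{W'}$. Finally, (b) reduces (d) to the identity $\mc{Y}=\sum_{i=1}^{n}\mc{Y}_{W_i}$, of which one inclusion is Lemma \ref{x3.1}(a); the reverse follows by dualizing Lemma \ref{x4.4}(d) ($\overline{\mc{X}}=\bigcap_i\overline{\mc{X}_{W_i}}$) via the standard identity $(\bigcap_i A_i)^{*}=\overline{\sum_i A_i^{*}}$ for closed convex cones, and then removing the closure by exploiting $\Phi=\bigcup_i\Phi_{W_i}$ together with the containment of each $\mc{Y}_{W_i}$ inside the polyhedral cone $\real_{\geq 0}\Pi$.
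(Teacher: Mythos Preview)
Your approach to (b) has a genuine gap that cannot be repaired along the lines you suggest. Your route would show that for every $v\in\mc{Y}$ the infimum $\inf_{w\in W}\mpair{\rho,wv}$ is attained at some $w_0\in W$, giving $w_0v\in\mc{K}$ and hence $v\in\mc{Z}$. But this would prove $\mc{Y}\subseteq\mc{Z}$, which is false: for irreducible indefinite $W$ the cone $\mc{Z}$ is not closed (see e.g.\ Proposition~\ref{x9.4}, or Lemma~\ref{x9.11}(c) together with the existence of isotropic limit rays), so $\mc{Y}=\overline{\mc{Z}}\supsetneq\mc{Z}$. Concretely, for compact hyperbolic $W$ and a non-zero isotropic $v\in\overline{\mc{Z}}\setminus\mc{Z}$, every $wv$ is again non-zero isotropic and one checks $\inf_{w}\mpair{\rho,wv}=0$ with limit $v^{*}=0$; then $v\notin Wv^{*}$. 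The ``delicate step'' you flag is not merely delicate---it asserts something false. (Your minimization argument does reappear later in the paper as Lemma~\ref{x6.1}, but there it is applied \emph{after} (b) is known and yields only $\overline{Wv}\cap\mc{K}\neq\emptyset$, not $Wv\cap\mc{K}\neq\emptyset$.)

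The paper instead proves (a) directly and deduces (b) from it. The inclusion $\overline{\mc{X}}\subseteq\mc{Z}^{*}$ is immediate; for the reverse $\mc{Z}^{*}\subseteq\overline{\mc{X}}$, after reducing to irreducible $W$ and disposing of the finite and affine cases explicitly, in the indefinite case one fixes $\beta\in\mc{K}$ with $\mpair{\beta,\ck\alpha}<-\epsilon$ for all $\alpha\in\Pi$. For any $z\in\mc{Z}^{*}$ and $\gamma\in\Phi_{+}$, the pair $\beta,\, s_{\gamma}(\beta)=\beta+s\gamma\in\mc{Z}$ (with $s>\epsilon$) yields the uniform lower bound $\mpair{z,\gamma}\geq -\epsilon^{-1}\mpair{z,\beta}$. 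Then for each $t>0$ the condition $\mpair{z+t\rho,\gamma}<0$ forces $\mpair{\rho,\gamma}$ to be bounded, whence by Lemma~\ref{x4.3}(e) only finitely many $\gamma\in\Phi_{+}$ can satisfy it; thus $z+t\rho\in\mc{X}$ for all $t>0$ and $z\in\overline{\mc{X}}$ by Lemma~\ref{x4.4}(b).

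Your deductions of (a), (c), (d) from (b) are correct and essentially parallel to the paper's deductions of (b), (c), (d) from (a). In particular your argument for (d)---dualizing Lemma~\ref{x4.4}(d) and noting that a finite sum of closed subcones of the salient polyhedral cone $\real_{\geq 0}\Pi$ is closed---is valid.
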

\begin{proof}  First we prove (a). We have $\mc{Z}\subseteq \real_{\geq 0}\Pi$, so taking duals using
Lemma \ref{x3.1}(d) shows that $\mc{C}\subseteq \mc{Z}^{*}$. Since $\mc{Z}$ is 
$W$-invariant, so is
 $\mc{Z}^{*}$ and hence $\mc{X}=\cup_{w\in W}w\mc{C}\subseteq \mc{Z}^{*}$.
 Since $\mc{Z}^{*}=\overline{\mc{Z}}^{*}$ is closed, we get that $\overline{\mc{X}}\subseteq 
 \overline{\mc{Z}}^{*}$.

We shall  prove the reverse inclusion first  under the additional assumption that $(W,S)$ is irreducible.
If $\Phi$ is of finite type, then
 $\overline{\mc{X}}=V$ and $\overline{\mc{Z}}=0$ so the result holds.
If $\Phi$ is of affine type, then   $\overline{\mc{X}}=\mset{v\in V\mid \mpair{v,\delta}\geq 0}$ and
 $\overline{\mc{Z}}=\real_{\geq 0}\delta$, so the result holds in this case also.
 
 Now assume that $\Phi$ is of indefinite type.   Choose $\beta\in \mc{K}$ and $\epsilon >0$  such that $\beta=\sum_{\alpha\in \Pi}d_{\alpha}\alpha$ with  $d_{\alpha}>0$ and  $\mpair{\beta,\ck\alpha}<-\epsilon $
 for all $\alpha\in \Pi$. 
 Consider $z\in \mc{Z}^{*}$ and $\gamma\in \Phi_{+}$. Write (by \cite[Lemma 3.3]{DySd}) $\ck\gamma =\sum_{\alpha\in \Pi}c_{\alpha}\ck\alpha$ where all $c_{\alpha}\geq 0$ and some $c_{\alpha}\geq 1$.
 We have $s_{\gamma}(\beta)=\beta+s\gamma$ where \[s=-\mpair{\beta,\ck\gamma}=
 \sum_{\alpha}c_{\alpha}(-\mpair{\beta,\ck\alpha})>\epsilon.\] 
 But $\beta$ and $s_{\gamma}(\beta)$ are in $\mc{Z}$, so by definition of $\mc{Z}^{*}$, we have 
 $\mpair{z,\beta}\geq 0$ and $\mpair{z,s_{\gamma}(\beta)}\geq 0$.
 Here,  $\mpair{z,s_{\gamma}\beta}=\mpair{z,\beta}+s\mpair{z,\gamma}$ so  $\mpair{z,\gamma}\geq -\frac{1}{s}\mpair{z,\beta}\geq- \epsilon ^{-1}\mpair{z,\beta}$. Hence  for $t\in \real_{>0}$,
 \[\mpair{ z+t \rho, \gamma}\geq -\frac{1}{\epsilon } \mpair{z,\beta}+t\mpair{\rho,\gamma}.\]
This implies that if  $\mpair{ z+t \rho, \gamma}<0$, then 
$\mpair{\rho,\gamma}\leq \frac{1}{\epsilon t}\mpair{z,\gamma}$. For fixed $t>0$, there are only finitely many $\gamma\in \Phi_{+}$ satisfying this latter condition by Lemma \ref{x4.3}(e), and it follows by Lemma \ref{x1.10}(a)
that $z+t\rho\in {\mc{X}}$. Since $t\in \real_{> 0}$ was arbitrary, we get $z\in \overline{\mc{X}}$ by Lemma \ref{x4.4}(b). This completes the proof that $\overline{\mc{Z}}^{*}=\overline{\mc{X}}$ if $(W,S)$ is irreducible.

To prove $\overline{\mc{Z}}^{*}=\overline{\mc{X}}$ in general, let $ W_{1},\ldots, W_{n}$ be the  irreducible components of $W$.
Then from above and  the Lemmas \ref{x3.2}(d) and \ref{x4.4}(d), we have \[\overline{\mc{Z}}^{*}=\mc{Z}^{*}=\Bigl(\sum_{i}\mc{Z}_{W_{i}}\Bigr)^{*}=
\bigcap_{i}\mc{Z}_{W_{i}}^{*}=\bigcap_{i}\overline{\mc{Z}_{W_{i}}}^{*}=\bigcap_{i}\overline{\mc{X}_{W_{i}}}=\overline{\mc{X}}.\]

Part (b), (c)   are restatements using (a)  of   Lemma \ref{x3.1}(d),(c)  while (d) follows by taking dual cones in Lemma \ref{x4.3}(a) using (a).   Several other results from \S\ref{x3}--\ref{x4} admit similar restatements using (a)  which we shall not list explicitly.
 \end{proof}
 \subsection{} \label{x5.2} Subsections  \ref{x5.3}--\ref{x5.6} give another description of the closure of  $\mc{Z}$ (cf. \cite[Lemma 5.8 and Exercise 5.12]{Kac}), 
 using 
 a topology on the set of rays in $V$ which is defined in this subsection.   
 
 Let $\wt{ {R}}:=\mset{\real_{\geq 0}\alpha\mid \alpha\in V\sm\set{0}}$ denote the set of rays of $V$ (see \ref{x1.1}). For any $E\seq \wt{R}$, let $\cup E:=\cup_{e\in E}\,e\seq V$ denote the union of the rays $e$ in the set $E$. 
   If $V=\set{0}$, then $\wt{ {R}}=\eset$, which we give its only possible topology. 
 To avoid trivialities,  assume henceforward  that $V\neq \set{0}$.
   Choose
 a compact convex body $B$ (e.g. a closed ball) with $0$ in its interior, and let $B'$ denote the boundary of $B$.
 The map $\wt{ {R}}\rightarrow B'$ taking each ray to the unique point  of $B'$ which the ray contains
 is a bijection, and we topologize $\wt{ {R}}$ by declaring this map to be a homeomorphism.
 This gives $\wt{ {R}}$ a topology, independent of the choice of $B$, in which it is homeomorphic to the standard $(\text{\rm dim}(V)-1)$-sphere. Note that $W$ acts naturally on $\wt{R}$ as a group of homeomorphisms; this action is faithful since if an element of $w$ fixes all rays, it cannot make any positive root negative and so must be the identity.  One may find in \ref{xA.11}--\ref{xA.12} some useful well known facts concerning the subset of $ {R}$ which has as its  elements the   rays contained in any fixed closed salient cone. 
 
 For any pointed, possibly non-convex 
 cone  $X\seq V$ let $\ray(X):=\mset{\real_{\geq 0}\al
 \mid \al\in X\sm\set{0}}\seq \wt{ {R}}$ denote the set of rays 
 of $V$ through non-zero points of  $X$, topologized as a subspace of 
 $\wt{R}$. We call it the space of rays of $X$.  In particular,  $\wt{R}=\ray(V)$. For $R\seq \wt{ {R}}$, let
  $\cup R\seq V$ be the possibly non-convex pointed cone arising 
  as the union of all rays in $R$. The maps $X\mapsto  R:=\ray(X)$ 
  and $R\mapsto X:=\cup R$ define inverse bijections between the 
  set of pointed, possibly non-convex cones in $V$ and the power 
  set of $\ray(V)$.

\subsection{}   \label{x5.3}
  Let $Q:=\ray(\mc{Q})\seq \ray(V) $ denote the set  all isotropic rays of $V$,
$ {R}_{{+}}:=\mset{\real_{\geq 0}\alpha
\mid \alpha\in \Phi_{+}}\subseteq \ray(V) $ and $ {R}_{0}:=\overline{ {R}_{+}}\setminus { {R}_{+}}$. Results closely related to the following appear in \cite{HLR}.
 \begin{prop}  \begin{num}\item  $ {R}_{+}$ consists of positive rays and is discrete in $\ray(V)$.
\item  $ {R}_{0}\seq Q$ and $R_{0}$ is closed in $\ray(V)$.
 \item $ {R}_{0}$ is the set of limit rays  (i.e. limit points) of $ {R}_{+}$ i.e. $R_{0}=\Acc(R_{+})$.
 \end{num}   \end{prop}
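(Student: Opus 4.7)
The plan rests on two elementary facts: every $\alpha\in \Phi$ satisfies $\mpair{\alpha,\alpha}=1$ (this holds on $\Pi$ by \ref{x1.3}(iii) and $W$ preserves the form), and $\Phi$ is closed and discrete in $V$ by Lemma \ref{x4.3}(b), so every bounded subset of $V$ meets $\Phi$ in a finite set. I fix an auxiliary Euclidean inner product on $V$ with norm $\|\cdot\|$ and realize $\ray(V)$ concretely via intersection with the Euclidean unit sphere $S$ (taking $B$ in \ref{x5.2} to be the closed Euclidean unit ball); convergence of rays then amounts to convergence of the associated unit vectors in $S$.

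For (a), ``positive'' is immediate from $\mpair{\alpha,\alpha}=1>0$. For the discreteness claim, I would argue by contradiction: suppose $r=\real_{\geq 0}\alpha_0\in R_+$ is a limit in $\ray(V)$ of pairwise distinct rays $\real_{\geq 0}\alpha_n$ with $\alpha_n\in \Phi_+$. Then $\alpha_n/\|\alpha_n\|\to \alpha_0/\|\alpha_0\|$ in $S$. If $(\|\alpha_n\|)$ stayed bounded, the $\alpha_n$ would lie in a bounded subset of $V$ containing only finitely many roots, contradicting distinctness; hence $\|\alpha_n\|\to\infty$. But then $\mpair{\alpha_n/\|\alpha_n\|,\alpha_n/\|\alpha_n\|}=1/\|\alpha_n\|^2\to 0$, forcing $\mpair{\alpha_0,\alpha_0}=0$ in the limit and contradicting $\mpair{\alpha_0,\alpha_0}=1$.

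For (b), the same calculation yields $R_0\seq Q$: any $r\in R_0=\overline{R_+}\sm R_+$ is, since $\ray(V)\cong S$ is metrizable, a limit of a sequence in $R_+$ which (since $r\notin R_+$) may be chosen with pairwise distinct entries, and the argument from (a) then forces its limiting unit vector to be isotropic. Closedness of $R_0$ follows from (c) below together with the general fact that $\Acc(A)$ is closed in any Hausdorff space for every subset $A$.

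Finally (c) is immediate from (a): discreteness says no point of $R_+$ is an accumulation point of $R_+$ in $\ray(V)$, whence $\Acc(R_+)\seq \overline{R_+}\sm R_+=R_0$; conversely, any point of $\overline{R_+}\sm R_+$ is, by metrizability, a limit of a sequence of distinct points of $R_+$, hence an accumulation point. There is no serious obstacle anywhere: the real work is packaged in Lemma \ref{x4.3}(b), and the only mild care needed is to ensure that the topology on $\ray(V)$---defined via an auxiliary convex body---behaves correctly with respect to convergence of representing vectors, which is why I fix the Euclidean sphere model at the outset.
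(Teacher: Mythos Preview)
Your proof is correct and follows essentially the same line as the paper's: both argue that along a convergent sequence of distinct positive-root rays, the ``size'' of the representing roots must tend to infinity (else finiteness of roots in bounded regions gives a contradiction), and then the relation $\mpair{\alpha_n,\alpha_n}=1$ forces the limit ray to be isotropic. The only cosmetic differences are that the paper normalizes by the linear functional $\mpair{\rho,-}$ and works inside the polytope $P\subseteq\{\mpair{\rho,v}=1\}$, invoking Lemma~\ref{x4.3}(e) for finiteness, whereas you normalize by a Euclidean norm and invoke Lemma~\ref{x4.3}(b); and the paper checks closedness of $R_0$ directly rather than via the general $T_1$ fact about derived sets. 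One tiny point of phrasing: from ``$(\|\alpha_n\|)$ is unbounded'' you jump to ``$\|\alpha_n\|\to\infty$''; what you actually use (and what your argument shows) is that \emph{no subsequence} is bounded, which does give $\|\alpha_n\|\to\infty$.
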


\begin{proof}  Consider   the  set $ R_{\Pi}:=\ray(\real_{\geq 0}\Pi)\subseteq \ray(V) $ of  rays  of $V$ contained in  $\real_{\geq 0}\Pi$. 
Since   $\mpair{\rho,\alpha}>0$ for all $\alpha\in \Pi$,
  we may define a map \[\tau\colon \real_{\geq 0}\Pi\setminus\set{0}\rightarrow H:=\mset{v\in V\mid \mpair{v,\rho}=1}\] by $v\mapsto \mpair{v,\rho}^{-1}v$.
The image of $\tau$  is  the set 
 $P:=\mset{v\in \real_{\geq 0}\Pi\mid \mpair{\rho,v}=1}$, which  by Lemma \ref{x4.2}(d), is a convex polytope 
 in the affine  hyperplane $H$,  with the points $\mpair{\rho,\alpha}^{{-1}}\alpha$ for $\alpha\in \Pi$ as vertices. 
 We may and do choose $B$ above so that $P\subseteq B'$. It follows that 
 the map taking a ray in $ {R}_{\Pi}$ to its intersection with $P$ is a homeomorphism
 $\theta\colon  {R}_{\Pi}\xrightarrow{\cong} P$,
 explicitly  given by $\real_{\geq 0}\alpha\mapsto \tau(\alpha)$ for  non-zero $\alpha\in \real_{\geq 0}\Pi$. (This is essentially the realization  of rays in $\real_{\geq 0}\Pi$ used in \cite{HLR},  the possible  hyperplanes $V_{1}$ ``transverse'' to $\Pi$ used there are exactly the hyperplanes $H$ as above for varying $\rho$). Clearly, $ {R}_{\Pi}$ is closed (in fact, compact) and  $ {R}_{+}\subseteq 
  {R}_{\Pi}$, so $ {R}_{0}\subseteq\overline{ {R}}_{+}\subseteq  {R}_{\Pi}$ also.
 
 Now $\theta(\overline{ {R}_{+}})=\overline{\tau(\Phi_{+}})$. The right hand side consists of all limit points  of sequences $(\tau(\alpha_{n}))$ (in $P$) for sequences $(\alpha_{n}) _{n\in \Nat}$ of positive roots. Consider a limit point $\alpha\in P$ of such a sequence. We may assume without loss of generality that the sequence $(\tau(\alpha_{n}))$ actually converges  to $\alpha$.
 We consider  two possibilities. The first case is that  the sequence $(p_{n}):=(\mpair{\rho,\alpha_{n}})$ is bounded.
  Then by Lemma \ref{x4.3}(e),  there are only finitely many possibilities for each $\alpha_{n}$. In this case,   the sequences
  $(\alpha_{n})$ and  $(\tau(\alpha_{n}))$
  must be eventually constant, and  $\alpha=\tau(\alpha_{n})$ for all sufficiently large $n$.
 This corresponds to an (obviously positive) limit ray $\theta^{{-1}}(\alpha)$ in  $ {R}_{+}$.  In the contrary case, 
 the sequence  $p_{n}$ is unbounded, and passing to a subsequence we may assume it has limit $+\infty$. We have 
 \[\mpair{\alpha,\alpha}=\lim_{n\rightarrow \infty }\mpair{\tau(\alpha_{n}),\tau(\alpha_{n})}=
 \lim_{n\rightarrow \infty }p_{n}^{-2}\mpair{\alpha_{n},\alpha_{n}}= \lim_{n\rightarrow \infty }p_{n}^{-2}=0.\]  This case yields   an isotropic limit ray
 $\theta^{-1}(\alpha)\in  {R}_{0}$.
  Observe that any limit of a sequence of rays in $ {R}_{0}$ is obviously an  isotropic ray and is contained in $\overline{ {R}_{+}}$, so it must be in $ {R}_{0}$. Therefore  $ {R}_{0}$ is  closed.
 From the above,   $ {R}_{+}$ is discrete, as  any sequence in $ {R}_{+}$ which converges to an element  of $ {R}_{+}$ is eventually constant (since otherwise it converges to an isotropic ray).
  This completes the proof of (a)--(c).
 \end{proof}
 \subsection{} \label{x5.4} Now we show  the closed imaginary cone is the conical closure of the union of the limit rays of the set of  rays spanned by positive roots.
 \begin{thm}  The  imaginary cone  ${\mc{Z}}$ 
 satisfies
  $\overline{\mc{Z}}=\real_{\geq 0}\bigl(\bigcup {R}_{0})$.   \end{thm}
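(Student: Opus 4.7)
The plan is to prove both inclusions by exploiting the duality $\overline{\mc{Z}}=\overline{\mc{X}}^{*}$ established in Theorem \ref{x5.1}(a), together with the finiteness results of \ref{x4.3} and the realization of rays in $\real_{\geq 0}\Pi$ as points of the compact polytope $P$ via the map $\tau$ from the proof of Proposition \ref{x5.3}. No case analysis on the type of $(W,S)$ is required.

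For the easy inclusion $\real_{\geq 0}(\bigcup R_{0})\subseteq \overline{\mc{Z}}$, I would fix $r\in R_{0}$, written as $r=\real_{\geq 0}v$ with $v\in P$ an isotropic limit $\tau(\alpha_{n})\to v$ for distinct $\alpha_{n}\in\Phi_{+}$ and (necessarily, by \ref{x4.3}(e)) $\mpair{\rho,\alpha_{n}}\to\infty$. For any $y\in\mc{X}$, Lemma \ref{x1.10}(a) gives $\mpair{y,\alpha_{n}}\geq 0$ for almost all $n$, so $\mpair{y,\tau(\alpha_{n})}\geq 0$ eventually, and hence $\mpair{y,v}\geq 0$ in the limit; by continuity this extends to all $y\in\overline{\mc{X}}$. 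Thus $v\in\overline{\mc{X}}^{*}=\overline{\mc{Z}}$, and since $\overline{\mc{Z}}$ is a convex cone, it contains $\real_{\geq 0}(\bigcup R_{0})$.

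Before tackling the reverse inclusion I would verify that $C:=\real_{\geq 0}(\bigcup R_{0})$ is already closed. Since $R_{0}$ is closed in the compact space $\ray(\real_{\geq 0}\Pi)$ (Proposition \ref{x5.3}(b)), its image $P_{0}:=\tau(R_{0})$ is a compact subset of the affine hyperplane $H=\{v\in V\mid\mpair{v,\rho}=1\}$, so $\conv(P_{0})$ is compact. Because $0\notin H$, the cone $C=\real_{\geq 0}\conv(P_{0})$ over this compact convex base is closed.

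For the reverse inclusion $\overline{\mc{Z}}\subseteq C$, I argue by contradiction: suppose $v\in\overline{\mc{Z}}\setminus C$. By Hahn-Banach applied to the closed convex cone $C$ and the point $v$, together with the non-degeneracy of $\mpair{-,-}$, there exists $y\in V$ with $\mpair{y,v}<0$ and $\mpair{y,C}\subseteq\real_{\geq 0}$; in particular $\mpair{y,q}\geq 0$ for every $q\in P_{0}$. I would then show $y\in\overline{\mc{X}}$ using the criterion of Lemma \ref{x4.4}(b): for each $t>0$ one must verify $y+t\rho\in\mc{X}$, i.e.\ by Lemma \ref{x1.10}(a) that only finitely many $\alpha\in\Phi_{+}$ satisfy $\mpair{y,\tau(\alpha)}<-t$. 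If infinitely many did, we could extract a subsequence $(\alpha_{n})$ of distinct positive roots with $\mpair{y,\tau(\alpha_{n})}\leq -t$; since $\{\alpha\in\Phi_{+}\mid\mpair{\rho,\alpha}\leq N\}$ is finite for every $N$ (Lemma \ref{x4.3}(e)), we may assume $\mpair{\rho,\alpha_{n}}\to\infty$, so (by the dichotomy in the proof of Proposition \ref{x5.3}) a further subsequence gives $\tau(\alpha_{n})\to q\in P_{0}$ with $\mpair{y,q}\leq -t<0$, contradicting $\mpair{y,P_{0}}\subseteq\real_{\geq 0}$. Therefore $y\in\overline{\mc{X}}$, whence $\mpair{y,v}\geq 0$ because $v\in\overline{\mc{Z}}=\overline{\mc{X}}^{*}$, contradicting $\mpair{y,v}<0$. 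The main obstacle throughout is this last step: verifying that the separating functional produced by Hahn-Banach actually lies in $\overline{\mc{X}}$, which is the essential place where the isotropic nature of limit rays, the shift criterion \ref{x4.4}(b), and the finiteness lemma \ref{x4.3}(e) are combined.
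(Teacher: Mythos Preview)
Your proof is correct. The closure argument for $C=\real_{\geq 0}(\bigcup R_{0})$ and the ``hard'' inclusion $\overline{\mc{Z}}\subseteq C$ match the paper's almost verbatim: the paper phrases the latter as ``$F^{*}\subseteq\overline{\mc{X}}$'' rather than separating with Hahn--Banach, but the core argument (pass to a limit root in $P_{0}$ using Lemma~\ref{x4.3}(e), apply the shift criterion Lemma~\ref{x4.4}(b)) is identical.

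The genuine difference is your treatment of the ``easy'' inclusion $\bigcup R_{0}\subseteq\overline{\mc{Z}}$. The paper first reduces to the irreducible case and then argues by type: for finite or affine $\Phi$ it computes both sides explicitly, and for indefinite $\Phi$ it exhibits, for each limit ray $f$, a concrete sequence $\tau(s_{\gamma_{n}}(\beta))$ in $\mc{Z}$ converging to $f$, where $\beta\in\mc{K}$ is chosen with $\mpair{\beta,\ck\alpha}<-\epsilon$ for all $\alpha\in\Pi$. You instead invoke the already--proven duality $\overline{\mc{Z}}=\overline{\mc{X}}^{*}$ from Theorem~\ref{x5.1}(a) together with the elementary fact (from Lemma~\ref{x1.10}(a)) that every $y\in\mc{X}$ pairs non-negatively with all but finitely many positive roots. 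This is strictly shorter: it eliminates the reduction to irreducible components and the three--way case split entirely, and uses no internal structure of $\mc{K}$ or $\mc{Z}$ beyond the duality statement. What the paper's approach buys is an explicit witnessing sequence in $\mc{Z}$ for each limit ray, which is useful elsewhere (e.g.\ in the proofs of Lemma~\ref{x7.2} and related dynamical statements); your argument gives membership in $\overline{\mc{Z}}$ non-constructively.
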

\begin{proof}  Maintain  notation  and assumptions from the proof of the preceding proposition.
 Since $ {R}_{0}$ is topologically closed, $\theta( {R}_{0})$ is a topologically closed, hence  compact subset of the polytope  $P$. The conical closure $F$  of  $\bigcup_{r\in  {R}_{0}}r\cup\set{0}$ is equal to that of  $\theta( {R}_{0})\cup\set{0}$. Let $K$ be the convex closure of $\theta( {R}_{0})$ in $P$;
 it is compact since $\theta( {R}_{0})$ is topologically  closed. Clearly,  $F=\mset{\lambda k\mid \lambda\in \real_{\geq 0},k\in K}$, which is easily seen to be closed in $V$.

 To prove the theorem,  we first reduce to the case that $W$ is irreducible. Let $W_{1},\ldots, W_{n}$ be the irreducible components of $W$. Denote the analogues of  $ {R}$, $ {R}_{+}$ and 
 $ {R}_{0}$ for $W_{i}$ as $ {R}_{W_{i}}$, $ {R}_{W_{i},+}$ and 
 $ {R}_{W_{i},0}$ respectively.  Since $ {R}_{+}=\cup_{i} {R}_{W_{i},+}$ (disjoint union) we have 
  $\overline{ {R}_{+}}=\cup_{i}\overline{ {R}_{W_{i},+}}$. Note $ {R}_{W_{i},+}\cap  {R}_{W_{j},0}=\eset$  (since a ray cannot be both positive and isotropic). Therefore 
  ${ {R}}_{0}=\cup_{i}{ {R}_{W_{i},0}}$. If the theorem is known for irreducible Coxeter groups, then the conical closure  of $ \cup_{r\in {R}_{W_{i},0}}r\cup\set{0}$ is $\overline{ \mc{Z}_{W_{i}}}$
  and hence the conical closure $F$ of $ \cup_{r\in {R}_{0}}r\cup\set{0}$  is $\sum_{i }\overline{\mc{Z}_{W_{i}}}$.
 Now  $F$ is topologically    closed from above,  so $F$ is the topological  closure of  $\sum_{i }{\mc{Z}_{W_{i}}}$,  which in turn is the topological closure of $\mc{Z}$ as required by Lemma  \ref{x3.2}(d).
  
  We now may and do assume that $W$ is irreducible.  If  $\Phi$ is of finite type,
  then $ {R}_{0}=\emptyset$ since $ {R}_{+}$ is finite, and $\overline{\mc{Z}}=0$ so the result holds in this case. If $\Phi$ is of affine type, then using  the standard description of the root system of affine Weyl groups, one easily sees  $ {R}_{0}=\set{\real_{\geq 0} \delta}$ where $\delta$ is as in \ref{x4.5}, and $F=\real_{\geq 0}\delta=\overline{\mc{Z}}$ as required.
  Henceforward we assume that $\Phi$ is of indefinite type. Since $F$ is a closed cone,  it will suffice by Theorem \ref{x5.3} to show that
  $F\subseteq \overline{\mc{Z}}$ and $F^{*}\subseteq \overline{\mc{X}}$.
  
We shall first show that $\theta( {R}_{0})\subseteq \overline{\mc{Z}}$, which implies  that
 $F\subseteq \overline{\mc{Z}}$.
 Fix $\beta\in \mc{K}$ and $\epsilon >0$  with $\mpair{\beta,\ck\alpha}<-\epsilon$ for all $\alpha\in \Pi$. 
  For any root $\gamma\in \Phi_{+}$, we may write $\ck\gamma=\sum_{\alpha\in \Pi}c_{\alpha }\ck\alpha$ where all $c_{\alpha}\geq 0$ and some $c_{\alpha}\geq 1$.
  Then $s_{\gamma}(\beta)=\beta+s\gamma\in \mc{Z}$ where  $s=-\pair{\beta,\ck\gamma}\geq \epsilon $ and $\mpair{\rho, \beta}> 0$. Hence $\mpair{\rho,s_{\gamma}(\beta)}=\mpair{\rho,\beta}+s \mpair{\rho,\gamma}\geq \epsilon \mpair{\rho,\gamma}$. 
    One computes that  \[e_{\gamma}:=\tau(\gamma)-\tau(s_{\gamma}(\beta))=a_{\gamma}\gamma-b_{\gamma}\beta\]
  where 
  \[a_{\gamma}=\frac{\mpair{\rho,\beta}}{\mpair{\rho,\gamma}\mpair{\rho,s_{\gamma}(\beta)}},\qquad b_{\gamma}=\frac{1}{\mpair{\rho,s_{\gamma}(\beta)}}.\] 
  Here, \[0\leq \mpair{\rho,a_{\gamma}\gamma}\leq \frac{\mpair{\rho,\beta}}{\epsilon \mpair{\rho,\gamma}},\quad 
  0\leq \mpair{\rho,b_{\gamma}\beta}\leq \frac{\mpair{\rho,\beta}}{\epsilon\mpair{\rho,\gamma}} .\] 
  
  Let $f\in \theta( {R}_{0})$. Then there exists a sequence $(\gamma_{n})_{n\in \Nat}$ of positive roots with $\lim_{n\rightarrow \infty}\mpair{\rho,\gamma_{n}}=\infty$ and 
  $\lim_{n\rightarrow \infty}\tau(\gamma_{n})=f$.
  Note that $a_{\gamma_{n}}\gamma_{n}$ and  $b_{\gamma_{n}}\beta$ are in $\real_{\geq 0}\Pi$ while   \[\lim_{n\rightarrow \infty}\mpair{\rho,a_{\gamma_{n}}\gamma_{n}}=0= \lim_{n\rightarrow \infty}\mpair{\rho,b_{\gamma_{n}}\beta}.\] It follows that \[\lim_{n\rightarrow \infty}a_{\gamma_{n}}\gamma_{n}=0= \lim_{n\rightarrow \infty}b_{\gamma_{n}}\beta\]
  and hence $\lim_{n\rightarrow \infty}e_{\gamma_{n}}=0$. Therefore 
  \[\lim_{n\rightarrow \infty }\tau(s_{\gamma_{n}}(\beta))=
  \lim_{n\rightarrow \infty }\tau(\gamma_{n})=f.\]
  Since $\tau(s_{\gamma_{n}}(\beta))\in \mc{Z}$, we deduce that $f\in \overline{\mc{Z}}$. This completes the proof that $\theta( {R}_{0})\subseteq \overline{\mc{Z}}$.
  
  We have proved that
  $F\subseteq \overline{\mc{Z}}$.  It remains to   prove  that $F^{*}\subseteq \overline{\mc{X}}$.
  For this, it will suffice by Lemma \ref{x4.4}(b)  to show that if $x\in V$ with $\mpair{x, \theta( {R}_{0})}\subseteq \real_{\geq 0}$, then  $x+t\rho\in \mc{X}$ for all $t\in \real_{>0}$.
  If $x+t\rho\not \in \mc{X}$ for some $t>0$, there is by Lemma  \ref{x1.10}(a) a sequence of distinct  roots  $\gamma_{n}\in \Phi_{+}$ with $\mpair{x+t\rho,\gamma_{n}}<0$ for all $n$. Passing to a subsequence, we may assume by Lemma \ref{x4.3}(e) that $\lim_{{n\rightarrow \infty}}\mpair{\rho,\gamma_{n}}=\infty$ and 
  $\lim_{n\rightarrow \infty}\tau(\gamma_{n})=f\in \theta( {R}_{0})$.
  We have $\mpair{\rho, f}=1$, and 
  by assumption on $x$, we have $\mpair{x,f}\geq 0$, and so  $\mpair{x+t\rho,f}\geq t>0$. On the other hand, 
  from $\mpair{x+t\rho,\gamma_{n}}<0$ for all $n$, it follows that 
  \[\mpair{x+t\rho,f}=\lim_{n\rightarrow \infty}\mpair{x+t\rho, \tau(\gamma_{n})}\leq 0,\] a contradiction which  completes the proof.\end{proof}
  
  \subsection{}  \label{x5.5} For each reflection subgroup  $W'$ of $\Phi$, let $ {R}_{W',+}:=\mset{\real_{\geq 0}\al\mid \al\in \Phi_{W',+}}$ and 
   $ {R}_{W',0}:=\ol{ {R}_{W',+}}
   \sm {R}_{W',+}$.  These are subsets of $\ray(\real_{\geq 0}\Pi')\seq \ray(\real_{\geq 0}\Pi)\seq \ray(V) $.
  
    \begin{cor} Let $W'$ be a reflection subgroup of $W$. Then \begin{num}
    \item  $ {R}_{W',+}\seq  {R}_{+}$.  
 \item   $ {R}_{W',0}\seq  {R}_{0}$. 
   \item  $ {R}_{W',0}=\eset$ if $W'$ is finite.
   \item If $W'$ is infinite dihedral, then $ {R}_{W',0}$ is a single $W'$-orbit of rays and  consists of a  set of  one (resp., two) isotropic rays of $V$ according as to whether $W'$ is of  affine or indefinite type.
\item     $\ol{\mc{Z}}_{W'}\seq \ol{\mc{Z}}$.
    \end{num}
     \end{cor}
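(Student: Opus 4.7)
The corollary splits into the essentially immediate parts (a), (c), (b), (e) and the substantive part (d). Part (a) is the tautology $\Phi_{W',+} = \Phi_{W'} \cap \Phi_+ \seq \Phi_+$. Part (c) is also immediate: when $W'$ is finite, $\Phi_{W',+}$ is finite, so $R_{W',+}$ is a finite (hence closed) subset of $\ray(V)$ and $R_{W',0} = \ol{R_{W',+}} \sm R_{W',+} = \eset$. For (b), observe that $R_{W',+} \seq R_+$ inherits discreteness in $\ray(V)$ from $R_+$ (Proposition \ref{x5.3}(a)); consequently $R_{W',0} = \ol{R_{W',+}} \sm R_{W',+} = \Acc(R_{W',+}) \seq \Acc(R_+) = R_0$, with the last equality from Proposition \ref{x5.3}(c).

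For (e), apply Proposition \ref{x3.2}(a) to the reflection subgroup $W'$ — its proof only uses the $W'$-stability of $\mc{Z}_{W'}$ together with the inclusion $\mc{Z}_{W'} \seq \real_{\geq 0}\Pi_{W'}$ coming from Lemma \ref{x1.13}(c), and so does not require finiteness of $\Pi_{W'}$ — to get $\mc{Z}_{W'} \seq \mc{Y}_{W'}$; combine with Lemma \ref{x3.1}(a) to obtain $\mc{Z}_{W'} \seq \mc{Y}_W$; and use Theorem \ref{x5.1}(b) to identify $\mc{Y}_W = \ol{\mc{Z}}$. Since $\ol{\mc{Z}}$ is closed, taking closures yields $\ol{\mc{Z}_{W'}} \seq \ol{\mc{Z}}$.

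The main work is (d), which I would handle by applying the trichotomy of \ref{x4.5} to $W'$ itself; this is legitimate because $(\Phi_{W'}, \Pi_{W'})$ on $(V, \mpair{-,-})$ satisfies the standing assumptions \ref{x4.1}, $\Pi_{W'} = \set{\al, \bt}$ being finite. Set $m := -\mpair{\al, \bt} \geq 1$: the Gram matrix of $\Pi_{W'}$ is of affine type when $m = 1$ and of indefinite type when $m > 1$. In the affine case, \ref{x4.5} provides a unique (up to positive scalar) isotropic $\dt \in \real_{>0}\al + \real_{>0}\bt$ spanning the radical; both $s_\al$ and $s_\bt$ fix $\dt$, and Theorem \ref{x5.4} applied to $W'$ gives $\ol{\mc{Z}_{W'}} = \real_{\geq 0}\dt$, so $R_{W',0} = \set{\real_{\geq 0}\dt}$ is the single $W'$-fixed ray. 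In the indefinite case, solving $\mpair{a\al + \bt, a\al + \bt} = 0$ gives two positive solutions $a_\pm = m \pm \sqrt{m^2 - 1}$ and hence exactly two isotropic rays $\real_{\geq 0}\dt_\pm$ with $\dt_\pm := a_\pm \al + \bt$ lying inside $\real_{>0}\al + \real_{>0}\bt$. A direct check gives $s_\al(\dt_\pm) = (2m - a_\pm)\al + \bt = a_\mp \al + \bt = \dt_\mp$, so the two rays form a single $W'$-orbit. The point that still needs argument is that $R_{W',0}$ equals \emph{exactly} $\set{\real_{\geq 0}\dt_+, \real_{\geq 0}\dt_-}$: the inclusion $R_{W',0} \seq \mc{Q}$ from Proposition \ref{x5.3}(b), combined with the fact that only these two isotropic rays lie in $\real_{\geq 0}\Pi_{W'}$, gives one direction; for the other, the indefinite-type analysis of \ref{x4.5} for $W'$ exhibits $\mc{K}_{W'}$ with non-empty relative interior in $\real\Pi_{W'}$, so $\ol{\mc{Z}_{W'}}$ is two-dimensional and must have two extreme rays, which by Theorem \ref{x5.4} are isotropic and hence are precisely $\real_{\geq 0}\dt_\pm$.
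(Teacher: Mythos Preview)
Your proof is correct and follows essentially the same lines as the paper's: parts (a)--(c) are handled identically, your argument for (e) is precisely the parenthetical alternative the paper offers (via Lemma~\ref{x3.1}(a) and Theorem~\ref{x5.1}(b)), and for (d) you supply the explicit dihedral computation where the paper simply cites \cite{HLR} as well-known. One small expository point: in the affine case of (d), the identity $\ol{\mc{Z}_{W'}}=\real_{\geq 0}\delta$ comes directly from \ref{x4.5} (where $\mc{Z}_{W'}=\real_{\geq 0}\delta$ is already closed), and Theorem~\ref{x5.4} is then what lets you read off $R_{W',0}$ from it, rather than the other way around as your phrasing suggests.
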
 \begin{proof} Part (a) is obvious, and (b) holds since   since  $ {R}_{W',0}\seq \ol{ {R}_{+}}$ contains only isotropic rays.
     Part (c) holds since $ {R}_{W',+}$ is finite (and hence closed) if $W$ is finite, and (d) is well known (see for instance \cite{HLR}).
   For the proof of (e),  assume without loss of generality by  Theorem \ref{x3.3}(b) that $W'$ is finitely generated. Then  (e)  follows  by taking closed convex hulls of the union of each side of (b) and using  Theorem \ref{x5.3}(a) (or from  Lemma \ref{x3.1}(a)  and Theorem \ref{x5.1}).
     \end{proof} 
     \subsection{} \label{x5.6} The set of limit rays of rays spanned by positive 
  roots has been independently  studied in \cite{HLR},
 which contains, as well as some basic results not proved here, several instructive examples (including  diagrams in low rank).     For use later 
  in this paper and in \cite{DHR}, we reformulate below
   in the framework of this paper  a fundamental fact proved in   \cite{HLR}, which implies  in particular that the set of limit rays of positive roots is the closure of the union of the  sets of  limit rays of positive roots of dihedral reflection subgroups.   
  Let $ {R}'_{0}$ (resp.,
  $ {R}''_{0}$) denote $\bigcup_{W'} {R}_{W',0}$
  where the union is over  the infinite dihedral reflection subgroups $W'$ of $W$ (resp., the infinite dihedral reflection subgroups $W'$ containing a simple reflection of $W$ i.e. with $\Pi_{W'}\cap \Pi\neq \eset$).  
 
  \begin{thm}   \begin{num}\item
   $ {R}''_{0}\seq  {R}'_{0}\seq  {R}_{0}$.  \item $\text{\rm (\cite[Theorems 4.2 and 4.5]{HLR})}$  $\ol{ {R}''_{0}}=\ol{  {R}'_{0}}=  {R}_{0}$.
  \item  $\ol{\mc{Z}}=\real_{\geq 0}(\bigcup{R}_{0})=\real_{\geq 0}(\cl(\bigcup{R}_{0}'))=\real_{\geq 0}(\cl(\bigcup  {R}_{0}''))$.
  \end{num}
\begin{rem} The validity of the weaker result in (b), that $ \ol{  {R}'_{0}}=  {R}_{0}$, was raised as a question in an earlier version of this paper.
\end{rem}  
  
  \end{thm}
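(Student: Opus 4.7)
The plan is to dispatch the three parts in turn. Parts (a) and (c) will follow from results already in the excerpt, while part (b) is the substantive content, which I will simply cite from \cite{HLR} as the statement of the theorem already indicates.

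For (a), the inclusion $R''_{0}\seq R'_{0}$ is immediate from the definitions, since the family of infinite dihedral reflection subgroups containing a simple reflection is a subfamily of the family of all infinite dihedral reflection subgroups. For $R'_{0}\seq R_{0}$, I would apply Corollary \ref{x5.5}(b) separately to each infinite dihedral reflection subgroup $W'$: that corollary gives $R_{W',0}\seq R_{0}$, and taking the union over all such $W'$ is by definition $R'_{0}\seq R_{0}$.

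For (b), I would simply invoke \cite[Theorems 4.2 and 4.5]{HLR}, as the theorem statement directs; reproducing that argument is outside the scope of the present paper.

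For (c), the first equality $\ol{\mc{Z}}=\real_{\geq 0}(\bigcup R_{0})$ is Theorem \ref{x5.4}. To deduce the remaining two, I would translate (b) from the topology on $\ray(V)$ to the topology on $V$. By construction of the topology on $\ray(V)$ (via the homeomorphism with the boundary $B'$ of a compact convex body $B$ with $0$ in its interior), convergence $r_{n}\to r$ of rays amounts to convergence of their intersections with $B'$ in $V$; consequently, for every point $v$ on the limit ray $r$ one can produce points $v_{n}$ on the approximating rays $r_{n}$ with $v_{n}\to v$ in $V$. Applying this to (b) yields $\bigcup R_{0}\seq \cl(\bigcup R''_{0})\seq \cl(\bigcup R'_{0})$. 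Combined with the reverse inclusions $\bigcup R''_{0}\seq\bigcup R'_{0}\seq \bigcup R_{0}$ from (a), this gives $\cl(\bigcup R_{0})=\cl(\bigcup R'_{0})=\cl(\bigcup R''_{0})$. Since $R_{0}$ is closed in $\ray(V)$ by Proposition \ref{x5.3}(b), a short compactness argument (the set $R_{0}\cap B'$ is compact, and $\bigcup R_{0}=\real_{\geq 0}(R_{0}\cap B')$ is closed in $V$ because $0\notin B'$) shows $\bigcup R_{0}$ is already closed, so all three sets coincide with $\bigcup R_{0}$; applying $\real_{\geq 0}(-)$ to each then recovers $\ol{\mc{Z}}$ via the first equality.

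The main obstacle of the theorem lies entirely in part (b), the density assertion that limit rays arising from (small) dihedral reflection subgroups are dense in the set of all limit rays of positive roots. This is the principal content of \cite{HLR} and is a genuinely delicate fact; I do not attempt to reprove it here. The contributions (a) and (c) are, by contrast, routine bookkeeping once Theorem \ref{x5.4}, Corollary \ref{x5.5}(b), Proposition \ref{x5.3}(b), and the density statement (b) are in hand.
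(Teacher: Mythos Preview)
Your proposal is correct and follows essentially the same route as the paper: (a) via Corollary \ref{x5.5}, (b) by citation to \cite{HLR}, and (c) by combining Theorem \ref{x5.4} with the translation of (b) from $\ray(V)$ to $V$. Two minor differences worth noting: for (b), the paper spends a sentence setting up the dictionary between \cite{HLR}'s normalized-root sets $E$, $E_{2}$, $E_{2}^{\circ}$ in the polytope $P$ and the ray-sets $R_{0}$, $R'_{0}$, $R''_{0}$ here, which you elide; and for (c), the paper invokes Lemma \ref{xA.12} (the standard correspondence between closures in a salient cone and in a compact convex base) rather than redoing the compactness argument by hand as you do.
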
\begin{proof}
  Part (a) is obvious from Corollary \ref{x5.5}.  
  For (b), choose 
 the convex body  $B$ in \ref{x5.3} so that $P:=\mset{v\in \real_{\geq 0}\Pi\mid \mpair{v,\rho}=1}$ is a subset of the 
 boundary $B'$ of $B$ and thereby identify $P$ as a compact 
 subset of $\ray(V) $. For a reflection subgroup $W'$,
 $\Psi_{W',+}:=\mset{\rho(\al)^{-1}\al\mid \al\in \Phi_{W',+}}\seq P$ was called the set of normalized roots of $W'$ in \cite{HLR}. Define $E(W'):=\ol{\Psi_{W',+}}\sm \Psi_{W',+}$, and $E=E(W)$. Also,  let   $E_{2}$ (resp., $E_{2}^{\circ}$) be the union of the sets $E(W')$ for $W'$ ranging over  the infinite dihedral reflection subgroups $W'$ of $W$ (resp., the infinite dihedral reflection subgroups $W'$ with $\Pi_{W'}\cap \Pi\neq \eset$).  The cited results from \cite{HLR} then are the statements that
 $\ol{E_{2}^{\circ}}=\ol{E_{2}}=E$, which are obviously equivalent to (b) via the above identification.
 From Lemma \ref{xA.12}, it follows from (b) that
$ \cl(\bigcup_{r\in  {R}_{0}'}r)=\cl(\bigcup_{r\in  {R}_{0}''}r)= {R}_{0}$, and then (c) follows by  Theorem \ref{x5.4}.
  \end{proof}

\subsection{}\label{x5.7} We introduce next some  notation for several families of facial subsets of $S$ which we shall consider 
subsequently. Let $F_{\mathrm{all}}$ be the set of all  facial subsets of $S$ and   $F_{\mathrm{m}} $ be the set of all maximal proper facial subsets of $S$. Also, let 
 $F_{\mathrm{m.ind}}  $ (resp., $F_{\mathrm{inf}} $) be the set of all elements $I$ of $F_{\mathrm{m}} $ (resp., of $F_{\mathrm{all}} $) such that
  $\Pi_{I}$ has all its irreducible components of indefinite type (resp., infinite type).
  Thus,  $F_{\mathrm{inf}} $ consists of the special facial subsets of $S$.
  
    \subsection{} \label{x5.8}
We now give a description of $\mc{K}$ and $\overline{\mc{Z}}$ which is special to the case of  irreducible  $W$ of indefinite type (see \cite[Exercise 5.11]{Kac}).

\begin{prop} Suppose that $\Pi$ is irreducible  of indefinite type. For each $I\in F_{\mathrm{all}} $, choose $\phi_{I}\in \mc{C}$ such
that  $\Pi_{I}=\Pi\cap \phi_{I}^{\perp}$. Then
\begin{num}
\item ${\mc{K}}=\mset{v\in -{\mc{C}}\cap\real \Pi\mid \mpair{v,\phi_{I}}\geq 0 \text{ \rm for all }I\in F_{\mathrm{m.ind}}  }$.
\item $\overline{\mc{Z}}=\mset{v\in- \overline{\mc{X}}\cap \real\Pi\mid \mpair{v,w\phi_{I}}\geq 0 \text{ \rm for all $I\in F_{\mathrm{m.ind}}  $ and $w\in W$}}$.\end{num}\end{prop}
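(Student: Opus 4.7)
The plan is to realize both $\mc{K}$ and $\overline{\mc{Z}}$ as intersections of closed half-spaces and then identify exactly which of those half-spaces are actually needed. For part (a), the inclusion $\mc{K}\seq L$ (with $L$ denoting the RHS) is immediate: if $v\in\mc{K}$ is written $v=\sum_{\alpha\in \Pi}c_\alpha\alpha$ with $c_\alpha\geq 0$, then $\mpair{v,\phi_I}=\sum_\alpha c_\alpha\mpair{\alpha,\phi_I}\geq 0$ because $\phi_I\in\mc{C}$, and the other RHS conditions are clear. For the reverse inclusion I use that under non-singularity of $\mpair{-,-}$ Lemma \ref{x4.2}(a) gives $\mc{C}^{*}=\real_{\geq 0}\Pi$, and that the extreme rays of the pointed polyhedral cone $\mc{C}/\Pi^{\perp}$ are spanned by $\mset{\phi_I\bmod\Pi^\perp\mid I\in F_{\mathrm{m}}}$ (via the bijection between facets of $\real_{\geq 0}\Pi$ and extreme rays of its dual, together with Lemma \ref{x4.2}(e)). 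Since any $v\in\real\Pi=\Pi^{\perp\perp}$ automatically satisfies $\mpair{v,\Pi^\perp}=0$, membership $v\in\real_{\geq 0}\Pi$ is equivalent to $\mpair{v,\phi_I}\geq 0$ for all $I\in F_{\mathrm{m}}$, and so
\begin{equation*}
\mc{K}=\mset{v\in\real\Pi\mid \mpair{v,\alpha}\leq 0\text{ for }\alpha\in\Pi,\ \mpair{v,\phi_I}\geq 0\text{ for }I\in F_{\mathrm{m}}}.
\end{equation*}

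The heart of (a) is to show that the inequalities $\mpair{v,\phi_I}\geq 0$ for $I\in F_{\mathrm{m}}\setminus F_{\mathrm{m.ind}}$ are redundant in this description. By \ref{x4.5}, the irreducible-indefinite hypothesis forces $\mc{K}$ to be a salient polyhedral cone of full dimension $\dim\real\Pi$ in $\real\Pi$, so its minimal inequality description uses exactly one half-space per facet. The face of $\mc{K}$ cut out by $\mpair{v,\phi_I}=0$ is $\mc{K}\cap\real\Pi_I=\mc{K}_I$ by Lemma \ref{x3.4}(a), and by Proposition \ref{x3.2}(d) one has $\mc{K}_I=\sum_j \mc{K}_{\Delta_j}$ over the irreducible components $\Delta_j$ of $\Pi_I$. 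The type analysis in \ref{x4.5} gives $\dim\mc{K}_{\Delta_j}$ equal to $\dim\real\Delta_j$, $1$, or $0$ according as $\Delta_j$ is of indefinite, affine, or finite type. Since $I\in F_{\mathrm{m}}$ forces $\dim\real\Pi_I=\dim\real\Pi-1$ (a facet of $\real_{\geq 0}\Pi$) and affine components have rank at least $2$, the equality $\dim\mc{K}_I=\dim\mc{K}-1$ holds exactly when every component of $\Pi_I$ is of indefinite type---that is, when $I\in F_{\mathrm{m.ind}}$. For the remaining $I\in F_{\mathrm{m}}$ the associated face has strictly smaller dimension, so the corresponding inequality is redundant, yielding (a).

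For part (b), Theorem \ref{x5.1}(a) gives $\overline{\mc{Z}}=\overline{\mc{X}}^{*}$, so $\overline{\mc{Z}}\seq$ RHS is immediate ($\overline{\mc{Z}}\seq -\overline{\mc{X}}\cap\real\Pi$ from the closure of Proposition \ref{x3.2}(a), and $w\phi_I\in\mc{X}$ gives the pairing inequalities). For the converse, let $v$ satisfy the RHS conditions. First consider $v\in -\mc{X}$: choose $w\in W$ with $w^{-1}(-v)\in\mc{C}$, so that $v':=w^{-1}v\in -\mc{C}\cap\real\Pi$ and $\mpair{v',\phi_J}=\mpair{v,w\phi_J}\geq 0$ for $J\in F_{\mathrm{m.ind}}$; part (a) gives $v'\in\mc{K}$, hence $v\in w\mc{K}\seq\mc{Z}\seq\overline{\mc{Z}}$. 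In the boundary case $v\in -\overline{\mc{X}}\setminus -\mc{X}$, fix $y\in\ri(\mc{K})$ (nonempty by \ref{x4.5}); then $-y\in\inter(\mc{C})\seq\inter(\mc{X})$. For $\epsilon>0$ set $v_\epsilon:=v+\epsilon y$. The standard convex-cone fact $\overline{\mc{X}}+\inter(\mc{X})\seq\inter(\mc{X})$ gives $-v_\epsilon\in\inter(\mc{X})$, so $v_\epsilon\in -\mc{X}$; moreover $v_\epsilon\in\real\Pi$, and $y\in\mc{K}\seq\overline{\mc{Z}}=\overline{\mc{X}}^{*}$ gives $\mpair{y,w\phi_J}\geq 0$, so $\mpair{v_\epsilon,w\phi_J}\geq 0$ and $v_\epsilon$ satisfies the RHS. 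The previous case places $v_\epsilon\in\overline{\mc{Z}}$, and letting $\epsilon\to 0$ and using closedness of $\overline{\mc{Z}}$ finishes (b).

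I expect the main obstacle to be in the dimension formula for (a), namely verifying $\dim\mc{K}_I=\sum_j\dim\mc{K}_{\Delta_j}$ despite possible linear dependence among the mutually perpendicular subspaces $\real\Delta_j$ (caused for instance by affine components contributing isotropic radical directions). This is handled by noting that each $\mc{K}_{\Delta_j}$ affinely spans $\real\Delta_j$ when $\Delta_j$ is of indefinite type (by \ref{x4.5}), so the affine hull of $\sum_j\mc{K}_{\Delta_j}$ equals $\sum_j\real\Delta_j=\real\Pi_I$, and the claimed codimension-one dimension count is attained precisely when every $\Delta_j$ is of indefinite type; the remaining ingredients are routine polyhedral duality and the facet/redundancy dictionary.
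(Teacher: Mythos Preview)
Your proof is correct in outline, and for part (b) it is essentially the paper's argument: perturb by a relative-interior point of $\mc{K}$ to land in $-\mc{X}$, apply (a), and pass to the limit. (The paper writes this perturbation as $z+t\rho$; your choice $y\in\ri(\mc{K})$ makes the required property $-y\in\inter(\mc{C})$ transparent.)

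For part (a), however, you take a genuinely different route. The paper argues by direct contradiction: assuming some $z$ satisfies all defining inequalities of $\mc{K}$ except $\mpair{z,\phi_I}\geq 0$ for a fixed $I\in F_{\mathrm{m}}\setminus F_{\mathrm{m.ind}}$, it interpolates between $z$ and an interior point $\beta$ of $\mc{K}$ (with $\mpair{\beta,\alpha}<0$ for all $\alpha\in\Pi$) to produce a point $z'$ in the relative interior of the facet $\real_{\geq 0}\Pi_I$ satisfying $\mpair{z',\alpha}<0$ for all $\alpha\in\Pi_I$; this forces every component of $\Pi_I$ to be indefinite, contradicting $I\notin F_{\mathrm{m.ind}}$. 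Your approach instead identifies the face $\mc{K}\cap\phi_I^\perp$ with $\mc{K}_I=\sum_j\mc{K}_{\Delta_j}$ (via Lemmas~\ref{x2.4} and~\ref{x3.4}) and argues that this face fails to be a facet of $\mc{K}$ when $I\notin F_{\mathrm{m.ind}}$, whence the inequality is redundant by standard polyhedral theory. Your route is more structural and ties redundancy directly to the component decomposition of $\mc{K}_I$; the paper's is self-contained and avoids any dimension bookkeeping.

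There is one small gap in your treatment of (a). You correctly flag the possible linear dependence among the $\real\Delta_j$ as the obstacle, but your ``handling'' only establishes the forward direction (all components indefinite $\Rightarrow$ codimension one). For the converse, suppose some component $\Delta_k$ is of finite or affine type. Then $\aff(\mc{K}_{\Delta_k})\subseteq\Delta_k^{\perp}$ (trivially if finite; for affine, the isotropic generator $\delta_k$ lies in $\Delta_k^{\perp}$), and $\real\Delta_j\subseteq\Delta_k^{\perp}$ for all $j\neq k$ since the components are separated. Hence
\[
\aff(\mc{K}_I)=\sum_j\aff(\mc{K}_{\Delta_j})\subseteq\Delta_k^{\perp},
\]
which meets $\real\Pi_I$ in a proper subspace because any $\alpha\in\Delta_k\subseteq\Pi_I$ has $\mpair{\alpha,\alpha}=1$. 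Thus $\dim\mc{K}_I<\dim\real\Pi_I=\dim\mc{K}-1$ irrespective of linear dependence, completing your argument.
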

 
\begin{proof}  Recall the notation $H_{\phi}^{\prec}$ of \ref{x2.3}. 
We use below the fact that any  polyhedral cone of  maximum possible dimensional in a finite dimension in real vector space is the intersection of a unique minimal family of closed half-spaces  (corresponding naturally to the facets (maximal proper faces) of the cone). 
 For $\real_{\geq 0}\Pi$ in $\real\Pi$, this gives  $\real_{\geq 0}\Pi=\bigcap_{I\in F_{\mathrm{m}}}(\real\Pi\cap H_{\phi_{I}}^{\geq})$.
  Since   $-\mc{C} =\cap_{\phi\in -\Pi}\,H_{\phi}^{\geq}$, the 
  polyhedral  cone $\mc{K}$ is the intersection of  the
    half-spaces $\real\Pi\cap H_{\phi}^{\geq } $ of $\real\Pi$ for $\phi\in \Gamma:=-\Pi\cup\mset{\phi_{I}\mid I\in F_{\mathrm{m}} }$.
Since $\mc{K}$ is a full-dimensional   polyhedral  cone in $\real\Pi$, in order 
to prove (a),
 it will suffice to show that for  each $I\in F_{\mathrm{m}} \setminus F_{\mathrm{m.ind}}  $,  $\mc{K}$ is the intersection of  the  closed half-spaces $H_{\phi}^{\geq }\cap \real\Pi$ of $\real\Pi$ for $\phi\in \Gamma_{I}:=\Gamma\setminus \set{\phi_{I}}$.

 Suppose to the contrary  that $z\in\cap_{\phi\in \Gamma_{I}}(\real\Pi\cap {H}_{\phi}^{\geq})
\setminus \mc{K}$. Then $\mpair{z,\phi_{I}}<0$.
Since $\Pi$ is of  indefinite type, we may choose $\beta=\sum_{\alpha\in \Pi}c_{\alpha}\alpha$  with all $c_{\alpha}\in \real_{> 0}$ and  $\mpair{\beta,\alpha}<0$ for all $\alpha\in \Pi$.
In particular,  $\mpair{z,\phi_{I}}< 0$ and $\mpair{\beta,\phi_{I}}>0$, so
there is a unique real number  $t$  with $0<t<1$ such that $z':=tz+(1-t)\beta$ satisfies  $\mpair{z',\phi_{I}}=0$. For all $\alpha\in \Pi$,  $\mpair{z,\alpha}\leq 0$  and $\mpair{\beta,\alpha}<0$ so $\mpair{z',\alpha}<0$.  Since $\mpair{z,\phi_{J}}\geq 0$  and $\mpair{\beta,\phi_{J}}>0$ for all $J\in \Gamma_{I}$,  we have
 $\mpair{z',\phi_{J}}>0$ for all such $J$, and   $\mpair{z',\phi_{I}}=0$ by choice of $t$. So $z'$ lies  in the facet $\real_{\geq 0}\Pi_{I}=\real_{\geq 0}\Pi\cap \phi_{I}^{\perp}$ of $\real_{\geq 0}\Pi$ but is in none of the other facets.  Hence we may write $z'=\sum_{\alpha\in \Pi_{I}}d_{\alpha}\alpha$ with all $d_{\alpha}>0$. 
 The above also showed that 
$\mpair{z',\alpha}<0$ for all $\alpha\in \Pi_{I}$.  But by \ref{x4.5}, this implies that each component
of $\Pi_{I}$ is of indefinite type and so $I\in F_{\mathrm{m.ind}}  $, contrary to assumption. This proves (a).

For (b), first note that  $\mc{Z}=\mc{Y}\cap -\mc{X}$ by Lemma \ref{x3.2}(a), so $\overline {\mc{Z}}\subseteq -\overline{\mc{X}}$. Also, $\mc{Z}\subseteq \real\Pi$ and 
 \[ \mc{Z}\subseteq \mc{Y}=\bigcap_{w\in W }w(\real_{\geq 0} \Pi)=\bigcap_{\substack{w\in W\\ I\in \mc{F}_{\mathrm{m}}}}w(H^{\geq}_{\phi})=\bigcap_{\substack{w\in W\\ I\in \mc{F}_{\mathrm{m}}}}H^{\geq}_{w\phi}\subseteq \bigcap_{\substack{w\in W\\ I\in \mc{F}_{\mathrm{m.ind}}}}H^{\geq}_{w\phi}\] where the right hand side is closed and hence contains $\overline{\mc{Z}}$. 
Hence the inclusion ``$\subseteq$'' in (b) holds. For the reverse inclusion, it suffices 
by Lemma \ref{x4.4}(b) to show that
if $z$ is in the right hand side, then $z+t\rho$ is in $\mc{Z}$ for all $t\in \real_{>0}$.
But since $z\in -\overline{\mc{X}}$ and  $-\rho$ is in the interior of $-\mc{C}\subseteq -\mc{X}$, we have $z+t\rho\in -\mc{X}$ and so $w(z+t\rho)\in -\mc{C}$ for some $w\in W$. 
Now for $I\in F_{\mathrm{m.ind}}  $, we have $\mpair{wz,\phi_{I}}\geq 0$ by assumption, and we have $\mpair{tw\rho,\phi_{I}}\geq 0$ since $tw\rho\in \mc{Z}\subseteq \real_{\geq 0}\Pi$. Hence 
$\mpair{w(z+t\rho),\phi_{I}}\geq 0$, and  by (a) we conclude that $w(z+t\rho)\in \mc{K}$. Hence
$z+t\rho \in \mc{Z}$ for all $t>0$ as required to prove (b).
\end{proof}

 \subsection{}\label{x5.9}  Let us say that a subset  $U$ of $V$ is   \emph{totally isotropic} if $\mpair{U,U}\seq \set{0}$. 
 Observe that a cone  in $V$ is totally isotropic  if and only if it is contained in $\mc{Q}$ i.e. contains only isotropic vectors. Therefore, the notion of a totally isotropic subspace of $V$ in this sense  coincides with the usual definition. 
 \begin{lem}
 \begin{num}
 \item If $x,y\in \ol{\mc{Z}}$, then $\mpair{x,y}\leq 0$.
 \item Let $n,m\in \Nat$, $x_{1},\ldots, x_{n},y_{1},\ldots, y_{m}\in\ol{\mc{Z}}$, 
 $x=x_{1}+\ldots+x_{n}$ and  $y:=y_{1}+\ldots +y_{m}$.
 Then $\mpair{x,y}=0$ if and only if $\mpair{x_{i},y_{j}}=0$ for all $i=1,\ldots,n$ and $j=1,\ldots, m$.
\item Let $n\in \Nat$, $x_{1},\ldots, x_{n}\in\ol{\mc{Z}}$ and
 $x=x_{1}+\ldots+x_{n}$. Then  $x$ is isotropic if and only if   $\real\set{x_{1},\ldots, x_{n}}$ is a totally  isotropic subspace of $\real\mc{Z}$.  \end{num}\end{lem}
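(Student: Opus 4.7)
The plan is to derive all three parts from Proposition \ref{x3.2}(c), which already asserts $\mpair{z,z'}\leq 0$ for $z,z'\in\mc{Z}$, together with continuity/bilinearity of $\mpair{-,-}$. No heavy machinery is needed; the only subtlety is making sure part (c) is stated in a form where the spans live in $\real\mc{Z}$, which follows since $\overline{\mc{Z}}\subseteq\real\mc{Z}$ (because $\real\mc{Z}$ is a finite-dimensional, hence closed, subspace of $V$ under the standing assumption \ref{x4.1}).

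For (a), I would argue as follows. Given $x,y\in\overline{\mc{Z}}$, pick sequences $(z_k),(z'_k)$ in $\mc{Z}$ with $z_k\to x$ and $z'_k\to y$. By Proposition \ref{x3.2}(c), $\mpair{z_k,z'_k}\leq 0$ for every $k$, and since $\mpair{-,-}\colon V\times V\to\real$ is continuous (as $V$ is finite-dimensional), passing to the limit yields $\mpair{x,y}\leq 0$.

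For (b), I would just expand using bilinearity:
\[
\mpair{x,y}=\sum_{i=1}^{n}\sum_{j=1}^{m}\mpair{x_i,y_j}.
\]
By part (a), every summand $\mpair{x_i,y_j}$ is $\leq 0$. A finite sum of non-positive real numbers is zero precisely when each summand is zero, giving the equivalence.

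For (c), I would apply (b) with $m=n$ and $y_j=x_j$, so that $y=x$. Then $\mpair{x,x}=0$ is equivalent to $\mpair{x_i,x_j}=0$ for all $i,j\in\{1,\ldots,n\}$. Since $\mpair{-,-}$ is bilinear and symmetric, the vanishing of all $\mpair{x_i,x_j}$ is equivalent to $\mpair{u,v}=0$ for all $u,v\in\real\{x_1,\ldots,x_n\}$, i.e., to $\real\{x_1,\ldots,x_n\}$ being a totally isotropic subspace. The inclusion $\real\{x_1,\ldots,x_n\}\subseteq\real\mc{Z}$ is immediate from $x_i\in\overline{\mc{Z}}\subseteq\real\mc{Z}$, so the conclusion is stated consistently. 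There is no genuinely hard step; the only thing to watch is the standing finite-rank, non-degenerate setup of \ref{x4.1} which ensures the topological statements (closure of $\real\mc{Z}$, continuity of the form) used in (a).
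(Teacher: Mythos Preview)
Your proof is correct and follows essentially the same route as the paper: part (a) from Proposition \ref{x3.2}(c) by continuity, part (b) by bilinear expansion and nonpositivity of the summands, and part (c) as the special case $m=n$, $y_i=x_i$ of (b). The paper's proof is terser but identical in substance.
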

 \begin{proof}
 Part (a) follows directly  from Proposition \ref{x3.2}(c). Part (b) holds since
  $\mpair{x,y}=\sum_{i,j}\mpair{x_{i},y_{j}}$ where  $\mpair{x_{i},y_{j}}\leq 0$ for all $i,j$ by (a). 
  Finally, (c) follows from (b) taking $m=n$ and $y_{i}=x_{i}$ for all $i$. 
 \end{proof}
 \subsection{} \label{x5.10} Make assumptions as in  \ref{x1.4}
 and \ref{x3.6} but assume that $(\Phi,\Pi)$ on $(V,\mpair{-,-})$
 satisfies \ref{x4.1}(i)--(iii) and  $V'$ is finite-dimensional.   Note that
the radical of $\mpair{-,-}' $  is $\ker(L)$, so $(\Phi',\Pi')$ on $(V',\mpair{-,-}')$  does not necessarily satisfy \ref{x4.1}(i)--(iii).

  Let $R_{W,+}=\mset{\real_{\geq 0}\al\mid \al\in \Phi_{+}}\seq \ray(\real_{\geq 0} \Pi)$
 and $R_{W',+}=\mset{\real_{\geq 0}\al\mid \al\in \Phi'_{+}}\seq \ray(\real_{\geq 0} \Pi')$ denote the set of rays spanned by  roots in $\Phi_{+}$ and $\Phi'_{+}$ respectively,  and  let
 $R_{W,0}=\ol{R_{W,+}}\sm R_{W,+}$ and 
 $R_{W',0}=\ol{R_{W',+}}\sm R_{W',+}$ 
denote the corresponding  sets of limit rays (i.e limit points).
By  Proposition \ref{x3.6}(a), there is a  map 
 $L'\colon \ray(\real_{\geq 0}\Pi')\to \ray(\real_{\geq 0}\Pi)$, determined 
 by $L'(\real_{\geq 0}\alpha)=\real_{\geq 0}L(\alpha)$.

 \begin{prop}
 \begin{num}
 \item $L'$ restricts to a surjective map $R_{W',0}\to R_{W,0}$.
 \item $L$ restricts to a surjective map $\ol{\mc{Z}}_{W'}\to \ol{\mc{Z}}_{W}$.
 \end{num}
 \end{prop}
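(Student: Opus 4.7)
The plan is to exploit compactness of the relevant ray spaces, the $W$-equivariant bijection $L\colon \Phi'_{+}\to \Phi_{+}$ of \ref{x1.4}, and the discreteness of $R_{W,+}$ from Proposition \ref{x5.3}(a), so as to transfer limit behaviour between the primed and unprimed systems. Pulling $\rho$ back along $L$, define $\rho'\in V'$ by $\mpair{\rho',u}':=\mpair{\rho,L(u)}$. Then $\mpair{\rho',\alpha'}'=\mpair{\rho,\alpha}>0$ for all $\alpha'\in \Pi'$, so $P':=\mset{u\in \real_{\geq 0}\Pi'\mid \mpair{\rho',u}'=1}$ is a compact simplex in $V'$ (as $\Pi'$ is finite and linearly independent), and we may topologize $\ray(\real_{\geq 0}\Pi')$ as in \ref{x5.2} so that it is homeomorphic to $P'$. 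The restriction of $L$ to $P'$ lands in the polytope $P$ of \ref{x5.3}, so $L'\colon \ray(\real_{\geq 0}\Pi')\to \ray(\real_{\geq 0}\Pi)$ is continuous. Since $L$ bijects $\Phi'_{+}$ onto $\Phi_{+}$ and all roots in both systems have squared length one (so distinct positive roots span distinct rays), $L'$ restricts to a bijection $R_{W',+}\to R_{W,+}$.

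For (a), suppose first that $r'\in R_{W',0}$ is a limit of distinct $r'_{n}\in R_{W',+}$. Continuity gives $L'(r')=\lim_{n}L'(r'_{n})\in \ol{R_{W,+}}$, and the $L'(r'_{n})\in R_{W,+}$ remain distinct, so Proposition \ref{x5.3}(a) forces $L'(r')\notin R_{W,+}$, whence $L'(r')\in R_{W,0}$. Conversely, given $r\in R_{W,0}$ the limit of distinct $r_{n}\in R_{W,+}$, lift to the unique $r'_{n}\in R_{W',+}$ with $L'(r'_{n})=r_{n}$. Compactness of $\ray(\real_{\geq 0}\Pi')\cong P'$ yields a convergent subsequence $r'_{n_{k}}\to r'\in \ol{R_{W',+}}$, and continuity gives $L'(r')=r\notin R_{W,+}$, forcing $r'\notin R_{W',+}$ and hence $r'\in R_{W',0}$.

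For (b), the inclusion $L(\ol{\mc{Z}}_{W'})\subseteq \ol{\mc{Z}}_{W}$ is immediate from continuity of $L$ together with the surjection $L\colon \mc{Z}_{W'}\to \mc{Z}_{W}$ of Proposition \ref{x3.6}(c). For the reverse inclusion, write $v\in \ol{\mc{Z}}_{W}$ as a limit of $v_{n}\in \mc{Z}_{W}$, and use \ref{x3.6}(c) to lift to $v'_{n}\in \mc{Z}_{W'}\subseteq \real_{\geq 0}\Pi'$ with $L(v'_{n})=v_{n}$. Since $\mpair{\rho',v'_{n}}'=\mpair{\rho,v_{n}}$ is bounded, say by $M$, the $v'_{n}$ lie in the pyramidal polytope $\mset{u\in \real_{\geq 0}\Pi'\mid \mpair{\rho',u}'\leq M}$, which is compact because $\Pi'$ is finite and linearly independent (the coefficients of $u$ in the expansion over $\Pi'$ are then bounded continuous functions of $u$). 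A convergent subsequence $v'_{n_{k}}\to v'\in \ol{\mc{Z}}_{W'}$ then satisfies $L(v')=v$ by continuity of $L$.

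The main obstacle is that $(\Phi',\Pi')$ on $(V',\mpair{-,-}')$ need not satisfy the non-degeneracy hypothesis \ref{x4.1}(ii), so neither the discreteness of $R_{W',+}$ nor the description of $\ol{\mc{Z}}_{W'}$ afforded by Theorem \ref{x5.4} is available on the primed side. The strategy above sidesteps this by performing all limit extraction in the primed system using only the compactness supplied by finiteness and linear independence of $\Pi'$, while importing the required discreteness exactly once from the unprimed system through the continuous map $L'$.
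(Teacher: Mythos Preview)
Your argument is correct in substance, with one slip: you write ``define $\rho'\in V'$ by $\mpair{\rho',u}':=\mpair{\rho,L(u)}$'', but since the radical of $\mpair{-,-}'$ is $\ker L\neq 0$, there is no guarantee that the linear functional $u\mapsto\mpair{\rho,L(u)}$ is represented by an element of $V'$ via the form. You never actually need this; every subsequent use of $\rho'$ (cutting out the compact simplex $P'$, bounding the $v'_n$ inside a pyramidal polytope) only requires $\rho'$ to be a linear functional on $V'$ which is strictly positive on $\Pi'$. Simply declare $\rho'(u):=\mpair{\rho,L(u)}$ as such, and everything goes through unchanged. This is consistent with your own closing remark that \ref{x4.1}(ii) is unavailable on the primed side.

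The paper's proof and yours share the same core (continuity of $L'$, compactness of the ray spaces, the bijection $R_{W',+}\to R_{W,+}$) but package it differently. The paper observes that $L'$, being a continuous surjection between compact Hausdorff spaces, is closed, so $L'(\ol{R_{W',+}})=\ol{R_{W,+}}$ in one stroke; it then separates $R_{W',0}$ from $R_{W',+}$ by noting that $L'$ preserves isotropy type and invoking Proposition~\ref{x5.3} to identify $R_{*,0}$ with the isotropic rays in $\ol{R_{*,+}}$. You instead extract subsequences by hand and use only the discreteness of $R_{W,+}$ from \ref{x5.3}(a), never touching isotropy. Your route has the modest advantage that it does not require the analogue of \ref{x5.3}(b) on the primed side (where the standing hypotheses of \S\ref{x5} fail), whereas the paper implicitly needs $R_{W',0}\subseteq Q$, which takes a short separate argument via \ref{x4.3}(e) on the unprimed side. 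For (b), the paper's one-line closed-map argument and your explicit compactness extraction are equivalent; yours makes the properness of $L$ on $\real_{\geq 0}\Pi'$ (which underlies both) visible.
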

 \begin{proof} Clearly, $L'$ is continuous and it surjective by 
 Proposition \ref{x3.6}(b). So $L'$ is a continuous 
 surjective map 
 between compact Hausdorff spaces and is therefore a closed map.   By \ref{x1.9},  $L'$ restricts to a 
 bijection $R_{W',+}\to R_{W,+}$. This implies that 
 $L'(\ol{R_{W',+}})=\ol{L'(R_{W',+})}=\ol{R_{W,+}}$.  Note that 
 $L'$ preserves isotropic rays (and non-isotropic rays), so it induces a 
 surjection from the  set of isotropic rays  in $\ol{R_{W',+}}$ to the set of isotropic rays in
 $\ol{R_{W,+}}$. That is, by  Proposition \ref{x5.3},
  $L'(R_{W',0})=R_{W,0}$, proving (a).  
  By Proposition \ref{x3.6}(c)), we have  $
  \ol{\mc{Z}}=\ol{L'(\mc{Z}_{W'})}=L'(\ol{\mc{Z}_{W'}})$ since $L'$ is closed, proving (b).
   (An alternative proof of (b) could be given using  (a) and Theorem \ref{x5.4}.)
     \end{proof}
 
\section{Imaginary cone of a reflection subgroup}
\label{x6}

  \subsection{} In  \label{x6.1}  preparation for  the proof of  Theorem \ref{x6.3}, we state  two 
  lemmas. The first is of interest even apart from its role in the proof of the theorem. Recall that $\rho$ denotes an arbitrary but fixed element of the interior of $\mc{C}$.
	  \begin{lem} Suppose that $v\in\overline{ \mc{Z}}$. Then
	  \begin{num}
	  \item $\overline{Wv}\seq \overline{\mc{Z}}\seq \real_{\geq 0}\Pi$.
	  \item $0\leq \lambda:=\inf(\mset{\mpair{x,\rho}\mid x\in \overline{Wv}})$.
	  \item $\eset\neq M_{v}:=\mset{x\in \overline{Wv}\mid \mpair{x,\rho}=\lambda}$.
	  \item $M_{v}\seq \mc{K}$.
	  \item $\ol{Wv}\cap \mc{K}\neq \eset$.
	  \item  If $v$ is non-isotropic, or $v\in \mc{Z}\sm\set{0}$, then $\lambda>0$.
	  \end{num} \end{lem}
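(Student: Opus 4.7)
The plan is to treat parts (a)--(f) in order, exploiting throughout the compactness of level sets of $\mpair{\cdot,\rho}$ in the polyhedral cone $\real_{\geq 0}\Pi$ together with the continuity and $W$-invariance inherited by $\overline{Wv}$.

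For (a), I would observe that $\mc{Z}$ is $W$-stable by construction, so $\overline{\mc{Z}}$ is $W$-stable (each $w \in W$ acts as a homeomorphism of $V$) and closed; hence $Wv \subseteq \overline{\mc{Z}}$ and then $\overline{Wv} \subseteq \overline{\mc{Z}}$. The second inclusion follows since $\mc{Z} \subseteq \real_{\geq 0}\Pi$ by Proposition \ref{x3.2}(a) (applied with $W$) and $\real_{\geq 0}\Pi$ is a closed polyhedral cone by Lemma \ref{x4.2}(a),(d) (using the finiteness of $\Pi$). For (b), since $\rho \in \inter(\mc{C})$, Lemma \ref{x4.2}(d) gives $\mpair{\alpha,\rho} > 0$ for all $\alpha \in \Pi$, so $\mpair{x,\rho} \geq 0$ for every $x \in \real_{\geq 0}\Pi \supseteq \overline{Wv}$, whence $\lambda \geq 0$.

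For (c), Lemma \ref{x4.2}(d) also shows that $P := \mset{v \in \real_{\geq 0}\Pi \mid \mpair{v,\rho}=1}$ is a compact polytope, hence $\mset{x \in \real_{\geq 0}\Pi \mid \mpair{x,\rho} \leq N}$ is compact for every $N \in \real_{\geq 0}$ (it is the conical closure of $\set{0}$ and $NP$). Thus the closed set $\overline{Wv} \cap \mset{x \mid \mpair{x,\rho} \leq \lambda + 1}$ is compact and non-empty (by definition of $\lambda$), and the continuous functional $\mpair{\cdot,\rho}$ attains its minimum $\lambda$ on it, so $M_v \neq \emptyset$. For (d), let $x \in M_v$. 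Since $\overline{Wv}$ is $W$-stable, $s_\alpha x \in \overline{Wv}$ for any $\alpha \in \Pi$; but
\[
\mpair{s_\alpha x, \rho} = \mpair{x,\rho} - \mpair{x,\ck\alpha}\mpair{\alpha,\rho} = \lambda - \mpair{x,\ck\alpha}\mpair{\alpha,\rho},
\]
and $\mpair{\alpha,\rho} > 0$, so $\mpair{x,\ck\alpha} > 0$ would force $\mpair{s_\alpha x, \rho} < \lambda$, contradicting the definition of $\lambda$. Hence $\mpair{x,\alpha} \leq 0$ for every $\alpha \in \Pi$, so $x \in -\mc{C}$, and combined with $x \in \real_{\geq 0}\Pi$ from (a) I conclude $x \in \mc{K}$. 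Part (e) is then immediate from (c) and (d).

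For (f), the idea is that $\lambda = 0$ would produce $x \in M_v \subseteq \mc{K}$ with $\mpair{x,\rho} = 0$; writing $x = \sum_{\alpha \in \Pi} c_\alpha \alpha$ with $c_\alpha \geq 0$ and using $\mpair{\alpha,\rho} > 0$ for all $\alpha \in \Pi$ forces $c_\alpha = 0$ for all $\alpha$, hence $x = 0$. Thus $\lambda = 0$ is equivalent to $0 \in \overline{Wv}$, and it suffices to rule this out under either hypothesis. If $v$ is non-isotropic, then $\mpair{y,y} = \mpair{v,v} \neq 0$ for every $y \in Wv$ and, by continuity of $\mpair{-,-}$, also for every $y \in \overline{Wv}$, so $0 \notin \overline{Wv}$. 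If instead $v \in \mc{Z} \setminus \set{0}$, I would write $v = wk$ with $k \in \mc{K} \setminus \set{0}$, so that $\overline{Wv} = \overline{Wk}$; since $-k \in \mc{C}$, Lemma \ref{x4.4}(a) gives that $W(-k)$ is closed and discrete in $V$, whence so is $Wk = -W(-k)$, and $\overline{Wk} = Wk$ cannot contain $0$ because $wk = 0$ would force $k = 0$.

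The main obstacle is part (f) in the potentially isotropic case $v \in \mc{Z} \setminus \set{0}$: here continuity of the quadratic form is useless, and the conclusion hinges on the discreteness/closedness of $W$-orbits in $\mc{C}$ from Lemma \ref{x4.4}(a), which in turn relies decisively on the blanket finiteness and non-degeneracy hypotheses \ref{x4.1}(i)--(iii).
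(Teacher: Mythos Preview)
Your proof is correct and follows the same overall architecture as the paper, but two steps are handled differently and are worth noting.

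For (d), the paper does not apply $s_\alpha$ directly to $x$: instead it picks a sequence $w_n v \to x$ and, for large $n$, estimates $\mpair{\rho, s_\alpha(w_n v)} < \lambda$ to obtain a contradiction. Your argument is cleaner: you observe that $\overline{Wv}$ is itself $W$-stable (since each $w\in W$ is a homeomorphism), so $s_\alpha x \in \overline{Wv}$ and $\mpair{\rho,s_\alpha x} = \lambda - \mpair{x,\ck\alpha}\mpair{\alpha,\rho} < \lambda$ already contradicts the definition of $\lambda$. This avoids the $\varepsilon$-bookkeeping entirely. For (f) in the case $v\in\mc{Z}\sm\set{0}$, the paper argues more directly: the orbit representative $v''\in Wv\cap\mc{K}$ satisfies $\mpair{\rho,v'}\geq\mpair{\rho,v''}$ for all $v'\in Wv$ (since $-v''\in\mc{C}$ and Lemma~\ref{x1.13}(c) gives $wv''-v''\in\real_{\geq 0}\Pi$), and this inequality passes to the closure, giving $\lambda \geq \mpair{\rho,v''} > 0$. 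Your route via Lemma~\ref{x4.4}(a) (discreteness and closedness of $Wk$) is also valid and gives the stronger conclusion $\overline{Wv}=Wv$, at the cost of invoking a somewhat heavier lemma; the paper's argument uses only the elementary minimality property of the $\mc{K}$-representative.
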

  \begin{proof} Part (a) holds since $\ol{\mc{Z}}$ is $W$-invariant (since $\mc{Z}$ is),
   $\mc{Z}\seq \real\Pi$ and $\real_{\geq 0}\Pi$ is closed (being a polyhedral cone).
  Part (b) follows from (a) since $\mpair{\rho,\real_{\geq 0}\Pi}\seq \real_{\geq 0}$. 
   One may choose a sequence $(w_{n})_{n\in \Nat}$ in $W$ with 
   $\lim_{n\rightarrow \infty }\mpair{\rho, w_{n}v}=\lambda$.  Since 
   $\mset{z\in \real_{\geq 0}\Pi \mid \mpair{\rho, z}\leq \lambda+1}$ is compact,  by passing to 
   a subsequence we may assume that the sequence $(w_{n}v)$ is convergent, say to  
   $x\in \overline{Wv}$.
   Then $x\in M_{v}$.
   This proves (c). 
     Now let $x\in M_{v}$ be arbitrary. One may choose a sequence $(w_{n})$ in $W$ such 
     that $(w_{n}v)$ converges to $x$.   To prove  (d), it will suffice to show that 
     $x\in -\mc{C}$  (for then  $x\in -\mc{C}\cap  \real_{\geq 0}\Pi\seq  \mc{K}$).
Suppose to the contrary that $\mpair{x,\alpha}=c>0$ for some $\alpha\in \Pi$.
We have $\mpair{\rho,x}=\lambda$.  Recall that $\mpair{\rho,\alpha}>0$. Since 
$\lim_{n\rightarrow \infty}w_{n}v=x$, we may   choose $n\in \Nat$ sufficiently large   that  both
$\mpair{\rho, w_{n}v}<\lambda+c\mpair{\rho,\alpha}$ and $d:= \mpair{w_{n}v,\ck\alpha}>\frac{1}{2}\mpair{x,\ck\alpha}=c$.
Then since $s_{\alpha}(w_{n}v)\in \ol{Wv}$, one has \[ \lambda\leq \mpair{\rho,s_{\alpha}(w_{n}v)}=\mpair{\rho, w_{n}v-d\alpha}<
\lambda+c\mpair{\rho,\alpha}- d\mpair{\rho,\alpha}<\lambda,\] which is a contradiction proving (d). Part (e) is immediate from (c) and (d). 
In (f),  the case in which $v\in \mc{Z}$  follows since there is
$v''\in Wv\cap \mc{K}$ and then $\rho(v'')\geq \rho(v')$ for all $v''\in Wv$, hence for all 
$v''\in \ol{Wv}$. Assume now that $\mpair{v,v}\neq 0$ and  let $x\in M_{v}$. One has  $\mpair{y,y}=\mpair{v,v}$ for all $y\in Wv$ and hence for all $y\in \ol{Wv}$. In particular, $\mpair{x,x}=\mpair{v,v}$.  If $\mpair{v,v}\neq 0$, then $\mpair{x,x}\neq 0$. This gives $x\in \real_{\geq 0}\Pi\sm\set{0}$ and so
$\lambda=\mpair{\rho, x}>0$.
 \end{proof}

\subsection{} \label{x6.2} The second lemma required in our proof of Theorem \ref{x6.3} is purely technical;    it follows from  the theorem and Lemma \ref{x4.4}(a). \begin{lem}  Let $W'$ be a finitely-generated reflection subgroup of $W$ and $z\in \mc{Z}_{W'}$.
Then the $W$-orbit $Wz$ of $z$ is discrete and closed in $V$. \end{lem}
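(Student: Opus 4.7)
The plan is to reduce the statement about a $W$-orbit of a point in $\mathcal{Z}_{W'}$ to the case of a $W$-orbit of a point in $\mathcal{K}_W$, so that Lemma \ref{x4.4}(a) applies. The bridge between these is exactly Theorem \ref{x6.3}, which asserts the inclusion $\mathcal{Z}_{W'}\subseteq \mathcal{Z}_W$ for any reflection subgroup $W'$ of $W$; this is what makes $W'$ essentially irrelevant to the argument.

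First I would apply Theorem \ref{x6.3} to conclude that $z\in \mathcal{Z}_W$. By the definition $\mathcal{Z}_W = W\mathcal{K}_W$, there exists $w\in W$ with $wz \in \mathcal{K}_W$. Since $\mathcal{K}_W = \real_{\geq 0}\Pi \cap -\mathcal{C}_W$, the element $v := -wz$ lies in $\mathcal{C}_W$. Now I would invoke Lemma \ref{x4.4}(a), which requires precisely that $v\in \mathcal{C}_W$ and $W$ be finitely generated in the relevant sense (here $W$ itself is of finite rank by \ref{x4.1}(iii), so the lemma applies to give that $Wv$ is closed and discrete in $V$). Finally, since $v\mapsto -v$ is a linear homeomorphism of $V$ and $Wv = W(-wz) = -Wwz = -Wz$, the orbit $Wz = -(Wv)$ is the image of a closed discrete set under a homeomorphism, hence closed and discrete.

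The only substantive input is Theorem \ref{x6.3}; once that is granted, the argument is essentially formal, using nothing beyond the definition of $\mathcal{Z}_W$, the containment $\mathcal{K}_W \subseteq -\mathcal{C}_W$, and the elementary fact that negation is a self-homeomorphism of $V$. In particular, the hypothesis that $W'$ be finitely generated (rather than $W$) is not directly exploited; it is needed only insofar as it enters the hypotheses of Theorem \ref{x6.3} itself. The main obstacle is therefore entirely located upstream, in the proof of Theorem \ref{x6.3}; the present lemma is essentially a corollary obtained by combining that theorem with the well-known discreteness of $W$-orbits of points in the fundamental chamber.
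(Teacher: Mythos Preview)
Your argument is logically circular. Lemma~\ref{x6.2} is not a downstream consequence of Theorem~\ref{x6.3}; it is one of the two lemmas used \emph{in the proof} of Theorem~\ref{x6.3}. Indeed, the paper's proof of Theorem~\ref{x6.3} runs: given $z\in\mc{Z}_{W'}$, Lemma~\ref{x6.1} produces a sequence $(w_n)$ in $W$ with $w_n z\to x\in\mc{K}$, and then Lemma~\ref{x6.2} is invoked to conclude that $w_n z=x$ for all but finitely many $n$, whence $z\in\mc{Z}$. So you cannot appeal to Theorem~\ref{x6.3} here. The paper itself flags exactly this point in the sentence introducing Lemma~\ref{x6.2}: the lemma ``follows from the theorem and Lemma~\ref{x4.4}(a)'' --- that is, your argument is correct \emph{a posteriori}, but it cannot serve as the proof at this stage of the development.

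The paper's actual proof of Lemma~\ref{x6.2} is a direct argument that does not assume $\mc{Z}_{W'}\subseteq\mc{Z}$. One reduces to $z=k\in\mc{K}_{W'}$ lying in the relative interior of $\real_{\geq 0}\Pi_{W'}$ (using Lemma~\ref{x3.4}(a) to pass to a facial subgroup of $W'$ otherwise), writes $k=\sum_{\alpha\in\Pi_{W'}}c_\alpha\alpha$ with all $c_\alpha>0$, and decomposes each $w_n=w_n''w_n'$ with $w_n'\in W'$ and $w_n''$ the minimal-length coset representative in $w_nW'$. The bound $\mpair{\rho,w_nk}\leq M$ forces $\mpair{\rho,w_n''\alpha}\leq M/\epsilon$ for all $\alpha\in\Pi_{W'}$, where $\epsilon=\min_\alpha c_\alpha$; by Lemma~\ref{x4.3}(e) there are only finitely many such positive roots, so after passing to a subsequence the tuple $(w_n''\alpha)_{\alpha\in\Pi_{W'}}$ is constant. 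This reduces convergence of $w_nk$ to convergence of $w_n'k$ inside $W'$, and then~\eqref{x4.4.1} applied to $W'$ (which is finitely generated) finishes the job. The finite-generation hypothesis on $W'$ is genuinely used here, not merely inherited from Theorem~\ref{x6.3}.
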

\begin{proof} Without loss of generality, we may assume that $z=k\in\mc{K}_{W'}$.
It will suffice to prove the following claim: if
 $(w_{n})_{n\in \Nat}$ is a sequence in $W$ such that
the sequence $(w_{n}z)_{n}$ converges in $V$, say to $x$, then there is a subsequence $(w_{n_{m}})_{m\in \Nat} $ such that $w_{n_{m}}z=x$ for all sufficiently large $m$. We will show below equivalently that  after passing to a suitable subsequence of $(w_{n})$, we have $w_{n}z=x$ for all $n$.
 
   Now  $k$ lies in some face of the polyhedral cone  $\real_{\geq 0}\Pi_{W'}$, say that corresponding to the facial subset $I'\subseteq S'$ of $S':=\chi(W')$.
   By Lemma \ref{x3.4}(a), we have  $k\in \real_{\geq 0}\Pi_{W'_{I'}}\cap \mc{K}_{{W'}}=\mc{K}_{W'_{I'}}$. 
   If the Lemma holds with $W'$ replaced by $W'_{I'}$, it holds for $W'$.
   Hence we may and do assume without loss of generality that 
   $I'=S'$ i.e. $k$ is a point of the relative interior of the cone $\real\Pi_{W'}$.
   It follows that there is an expression $k=\sum_{\alpha\in \Pi_{W'} }c_{\alpha}\alpha$ with all
   $c_{\alpha}>0$, since $\Pi_{W'}$ is a set of representatives of the extreme rays of this cone. Let $\epsilon=\min(\mset{c_{\alpha}\mid {\alpha\in \Pi_{W'}}})>0$.
   For any $w\in W$, we write $w=w''w'$ where $w'\in W'$ and $w''\in W$ satisfies $N(w^{''-1})\cap W'=\emptyset$ i.e. $w''$ is the unique element of minimum length in the coset $wW'$.
   Using Lemma \ref{x1.13}(c),  write $w'k=\sum_{\alpha\in \Pi'}(c_{\alpha}+d_{w',\alpha})\alpha$ where all 
   $d_{w',\alpha}\geq 0$.
   
   Note that the sequence $(\mpair{\rho,w_{n}k})_{n}$ is bounded above, say  $\mpair{\rho,w_{n}k}\leq M$ where $M\in \real_{\geq 0}$.   
  We have \begin{equation*}\begin{split}M&\geq  \mpair{\rho,w_{n}k}=\mpair{\rho,w_{n}''w_{n}'k}=\mpair{\rho, \sum_{\alpha\in \Pi'}(c_{\alpha}+d_{w_{n}',\alpha})w_{n}''\alpha}\\ &\geq \epsilon \max(\set{\mpair{\rho,w_{n}''\alpha}\mid \alpha\in \Pi_{W'}})\end{split}\end{equation*}
   where all $w_{n}''(\alpha)\in\Phi_{+}$. Hence $\mpair{\rho,w_{n}''\alpha}\leq M/\epsilon$ for all
   $n\in \Nat$ and all $\alpha\in \Pi_{W'}$. By Lemma \ref{x4.3}(e), there are only finitely many possibilities for the $\Pi_{W'}$-indexed families $(w_{n}''\alpha)_{\alpha\in \Pi_{W'}}$ for $n\in \Nat$. Replacing $(w_{n})$ by a subsequence, we may assume that the sequence of families  $(w_{n}''\alpha)_{\alpha\in \Pi_{W'}}$ for $n\in \Nat$ is constant, so $(w_{n}''\alpha)_{\alpha\in \Pi_{W'}}=(w_{0}''\alpha)_{\alpha\in \Pi_{W'}}$ for all $n\in \Nat$.
   Then \begin{equation*} \begin{split}  w_{n}k&= \sum_{\alpha\in \Pi'}(c_{\alpha}+d_{w_{n}',\alpha})w_{n}''\alpha=\sum_{\alpha\in \Pi'}(c_{\alpha}+d_{w_{n}',\alpha})w_{0}''\alpha\\&=w_{0}''\sum_{\alpha\in \Pi'}(c_{\alpha}+d_{w_{n}',\alpha})\alpha=w_{0}''w'_{n}k \end{split} \]
   
   It follows that the sequence $w_{n}'k$ converges to $w_{0}^{\prime\prime -1}x$ and in particular, the sequence $\mpair{\rho, w_{n}'k-k}$ for $n\in \Nat$ is bounded above.
Recall that $\rho$ is an interior point of  $\mc{C}_{W'}$, by \ref{x1.10}(e).   By  \eqref{x4.4.1} applied to $W'$, we see that  $w_{n}'(k)-k$ has a constant subsequence. Passing to an appropriate  subsequence of $(w_{n})$ yet  again, we may  therefore
   assume that $w_{n}'k$ is constant. Hence $w_{n}'k=w_{0}^{\prime\prime -1}x$ for all $n$,
   and $w_{n}k=x$ for all $n$ as required to complete the proof.
   
    \end{proof}

   \subsection{}\label{x6.3}
  The following is a main result of this work.
  \begin{thm}  Let $W'$ be a  reflection subgroup  of $W$.  
 Then   $\mc{Z}_{W'}\subseteq \mc{Z}_{W}$.\end{thm}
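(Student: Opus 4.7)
The plan is to reduce the inclusion $\mc{Z}_{W'} \seq \mc{Z}_W$ to the statement $\mc{K}_{W'} \seq \mc{Z}_W$, using that $\mc{Z}_{W'} = W'\mc{K}_{W'}$ and that $\mc{Z}_W$ is $W$-stable (hence $W'$-stable). Given $k \in \mc{K}_{W'}$, I would write $k = \sum_{\alpha \in \Delta} c_\alpha \alpha$ with $\Delta$ a finite subset of $\Pi_{W'}$ and all $c_\alpha > 0$; by \ref{x1.7}, $\Delta$ has the form $\Pi_{W''}$ for the finite rank reflection subgroup $W'' := W_\Delta$, and $k \in \mc{K}_{W''}$ because $W'' \seq W'$ implies $\mc{C}_{W'} \seq \mc{C}_{W''}$. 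Proving the theorem for $W''$ then yields $k \in \mc{Z}_W$, so it suffices to treat the case that $W'$ has finite rank.

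Assume $W'$ has finite rank, and let $k \in \mc{K}_{W'}$. The closed version of the inclusion is the easy input: by Corollary \ref{x5.5}(e) one has $\mc{K}_{W'} \seq \mc{Z}_{W'} \seq \ol{\mc{Z}_{W'}} \seq \ol{\mc{Z}_W}$, so $k \in \ol{\mc{Z}_W}$. I would then apply Lemma \ref{x6.1}(e) to $v = k$ to produce a point $x \in \ol{Wk} \cap \mc{K}_W$; concretely $x$ is a limit of a sequence $(w_n k)_{n \in \Nat}$ along which $\mpair{\rho, w_n k}$ tends to $\inf\,\mset{\mpair{\rho, y} \mid y \in \ol{Wk}}$, and any such minimizer automatically lies in $-\mc{C}_W$ by the simple-reflection tangency argument in the proof of Lemma \ref{x6.1}(d).

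To promote $x \in \ol{Wk}$ to $x \in Wk$, I would invoke Lemma \ref{x6.2}: for a finitely generated reflection subgroup $W'$ and $z \in \mc{Z}_{W'}$, the full $W$-orbit $Wz$ is closed and discrete in $V$. Applied to $z = k \in \mc{K}_{W'} \seq \mc{Z}_{W'}$, this gives $\ol{Wk} = Wk$, so $x = wk$ for some $w \in W$, whence $k = w^{-1}x \in W\mc{K}_W = \mc{Z}_W$, completing the proof.

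The main obstacle is Lemma \ref{x6.2} itself; it cannot be deduced directly from Lemma \ref{x4.4}(a), because the latter would require $-k \in \mc{C}_W$, which is essentially what we are trying to establish and would make the argument circular. The proof of Lemma \ref{x6.2} must therefore proceed independently, exploiting minimal-length coset representatives of $W/W'$: after reducing (via a face of $\real_{\geq 0}\Pi_{W'}$ and Lemma \ref{x3.4}) to the case that $k$ lies in the relative interior of $\real_{\geq 0}\Pi_{W'}$, and factoring each $w_n$ as $w_n = w_n'' w_n'$ with $w_n' \in W'$ and $w_n''$ of minimal length in $w_n W'$, the strict positivity of the coefficients of $k$ bounds $\mpair{\rho, w_n''\alpha}$ uniformly in $n$ for each $\alpha \in \Pi_{W'}$; Lemma \ref{x4.3}(e) then extracts a subsequence along which the tuple $(w_n''\alpha)_{\alpha \in \Pi_{W'}}$ is constant, and the discreteness of $Wk$ reduces to the discreteness of $\mset{w'k - k \mid w' \in W'}$ inside $\real_{\geq 0}\Pi_{W'}$, which is the $W'$-analogue of \eqref{x4.4.1}.
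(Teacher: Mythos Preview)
Your proposal is correct and follows essentially the same route as the paper: reduce to finitely generated $W'$ (your reduction via a finite $\Delta\subseteq\Pi_{W'}$ is equivalent to the paper's use of Lemma~\ref{x3.3}(b) applied to $W'$), use the closed inclusion $\overline{\mc{Z}_{W'}}\subseteq\overline{\mc{Z}_W}$ together with Lemma~\ref{x6.1} to find a limit point in $\mc{K}_W$, and then invoke Lemma~\ref{x6.2} to conclude the orbit is discrete so the limit is actually attained. Your sketch of Lemma~\ref{x6.2} (pass to the facial subset of $\chi(W')$ supporting $k$, factor through minimal-length $W'$-coset representatives, bound $\mpair{\rho,w_n''\alpha}$ via the strictly positive coefficients, apply Lemma~\ref{x4.3}(e), and finish with the $W'$-version of \eqref{x4.4.1}) is exactly the paper's argument.
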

  \begin{proof}  Using \ref{x3.3}(b),  assume without loss of generality that $W'$ is finitely generated.   It is clear from Theorem \ref{x5.1} and   Lemma \ref{x3.1}(a), that $\overline{\mc{Z}_{W'}}\subseteq\overline{ \mc{Z}_{W}}$.    Let $z\in \mc{Z}_{W'}$.
   Then  $z\in \overline{\mc{Z}}$, so by Lemma \ref{x6.1} there is a sequence $(w_{n})_{n\in \Nat}$ in $W$ such that 
   the sequence $(w_{n}z)$ converges to an element $x\in \mc{K}$.
   By Lemma \ref{x6.2},  $w_{n}z=x\in \mc{K}$ for all but finitely many $n$. Hence $z=w^{-1}_{n}x\in \mc{Z}$ for some $n$, as required.\end{proof}

\subsection{} \label{x6.4} The final result  in this section refines  Proposition \ref{x2.6}.
\begin{cor} Assume that $W''$ is a facial subgroup of $W$ and $W'$ is a finitely generated reflection subgroup of $W$. Then $W''':=W'\cap W''$ is a finitely-generated reflection subgroup
of $W$ and  $\mc{Z}_{W'}\cap \mc{Z}_{W''}=\mc{Z}_{W'''}$.
\end{cor}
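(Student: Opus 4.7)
The plan is to deduce the corollary from results already established, structuring the argument as a reduction to the standard facial case plus a short chase of containments.

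To begin, by Proposition \ref{x2.6}(b), $W'''=W'\cap W''$ is a facial subgroup of $W'$. Since $W'$ is finitely generated, $\chi(W')$ is finite, so the standard facial subgroup of $W'$ to which $W'''$ is conjugate is generated by a subset of the finite set $\chi(W')$; hence $W'''$ is finitely generated.  The inclusion $\mc{Z}_{W'''}\seq \mc{Z}_{W'}\cap \mc{Z}_{W''}$ is then immediate: two applications of Theorem \ref{x6.3}, to $W'''\seq W'$ and to $W'''\seq W''$, yield $\mc{Z}_{W'''}\seq \mc{Z}_{W'}$ and $\mc{Z}_{W'''}\seq \mc{Z}_{W''}$.

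For the reverse inclusion, I would first reduce to the case in which $W''$ is standard facial.  Write $W''=w_{0}W_{J}w_{0}^{-1}$ for some facial $J\seq S$ and some $w_{0}\in W$, and set $\wt{W}:=w_{0}^{-1}W'w_{0}$, which is again a finitely generated reflection subgroup of $W$ with $\wt{W}\cap W_{J}=w_{0}^{-1}W'''w_{0}$. Since Proposition \ref{x3.2}(e) gives $\mc{Z}_{xHx^{-1}}=x\mc{Z}_{H}$ for any reflection subgroup $H$ and any $x\in W$, applying $w_{0}^{-1}$ throughout converts the target identity $\mc{Z}_{W'}\cap \mc{Z}_{W''}=\mc{Z}_{W'''}$ into the equivalent assertion $\mc{Z}_{\wt{W}}\cap \mc{Z}_{W_{J}}=\mc{Z}_{\wt{W}\cap W_{J}}$. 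So it suffices to treat the situation $W''=W_{J}$ with $J\seq S$ facial.

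In that reduced setting, Proposition \ref{x2.6}(a) supplies the crucial cone identity: $W'''=W'\cap W_{J}$ is a standard facial subgroup of $W'$ with $\real_{\geq 0}\Pi_{W'''}=\real_{\geq 0}\Pi_{W'}\cap \real\Pi_{J}$. Given $z\in \mc{Z}_{W'}\cap \mc{Z}_{W_{J}}$, the containment $\mc{Z}_{W'}\seq \real_{\geq 0}\Pi_{W'}$ gives $z\in \real_{\geq 0}\Pi_{W'}$, while Lemma \ref{x3.5}(b) applied to the facial subgroup $W_{J}$ of $W$ yields $\mc{Z}_{W_{J}}\seq \real\Pi_{J}$ and hence $z\in \real\Pi_{J}$; combining these with the cone identity places $z$ in $\real_{\geq 0}\Pi_{W'''}$. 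A second application of Lemma \ref{x3.5}(b), this time to the facial subgroup $W'''$ of $W'$, gives $\mc{Z}_{W'}\cap \real_{\geq 0}\Pi_{W'''}=\mc{Z}_{W'''}$, and since $z$ lies in both factors on the left one concludes $z\in \mc{Z}_{W'''}$. The one non-formal step is the cone identity of Proposition \ref{x2.6}(a), which transports the combinatorial description of the facial intersection into positive root cone language; everything else is conjugation bookkeeping and two applications of the characterization $\mc{Z}_{W'}=\mc{Z}\cap \real\Pi_{W'}$ for facial subgroups.
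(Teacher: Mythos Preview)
Your argument is correct and follows essentially the same route as the paper: reduce via conjugation (Proposition \ref{x3.2}(e)) to the case $W''=W_{J}$ standard facial, get one inclusion from Theorem \ref{x6.3}, and for the other combine the cone identity $\real_{\geq 0}\Pi_{W'''}=\real_{\geq 0}\Pi_{W'}\cap\real\Pi_{J}$ from Proposition \ref{x2.6}(a) with the characterization $\mc{Z}_{W'}\cap\real_{\geq 0}\Pi_{W'''}=\mc{Z}_{W'''}$ for the facial subgroup $W'''$ of $W'$. The only cosmetic difference is that the paper places the conjugation reduction at the end rather than the beginning, and cites Lemma \ref{x3.4}(b) applied to $W'$ where you cite Lemma \ref{x3.5}(b); since $W'''$ is standard facial in $W'$ by Proposition \ref{x2.6}(a), either citation works.
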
\begin{proof}
Note that $W'''$ is a finite rank reflection subgroup by \ref{x2.6}.
Assume  first that  $W''$ is standard facial,
say $W''=W_{J}$ for facial $J\subseteq S$.
Then $\chi(W'')=J':=
\chi(W')\cap W_{J}$ by \ref{x2.6}.  
By Theorem \ref{x6.3}, we have $\mc{Z}_{W'''}\subseteq \mc{Z}_{W'}\cap\mc{Z}_{W_{J}}$.
For the reverse inclusion, note that 
\begin{equation*}\begin{split} \mc{Z}_{W'}\cap \mc{Z}_{W_{J}}&\subseteq \bigl(\mc{Z}_{W'}\cap\real_{\geq 0}\Pi_{W'}\bigr)\cap \real\Pi_{J}\\ 
&= \mc{Z}_{W'}\cap
 \real_{\geq 0}\Pi_{W'_{J'}}=\mc{Z}_{W'_{J'}}.\end{split}\end{equation*}
 Here, we use \ref{x2.6}(a)  to get the first equality, 
and  Lemma \ref{x3.4}(a) applied to $W'$ to get the second (recalling $J'$ is facial in $\chi(W')$ by \ref{x2.6}). 
This proves the Corollary in the special case that $W''$ is a standard facial subgroup of $W$.
The  case of a general facial subgroup $W''$ reduces easily to the special  case just treated
by writing $W''=wW_{J}w^{-1}$ for some facial $J\subseteq S$ and using Lemma
  \ref{x3.2}(e). 
\end{proof}

  \section{Action on the closed imaginary cone} \label{x7}
  \subsection{} The following  simple fact will prove useful.
  \begin{lem}\label{x7.1}
  Suppose that $\Phi$ is irreducible infinite but not of affine type.   If $\al\in \real_{\geq 0}\Pi$ is non-zero, then $\al\not\perp\Pi$ and
  $\aff(W\al)=\real(W\al)=\real \Pi$.
  \end{lem}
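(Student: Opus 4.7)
The plan is to first establish $\al\not\perp\Pi$ by using the classification in \ref{x4.5}, and then deduce the statements about the affine and linear spans of $W\al$ from Lemma \ref{x1.13}(d).

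First I would note that since $\Phi$ is irreducible, infinite, and not of affine type, the Gram matrix $A = (\mpair{\beta,\gamma})_{\beta,\gamma\in\Pi}$ falls into the indefinite case of \ref{x4.5} (it cannot be of finite type by Lemma \ref{x2.9}, which would force $W$ finite). I would then suppose for contradiction that $\al\perp\Pi$. Writing $\al = \sum_{\beta\in\Pi}c_{\beta}\beta$ with $c_{\beta}\geq 0$, the hypothesis $\al\perp\Pi$ gives $\mpair{\al,\Pi}=\set{0}\subseteq \real_{\geq 0}$. But in the indefinite case recalled in \ref{x4.5}, the only element $v\in\real_{\geq 0}\Pi$ satisfying $\mpair{v,\Pi}\subseteq\real_{\geq 0}$ is $v=0$. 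This contradicts $\al\neq 0$, so $\al\not\perp\Pi$.

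Having established $\al\not\perp\Pi$, I would apply Lemma \ref{x1.13}(d) to obtain $\aff(W\al) = \al + \real\Pi$. Since $\al\in\real_{\geq 0}\Pi\subseteq\real\Pi$, the affine set $\al+\real\Pi$ equals $\real\Pi$ as a subset of $V$, so $\aff(W\al)=\real\Pi$. For the linear span, I would observe that $W$ stabilizes $\real\Pi$ (indeed $W\Pi=\Phi\subseteq\real\Pi$), so $W\al\subseteq \real\Pi$ and therefore $\real(W\al)\subseteq\real\Pi$. Combined with the general inclusion $\aff(W\al)\subseteq \real(W\al)$, we get $\real\Pi=\aff(W\al)\subseteq\real(W\al)\subseteq\real\Pi$, forcing equality throughout.

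The main work is the first paragraph, but it is not really an obstacle once the classification in \ref{x4.5} is invoked: the key point is simply that ``indefinite type'' is precisely designed to exclude non-zero elements of $\real_{\geq 0}\Pi$ that are weakly dominant with respect to the form. Everything after $\al\not\perp\Pi$ is formal manipulation using Lemma \ref{x1.13}(d).
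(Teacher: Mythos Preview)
Your proof is correct and follows the same overall approach as the paper: establish $\al\not\perp\Pi$ via the trichotomy in \ref{x4.5}, then apply Lemma \ref{x1.13}(d) and the inclusion $\al\in\real\Pi$ to get $\real(W\al)\supseteq\aff(W\al)=\al+\real\Pi=\real\Pi\supseteq\real(W\al)$. The only minor difference is in the first step: the paper splits into cases on the support $\Gamma$ of $\al$, handling $\Gamma\subsetneq\Pi$ by an elementary argument from irreducibility (choose $\beta\in\Pi\setminus\Gamma$ adjacent to $\Gamma$, so $\mpair{\beta,\al}<0$) and reserving the classification only for the full-support case, whereas you invoke the indefinite-type fact from \ref{x4.5} uniformly to cover both cases at once.
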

  \begin{proof} Write $\al=\sum_{\g\in \G}c_{\g}\g$ for some (finite)
 $ \G\seq \Pi$  where all $c_{\g}>0$.  If $\G\sneq \Pi$, then, by 
 irreducibility of $\Pi$, there is some $\bt\in \Pi\sm \G$ with 
 $\mpair{\G, \bt}\neq 0$ and then $\mpair{\bt,\al}<0$ also. Otherwise, 
 $\G=\Pi$ and so $\mpair{\Pi,\al}\neq \set{0}$ since $\Pi$ is not of affine 
 type. Hence $\al\not\perp \Pi$. By Lemma \ref{x1.13}(d),
 $\real(W\al)\sreq \aff(W\al)= \al+\real\Pi=\real\Pi\sreq \real(W\al)$ and 
 equality must hold throughout. 
   \end{proof}
  \subsection{}  Recall from Section \ref{x5}  the definitions of  the  set $\ray(V) $ of rays of $V$ (as 
 topologized in \ref{x5.3}) and its subsets $R_{0}\sreq R'_{0}\sreq R''_{0}$ of isotropic rays.   The analogously defined subsets for a reflection subgroup $W'$ are denoted $R_{W',0}\sreq R'_{W',0}\sreq R''_{W',0}$.
  
 \label{x7.2}
  \begin{lem} Suppose that $\Phi$ is irreducible and not of affine type and that $\al\in \mc{Z}\sm\set{0}$. Let $W'$ be a non-trivial reflection subgroup of $W$.
  \begin{num}
  \item
  There exists $\bt\in W\al\cap -\mc{C}_{W'}$ such that 
  $\bt\not\perp \Pi_{W'}$.
  \item Assume further that $W'$ is infinite irreducible and let  $\bt$ be  as in $\text{\rm (a)}$. Then there exists $\epsilon>0$ such that $\Psi:=\mset{\g\in \Phi_{W',+}\mid \mpair{\bt,\ck\g}<-\epsilon}$ is infinite.
  \item Let assumptions be as in $\text{\rm (b)}$. Then there is 
  a sequence $(\g_{n})_{n\in \Nat}$  of distinct roots in $\Psi$ such that 
  $(\real_{\geq 0}\g_{n})$ converges  in $\ray(V)$ to a ray 
  $\real_{\geq 0}\g$ in $R_{W',0}$. Further,  for any such sequence $(\g_{n})$, $(\real_{\geq 0}s_{\g_{n}}(\beta))_{n\in \Nat}$ then necessarily  converges in $\ray(V)$ to
  $\real_{\geq 0}\g$.

  \end{num}
  \end{lem}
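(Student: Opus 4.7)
The plan is to handle the three parts in succession, with each relying on machinery from Sections \ref{x4}--\ref{x5}.

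For (a), first observe that $\alpha\neq 0$ with $\mc{Z}\neq 0$ forces $W$ to be infinite (since $\mc{Z}=0$ when $\Phi$ is irreducible and finite, by \ref{x4.5}), so Lemma \ref{x7.1} applies and yields $\alpha\not\perp\Pi$ together with $\real(W\alpha)=\real\Pi$. The chain $\alpha\in\mc{Z}\seq -\mc{X}\seq -\mc{X}_{W'}$ (using Lemma \ref{x1.10}(e)) shows that every element of $W\alpha$ lies in $W'(-\mc{C}_{W'})$, so in particular $W\alpha\cap -\mc{C}_{W'}\neq\eset$. I would then argue by contradiction: if every such $\beta$ were perpendicular to $\Pi_{W'}$, each would be fixed pointwise by $W'$; transferring by the $W'$-action any $v\in W\alpha$ into $-\mc{C}_{W'}$ would show $v$ itself is $W'$-fixed, hence $\real\Pi=\real(W\alpha)\perp\Pi_{W'}$, contradicting $\mpair{\delta,\delta}=1$ for $\delta\in\Pi_{W'}\seq\real\Pi$.

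For (b), I would assume without loss of generality that $W'$ is finitely generated (so that the results of Sections \ref{x4}--\ref{x5} apply verbatim to it) and argue by contradiction: suppose that $\Psi_{\e}:=\mset{\g\in\Phi_{W',+}\mid\mpair{\beta,\ck\g}<-\e}$ is finite for every $\e>0$. Using that all roots have norm $1$ (so $\ck\g=2\g$), and that $\beta\in -\mc{C}_{W'}$ yields $\mpair{\beta,\g}\leq 0$ on $\Phi_{W',+}$, I apply Proposition \ref{x5.3} to $W'$: since $W'$ is infinite, $ {R}_{W',0}\neq\eset$, and for any $\real_{\geq 0}\g_{\infty}\in {R}_{W',0}$ there is a sequence $\g_{n}\in\Phi_{W',+}$ with $\tau(\g_{n})\to\tau(\g_{\infty})$ and $\mpair{\rho,\g_{n}}\to\infty$. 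The identity $\mpair{\beta,\g_{n}}=\mpair{\rho,\g_{n}}\mpair{\beta,\tau(\g_{n})}$ together with the contradiction hypothesis forces $\mpair{\beta,\tau(\g_{\infty})}=0$. Since this must hold for every limit ray, $\beta$ is orthogonal to $\bigcup {R}_{W',0}$, and by Theorem \ref{x5.4} applied to $W'$, to $\ol{\mc{Z}_{W'}}$. Now choose $k=\sum_{\delta\in\Pi_{W'}}c_{\delta}\delta\in\mc{K}_{W'}$ with all $c_{\delta}>0$ (available in both the affine and the indefinite case by \ref{x4.5}): the equation $0=\mpair{\beta,k}=\sum_{\delta}c_{\delta}\mpair{\beta,\delta}$ with $c_{\delta}>0$ and $\mpair{\beta,\delta}\leq 0$ forces $\beta\perp\Pi_{W'}$, contradicting the hypothesis on $\beta$.

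For (c), infinitude of $\Psi$ gives infinitely many rays $\real_{\geq 0}\g$, $\g\in\Psi$, in the compact space $\ray(\real_{\geq 0}\Pi_{W'})$; any convergent subsequence has limit outside the discrete set $ {R}_{W',+}$ (Proposition \ref{x5.3}), hence in $ {R}_{W',0}$. For the final claim, set $p_{n}:=\mpair{\rho,\g_{n}}$ and $c_{n}:=-\mpair{\beta,\ck\g_{n}}>\e$; convergence of $(\real_{\geq 0}\g_{n})$ to an isotropic limit ray forces $p_{n}\to\infty$. Then $s_{\g_{n}}(\beta)=\beta+c_{n}\g_{n}$ and $q_{n}:=\mpair{\rho,s_{\g_{n}}(\beta)}=\mpair{\rho,\beta}+c_{n}p_{n}\to\infty$, and a direct normalization
\[
\tau(s_{\g_{n}}(\beta))=q_{n}^{-1}\beta+(c_{n}p_{n}/q_{n})\,\tau(\g_{n})\longrightarrow 0+1\cdot\tau(\g)=\tau(\g)
\]
(using $c_{n}p_{n}/q_{n}\to 1$ as $c_{n}p_{n}\to\infty$) shows $\real_{\geq 0}s_{\g_{n}}(\beta)\to\real_{\geq 0}\g$ in $\ray(V)$, completing the argument.

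The main obstacle is part (b): producing a \emph{uniform} $\e>0$ from the mere pointwise non-orthogonality $\beta\not\perp\Pi_{W'}$, given that the positive roots of $W'$ can be geometrically very thin along $\beta^{\perp}$. The decisive device is routing through the limit rays of $ {R}_{W',+}$: either some limit ray pairs strictly negatively with $\beta$ (which instantly furnishes the uniform $\e$ via approximating positive roots with $\mpair{\rho,\g_{n}}\to\infty$), or $\beta$ is orthogonal to all of them and hence to $\ol{\mc{Z}_{W'}}$ (via Theorem \ref{x5.4}), whence the standard positive-combination argument against an interior ray of $\mc{K}_{W'}$ forces the contradiction $\beta\perp\Pi_{W'}$.
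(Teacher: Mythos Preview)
Your argument for (a) is correct and more elegant than the paper's. You run a global contradiction: if every $\beta\in W\alpha\cap -\mc{C}_{W'}$ were orthogonal to $\Pi_{W'}$, then each such $\beta$ is $W'$-fixed, hence so is every element of $W\alpha$, and Lemma~\ref{x7.1} gives $\real\Pi=\real(W\alpha)\perp\Pi_{W'}$, impossible since $\Pi_{W'}\seq\real\Pi$ contains unit vectors. The paper instead constructs $\beta$ explicitly: from $\alpha\in\mc{K}\cap\Pi_{W'}^{\perp}$ it walks along a minimal path $\delta_0,\ldots,\delta_{p-1}$ in the Coxeter graph of $W$ from $\Pi\setminus\alpha^{\perp}$ toward the support of $\Pi_{W'}$ and sets $\beta=s_{\delta_{p-1}}\cdots s_{\delta_0}(\alpha)$, checking by hand that $\beta\in -\mc{C}_{W'}$ and $\beta\not\perp\Pi_{W'}$. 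Your approach is cleaner; the paper's is constructive.

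Your (c) is essentially the paper's argument with a different normalization.

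There is a gap in (b). The reduction ``assume without loss of generality that $W'$ is finitely generated'' is not justified: the lemma is stated for arbitrary infinite irreducible $W'$, which may have infinite $\Pi_{W'}$ even though $W$ has finite rank (Example~\ref{x1.7}). Your use of Proposition~\ref{x5.3} and Theorem~\ref{x5.4} for $W'$, and your choice of $k\in\mc{K}_{W'}$ with \emph{all} coefficients positive, all require $\Pi_{W'}$ finite. The reduction can in fact be made to work (one shows an infinite irreducible reflection subgroup of finite-rank $W$ cannot be of locally finite type, and then extracts an infinite irreducible finite-rank standard parabolic of $W'$ containing a $\gamma_0$ with $\mpair{\beta,\gamma_0}<0$), but this is a separate argument you have not given. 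The paper avoids all of this: it picks $\gamma_0\in\Pi_{W'}$ with $\epsilon:=-\mpair{\beta,\ck\gamma_0}>0$, sets $W''=W_{\Pi_{W'}\setminus\{\gamma_0\}}$, notes $\Psi':=\Phi_{W',+}\setminus\Phi_{W'',+}$ is infinite by Lemma~\ref{x1.20}(b), and shows $\Psi'\seq\Psi$ via Brink's result that (in the canonical lift) each $\tau\in\Psi'$ has $\gamma_0$-coefficient $\geq 1$, so $\mpair{\beta,\ck\tau}\leq-\epsilon$. This is direct, uses no limit rays and no closure of $\mc{Z}_{W'}$, and works for $W'$ of arbitrary rank.
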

  
    \begin{proof} First we prove (a).
    Without loss of generality, assume that $\al\in \mc{K}\seq -\mc{C}_{W'}$. 
 If $\al\not \perp \Pi_{W'}$, then $\bt:=\al$ satisfies the requirements of (a). Henceforward,  suppose that 
 $\al\in \mc{K}\cap \Pi_{W'}^{\perp}$. Set $\D:=\Pi\cap \al^{\perp}$. By Lemma \ref{x7.1}, we have $\Delta\neq \Pi$.

 For each $\g\in \Pi_{W'}$, choose a finite set $\G_{\g}\seq \Pi$
 and scalars  $c_{\g,\delta}>0$ for $\delta\in\G_{\g}$ such that
 $\g=\sum_{\delta\in \G_{\g}}c_{\g,\delta}\delta$. 
 We have $0=\mpair{\g,\al}= \sum_{\delta\in \G_{\g}}c_{\g,\delta}\mpair{\delta,\al}$
 where each $\mpair{\delta,\al}\leq 0$ since $\al\in -\CC$ and 
 $\mpair{\delta,\al}<0$ if $\delta\in \Pi\sm \D$. This implies that 
 $\G_{\g}\seq \D$ and hence $\G:=\cup_{\g\in \Pi_{W'}}\Gamma_{\gamma}\seq \D$. 
 Note that $\Pi_{W'}\seq \real\G$. 
  Also,  $\G\neq \eset$ since $\Pi_{W'}\neq \eset$ by the assumed 
  non-triviality of $W'$.  
  
  Using connectedness of the Coxeter graph of $W$, we may choose 
  $p\in \Nat$ and   a sequence $\delta_{0},\delta_{1},\ldots,\delta_{p}$ of simple roots 
  with $\delta_{0}\in \Pi\sm \Delta$, $\delta_{p}\in \G$ and
  $\mpair{\delta_{i-1},\delta_{i}}\neq 0$ for $i=1,\ldots, p$.  Suppose 
  $p$ and the sequence is   chosen so $p$ is minimal amongst all such 
  sequences. Then $p\geq 1$, $\delta_{1},\ldots, \delta_{p}\in \Delta$ and $\delta_{0},\ldots, \delta_{p-1}\not\in \Gamma$. Moreover, if  $0\leq i\leq p-2$, then $\delta_{i}$ is not joined in the Coxeter  graph of  $W$ to any element of $\Gamma$ and so $\delta_{i}\perp \Pi_{W'}$.
  
  Set $\bt:=s_{\delta_{p-1}}\ldots s_{\delta_{0}}(\al)\in W\al$. By Lemma \ref{x1.13}(a), $\bt=\al+c \tau$ where $c=-\mpair{\al,\ck\delta_{0}}>0$ 
  (since $\delta_{0}\not \in\Delta$) and  $\tau:= s_{\delta_{p-1}}\ldots s_{\delta_{1}}(\delta_{0})$.
  Using Lemma \ref{x1.16}(a), one has
  $\bt=\al+b_{0}\delta_{0}+\ldots +b_{p-1}\delta_{p-1}$ for some 
  $b_{1},\ldots, b_{p-1}\in \real_{>0}$.
  Now for $\g\in \Pi_{W'}$,  \begin{equation*}
  \mpair{\bt,\g}=\mpair{\al+\sum_{i=0}^{p-1}b_{i}\delta_{i},\g}=b_{p-1}\mpair{\delta_{p-1},\g}=\sum_{\delta\in \G_{\g}}b_{p-1}c_{\g,\delta}\mpair{\delta_{p-1},\delta}.
  \end{equation*}
  Since $\delta_{p-1}\not\in \G$ and $\G\sreq \G_{\g}$, it follows that $\mpair{\bt,\g}\leq 0$ and $\bt\in -\mc{C}_{W'}$. Moreover, one has $\delta_{p}\in \G$, so $\delta_{p}\in \G_{\g}$ for some $\g\in \Pi_{W'}$.
  Then $\mpair{\bt,\g}\leq b_{p-1}c_{\g,\delta_{p}}\mpair{\delta_{p-1},\delta_{p}}<0$ and so $\bt\not\in \Pi_{W'}^{\perp}$. This proves (a).
  
  For the proof of (b), choose $\g_{0}\in \Pi_{W'}$ with
   $\e:=-\mpair{\bt,\ck \g_{0}}>0$. Let $W'':=W_{\Pi_{W'}\sm\set{\g_{0}}}$.
   Let $\Psi':=\Phi_{W',+}\sm \Phi_{W'',+}$ which is infinite by 
   Lemma \ref{x1.20}(b). We claim that $\Psi'\seq \Psi$ i.e.
   $\mpair{\bt,\ck\tau}<-\epsilon$ for all $\tau\in \Psi'$.  This may be regarded as a statement purely in terms of inner products in the root system $\Phi_{W'}$, for the proof of which we may assume $\Pi_{W'}$ is linearly independent 
   By Lemma \ref{x2.4}, each element $\tau$ of $\Psi'$ can be 
   written in the form $\tau=\sum_{\g\in \Pi_{W'}}c_{\g}\g$ where  all 
   $c_{\g}\geq 0$ and $c_{\g_{0}}>0$.  By  \ref{x1.4} and  a result of Brink (see
    \cite[Lemma (3.3)]{DySd}), one may  choose the $c_{\gamma}$ so  $c_{\g_{0}}\geq 1$. Hence
      $\mpair{\bt,\ck\tau}= \mpair{\bt, \sum_{\g\in \Pi_{W'}}c_{\g}\ck\g}\leq c_{\g_{0}}\mpair{\bt, \ck\g_{0}}<
   -\epsilon$. This proves (b).
   
   Finally, we prove (c).  Since $\Psi$ is infinite, there exists an infinite sequence
   $(\g_{n})_{n\in \Nat}$ of distinct elements of $\Psi$.
  Since $W$ is of finite rank,   $\rho(\g_{n})\to \infty$ as $n\to \infty$ by \eqref{x4.4.1}.  Since $\ray(V) $ is 
   compact, by passing to a subsequence if necessary,    we may assume  this sequence converges in $\ray(V) $ to a ray   $\real_{\geq 0}\g\in R_{W',0}$,
   where $\g\in \real_{\geq 0}\Pi\sm\set{0}$. Now let $(\g_{n})$ be any sequence of distinct roots in $\Psi$ with $\real_{\geq 0}\g_{n}\to \real_{\geq 0}\g$.
   Then $\mpair{\rho,\g_{n}}^{-1}\g_{n}\to \mpair{\rho,\g}^{-1}\g $ and $\mpair{\rho,\g_{n}}\to \infty$ as $n\to \infty$.
     Now $s_{\g_{n}}(\bt)=\bt-\mpair{\bt,\ck\g_{n}}\g_{n}$.
     Let $c_{n}:=-\mpair{\bt,\ck\g_{n}}^{-1}\mpair{\rho,\g_{n}}^{-1}$. 
     Since $\epsilon < -\mpair{\bt,\ck \g_{n}}$ for all $n$, $c_{n}>0$ and $c_{n}\to 0$ as $n\to \infty$.
    One has $\real_{\geq 0}s_{\g_{n}}(\bt)=\real_{\geq 0}\delta_{n}$ 
    where  
    $\delta_{n}:=c_{n}s_{\g_{n}}(\bt)=c_{n}\bt_{n}+
    \mpair{\rho,\g_{n}}^{-1}\g_{n}$.  Clearly, as $n\to \infty$, 
    $\delta_{n}\to \mpair{\rho,\g}^{-1}\g$ and so $\real_{\geq 0}s_{\g_{n}}
    (\bt)\to \real_{\geq 0}\g$ as required.
    \end{proof}
       \begin{rem} (1)  The  proofs above of Lemma \ref{x7.1}  and (a)--(b) hold in the framework in Sections \ref{x1}--\ref{x3} (in particular, $W$ need not be of finite rank). However, if $W$ is finite or of locally finite type, then $\mc{Z}=\set{0}$, so no $\al$ as in the statement of the above lemma exists.
   
(2)  In the case $W'$ is infinite dihedral,   a simpler proof of (b) is as follows. Write 
   $\Pi_{W'}=\set{\delta,\delta'}$. Then $\Phi_{W',+}\sm\Pi_{W'}\seq \Psi$ 
   since, as is  well known and easily checked, for $\tau\in \Phi_{W',+}\sm \Pi_{W'}$, one has $\tau=c\delta+d\delta'$ where $c,d\geq 1$. 
 \end{rem}  
 \subsection{} \label{x7.3}
 If $w\in W$ and $\mc{Z}\neq 0$, then $w$ has an eigenvector $\al$ in 
 $\ol{\mc{Z}}$ with strictly positive eigenvalue  equal to the 
 spectral radius of $w$ on $\real\mc{Z}$, by Perron-Frobenius 
  theory (see \cite{Va}, \cite{RVa}). 
  
\begin{lem}
Let $\al\in \ol{\mc{Z}}$ be an eigenvector of $w\in W$
  with (real)  eigenvalue $\lambda$.   So  $\al\neq 0$, $w\al=\lambda\al$ and   $\lambda>0$. Let $V_{w,\lambda}:=\mset{\g\in V\mid w(\g)=\lambda \g}$ be the $\lambda$-eigenspace of $w$ on $V$
\begin{num}
\item  If $\lambda\neq 1$, then $V_{w,\lambda}\cap \ol{\mc{Z}}$ is a  totally isotropic subset of $V$.  
\item   If $\lambda>1$, then $\mpair{\rho,w^{n}\al}\to\infty$ and 
   $w^{-n}\al\to 0$ as $n\to \infty$.
 Similarly, if  $\lambda<1$,  then $w^{n}\al\to 0$ and $\mpair{\rho,w^{-n}\al}\to\infty$ as $n\to \infty$. 
 \item If $\al'$ and $\al''$ are linearly independent eigenvectors of $w$ in $\ri(\mc{Z})$ with corresponding eigenvalues $\lambda'$ and $\lambda''$, then $\lambda'=\lambda''$ and there is an eigenvector
  $\al''\in \rb(\mc{Z})$  of $w$ with eigenvalue $\lambda'$.
\end{num}    
\end{lem}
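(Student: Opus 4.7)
The plan is to dispatch (a) and (b) directly from the $W$-invariance of $\mpair{-,-}$ together with Lemma \ref{x6.1}(a), and to handle (c) via a convexity argument along a line in $\aff(\mc{Z})$, which will be the main obstacle.

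For (a), if $x,y\in V_{w,\lambda}\cap\overline{\mc{Z}}$, then $\mpair{x,y}=\mpair{wx,wy}=\lambda^{2}\mpair{x,y}$. Since $\lambda>0$ together with $\lambda\neq 1$ forces $\lambda^{2}\neq 1$, we conclude $\mpair{x,y}=0$, so $V_{w,\lambda}\cap\overline{\mc{Z}}$ is totally isotropic. For (b), Lemma \ref{x6.1}(a) gives $\overline{\mc{Z}}\subseteq\real_{\geq 0}\Pi$, and since $\rho$ is interior to $\mc{C}$, Lemma \ref{x4.2}(d) yields $\mpair{\rho,\beta}>0$ for every $\beta\in\Pi$ and hence $\mpair{\rho,\gamma}>0$ for every non-zero $\gamma\in\real_{\geq 0}\Pi$. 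With $w^{n}\alpha=\lambda^{n}\alpha$, the identities $\mpair{\rho,w^{n}\alpha}=\lambda^{n}\mpair{\rho,\alpha}$ and $w^{-n}\alpha=\lambda^{-n}\alpha$ produce the required limits when $\lambda>1$; the case $\lambda<1$ is symmetric.

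For (c) the key step is to prove $\lambda'=\lambda''$; the eigenvector in $\rb(\mc{Z})$ will then drop out automatically. Since $0\in\mc{Z}$, the affine hull $\aff(\mc{Z})=\lin(\mc{Z})$ is a linear subspace containing both $\alpha'$ and $\alpha''$, so the line $\mset{\alpha'+t\alpha''\mid t\in\real}$ lies in $\aff(\mc{Z})$. I would set
\[
t_{0}:=\inf\mset{t\in\real\mid \alpha'+t\alpha''\in\overline{\mc{Z}}}.
\]
Because $\alpha'\in\ri(\mc{Z})$, small negative $t$ still keeps $\alpha'+t\alpha''$ in $\mc{Z}$, so $t_{0}<0$. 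If $t_{0}=-\infty$, rescaling by $1/|t|$ and letting $t\to-\infty$ would place $-\alpha''$ in the closed cone $\overline{\mc{Z}}\subseteq\real_{\geq 0}\Pi$, contradicting the salience of $\real_{\geq 0}\Pi$ (Lemma \ref{x4.2}(c)) since $\alpha''\neq 0$. Hence $t_{0}\in(-\infty,0)$, and $\beta:=\alpha'+t_{0}\alpha''\in\overline{\mc{Z}}$ is non-zero by linear independence of $\alpha',\alpha''$. A perturbation $\beta+s\alpha''$ would lie in $\mc{Z}$ for small $|s|$ if $\beta\in\ri(\mc{Z})$, violating the infimum, so $\beta\in\rb(\mc{Z})$.

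Applying $w$ gives $w\beta=\lambda'\alpha'+t_{0}\lambda''\alpha''\in\overline{\mc{Z}}$; scaling by $1/\lambda'>0$ shows $\alpha'+(t_{0}\lambda''/\lambda')\alpha''\in\overline{\mc{Z}}$, so the infimum property forces $t_{0}\leq t_{0}\lambda''/\lambda'$, and dividing by $t_{0}<0$ yields $\lambda''\leq\lambda'$. The symmetric argument with $w^{-1}\beta$ produces $\lambda'\leq\lambda''$, so $\lambda'=\lambda''$ and consequently $w\beta=\lambda'\beta$, exhibiting the required eigenvector in $\rb(\mc{Z})$. The main difficulty is controlling the geometry along this line: verifying $t_{0}\in(-\infty,0)$ uses both the relative interiority of $\alpha'$ and the salience of $\real_{\geq 0}\Pi$, and converting $W$-invariance of $\overline{\mc{Z}}$ into the crucial inequalities on $t_{0}$ is what ultimately pins the two eigenvalues together.
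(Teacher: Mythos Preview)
Your proofs of (a) and (b) are essentially identical to the paper's: the same orthogonality computation from $W$-invariance of the form, and the same use of $\mpair{\rho,w^{n}\alpha}=\lambda^{n}\mpair{\rho,\alpha}$ together with the fact that $\mpair{\rho,-}$ is positive on $\real_{\geq 0}\Pi\setminus\{0\}$.

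For (c), the paper does not prove anything: it simply cites \cite[Lemma 4.1]{Va} (Vandergraft's lemma on matrices with invariant cones) and observes that (c) is a special case. Your argument is a correct self-contained proof, and in fact it is essentially the standard proof of Vandergraft's result specialized to this setting. The line-search for the boundary point $\beta=\alpha'+t_{0}\alpha''$, the salience argument to ensure $t_{0}>-\infty$, and the comparison of $t_{0}$ with $t_{0}\lambda''/\lambda'$ via $W$-invariance of $\overline{\mc{Z}}$ are exactly the right ingredients. One small point worth making explicit: the set $\{t\in\real\mid \alpha'+t\alpha''\in\overline{\mc{Z}}\}$ is closed (and convex), so the infimum $t_{0}$ is actually attained and $\beta\in\overline{\mc{Z}}$; you use this implicitly. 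What your approach buys over the paper's citation is self-containment; what the citation buys is brevity and the observation that nothing specific to Coxeter groups is happening in (c).
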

\begin{proof} One has $\lambda\neq 0$ since $w$ acts invertibly on $\real{\mc{Z}}$ and $\lambda\not<0$ since $\ol{\mc{Z}}$ is salient. Hence $\lambda>0$. Assume $\lambda\neq 1$.
If $\bt,\g\in V_{w,\lambda}$, then $\mpair{\bt,\g}=\mpair{w\bt,w\g}=\lambda^{2}\mpair{\bt,\g}$ and so $\mpair{\bt,\g}=0$ since $\lambda^{2}\neq1$. Part (a) follows readily.
Suppose now that $\lambda>1$. Then  $\mpair{\rho,\al}\neq 0$ since $\al\neq 0$, $\mpair{\rho,w^{n}\al}=\lambda^{n}\mpair{\rho,\alpha}\to\infty$ and $\mpair{\rho,w^{-n}\al}=\lambda^{-n}\mpair{\rho,\alpha}\to 0$ as $n\to \infty$. Hence $w^{-n}\alpha\to 0$ by Remark \ref{xA.11}. This proves the first part of (b), and the second follows from it.
  Part (c) is a special case of   \cite[Lemma 4.1]{Va}.
\end{proof}

\subsection{}  \label{x7.4} 
 For $\al\in \ol{\mc{Z}}\sm\set{0}$, set 
  ${E}_{\al}:=\mset{\real_{\geq 0}\bt\mid \bt\in W\al} \seq \ray(V) $ and let 
  $E'_{\al}\seq \ol{E_{\al}}$ be the set of limit points  of $E_{\al}$ in
    $ \ray(V)$.  One has $\ol{E_{\al}'}=E_{\al}'$. \begin{lem} If $\al\in (\ol{\mc{Z}}\sm\set{0})\cap (\mc{Z}\cup\mc{Q})$, then $E'_{\al}\seq Q$. 
  \end{lem}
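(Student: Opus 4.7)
The plan is to show that any accumulation ray $\real_{\geq 0}\g\in E'_{\al}$ must be isotropic, via a normalization argument using the fixed interior point $\rho$ of $\mc{C}$. First I would select a sequence $(w_{n})_{n\in \Nat}$ in $W$ such that the rays $\real_{\geq 0}w_{n}\al$ are pairwise distinct and converge in $\ray(V)$ to $\real_{\geq 0}\g$. Since $\al\in \ol{\mc{Z}}\seq \real_{\geq 0}\Pi$ (the latter inclusion holds because $\mc{Z}\seq \real_{\geq 0}\Pi$ by Proposition \ref{x3.2}, and $\real_{\geq 0}\Pi$ is closed) and $\ol{\mc{Z}}$ is $W$-stable, the orbit $W\al$ lies in $\real_{\geq 0}\Pi\sm\set{0}$; because $\rho$ lies in the interior of $\mc{C}$, this gives $\mpair{\rho,w\al}>0$ for every $w\in W$. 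Setting $\al_{n}:=\mpair{\rho,w_{n}\al}^{-1}w_{n}\al$ and $\tau:=\mpair{\rho,\g}^{-1}\g$, the convergence of rays translates to $\al_{n}\to \tau$ in the affine hyperplane $\set{v\in V\mid \mpair{\rho,v}=1}$, and by bilinearity one has $\mpair{\al_{n},\al_{n}}=\mpair{\rho,w_{n}\al}^{-2}\mpair{\al,\al}$.

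I would then split into the two cases provided by the hypothesis. If $\al$ is isotropic, then $\mpair{\al_{n},\al_{n}}=0$ for every $n$, so continuity of $\mpair{-,-}$ gives $\mpair{\tau,\tau}=0$ and hence $\real_{\geq 0}\g\in Q$. If instead $\al\in \mc{Z}$, then Lemma \ref{x6.2} applies: the orbit $W\al$ is discrete and closed in $V$. Since the rays $\real_{\geq 0}w_{n}\al$ are pairwise distinct, the points $w_{n}\al$ are themselves pairwise distinct, and so the sequence $(w_{n}\al)$ can have no convergent subsequence in $V$ and in particular is unbounded. Using the compact base $P$ of $\real_{\geq 0}\Pi$ supplied by Lemma \ref{x4.2}(d), there exists a constant $C>0$ such that $\|v\|\leq C\mpair{\rho,v}$ for all $v\in \real_{\geq 0}\Pi$ (for any chosen norm on $\real \Pi$), so unboundedness of $(w_{n}\al)$ forces $\mpair{\rho,w_{n}\al}\to\infty$ after passing to a subsequence. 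Then $\mpair{\al_{n},\al_{n}}=\mpair{\rho,w_{n}\al}^{-2}\mpair{\al,\al}\to 0$, whence $\mpair{\tau,\tau}=0$ and again $\real_{\geq 0}\g\in Q$.

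The main technical ingredient, and the reason the hypothesis $\al\in \mc{Z}$ (rather than merely $\al\in \ol{\mc{Z}}$) is needed in the non-isotropic case, is the appeal to Lemma \ref{x6.2}: without discreteness and closedness of $W\al$, one could not rule out a bounded subsequence of distinct points $w_{n}\al$, and the normalization trick would not be strong enough to kill a potentially non-zero value of $\mpair{\al,\al}$ in the limit. Apart from this point, the argument is a routine continuity computation on the affine slice $\set{v\in V\mid \mpair{\rho,v}=1}$ of $\real_{\geq 0}\Pi$.
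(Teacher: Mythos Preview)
Your proof is correct and follows essentially the same approach as the paper's: normalize the orbit points on the affine slice $\{v:\mpair{\rho,v}=1\}$, use $W$-invariance of the form to compute $\mpair{\al_n,\al_n}=\mpair{\rho,w_n\al}^{-2}\mpair{\al,\al}$, and in the $\al\in\mc{Z}$ case show the denominators tend to infinity. The only difference is cosmetic: the paper cites the finiteness statement \eqref{x4.4.1} directly (via $W\al\cap -\mc{C}\neq\eset$) to get $\mpair{\rho,w_n\al}\to\infty$, whereas you pass through Lemma~\ref{x6.2} (discreteness and closedness of $W\al$) together with Bolzano--Weierstrass and the compact base; since Lemma~\ref{x6.2} itself is proved from \eqref{x4.4.1}, the two arguments are the same in substance.
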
  
 \begin{proof}
 Note that if two rays $\real_{\geq 0} x(\al)$, $\real_{\geq 0} y(\al)$, where $x,y\in W$ and    $\al\in\ol{ \mc{Z}}\sm\set{0}$,  are equal, then $\al$ is an eigenvector in $\ol{\mc{Z}}$ of $x^{-1}y$. 
 Suppose that $\real_{\geq 0}\bt$, $0\neq\bt\in \real_{\geq 0}\Pi$, is a limit ray of $E_{\al}$.
 Then there is a sequence $(w_{n})_{n\in \Nat}$ of elements of $W$ such that the rays $\real_{\geq 0}w_{n}\al$ in $\ol{\mc{Z}}$ are pairwise  distinct and converge in $\ray(V)$ to $\real_{\geq 0}\bt$. That is,
 $\g_{n}:=\mpair{\rho,w_{n}\al}^{-1}w_{n}\al\to \mpair{\rho,\bt}^{-1}\bt$ as $n\to \infty$ in $V$.
 If $\al$ is isotropic, then so is each $w_{n}\al$ and so $\bt$ is isotropic 
 as required. If $\al\in \mc{Z}$,  then, since
  $W\al\cap -\mc{C}\neq \eset$,  \eqref{x4.4.1} implies that  for any $\eta>0$, there are only finitely 
  many $\g\in W\al$ with $\mpair{\g,\rho}<\eta$. Hence as $n\to \infty$,
   $\mpair{\rho, w_{n}\al} \to\infty$ . Therefore
 $  \mpair{\g_{n},\g_{n}}=\mpair{\rho,w_{n}\al}^{-2}\mpair{w_{n}\al,w_{n}\al}\to 0$ since $\mpair{w_{n}\al,w_{n}\al}=\mpair{\al,\al}$.
 But also $  \mpair{\g_{n},\g_{n}}\to  \mpair{\rho,\bt}^{-2}\mpair{\bt,\bt}$, so $\bt$  is isotropic as claimed.

  \end{proof} 
  \subsection{} \label{x7.5} The  main consequence  of the next result is stated as Theorem \ref{x7.6}.
    \begin{thm}
  Assume  $W$ is irreducible.  
\begin{num} 
\item   If  $\al\in{ \mc{Z}}\sm\set{0}$, then
$R_{0}\seq \ol{E_{\al}}$. 
\item If $\al\in\ol{ \mc{Z}}\sm\set{0}$, then  $\ol{\mc{Z}}=
\cl\bigl(\conv(\bigcup E_{\al})\bigr)=\conv\bigl(\cl(\bigcup E_{\al})\bigr)$. 
 \end{num}
\end{thm}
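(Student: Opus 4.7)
The proof splits on the type of $\Phi$ as in \ref{x4.5}. If $\Phi$ is of finite type, $\ol{\mc{Z}}=\set{0}$ and both statements are vacuous; if $\Phi$ is affine, then the vector $\delta$ is $W$-fixed, so $W\al=\set{\al}$, $E_\al=\set{\real_{\geq 0}\delta}=R_0$, and $\bigcup E_\al=\real_{\geq 0}\delta=\ol{\mc{Z}}$, making (a) and (b) immediate. I assume henceforth that $\Phi$ is of indefinite type.

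For part (a), the plan is to invoke Lemma \ref{x7.2}(c) together with Theorem \ref{x5.6}(b). Given $\al\in \mc{Z}\sm\set{0}$, I would apply Lemma \ref{x7.2} to each infinite dihedral reflection subgroup $W'$ with $\Pi_{W'}\cap \Pi\neq \eset$: this produces $\beta\in W\al\cap -\CC_{W'}$ with $\beta\not\perp \Pi_{W'}$ and a sequence of distinct positive roots $\g_n\in \Phi_{W',+}$ such that the rays $\real_{\geq 0}s_{\g_n}(\beta)$, which all lie in $E_\al$ since $s_{\g_n}(\beta)\in W\al$, converge in $\ray(V)$ to a ray of $R_{W',0}$. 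Hence $R_{W',0}\cap \ol{E_\al}\neq \eset$. Since $\ol{E_\al}$ is $W$-stable (because $E_\al$ is) and $R_{W',0}$ is a single $W'$-orbit of rays by Corollary \ref{x5.5}(d), this forces $R_{W',0}\seq \ol{E_\al}$. Taking the union over such $W'$ gives $R''_0\seq \ol{E_\al}$, and closedness of $\ol{E_\al}$ in $\ray(V)$ together with Theorem \ref{x5.6}(b) yields $R_0=\ol{R''_0}\seq \ol{E_\al}$.

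For part (b), the inclusion $\cl(\conv(\bigcup E_\al))\seq \ol{\mc{Z}}$ is clear because $\bigcup E_\al\seq \ol{\mc{Z}}$ and $\ol{\mc{Z}}$ is a closed convex cone. For the reverse, the plan is to reduce to (a) by producing a non-isotropic element of $\conv(\bigcup E_\al)$. If $\al$ is non-isotropic, take $\beta=\al$. If $\al$ is isotropic, Lemma \ref{x7.1} applies (since $\Phi$ is irreducible, infinite and non-affine, and $\al\in\real_{\geq 0}\Pi\sm\set{0}$) and gives $\real(W\al)=\real\Pi$; this space is not totally isotropic because each $\g\in\Pi$ satisfies $\mpair{\g,\g}=1$, so there exist $w,w'\in W$ with $\mpair{w\al,w'\al}\neq 0$, necessarily negative by Proposition \ref{x3.2}(c), and $\beta:=w\al+w'\al\in \conv(\bigcup E_\al)$ satisfies $\mpair{\beta,\beta}=2\mpair{w\al,w'\al}<0$. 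In either case $\beta$ is a non-isotropic element of $\ol{\mc{Z}}$, so Lemma \ref{x6.1}(d),(f) supplies a non-zero $\beta'\in M_\beta\seq \ol{W\beta}\cap \mc{K}\seq \mc{Z}\sm\set{0}$. Since $\conv(\bigcup E_\al)$ is $W$-stable, its closure contains $\ol{W\beta}$, hence $\beta'$, and consequently $W\beta'$ and $\real_{\geq 0}W\beta'=\bigcup E_{\beta'}$. Applying (a) to $\beta'$ gives $R_0\seq \ol{E_{\beta'}}$, which translates in $V$ to $\bigcup R_0\seq \cl(\bigcup E_{\beta'})\seq \cl(\conv(\bigcup E_\al))$. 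Theorem \ref{x5.4} then yields $\ol{\mc{Z}}=\real_{\geq 0}(\bigcup R_0)\seq \cl(\conv(\bigcup E_\al))$, establishing the first equality.

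The second equality follows from closedness of $\conv(\cl(\bigcup E_\al))$: the closed cone $\cl(\bigcup E_\al)$ meets the hyperplane $\mset{v\in V\mid \mpair{\rho,v}=1}$ in a closed subset of the compact polytope $P$ of \ref{x4.2}(d), hence in a compact set whose convex hull is compact by Carath\'eodory, so $\conv(\cl(\bigcup E_\al))$ is the cone over a compact convex set bounded away from $0$ and is therefore closed. Being closed convex and containing $\bigcup E_\al$, it contains $\cl(\conv(\bigcup E_\al))$; the reverse inclusion is the standard fact $\conv(\cl(X))\seq \cl(\conv(X))$. The hardest step, and the main obstacle anticipated, is the isotropic case of (b): here $\ol{W\al}$ can meet $\mc{K}$ only at $0$ so Lemma \ref{x6.1}(f) is unavailable for $\al$ itself, and the whole argument turns on the non-isotropic convex combination $\beta=w\al+w'\al$ supplied by Lemma \ref{x7.1}, which provides the bridge back to (a).
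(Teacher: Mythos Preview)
Your proof is correct. Part (a) is essentially the paper's argument: invoke Lemma \ref{x7.2}(c) to place a ray of $R_{W',0}$ in $\ol{E_\al}$ for each infinite dihedral $W'$, use $W'$-stability of $\ol{E_\al}$ together with Corollary \ref{x5.5}(d) to swallow all of $R_{W',0}$, and close up via Theorem \ref{x5.6}(b). (Your restriction to $W'$ with $\Pi_{W'}\cap\Pi\neq\eset$, yielding $R''_0$ rather than $R'_0$, is harmless since $\ol{R''_0}=R_0$ as well.)

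Part (b) reaches the same conclusion by a genuinely different bridge from $\ol{\mc{Z}}$ back to $\mc{Z}$. The paper uses Lemma \ref{x7.1} to choose a basis $\Gamma\seq W\al$ of $\real\Pi$ and observes that any $\al'\in\real_{>0}\Gamma$ lies in $\ri(\ol{\mc{Z}})=\ri(\mc{Z})\seq\mc{Z}\sm\set{0}$, so (a) applies directly to $\al'$; no appeal to Lemma \ref{x6.1} is needed. You instead manufacture a non-isotropic $\beta\in\conv(\bigcup E_\al)$ and then invoke Lemma \ref{x6.1}(d),(f) to land a nonzero $\beta'$ in $\mc{K}$. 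Both work; the paper's route is shorter and avoids the orbit-closure machinery, while yours makes the isotropic obstruction explicit. One small citation fix: in the isotropic case you have $w\al,w'\al\in\ol{\mc{Z}}$, not $\mc{Z}$, so the non-positivity of $\mpair{w\al,w'\al}$ comes from Lemma \ref{x5.9}(a) rather than Proposition \ref{x3.2}(c) (the latter implies the former by continuity, so the conclusion stands). Your closing argument for $\conv(\cl(\bigcup E_\al))=\cl(\conv(\bigcup E_\al))$ is exactly the content of Lemma \ref{xA.12}(c)--(d), which the paper simply cites.
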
 
\begin{proof} If $W$ is finite, then (a)--(b) hold vacuously and if $W$ is affine, they hold trivially since $\al$ spans the isotropic ray $\real_{\geq 0}\al=\mc{K}=\mc{Z}=\ol{\mc{Z}}$ and is fixed by $W$. Henceforward assume $W$ is of indefinite type. 
For the proof of  (a),  fix $\al\in{ \mc{Z}}\sm\set{0}$. Let $W'$ be any infinite dihedral reflection subgroup of $W$. 
 By  Lemma \ref{x7.2}(c), there is $w\in W$ such that  the closure in $\ray(V) $ of 
the $W'$-orbit of 
$\real_{\geq 0}w\al$ contains a point of $R_{W',0}$. By Corollary \ref{x5.5}(d), the 
closure  of the $W$-orbit $E_{\al}$ of 
$\real_{\geq 0}\al$ contains
$R_{W',0}$ for all infinite dihedral subgroups $W'$ of $W$.  
That is, $R'_{0}\seq\overline{E}_{\al}$ and hence 
$R_{0}=\ol{R'_{0}}\seq \ol{E}_{\al}$ by Theorem \ref{x5.6}(b).
This proves the  assertion of (a).

For (b),  let $\al\in\ol{ \mc{Z}}\sm\set{0}$.  By Lemma \ref{x7.1}, we have 
 $\real(W\al)=\real\Pi$.   One may  therefore choose 
 $\G\seq W\al\seq \ol{\mc{Z}}$  which is a basis for $\real \Pi$.
  Then $\real_{>0}\G\seq \ri(\real_{\geq 0}\G)\seq \ri(\ol{\mc{Z}})=\ri(\mc{Z})$. Let $\al'\in \real_{>0}\G$.
  By (a) and Theorem \ref{x5.3}(a),   it follows that 
 $ \ol{\mc{Z}}\sreq \conv(\cup \ol{E}_{\al'}) \sreq \conv(\cup R_{0})=\ol{\mc{Z}}$. By  Lemma \ref{xA.12}(b) and (d) this gives
 $\ol{\mc{Z}}= \conv\bigl(\cl(\bigcup_{ w\in W }\real_{\geq 0}w(\al'))\bigr)=
     \cl\bigl(\conv(\bigcup_{ w\in W }\real_{\geq 0}w(\al'))\bigr)$. 
  Since   $\al'\in \conv(\cup_{w\in W}\real_{\geq 0}w\al)$, this implies that  \begin{equation*}
  \ol{\mc{Z}}\sreq \cl\bigl(\conv(\bigcup_{w\in W} \real_{\geq 0}w(\al))\bigr)\sreq
  \cl\bigl(\conv(\bigcup_{w\in W} \real_{\geq 0}w(\al'))\bigr)=\ol{\mc{Z}}. 
  \end{equation*} 
  This establishes that  $\ol{\mc{Z}}=
\cl\bigl(\conv(\bigcup E_{\al}\bigr))$. By Lemma \ref{xA.12}, 
$\cl\bigl(\conv(\bigcup E_{\al}\bigr))=\ol{\mc{Z}}=\conv\bigl(\cl(\bigcup 
E_{\al}\bigr))$, completing the proof of (b). 
\end{proof}

\subsection{}  \label{x7.6} For  finite $W$, $\mc{Z}=\set{0}$ is the only non-empty  $W$-invariant   cone  contained in $\real_{\geq 0}\Pi$, 
since any such cone is contained in   $\real_{\geq 0}\Pi\cap w_{S}(\real_{\geq 0}\Pi)=\real_{\geq 0}\Pi\cap-\real_{\geq 0}\Pi=\set{0}$ where $w_{S}$ is the longest element.
For irreducible infinite $W$, one has instead: 
\begin{thm}
Suppose that  $W$ is irreducible and  infinite. Then
$\ol{\mc{Z}}$ is the unique non-zero $W$-invariant closed pointed cone contained in $\real_{\geq 0}\Pi$.
 \end{thm}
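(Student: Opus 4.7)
The plan is to deduce Theorem \ref{x7.6} rather directly from Theorem \ref{x7.5} together with the characterization of $\overline{\mc{Z}}$ as the cone $\mc{Y}=\mc{Y}_{W}$. First, I would observe existence: $\overline{\mc{Z}}$ is itself a non-zero (since $W$ is infinite, $\mc{K}\neq 0$ by the type-analysis of \ref{x4.5}), $W$-stable, closed, pointed cone contained in $\real_{\geq 0}\Pi$, the last containment because $\mc{Z}\seq \real_{\geq 0}\Pi$ by Proposition \ref{x3.2}(a) and $\real_{\geq 0}\Pi$ is closed.

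For uniqueness, let $\mc{Y}$ be any non-zero closed pointed $W$-invariant cone with $\mc{Y}\seq \real_{\geq 0}\Pi$. The first inclusion $\mc{Y}\seq \overline{\mc{Z}}$ is immediate: because $\mc{Y}$ is $W$-stable and contained in $\real_{\geq 0}\Pi$, one has $\mc{Y}=w^{-1}\mc{Y}\seq w^{-1}(\real_{\geq 0}\Pi)$ for all $w\in W$, hence
\[\mc{Y}\seq \bigcap_{w\in W}w(\real_{\geq 0}\Pi)=\mc{Y}_{W}=\overline{\mc{Z}},\]
using Lemma \ref{x3.1}(a) (applied with $W'=W$, so $\mc{Y}_{W}=\bigcap_{w}w(\real_{\geq 0}\Pi)$) together with Theorem \ref{x5.1}(b).

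The reverse inclusion $\overline{\mc{Z}}\seq \mc{Y}$ is the substantive point and is where Theorem \ref{x7.5} does all the work. Choose any $\al\in \mc{Y}\sm\set{0}$; by the first inclusion $\al\in \overline{\mc{Z}}\sm\set{0}$. Since $W$ is irreducible, Theorem \ref{x7.5}(b) gives
\[\overline{\mc{Z}}=\cl\bigl(\conv(\bigcup E_{\al})\bigr),\]
where $\bigcup E_{\al}=\bigcup_{w\in W}\real_{\geq 0}\,w\al$. But $\mc{Y}$ is a cone containing $\al$, $W$-stable, convex, and closed, so it contains $\real_{\geq 0}\,w\al$ for every $w\in W$, hence contains $\conv(\bigcup E_{\al})$ and then $\cl\bigl(\conv(\bigcup E_{\al})\bigr)=\overline{\mc{Z}}$.

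No step is truly the ``hard part'' here since the dynamical content (that a single $W$-orbit on any non-zero element of $\overline{\mc{Z}}$ recovers $\overline{\mc{Z}}$ after closed convex hull) has already been established in Theorem \ref{x7.5}(b); the role of that theorem is precisely to make Theorem \ref{x7.6} a short corollary. The only minor subtlety worth spelling out is that existence requires $W$ infinite (so that the affine or indefinite cases of \ref{x4.5} apply and yield $\mc{K}\neq 0$), while irreducibility is used exactly to invoke Theorem \ref{x7.5}.
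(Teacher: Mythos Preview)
Your proof is correct and follows essentially the same route as the paper: first observe that any $W$-invariant closed pointed cone contained in $\real_{\geq 0}\Pi$ lies in $\bigcap_{w\in W}w(\real_{\geq 0}\Pi)=\mc{Y}_{W}=\overline{\mc{Z}}$ (Theorem \ref{x5.1}(b)), then pick a non-zero element and apply Theorem \ref{x7.5}(b) to get the reverse inclusion. The only cosmetic issue is that you reuse the symbol $\mc{Y}$ for the arbitrary cone while also invoking the paper's fixed cone $\mc{Y}_{W}$; the paper avoids this by calling the arbitrary cone $C$.
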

\begin{proof} It has already been shown that $\ol{\mc{Z}}$ has the properties listed. 
Let $C$ be any cone with these properties, and fix   $\al\in C$ with $\al\neq 0$. Then $C\seq \cap_{w\in W}w(\real_{\geq 0}\Pi)=\mc{Y}=\ol{\mc{Z}}$ by Theorem \ref{x5.1}.  
Hence $\al\in \ol{\mc{Z}}\sm\set{0}$. Therefore, by Proposition \ref{x7.5},  
$C\sreq \cl\bigl(\conv (\cup_{w\in W}\real_{\geq 0} w\al)\bigr)=\ol{\mc{Z}}$.
\end{proof}

\subsection{}\label{x7.7} If $(W,S)$ is   affine or  dihedral of indefinite type, then $\ol{\mc{Z}}$ has either one or two extreme rays, which form a single $W$-orbit.  This behavior is exceptional amongst the infinite irreducible Coxeter groups. 

\begin{lem} Suppose that $(W,S)$ is  irreducible of indefinite type and  $\vert S\vert \geq 3$. Then \begin{num}
\item The $W$-orbit of any ray contained in $\ol{\mc{Z}}$ is infinite.
\item $\ol{\mc{Z}}$ is not a polyhedral cone.
\end{num}
\end{lem}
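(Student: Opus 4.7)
I will reduce (a) to (b) and then prove (b) by a commutator argument combined with the existence of a non-abelian free subgroup of $W$.

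Reduction of (a) to (b): if $\alpha\in \ol{\mc{Z}}\sm\set{0}$ had finite $W$-orbit of rays $E_{\alpha}$, then $\conv(\bigcup E_{\alpha})$ would be the conical hull of finitely many vectors, hence a closed polyhedral cone, and Theorem \ref{x7.5}(b) would give $\ol{\mc{Z}}=\cl(\conv(\bigcup E_{\alpha}))=\conv(\bigcup E_{\alpha})$, which is polyhedral, contradicting (b). Hence once (b) is established, (a) follows.

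For (b), suppose for contradiction that $\ol{\mc{Z}}$ is polyhedral. Being a subcone of the salient cone $\real_{\geq 0}\Pi$ it is salient, and by \ref{x4.5} it is non-zero (the indefinite-type hypothesis forces $\mc{K}$ to have non-empty interior in $\real\Pi$); hence it has a finite non-empty set of extreme rays $\real_{\geq 0}\beta_{1},\ldots,\real_{\geq 0}\beta_{n}$ whose conical hull equals $\ol{\mc{Z}}$. The same description in \ref{x4.5} gives $\real\ol{\mc{Z}}=\real\mc{K}=\real\Pi$, so the $\beta_{i}$ span $\real\Pi$. Since $W$ preserves $\ol{\mc{Z}}$ it permutes its extreme rays, which yields a homomorphism from $W$ into a finite symmetric group whose kernel $K$ has finite index in $W$; each $w\in K$ satisfies $w\beta_{i}=\lambda_{w,i}\beta_{i}$ for some $\lambda_{w,i}>0$.

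Since $w\mapsto \lambda_{w,i}$ is a homomorphism $K\to \real_{>0}$ for each $i$, the commutator subgroup $[K,K]$ fixes each $\beta_{i}$, hence acts trivially on $\real\Pi$. The $W$-action on $\real\Pi$ is faithful (if $w\alpha=\alpha$ for all $\alpha\in \Pi$, then $N(w)\cap S=\eset$, which forces $w=1$ by examining any reduced expression of $w$), so $[K,K]=\set{1}$ and $K$ is abelian. By Remark \ref{x4.5}(2), under our hypotheses (irreducible, infinite, indefinite type, $|S|\geq 3$) the group $W$ contains a non-abelian free subgroup $F$. Then $F\cap K$ has finite index in $F$ and is abelian, contradicting the standard fact that non-abelian free groups are not virtually abelian. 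This proves (b).

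The main step requiring external input is the appeal to Remark \ref{x4.5}(2), which is precisely what forces the hypothesis $|S|\geq 3$; once a non-abelian free subgroup of $W$ is in hand, the remainder is a routine combination of polyhedral geometry with an elementary commutator computation.
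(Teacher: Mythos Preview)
Your proof is correct and follows essentially the same approach as the paper: both arguments reduce to showing that a polyhedral $\ol{\mc{Z}}$ would force $W$ to have an abelian subgroup of finite index (via simultaneous diagonalization on a spanning set of eigenvectors), contradicting Remark~\ref{x4.5}(2). The only differences are cosmetic: the paper proves (a) first (using Theorem~\ref{x7.6} rather than Theorem~\ref{x7.5}(b)) and deduces (b) from it, whereas you swap the order; and the paper observes directly that simultaneously diagonalizable matrices commute, making your commutator step $[K,K]=1$ unnecessary though harmless.
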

\begin{proof} Suppose that $0\neq\al\in \ol{\mc{Z}}$ and there are only 
finitely many distinct  rays in the $W$-orbit of  $\real_{\geq 0}\al$, say
$\real_{\geq 0} \al_{i}$ for $i=1,\ldots, n$.
Let $C:=\real_{\geq 0}\G$, where $\G=\set{\al_{1},\ldots, \al_{n}}$, be the 
polyhedral cone spanned by these rays.  Then $C$ has the properties 
listed in  Theorem \ref{x7.6}, so $C=\ol{\mc{Z}}$.  Let $W'$ be the 
pointwise stabilizer in $W$  of the set $\set{\real_{\geq 0}\al_{i}\mid i=1,\ldots, n}$ of  rays, so $W'$ is a normal subgroup of $W$  of finite 
index. Each $w\in W'$ has each element of $\G$ as an 
eigenvector. Since $\G$ spans $\real\G=\real{\mc{Z}}=\real\Pi$
and $W$ acts faithfully on $\Phi\seq \real \Pi$,  
 it  follows that $W'$ is abelian.  By Remark \ref{x4.5}(2), $(W,S)$ is  affine, a contradiction. Now (b) follows from (a) since if $\ol{\mc{Z}}$ is polyhedral, the $W$-orbit of an  extreme ray of $\ol{\mc{Z}}$ is contained in the (finite) set of extreme rays of $\ol{\mc{Z}}$.
\end{proof}
\subsection{} \label{x7.8}Let $\rext$ (resp., $\rexp$) denote the set of extreme (resp., exposed) rays of $\ol{\mc{Z}}$ and
$Z:=\ray(\mc{Z})$ denote the set of rays of  $\mc{Z}$, so the set of  rays of $\ol{\mc{Z}}$ is $\ray(\ol{\mc{Z}})= \ol{Z}$ (where the latter  closure is  taken in $\ray(V)$). Note that 
\begin{equation}
R_{0}\seq \ol{Z}\cap Q
\end{equation} by  Theorem \ref{x5.4} and Proposition \ref{x5.3}
and 
\begin{equation}
\rexp\seq \rext\seq\ol{\rexp}=\ol{\rext}\seq \ol{Z}
\end{equation} for general reasons (see Lemma \ref{xA.11}(b)).

\begin{prop}  Let $W$ be arbitrary. Then
\begin{num}
\item $\ol{\mc{Z}}$ is the set consisting of ($0$ and) all 
sums
$y_{1}+\ldots +y_{n}$ where  $n>0$ and  all $y_{i}\in \cup\rext$. 
\item $\mc{Q}\cap \ol{\mc{Z}}$ is the set consisting of ($0$ and) all 
sums
$y_{1}+\ldots +y_{n}$ where  $n>0$, all $y_{i}\in \cup\rext$  and 
$\mpair{y_{i},y_{j}}=0$ for $i,j=1,\ldots, n$.
\end{num}\end{prop}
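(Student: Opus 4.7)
The plan is to prove (a) by exhibiting $\ol{\mc{Z}}$ as a closed pointed cone admitting a compact convex base and then invoking the finite-dimensional Minkowski theorem (every compact convex set equals the convex hull of its extreme points); part (b) will follow immediately from (a) together with Lemma \ref{x5.9}(c).

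For the compact base, Theorem \ref{x5.1}(b) and Lemma \ref{x3.1}(a) give $\ol{\mc{Z}}=\mc{Y}\seq\real_{\geq 0}\Pi$, and by Lemma \ref{x4.2}(d) the affine slice $P:=\set{v\in\real_{\geq 0}\Pi\mid \mpair{v,\rho}=1}$ is a compact convex polytope and a base of $\real_{\geq 0}\Pi$. Setting $B:=\ol{\mc{Z}}\cap P$ yields a closed (hence compact) convex subset of $P$. Since $\mpair{\rho,\alpha}>0$ for every $\alpha\in\Pi$, any non-zero $z\in\ol{\mc{Z}}\seq\real_{\geq 0}\Pi\sm\set{0}$ satisfies $\mpair{z,\rho}>0$, so $B$ is a compact convex base of $\ol{\mc{Z}}$ (each non-zero $z\in\ol{\mc{Z}}$ has a unique presentation $z=\lambda b$ with $\lambda>0$ and $b\in B$); in particular $\ol{\mc{Z}}$ is pointed and salient. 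Minkowski's theorem, as recorded in Appendix \ref{xA}, now gives that $B$ equals the convex hull of its extreme points, and the standard bijection $b\mapsto\real_{\geq 0}b$ identifies these extreme points of $B$ with the elements of $\rext$. Consequently every non-zero $z\in\ol{\mc{Z}}$ takes the form $z=\lambda\sum_{i=1}^n t_i b_i=\sum_{i=1}^n y_i$ with $b_i$ extreme in $B$, $t_i>0$, $\sum_{i=1}^n t_i=1$, and $y_i:=\lambda t_i b_i\in\bigcup\rext$; the reverse inclusion is trivial since $\ol{\mc{Z}}$ is a convex cone containing each extreme ray. This establishes (a).

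For (b), (a) lets one write any $z\in\ol{\mc{Z}}$ as $z=y_1+\ldots+y_n$ with each $y_i\in\bigcup\rext\seq\ol{\mc{Z}}$, and Lemma \ref{x5.9}(c) then gives $\mpair{z,z}=0$ if and only if $\real\set{y_1,\ldots,y_n}$ is a totally isotropic subspace, equivalently $\mpair{y_i,y_j}=0$ for all $i,j$. The proposition therefore assembles structural results already in hand, and I expect no serious obstacle; the only point demanding care is the verification that $B$ really is a base of $\ol{\mc{Z}}$ (i.e.\ that $\mpair{z,\rho}>0$ for every non-zero $z\in\ol{\mc{Z}}$), which rests on $\rho\in\inter(\CC)$ and $\ol{\mc{Z}}\seq\real_{\geq 0}\Pi$.
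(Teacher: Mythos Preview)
Your proof is correct and follows essentially the same approach as the paper: the paper simply cites Lemma~\ref{xA.11}(b) (the Minkowski theorem for closed salient cones) for part (a) and Lemma~\ref{x5.9} for part (b), while you unpack the content of Lemma~\ref{xA.11}(b) by explicitly constructing the compact base $B=\ol{\mc{Z}}\cap P$ and applying Minkowski's theorem to it. The only minor difference is that the paper treats salience of $\ol{\mc{Z}}$ as already known (since $\ol{\mc{Z}}\seq\real_{\geq 0}\Pi$ and the latter is salient by Lemma~\ref{x4.2}(c)), whereas you deduce it from the existence of the base; either way the argument is the same in substance.
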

\begin{proof}
Part (a) is a general property of the extreme rays of a closed salient cone  (see Lemma \ref{xA.11}(b))  and (b) follows from (a) and Lemma \ref{x5.9} 
\end{proof}

\subsection{}\label{x7.9}  For $\bt\in \Phi_{+}$, let $R_{\bt,0}:=\ol{R_{\bt,+}}\sm R_{\bt,+}\seq R_{0}$ where $R_{\bt,+}:=\mset{\real_{\geq 0}
\g\mid \g\in \Phi_{+}\cap W\bt}\seq R_{+}$. Since $R_{\bt,0}=\ol{R_{\bt,+}}\cap R_{0}$, $R_{\bt,0}$ is closed in $\ray(V)$.
 \begin{cor} Assume that $W$ is irreducible and indefinite, $\al\in \ol{\mc{Z}}\sm\set{0}$ and $\bt\in \Phi_{+}$.
\begin{num}\item
 $\ri(\mc{Z})=\ri(\conv(\cup E_{\al}))\seq\conv(\cup E_{\al}))\seq\ol{\mc{Z}}$.\item  If $\al\in \ri(\mc{Z})$, then $\conv(\cup E_{\al})\sm\set{0}=\ri(\mc{Z})$ and  $\mpair{\al,\al}<0$.  \item 
$\ol{\rext}\seq \ol{E_{\al}}\seq \ol{Z}$. \item $\ol{\rext}\seq R_{\bt,0}\seq R_{0}\seq \ol{Z}\cap Q$. \end{num}
\end{cor}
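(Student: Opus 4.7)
The plan is to treat parts (a)--(d) in order. Parts (a) and (c) follow formally from Theorem \ref{x7.5}(b) together with a general fact about closed salient cones (Lemma \ref{xA.11}): for (a), $W\al \subseteq \ol{\mc{Z}}$ and $\ol{\mc{Z}}$ is a convex cone, so $\conv(\cup E_{\al}) \subseteq \ol{\mc{Z}}$, and by Theorem \ref{x7.5}(b), $\cl(\conv(\cup E_{\al})) = \ol{\mc{Z}} = \cl(\mc{Z})$, so applying the identity $\ri(A) = \ri(\cl(A))$ for convex $A$ to both $\mc{Z}$ and $\conv(\cup E_{\al})$ yields $\ri(\mc{Z}) = \ri(\conv(\cup E_{\al}))$. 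For (c), $E_{\al} \subseteq \ray(\ol{\mc{Z}}) = \ol{Z}$ gives $\ol{E_{\al}} \subseteq \ol{Z}$, and Theorem \ref{x7.5}(b) realises $\ol{\mc{Z}}$ as the closed conical hull of $\cup E_{\al}$, so the fact (Lemma \ref{xA.11}) that the extreme rays of a closed salient cone lie in the closure of any set of rays whose closed conical hull recovers the cone yields $\rext \subseteq \ol{E_{\al}}$, and $\ol{\rext} \subseteq \ol{E_{\al}}$ by closedness of the right side.

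For (b), first observe $\aff(\mc{Z}) = \real\Pi$: the indefiniteness hypothesis supplies (via \ref{x4.5}) a point $\beta \in \mc{K}$ with all simple-root coefficients strictly positive and $\mpair{\beta,\g} < 0$ for each $\g \in \Pi$, so $\beta$ is an interior point of $\mc{K}$ (hence of $\mc{Z}$) in $\real\Pi$. The linear $W$-action on $\real\Pi$ preserves $\mc{Z}$, hence $\ri(\mc{Z})$; together with closure of $\ri(\mc{Z})$ under positive scalar multiples, convexity yields $\conv(\cup E_{\al}) \setminus \{0\} \subseteq \ri(\mc{Z})$, and the reverse inclusion follows from (a) and $0 \notin \ri(\mc{Z})$. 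For $\mpair{\al,\al} < 0$, suppose $\mpair{\al,\al} = 0$: since a small $\real\Pi$-ball about $\al$ lies in $\mc{Z}$, on which $\mpair{v,v} \leq 0$ (Lemma \ref{x5.9}(a)), expanding $\mpair{\al + tv, \al + tv} = 2t\mpair{\al,v} + t^{2}\mpair{v,v} \leq 0$ for small $|t|$ of either sign forces $\mpair{\al,v} = 0$ for every $v \in \real\Pi$; combined with $\al \in \real_{\geq 0}\Pi$, this contradicts the property in \ref{x4.5} (for indefinite $W$) that only $0 \in \real_{\geq 0}\Pi$ satisfies $\mpair{\cdot,\Pi} \subseteq \real_{\geq 0}$.

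For (d), $R_{0} \subseteq Q$ and $R_{0} \subseteq \ol{Z}$ come from Proposition \ref{x5.3}(b) and Theorem \ref{x5.4}; $R_{\bt,0} \subseteq R_{0}$ follows from discreteness of $R_{+}$ (Proposition \ref{x5.3}(a)), since a limit ray of $R_{\bt,+} \subseteq R_{+}$ cannot lie in $R_{+}$. The plan for the central claim $\ol{\rext} \subseteq R_{\bt,0}$ is to show that $R_{\bt,0}$ is non-empty, closed, and $W$-invariant; then $C := \cl(\real_{\geq 0}(\cup R_{\bt,0}))$ is a non-zero, closed, $W$-invariant, pointed cone in $\real_{\geq 0}\Pi$, so Theorem \ref{x7.6} forces $C = \ol{\mc{Z}}$, and Lemma \ref{xA.11} gives $\rext \subseteq \cl(R_{\bt,0}) = R_{\bt,0}$. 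Non-emptiness is Bolzano--Weierstrass: $W\bt \cap \Phi_{+}$ is infinite (since $W$ acts faithfully on $W\bt$ by Proposition \ref{x1.18}(a) and $W$ is infinite), so $R_{\bt,+}$ is an infinite discrete subset of the compact space $\ray(\real_{\geq 0}\Pi)$; closedness of $R_{\bt,0}$ follows from the same discreteness. The main obstacle is the $W$-invariance, since $R_{\bt,+}$ by itself is not $W$-stable: the set $R_{\bt,+} \cup (-R_{\bt,+})$ of rays (in either direction) through roots of $W\bt$ is $W$-invariant, and since both the non-isotropic rays of $R_{\bt,+}$ and all rays in $-\real_{\geq 0}\Pi$ lie outside $\ol{Z} \subseteq \ray(\real_{\geq 0}\Pi)$, intersecting the $W$-invariant closure $\cl(R_{\bt,+} \cup (-R_{\bt,+}))$ with the $W$-invariant set $\ol{Z}$ leaves exactly $R_{\bt,0}$, yielding $W$-invariance.
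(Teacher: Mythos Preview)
Your proof is correct; parts (a) and (c) coincide with the paper's argument. Two parts differ in route. For $\mpair{\al,\al}<0$ in (b), you argue locally: assuming $\mpair{\al,\al}=0$, the constraint $\mpair{\al+tv,\al+tv}\leq 0$ on a $\real\Pi$-neighbourhood forces $\al\perp\real\Pi$, contradicting \ref{x4.5} (or Lemma~\ref{x7.1}); the paper instead exhibits one explicit $\al'\in\ri(\mc{K})$ with $\mpair{\al',\al'}<0$ and uses the first half of (b) together with Lemma~\ref{x5.9} to propagate strict negativity across all of $\conv(\cup E_{\al'})\sm\set{0}=\ri(\mc{Z})$. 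For (d), you form $C=\cl(\real_{\geq 0}(\cup R_{\bt,0}))$ and invoke Theorem~\ref{x7.6} directly; the paper takes the shorter route of picking $\g$ with $\real_{\geq 0}\g\in R_{\bt,0}$ and applying the already-proved (c) with $\al=\g$, so that $\ol{\rext}\seq\ol{E_\g}\seq\ol{WR_{\bt,0}}=R_{\bt,0}$. Both routes need non-emptiness, closedness, and $W$-invariance of $R_{\bt,0}$; your derivation of $W$-invariance is more explicit than the paper's bare assertion, though a quicker argument is that $w\cdot R_{\bt,+}$ and $R_{\bt,+}$ differ by only finitely many rays (as $w$ flips only finitely many positive roots) and hence have the same accumulation set. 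One minor point: Lemma~\ref{xA.11}(b) as stated concerns $\real_{\geq 0}\Gamma=\mc{Y}$, not its closure, so to apply it you should observe via Lemma~\ref{xA.12} that closedness of $R_{\bt,0}$ gives $C=\real_{\geq 0}(\cup R_{\bt,0})$ without the outer closure.
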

\begin{rem} (1) If $W$ is irreducible affine, then, using the explicit description of $\mc{Z}$,  (a)--(d) hold with the following change: in (b), $\mpair{\al,\al}=0$. 

(2) If $W$ is irreducible indefinite of rank at least three, then  $E_{\al}$ 
is infinite by  Lemma \ref{x7.7} and is contained in the compact set 
$\ray(\real_{\geq 0}\Pi)$, so $E_{\al}'\neq \eset$.  Let 
$0\neq \bt\in \ol{\mc{Z}}$ with $\real_{\geq 0}\bt\in E_{\al}'$. 
Since $E_{\al}'$ is closed and $W$-invariant,
one has $\ol{E_{\beta}}\seq E_{\al}'$. By applying (c) to $\bt$ instead of 
$\al$, one gets $\ol\rext\seq \ol{E_{\beta}}\seq E_{\al}'\seq \ol{Z}$, which 
improves (c) under these extra assumptions.

 (3)  From
   \cite[Example 2.16]{HLR}, one sees  that, even for the standard 
   root system of an irreducible finite rank Coxeter system, one may have $\rext\sneq R_{0}$.

(4) It will be shown in  \cite{DHR} that (for any irreducible$W$) $\ol{\rext}=R_{0}$.  This leads to 
improvements in several results of this subsection and additional results not discussed here. It has been asked by Hohlweg and Ripoll whether (for irreducible $W$) $R_{0}=\ol{Z}\cap Q$.
\end{rem}
\begin{proof} In (a), the equality  holds since $\ri(\mc{Z})=\ri(\ol{\mc{Z}})=\ri(\cl(\conv(\cup E_{\al})))=
  \ri(\conv(\cup E_{\al}))$, and the inclusions are trivial from the definitions. For (b), assume $\al\in \ri(\mc{Z})$.
  Then $\ri(\mc{Z})\seq \conv(\cup(E_{\al}))\sm\set{0}$ by (a).
  On the other hand, since $\cup E_{\al}$ is contained in the salient cone $\real_{\geq 0}\Pi$ and  \begin{equation*}
  \cup(E_{\al})\sm\set{0}=\mset{\lambda w(\al)\mid \lambda\in \real_{>0}, w\in W}\seq \ri(\mc{Z}),
  \end{equation*} one has $\conv(\cup(E_{\al}))\sm\set{0}=\conv(\cup(E_{\al})\sm\set{0})\seq \ri(\mc{Z})$.
  If also $W$ is indefinite, then there is some $\al'\in \mc{K}$ of the form $\al'=\sum_{\g\in \Pi}c_{\g}\g$ with all $c_{\g}>0$ and $\mpair{\al',\Pi}\seq \real_{<0}$. Then $\al'\in \ri(\mc{K})\seq \ri(\mc{Z})$ and
  $\mpair{\al',\al'}<0$. Using  Lemma \ref{x5.9}, one has
  $\mpair{\g,\g}<0$ for all $\g\in \conv(\cup(E_{\al'})\sm\set{0})= \ri(\mc{Z})$ and so $\mpair{\al,\al}<0$. This proves (b).

  Since $\conv(\cup\overline{E_{\al}})=\ol{\mc{Z} }$ by Proposition \ref{x7.5}, it follows from Lemma \ref{xA.11}(b) that  $\overline{E_{\al}}\sreq \rext$. Taking closures gives  
  $\overline{Z}\sreq \overline{E_{\al}}\sreq \ol{\rext}$, proving (c).  
    In (d),  $R_{\bt,0}\seq R_{0}\seq \ol{Z}\cap Q$ has already been 
    noted. Observe that since $W\bt$ is infinite (by  Proposition 
    \ref{x1.18}(a)), it follows that
  $R_{\bt,+}$ has a limit point, which is necessarily in  $R_{0}$ and  so 
  not in $R_{\bt,+}$.  Hence there is some ray $\real_{\geq 0}\g
\in R_{\bt,0}$. Taking $\al=\g$ in (c), shows (in terms of the natural 
$W$-action on $\ray(V)$) that
$\ol{\rext}\seq \ol{E_{\al}}\seq \ol{W R_{\bt,0}}=R_{\bt,0}$ and proves 
(d).\end{proof}

\subsection{} \label{x7.10}   We conclude this section 
with some    miscellaneous properties  of $\ol{\mc{Z}}$ related to its 
facial structure, about which much less is known than for $\mc{Z}$.
These facts  have interesting consequences which we do not go into here (but see \S\ref{x9} for some applications to universal  Coxeter groups).

\begin{prop} Let   $\al\in \ol{\mc{Z}}$. Denote the minimal face of $\ol{\mc{Z}}$ containing $\al$ by $\ol{\mc{Z}}_{\al}$. 
\begin{num}
\item  $\ol{\mc{Z}}\cap \al^{\perp}$ is a face of $\ol{\mc{Z}}$.
\item If   $\al$ is isotropic,  then $\ol{\mc{Z}}_{\al} $ is a totally isotropic face of $\ol{\mc{Z}}$ and 
 $ \al\in \ol{\mc{Z}}_{\al}\seq  \ol{\mc{Z}}\cap \al^{\perp}$. 
\item If $(w_{n})_{n\in \Nat}$ is any sequence in $W$, then $F:=\mset{\bt\in \ol{\mc{Z}}\mid \lim_{n\to\infty}w_{n}\bt=0}$ is a totally  isotropic  face of $\ol{\mc{Z}}$. 
\item Let $W'$ be an irreducible  reflection subgroup such that $\mpair{\Pi_{W'},\al}\seq\real_{\geq 0}$ but $\Pi_{W'}\not \seq \al^{\perp}$. Then  $W'$ is finite.
\end{num}\end{prop}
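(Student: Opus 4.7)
Part (a) is immediate from Lemma \ref{x5.9}(a): the linear functional $x \mapsto -\mpair{\alpha, x}$ is non-negative on $\overline{\mc{Z}}$, so $\alpha^{\perp}\cap\overline{\mc{Z}}$ is an exposed face, in particular a face. For (b), isotropy of $\alpha$ gives $\alpha\in\alpha^{\perp}\cap\overline{\mc{Z}}$, so the minimality of $\overline{\mc{Z}}_{\alpha}$ yields the stated inclusions. For the total isotropy of $\overline{\mc{Z}}_{\alpha}$, I would use the standard fact that $\alpha$ lies in the relative interior of its minimal face: then for any $\beta,\gamma\in\overline{\mc{Z}}_{\alpha}$ and all sufficiently small $\epsilon,\delta>0$, one has $\alpha-\epsilon\beta-\delta\gamma\in\overline{\mc{Z}}_{\alpha}\subseteq\overline{\mc{Z}}$, and then $\alpha=(\alpha-\epsilon\beta-\delta\gamma)+\epsilon\beta+\delta\gamma$ expresses the isotropic vector $\alpha$ as a sum of three elements of $\overline{\mc{Z}}$. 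Lemma \ref{x5.9}(c) then forces $\real\{\alpha-\epsilon\beta-\delta\gamma,\beta,\gamma\}$ to be totally isotropic, giving $\mpair{\beta,\gamma}=0$ (and in particular the isotropy of each element of $\overline{\mc{Z}}_\alpha$).

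For (c), I would first note that $F$ is trivially a subcone of $\overline{\mc{Z}}$ (by linearity and continuity of each $w_{n}$). To see it is a face, take $x,y\in\overline{\mc{Z}}$ with $x+y\in F$; then $w_{n}x,w_{n}y\in\overline{\mc{Z}}\subseteq\real_{\geq 0}\Pi$, so $\mpair{\rho,w_{n}x},\mpair{\rho,w_{n}y}\geq 0$ with sum $\mpair{\rho,w_{n}(x+y)}\to 0$, forcing each summand to $0$. Since $\mpair{\rho,-}$ dominates the norm on the closed salient cone $\real_{\geq 0}\Pi$ (using $\mpair{\rho,\alpha}>0$ for all $\alpha\in\Pi$; cf.\ Remark \ref{xA.11}), this yields $w_{n}x,w_{n}y\to 0$, i.e.\ $x,y\in F$. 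Total isotropy is then direct: $\beta\in F$ gives $\mpair{\beta,\beta}=\lim_{n\to\infty}\mpair{w_{n}\beta,w_{n}\beta}=0$, and for $\beta,\gamma\in F$ the element $\beta+\gamma\in F$ is also isotropic, which combined with isotropy of $\beta,\gamma$ forces $\mpair{\beta,\gamma}=0$.

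The main obstacle is (d); I would attack it by contradiction. Suppose $W'$ is infinite and irreducible, so by \ref{x4.5} it is of affine or indefinite type. In either case there is an element $\delta'\in\mc{K}_{W'}$ of the form $\delta'=\sum_{\beta\in\Pi_{W'}}c_{\beta}\beta$ with all $c_{\beta}>0$: in the affine case, the positive imaginary root; in the indefinite case, any element of $\ri(\mc{K}_{W'})$. The key input is Theorem \ref{x6.3}: it gives $\delta'\in\mc{Z}_{W'}\subseteq\mc{Z}_{W}\subseteq\overline{\mc{Z}_{W}}$. (Without this containment, one would only know $\delta'\in\overline{\mc{Z}_{W'}}$, and the argument below would not close.) Given this, Lemma \ref{x5.9}(a) applied to $\alpha,\delta'\in\overline{\mc{Z}_{W}}$ yields $\mpair{\alpha,\delta'}\leq 0$, while the hypothesis $\alpha\in\mc{C}_{W'}$ yields $\mpair{\alpha,\delta'}=\sum_{\beta\in\Pi_{W'}}c_{\beta}\mpair{\alpha,\beta}\geq 0$. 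Equality therefore holds, and since each $c_{\beta}>0$ and each $\mpair{\alpha,\beta}\geq 0$, we must have $\mpair{\alpha,\beta}=0$ for every $\beta\in\Pi_{W'}$, contradicting $\Pi_{W'}\not\subseteq\alpha^{\perp}$.
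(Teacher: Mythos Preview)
Your arguments for (a)--(c) are correct and essentially match the paper's. For (c), your direct verification of total isotropy via $\mpair{\beta,\beta}=\lim_{n}\mpair{w_n\beta,w_n\beta}=0$ is actually more elementary than the paper's route, which instead observes that no non-isotropic vector can lie in $F$ by appealing to Lemma~\ref{x6.1}(f) (the infimum of $\mpair{\rho,\cdot}$ over a $W$-orbit of a non-isotropic point of $\overline{\mc{Z}}$ is strictly positive).

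For (d) your approach is correct but genuinely different from the paper's. The paper argues as follows: since $\alpha\in\mc{C}_{W'}$ and $\Pi_{W'}\not\subseteq\alpha^\perp$, the stabiliser of $\alpha$ in $W'$ is a proper standard parabolic $W_\Delta$ (Lemma~\ref{x1.10}), so $|W'\alpha|=|W'/W_\Delta|$ is infinite by Lemma~\ref{x1.20}; then \eqref{x4.4.1} forces $\{\mpair{\alpha-w\alpha,\rho}\mid w\in W'\}$ to be unbounded, contradicting the bound $\mpair{\alpha-w\alpha,\rho}\leq\mpair{\alpha,\rho}$ coming from $w\alpha\in\overline{\mc{Z}}\subseteq\real_{\geq 0}\Pi$. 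Your argument instead produces a single strictly positive combination $\delta'\in\mc{K}_{W'}$ and squeezes $\mpair{\alpha,\delta'}$ between $0$ (from $\alpha\in\mc{C}_{W'}$) and $0$ (from Lemma~\ref{x5.9}(a), once $\delta'\in\overline{\mc{Z}}$). This is shorter and avoids the orbit--discreteness machinery of \eqref{x4.4.1} and Lemma~\ref{x1.20}; on the other hand, the paper's route is more self-contained in that it does not need to place $\delta'$ inside $\overline{\mc{Z}}$. One small overstatement: you do \emph{not} need the full strength of Theorem~\ref{x6.3}. Since you only require $\delta'\in\overline{\mc{Z}}$ (not $\delta'\in\mc{Z}$), the inclusion $\overline{\mc{Z}_{W'}}\subseteq\overline{\mc{Z}}$ from Corollary~\ref{x5.5}(e) (equivalently Lemma~\ref{x3.1}(a) combined with Theorem~\ref{x5.1}) already suffices, so your parenthetical remark that the argument would not close without Theorem~\ref{x6.3} is not quite right.
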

\begin{proof}
In the terminology  of Appendix A, $(\ol{\mc{Z}},-\ol{\mc{Z}})$  is  
semidual pair of stable cones with respect to $\mpair{-,-}$ and 
$\ol{\mc{Z}}\cap \al^{\perp}$ is  an exposed face of $\mc{Z}$ with 
respect to  this semidual pair.  In particular, (a) holds. Now assume that $\al$ is isotropic. It is trivial that $ \al\in \ol{\mc{Z}}_{\al}\seq  \ol{\mc{Z}}\cap \al^{\perp}$.  Recall from the Appendix  that \begin{equation*}
\ol{\mc{Z}}_{\al}=\mset{x\in \ol{\mc{Z}}\mid tx+(1-t)y=\al\text{ \rm for some $y\in \ol{\mc{Z}}$ and $0<t<1$}}.
\end{equation*}  To show $\ol{\mc{Z}}_{\al}$ is totally isotropic, it suffices to show that 
$\mpair{x,x}=0$ for all $x\in \ol{\mc{Z}}_{\al}$. Choose $y\in \ol{\mc{Z}}_{\al}$ and $0<t<1$ with $\al=tx+(1-t)y$.  Since $\al$ is isotropic, Lemma \ref{x5.9} implies that $tx$ is isotropic and  hence  $x$ is isotropic as required for (b).

To prove (c), note first that $F$ is obviously a pointed cone contained in 
$\ol{\mc{Z}}$.  To show $F$ is a face, suppose that $x,y\in \ol{\mc{Z}}$ 
with $x+y=\bt\in F$.  Since 
$\mpair{\rho,w_{n}x}+\mpair{\rho,w_{n}(y)}=\mpair{\rho,w_{n}\beta}\to 0$ 
as $n\to \infty$ with $\mpair{\rho,w_{n}x}, \mpair{\rho,w_{n}(y)}\geq 0$, it 
follows that  $\mpair{\rho,w_{n}x}\to 0$ and  $\mpair{\rho,w_{n}(y)}\to 0$.  
By Lemma \ref{xA.11},
$w_{n}x\to 0$ and so $x\in F$. To see $F$ is totally isotropic, it suffices 
to show it contains no non-isotropic vector. This follows from  Lemma 
\ref{x6.1}(f).

Finally, we prove (d). Assume to the contrary that $W'$ is infinite.  Let $\Delta:=\Pi_{W'}\cap \al^{\perp}\sneq \Pi_{W'}$. 
Lemma \ref{x1.10} gives $\stab_{W'}(\al)=W_{\Delta}$, since $\al\in \mc{C}_{W'}$. Hence $\vert W'\al\vert =\vert W'/W_{\Delta}\vert$ is infinite by Lemma \ref{x1.20}. By \eqref{x4.4.1}, $\mset{\mpair{\al-w\al,\rho}\mid w\in W'}$   is not bounded above. But for any $w\in W'$,
 $w\al\in \ol{\mc{Z}}\seq\real_{\geq 0}\Pi$. Hence $\mpair{\al-w\al,\rho}=\mpair{\al,\rho}-\mpair{w\al,\rho}\leq \mpair{\al,\rho}$, a contradiction which proves (d). 
\end{proof} 

\subsection{} \label{x7.11} This subsection formulates a  result on limit points of 
$W$-orbits of  ordered tuples of rays spanned by roots or in the 
imaginary cone. Other related  results    can be obtained by similar  arguments. 

Assume  $\Phi$ is irreducible of indefinite type, $m\in \Nat_{\geq 1}$
and  that  $\al_{1},\ldots, \al_{m}\in \Phi\cup({\mc{Z}}\sm\set{0})$ are 
all non-isotropic (e.g. $\al_{i}\in \Phi\cup\ri({\mc{Z}})$ for all $i$).
Consider the tuple $\al:=(\real_{\geq 0}\al_{1},
  \ldots, \real_{\geq 0}\al_{m})$ of rays as a point of $(\ray(V))^{m}=\ray(V)\times\ldots\times \ray(V)$, which we equip with the product topology (which makes it compact) and  diagonal $W$-action.
    The $W$-orbit of $\al$ in $(\ray(V))^{m}$ is infinite since the $W$-orbit of each ray $\real_{\geq 0}\al_{i}$ is infinite
  (by Proposition \ref{x1.18}, Lemma \ref{x7.7} and an easy direct check if $W$ is dihedral).
 Let 
$(\bt_{1},\ldots, \bt_{m})\in V^{m}$ be such that
 $\bt:=(\real_{\geq 0}\bt_{1},\ldots, \real_{\geq 0}\bt_{m})$ is a limit point
  of the $W$-orbit $W\al$ in $(\ray(V))^{m}$. Replacing some $\al_{i}\in \Phi$ by their 
  negatives  and $\bt_{i}$ by suitable  scalar multiples if necessary, we suppose without loss of generality by 
  \ref{x5.3}--\ref{x5.4}   that  $\bt_{i}\in \Phi_{+}\cup(\ol{\mc{Z}}\sm\set{0})$ for all $i$. Note that $\bt_{i}\in \ol{\mc{Z}}$ if $\al_{i}\in \mc{Z}$, and the rays  $\real_{\geq 0}\bt_{i}$ need not be pairwise  distinct even if the $\real_{\geq 0}\al_{i}$ are.
  
  \begin{prop} Let assumptions be as above. For some $i$ with $1\leq i\leq m$,  $\bt_{i}$ is isotropic.  
   For any $i$ with $\bt_{i}$ isotropic,  $\bt_{i}\in \ol{\mc{Z}}\cap \mc{Q}$    and  $H:=\real\Pi\cap \bt_{i}^{\perp}$  is a supporting   hyperplane of $\ol{\mc{Z}}$ in $\real{\mc{Z}}$ such that $H$ contains all of    $\set{\bt_{1},\ldots, \bt_{m}}$.  In particular, there is a proper face of $\ol{\mc{Z}}$ containing all the $\beta_{j}$ which are isotropic. 
   \end{prop}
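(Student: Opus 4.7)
The plan is to extract a convergent subsequence from the orbit, normalize, and read off both isotropy and orthogonality from the $W$-invariance of $\mpair{-,-}$. Since $(\ray(V))^m$ is compact and metrizable and $W\al$ is infinite, there is a sequence $(w_n)_{n \in \Nat}$ in $W$ with $w_n\al \to \bt$ and the tuples $w_n\al$ pairwise distinct. Replacing $\al_i$ by $-\al_i$ if necessary, assume each $w_n\al_i \in \real_{\geq 0}\Pi\setminus\set{0}$; since $\rho \in \inter(\CC)$, the scalars $c_n^{(i)} := \mpair{\rho, w_n\al_i}$ are positive, and the rescaled points $u_n^{(i)} := (c_n^{(i)})^{-1} w_n\al_i$ lie in the compact polytope $P$ of \ref{x5.3} and satisfy $u_n^{(i)} \to \bar\bt_i := \mpair{\rho, \bt_i}^{-1}\bt_i$ in $V$.

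By iteratively passing to subsequences, assume that for each $i$ either (A) $c_n^{(i)} \to \infty$, or (B) $c_n^{(i)}$ is bounded. In Case (B), Lemma \ref{x4.3}(e) (for $\al_i \in \Phi$) or Lemma \ref{x6.2} combined with compactness of $\mset{v \in \real_{\geq 0}\Pi \mid \mpair{\rho, v} \leq C}$ (for $\al_i \in \mc{Z}$) implies that $w_n\al_i$ takes finitely many values, so after further thinning $w_n\al_i$ is eventually constant. If Case (B) held for every $i$, the entire tuple $w_n\al$ would eventually be constant, contradicting distinctness; hence some $i$ satisfies Case (A). Using $W$-invariance of $\mpair{-,-}$, the key identity is
\[\mpair{\bar\bt_i, \bar\bt_j} \;=\; \lim_{n \to \infty} \mpair{u_n^{(i)}, u_n^{(j)}} \;=\; \lim_{n \to \infty} \frac{\mpair{\al_i, \al_j}}{c_n^{(i)} c_n^{(j)}}.\]
Taking $j = i$ in Case (A) gives $\mpair{\bt_i, \bt_i} = 0$; thus $\bt_i$ is isotropic. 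Conversely, Case (B) at $i$ forces $\bt_i$ to be a positive multiple of some $w_{n_0}\al_i$, which is non-isotropic, so the indices in Case (A) are exactly those with $\bt_i$ isotropic. For such $i$, inclusion $\bt_i \in \ol{\mc{Z}}$ is clear if $\al_i \in \mc{Z}$ (since $u_n^{(i)} \in \mc{Z}$); if $\al_i \in \Phi_+$, it follows from Proposition \ref{x5.3} and Theorem \ref{x5.4}, as the limit ray is isotropic and a limit of rays in $R_+$, hence in $R_0 \subseteq \ray(\ol{\mc{Z}})$.

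Now fix any $i$ with $\bt_i$ isotropic. By Lemma \ref{x5.9}(a), $\mpair{\bt_i, x} \leq 0$ for all $x \in \ol{\mc{Z}}$, so $H := \real\Pi \cap \bt_i^\perp$ bounds $\ol{\mc{Z}}$ on one side. That $H$ is a genuine hyperplane in $\real\mc{Z} = \real\Pi$ (the equality by Lemma \ref{x7.1}) is the main technical obstacle: one must rule out $\mpair{\bt_i, \Pi} = 0$. But the characterization of indefinite type recalled in \ref{x4.5} states that any $v \in \real_{\geq 0}\Pi$ with $\mpair{v, \Pi} \subseteq \real_{\geq 0}$ is zero, and $\bt_i$ is a non-zero element of $\real_{\geq 0}\Pi$, so $\mpair{\bt_i, \Pi}$ cannot be identically zero. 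Consequently $H$ is a supporting hyperplane of $\ol{\mc{Z}}$ in $\real\mc{Z}$. Applying the key identity to any index $j$ for this particular $i$, with $c_n^{(i)} \to \infty$ and $c_n^{(j)}$ either eventually constant at a positive value (Case B) or tending to $+\infty$ (Case A), one gets $(c_n^{(i)} c_n^{(j)})^{-1} \to 0$, yielding $\mpair{\bt_i, \bt_j} = 0$ and $\bt_j \in H$ for every $j$.

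Finally, by Proposition \ref{x7.10}(a), $F := \ol{\mc{Z}} \cap \bt_i^\perp$ is a face of $\ol{\mc{Z}}$, and it is a proper face because $\ol{\mc{Z}}$ has non-empty relative interior in $\real\mc{Z}$ which meets the open half-space $\mpair{\bt_i, -} < 0$ (since $H$ is a genuine hyperplane by the preceding paragraph). Each isotropic $\bt_j$ lies in $\ol{\mc{Z}}$ by the argument of the second paragraph and satisfies $\mpair{\bt_i, \bt_j} = 0$ by the third, hence lies in $F$. Apart from the hyperplane-versus-whole-space issue flagged above, the only other mild subtlety is the iterative subsequencing in the dichotomy step, which is routine given compactness of $P$ and the finiteness of the index set $\set{1,\ldots,m}$.
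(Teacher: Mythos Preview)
Your proof is correct and follows essentially the same approach as the paper: extract a sequence $(w_n)$ realizing the limit, normalize by $\mpair{\rho,-}$, and read off isotropy and orthogonality from the identity $\mpair{\bar\bt_i,\bar\bt_j}=\lim\mpair{\al_i,\al_j}/(c_n^{(i)}c_n^{(j)})$. The only cosmetic differences are that you establish an explicit Case~A/B dichotomy (invoking Lemma~\ref{x6.2} for $\al_i\in\mc{Z}$), whereas the paper cites \eqref{x4.4.1} and a uniform lower bound $\mpair{\rho,w_n\al_j}>\epsilon$ from Lemmas~\ref{x1.21}(b) and~\ref{x6.1}(f); and that for $\bt_i\in\ol{\mc{Z}}$ the paper just observes $\bt_i\notin\Phi_+$ and uses the standing hypothesis $\bt_i\in\Phi_+\cup(\ol{\mc{Z}}\setminus\set{0})$, while you trace through $R_0$. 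One small phrasing issue: replacing $\al_i$ by $-\al_i$ does not by itself force all $w_n\al_i\in\real_{\geq 0}\Pi$; you need to drop finitely many initial terms as well (the paper does this as a separate subsequencing step).
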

   \begin{rem} A more symmetric choice of supporting hyperplane with these properties  would be $\real\Pi\cap \g^{\perp}$ where $\g\in \ol{\mc{Z}}\cap\mc{Q}$ is the sum of all  $\beta_{i}$ which are isotropic.\end{rem}
\begin{proof} There is  some sequence $(w_{n})_{n\in \Nat}$  in $W$ such that
the tuples $w_{n}\al$ for $n\in \Nat$ are  pairwise distinct and 
$w_{n}\al\to \bt$ as $n\to\infty$. Passing to a subsequence of 
$(w_{n})$ if necessary, we may suppose firstly that 
$w_{n}\al_{i}\in \real_{\geq 0}\Pi$ for all $i$ and secondly, by  \eqref{x4.4.1}  
and Lemma \ref{x4.3}(d), that for some $i$ with $1\leq i\leq m$,
  $\mpair{\rho,w_{n}\al_{i}}\to\infty$ as $n\to\infty$. Now there is $\epsilon>0$ such that
   $\mpair{\rho, w_{n}\al_{j}}>\epsilon $  for all $j=1,\ldots, m$ and all $n$ (for instance, using Lemma \ref{x1.21}(b) for $j$ with $\al_{j}\in \Phi$ and  Lemma  \ref{x6.1}(f) for other $j$).
   Note that for any $j$, $k$,  $\mpair{\rho,w_{n}\al_{j}}^{-1}w_{n}\al_{j}\to \mpair{\rho, \bt_{j}}^{-1}\bt_{j}$ in $V$  as $n\to\infty$ and 
   hence, since  $\mpair{w_{n}\al_{j},w_{n}\al_{k}}=\mpair{\al_{j},\al_{k}}$, 
   \begin{equation*}
   \mpair{\rho,w_{n}\al_{j}}^{-1}\mpair{\rho,w_{n}\al_{k}}^{-1}
   \mpair{\al_{j},\al_{k}}\to \mpair{\rho, \bt_{j}}^{-1}\mpair{\rho, \bt_{k}}^{-1} 
   \mpair{\bt_{j},\bt_{k}}
   \end{equation*} in $\real$ as $n\to\infty$.
   Since the $\al_{j}$ are assumed non-isotropic, 
   taking $k=j$ shows that $\bt_{j}$ is isotropic if and only if $\mpair{\rho,w_{n}\al_{j}}\to\infty$ as $n\to \infty$ (in particular, there exists an isotropic  $\bt_{i}$ from above).  Also, for any isotropic $\bt_{j}$,     
$\mpair{\bt_{j},\bt_{k}}=0$ for all $k$.
Now fix $i$ such that $\bt_{i}$ is isotropic. Obviously 
$\bt_{i}\in \ol{\mc{Z}}\cap\mc{Q}$ since $\bt_{i}\not\in \Phi_{+}$. 
By Lemma \ref{x5.9}, $\mpair{\bt_{i},\mc{Z}}\seq \real_{\leq 0}$ and by Lemma \ref{x7.1}, $\real\mc{Z}=\real\Pi\not \seq\bt_{i}^{\perp}$ (recall  that $\Pi$ is irreducible  of indefinite type). Hence $H:=\real\Pi\cap\bt_{i}^{\perp}$ is a supporting hyperplane of $\ol{\mc{Z}}$ at  $\bt_{i}$ as required.
\end{proof}
\subsection{} \label{x7.12} For certain special  irreducible $\Phi$ (for instance,  $\Phi$  affine or $\Phi$ weakly hyperbolic as defined in \S\ref{x9}),  the non-zero isotropic faces of $\ol{\mc{Z}}$ are one-dimensional (they are rays). That this is not the case in general is shown by the following example. 
\begin{exmp}  Consider the based root system $(\Phi, \Pi)$ in $(V,\mpair{-,-})$  in  \cite[Example 5.8]{HLR}. It has linearly independent simple 
roots  $\Pi=\set{\al,\bt,\g,\delta,\epsilon}$, where   $\mpair{\al,\bt}=\mpair{\delta,\epsilon}=-1$, 
$\mpair{\bt,\g}=\mpair{\g,\delta}=-1/2$ and all  inner products of  other distinct simple roots are
$0$.  Assume $V=\real \Pi$.  The  form $\mpair{-,-}$ has signature $(3,1,1)$ and its radical  is spanned by $\al+\bt-\g-\delta$. Note  $z:=\al+\bt+\delta+\epsilon\in\mc{K}\seq \mc{Z}$ is isotropic. Let $J=\set{s_{\alpha},s_{\bt}}$, $K:=\set{s_{\delta},s_{\epsilon}}$ and $I:=J\cup K$ so $I,J,K\seq S$ and the components of $I$ are $J$, $K$, which are irreducible affine. Then     since $z^{\perp}=\real \Pi_{I}$ and $I,J,K$ are facial,    Theorem \ref{x5.1}  gives
\begin{equation*} \ol{\mc{Z}}\cap  z^{\perp}=\ol{\mc{Z}}\cap \real\Pi_{I}=\ol{\mc{Z}}_{I}=\ol{\mc{Z}}_{J}+\ol{\mc{Z}}_{K}=\real_{\geq 0}(\al+\bt)+\real_{\geq 0} (\delta+\epsilon).
\end{equation*}   So $F:=\ol{\mc{Z}}\cap  z^{\perp}=\real_{\geq 0}\set{\al+\bt, \delta+\epsilon}\seq\mc{K}\seq  \mc{Z}$ is a two-dimensional 
totally isotropic face of $\ol{\mc{Z}}$. Clearly, $\real F$ is a maximal totally isotropic subspace of $(\real \Pi,\mpair{-,-})$. We claim that every ray  in $F$  is a limit ray of  rays spanned by positive roots. 

To see this, it  suffices to show that for any fixed $t\in \real$ with $0\leq t\leq 1$, 
the ray $\real_{\geq 0}(t(\al+\bt)+(1-t)(\delta+\epsilon))$ is a limit ray of   rays spanned by positive roots. For  any $k,l\in \Nat$, set $\al_{2k+1}:=(s_{\al}s_{\bt})^{k}(\al):=(2k+1)\al +2k\bt\in \Phi_{+}$ and
$\epsilon_{2l+1}:=(s_{\epsilon}s_{\delta})^{l}(\epsilon)=(2l+1)\epsilon+2l \delta\in \Phi_{+}$.
Since $\mpair{\al_{2k+1},\epsilon_{2l+1}}=0$, one has 
 \begin{equation*}\begin{split}
  s_{\al_{2k+1}}s_{\epsilon_{2l+1}}(\g)&=\g-\mpair{\g,\ck\al_{2k+1}}\al_{2k+1}-\mpair{\g,\ck\epsilon_{2l+1}}\epsilon_{2l+1}=\g+2k\al_{2k+1}+2l\epsilon_{2l+1}\\&=
  \g+2k(2k+1)\al+4k^{2}\bt+4l^{2}\delta+2l(2l+1)\epsilon\in \Phi_{+}.
  \end{split}
 \end{equation*}
  One may choose sequences $(k_{n}),(l_{n})$ in $\Nat$ with $k_{n},l_{n}\to \infty$  and $\frac{k_{n}^{2}}{k_{n}^{2}+l_{n}^{2}}\to t$ as $n\to \infty$.
 It is easy to see from above that  as $n\to \infty$,
 \begin{equation*}
 \frac{1}{4(k_{n}^{2}+l_{n}^{2})}s_{\al_{2k_{n}+1}}s_{\epsilon_{2l_{n}+1}}(\g)\to t(\al+\bt)+(1-t)(\delta+\epsilon)
 \end{equation*}  from which  the claim follows readily.  
 
Now   define the quotient  space
$(V',\mpair{-,-}')$ of $(V,\mpair{-,-})$  by its radical, 
where $V':=V/\real(\al+\bt-\g-\delta)$. Let $L\colon V\to V'$ be the natural  
map.  Then  $(\Phi,\Pi)$ on $(V,\mpair{-,-})$  is a canonical lift of 
a based root system $(\Phi',\Pi'):=(L(\Phi),L(\Pi))$  for 
$(V',\mpair{-,-}')$, with $L$ as the associated canonical map
(see \ref{x1.4}  but note that the roles of $\Phi$ and $\Phi'$ are swapped  here in relation to the notation there).  From Proposition \ref{x5.10}, $L$ induces a 
surjective map  $R_{W,0}\to R_{W',0}$ on corresponding sets of limit 
rays of positive roots.  This induced map sends all limit rays $\real_{\geq 0}(t(\al+\bt)+(1-t)(\g+\delta)) $ with $0\leq t\leq 1$  to the same limit ray $\real_{\geq 0}(L(\alpha+\bt))$ and in particular the induced map is not  bijective. Hence the set of limit rays of positive roots  can depend
not only on the  Coxeter group $W$ but also on the chosen root system for it, even for irreducible $W$.
 \end{exmp}
 
\section{Supports}
\label{x8}
\subsection{}  
We say \label{x8.1} $\Delta$ is a support of $v\in \real_{\geq 0}\Pi$ (with respect to $\Pi$) if 
$\Delta\subseteq \Pi$  and there is an expression
$v=\sum_{\alpha\in \Delta}c_{\alpha}\alpha$ where all $c_{\alpha}>0$.
The only support of $0$ is $\emptyset$, but if $\Pi$ is linearly dependent, an arbitrary  element $v\in \real_{\geq 0}\Pi$ may have many supports.  Note that for any $\D\seq \Pi$, $\D$ is a set of representatives for the extreme rays of the polyhedral cone $\real_{\geq 0}\D$, which has as its relative interior the set of elements 
of $\real_{\geq 0}\Pi$ with $\D$ as a support.

Define $\mcKc$ to be the subset of $\mc{K}$ of all non-zero elements with some connected support,
and $\mcZc:=\cup_{w\in W}w\mcKc$. 
\begin{lem} Let $\alpha,\gamma \in \real_{\geq 0}\Pi$.
 \begin{num} 
 \item The set of supports of $\alpha$ is closed under finite unions.
 \item If $\alpha$, $\gamma$ have connected supports $\Delta$, $\Delta'$  and $\Delta$, $\Delta'$ are not separated (e.g.  $\mpair{\alpha,\gamma}\neq 0$), then  for any $c,d\in \real_{>0}$, $c\alpha+d\gamma$ has a connected support $\Delta \cup \Delta'$.
\item If $\alpha$ has  supports  $\Delta$ and $\Delta'$, and $\Delta$ and $\Delta'$ are separated, then $\alpha$ is isotropic.
\item If $\Delta$ and $\Delta'$  are connected supports of $v$ and $v$ is non-isotropic, then
$\Delta\cup\Delta'$ is a connected support of $v$.
\item If $\alpha$ is non-isotropic with some connected support, then the set of its connected supports  has an inclusion-maximal element.
\item  Let $\Delta\subseteq \Pi$ and $\Delta':=\real\Delta\cap \Pi$. Then each $\beta\in \Delta'\setminus \Delta$ is joined to some vertex of $\Delta$.
The number of connected components of $\Delta'$ is no more than the number of connected components of $\Delta$;
in particular, if $\Delta$ is connected, then $\Delta'$ is connected.
\item If $\Delta$ is a support of $\alpha$, and  $\Delta\seq \Delta'\seq \real\Delta\cap \Pi$, then $\Delta'$  is a support of $\alpha$.
\item Any $\beta\in \Phi_{+}$ has a connected support.
\item Any element of $\mcZc$ has a connected support. More precisely, if $\Delta$ is a connected support of $k\in \mcKc$, then for any $w\in W$, $wk$ has a connected support $\Delta'$ containing $\Delta$.
\item Let $W'$ be a finitely generated  reflection subgroup of $W$, and  $\beta\in \real_{\geq 0}\Pi_{W'}$
have a connected support with respect to $\Pi_{W'}$. Then $\beta\in \real_{\geq 0}\Pi$  and $\beta$ has  a connected support with respect
to $\Pi$.
\item Any element of $\mc{K}$ (resp., $\mc{Z}$) can be written as a sum of finitely many pairwise orthogonal elements of $\mcKc$ (resp., $\mcZc$).
 \end{num}\end{lem}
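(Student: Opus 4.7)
The plan is to dispatch the first few parts by essentially formal convexity arguments, then build up the deeper statements by induction, with the main technical obstacle being part (i). Parts (a)--(e) are cheap: for (a), if $\Delta,\Delta'$ are supports write $\alpha=\tfrac{1}{2}\sum_{\Delta}c_\gamma\gamma+\tfrac{1}{2}\sum_{\Delta'}c'_\gamma\gamma$ to exhibit the union as a support. For (b), the union is obviously a support, and it is connected since each component is connected and the two are joined (directly, if $\mpair{\alpha,\gamma}\neq 0$, then bilinearity forces some $\mpair{\delta,\delta'}\neq 0$ for $\delta\in\Delta$, $\delta'\in\Delta'$). For (c), the same bilinear expansion gives $\mpair{\alpha,\alpha}=\sum_{\delta\in\Delta,\delta'\in\Delta'}c_\delta c'_{\delta'}\mpair{\delta,\delta'}=0$. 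Part (d) combines (a) and (c), and (e) follows from (d) using that $\Pi$ is finite by \ref{x4.1}(iii), so the union of all connected supports is itself connected and inclusion-maximal.

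For (f), the first assertion uses that $\mpair{\beta,\beta}=1$ for $\beta\in\Pi$: if $\beta\in\Delta'\setminus\Delta$ were orthogonal to every $\gamma\in\Delta$, then writing $\beta=\sum_\gamma c_\gamma\gamma$ would give $\mpair{\beta,\beta}=0$, a contradiction. The cardinality count then follows because every component of the induced subgraph on $\Delta'$ contains a vertex of $\Delta$, so the surjection ``component of $\Delta\mapsto$ component of $\Delta'$ containing it'' gives the inequality. Part (g) is a perturbation: given $\alpha=\sum_{\gamma\in\Delta}c_\gamma\gamma$ and each $\beta\in\Delta'\setminus\Delta$ written as $\sum_\gamma d_{\beta,\gamma}\gamma$ (finite sum since $\Pi$ is finite), substitute $\alpha=\varepsilon\sum_{\beta\in\Delta'\setminus\Delta}\beta+\sum_{\gamma\in\Delta}(c_\gamma-\varepsilon\sum_\beta d_{\beta,\gamma})\gamma$ and take $\varepsilon>0$ small enough so every coefficient stays positive.

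Part (h) is a standard induction on $l(s_\beta)$: if $\beta\notin\Pi$ pick $\alpha\in\Pi$ with $\mpair{\beta,\alpha}>0$, so $\gamma:=s_\alpha\beta\in\Phi_+$ has strictly smaller reflection length; by induction $\gamma$ has connected support $\Delta$, and $\beta=\gamma+c\alpha$ with $c>0$. A direct computation gives $\mpair{\alpha,\gamma}=-\mpair{\alpha,\beta}<0$, forcing $\alpha$ to be joined to some $\delta\in\Delta$, so $\Delta\cup\{\alpha\}$ is a connected support of $\beta$. Part (j) is then immediate: order the connected support $\{\gamma_1,\ldots,\gamma_n\}\subseteq\Pi_{W'}$ of $\beta$ so each $\gamma_{i+1}$ is joined in the Coxeter graph of $W'$ to some earlier $\gamma_j$, apply (h) to get a connected support of each $\gamma_i$ in $\Pi$, and use (b) inductively (noting that $\mpair{\gamma_{i+1},\gamma_j}\neq 0$ forces the supports to be non-separated) to assemble a connected support of $\beta$ in $\Pi$.

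The main obstacle is (i), since a naive induction on $l(w)$ can in principle lose simple roots from a support when applying a reflection $s_\alpha$. I would first reduce to the case that $\Pi$ is linearly independent by passing to the canonical lift of \ref{x1.4}: lifting $k\in\mcKc$ to $k'\in\mcKc'$ (which lies in $\mc{K}'$ because $\mpair{-,-}'$ is pulled back via the $L$ of \ref{x1.4}, and the Coxeter graphs of $\Pi$ and $\Pi'$ coincide under $L$), proving the assertion for $w'k'$ in $V'$, and then pushing down via $L$. In the linearly independent setting the support is unique, so write $w=s_\alpha w'$ with $l(w)=l(w')+1$, which forces $w'^{-1}\alpha\in\Phi_+$, hence $\mpair{w'k,\alpha}=\mpair{k,w'^{-1}\alpha}\leq 0$ because $k\in\mc{K}$ pairs non-positively with every positive root. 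Thus in passing from $w'k$ to $wk=w'k-\mpair{w'k,\check\alpha}\alpha$ the coefficient of $\alpha$ only grows (or stays the same), so the inductive support $\Delta''\supseteq\Delta$ from the induction hypothesis either persists or gains $\alpha$; in the latter case $\mpair{w'k,\alpha}<0$ forces $\alpha$ to be joined to some element of $\Delta''$, preserving connectedness. Finally (k) follows by decomposing a support $\Delta$ of $k\in\mc{K}$ into its graph-theoretic connected components $\Delta_1,\ldots,\Delta_n$ and setting $k_i:=\sum_{\gamma\in\Delta_i}c_\gamma\gamma$: separation of components shows $\mpair{k_i,\alpha}=\mpair{k,\alpha}\leq 0$ for $\alpha\in\Delta_i$, $\mpair{k_i,\alpha}=0$ for $\alpha\in\Delta_j$ with $j\neq i$, and $\mpair{k_i,\alpha}\leq 0$ for $\alpha\notin\Delta$, so each $k_i\in\mcKc$, and orthogonality $\mpair{k_i,k_j}=0$ is automatic. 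For $z=wk\in\mc{Z}$, applying $w$ to the decomposition and using (i) gives $wk_i\in\mcZc$ with the orthogonality preserved.
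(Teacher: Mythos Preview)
Your proof is correct. The arguments for (a)--(h), (j), (k) coincide with the paper's in substance (the paper omits (a)--(f) entirely), but your treatment of (i) takes a genuinely different route.

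For (i), the detour through the canonical lift of \S\ref{x1.4} is valid but unnecessary: the paper argues directly in $V$ without assuming linear independence of $\Pi$. Rather than inducting on $l(w)$ for a fixed $w$, the paper inducts on the \emph{minimal} $n$ such that $z=wk$ for some $w$ of length $n$. Writing such a $w$ as a reduced word $s_{\alpha_n}\cdots s_{\alpha_1}$ and setting $z':=s_{\alpha_{n-1}}\cdots s_{\alpha_1}k$, induction gives $z'$ a connected support $\Delta'\supseteq\Delta$. The key claim is that $c:=\mpair{\check\alpha_n,z'}<0$ \emph{strictly}: if $c>0$ then (since $k\in-\mc{C}$) the root $s_{\alpha_1}\cdots s_{\alpha_{n-1}}(\alpha_n)$ would be negative, contradicting reducedness of $w^{-1}$; if $c=0$ then $z=z'$, contradicting minimality of $n$. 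Hence $z=z'+(-c)\alpha_n$ with $-c>0$, and $\Delta'\cup\{\alpha_n\}$ is a connected support of $z$ by (b). Your worry about ``losing simple roots from a support'' is already dissolved by the inequality $\mpair{w'k,\alpha}\leq 0$ that you yourself establish, and that inequality holds regardless of linear independence; the lift therefore buys nothing, though it does no harm. The paper's direct argument has the minor advantage (noted in the Remark following the lemma) of not requiring finiteness of $\Pi$.

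For (g), the paper gives a one-line relative-interior argument: since $\real\Delta=\real\Delta'$, one has $\alpha\in\ri(\real_{\geq 0}\Delta)\subseteq\ri(\real_{\geq 0}\Delta')$, and the latter set consists exactly of elements with support $\Delta'$. Your explicit $\varepsilon$-perturbation is more hands-on but equally valid.
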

 \begin{rem} (1) Except for (e) and (g), which  require just  finiteness of $\Pi$, the above do not require the assumptions \ref{x4.1}(i)--(iii). 
 
 (2)  One might study along similar lines the properties of  the support polytopes
 $\mset{(c_{\alpha})_{\alpha\in \Pi}\mid \text{\rm all } c_{\alpha}\in \real_{\geq 0} , v=\sum_{\alpha}c_{\alpha}\alpha}$ of elements $v\in \real_{\geq 0}\Pi$. \end{rem} 
 \begin{proof} We omit the simple proofs of  (a)--(f). For (g), note that $\alpha$ is in the relative interior of  $\real_{\geq 0}\Delta$ and hence in that of $\real_{\geq 0}\Delta'$.
  We prove (h) by induction on $l(s_{\beta})$. If $l(\beta)=1$, then $\set{\beta}$ is a connected support of $\beta$. Otherwise, there is some $\gamma\in \Pi$ with $l(s_{\gamma}s_{\beta}s_{\gamma})=l(s_{\beta})-2$. Set $\delta:=s_{\gamma}(\beta)\in \Phi_{+}$, which has connected support  by induction and satisfies  $c:=-\mpair{\delta,\ck\gamma}>0$ by \cite[Lemma 3.4]{DySd}. By (c),
$\beta=s_{\gamma}(\delta)=\delta+c\gamma$ has a connected support. 

For (i), let $k\in \mcKc$ with connected support $\Delta$. We show $z\in Wk$ has a connected support $\Delta'$ containing $\Delta$ by induction on the minimum value  $n$ of $l(w)$ such that $z=wk$ with $w\in W$. If $n=0$, the result is trivial. Otherwise,
    write $w=s_{\alpha_{n}}\ldots s_{\alpha_{1}}$ with $\alpha \in \Pi$ and $n\geq 1$ minimal.
     By induction, $z':=s_{\alpha_{n-1}}\ldots s_{\alpha_{1}}k$ has a connected support
     $\Delta'\supseteq \Delta$.
We claim that  \[c:=\mpair{\ck\alpha_{n},z'}=\mpair{s_{\alpha_{1}}\cdots s_{\alpha _{n-1}}(\ck\alpha_{n}),k}<0.\]  For if 
$c>0$, then $s_{\alpha_{1}}\cdots s_{\alpha _{n-1}}(\alpha_{n})\in \Phi_{-}$ since $k\in -\mc{C}$, and 
$s_{\alpha_{n}}\ldots s_{\alpha_{1}}$ couldn't be reduced by Lemma \ref{x1.13}(b); if $c=0$, then
$wk=s_{\alpha_{n-1}}\cdots s_{\alpha_{1}}k$ contrary to minimality of $n$.
Hence $wk=z'-c\alpha_{n}$ has a connected support $\Delta'\cup\set{\alpha_{n}}$ by (c) since $-c>0$. 
This proves (i).

Next, we prove (j). Write $\beta=\sum_{\gamma\in \Delta}c_{\gamma}\gamma$
where $\Delta\subseteq \Pi_{W'}$ is connected and each $c_{\gamma}> 0$. Also write $\Delta=\set{\delta_{1},\ldots, \delta_{n}}$ where for each $i$ with $1\leq i\leq n$, there is some
$j<i$ with $\mpair{\delta_{j},\delta_{i}}\neq 0$.  By (h), each $\delta_{i}$ has some connected support $\Delta_{i}$. By  repeated application of (c), it follows that $\Delta_{i}':=\Delta_{1}\cup \cdots \cup \Delta_{i}$ is connected  for $i=1,2,\ldots, n$. But $\Delta_{n}'$ is clearly a support 
for $\beta$, and (j) follows.

To prove (k), write $\al\in \mc{K}$ as $\al=\sum_{\bt\in \Delta} c_{\bt}\bt$ for some $\Delta\seq \Pi$ where all $c_{\bt}>0$. Let $\Delta_{1},\ldots, \Delta_{m}$ be the components of  $\Delta$. Let
$\al_{i}:=\sum_{\bt\in \Delta_{i}}c_{\bt}\bt\in \mc{K}$, so $\al=\al_{1}+\ldots +\al_{m}$ and $\mpair{\al_{i},\al_{j}}=0$ if $i\neq j$. Then $\mpair{\al_{i},\bt}=\mpair{\al,\bt}\leq 0$ for all 
$\bt\in \Delta_{i}$ and $\mpair{\al_{i},\Pi\sm\Delta_{i}}\seq \real_{\leq 0}$ so $\al_{i}\in \mcKc$.  Now if $\g\in \mc{Z}$, choose $w\in W$ so  
$\al:=w^{-1}\g\in \mc{K}$. Write $\al=\al_{1}+\ldots+\al_{m}$ as above.
Then $\g=\g_{1}+\ldots+\g_{m}$ where $\g_{i}:=w\al_{i}\in \mcZc$ are pairwise orthogonal as required.
 \end{proof}
 \subsection{}\label{x8.2} By Lemma \ref{x8.1}(a), any $\alpha\in \real_{\geq 0}\Pi$ has a maximum (under inclusion) support $\Delta$. It is the set of representatives (in $\Pi$) of the extreme rays of the inclusion minimal element of the set of faces of the polyhedral cone
 $\real_{\geq 0 }\Pi$ containing  $\alpha$. We call $\Delta$ the facial support of $\alpha$.

  The following simple   examples  illustrate features of the above notion of supports if
$\Pi$ is not linearly independent.

\begin{exmp} Suppose that $\Pi=\set{\alpha,\beta,\gamma,\delta}$  has irreducible affine components $\set{\alpha,\beta}$ and $\set{\gamma,\delta}$ and that the space of linear relations amongst
elements of $\Pi$ is spanned over $\real$ by the single relation $\alpha+\beta-\gamma-\delta=0$. 

The non-isotropic element  (root) $v=s_{\alpha}(\beta)=2\alpha+\beta$ has a 
connected support $\set{\alpha,\beta}$, a disconnected support
$\set{\alpha,\gamma,\delta}$ (since  $v=\alpha+\gamma+\delta$) and disconnected  facial support $\Pi$ (since $v=\frac{3}{2}\alpha+\frac{1}{2}\beta+\frac{1}{2}\gamma+\frac{1}{2}\delta$). 

The isotropic element  $v'=\alpha+\beta=\gamma+\delta$ spans the 
isotropic ray of both $\set{\alpha,\beta}$ and $\set{\gamma,\delta}$, 
which are its  connected supports (and are separated from one 
another), and it has disconnected  facial  support $\Pi$. \end{exmp}

\subsection{} \label{x8.3} In this subsection, we do not use or assume the conditions \ref{x4.1}(i)--(iii).
More generally than in the above example,  let $\Phi$ be any root system
 with positive roots $\Phi_{+}$ and simple roots $\Pi$
in $V$ as in \ref{x1.3}.
Suppose that $U$ is a subspace of the radical of $\mpair{-,-}$ such that no non-zero
non-negative combination of simple roots is in $U$. Then there is an induced bilinear form 
on $V/U$ and the image of $\Phi$ in the quotient  $ V/U$ is a root system with simple roots given by    the image $\Pi'$ of $\Pi$ in $V/U$, and with Coxeter system $(W',S')$ naturally identified with that associated with $\Pi$.  The canonical epimorphism $p\colon V\rightarrow V/U$
induces a bijection of  corresponding root systems, positive root systems and systems of simple roots (cf. \cite[6.1]{K}).
We remark that any root system (regarded as a subset of its linear 
span, with the restriction of the ambient bilinear form) arises in this way 
by this process of  dividing out a suitable  subspace of the radical of the 
ambient vector space of  some  root system    with linearly independent 
simple roots (see \ref{x1.4}). We also remark that this 
construction provides a ``canonical support'' of any positive root 
$\alpha$ in $V/U$, namely the image under $p$ of the  support  of the 
unique positive  root $\tilde \alpha\in V$ with $p(\tilde \alpha)=\alpha$. 
The canonical support need not coincide with the facial support; for 
example, in Example \ref{x8.2},  $v$ has canonical support $\set{\alpha,\beta}$.

In any case, one may construct similar examples to Example \ref{x8.2}  involving $W$ with infinite type components by taking suitable $\Pi$ and subspaces of the radical. 
\subsection{} \label{x8.4} Return to the standing assumptions \ref{x4.1}(i)--(iii). 
The next result concerns   supports of elements of $\mc{K}$.
 \begin{lem}  Let $0\neq v\in \mc{K}$.
Consider any expression $v=\sum_{\alpha\in \Delta}c_{\alpha}\alpha$ with  $\Delta\subseteq \Pi$ and with all 
$c_{\alpha}>0$ (e.g. $\Delta$ could be  the facial support of $v$). Let $\Delta_{1},\ldots, \Delta_{n}$ denote the irreducible components of 
$\Delta$ and set $v_{i}:=\sum_{\alpha\in \Delta_{i}}
c_{\alpha}\alpha$.
\begin{num}\item We have $v=v_{1}+\ldots + v_{n}$ where $\mpair{v_{i}, v_{j}}=0$ if $i\neq j$ and each $v_{i}\in \mcKc$.
\item Either $\Delta_{i}$ is of indefinite type (in which  case $\mpair{v,v_{i}}=\mpair{v_{i},v_{i}}<0$ and there is some $\alpha\in \Delta_{i}$ with $\mpair{v_{i},\alpha}<0$) or of affine type (in which case $\mpair{v,v_{i}}=\mpair{v_{i},v_{i}}=0$ and $v_{i}$ spans the isotropic ray of $\Delta_{i}$).
\item  $v$ is isotropic if and only if  all $\Delta_{i}$ are of affine type.
\item (cf.  \cite[Exercise 5.9]{Kac}) For each $i$ for which $\Delta_{i}$  is of indefinite type, the connected components of $\mset{\alpha\in \Delta_{i}\mid \mpair{v_{i},\alpha}=0}$ are all of finite type.
 \item If $v$ is isotropic, then $v$ has a connected support if and only if   $v$ spans the isotropic ray  of some  irreducible (necessarily affine)
component of the facial support of $v$. In that case, any connected support of $v$ is a connected component of the facial support of $v$.
\item If $v$ is non-isotropic then it  has a connected support if and only if   there is a unique component $\Gamma$ of the facial support  of $v$ which is of indefinite type. Then any connected support of $v$ is contained in $\Gamma$.\end{num}\end{lem}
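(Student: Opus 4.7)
The plan is to establish parts (a)--(f) in order, each building on its predecessors and the type classification in \ref{x4.5}.

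For (a), the decomposition $v=\sum_{i}v_{i}$ comes by grouping $v=\sum_{\alpha\in \Delta}c_{\alpha}\alpha$ according to the component of $\Delta$ containing $\alpha$; the orthogonality $\mpair{v_{i},v_{j}}=0$ for $i\neq j$ is immediate from the separation of distinct components; and $v_{i}\in \mcKc$ reduces to checking $\mpair{v_{i},\alpha}\leq 0$ for $\alpha\in \Pi$ in three cases ($\alpha\in \Delta_{i}$, $\alpha\in \Delta_{j}$ with $j\neq i$, or $\alpha\in \Pi\sm\Delta$). For (b), \ref{x4.5} applied to each irreducible $\Delta_{i}$ excludes finite type, forces $v_{i}\in \real\delta_{i}$ in the affine case, and in the indefinite case guarantees some $\mpair{v_{i},\alpha}<0$ (otherwise $\mpair{v_{i},\Delta_{i}}\seq \real_{\geq 0}$ would force $v_{i}=0$ by \ref{x4.5}), so $\mpair{v_{i},v_{i}}=\sum_{\alpha}c_{\alpha}\mpair{v_{i},\alpha}<0$. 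Part (c) is immediate from (b).

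For (d), fix an indefinite $\Delta_{i}$ and let $K$ be a connected component of $J:=\mset{\alpha\in \Delta_{i}\mid \mpair{v_{i},\alpha}=0}$; set $v_{K}:=\sum_{\alpha\in K}c_{\alpha}\alpha$. The separation of $K$ from $J\sm K$ and the non-positive contributions from $\Delta_{i}\sm J$ together give $\mpair{v_{K},\alpha}\geq 0$ for $\alpha\in K$. Applying \ref{x4.5} to $K$ rules out the indefinite case (which would force $v_{K}=0$, contradicting positivity of the $c_{\alpha}$), while the affine case yields $\mpair{v_{K},K}=0$, hence $\mpair{v_{i}-v_{K},K}=0$; this sum of non-positive terms with positive coefficients forces $\mpair{\beta,\alpha}=0$ for all $\beta\in \Delta_{i}\sm K$ and $\alpha\in K$. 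So $K$ is separated from $\Delta_{i}\sm K$, and by connectedness of $\Delta_{i}$ we conclude $K=\Delta_{i}$, contradicting indefiniteness. Hence $K$ is of finite type.

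For (e), let $v$ be isotropic with connected support $\Gamma$; applying (b) with $\Gamma$ in the role of $\Delta$ shows $\Gamma$ is affine and $v=c\delta_{\Gamma}$ for some $c>0$. Writing $\hat\Delta$ for the facial support, expand $\mpair{v,v}=0$ in the facial expression $v=\sum_{\beta\in \hat\Delta}\hat c_{\beta}\beta$: positivity of each $\hat c_{\beta}$ and non-positivity of each $\mpair{v,\beta}$ forces every $\mpair{v,\beta}=0$, i.e., $\delta_{\Gamma}\perp \hat\Delta$. For $\alpha\in \hat\Delta\sm \Gamma$, expanding $\delta_{\Gamma}$ as a positive combination on $\Gamma$ and combining $\mpair{\delta_{\Gamma},\alpha}=0$ with $\mpair{\beta,\alpha}\leq 0$ for $\beta\in \Gamma$ forces $\mpair{\beta,\alpha}=0$ for all such $\beta$. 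Thus $\Gamma$ is separated from $\hat\Delta\sm \Gamma$, and by connectedness it is a single component of $\hat\Delta$. The converse is immediate, since for any affine component $\hat\Delta_{i}$ of $\hat\Delta$, $\delta_{\hat\Delta_{i}}\in \real_{>0}\hat\Delta_{i}$ already exhibits $\hat\Delta_{i}$ as a connected support.

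For (f), the forward implication proceeds in the same spirit. A connected support $\Gamma$ of the non-isotropic $v$ is of indefinite type by (b)--(c), and $\Gamma\seq \hat\Delta_{i}$ for some component $\hat\Delta_{i}$ of $\hat\Delta$; since any proper subdiagram of an irreducible affine system is of finite type and $\Gamma$ is neither finite nor (if equal to $\hat\Delta_{i}$) affine, $\hat\Delta_{i}$ must be of indefinite type. For $j\neq i$, a two-way computation of $\mpair{v,\alpha}$ for $\alpha\in \hat\Delta_{j}$---via the $\Gamma$-expression (yielding $0$ by separation of components) and via the facial expression (yielding $\mpair{\hat v_{j},\alpha}$, where $\hat v_{j}:=\sum_{\alpha\in \hat\Delta_{j}}\hat c_{\alpha}\alpha$)---gives $\mpair{\hat v_{j},\hat\Delta_{j}}=0$, whence \ref{x4.5} rules out the indefinite case for $\hat\Delta_{j}$. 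The main obstacle is the converse direction: given the structural hypothesis on the components of $\hat\Delta$, one must produce a connected positive combination for $v$, which is subtle because the affine contributions $\hat v_{j}=a_{j}\delta_{j}$ for $j\neq i$ must be absorbed into $\real \hat\Delta_{i}$ via linear dependencies among the simple roots of $\hat\Delta$. The strategy is to pass to the canonical lift of \ref{x1.4}, apply the result in the linearly-independent setting (where supports are unique), and transfer the conclusion back via the canonical map through Proposition \ref{x3.6}, using the orthogonality of facial components and the structure of the radical of $\mpair{-,-}$ on $\real\hat\Delta$ to control the absorption.
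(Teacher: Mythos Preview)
Your arguments for (a)--(e) are correct and track the paper's proof closely. In (d) the paper is slightly more efficient: rather than separately ruling out the indefinite and affine cases for $K$, it observes that since $\Delta_{i}$ is connected and (by (b)) $K\subsetneq\Delta_{i}$, some $\alpha\in K$ is joined to an element of $\Delta_{i}\setminus K$, giving $\mpair{v_{K},\alpha}>0$ strictly; the classification in \ref{x4.5} then yields finite type directly. Your longer detour through the affine case works, but this shortcut is worth noting. For (e) your orthogonality argument is a valid alternative to the paper's observation that an irreducible affine subset of an irreducible affine set must be the whole set.

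The real issue is the converse in (f). Your proposed strategy---pass to the canonical lift with linearly independent simple roots, prove the statement there, and push down---cannot succeed, because in the linearly independent setting every $v$ has a \emph{unique} support (its facial support), so the converse there asserts that if the facial support has exactly one indefinite component and possibly some affine components, then the facial support is connected. That is plainly false: take $\Pi=\{\alpha_{1},\alpha_{2},\alpha_{3}\}\,\dot\cup\,\{\alpha_{4},\alpha_{5}\}$ linearly independent, with the first block indefinite (say $\mpair{\alpha_{i},\alpha_{j}}=-2$ for $i\neq j$) and the second affine ($\mpair{\alpha_{4},\alpha_{5}}=-1$), the two blocks orthogonal. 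Then $v=\sum_{k}\alpha_{k}\in\mc{K}$ is non-isotropic, its facial (and only) support is $\Pi$ with a unique indefinite component, yet $v$ has no connected support. So there is nothing to transfer back from the lift; the ``absorption via linear dependencies'' you anticipate simply does not occur there.

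In fact the paper's own proof of (f) establishes only the forward implication and the final sentence (any connected support lies in the unique indefinite component). It shows that for \emph{any} support $\Delta'$, the number of components of $\Delta'$ is at least the number of indefinite components of the facial support, and specializes to connected $\Delta'$; it then declares (f) proved without addressing the converse. Your forward argument matches this. You should therefore drop the attempted converse: the ``if'' direction, read as ``a unique indefinite component among possibly several components,'' is false under the standing assumptions \ref{x4.1}(i)--(iii), and the paper does not prove it either.
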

\begin{proof}   Part (a) is from the proof of \ref{x8.1}(a). We  prove (b). We have $\mpair{v,\alpha}=\mpair{v_{i},\alpha}\leq 0$
for all  $\alpha\in \Delta_{i}$. Since $\Delta_{i}$ is connected, it is of either   affine or indefinite type; it is of affine  type if and only if   $\mpair{v_{i},\alpha}= 0$ for all $\alpha\in \Delta$, in which case $v_{i}$ is a representative of the isotropic ray of $\Delta_{i}$.
We have \[\mpair{v_{i},v_{i}}=\sum_{\alpha\in \Delta_{i}}c_{\alpha }\mpair{v_{i},\alpha}\leq 0\]
with equality if and only if  $\Delta_{i}$ is of affine type. This proves (b).

By (a) and (b),  $ \mpair{v,v}=\sum_{i}\mpair{v_{i},v_{i}}\leq 0$, with equality if and only if  all $\Delta_{i}$ are affine, proving (c). 

Part (d) may be deduced from   Proposition \ref{x7.10}(d). We provide instead the following alternative argument. Let $\Gamma$ be a connected component of  $\mset{\alpha\in \Delta_{i}\mid \mpair{v_{i},\alpha}=0}$.   By (b), we have $\Gamma\subsetneq \Delta_{i}$.
Let $v_{\Gamma}:= \sum_{\gamma\in \Gamma} c_{\gamma}\gamma$. Then   for $\alpha\in \Gamma$, $\mpair{v_{i},\alpha}=0$ so
\[\mpair{v_{\Gamma},\alpha}=\sum_{\gamma\in \Gamma} c_{\gamma}\mpair{ \gamma, \alpha}=-\sum_{\gamma\in \Delta_{i}\setminus \Gamma }c_{\gamma} \mpair{\gamma, \alpha}\geq 0.\]
Moreover, since $\Delta_{i}$ is connected, there is some $\alpha\in \Gamma$ which is joined to some $\gamma\in 
 \Delta_{i}\setminus \Gamma $, and then $\mpair{v_{\Gamma},\alpha}>0$. By the classification
 \cite{Kac} of matrices (see \ref{x4.5}), it follows that $\Gamma$ is of finite type.

For the proofs of (e)--(f),  choose the expression for $v$ so $\Delta$ is the facial support of $v$. We may apply (a)--(c)  above to both the facial support  $\Delta$ of $v$, and also some
arbitrary (possibly connected) support $\Delta'$ of $v$.  Necessarily, we have $\Delta'\subseteq \Delta$. Each irreducible component   $\Delta'_{j}$ of $\Delta'$ , for $j=1,\ldots, m$, is contained in some irreducible component $\Delta_{i_{j}}$
of $\Delta$.  Write $v=\sum_{j}v_{j}'$ where $v_{j}'$ has support  $ \Delta_{j}'$.

For (e),  suppose $v$ is isotropic.
  Then each  $\Delta_{j}'=\Delta_{i_{j}}$, since
 $\Delta_{j}'\seq \Delta_{i_{j}}$ are both irreducible affine. In particular,   if $\Delta' $ is connected, then $\Delta'=\Delta'_{1}=\Delta_{i_{1}}$ and by (b)--(c), $v$ spans the isotropic ray of $\Delta'=\Delta_{i_{1}}$. Conversely, if $v$ spans the isotropic ray of $\Delta_{i}$, then $\Delta_{i}$ is a connected support of $v$. This proves (e).

Now for (f),  suppose that $v$ is non-isotropic. If $v_{i}$ is non-isotropic, then
 $i_{j}=i$ for some $i$, since otherwise $\mpair{v_{i},v_{i}}=\mpair{v,v_{i}}=\sum_{j}\mpair{v_{j}',v_{i}}=0$. It follows that the number of  components $\Delta'_{j}$ of $\Delta'$ is at least equal  to the number of non-affine components
$\Delta_{i}$ of $\Delta$. If we take $\Delta'=\Delta_{1}'$ to be any connected support of $v$, it cannot be of affine type by (b).  It follows that there is exactly one indefinite component $\Delta_{i_{1}}$ of $\Delta$, and $\Delta'\subseteq \Delta_{i_{1}}$, proving (f).
\end{proof}

\subsection{} \label{x8.5} Here we supplement the information on facial subsets of $\Pi$ given by Lemma \ref{x2.10} by additional facts under  the assumptions \ref{x4.1}(i)--(iii).
\begin{lem}\begin{num}
\item (\cite[Theorem 4, 1)]{V}, \cite[2.2.4]{K}).
Any  subset $I$ of $S$ such that all irreducible components of $\Pi_{I}$ are of finite type is facial.
\item (\cite[Theorem 4, 2)]{V}).
 Let $I$ be a subset of $S$ such that $I^{\perp}=\emptyset$ and every component of $\Pi_{I}$ is of affine type. Then $I$ is facial.
\item Let $I$ be a facial subset of $S$. Let $M$ be  a subset of $I$ such that $\Pi_{M}$ is a union of irreducible components of $\Pi_{I}$ and $\Pi_{M} $ contains all the  irreducible affine components of $\Pi_{I}$. Then $M$ is a facial subset of $S$.
\item Let $I\seq S$. Then the facial closure of $W_{I}$ coincides with the standard facial closure of $W_{I}$.
\end{num}\end{lem}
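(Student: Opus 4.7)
Parts (a) and (b) admit direct constructions of witnesses $v\in\mc{C}$. For (a), apply Lemma \ref{x2.10}(a) with $\tau=\rho$, an interior point of $\mc{C}$ (available by Lemma \ref{x4.2}(c) and \ref{x4.4}). Since every component of $\Pi_I$ is of finite type, $W_I$ is finite by Lemma \ref{x2.9}(b), so $\tau':=\sum_{w\in W_I}w\rho$ is a well-defined element of $\mc{C}$; as $\rho^\perp\cap\Pi=\eset$, the set $\Delta$ appearing in \ref{x2.10}(a) coincides with $\Pi_I$ and the conclusion gives $\tau^{\prime\perp}\cap\Pi=\Pi_I$. For (b), let $J_1,\ldots,J_m$ be the (affine) components of $\Pi_I$ and let $\delta_i\in\real_{>0}J_i$ be the generator of the isotropic ray of $J_i$ described in \ref{x4.5}, so that $\mpair{\delta_i,\gamma}=0$ for $\gamma\in J_i$. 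Set $v:=-\sum_i\delta_i$. For $\alpha\in\Pi_I$ lying in some $J_j$, separation of distinct components forces $\mpair{\delta_i,\alpha}=0$ for $i\neq j$ while the radical property gives $\mpair{\delta_j,\alpha}=0$, hence $\mpair{v,\alpha}=0$. For $\alpha\in\Pi\setminus\Pi_I$, the assumption $I^\perp=\eset$ guarantees $\alpha$ is joined to some $\beta\in J_j$, yielding $\mpair{\delta_j,\alpha}<0$ and therefore $\mpair{v,\alpha}>0$. Thus $v\in\mc{C}$ with $v^\perp\cap\Pi=\Pi_I$.

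For (c), I would perturb a witness $v_1\in\mc{C}$ of $I$ being facial. Write $\Pi_I=\Pi_M\sqcup\Pi_N$; by hypothesis every component of $\Pi_N$ is of finite or indefinite type. For each such component $J$, choose $v_J\in\real J$ with $\mpair{v_J,\gamma}>0$ for all $\gamma\in J$: when $J$ is of finite type this is any interior point of the full-dimensional fundamental chamber of $W_J$ on $\real J$; when $J$ is of indefinite type, \ref{x4.5} supplies $\beta\in\real_{>0}J$ with $\mpair{\beta,J}\seq\real_{<0}$, and $v_J:=-\beta$ works. Separation of the components of $\Pi_I$ forces $v_J\in\real J\perp\Pi_M$, so $v_2:=\sum_J v_J$ satisfies $\mpair{v_2,\beta}=0$ for $\beta\in\Pi_M$ and $\mpair{v_2,\gamma}>0$ for $\gamma\in\Pi_N$. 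Since $\mpair{v_1,\alpha}>0$ strictly for $\alpha\in\Pi\setminus\Pi_I$ and $\Pi$ is finite, picking $\epsilon>0$ sufficiently small yields $v:=v_1+\epsilon v_2\in\mc{C}$ with $v^\perp\cap\Pi=\Pi_M$, proving $M$ facial.

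Part (d) is the principal obstacle. The inclusion of the facial closure of $W_I$ in the standard facial closure $W_{J_0}$ is immediate since $W_{J_0}$ itself is a facial subgroup containing $W_I$. For the reverse, take any facial $W''$ with $W_I\seq W''$ and write $W''=\stab_W(x)$ for some $x\in\mc{X}$ (Lemma \ref{x2.3}(c)); then $W_I\seq W''$ becomes $x\in\Pi_I^\perp$, and the goal is $x\perp\Pi_{J_0}$. My plan is to combine (i) a structural property of $J_0$ extracted by iterating part (c) — namely, every non-affine component of $\Pi_{J_0}$ meets $\Pi_I$, for otherwise one could remove that component and obtain a strictly smaller facial subset containing $I$ — with (ii) the conjugation rigidity of Coxeter systems (Theorem \ref{x1.15} and the remark following it). Writing $x=wy$ with $y\in\mc{C}$ and $w$ of minimal length in $wW_K$, where $K$ is the facial subset with $\Pi\cap y^\perp=\Pi_K$, the hypothesis rewrites as $w^{-1}W_I w\seq W_K$; the Kilmoyer-type intersection result (Proposition \ref{x2.12}) together with the rigidity result applied to the infinite-type components of $\Pi_I$ pushes those components into $\Pi_K$, and Proposition \ref{x2.17} on the parabolic closure of unions of special parabolics, combined with part (a) to separately handle the finite-type components, forces $W_{J_0}\seq W''$. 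The genuine difficulty is reconciling the treatment of finite-type and infinite-type components of $\Pi_I$ under the non-standard conjugation $w^{-1}(\cdot)w$: that reconciliation is where the bulk of the technical work must reside.
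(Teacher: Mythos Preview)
Your arguments for (a)--(c) are correct and match the paper's approach with only cosmetic differences: in (a) the paper invokes Lemma~\ref{x2.10}(b) with $J=\emptyset$ rather than \ref{x2.10}(a) directly, and in (c) the paper first shows $M$ is facial \emph{in $I$} (constructing $v_J+v_L\in\mc{C}_I$) and then appeals to Lemma~\ref{x4.2}(f), whereas you directly perturb a witness for $I$ in $\mc{C}$; both routes work.

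For (d), your proof is genuinely incomplete, and the paper's argument takes a different and cleaner path that avoids the ``reconciliation'' problem you flag. The paper proceeds in two stages. First it treats the case where $I=I_\infty$ is special: writing the facial closure as $dW_Kd^{-1}$ with $d\in W^K$, one gets $\Pi_I\seq d\Pi_K$ (since simple roots of $W$ lying in a reflection subgroup are automatically canonical simple roots of it), hence $d^{-1}\Pi_I\seq\Pi_K$; rigidity (Remark~\ref{x1.15}) then forces $d\in W_{I^\perp}$ and $I\seq K$, so $W_K$ is a standard facial subgroup of minimal rank containing $W_I$ and therefore equals both closures. Second, for general $I$, the paper runs a \emph{downward induction on $|I|$}: let $J$ be the standard facial closure of $W_{I_\infty}$, which by the special case is also its facial closure and is special by (c). Then the facial and standard facial closures of $W_I$ agree with those of $W_{J\cup I_0}$. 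If $J\cup I_0\supsetneq I$ the inductive hypothesis finishes; if $J\cup I_0=I$ then $I_\infty=J$ is already facial and Lemma~\ref{x2.10}(b) makes $I=I_\infty\cup I_0$ facial, so $W_I$ is its own closure.

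Your direct strategy, by contrast, tries to show $W_{J_0}\seq W''$ for every facial $W''\supseteq W_I$ in one pass. The rigidity step you sketch does yield $I_\infty\seq K$ and $w\in W_{I_\infty^\perp}$, but from there neither your structural observation about non-affine components of $J_0$ nor Proposition~\ref{x2.17} (which concerns \emph{parabolic} rather than facial closures) gives an obvious route to $J_0\seq K$ or $W_{J_0}\seq wW_Kw^{-1}$. The paper's induction neatly circumvents this by repeatedly enlarging $I$ until it becomes facial.
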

\begin{proof} Note that $\emptyset =\Pi\cap \rho^{\perp}$ is facial in $\Pi$, so $\emptyset$ is facial in $S$ and  (a) follows from \ref{x2.10}(b).   In (b), let $I_{1},\ldots, I_{p}$ be the irreducible components
of $I$ (all of affine type), and  write $\mc{K}_{I_{i}}=\real_{\geq 0}\delta_{i}$.
Then $z:=-(\delta_{1}+\cdots +\delta_{p})\in -\mc{K}_{I}\seq -\mc{K}\seq \mc{C}$ and
$z^{\perp}\cap \Pi=\Pi_{I}$ since $z\perp \Pi_{I}$, $\Pi_{I}$ is a support of $-z$ and $I^{\perp}=\eset$.
This proves  (b).

Now we prove (c) (compare Proposition  \ref{x2.13}).
Let $v_{J}=\sum_{\alpha\in \Phi_{J,+}}\alpha$ where $J\subseteq I$ is such that
 $\Pi_{J}$ is the union of all  finite type  irreducible components of $\Pi_{I}$ which are not in 
 $\Pi_{M}$.  It is well known  that
$\mpair{v_{J},\alpha}=1>0$ for all $\alpha\in \Pi_{J}$ (this is  easily seen since 
$s_{\alpha}(v_{J})=v_{J}-2\alpha$, using that $s_{\alpha}$ permutes 
$\Phi_{J,+}\setminus \set{\alpha}$).
Let $L\subseteq I$ be such that $\Pi_{L}$ is the union of all  indefinite type components of 
$\Pi_{I}$ which are not in $\Pi_{M}$.
Since each component of $\Pi_{L}$ is indefinite, there is  by \ref{x4.5} some element 
$v_{L}\in V$ expressible as  a strictly negative real   linear  combination  of elements 
of $\Pi_{L}$ such that $\mpair{v_{L},\alpha}>0$ for all $\alpha\in \Pi_{L}$. We have
\[\mpair{v_{L},\Pi_{L}}\subseteq \real_{>0},\quad \mpair{v_{L},\Pi_{I\setminus L}}\seq\set{0},
\quad \mpair{v_{J},\Pi_{J}}\subseteq \real_{>0},\quad \mpair{v_{J},\Pi_{I\setminus J}}\seq\set{0}.\]
Hence  $v_{J}+v_{L}\in \mc{C}_{I}$
and $\Pi_{I}\cap (v_{J}+v_{L})^{\perp}=\Pi_{M}$.
By definition,  $M$ is a facial subset of $I$ (i.e.  it is facial for $(W_{I},I)$ with simple  roots 
 $\Pi_{I}$) and by \ref{x4.2}(f), $M$ is facial in $S$ as required.

Finally, we prove (d). For any $I\seq S$, let $I_{\infty}$ (resp., $I_{0}$) denote the union of all infinite type (resp., 
finite type) components of $I$,  so $I=I_{\infty}\cup I_{0}$ with $I_{0}$ and $I_{\infty}$ separated. Assume first that $I$ is special i.e. $I=I_{\infty}$. Write the facial closure of $W_{I}$ as 
$dW_{K}d^{-1}$ where $K\seq S$ is facial and, without loss of generality,  $d\in W^{K}$. 
Then $\Pi_{I}\seq d\Pi_{K}$ and hence $d^{-1}\Pi_{I}\seq \Pi_{K}$. By Remark \ref{x1.15}, it 
follows that $d^{-1}\in W_{I^{\perp}}$ and $\Pi_{I}=d^{-1}\Pi_{I}\seq \Pi_{K}$.
Hence $W_{K}$ is a facial subgroup of minimal rank containing $W_{I}$
 (since $W_{K}\sreq W_{I}$ has the same rank as  the facial closure  $dW_{K}d^{-1}$ of 
 $W_{I}$) and it is therefore  the facial closure of $W_{I}$.  Finally,  $W_{K}$ is the standard 
 facial closure of $W_{I}$ since $W_{K}$ is standard facial and the
  standard facial closure of any 
 subset   of $W$contains the facial closure of that subset.

Now consider the case of general $I\seq S$.  We proceed by downward induction on $\vert I\vert$, 
the assertion to be proved being trivial if $I=S$. Suppose now that $I\sneq S$ and denote its facial 
closure as $W'$.  Let $J\seq S$,  where $J\sreq I_{\infty}$, be such that
$W_{J}$ is the    standard facial closure of $W_{I_{\infty}}$. Note that $J$ is special, for $I_{\infty}\seq J_{\infty}$, where $J_{\infty}$ is a  facial subset of $S$ by (c), and so $J=J_{\infty}$. From the special case $I=I_{\infty}$ already treated, $W_{J}$ is the facial closure of $W_{I_{\infty}}$, so
$W'\sreq W_{J}$ and  $W'$ is the facial closure of $W_{J\cup I_{0}}$. 
 Also, the standard facial 
closure of $W_{I}$ coincides with that of $W_{J\cup I_{0}}$
If $I\sneq J\cup I_{0}$, then by induction, the standard  facial closure $W_{K}$ of 
$W_{J\cup I_{0}}$ (and $W_{I}$) is equal to $W'$ by induction. If $I=J\cup I_{0}$, then $I_{\infty}\seq J\seq I_{\infty}$, $I_{\infty}=J$ is facial
 and hence $I=J\cup I_{0}$ is facial 
by Lemma \ref{x2.9}(c). In either case, we have the desired conclusion. 
 \end{proof}

\subsection{} \label{x8.6}  We next describe the relation of  the  imaginary cone  to the  facial subsets of $\Pi$ in the most general affine case.

In this subsection only, let $\Pi_{i}$ for $i\in I$ be the irreducible components of $\Pi$, and assume that all $\Pi_{i}$ are  of affine type.  Each set $\Pi_{i}$ is linearly independent. Let $W_{i}$ be the  affine Weyl group corresponding to $\Pi_{i}$. Write $\mc{K}_{i}=\real_{\geq 0}\delta_{i}$ where $\delta_{i}\in \real_{\geq 0}\Pi_{i}
\subseteq \real_{\geq 0}\Pi$ spans the isotropic ray of $\Pi_{i}$. We have $\mc{Z}_{i}:=W_{i}\mc{K}_{i}=\mc{K}_{i}$
and by \ref{x3.2}(d) that $\mc{Z}=\sum_{i} \mc{Z}_{i}=\sum_{i}\mc{K}_{i}=\mc{K}$ which is a polyhedral cone. Note that some of the $\delta_{i}$ 
may   lie in the relative interior of  faces of $\mc{K}$ of dimension greater than $1$ or may span the same extreme ray of $\mc{K}$.

\begin{lem}\begin{num}\item Let $P$ be a face of the cone $ \mc{Z}$, 
and $J=\mset{i\in I\mid \delta_{i}\in P}$.
For $i\in I$, let  $\Delta_{i}\subseteq \Pi_{i}$ be
  such that  $\Delta_{i}:=\Pi_{i}$ if $i\in J$, and $\Delta_{i}$ is a proper subset of 
 $\Pi_{i}$ otherwise. Then $\Delta:=\cup_{i\in I}\Delta_{i}$ is a facial subset of $\Pi$.
 \item Each facial subset  $\Delta$  of $\Pi$ arises as in $\text{\rm (a)}$ from a unique choice of face $P$ of 
 $\mc{Z}$ and sets $\Delta_{i}\subseteq \Pi_{i}$ satisfying the conditions of $\text{\rm (a)}$. 
  \end{num}\end{lem}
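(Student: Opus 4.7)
The plan is to prove (a) by constructing an explicit $v \in \mc{C}$ with $v^{\perp}\cap \Pi = \Delta$, and to prove (b) by reading off $(P,\{\Delta_i\})$ from any such $v$.

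For (a), the key point is that since the components $\Pi_i$ are pairwise separated, I can work component by component. For $i \in J$, I set $v_i := \delta_i \in V_i := \real\Pi_i$; because $\delta_i$ spans the radical of $\mpair{-,-}|_{V_i}$, we have $\mpair{v_i,\alpha}=0$ for every $\alpha\in\Pi_i$, which forces $\Pi_i\subseteq v_i^{\perp}$. For $i\notin J$, I need $v_i\in V_i$ with $v_i\in \mc{C}_{W_i}$ and $v_i^{\perp}\cap\Pi_i=\Delta_i\subsetneq \Pi_i$. Here I invoke the standard fact that every proper subset of an irreducible affine $\Pi_i$ consists only of finite-type components, so by Lemma \ref{x8.5}(a) applied to $(W_i,V_i)$, $\Delta_i$ is facial in $\Pi_i$ and such a $v_i$ exists. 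Setting $v:=\sum_i v_i\in\real\Pi$, the pairwise orthogonality of distinct components gives $\mpair{v,\alpha}=\mpair{v_j,\alpha}$ whenever $\alpha\in\Pi_j$, so $v\in\mc{C}$ and $v^{\perp}\cap\Pi=\bigsqcup_i \Delta_i=\Delta$.

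For (b), given a facial $\Delta\subseteq \Pi$, the $\Delta_i=\Delta\cap\Pi_i$ are obviously forced, as is $J=\{i:\Delta_i=\Pi_i\}$. Choose $v\in\mc{C}$ with $v^{\perp}\cap\Pi=\Delta$ and let $P:=\mc{Z}\cap v^{\perp}$; since $\mpair{v,\cdot}\geq 0$ on $\mc{Z}\subseteq\real_{\geq 0}\Pi$, $P$ is an exposed face of $\mc{Z}$. Writing $\delta_i=\sum_{\alpha\in\Pi_i}c_{i,\alpha}\alpha$ with all $c_{i,\alpha}>0$, one has $\mpair{v,\delta_i}=0$ iff $\mpair{v,\alpha}=0$ for all $\alpha\in\Pi_i$ iff $\Pi_i\subseteq \Delta$ iff $i\in J$, so $J=\{i:\delta_i\in P\}$. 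Since $\mc{Z}=\sum_i\real_{\geq 0}\delta_i$ is polyhedral, its faces are determined by the subset of generators they contain: if $x=\sum c_i\delta_i\in P$ with $c_i\geq 0$, the face property of $P$ propagates down to each summand $c_i\delta_i$, forcing $\delta_i\in P$ whenever $c_i>0$. Hence $P=\sum_{i\in J}\real_{\geq 0}\delta_i$, which both realizes $\Delta$ via the recipe of (a) and shows $P$ is uniquely determined by $J$, hence by $\Delta$.

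The only step requiring any real thought is the reduction in (a) to Lemma \ref{x8.5}(a) via the classification remark about affine diagrams; the remainder is a routine bookkeeping exercise exploiting the orthogonality of the $V_i$ and the face structure of the polyhedral cone $\mc{Z}$.
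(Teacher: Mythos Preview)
Your argument for (b) is correct and matches the paper's (with slightly more detail on why $P=\sum_{i\in J}\real_{\geq 0}\delta_i$).

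Your argument for (a), however, has a genuine gap at the step where you produce $v_i\in V_i:=\real\Pi_i$ for $i\notin J$. No such $v_i$ exists. Since $\delta_i=\sum_{\alpha\in\Pi_i}c_\alpha\alpha$ (all $c_\alpha>0$) spans the radical of the form on $\real\Pi_i$, every $v\in\real\Pi_i$ satisfies
\[
\sum_{\alpha\in\Pi_i}c_\alpha\,\mpair{v,\alpha}=\mpair{v,\delta_i}=0,
\]
so if $\mpair{v,\alpha}\geq 0$ for all $\alpha\in\Pi_i$ then in fact $\mpair{v,\alpha}=0$ for all $\alpha\in\Pi_i$. Thus $v^{\perp}\cap\Pi_i=\Pi_i$, never a proper subset. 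Equivalently, Lemma~\ref{x8.5}(a) cannot be applied to $(W_i,\real\Pi_i)$: the standing assumption~\ref{x4.1}(ii) fails there, and indeed the only facial subset of $\Pi_i$ for $W_i$ acting on $\real\Pi_i$ is $\Pi_i$ itself. Your subsequent step ``$\mpair{v,\alpha}=\mpair{v_j,\alpha}$ for $\alpha\in\Pi_j$'' genuinely requires $v_i\perp\Pi_j$ for $j\neq i$, so you cannot simply relax to $v_i\in V$ without further work.

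This is precisely why the paper's proof of (a) is more involved: one must go outside $\real\Pi$. The paper selects for each $i$ a maximal proper $\Gamma_i\subseteq\Pi_i$ with $\Delta_i\subseteq\Gamma_i$ (so $\Gamma:=\cup_i\Gamma_i$ has only finite-type components and the form is positive definite on $\real\Gamma$), extends $\real\Pi=\real\Gamma\oplus\real\mc{Z}$ to a nondegenerate subspace $U'\subseteq V$, and locates inside $(\real\Gamma)^{\perp}\cap U'$ a totally isotropic complement $U$ to $\real\mc{Z}$. A suitable $\psi\in U$ exposes the face $P$ of $\mc{Z}$ and is orthogonal to all of $\Gamma$; perturbing by small $\gamma_i\in\real\Gamma_i$ (which exist since the form is positive definite there) then cuts out $\Delta_i$ inside each $\Gamma_i$ for $i\notin J$. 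The resulting $\phi=\psi+\epsilon\sum_{i\notin J}\gamma_i$ lies in $\mc{C}$ with $\phi^{\perp}\cap\Pi=\Delta$, but it does not lie in $\real\Pi$.
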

  
\begin{rem} (1) It is easy to  see that (up to linear isomorphism), 
any pair $(C,U)$ of a polyhedral cone $C$ in a vector space
$U$ with $C=\real U$ is isomorphic to  a pair $(\mc{Z},\real\mc{Z})$
arising as above from some affine root system $\Phi$.  Moreover, given any multiset $R$ of rays in $C$ including all the extreme rays of $C$ with multiplicity at least one, one could choose $\Phi$ so in addition, $R$ corresponds under the isomorphism to  the multiset of  isotropic rays of  the irreducible components of $\Phi$

(2) One obtains from Lemmas \ref{x8.6} and  \ref{x2.13}   a complete description of  all the  facial subsets
of $\Pi$ when $\Pi$ has only finite type and affine type components.
In this case, one easily sees  that the faces of $\mc{Z}$ are in natural bijection with the special facial subgroups $W'$ of $W$, via $W'\mapsto\mc{Z}_{W'}$. We shall show this holds in general  in Section \ref{x10}.
  \end{rem}

  \begin{proof}  We first prove (b).
  Choose $\phi\in \mc{C}$ and 
  consider the corresponding facial subset $\Delta:=\Pi\cap \phi^{\perp}$ of $\Pi$.
  We have $\mpair{\phi, \mc{Z}}\subseteq \real_{\geq 0}$. Let $P:=\mc{Z}\cap \phi^{\perp}$, which is a face of $\mc{Z}$. Let $J:=\mset{i\in I\mid \delta_{i}\in P}$ and
  $\Delta_{i}:=\Delta\cap \Pi_{i}$.  Note that $\delta_{i}$ is a strictly positive linear  combination of $\Pi_{i}$ and $\mpair{\phi, \Pi_{i}}\subseteq \real_{\geq 0}$.
   If $i\in J$,  then $\mpair{\phi, \delta_{i}}=0$, so
  it follows   $\mpair{\phi,\Pi_{i}}=0$, $\Pi_{i}\subseteq \Delta$
 and therefore  $\Delta_{i}=\Pi_{i}$ in this case. On the other hand, if $i\not\in J$, then 
 $\mpair{\phi,\delta_{i}
}>0$ so $\Delta_{i}\subsetneq \Pi_{i}$. Hence $\Delta$ arises as in (a).
Clearly, $\Delta_{i}=\Pi_{i}\cap \Delta$, $J=\mset{i\in I\mid \Delta_{i}=\Pi_{i}}$, and  $P=\sum_{j\in J}\real_{\geq 0}\delta_{j}$
  are uniquely determined by $\Delta$.

Conversely, let $P$, $J$, $\Delta_{i}$ be as in (a). 
Choose $\Gamma_{i}\subseteq \Pi_{i}$ as follows. If $i\in J$, let $\Gamma_{i}=\Pi'_{i}$ where $\Pi'_{i}\subseteq \Pi_{i}$ is the set of simple  roots of the finite Weyl group corresponding to $\Phi_{i}$ (in fact, we need only  $\Pi_{i}'\subseteq \Pi_{i}$ and $\vert \Pi_{i}\setminus \Pi_{i}'\vert=1$) .
If $i\in I\setminus J$, let $\Gamma_{i}$ be any subset of $\Pi_{i}$ such that $\vert \Pi_{i}\setminus \Gamma_{i}\vert =1$ and $\Delta_{i}\subseteq  \Gamma_{i}$.
Note that $\Gamma:=\cup_{i}\Gamma_{i}$  has all components of finite type; hence it is  facial 
 and  linearly independent  and the restriction of $\mpair{-,-}$ to $\real\Gamma$ is positive definite by Lemmas  \ref{x8.5}(a)  and \ref{x2.9}(c). It follows that $\real\Pi=\real\Gamma\oplus \real\mc{Z}$, an orthogonal direct sum with $\real\mc{Z}$ as radical, where $\real \Pi$ is positive semidefinite.
Choose, by non-degeneracy of $\mpair{-,-}$,  a subspace $U'\supseteq \real \Pi$ of $V$ such that the restriction of $\mpair{-,-}$ to $U'$ is non-singular and   such that the codimension of $\real\Pi$ in $U'$ is equal to the  dimension $N$ of $\real\mc{Z}$. Write $(\real\Gamma)^{\perp}\cap U'=\real{\mc Z}+U$ where $U$ is a totally isotropic subspace of $U'$ of dimension $N$. Then the restriction of $\mpair{-,-}$ to $\real\mc{Z}+U$
is non-singular of  signature $(N,N,0)$ (it is a direct sum of $N$ hyperbolic planes) and the induced bilinear form $\real\mc{Z}\times U\rightarrow \real$ is a perfect pairing.
Choose $\psi\in U$ such that  $\mpair{\psi,\mc{Z}}\subseteq \real_{\geq 0}$ and 
  $P=\mc{Z}\cap \psi^{\perp}$, so  $J=\mset{j\in I\mid \delta_{j}\in \psi^{\perp}}$. 
  Note that $\mpair{\psi, \Pi_{i}}=0$ for $i\in J$ and $\mpair{\psi, \Pi_{i}\setminus \Gamma_{i}}\subseteq \real_{>0}$ for $i\in I\setminus J$ since $\delta_{i}$ is a strictly positive linear combination of all the elements of $\Pi_{i}$, $\vert \Pi_{i}\setminus \Gamma_{i}\vert\leq 1$ and $\mpair{\psi,\Gamma_{i}}=0$.
   For each $i\in I\setminus J$, choose $\gamma_{i}\in \real\Gamma_{i}$ such that 
  $\mpair{\gamma_{i},\Delta_{i}}=0$ and $\mpair{\gamma_{i},\Gamma_{i}\setminus \Delta_{i}}\subseteq \real_{>0}$ (which is possible since $\Gamma_{i}$ is linearly independent and $\mpair{-,-}$ is positive definite on $\real\Gamma_{i}$). Let $\phi:=\psi+\epsilon \sum_{i\in I\setminus J} \gamma_{i}$ where $\epsilon >0$. It is easy to see that for sufficiently small $\epsilon>0$,  $\mpair{\phi,\Pi}\subseteq \real_{\geq 0}$ and $\Pi\cap \phi^{\perp}=\cup_{i\in I}\Delta_{i}=\Delta$. Hence the set $\Delta$ as in (a) is facial.
   \end{proof}

\subsection{} \label{x8.7} From Proposition \ref{x3.2}(a), we have $\mpair{z,z'}\leq 0$ for all $z,z'\in \mc{Z}$. 
Since each element $z'$ of $\mcZc$ is $W$-conjugate to an element $k$ of $\mcKc$, 
the result below determines  the pairs  $z,z'\in \mcZc$ with $\mpair{z,z'}=0$ up to $W$-conjugacy. 
\begin{lem} Let $k\in \mcKc$ and $z\in \mcZc$ both be non-zero.
Let $\Gamma$, $\Delta$ be  connected    supports  of $k$ and $z$ respectively,  chosen so $\Delta$ contains some connected support of the unique element of $Wz\cap \mc{K}$  (which is possible by Lemma $\text{\rm \ref{x8.4}(i)}$).
Then $\mpair{z,k}=0$ if and only if  one or both  of the following two conditions holds:
 \begin{conds}\item  $\Delta$ and $\Gamma$ are separated.
\item $\Gamma$ is of affine type and $z$, $k$ both span the  isotropic ray $\real_{\geq 0}\delta$ of $\Gamma$.\end{conds}
\end{lem}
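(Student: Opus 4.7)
The plan is to dispatch the ``if'' direction by a direct computation, and then for the ``only if'' direction, reduce the hypothesis $\mpair{z,k}=0$ to a pointwise orthogonality condition on the support $\Delta$ of $z$, and use connectedness of $\Delta$ together with Lemma \ref{x8.4}(b), (d) to force one of (i) or (ii).

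For the ``if'' direction: if (i) holds then $z \in \real_{\geq 0}\Delta$ and $k \in \real_{\geq 0}\Gamma$ are orthogonal because $\mpair{\Delta,\Gamma}=\set{0}$; if (ii) holds then $z$ and $k$ both lie in the isotropic ray $\real_{\geq 0}\delta$. For the ``only if'' direction, I first write $z = \sum_{\alpha \in \Delta} d_\alpha \alpha$ with $d_\alpha>0$ and use $k \in \mc{K}$ (so $\mpair{\alpha,k}\leq 0$ for all $\alpha \in \Pi$) to conclude from $\mpair{z,k} = \sum_{\alpha \in \Delta} d_\alpha \mpair{\alpha,k}=0$, a vanishing sum of non-positive terms with strictly positive coefficients, that $\mpair{\alpha,k}=0$ for every $\alpha \in \Delta$. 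Unpacking this pointwise: for $\alpha \in \Pi \sm \Gamma$ the non-positivity of all $\mpair{\alpha,\gamma}$, $\gamma \in \Gamma$, turns the condition into $\alpha \perp \Gamma$, while for $\alpha \in \Gamma$ it becomes $\alpha \in \Gamma_0 := \set{\beta \in \Gamma \mid \mpair{\beta,k}=0}$, and Lemma \ref{x8.4}(b), (d) tells us that $\Gamma_0$ is all of $\Gamma$ when $\Gamma$ is affine, and a proper subset of $\Gamma$ with only finite-type components when $\Gamma$ is indefinite.

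Next I invoke connectedness of $\Delta$: no path in $\Delta$ can cross from $\Delta \cap \Gamma$ to $\Delta \sm \Gamma$ (since elements of $\Delta \sm \Gamma$ are perpendicular to $\Gamma$), so either $\Delta \cap \Gamma = \eset$, which immediately gives $\Delta \perp \Gamma$ and hence (i), or $\Delta \seq \Gamma$. In the second alternative, if $\Gamma$ were indefinite then $\Delta \seq \Gamma_0$ together with connectedness would force $\Delta$, and hence the included connected support $\Delta' \seq \Delta$ of $k' := Wz \cap \mc{K}$, to be of finite type, contradicting $k' \in \mcKc$: positive definiteness of $\mpair{-,-}$ restricted to $\real \Delta'$ gives $\mpair{k',k'}>0$, while $k' \in \mc{K}$ gives $\mpair{k',k'}\leq 0$. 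So $\Gamma$ must be affine and $k$ is a positive multiple of the isotropic $\delta$ by Lemma \ref{x8.4}(b); it remains to show $z$ is also a positive multiple of $\delta$.

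This last step is where I expect the main technical obstacle. The inclusion $\Delta' \seq \Gamma$ together with $k' \in \mc{K}$ yields $k' \in \real_{\geq 0}\Gamma \cap \mc{K} \seq \mc{K}_{W_\Gamma}$, and this last set equals $\real_{\geq 0}\delta$ by \ref{x4.5} because $\Gamma$ is affine; so $k' = c'\delta$ for some $c'>0$ and $z = wk' = c'w\delta$. The vector $w\delta$ is isotropic and lies in $\real\Delta \seq \real\Gamma$, on which $\mpair{-,-}$ is positive semi-definite with radical $\real\delta$, so $w\delta \in \real\delta$. Finally, positive independence of $\Pi$ excludes $w\delta \in \real_{<0}\delta$ (a non-zero element of $\real_{<0}\Gamma \cap \real_{\geq 0}\Pi$ would produce a non-trivial vanishing non-negative combination of $\Pi$), forcing $w\delta \in \real_{>0}\delta$ and hence $z \in \real_{>0}\delta$, giving (ii).
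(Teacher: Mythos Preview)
Your proof is correct and follows essentially the same route as the paper's: expand $z$ over its support $\Delta$, use $k\in\mc{K}$ to force $\mpair{\alpha,k}=0$ for all $\alpha\in\Delta$, derive the dichotomy $\Delta\perp\Gamma$ or $\Delta\seq\Gamma$ from connectedness, and rule out indefinite $\Gamma$ via Lemma~\ref{x8.4}(d). The only notable difference is in the final step: once $\Delta\seq\Gamma$ with $\Gamma$ affine, the paper observes that $\Delta$ (containing the infinite-type $\Delta'$) cannot be a proper subset of the affine $\Gamma$, so $\Delta=\Gamma$, and then $z\in\real_{\geq 0}\Gamma$ together with $\mpair{z,z}\leq 0$ and positive semidefiniteness on $\real\Gamma$ immediately force $z\in\real_{\geq 0}\delta$ --- this is shorter than your detour through $k'$ and the $W$-action, though your argument is also valid.
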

\begin{proof} The ``if'' direction is trivial.  For the converse, suppose $\mpair{z,k}=0$.
Note that neither $\Gamma$ nor  $\Delta$ is of finite type,    by  \ref{x8.5}(b).
Write $z=\sum_{\alpha\in \Delta}c_{\alpha}\alpha$ where  all $c_{\alpha}> 0$. 
 We have
\[0= \mpair{k,z}=\sum_{\alpha\in \Delta}c_{\alpha} \mpair{k,\alpha}.\]
Since all $c_{\alpha}>0$ and $\mpair{k,\alpha}\leq 0$, it follows that 
$\mpair{k,\alpha}=0$ for all $\alpha\in \Delta$.
This readily implies that  either $\Delta$ is separated from $\Gamma$ or $\Delta$ is contained in  $\Gamma$.
Suppose that (i) doesn't hold, so  $\Delta\subseteq \Gamma$. If $\Gamma$ is of indefinite type,
\ref{x8.4}(d) implies that every connected component of $\mset{\beta\in \Gamma\mid \mpair{\beta, k}=0}$ is of finite type. Since $\Delta$ is contained in one of these components, we would have $\Delta$ of finite type contrary to above. Therefore 
$\Gamma$ must be of affine type.
Since $\Delta$ is not of finite type and $\Delta\subseteq \Gamma$, we have $\Gamma=\Delta$ of affine type.
But $\mpair{-,-}$ restricted to $\real \Delta$  is positive semi-definite with radical $\real\delta$. Since $\mpair{k,k}=\mpair{z,z}\leq 0$ and 
$k,z\in \real_{\geq 0}\Gamma$, we conclude that  (ii) holds.
\end{proof}

\subsection{}  \label{x8.8}The following description of isotropic rays in $\mc{Z}$ with connected support
is an immediate corollary (cf. \cite[Prop 5.7]{K}).

\begin{cor}  Any non-zero  element $\delta$ of $\mcZc$ is isotropic if and only if  it is  $W$-conjugate  
to a representative of the isotropic ray of some irreducible affine standard  parabolic subsystem of $\Phi$.\end{cor}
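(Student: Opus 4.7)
The plan is to derive the corollary as a direct application of Lemma \ref{x8.7}, with the two distinguished elements of that lemma taken to coincide. For the easy ``if'' direction, I would start from an element $\delta = w\delta'$ with $w \in W$ and $\delta'$ spanning the isotropic ray of some irreducible affine standard parabolic subsystem with simple roots $\Pi_I$ (so $I \subseteq S$ is irreducible of affine type). By the description of affine root systems in \ref{x4.5}, $\delta'$ is isotropic, hence so is $\delta$ since $W$ preserves $\mpair{-,-}$; and $\delta' \in \mc{K}_I \subseteq \mc{K}$ has the connected support $\Pi_I$, so $\delta' \in \mcKc$ and $\delta = w\delta' \in W\mcKc = \mcZc$.

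For the nontrivial ``only if'' direction, I would begin by writing the given non-zero isotropic $\delta \in \mcZc$ as $\delta = wk$ with $w \in W$ and $k \in \mcKc$. Then $k$ is itself isotropic (the form being $W$-invariant) and, by the very definition of $\mcKc$, admits some connected support $\Gamma \subseteq \Pi$. The plan is now to apply Lemma \ref{x8.7} with the roles of both its distinguished elements played by $k$ itself: set $z := k$ and choose $\Delta := \Gamma$. The side condition on $\Delta$ in that lemma — that it contain a connected support of the unique element of $Wz \cap \mc{K}$ — is trivially met, since that unique element is $k$ and $\Gamma$ is already a connected support of it.

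The hypothesis $\mpair{z,k} = 0$ of Lemma \ref{x8.7} is precisely the isotropy of $k$, so one of the two alternative conclusions of that lemma must hold. The first alternative, that $\Delta = \Gamma$ be separated from $\Gamma$, is immediately ruled out because $\Gamma$ is non-empty and every $\alpha \in \Pi \supseteq \Gamma$ satisfies $\mpair{\alpha,\alpha} = 1 \neq 0$. Hence the second alternative holds: $\Gamma$ is of affine type and $k$ spans the isotropic ray of $\Gamma$. Since $\Gamma \subseteq \Pi$ is connected and affine, $W_\Gamma$ is an irreducible affine standard parabolic subgroup, and $k$ is by construction a representative of the isotropic ray of its root system $\Phi_\Gamma$. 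Therefore $\delta = wk$ is $W$-conjugate to such a representative, as required.

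No substantive obstacle arises; the entire content has already been bundled into Lemma \ref{x8.7} and the supporting results \ref{x8.1}(i), \ref{x8.4} on connected supports. The only thing to be careful about is matching the hypotheses of Lemma \ref{x8.7} correctly when $z$ and $k$ are identified — specifically, verifying the side condition about connected supports of the $\mc{K}$-representative of $Wz$ — and observing that ``separated from itself'' is incompatible with $\Gamma$ being a non-empty subset of $\Pi$.
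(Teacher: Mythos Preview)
Your proof is correct and matches the paper's intended argument: the paper simply states that the result is an immediate corollary of Lemma~\ref{x8.7}, and your specialization $z=k$ with $\Delta=\Gamma$ is precisely how that immediacy plays out. The only minor remark is that one could alternatively bypass Lemma~\ref{x8.7} and appeal directly to Lemma~\ref{x8.4}(e), which already characterizes isotropic elements of $\mc{K}$ with a connected support; but since Lemma~\ref{x8.7} itself rests on \ref{x8.4}, this is the same content repackaged.
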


\section{Hyperbolic and Universal Coxeter groups} \label{x9} In this section, we discuss the imaginary cone and its closure in the case of ``hyperbolic''  and ``universal'' root systems. These two classes of root system play an important role in the study of the  imaginary cone in general. We also mention some open questions. 
 \subsection{}\label{x9.1} Consider the following  conditions on $(V,\mpair{-,-})$ and $(\Phi,\Pi)$. 

 \begin{conds}
 \item The restriction of $\mpair{-,-}$ to $\real\Pi$ has signature $(n,1,0)$ for some $n\in \Nat_{\geq 1}$
 (i.e. $\real \Pi$ is the orthogonal direct sum of a positive definite subspace of dimension $n\geq 1$ and a negative definite subspace of dimension $1$).
 \item No non-empty proper facial subset of $\Pi$ has all its irreducible components of indefinite type.
 \item Every proper facial subset   of $\Pi$ has each of  its connected components  of finite or affine type.
 \item   Every proper facial  subset   of $\Pi$ has each of  its connected components  of finite  type.\end{conds} 
 
 Clearly (iv) implies (iii) which in turn implies (ii).
  We shall say that the root system  $\Phi$ (or  Coxeter group $W$) is \emph{weakly hyperbolic} if (i) holds
 (in case $V=\real \Pi$, this was called hyperbolic in \cite[4.5--4.6]{K} and \cite{HRT}). We say that $(W,S)$ is \emph{hyperbolic} if 
 (i) and  (iii) hold, and that $(W,S)$ is \emph{compact hyperbolic} if (i) and (iv)  hold.

\subsection{} \label{x9.2} If $W$ is weakly hyperbolic, 
 let $\Pi_{1},\ldots, \Pi_{m}$ be the irreducible components of $\Pi$.
 We have $\real\Pi=\sum_{i}\real \Pi_{i}$ where $\real \Pi_{i}\perp \real \Pi_{j}$ if $i\neq j$.
 Moreover, $\real \Pi_{i}\cap \sum_{j\neq i}\real\Pi_{j}$ is in the radical $\set{0}$ of $\mpair{-,-}$ restricted to $\real \Pi$, so 
 in fact $\real\Pi=\oplus_{i}\real \Pi_{i}$, an orthogonal direct sum. By considering signatures of the restrictions of $\mpair{-,-}$ to $\real \Pi_{i}$, it follows that all but one of the $\Pi_{i}$ are of finite type and one of them, say $\Pi_{1}$, is of indefinite type. By \ref{x8.5}(c), $\Pi_{1}$ is a facial subset of $\Pi$.  If (ii) holds as well, then $\Pi_{1}=\Pi$ i.e. $\Pi$ is irreducible. 
 In particular, if $W$ is hyperbolic, it is irreducible of indefinite type.
 
 Note that if $W$ is  weakly hyperbolic, the Witt index (dimension of a maximal totally isotropic subspace) of the restriction of $\mpair{-,-}$ to $\real \Pi$ 
 is $1$.  This implies  that  any non-zero  totally isotropic face (cf. \ref{x7.10}) of $\ol{\mc{Z}}$ is  one-dimensional (i.e. a ray), which makes the structure of $\ol{\mc{Z}}$, $R_{0}$ etc much simpler  than in general.
 \begin{rem} If $V=\real \Pi$, the notions of hyperbolic and compact hyperbolic above essentially coincide with those in \cite{Bour},  \cite{Hum} and \cite{Kac} for those  (special) root systems  which are both of the type here  and  of the  type  considered in these sources (which all assume in particular that simple roots are  linearly independent).  \end{rem}

\subsection{}\label{x9.3} The following  corollary of Proposition \ref{x5.8} is required below. 
\begin{cor} Suppose that $(W,S)$ is irreducible and satisfies the condition
 $\text{\rm \ref{x9.1}(ii)} $
(as well as $\text{\rm \ref{x4.1}(i)--(iii)}$).  Then $\mc{K}=-\mc{C}\cap \real \Pi$ and  $\mc{Z}=-\mc{X}\cap \real\Pi$.
\end{cor}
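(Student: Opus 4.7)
The plan is to derive both identities as an essentially immediate specialization of Proposition \ref{x5.8}. Under the stated hypotheses (understood to inherit ``of indefinite type'' from the framework of Proposition \ref{x5.8}, since for irreducible finite or affine $W$ the claimed identities can be checked to fail), condition \ref{x9.1}(ii) forces the index set $F_{\mathrm{m.ind}}$ in that proposition to be empty, so the extra linear inequalities appearing in the formulas there become vacuous.

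First I would verify $F_{\mathrm{m.ind}}=\eset$. Every element of $F_{\mathrm{m.ind}}$ is by definition a maximal proper facial subset $I$ of $S$ for which all irreducible components of $\Pi_I$ are of indefinite type. If $I\neq\eset$, condition \ref{x9.1}(ii) excludes this directly. The remaining possibility $I=\eset$ is also ruled out: since $(W,S)$ is irreducible of indefinite type we have $|\Pi|\geq 2$, and by Lemma \ref{x8.5}(a) each singleton $\{s_\al\}$ with $\al\in\Pi$ is facial (being of finite type $A_1$), so $\eset$ is strictly contained in a proper facial subset of $S$ and hence is not in $F_{\mathrm{m}}$.

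Once $F_{\mathrm{m.ind}}=\eset$ is established, Proposition \ref{x5.8}(a) collapses to the clean statement $\mc{K}=-\mc{C}\cap\real\Pi$, which is the first identity. For the second, I would simply pass to $W$-orbits: the subspace $\real\Pi$ is $W$-stable (each $s_\al$ sends $\Pi$ into $\Phi\seq\real\Pi$), so $w(-\mc{C}\cap\real\Pi)=-w(\mc{C})\cap\real\Pi$ for every $w\in W$, and therefore
\begin{equation*}
\mc{Z}=\bigcup_{w\in W}w\mc{K}=\bigcup_{w\in W}\bigl(-w\mc{C}\cap\real\Pi\bigr)=\Bigl(\bigcup_{w\in W}-w\mc{C}\Bigr)\cap\real\Pi=-\mc{X}\cap\real\Pi.
\end{equation*}

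The one delicate point (and the main obstacle, such as it is) is making sure that $\eset$ does not slip into $F_{\mathrm{m.ind}}$: if it did, the constraint $\mpair{v,\phi_{\eset}}\geq 0$ with $\phi_\eset$ in the interior of $\mc{C}$ would combine with $\mpair{v,\phi_\eset}\leq 0$ (coming from $v\in-\mc{C}$) to force $v\perp\phi_\eset$ and cut $-\mc{C}\cap\real\Pi$ down strictly. Once the exclusion of $\eset$ from $F_{\mathrm{m}}$ is justified as above, both identities follow directly from Proposition \ref{x5.8}(a) together with a single line of $W$-equivariance.
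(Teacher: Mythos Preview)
Your proof is correct and follows essentially the same route as the paper: both deduce $\mc{K}=-\mc{C}\cap\real\Pi$ from Proposition~\ref{x5.8}(a) by observing $F_{\mathrm{m.ind}}=\eset$, and then obtain $\mc{Z}=-\mc{X}\cap\real\Pi$ by taking $W$-orbits using $W$-stability of $\real\Pi$. You are more careful than the paper in explicitly excluding $\eset$ from $F_{\mathrm{m}}$ and in flagging that the indefinite-type hypothesis (required to invoke Proposition~\ref{x5.8}) is implicit in the statement.
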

\begin{proof} 
Under the assumptions here, $F_{\mathrm{m.ind}}  =\emptyset$. Proposition \ref{x5.8}(a) therefore gives $\mc{K}=-\mc{C}\cap \real \Pi$.  Since $W\real\Pi=\real \Pi$, this implies \begin{equation*}
\mc{Z}=\bigcup_{w\in W}\,w (-\mc{C}\cap \real{\Pi})=\real \Pi\cap \bigcup_{w\in W}w (-\mc{C})=\real\Pi\cap -\mc{X}.
\qedhere\end{equation*} 
\end{proof}

 \subsection{} \label{x9.4} If $W$ is   weakly hyperbolic on $V$
 there is a basis $x_{0},\ldots, x_{n}$ of $V$ such that $\mpair{x_{i},x_{j}}=0$ for $i\neq j$,
 $\mpair{x_{i},x_{i}}=1$ for $i=1,\ldots n$ and $\mpair{x_{0},x_{0}}=-1$.

 Let $\mc{L}=\mset{\sum_{i}\lambda_{i}x_{i}\mid \lambda_{0}>\sqrt{\sum_{i=1}^{n}\lambda_{i}^{2}}}$;
 this is an open convex cone in $V$ with closure 
 $\overline{\mc{L}}=\mset{\sum_{i}\lambda_{i}x_{i}\mid \lambda_{0}\geq \sqrt{\sum_{i=1}^{n}\lambda_{i}^{2}}}$ satisfying $\overline{\mc{L}}^{*}=-\overline{\mc{L}}$. (These well-known assertions follow readily from the Cauchy-Schwarz inequality). The cones $\mc{L}$ and $-\mc{L}$ are the two connected components of $\mset{v\in V\mid \mpair{v,v}< 0}$.  We have $0\neq \mc{Z}\subseteq \overline{\mc{L}}\cup -\overline{\mc{L}}$. Replacing $x_{n}$ by $-x_{n}$ if necessary, assume $\mc{Z}\cap \overline{\mc{L}}\neq \set{0}$.
  \begin{prop}  Let $W$ be irreducible and weakly hyperbolic, and let $\mc{L}$ be as defined above. 
 Assume $V=\real \Pi$. Then   $\mc{L}$ is $W$-invariant and \begin{num}
  \item  $\text{\rm(\cite{Max},\cite[(5.10.2)]{Kac}, \cite[Proposition 3.7(i)]{HRT})}$  $ \mc{Z}\subseteq \overline{\mc{L}}$.
  \item $\text{\rm(\cite{Max},  \cite[Proposition 3.7(ii)]{HRT}}$ $-\mc{L}\subseteq \mc{X}$. 
  \item $\text{\rm (compare \cite[Ex 5.15]{Kac})}$ If the condition $\text{\rm  \ref{x9.1}(ii)}$ holds (e.g. $W$ is hyperbolic), then $\mc{L}\subseteq \mc{Z}=-\mc{X}\subseteq  \overline{\mc{L}}$ and $\overline{\mc{Z}}=-\overline{\mc{X}}=\mc{X}^{*}=\overline{\mc{L}}\seq \real_{\geq 0}\Pi$. Further,
	  $\mc{Z}\setminus \mc{L}$ is the set of all   isotropic elements of $\mc{Z}$.
  \item If $W$ is  compact hyperbolic, then $\mc{Z}=-\mc{X}=\mc{L}\cup\set{0}$. \end{num}
 \end{prop}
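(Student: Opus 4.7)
The plan is to establish the four parts in order, reducing (b) to (a) by duality, deriving (c) from (a), (b) and the description of $\mc{Z}$ under condition (ii), and finally reducing (d) to a stabilizer argument via (c).

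For (a), I would use salience of $\mc{Z}\seq \real_{\geq 0}\Pi$ combined with Proposition \ref{x3.2}(c), which gives $\mpair{z,z'}\leq 0$ for all $z,z'\in \mc{Z}$. In particular, every $z\in \mc{Z}$ has non-positive norm and so lies in $\overline{\mc{L}}\cup -\overline{\mc{L}}$. If some $v\in \mc{Z}\cap(-\overline{\mc{L}}\sm\set{0})$ existed, then for any non-zero $v'\in \mc{Z}\cap \overline{\mc{L}}$ (which exists by hypothesis), the inner product $\mpair{v,v'}$ would be $\leq 0$ by \ref{x3.2}(c) and $\geq 0$ by $-\overline{\mc{L}}=\overline{\mc{L}}^{*}$, hence zero. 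The equality case of the reversed Lorentzian Cauchy--Schwarz inequality then forces $v$ and $-v'$ to be positively proportional null vectors, so $-v$ is a positive multiple of $v'$ and both $v$ and $-v$ would lie in $\mc{Z}$, contradicting salience.

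For (b), I would take duals in (a): by Theorem \ref{x5.1}(a) and the identity $\overline{\mc{L}}^{*}=-\overline{\mc{L}}$ noted above, one has $-\overline{\mc{L}}=\overline{\mc{L}}^{*}\seq \mc{Z}^{*}=\overline{\mc{Z}}^{*}=\overline{\mc{X}}$. Passing to interiors and using $\inter(\overline{\mc{X}})=\inter(\mc{X})$ for the convex set $\mc{X}$ yields $-\mc{L}\seq \mc{X}$. For (c), condition (ii) together with $V=\real\Pi$ permits Corollary \ref{x9.3} to give $\mc{Z}=-\mc{X}$; combining with (a) and (b) yields the chain $\mc{L}\seq \mc{Z}=-\mc{X}\seq \overline{\mc{L}}$. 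Passage to closures and a further application of Theorem \ref{x5.1}(a) then give $\overline{\mc{Z}}=-\overline{\mc{X}}=\mc{X}^{*}=\overline{\mc{L}}$; the inclusion $\overline{\mc{L}}\seq\real_{\geq 0}\Pi$ follows from $\mc{Z}\seq\real_{\geq 0}\Pi$ on taking closures. The last assertion of (c) is immediate from the fact that $\overline{\mc{L}}\sm \mc{L}$ is precisely the set of isotropic vectors in $\overline{\mc{L}}$.

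For (d), observe that (iv) implies (ii), so (c) applies; it then remains to show that $\mc{Z}$ contains no non-zero isotropic vector. Given such $v$, the point $-v$ lies in $\mc{X}$ on the boundary of $\overline{\mc{X}}=-\overline{\mc{L}}$, since $-v\in \inter(\mc{X})=-\mc{L}$ would contradict $\mpair{v,v}=0$. By \ref{x4.3}(c) the stabilizer of $-v$ is therefore infinite, and writing $-v=w(p)$ with $p\in \mc{C}$, Lemma \ref{x2.2} identifies it with $wW_{J}w^{-1}$ where $\Pi_{J}=\Pi\cap p^{\perp}$ is a facial subset of $\Pi$. If $J\sneq S$, condition (iv) forces all components of $\Pi_{J}$ to be finite type, making $W_{J}$ finite, a contradiction; hence $J=S$ and $p\in \Pi^{\perp}=\set{0}$ by non-singularity of $\mpair{-,-}$ on $V=\real\Pi$, giving $v=0$. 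The main obstacle is the equality-case step in (a), verifying via Lorentzian Cauchy--Schwarz that $\mpair{v,v'}=0$ with $v\in -\overline{\mc{L}}\sm\set{0}$ and $v'\in \overline{\mc{L}}\sm\set{0}$ forces $v$ and $-v'$ to span a common isotropic ray; the remainder is essentially formal manipulation of the known dualities together with structural facts from Sections \ref{x2} and \ref{x4}.
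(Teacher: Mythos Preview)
Your argument is correct, with one omission: you do not address the $W$-invariance of $\mc{L}$, which is part of the statement.  This is easily repaired once (a) is in hand: $W$ preserves the form and hence the set $\pm(\overline{\mc{L}}\sm\set{0})$ of non-zero vectors of non-positive norm; since $W$ fixes the non-empty subset $\mc{Z}\sm\set{0}\seq\overline{\mc{L}}\sm\set{0}$, it cannot interchange the two components, so $\overline{\mc{L}}$ and its interior $\mc{L}$ are $W$-stable.

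For (b) and (c) your argument is essentially the paper's.  For (a) and (d) you take genuinely different routes.  In (a), the paper observes that $\mc{Z}\sm\set{0}$ is a connected subset (being a non-empty blunt cone) of the two-component set $\mset{v\in V\sm\set{0}\mid \mpair{v,v}\leq 0}$, so must lie entirely in $\overline{\mc{L}}\sm\set{0}$; your argument via the equality case of the reversed Cauchy--Schwarz inequality and salience of $\mc{Z}\seq\real_{\geq 0}\Pi$ is valid but more computational.  In (d), the paper instead appeals to Lemma~\ref{x8.4}: any non-zero isotropic element of $\mc{K}$ has a support all of whose components are of affine type, which under condition~\ref{x9.1}(iv) is impossible for a proper facial subset.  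Your alternative via Lemma~\ref{x4.3}(c) --- an isotropic $-v\in\mc{X}$ lies outside $\inter(\mc{X})=-\mc{L}$ and hence has infinite stabilizer, forcing $J=S$ and $v=0$ --- is a clean substitute that avoids the support machinery of Section~\ref{x8} at the cost of using the identification $\inter(\mc{X})=-\mc{L}$ from (c).
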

 \begin{proof} Let $\mc{L}':=\overline{\mc{L}}\sm\set{0}$, so the connected components of $\mc{L}'':=\mset{v\in V\sm\set{0}\mid \mpair{v,v}\leq 0}$ are $\pm \mc{L}'$. By Proposition \ref{x3.2}, $\mc{Z}\sm\set{0}$ is a connected subset of
$ \mc{L}''$ which intersects $\mc{L}'$ nontrivially, so  
 $\mc{Z}\sm\set{0}\seq\mc{L}'$ and (a) is proved. 
 The $W$-action  preserves $\mc{L}''$ but cannot interchange the two components $\pm \mc{L}'$ since it preserves $\mc{Z}\sm\set{0}\seq \mc{L}'$.  Therefore  $\mc{L}' $
  (and hence its interior $\mc{L}$) is $W$-invariant.  Taking duals in (a)   gives $-\overline{\mc{L}}\seq\mc{Z}^{*}= \overline{\mc{X}}$ by 
 Theorem \ref{x5.1}. Hence the interior  $-\mc{L}$ of 
 $-\overline{\mc{L}}$ is contained in the interior of $\overline{\mc{X}}$, which is in turn contained in $\mc{X}$ since $\mc{X}$ is a cone. This proves (b).

  Assume from now  that \ref{x9.1}(ii) holds. The first part of  (c) follows from (a)--(b), 
 Lemma \ref{x9.3} and Theorem \ref{x5.1} (together with the definition of $\mc{Y}$). The  second part of (c) is trivial since  $\overline{\mc{L}}\setminus \mc{L}$
 is the set of all isotropic elements of $\overline{\mc{L}}$. Now the  isotropic elements of $\mc{Z}$
 are  $W$-conjugate to the isotropic elements of $\mc{K}$, which are completely described 
 in Lemma \ref{x8.4}. In particular, there are no non-zero isotropic elements of $\mc{Z}$ unless
 there is a (necessarily proper  since $W$ is of indefinite type)  facial subset $I$ of $S$ such that $\Pi_{I}$ has only affine components. Hence (d) follows from the definitions and (c).
   \end{proof}
   \subsection{} \label{x9.5}
    The imaginary cone $\mc{Z}_{W'}$ of a reflection subgroup $W'$ is not in general the intersection of $\mc{Z}$  with $\real\Pi_{W'}$, as simple examples show.
    However, the next Lemma describes some important special situations in which this is true
    (in addition to those in Corollary \ref{x3.5}).
   
   \begin{cor} Let $W'$ be a  finitely generated reflection subgroup of $W$
   which is either finite,   irreducible affine  or   hyperbolic  (e.g. any dihedral reflection subgroup).  Then     $\mc{Z}_{W'}=\mc{Z}\cap \real\Pi_{W'}$ and  $\overline{\mc{Z}_{W'}}=\overline{\mc{Z}}\cap \real\Pi_{W'}$.
      \end{cor}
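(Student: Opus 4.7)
My plan is to handle the three cases separately, setting $V':=\real\Pi_{W'}$ throughout. The easy inclusions $\mc{Z}_{W'}\subseteq \mc{Z}\cap V'$ and $\ol{\mc{Z}_{W'}}\subseteq \ol{\mc{Z}}\cap V'$ are immediate from Theorem \ref{x6.3} together with closedness of $V'$, so the task is the reverse inclusions. A common feature across all three cases will be that the restriction of $\mpair{-,-}$ to $V'$ has a signature that interacts well with the constraint $\mpair{v,v}\leq 0$ on $\ol{\mc{Z}}$ coming from Lemma \ref{x5.9}(a).

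For $W'$ finite, Lemma \ref{x2.9}(b),(c) supplies positive definiteness of $\mpair{-,-}|_{V'}$, while $\mc{Z}_{W'}=\set{0}$ by \ref{x4.5}. Any $v\in \ol{\mc{Z}}\cap V'$ satisfies $\mpair{v,v}\leq 0$ by Lemma \ref{x5.9}(a), which combined with positive definiteness forces $v=0$. For $W'$ irreducible affine, the form on $V'$ is positive semi-definite with radical $\real\delta_{W'}$, where $\delta_{W'}\in \real_{>0}\Pi_{W'}$ is $W'$-fixed, and $\mc{Z}_{W'}=\ol{\mc{Z}_{W'}}=\real_{\geq 0}\delta_{W'}$ by \ref{x4.5}. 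For $v\in \ol{\mc{Z}}\cap V'$, Lemma \ref{x5.9}(a) combined with positive semi-definiteness yields $\mpair{v,v}=0$, placing $v$ in $\real\delta_{W'}$; then $v\in \ol{\mc{Z}}\subseteq \real_{\geq 0}\Pi$ and $\mpair{\rho,\delta_{W'}}>0$ force the scalar in $v=c\delta_{W'}$ to satisfy $c\geq 0$, so $v\in \mc{Z}_{W'}$.

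For the hyperbolic case, I will work with $W'$ acting on $V'$, justified by Remark \ref{x3.5}(1) (which says $\mc{K}_{W'}$, $\mc{Z}_{W'}$ and their closures are unchanged by restriction of quadratic space). Hyperbolicity of $W'$ provides irreducibility, condition \ref{x9.1}(ii), non-singularity of the form on $V'$, and finiteness of $\Pi_{W'}$, so the standing assumptions \ref{x4.1}(i)--(iii) hold for $(\Phi_{W'},\Pi_{W'})$ on $V'$, and Corollary \ref{x9.3} and Proposition \ref{x9.4} apply. Corollary \ref{x9.3} gives $\mc{Z}_{W'}=-\mc{X}_{W'}'\cap V'$, where $\mc{X}_{W'}'=\mc{X}_{W'}\cap V'$ because $W'$ preserves $V'$ and $\mc{C}_{W'}\cap V'$ coincides with the fundamental chamber computed intrinsically in $V'$; combined with $\mc{Z}\subseteq -\mc{X}\subseteq -\mc{X}_{W'}$ (from Proposition \ref{x3.2}(a) and Lemma \ref{x1.10}(e)), this yields $\mc{Z}\cap V'\subseteq \mc{Z}_{W'}$. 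For the closed statement, Proposition \ref{x9.4}(c) applied to $W'$ on $V'$ gives $\ol{\mc{Z}_{W'}}=\ol{\mc{L}'}\subseteq \real_{\geq 0}\Pi_{W'}$, where $\ol{\mc{L}'}$ is one of the two components of the closed light cone in $V'$. Given $v\in \ol{\mc{Z}}\cap V'$, Lemma \ref{x5.9}(a) places $v$ in $\ol{\mc{L}'}\cup -\ol{\mc{L}'}$; if $v\in -\ol{\mc{L}'}$ then $-v\in \ol{\mc{L}'}\subseteq \real_{\geq 0}\Pi$ gives $\mpair{\rho,v}\leq 0$, while $v\in \ol{\mc{Z}}\subseteq \real_{\geq 0}\Pi$ gives $\mpair{\rho,v}\geq 0$, so $\mpair{\rho,v}=0$ and hence $v=0\in \ol{\mc{L}'}$ (as $\rho$ is strictly positive on $\Pi$). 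The main subtlety, and the step requiring the most care in the write-up, will be the bookkeeping in this hyperbolic case: tracking how the various cones for $W'$ computed in the ambient $V$ compare with their counterparts computed intrinsically in $V'$, so that the results of Section \ref{x9} can be cleanly invoked despite their being phrased under the hypothesis $V=\real\Pi$.
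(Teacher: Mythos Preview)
Your argument is correct; the overall strategy---exploiting that the restriction of $\mpair{-,-}$ to $\real\Pi_{W'}$ has controlled signature in each of the three cases and that Lemma~\ref{x5.9}(a) forces $\mpair{v,v}\leq 0$ on $\ol{\mc{Z}}$---matches the paper's. The paper packages the three cases uniformly by introducing $L'_{W'}:=\{z\in\real\Pi_{W'}\mid\mpair{z,z}\leq 0\}$ and $L_{W'}:=L'_{W'}\cap\real_{\geq 0}\Pi_{W'}$, proving once that $\ol{\mc{Z}_{W'}}=L_{W'}$, $L'_{W'}=L_{W'}\cup(-L_{W'})$, and $\mc{K}_{W'}=-\mc{C}_{W'}\cap L_{W'}$; your case-by-case treatment is the same content unpacked.

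The genuine difference is your handling of the non-closed inclusion $\mc{Z}\cap V'\subseteq\mc{Z}_{W'}$ in the hyperbolic case. The paper argues by picking $w\in W$ with $wz\in\mc{K}$, setting $W'':=wW'w^{-1}$, and showing $wz\in\mc{K}_{W''}$ via the characterisation $\mc{K}_{W''}=-\mc{C}_{W''}\cap L_{W''}$. You instead invoke Corollary~\ref{x9.3} to get $\mc{Z}_{W'}=-\mc{X}_{W'}\cap V'$ and then use only the chain $\mc{Z}\subseteq -\mc{X}\subseteq -\mc{X}_{W'}$; this bypasses the conjugation entirely and is cleaner. The paper's route has the compensating virtue that, once the $L_{W'}$ framework is in place, the single conjugation argument covers all three cases simultaneously (via Proposition~\ref{x3.2}(c) for the inequality $\mpair{wz,wz}\leq 0$), whereas your approach needs a separate mechanism (positive definiteness, the isotropic ray, or Corollary~\ref{x9.3}) in each case. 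Your bookkeeping about intrinsic versus ambient cones for $W'$ is correct; the identity $\mc{X}_{W'}\cap V'=\bigcup_{w\in W'}w(\mc{C}_{W'}\cap V')$ holds simply because intersection distributes over union and $W'$ preserves $V'$.
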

    \begin{proof}  Let ${L} '_{W'}:=\mset{z\in \real
\Pi_{W'}\mid \mpair{z,z}\leq 0}$ and ${L} _{W'}:={L} '_{W'}\cap \real_{\geq 0}\Pi_{W'}$. We first prove that 
\begin{equation}\label{x9.5.1}
{L} '_{W'}={L} _{W'}\cup -  {L} _{W'},\quad \overline{\mc{Z}_{W'}}={L} _{W'}\end{equation}
     by considering cases according to the type of $W'$.
      If $W'$ is finite, then $\overline{\mc{Z}_{W'}}=0$ by \ref{x4.5} and \ref{x3.2}(d),
while  ${L} _{W'}={L} _{W'}'=0$ by Lemma \ref{x2.9}.   If instead $W'$ is affine, then by 4.4, 
 $\overline{\mc{Z}_{W'}}=\mc{Z}_{W'}=\real_{\geq 0} \delta$ (where $\delta$ spans the isotropic ray of $W'$),
 ${L} _{W'}=\real_{\geq 0}\delta$ and ${L} '_{W'}=\real\delta$.  Finally, if  $W'$ is of hyperbolic type, then   \eqref{x9.5.1} follows  from Proposition \ref{x9.4}(c).  Next, we claim that
  \begin{equation}\label{x9.5.2} 
  L'_{W'}\cap \real_{\geq 0}\Pi=L_{W'},\qquad \mc{K}_{W'}=-\mc{C}_{W'}\cap L_{W'}.\end{equation}
   The first equation above follows directly  from the first equation in \eqref{x9.5.1}, and the second   follows from the definitions and Proposition \ref{x3.2}(c).

 Now by Lemma \ref{x3.2}(c), $\overline{\mc{Z}}\cap \real\Pi_{W'}\subseteq {L} _{W'}'$. We get from Theorem \ref{x6.3} that
 \[\overline{ \mc{Z}_{W'}}\subseteq \overline{\mc{Z}}\cap \real\Pi_{{W'}}\subseteq {L} '_{W'}\cap \real_{\geq 0}\Pi\subseteq {L} _{W'}=\overline{\mc{Z}_{W'}}\] by \eqref{x9.5.1}--\eqref{x9.5.2}.
 Next, note that   $\mc{Z}_{W'}\subseteq \mc{Z}\cap \real\Pi_{W'}$
 by \ref{x6.3} again. To prove the reverse inclusion, let $z\in  \mc{Z}\cap \real\Pi_{W'}$. Choose $w\in W$ with $wz\in \mc{K}$, and let $W'':=wW'w^{-1}$ which is a reflection subgroup of $W$ of the same type (finite, irreducible affine or hyperbolic) as $W'$.
 Using Proposition \ref{x3.2}(c) for $W$, and  \eqref{x9.5.2} for $W''$, we have \[wz\in \mc{K}\cap \real\Pi_{W''}= -\mc{C}\cap \real_{\geq 0}\Pi
\cap     {L} '_{W''}\subseteq -\mc{C}_{{W''}}\cap     {L} _{W''} = \mc{K}_{W''}\subseteq \mc{Z}_{W''} \]     and so $z\in w^{-1}\mc{Z}_{W''}=\mc{Z}_{W'}$ as required.     
           \end{proof}
 
\subsection{Some questions} \label{x9.6}
We next collect some (mostly open) questions which have arisen in the 
course of this work or are suggested by study of examples.  One 
preliminary remark is that some  properties of the closed imaginary 
cone  which hold for irreducible root systems fail trivially 
for reducible ones. Correspondingly, even for irreducible $W$,  points 
of this cones may fail  to have some property of interest if  the point is 
conjugate to a point in the cone attached to a proper facial parabolic 
subgroup (since the latter may be reducible).   Therefore define the set of \emph{generic points} of  $\ol{\mc{Z}}$ to be 
 \begin{equation*}
\Zg:=\mset{\al\in \ol{\mc{Z}}\mid w\al\not \in \ol{\mc{Z}_{I}} \text{ \rm for all $w\in W$ and facial $I\sneq S$}}.
\end{equation*}
(We  don't make a corresponding definition for $\mc{Z}$ since it just yields the relative interior of $\mc{Z}$, as follows from results in  \S\ref{x10}.)

 The following  definitions are more naturally formulated  in terms of a compact convex  base of a closed salient  cone   as in Lemma 7.10, but for uniformity, we  continue in terms of cones.   
 Let $C$ be  any closed salient  cone in $V$.   For $p\in V$, 
 set $C(p):=\ccl(C-p)$. Fix non-zero $p\in  \rb(C)$.
 Call $p$  a \emph{smooth boundary point} if
  there is a unique absolutely supporting linear hyperplane for $C$ in $\real C$ which contains $p$.
 Say that  $p$  is a
 \emph{round boundary point} if 
there is some (necessarily unique)  linear hyperplane $H$  in $\real C$  containing 
the 
line  $\real p$ such that   $C(p)$
is the union of $\real p$ with  one of the two  open halfspaces in 
$\real(C-p)$ determined by $H$.
  Finally, 
say
that   $p$ is a \emph{flat boundary  point}
  if it is in the relative interior of an exposed face of $C$ of 
  codimension $1$.  Note that round or flat boundary points are smooth, but not conversely.

\begin{quest}\label{q9.6}  Let $(W,S)$ be an irreducible finite rank Coxeter system of indefinite type.
\begin{num}
\item  Is $\Zg\seq\mc{Z}\cup\mc{Q}$? More strongly, is every  proper face $F$ of $\ol{\mc{Z}}$ such that $\ri(F)\cap \Zg\neq\eset$  totally isotropic?
\item (See \ref{x7.9})  Is $R_{0}=\ol{Z}\cap {Q}$ (raised by Hohlweg and Ripoll)? If not, is it at least true that $\Zg\cap \mc{Q}\seq \cup R_{0}$?
\item Is $\overline{\mc{Z}}$  equal to the  topological closure of the convex hull of the union of  the set of  its  round  boundary points and  $\set{0}$? 
\item Is every extreme ray of $\ol{\mc{Z}}$ exposed (cf. Lemma \ref{xA.11}(b)). More generally, is every face of $\ol{\mc{Z}}$ exposed? Is it even true that $(\ol{\mc{Z}},\ol{\mc{X}})$ is a dual pair of stable cones (cf. Theorem \ref{x10.3})?
\item Do any two distinct,  maximal, totally isotropic faces of $\ol{\mc{Z}}$,  such that  each of them   contains  some point of $\Zg$, intersect in $\set{0}$?
\end{num}   
\end{quest}

\begin{rem} (1) Results in \cite{DHR} are relevant to the study of several of these questions. It should not be difficult to settle some of them for hyperbolic groups (for example, for hyperbolic $\Phi$, one 
has $\ol{\mc{Z}}\seq \mc{Z}\cup\mc{Q}$, more strongly than (a), by 
Proposition \ref{x9.4}).  See \ref{e9.18}  for an example showing 
the necessity of  restrictions in (a), (b), (e) for non-hyperbolic $\Phi$. 
 
 (2)    There is   considerable 
 diversity even  in the imaginary cones of  rank three universal Coxeter 
 groups. Depending on the 
 root system, $\rb(\mc{Z})\sm\set{0}$ may either  consist entirely of 
 round boundary points and be  a differentiable 
 submanifold of $V$ or  contain  no round boundary point  which  has a 
 neighborhood in $\rb(\mc{Z})\sm\set{0}$  which is a differentiable submanifold 
 of $V$.  Similarly in this case, $R_{0}$ may be topologically a circle 
 or a Cantor set. 
  \end{rem}

\subsection{} \label{x9.7} It would be  natural to study the dynamics of the action of 
special elements of $W$ and subgroups of $W$ on the closed 
imaginary cone, the disposition of eigenvectors and eigenspaces  in relation to the faces 
etc. The following question in this vein  is suggested by Lemma 
\ref{x6.1}(f).\begin{quest} If 
$\Phi$ is irreducible of indefinite type  and $\al\in\Zg\cap \mc{Q}$,  
is $0\in \ol{W\al}$. \end{quest}
The condition that $\al$ be generic  is included to exclude possibilities like $\al=\al'+\al''$
where $\al'\in \mc{Z}$ and $\al''\in \ol{\mc{Z}}\cap\mc{Q}$ have separated facial supports and $\al'$ spans the isotropic ray of an affine standard parabolic subsystem; in such a case, $0\in \ol{W\al}$ is not possible.

\subsection{}\label{x9.8} Other natural questions involve ubiquity of  
   reflection subgroups which are universal Coxeter groups, and their properties.
   Note that the based root system $(\Phi,\Pi)$  is the root system of a universal Coxeter group if and only if $\mpair{\al,\bt}\leq -1$ for all distinct $\al,\bt\in \Pi$.   In this case, we also say that $(\Phi,\Pi)$ is \emph{universal}. 
 Using \cite{DyRef} (see \cite[Remark 3.12]{DyTh}), it is equivalent  to require $\vert\mpair{\al,\bt}\vert\geq 1$ for all $\al,\beta $ in $\Phi$.
 We shall say that $(\Phi,\Pi)$ is a \emph{generic} universal based root system  if  $\mpair{\al,\bt}< -1$ for all distinct $\al,\bt\in \Pi$. 
 (Using Corollary \ref{x8.8}, this is easily seen to be equivalent to
 $\vert \mpair{\al,\bt}\vert > 1$ for all $\al,\bt\in \Phi$ with $\bt\neq \pm \al$, but we shall not need this fact).

    Assume   that $(W,S)$ is  an 
  irreducible Coxeter system of  indefinite type and of finite rank at least 
  two. 
We define a metric on the   set of  non-zero closed pointed   cones which are contained   in
 $\real_{\geq 0}\Pi$ by using a Hausdorff metric (see e.g. \cite{Web}) on their intersections 
 with  $\mset{v\in \real_{\geq 0}\Pi\mid \mpair{v,\rho}=1}$ (which is a compact convex base of $\real_{\geq 0}\Pi$). 

\begin{quest} \label{q9.8} Does there exist a finite rank   reflection subgroup $W'$ of $W$ such that $(\Phi_{W'},\Pi_{W'})$ is a generic universal based root system and 
 $\real \Pi_{W'}=\real
\Pi$.   Is there   a sequence of such reflection
subgroups $W'_{n}$, $n\in \Nat$, such that  both sequences $\real_{\geq 0}\Pi_{W'_{n}}$ and 
$\overline{\mc{Z}_{W'_{n}}}$, for $n\in \Nat$, converge  to  $\overline{\mc{Z}}$ in the above 
metric?\end{quest}
This was established for hyperbolic groups in \cite{Edgar}. It will be shown in    \cite{DHR} that the question has an affirmative answer; in fact, we will prove more general results about approximation of non-zero faces of $\ol{\mc{Z}}$.

 \subsection{Generic universal root systems}\label{x9.9} 
 Because of the general importance of  generic universal root systems in view of the affirmative answer  in \cite{DHR} to the previous question, we discuss some of their properties in the remainder of this section. 
 
  \begin{ass}  In \ref{x9.10}--\ref{x9.17},  it is assumed unless otherwise stated (in addition to the standing assumptions  \ref{x4.1}(i)--(iii)) that $(\Phi,\Pi)$ is generic universal (i.e. $\mpair{\al,\bt}<-1$ for all  $a\neq \bt\in \Pi$) and, to avoid trivialities, that  $\vert \Pi\vert \geq 2$. \end{ass}
  
  Give the finite set $\Pi$ the discrete topology and  $\prod_{n\in \Nat}\Pi$ the product topology. Let $C$ the the closed (topological) subspace of 
 $\prod_{n\in \Nat}\Pi$ consisting of all sequences 
  $\bt=(\bt_{n})_{n\in \Nat}$ in   $\Pi$ with $\bt_{i}\neq \bt_{i+1}$ for all
   $i$.  In general, $C$ is a  compact, totally disconnected,  metrizable  
   space.  If $\vert \Pi\vert >2$, then $C$ is perfect  (i.e. has no isolated 
   points) and it is well known that these conditions then imply that  $C$ 
   is a Cantor space i.e. it is homeomorphic to the (standard, ternary) 
   Cantor set.   If $\vert \Pi\vert =2$, then $C$ is a discrete two-point 
   space. 
   For any  $\bt=(\bt_{n})_{n\in \Nat}\in C$, set 
  \begin{equation*}
  F_{\beta}:=\mset{z\in \ol{\mc{Z}}\mid \lim_{n\to\infty}s_{\bt_{n-1}}\cdots
    s_{\beta_{0}}z=0}.
  \end{equation*}
      The  theorem below is our main result on generic universal root systems. It shows in particular that  $\ol{\mc{Z}}\sm\mc{Z}\seq \mc{Q}$ and that the  space of connected components of $\ol{\mc{Z}}\sm\mc{Z}$ (its quotient space identifying the components to points) is homeomorphic to $C$.
\begin{thm}\begin{num}
\item $\mc{Z}\cap \mc{Q}=\set{0}$ and $\ol{\mc{Z}}\sm\mc{Z}\seq \mc{Q}$. 
\item For $\bt=(\bt_{n})_{n\in \Nat}\in C$, $\ol{\mc{Z}}\sm F_{\bt}=\mset{z\in \ol{\mc{Z}}\mid \lim_{n\to \infty}\mpair{\rho, s_{\bt_{n-1}}\cdots
    s_{\beta_{0}}z}=+\infty}$. 
\item  There is    a well-defined function
 $b\colon \ol{\mc{Z}}\sm\mc{Z}\to C$ defined  by setting   $b(z):=\bt$ if $\bt\in C$ and $z\in F_{\bt}\sm\set{0}$. \item The fibers of $b$, namely the sets $b^{-1}(\bt)=F_{\bt}\sm\set{0}$, for $\bt\in C$, are the connected components of $\ol{\mc{Z}}\sm\mc{Z}$.

\item $\set{F_{\bt}\mid \bt\in C}$  is the set of all faces of $\ol{\mc{Z}}$ which are maximal in the set of faces $F$ of $\ol{\mc{Z}}$ satisfying $F\cap \mc{Z}=\set{0}$. It is also  the set of all maximal totally isotropic faces of $\ol{\mc{Z}}$.
\item  The map  $b$ is a quotient map. It  factors as a composite 
$b=qp$ where $p\colon\ol{\mc{Z}}\sm  \mc{Z}\to \ray(\mc{Q}\cap \ol{\mc{Z}})$ is an open quotient map $\al \mapsto \real_{\geq 0}\al$ and where $q\colon 
\ray(\mc{Q}\cap \ol{\mc{Z}})\to C$ is a closed quotient map given by 
$\real_{\geq 0}\al\mapsto  b(\al)$. 

\end{num}
\end{thm}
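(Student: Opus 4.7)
The driving observation is that in any generic universal root system, every subset $\Delta\seq\Pi$ of size at least two is of indefinite type: the $2\times 2$ Gram submatrix on any pair has diagonal entries $1$ and off-diagonal entry strictly less than $-1$, hence negative determinant. For part (a), I would decompose $v\in\mc{K}\sm\set{0}$ by Lemma \ref{x8.1}(k) as an orthogonal sum $v_{1}+\cdots+v_{m}$ with each $v_{i}\in\mcKc$; each support has size at least two (otherwise $v_{i}=c\alpha$ would force $\mpair{v_{i},\alpha}=c>0$, violating $v_{i}\in\mc{K}$) and is therefore of indefinite type, so Lemma \ref{x8.4}(b) gives $\mpair{v_{i},v_{i}}<0$. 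Summing yields $\mpair{v,v}<0$, and $W$-invariance of $\mpair{-,-}$ delivers $\mc{Z}\cap\mc{Q}=\set{0}$. For $\ol{\mc{Z}}\sm\mc{Z}\seq\mc{Q}$, let $z\in\ol{\mc{Z}}$ be non-isotropic; by Lemma \ref{x6.1} there exist $w_{n}\in W$ with $w_{n}z\to x\in\ol{Wz}\cap\mc{K}$, where $x\ne 0$ has $\mpair{x,x}=\mpair{z,z}<0$. The plan is to prove $w_{n}z=x$ for all large $n$, whence $z=w_{n}^{-1}x\in\mc{Z}$: pass to a subsequence on which the facial support of $w_{n}z$ is constantly equal to some $\Delta\seq\Pi$ containing the facial support of $x$, and adapt the stabilization argument of Lemma \ref{x6.2}, using non-degeneracy of $\mpair{-,-}$ on $\real\Delta$ together with discreteness of $Wx$ (Lemma \ref{x6.2}) to force eventual constancy.

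For part (b), the key length identity is that, because $(W,S)$ is universal with no braid relations, any word in $S$ with no adjacent repeat is automatically reduced, so $l(g_{n})=n$ for $g_{n}:=s_{\bt_{n-1}}\cdots s_{\bt_{0}}$, and the elements $g_{n}$ are pairwise distinct. The dichotomy reduces to showing that for any $z\in\ol{\mc{Z}}\sm\set{0}$ and any $M\geq 0$, the set $\mset{gz\in V:g\in W,\ \mpair{\rho,gz}\leq M}$ is finite. For $z\in\mc{Z}$ this is Lemma \ref{x6.2}; for isotropic $z\in\ol{\mc{Z}}\cap\mc{Q}\sm\set{0}$ it follows by a parallel stabilization-of-supports argument combined with Lemma \ref{x4.3}(e), again using that every connected support is of indefinite type. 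Given this finiteness, a bounded $\rho$-subsequence of $(v_{n})$ assumes only finitely many values in $V$, yet the $g_{n}$ are distinct, so along a further subsequence one must have $v_{n_{k}}\to 0$, which is the first alternative of the dichotomy.

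For (c)--(f), by (a) the domain of $b$ is $\ol{\mc{Z}}\cap\mc{Q}\sm\set{0}$. To see $b$ is well-defined, I construct $\bt=b(z)$ inductively: at each step, existence of some $\bt_{n}\in\Pi\sm\set{\bt_{n-1}}$ with $\mpair{v_{n},\bt_{n}}>0$ (so $s_{\bt_{n}}$ strictly decreases $\mpair{\rho,v_{n}}$) follows because $v_{n}\ne 0$ would otherwise lie in $-\mc{C}\cap\ol{\mc{Z}}\seq\mc{K}$, contradicting isotropy via the first part of (a). Uniqueness of $\bt$ is forced by the dichotomy (b): any alternative choice would eventually produce $\mpair{\rho,v_{n}}\to\infty$, contradicting $z\in F_{\bt'}$. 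By Proposition \ref{x7.10}(c) each $F_{\bt}$ is a face of $\ol{\mc{Z}}$, and by (a) combined with $F_{\bt}\cap\mc{Z}=\set{0}$ it is totally isotropic. Maximality in both senses of (e) then follows because any strictly larger face would contain a non-isotropic point, which lies in $\mc{Z}\sm\set{0}$ by (a), contradicting $F_{\bt}\cap\mc{Z}=\set{0}$. The fibers $F_{\bt}\sm\set{0}$ are convex cones minus the origin, hence connected, and the disjointness from (c) identifies them with the connected components of $\ol{\mc{Z}}\sm\mc{Z}$ (part (d)). For (f), the factorization $b=qp$ is immediate since $b$ is $\real_{>0}$-scale-invariant; $p$ is an open quotient because the $\real_{>0}$-action is free and proper on a pointed cone minus $\set{0}$, and $q$ is a closed quotient since its domain $\ray(\mc{Q}\cap\ol{\mc{Z}})$ is compact Hausdorff and $C$ is Hausdorff, so continuous surjections here are automatically closed quotient maps.

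\textbf{Main obstacle.} The central difficulty is upgrading Lemma \ref{x6.2} to handle $z\in\ol{\mc{Z}}\sm\mc{Z}$ in both (a) and (b): that lemma as stated requires $z\in\mc{Z}$, whereas we need discreteness of $\mset{gz:\mpair{\rho,gz}\leq M}$ also for isotropic $z$ and, critically, we need the argument that $w_{n}z$ eventually equals $x$ when it converges to $x\in\mc{K}$; this is what closes the loop proving $z\in\mc{Z}$ in the first place, and everything else in the proof hinges on it.
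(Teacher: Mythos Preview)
Your identified ``main obstacle'' is genuine, but your proposed workaround is false, and this breaks the proof. You claim that for isotropic $z\in\ol{\mc{Z}}\cap\mc{Q}\sm\set{0}$ the set $\mset{gz:g\in W,\ \mpair{\rho,gz}\leq M}$ is finite. This cannot hold: the very conclusion you are proving asserts that for $z\in F_{\bt}$ one has $g_{n}z\to 0$ with the $g_{n}=s_{\bt_{n-1}}\cdots s_{\bt_{0}}$ pairwise distinct, so the orbit points $g_{n}z$ are infinitely many and all have $\rho$-value below any $M>0$. There is thus no Lemma~\ref{x6.2}-type discreteness for such $z$, and your ``parallel stabilization-of-supports'' argument cannot produce one. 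The dichotomy in (b) therefore does not reduce to a finiteness statement; even granting finiteness, your inference ``finitely many values yet $g_{n}$ distinct $\Rightarrow v_{n_{k}}\to 0$'' is a non sequitur (repeated values would only show the stabilizer is infinite, not that some value is $0$). The same gap undermines your plan for the second half of (a): you want $w_{n}z=x$ eventually, but this is precisely an orbit-discreteness statement you have no mechanism to establish for boundary points.

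The paper avoids this entirely by exploiting a feature special to the generic universal case. It introduces polyhedral subcones $\mc{D}_{\alpha}=\real_{\geq 0}(\set{\alpha}\cup\set{u(\alpha,\beta):\beta\ne\alpha})$ and proves (Lemma~\ref{x9.12}) that $\real_{\geq 0}\Pi=\mc{Z}\cup\bigcup_{\alpha}\mc{D}_{\alpha}$ together with a uniform lower bound $\vert\mpair{\gamma,\check\beta}\vert\geq k_{\rho}\mpair{\rho,\gamma}$ for all $\gamma\in\bigcup_{\alpha}\mc{D}_{\alpha}$. This is the replacement for discreteness: it says that at every step of the iteration the reflection moves $\rho$-value by at least a fixed proportion of its current size. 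With this in hand the paper proves directly (Lemma~\ref{x9.14}) that the iterated sequence $\gamma'_{n}$ is strictly decreasing in $\leq$, converges to some $v\in\mc{K}$ via Lemma~\ref{x6.1}, and then uses the $k_{\rho}$-bound to force $v=0$ (otherwise $\gamma'_{n}\in\mc{D}_{\tau}$ with $\mpair{v,\tau}\geq k_{\rho}\mpair{\rho,v}>0$, contradicting $v\in\mc{K}$). The divergence $\mpair{\rho,\delta_{n}}\to\infty$ for a wrong sequence is likewise obtained from the same uniform bound by showing each step increases $\rho$-value by at least a fixed positive amount. Both halves of (a) and all of (b) flow from this single geometric input rather than from any orbit-finiteness. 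You are also missing surjectivity of $b$ (the paper proves it separately in Proposition~\ref{x9.15} via dominance order on roots), without which (d)--(f) do not go through; and your argument for maximality in (e) is circular, since a hypothetical larger face with $F\cap\mc{Z}=\set{0}$ would by (a) consist entirely of isotropic points, so no ``non-isotropic point'' contradiction arises---the paper instead deduces (e) from the connected-component description (d).
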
  
 Subsections \ref{x9.10}-\ref{x9.17} 
give the  proof of the theorem, and related facts.

\subsection{} \label{x9.10}
  Some of the following  results, especially the early ones,  can be generalized, with more technical statements,  to hold for all universal $(\Phi,\Pi)$.   We shall find it convenient to use again  the notation $H_{\phi}^{\prec}$ introduced in \ref{x2.3}.  Also, for any non-zero $v\in \real_{\geq 0}\Pi$, write $\wh{v}:=\frac{1}{\mpair{\rho,v}}v$ where we have fixed $\rho\in V$ with $\mpair{\rho, \Pi}\seq \real_{\geq 0}$. Set $\epsilon_{\rho}:=\min(\mset{\mpair{\rho,\bt}\mid \bt\in \Pi})>0$.
  Throughout the proof, whenever multiple root systems are under consideration and  it is necessary to indicate notationally the dependence  of some object we have defined on the root system $(\Phi,\Pi)$, we  shall do so by attaching a subscript $\Pi$ (or $W$). 
  
 For any distinct   $\al$ and $\bt$ in $\Pi$, write 
 $\mpair{\al,\bt}=-\cosh \lambda$ where 
 $\lambda=\lambda_{\al,\bt}> 0$. Let  
 $u_{\Pi}(\al,\bt)=u(\al,\bt):=e^{\lambda}\al+\bt\in \mc{Q}$  and
  $u'_{\Pi}(\al,\bt)=u'(\al,\bt):=(\cosh\lambda) \al+\bt\in  \al^{\perp}$.
  It is easily verified from our general results (or by 
  direct computation using  for instance  the discussion of dihedral root systems in 
 \cite{HLR}),  that the imaginary cone of 
 $W'=W_{\al,\bt}=\mpair{s_{\al},s_{\bt}}$ is 
${\mc{Z}}_{W'}=\real_{>0}\set{u(\al,\bt),u(\bt,\al)}\cup\set{0}$, with 
closure  $\ol{\mc{Z}}_{W'}=\real_{\geq 0}\set{u(\al,\bt),u(\bt,\al)}$ 
and the 
fundamental domain for $W'$ on $\mc{Z}_{W'}$ is $ \mc{K}_{W'}=
\real_{\geq 0}\set{u'(\al,\bt),u'(\bt,\al)}$.
 Also define  \begin{equation*}
 \mc{K}^{+}:=\real_{\geq 0}\mset{u(\al,\bt)\mid \al,\bt\in \Pi,\al\neq \bt}=\real_{\geq 0}\Bigl(\ \bigcup_{\al\neq \bt \in \Pi}\ol{\mc{Z}}_{W_{\set{\al,\bt}}}\Bigr)\seq{\ol{\mc{Z}}}.
 \end{equation*} 

It is important for our purposes to understand the relative position of the rays through the points 
\begin{align*}&x_{1}:=\alpha &&x_{2}:=u(\al,\bt)=e^{\lambda}\alpha+\beta\\ &x_{3}:=u'(\al,\bt)=(\cosh \lambda)\alpha +\beta
 &&x_{4}:=u'(\bt,\al)=\alpha+(\cosh \lambda)\beta\\ &x_{5}:=u(\bt,\al)=\alpha+e^{\lambda}\beta &&x_{6}:=\beta
\end{align*} where $\al\neq \beta\in \Pi$. 
Define the \emph{slope} of $ u:=a\alpha+b\beta$, where $a,b\in \real_{\geq 0}$  are not both zero, as $m(u)=\frac{b}{a}\in \real_{\geq 0}\cup\set{\infty}$.
 Then \begin{multline*}m(x_{1})=0<m(x_{2})=e^{-\lambda}<m(x_{3})=\frac{1}{\cosh \lambda}\\<m(x_{4})=\cosh(\lambda)<m(x_{5})=e^{\lambda}<m(x_{6})=\infty.\end{multline*}  This implies that   \begin{equation}\label{x9.10.1}
 \real_{\geq 0 }x_{j}\seq \real_{\geq 0} x_{i}+\real_{\geq 0}x_{k} \iff 
i\leq j\leq k, \quad \text{ if $1\leq i\leq k\leq 6$, $1\leq j\leq 6$.} \end{equation}
Similar results  are formulated in an affine setting in \cite{HLR}; the condition in  \eqref{x9.10.1} is equivalent to $\wh x_{j}\in \conv(\set{\wh x_{i},\wh x_{j}})$ and the result is represented by the diagram
\begin{equation*}\xymatrix{
\ar@{-}[r]&{\wh x_{1}}\ar@{-}[r]&{\wh x_{2}}\ar@{-}[r]&{\wh x_{3}}\ar@{-}[r]&{\wh x_{4}}\ar@{-}[r]&{\wh x_{5}}\ar@{-}[r]&{\wh x_{6}}\ar@{-}[r]&}
\end{equation*} of points on the affine line spanned by the $\wh x_{i}$.

\subsection{}  \label{x9.11}The lemma below describes basic relations between $\mc{K}$, $\mc{K}^{+}$, $\ol{\mc{Z}}$ and $\mc{Q}$.

\begin{lem}\begin{num}\item 
$\mc{K}=
\real_{\geq 0}\mset{u'(\al,\bt)\mid \al\neq \bt\in \Pi}=\real_{\geq 0}\bigl(\bigcup_{\al\neq \bt\in \Pi}\mc{K}_{W_{\set{\al,\bt}}}\bigr)$.
\item $\real_{\geq 0}\Pi=\mc{K}\dot\cup \bigl(\dot\bigcup_{\al\in \Pi}(\real_{\geq 0}\Pi\cap H^{>}_{\al})\bigr)$.
\item ${\mc{Z}}\cap \mc{Q}=\set{0}$.
 \item $\mc{K}\seq \mc{K}^{+}\seq\ol{\mc{Z}}$.
\item  $\ol{\mc{Z}}=\ol{\cup_{w\in W}\,w\mc{K}^{+}}$.

  \end{num} 
\end{lem}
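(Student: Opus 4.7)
The plan is to establish (a) first, since the other parts follow rather directly. The approach for (a) is to reduce to the case where $\Pi$ is linearly independent by means of the canonical lift $(\Phi',\Pi')$ on $(V',\mpair{-,-}')$ from \ref{x1.4}. Generic universality is preserved under lifting since $\mpair{\al',\bt'}'=\mpair{\al,\bt}<-1$, and Proposition \ref{x3.6} gives both $L(\mc{K}')=\mc{K}$ and $L(u'_{\Pi'}(\al',\bt'))=u'_{\Pi}(\al,\bt)$. Assuming $\Pi$ linearly independent, I would argue by induction on $|\Pi|\geq 2$. The dihedral base case $|\Pi|=2$ follows by direct inspection of the two halfspace constraints cutting out $\mc{K}$. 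For $|\Pi|\geq 3$, $\mc{K}$ is a full-dimensional polyhedral cone in $\real\Pi$ (using \ref{x4.5} to supply a relative interior point from the indefinite character of $\Pi$), defined by the inequalities $c_\al\geq 0$ and $\mpair{k,\al}\leq 0$ for $\al\in\Pi$. Every extreme ray must lie on one of the associated hyperplanes. If it lies on $\al^\perp$, the constraint $\mpair{k,\al}=0$ forces $c_\al=\sum_{\bt\neq\al}c_\bt\cosh\lambda_{\al,\bt}$ and hence $k=\sum_{\bt\neq\al}c_\bt u'(\al,\bt)$, with the remaining inequalities automatically satisfied because $\mpair{u'(\al,\bt),\g}\leq 0$ for all $\g$; if it lies on $\set{c_\al=0}$, then it is contained in $\mc{K}_{\Pi\sm\set\al}$ (facial by \ref{x4.2}(g)) and the inductive hypothesis applies since $\Pi\sm\set\al$ remains generic universal. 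The reverse inclusion follows from direct verification that $u'(\al,\bt)\in\mc{K}$, and the second equality in (a) is just the identity $\mc{K}_{W_{\set{\al,\bt}}}=\real_{\geq 0}\set{u'(\al,\bt),u'(\bt,\al)}$ observed in \ref{x9.10}.

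Part (b) splits into existence and disjointness. The covering is immediate: any $v\in\real_{\geq 0}\Pi\sm\mc{K}$ satisfies $\mpair{v,\al}>0$ for some $\al\in\Pi$. For the disjointness of the pieces $\real_{\geq 0}\Pi\cap H^{>}_\al$, suppose $\mpair{v,\al}>0$ and $\mpair{v,\bt}>0$ for distinct $\al,\bt$; writing $v=\sum c_\g\g$ with $c_\g\geq 0$, the first inequality gives $c_\al>\sum_{\g\neq\al}c_\g\cosh\lambda_{\al,\g}\geq c_\bt\cosh\lambda_{\al,\bt}>c_\bt$, and symmetrically $c_\bt>c_\al$, a contradiction. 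For (c), the key observation is that in a generic universal system the Coxeter graph is complete, so every nonempty subset of $\Pi$ is connected, and every subset of size at least two is of indefinite type since the $2\times 2$ submatrix $\left(\begin{smallmatrix}1&-\cosh\lambda\\-\cosh\lambda&1\end{smallmatrix}\right)$ has negative determinant. Given $z\in\mc{Z}\cap\mc{Q}$, write $z=wk$ with $k\in\mc{K}$ and $\mpair{k,k}=\mpair{z,z}=0$. If $k\neq 0$, take any nonempty support $\Delta$; it must be connected, of size at least two (since $\mpair{c\al,\al}=c>0$ would violate $\mpair{k,\al}\leq 0$), and thus indefinite, so Lemma \ref{x8.4}(b) gives $\mpair{k,k}<0$, contradicting isotropy.

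For (d), the inclusion $\mc{K}\seq\mc{K}^+$ follows from (a) once each generator is written as a positive combination of $u$'s: solving $u'(\al,\bt)=au(\al,\bt)+bu(\bt,\al)$ yields $a=\tfrac{1}{2}$ and $b=\sinh\lambda/(e^{2\lambda}-1)$, both strictly positive. The inclusion $\mc{K}^+\seq\ol{\mc{Z}}$ is immediate from Theorem \ref{x6.3} applied to each dihedral reflection subgroup $W_{\set{\al,\bt}}$, which on taking closures gives $\ol{\mc{Z}}_{W_{\set{\al,\bt}}}\seq\ol{\mc{Z}}$, and the conical hull of the union of these cones is exactly $\mc{K}^+$. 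Part (e) is then a formal consequence: since $\ol{\mc{Z}}$ is $W$-invariant, (d) yields $\bigcup_{w\in W}w\mc{K}\seq\bigcup_{w\in W}w\mc{K}^+\seq\ol{\mc{Z}}$, and taking closures squeezes the middle term between $\ol{\mc{Z}}=\ol{W\mc{K}}$ and $\ol{\mc{Z}}$ itself.

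The main obstacle is (a), specifically the extreme-ray argument and the canonical-lift reduction. One subtlety to monitor is that the lifted form on $V'$ may be singular when $\Pi$ is linearly dependent, but since $\mc{K}'$ depends only on the form restricted to $\real\Pi'$ and the polyhedral argument is entirely intrinsic to that subspace, this should cause no difficulty.
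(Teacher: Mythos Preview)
Your proposal is correct and follows essentially the same strategy as the paper: reduce (a)--(b) to the linearly independent case via the canonical lift of \ref{x1.4} and Proposition~\ref{x3.6}, then deduce (c)--(e) from structural facts about $\mc{K}$, $\mc{K}^{+}$ and $\ol{\mc{Z}}$.

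There are a few tactical differences worth noting. For (a), the paper identifies the extreme rays of $\mc{K}$ directly by checking, for each $u'(\al,\bt)$, exactly which of the bounding half-spaces $H_{\g}^{\leq}$ contain it strictly (namely, $u'(\bt,\g)\in H_{\al}^{<}$ iff $\bt\neq\al$); you instead induct on $|\Pi|$, using the pleasant identity $\mc{K}\cap\al^{\perp}=\real_{\geq 0}\{u'(\al,\bt):\bt\neq\al\}$ for one type of bounding hyperplane and faciality of $\Pi\setminus\{\al\}$ for the other. Both arguments work; the paper's is shorter once one observes that an extreme ray satisfying only coordinate constraints $\{c_{\al}=0\}$ would have singleton support, contradicting $\mpair{k,\al}\leq 0$. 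For (b), the paper reads off the decomposition from the facet structure developed in (a), whereas your direct inequality $c_{\al}>c_{\bt}>c_{\al}$ is self-contained and does not require the lift. For (c), the paper invokes Corollary~\ref{x8.8} (no affine parabolic subsystems), while you argue one level lower via Lemma~\ref{x8.4}(b); these are equivalent here. For the first inclusion of (d), your explicit solution $a=\tfrac12$, $b=\sinh\lambda/(e^{2\lambda}-1)$ is correct but unnecessary: the paper simply combines (a) with $\mc{K}_{W_{\{\al,\bt\}}}\seq\ol{\mc{Z}}_{W_{\{\al,\bt\}}}$, which is already contained in the definition of $\mc{K}^{+}$.
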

\begin{proof} We first prove (a)-(b) assuming that $\Pi$ is linearly independent. By definition, 
  $\mc{K}=  \real_{\geq 0}\Pi\cap \bigcap_{\al\in \Pi}H_{\al}^{\leq }$.  One
   easily  checks 
   \begin{equation*}
    \real_{\geq 0}\Pi\cap H_{\al}^{=}=\real_{\geq 0}\mset{u'_{\al,\bt}\mid \bt\in \Pi\sm\set{\al}}.
    \end{equation*}
    Also,  for any $\al\in \Pi$, 
   $H_{\al}^{<}$  does not contain $\al$, and it 
    contains $u'(\bt,\g)$, where $\bt\neq \g$ in $\Pi$, if and only if 
    $\bt \neq \al$. It follows that $\mset{u'(\al,\bt)\mid \al,\bt\in \Pi, \al\neq \bt}$ is a 
    set of representatives for the extreme rays of the polyhedral cone $\mc{K}$, proving the first equality in (a).     
    The second equality in (a) is clear from the formulae in \ref{x9.10}.  
    Recall that the facets of $\mc{K}$ are its  codimension one faces. These are the   cones
    \begin{equation*}
    \mc{K}\cap \real_{\geq 0}(\Pi\sm\set{\al})=\real_{\geq 0}\mset{u'_{\bt,\g}\mid \bt,\g\in \Pi\sm\set{\al}, \bt\neq \g}
    \end{equation*} for $\al\in \Pi$  (provided $\vert \Pi\vert \geq 3$) and  $
    \mc{K}\cap \al^{\perp}= \real_{\geq 0}\Pi\cap H_{\al}^{=}$ for $\al\in \Pi$.  Moreover,
    \begin{equation*}
    \real_{\geq 0}\Pi\cap H_{\al}^{\geq 0}=\real_{\geq 0}(\set{\al}\cup\mset{u'_{\al,\bt}\mid \bt\in \Pi\sm\set{\al}})
    \end{equation*} for $\al\in \Pi$. It is easy to see from these equations that (b) also holds. Next, to prove (a)--(b)  in general (if $\Pi$ is is possibly  linearly dependent) 
  choose a canonical lift $(\Phi',\Pi')$ in $(V',\mpair{-,-}')$ of $(\Phi,\Pi)$ with canonical map $L\colon V'\to V$ as in \ref{x1.4}. 
 For any  distinct $\al',\bt'\in \Pi'$,  one has $L(u'_{\Pi'}(\al',\bt'))=u'_{\Pi}(L(\al'),L(\bt'))$ since $\mpair{L(\al'),L(\bt')}=\mpair{\al',\bt'}'$. 
 Part (a) follows directly by applying $L$ to the formula (a) for 
 $(\Phi',\Pi')$ (known since $\Pi'$ is linearly independent) using Proposition \ref{x3.6}. Part (b) follows similarly using 
 $\real_{\geq 0}\Pi'\cap L^{-1}(\real_{\geq 0}\Pi\cap H_{L(\al')}^{>})=\real_{\geq 0}\Pi'\cap H_{\al'}^{>} $ in $V'$  and $L(\real_{\geq 0}\Pi'\cap  H_{\al'}^{>})=\real_{\geq 0}\Pi\cap H_{L(\al')}^{>} $ in $V$,  for $\al'\in \Pi'$.

    For (c), note  $\mcKc=\mc{K}\sm\set{0}$ and
    $\mcZc=\mc{Z}\sm\set{0}$ since any non-zero element  of $\real_{\geq 0}\Pi$ has a connected support (in fact, any non-empty subset of $\Pi$ is connected). Since there are no irreducible  affine standard parabolic subsystems of $\Phi$, part (c) holds by Corollary \ref{x8.8}.
  In (d)  the first (resp., second)  inclusion holds  since for all $\al\neq \bt$ in $\Pi$,
  $ \mc{K}_{W_{\al,\bt}}\subset\ol{\mc{Z}}_{W_{\al,\bt}}$ (resp., 
 $ \ol{\mc{Z}}_{W_{\al,\bt}}\seq \ol{\mc{Z}}$).   From (d) and the definition  $\mc{Z}=\cup_{w\in W}w(\mc{K})$, we have  \begin{equation*}
 \ol{\mc{Z}}=\ol{\cup_{w\in W}\,w\ol{\mc{Z}}}\,\sreq\, 
 \ol{\cup_{w\in W}\,w\mc{K}^{+}}\,\sreq \, \ol{\cup_{w\in W}\,w\mc{K}}=\ol{\mc{Z}}
 \end{equation*}
which proves (e). 
     \end{proof}
 \subsection{} \label{x9.12}
The   next lemma establishes a  crucial special fact about generic universal root systems:   the positive root cone is the union of the imaginary cone and certain  cones $\mc{D}_{\al}\sm\set{0}$, indexed by 
simple roots $\al$, the union of which contains all positive roots and also the intersection of the positive root cone and isotropic cone. This fact plays a significant role in the study of root systems of general Coxeter groups in \cite{DHR}. For any  $\al\in \Pi$, define
     \begin{equation*}
     \mc{D}_{\alpha}=\mc{D}_{\Pi,\al}:=\real_{\geq 0}(\set{\al}\cup\set {u(\al,\bt)\mid \bt\in \Pi\sm\set{\al}})\seq \real_{\geq 0}\Pi
     \end{equation*} \begin{lem}\begin{num}
\item For $\al\in \Pi$, $\mc{D}_{\al}\sm\set{0}\seq H_{\al}^{>}\cap \bigcap_{\bt\in \Pi\sm\set{\al}}H_{\bt}^{<}$. In particular, $\mc{D}_{\al}\cap \mc{D}_{\bt}=\set{0}$ if $\al\neq \bt$ in $\Pi$.
\item There is a constant $k_{\rho}>0$ (depending on $\rho$) such that if $\g\in \bigcup_{\al\in \Pi}\mc{D}_{\al}$ and $\bt\in \Pi$ then $\vert \mpair{\g,\ck \bt}\vert\geq k_{\rho}\, \vert \mpair{\rho,\g}\vert$. \item  There is a constant $k_{\rho}'>0$  such that if $\g\in \mc{D}_{\al}$ and $\g'\in \mc{D}_{\bt}$ where $\al,\bt\in \Pi$ are distinct, then $\mpair{\g,\g'}\leq  -k_{\rho}'\mpair{\g,\rho}\mpair{\g',\rho}$.
\item $\real_{\geq 0}\Pi=\mc{Z}\cup \bigl(\bigcup_{\al\in \Pi}\mc{D}_{\al}\bigr)$.
 \item $\Phi_{+}\cup((\mc{Q}\cap \real_{\geq 0}\Pi)\sm\set{0})\seq \real_{\geq 0}\Pi\sm{\mc{Z}}\seq \bigcup_{\al\in \Pi}\mc{D}_{\alpha}$. 

\item Let $\g\in\real_{\geq 0}\Pi\sm{\mc{Z}}$. Then for $\al\in \Pi$,  $\mpair{\g,\al}\geq 0$ if and only if    $\g\in \mc{D}_{\al}$ if and only if  $\mpair{\g,\al}> 0$. 
These conditions are satisfied by a unique $\al\in \Pi$.

  \end{num} 
\end{lem}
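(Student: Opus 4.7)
The proof of Lemma \ref{x9.12} splits naturally into four tasks: (a) is a direct inner-product calculation, (b) and (c) are compactness arguments built on (a), (d) is the main technical heart, and (e)--(f) will follow easily from (a), (d), and earlier results. The hard part will be (d).

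For (a), I plan to parametrize a nonzero $\g \in \mc{D}_\al$ as $\g = c\al + \sum_{\bt \neq \al}c_\bt u(\al,\bt)$ with $c, c_\bt \geq 0$ not all zero. Expanding using $u(\al,\bt) = e^{\lambda_{\al\bt}}\al + \bt$ and the identities $e^\lambda - \cosh\lambda = \sinh\lambda > 0$ and $e^\lambda\cosh\lambda > 1$ (both for $\lambda > 0$), I obtain $\mpair{\g,\al} = c + \sum_\bt c_\bt\sinh\lambda_{\al\bt} > 0$, while for each $\bt' \in \Pi\sm\set{\al}$ the expression $\mpair{\g,\bt'}$ is a sum of non-positive terms with at least one strictly negative, hence $<0$. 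The disjointness $\mc{D}_\al \cap \mc{D}_\bt = \set{0}$ for $\al \neq \bt$ follows immediately from the inconsistent sign conditions. For (b) and (c), I will use that the normalized base $K_\al := \mc{D}_\al \cap \mset{v \mid \mpair{v,\rho} = 1}$ is a compact polytope with vertices $\wh\al$ and $\wh{u(\al,\bt)}$ for $\bt \neq \al$. In (b), the continuous function $v \mapsto \vert\mpair{v,\ck\bt}\vert$ is strictly positive on $K_\al$ by (a), so attains a positive minimum; taking the minimum over the finitely many pairs $(\al,\bt)$ and rescaling yields $k_\rho$. For (c), by bilinearity it suffices to verify $\mpair{x,y}<0$ for each pair of cone generators $x$ of $\mc{D}_\al$ and $y$ of $\mc{D}_\bt$, a short case analysis whose most delicate case is $x = u(\al,\bt), y = u(\bt,\al)$, which simplifies to $-\frac{1}{2}e^{-\lambda_{\al\bt}}(e^{2\lambda_{\al\bt}}-1)^2 < 0$; compactness on $K_\al \times K_\bt$ then produces the uniform constant $k'_\rho$.

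For (d), the main obstacle, I begin from Lemma \ref{x9.11}(b): $\real_{\geq 0}\Pi = \mc{K} \cup \bigcup_\al(\real_{\geq 0}\Pi \cap H_\al^>)$ with $\mc{K}\seq\mc{Z}$, so it suffices to prove $\real_{\geq 0}\Pi \cap H_\al^> \seq \mc{Z}\cup\mc{D}_\al$ for each $\al$. Writing $\g = c_\al\al + \sum_{\bt\neq\al}c_\bt\bt$, matching coefficients shows that $\g \in \mc{D}_\al$ is equivalent to $c_\al \geq \sum_\bt c_\bt e^{\lambda_{\al\bt}}$. This leaves the intermediate range $\sum_\bt c_\bt\cosh\lambda_{\al\bt} < c_\al < \sum_\bt c_\bt e^{\lambda_{\al\bt}}$, in which $\g \in \mc{Z}$ remains to be shown. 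My plan is to exhibit such $\g$ as an element of $W\mc{K} = \mc{Z}$ by an iterative reflection reduction: the strict upper bound forces $c_\al < \sum_\bt c_\bt(e^{\lambda_{\al\bt}} + e^{-\lambda_{\al\bt}}) = 2\sum_\bt c_\bt\cosh\lambda_{\al\bt}$, which ensures $s_\al\g \in \real_{\geq 0}\Pi$, and one continues by further reflections whenever the current element still lies outside $\mc{K}$ (as one must in general, since a single $s_\al$ step can produce $\mpair{s_\al\g,\bt'}>0$ for some $\bt'\neq\al$). Ensuring the process terminates in finitely many steps is the delicate point; a possible alternative approach would use the characterization $\mc{Z} = \mc{Y} \cap -\mc{X}$ of Proposition \ref{x3.2}(a) together with Lemma \ref{x1.10}(a) to verify the two conditions directly.

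Finally (e) and (f) are easy once (d) is in hand. For (e), $\Phi_+\cap\mc{Z} = \eset$ because any $\al \in \Phi_+$ satisfies $\mpair{\al,\al} = 1$ while every $z \in \mc{Z}$ satisfies $\mpair{z,z}\leq 0$ by Proposition \ref{x3.2}(c), and $\mc{Z} \cap \mc{Q} = \set{0}$ is Lemma \ref{x9.11}(c); combined with (d), this gives both inclusions. For (f), (e) supplies existence of some $\al$ with $\g \in \mc{D}_\al$, (a) yields the equivalence of the three sign conditions, and the disjointness of the $\mc{D}_\al$ from (a) gives uniqueness.
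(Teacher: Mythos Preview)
Your arguments for (a), (b), (c), (e), (f) are correct and essentially match the paper's. The paper checks (a) on generators rather than expanding a general element, and for (b) writes the minimum over the generators explicitly, but these are cosmetic differences.

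The genuine issue is (d). Your iterative reflection approach has a gap at precisely the point you flag: termination. You show that a single reflection $s_\al$ keeps a point of the intermediate range inside $\real_{\geq 0}\Pi$, but you do not show that repeated reflections eventually reach $\mc{K}$, nor do you rule out that after some steps the process lands in some $\mc{D}_{\al'}$ (which would block the argument, since $\mc{D}_{\al'}\not\seq\mc{Z}$). The alternative via $\mc{Z}=\mc{Y}\cap(-\mc{X})$ is no easier: showing $w\g\in\real_{\geq 0}\Pi$ for all $w\in W$, or that only finitely many positive roots have positive inner product with $\g$, each requires control over the $W$-orbit of $\g$ that you do not yet have. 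Your coefficient-matching for the condition $\g\in\mc{D}_\al$ also tacitly assumes $\Pi$ linearly independent, a hypothesis not in force here.

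The paper's route is quite different. Assuming first that $\Pi$ is linearly independent, it introduces auxiliary linear functionals $\phi_\al$ (vanishing on each $u(\al,\bt)$ and positive on $\al$) so that $\mc{D}_\al=\real_{\geq 0}\Pi\cap H_{\phi_\al}^{\geq}$ and $\mc{K}^{+}=\real_{\geq 0}\Pi\cap\bigcap_{\al} H_{\phi_\al}^{\leq}$, giving $\real_{\geq 0}\Pi=\mc{K}^{+}\cup\bigcup_{\al}\mc{D}_\al$. It then proves $\mc{K}^{+}\seq\mc{Z}\cup\bigcup_{\al}\mc{D}_\al$ by induction on $\vert\Pi\vert$: since $\mc{K}^{+}\seq\ol{\mc{Z}}$ and $\real\mc{K}^{+}=\real\Pi=\real\mc{Z}$, one has $\ri(\mc{K}^{+})\seq\ri(\ol{\mc{Z}})=\ri(\mc{Z})$, while each boundary point of $\mc{K}^{+}$ lies either in a facet shared with some $\mc{D}_\al$ or in a facet $\mc{K}^{+}_{\Delta}$ with $\Delta=\Pi\sm\set{\al}$, handled by the inductive hypothesis together with $\mc{Z}_{\Delta}=\mc{Z}\cap\real_{\geq 0}\Delta$ (Remark~\ref{x3.5}(2)). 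The linearly dependent case is then reduced to this one via the canonical lift of \ref{x1.4}. The key device you are missing is the use of $\mc{K}^{+}$ rather than $\mc{K}$: it is large enough that its relative interior already lies inside $\mc{Z}$, yet its facet structure permits induction on rank, so no iterative reflection argument is needed.
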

\begin{proof}  
Part (a) holds since for $\al\neq \beta \in \Pi$ , one has $\mpair{\al,\al}>0$  and $\mpair{\al,u(\al,\bt)}>0$
while for  $\g\in \Pi\sm\set{\al}$, one has $\mpair{\g,\al}<0$  and $\mpair{\g,u(\al,\bt)}<0$. For (b), note that  for any $\al,\bt\in \Pi$ 
and $\g\in \mc{D}_{\al}\sm\set{0}$, one has  $\vert \mpair{\g,\ck \bt}\vert\geq  k_{\al,\bt} \mpair{\g,\rho}$ by (a), where \begin{equation*}k_{\al,\bt}:=
\min\left(\mset{ \frac{\vert\mpair{\ck\bt,u}\vert}{\vert\mpair{\rho,u}\vert} \mid u\in \set{\al}\cup\mset{u(\al,\g)\mid \g\in \Pi\sm\set{\al}}}\right). 
\end{equation*}
Part (b) holds setting $k_{\rho}:=\min(\mset{k_{\al,\bt}\mid \al,\bt\in \Pi})$. The proof of (c) is similar to that of (b) and we omit it. 
 
 In the  proof of (d), assume first    that $\Pi$ is linearly independent. 
 Using this assumption, choose  for each $\al\in \Pi$ an element $\phi_{\al}\in V$ such 
 that $\mpair{\phi_{\al},u(\al,\bt)}=0$  for all $\bt\in \Pi\sm\set{\al}$ and $\mpair{\phi_{\al},\al}> 0$.
 Let $\al\neq \bt$ in $\Pi$.  It follows from \eqref{x9.10.1}  that  $u({\al,\bt})\in \real_{>0}\set{\al,\g}$ for each $\g\in \set{u'({\al,\bt}), u'(\bt,\al), u(\bt,\al),\bt} $. It  follows that $\mpair{\phi_{\al},\g}<0$ for such $\g$.  That is, the hyperplane 
$H_{\phi_{a}}^{=}\cap \real \Pi$ in $\real \Pi$ strictly separates $\al$ and $\g$. 
 It follows that it also strictly  separates $\al$ from $\Pi\sm \set{\al}$ and $\al$ from $u_{\g,\delta}$ for any $\gamma\neq \delta$ in $\Pi\sm\set{\al}$.
  One
   easily  checks that for any $\al\in \Pi$, 
   $\real_{\geq 0}\Pi\cap H_{\phi_\al}^{=}=\real_{\geq 0}
   \mset{u(\al,\bt)\mid \bt\in \Pi\sm\set{\al}}$. An argument very similar to that in the proof of Lemma \ref{x9.11}(a)
  shows that $\mc{K}^{+} =\real_{\geq 0}\Pi\cap\bigcap_{\al\in \Pi} H_{\phi_{\al}}^{\leq}$.    The facets of $\mc{K}^{+}$ are the cones
    $\mc{K}^{+}\cap \real_{\geq 0}(\Pi\sm\set{\al})=\real_{\geq 0}\mset{u_{\bt,\g}\mid \bt\neq \g\in \Pi\sm\set{\al}}=\mc{K}^{+}_{\Pi\sm\set{\al}}$ for $\al\in \Pi$  (provided $\vert \Pi\vert \geq 3$) and  $\mc{K}^{+}\cap H_{\phi_{\al}}^{=}=\real_{\geq 0}\mset{u_{\al,\bt}\mid \bt\in \Pi\sm\set{\al}}$ for $\al\in \Pi$. 
 Also, $\mc{D}_{\al}=\real_{\geq 0}\Pi\cap H_{\phi_{\al}}^{\geq}$.  This makes it clear that $\real_{\geq 0}\Pi=\mc{K}^{+}\cup \bigl(\bigcup_{\al\in \Pi}\mc{D}_{\al}\bigr)$.
  
Now clearly  $\real_{\geq 0}\Pi\,\sreq\,\mc{Z}\cup \bigl(\bigcup_{\al\in \Pi}\mc{D}_{\al}\bigr)$.
We prove the reverse inclusion  by induction on $n:=\vert \Pi\vert$.  If $n=2$, say $\Pi=\set{\al,\bt}$, this follows easily from \eqref{x9.10.1} (in fact, the right hand side is a partition of the left hand side in this case). Now assume  $n\geq 3$.  From above, it suffices  to show $\mc{K}^{+}\seq\mc{Z}\cup \bigl(\bigcup_{\al\in \Pi}\mc{D}_{\al}\bigr)$.
Let $z\in \mc{K}^{+}$. Now clearly $\real \mc{K}^{+}=\real \Pi=\real{\mc{Z}}$. Hence $\ri(\mc{K}^{+})\seq \ri(\ol{\mc{Z}})=\ri(\mc{Z})\seq \mc{Z}$.
Therefore if $z\in \ri(\mc{K}^{+})$, then $z\in\mc{Z}$ as required. 
In the contrary case, $z$ is in some facet of $\mc{K}^{+}$.
Now for $\al \in \Phi$, the facet  $\mc{K}^{+}\cap H_{\phi_{\al}}^{=}=\real_{\geq 0}\cap H_{\phi_{\al}}^{=}$ is also a facet of $\mc{D}_{\al}$.
Hence we may assume that $z$ is in a facet $\mc{K}^{+}_{\Delta}$ of $\mc{K}^{+}$  where $\Delta:=\Pi\sm\set{\al}$ for some $\al\in \Pi$.
By induction, $z\in \mc{Z}_{\Delta}\cup(\bigcup_{\bt\in \Delta}
D_{\Delta,\bt})$. By Remark \ref{x3.5}(2),  $\mc{Z}_{\Delta}=\mc{Z}\cap \real_{\geq 0}\Delta$. Also,  
 $\mc{D}_{\Delta,\bt}=\real_{\geq 0}(\set{\bt}\cup\set{u_{\bt,\g}\mid \g\in \Delta\sm \set{\bt}})=\mc{D}_{\bt}\cap \real_{\geq 0}\Delta$. Therefore
 $z\in \mc{Z}\cup(\bigcup_{\bt\in \Pi}\mc{D}_{\bt})$ as required to complete the proof of (d) in the case of linearly independent $\Pi$.
 
 If $\Pi$ is linearly dependent,  choose a canonical lift $(\Phi',\Pi')$
  in $(V',\mpair{-,-}')$ of $(\Phi,\Pi)$ with canonical map 
  $L\colon V'\to V$ as in \ref{x1.4}. 
 For any  distinct $\al',\bt'\in \Pi'$,  one has 
 $L(u_{\Pi'}(\al',\bt'))=u_{\Pi}(L(\al'),L(\bt'))$ since $\mpair{L(\al'),L(\bt')}=
 \mpair{\al',\bt'}'$. The definitions therefore  give $L(\mc{D}_{\Pi',\al'})=
 \mc{D}_{\Pi,L(\al')}$ for $\al\in \Pi'$. By the  linear independence of 
 $\Pi'$  and the case previously  treated,  we therefore get
 \begin{equation*}\begin{split}
 \real_{\geq 0}\Pi&=L(\real_{\geq 0}\Pi')=
 L(\mc{Z}_{\Pi'}\cup(\bigcup_{\al'\in \Pi'}\mc{D}_{\Pi',\al'}))=
 L(\mc{Z}_{\Pi'})\cup(\bigcup_{\al'\in \Pi'}L(\mc{D}_{\Pi',\al'}))
 \\&= \mc{Z}_{\Pi}\cup(\bigcup_{\al\in L(\Pi')}\mc{D}_{\Pi,\al})=
 \mc{Z}_{\Pi}\cup(\bigcup_{\al\in \Pi}\mc{D}_{\Pi,\al})\end{split}
 \end{equation*} using Proposition \ref{x3.6},
 as required to complete the proof of (d) in all cases.
 
The second inclusion in (e) follows from (d). For the first, note that its 
left hand side is contained in $\real_{\geq 0}\Pi\sm\set{0}$. 
Hence  it suffices  to show that $\Phi_{+}\cap \mc{Z}=\eset$ and 
$\mc{Z}\cap \mc{Q}=\set{0}$.  The former is clear since
 $\mpair{\al,\al}=1$ for $\al\in \Phi$ while $\mpair{\al,\al}\leq 0$ for 
 $\al\in \mc{Z}$ by 
Proposition \ref{x3.2}(c), while the latter holds by 
  Lemma \ref{x9.11}(b).  Part  (f) follows from (a) and (e).

\end{proof}

\subsection{} \label{x9.13} We shall define below a subset $\mc{Z}_{\infty}$ of 
$\real_{\geq 0}\Pi$ and  a function $b\colon  \mc{Z}_{\infty}\to C$
(which will turn out to coincide with $b$ in Theorem \ref{x9.9}(a)). To do this, fix
$\g\in\real_{\geq 0}\Pi $ and   attempt to associate to $\g$  a sequence
$b(\g)=(\bt_{n})_{n\in \Nat}\in C$, using Lemma \ref{x9.11}(b) as follows. Set $\g_{0}:=\g$. Then $\bt_{0}$ is defined if and only if $\g_{0}\in \real_{\geq 0}\Pi\sm\mc{K}$, in which case
  $\bt_{0}$  is defined to be the unique element of $\Pi$ such that 
$\mpair{\bt_{0},\g}>0$.  In general,  for $m\in \Nat$,
 $\bt_{m+1}$ is defined if and only 
if $\bt_{0},\ldots,\bt_{m}$ are all defined  and, further,  
$\g_{m+1}:=s_{\bt_{m}}\cdots s_{\bt_{0}}(\g)\in \real_{\geq 0}\Pi\sm{\mc{K}}$;
then  we define $\bt_{m+1}$ to be the unique element of $\Pi$ with 
$\mpair{\bt_{m+1},\g_{m+1}}>0$.  Note $\bt_{m+1}\neq \bt_{m}$ when they are both defined. Define $\mc{Z}_{\infty}$ to be the set of all $\g\in\real_{\geq 0}\Pi$ such that 
$\bt_{n}$ is defined for all $n\in \Nat$, and define 
$b\colon  \mc{Z}_{\infty}\to C$ by $b(\g):=(\bt_{n})_{n\in \Nat}$ for any $\g\in \mc{Z}_{\infty}$.  Note that $\mc{Z}_{\infty}$ is a  possibly non-convex, blunt cone  and $b$ is constant on $\real_{> 0}$-orbits on $\mc{Z}_{\infty}$.

\begin{lem}\begin{num}
\item $b\colon \mc{Z}_{\infty}\to C$ is continuous. 
\item For $\bt=(\bt_{n})_{n}\in C$, $b^{-1}(\bt)=
\mc{Z}_{\infty}\cap\, \bigcap_{i\in \Nat} H_{\bt_{i}'}^{>}$ where 
$\bt_{i}':=s_{\bt_{0}}\cdots s_{\bt_{i-1}}(\bt_{i})$.
\end{num}
\end{lem}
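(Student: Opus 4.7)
The plan is to prove (b) first by translating the iterative definition of $b$ into a linear (half-space) condition on the original vector $\gamma$, and then deduce (a) by observing that this description exhibits preimages of basic open sets of $C$ as intersections of $\mc{Z}_\infty$ with finitely many open half-spaces.

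For (b), the key identity is that, since each $s_{\bt_j}$ is self-adjoint with respect to $\mpair{-,-}$ and $(s_{\bt_{i-1}}\cdots s_{\bt_0})^{-1}=s_{\bt_0}\cdots s_{\bt_{i-1}}$, one has
\[
\mpair{\bt_i,\gamma_i}=\mpair{\bt_i,s_{\bt_{i-1}}\cdots s_{\bt_0}\gamma}=\mpair{s_{\bt_0}\cdots s_{\bt_{i-1}}(\bt_i),\gamma}=\mpair{\bt_i',\gamma}.
\]
So, if $\gamma\in b^{-1}(\bt)$ then by construction $\mpair{\bt_i,\gamma_i}>0$ for all $i$, which gives the inclusion $b^{-1}(\bt)\seq\mc{Z}_\infty\cap\bigcap_i H_{\bt_i'}^>$. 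Conversely, assume $\gamma\in\mc{Z}_\infty$ satisfies $\mpair{\bt_i',\gamma}>0$ for every $i$, and write $b(\gamma)=(\bt_n^{(0)})_{n\in\Nat}$. I will argue by induction on $i$ that $\bt_i^{(0)}=\bt_i$. For $i=0$ the identity above gives $\mpair{\bt_0,\gamma_0}=\mpair{\bt_0,\gamma}>0$, so $\gamma_0\in\real_{\geq 0}\Pi\cap H_{\bt_0}^>$; by the uniqueness in the partition of $\real_{\geq 0}\Pi$ provided by Lemma \ref{x9.11}(b), $\bt_0$ is the unique element of $\Pi$ with this property, hence $\bt_0^{(0)}=\bt_0$. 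The inductive step is identical: once $\bt_0^{(0)},\ldots,\bt_{i-1}^{(0)}$ equal $\bt_0,\ldots,\bt_{i-1}$, the identity shows $\mpair{\bt_i,\gamma_i}=\mpair{\bt_i',\gamma}>0$ and uniqueness in Lemma \ref{x9.11}(b) forces $\bt_i^{(0)}=\bt_i$. This gives the reverse inclusion and proves (b).

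For (a), recall that the topology on $C\seq\prod_{n\in\Nat}\Pi$ (product of discrete factors) has as a base the cylinders
\[
U(a_0,\ldots,a_N):=\mset{\bt\in C\mid \bt_0=a_0,\ldots,\bt_N=a_N},
\]
for $N\in\Nat$ and admissible $(a_0,\ldots,a_N)\in\Pi^{N+1}$ (i.e.\ $a_{j-1}\neq a_j$). The same induction used in (b) shows that membership in $b^{-1}\bigl(U(a_0,\ldots,a_N)\bigr)$ is controlled only by the first $N+1$ conditions, so
\[
b^{-1}\bigl(U(a_0,\ldots,a_N)\bigr)=\mc{Z}_\infty\cap \bigcap_{i=0}^{N}H_{a_i'}^{>},
\]
where $a_i':=s_{a_0}\cdots s_{a_{i-1}}(a_i)$. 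Each $H_{a_i'}^{>}$ is open in $V$, so the right-hand side is open in the subspace topology on $\mc{Z}_\infty$, whence $b$ is continuous.

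The only mild obstacle is the bookkeeping in the induction (making sure $\gamma_i$ stays in $\real_{\geq 0}\Pi$ so that the process legitimately continues); this is immediate once one notes that $\gamma\in\mc{Z}_\infty$ by hypothesis, so the sequence $(\gamma_i)$ is defined for all $i$ and lies in $\real_{\geq 0}\Pi$ by the very definition of $\mc{Z}_\infty$. Everything else reduces to the self-adjointness trick above and the uniqueness clause of Lemma \ref{x9.11}(b).
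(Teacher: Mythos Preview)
Your proof is correct and follows essentially the same approach as the paper's. The only cosmetic difference is that the paper proves (a) first and then remarks that (b) follows by the same argument, whereas you prove (b) first (making the key self-adjointness identity and the induction via the uniqueness clause of Lemma~\ref{x9.11}(b) explicit) and then read off (a) from the finite-cylinder version of the same formula.
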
\begin{proof}
To prove (a), it will suffice to show that  for $n\in \Nat$ and  
$\beta_{0},\ldots, \beta_{n}\in\Pi$ with $\beta_{i-1}\neq \beta_{i}$ for 
$i=1,\ldots,n$, the set 
$U:=\mset{\g\in \mc{Z}_{\infty}\mid (b(\g))_{i}=\bt_{i} 
\text{ \rm  for $i=1,\ldots, n$ }}$ is open in $\mc{Z}_{\infty}$.
But from the definition of $b$, for any  $\g\in \mc{Z}_{\infty}$, one has $\g\in U$  if and only if \begin{equation*}
\mpair{\g,\beta_{0}}>0,
\mpair{s_{\beta_{0}}\g,\beta_{1}}>0,\ldots,
\mpair{s_{\beta_{n-1}}\cdots s_{\beta_{0}}(\g),\beta_{n}}>0.
\end{equation*}
This implies  that 
$U=\mc{Z}_{\infty}\cap \bigcap_{i=0}^{n} H_{\bt_{i}'}^{>}$ where 
$\bt_{i}'=s_{\bt_{0}}\cdots s_{\bt_{i-1}}(\bt_{i})$ for $i=0,\ldots, n$. This 
description shows $U$ is open in ${\mc{Z}}_{\infty}$, proving (a). A 
similar argument shows that for $\g\in \mc{Z}_{\infty}$ and
 $\bt=(\bt_{n})_{n}\in C$, one has  $b(\g)=\bt$ if and only if
$\mpair{\g,\bt_{i}'}>0$ for all $i\in \Nat$, where $\bt'_{i}$ is as in (b).
This completes the proof of (b).
\end{proof}
\subsection{} \label{x9.14}
In order to describe  and prove some further properties of  the function $b$, it is convenient  to partially order $V$ by 
$v_{1}\leq v_{2}$ if  $v_{2}-v_{1}\in \real_{\geq 0}\Pi$, 
for $v_{1},v_{2}\in V$.  For $ \g\in  \real_{\geq 0}\Pi\sm\mc{K}$ and $\bt\in \Pi$, we have $s_{\beta}(\g)<\g$ if  $\mpair{\g,\bt}
>0$ and $s_{\beta}(\g)>\g$ if $\mpair{\g,\bt}<0$, these being the only 
possibilities (any coarser partial order on $V$ with these properties could be used in place of $\leq $ below).

\begin{lem} Let $\g'\in \mc{Z}_{\infty}$, $b(\g') =\bt'=(\bt'_{n})_{n}\in C$ and
$\al=(\al_{n})_{n}\in C$  with $\al\neq \bt'$. 
Let $m:=\max(\mset{n\in \Nat\mid  \al_{j}=\bt'_{j}\text{ for $j=0,\ldots, n-1$}})\in \Nat$. For any $n\in \Nat$, write
$\g'_{n}:=s_{\bt'_{n-1}}\cdots s_{\bt'_{0}}(\g')\in \real_{\geq 0}\Pi$  and  set
$\delta_{n}:=s_{\al_{n-1}}\cdots s_{\al_{0}}(\g')$.

\begin{num}
\item $\g'_{0}>\g'_{1}>\g'_{2}\ldots$.
\item $\delta_{0}>\delta_{1}>\ldots >\delta_{m}$ and  
$\delta_{m}<\delta_{m+1}<\delta_{m+2}<\ldots$.
\item $\g'_{n}=\delta_{n}$ for $n=0,1,\ldots, m$ and $\delta_{n}>\g'_{n}$ 
for all $n\geq m+1$. 
\item $\mc{Z}_{\infty}=\ol{\mc{Z}}\sm\mc{Z}$. 
\item $\lim_{n\to \infty }\g'_{n}=0$ and $\g'\in \mc{Q}$.
\item $\mpair{\rho,\delta_{n}}\to\infty$ as $n\to \infty$.
\item If $z\in \mc{Z}\sm\set{0}$ and $(w_{n})_{n\in \Nat}$ is any sequence of pairwise distinct elements of $W$, then $\mpair{\rho,w_{n}z}\to\infty$ as $n\to \infty$.\end{num}
\end{lem}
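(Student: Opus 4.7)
The plan is to establish parts (a) and (d) first, then obtain (b) and (c) via an inductive identification of the reflections applied by the ``reverse greedy'' sequence $\delta_n$, and finally handle (e)--(g) with geometric contraction/expansion estimates and orbit discreteness. Part (a) is immediate: by the definition of $b(\g')$, $\bt'_n$ is the unique simple root with $\mpair{\g'_{n},\bt'_n}>0$, so $\g'_n-\g'_{n+1}=2\mpair{\g'_n,\bt'_n}\bt'_n\in\real_{>0}\Pi$, giving $\g'_n>\g'_{n+1}$.

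For (d) I would argue both inclusions. If $\g'\in\mc{Z}\cap\mc{Z}_\infty$, Lemma \ref{x6.2} makes $W\g'$ discrete and closed in $V$; by (a) the $\g'_n\in W\g'$ are pairwise distinct with $\mpair{\rho,\g'_n}\leq\mpair{\rho,\g'}$, so they all sit in the compact set $\mset{v\in\real_{\geq 0}\Pi\mid \mpair{\rho,v}\leq\mpair{\rho,\g'}}$, contradicting discreteness. Conversely, for $\g'\in\ol{\mc{Z}}\sm\mc{Z}$, $W$-invariance of $\ol{\mc{Z}}$ and $\ol{\mc{Z}}\seq\real_{\geq 0}\Pi$ force $\g'_n\in W\g'\seq\ol{\mc{Z}}\sm\mc{Z}\seq\real_{\geq 0}\Pi\sm\mc{Z}$ at every stage, while $\g'_n\not\in\mc{K}\seq\mc{Z}$; Lemma \ref{x9.12}(f) then supplies the unique $\bt\in\Pi$ with $\mpair{\g'_n,\bt}>0$ needed to extend the greedy sequence indefinitely, so $\g'\in\mc{Z}_\infty$.

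With (d) in hand, each $\delta_n\in W\g'\seq\real_{\geq 0}\Pi\sm\mc{Z}$, so Lemma \ref{x9.12}(f) provides a unique $\bt^\ast_n\in\Pi$ with $\mpair{\delta_n,\bt^\ast_n}>0$. For $n\leq m$, trivially $\delta_n=\g'_n$ and $\bt^\ast_n=\bt'_n$, so (a) already gives (b) and (c) in this range. The crux is the inductive claim $\bt^\ast_n=\al_{n-1}$ for all $n\geq m+1$. At the base case $n=m+1$: since $\al_m\neq\bt'_m=\bt^\ast_m$, Lemma \ref{x9.12}(f) forces $\mpair{\g'_m,\al_m}<0$, whence $\mpair{\delta_{m+1},\al_m}=-\mpair{\g'_m,\al_m}>0$. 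Inductively, $\al_n\neq\al_{n-1}=\bt^\ast_n$ forces $\mpair{\delta_n,\al_n}<0$ by Lemma \ref{x9.12}(f) again, so both $\delta_{n+1}-\delta_n=-2\mpair{\delta_n,\al_n}\al_n\in\real_{>0}\Pi$ (the monotonicity in (b)) and $\mpair{\delta_{n+1},\al_n}>0$ (closing the induction with $\bt^\ast_{n+1}=\al_n$). Part (c) for $n\geq m+1$ then follows by chaining $\delta_n>\delta_{m+1}>\g'_m>\g'_n$, using (a), (b), and the base case.

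Parts (e) and (f) follow from Lemma \ref{x9.12}(b). For (e), $\mpair{\g'_n,\bt'_n}\geq\tfrac{1}{2}k_\rho\mpair{\rho,\g'_n}$ yields $\mpair{\rho,\g'_{n+1}}\leq(1-k_\rho\epsilon_\rho)\mpair{\rho,\g'_n}$, hence $\mpair{\rho,\g'_n}\to 0$ geometrically; compactness of the base $\mset{v\in\real_{\geq 0}\Pi\mid \mpair{\rho,v}=1}$ forces $\g'_n\to 0$, and $\mpair{\g'_n,\g'_n}=\mpair{\g',\g'}$ together with continuity gives $\mpair{\g',\g'}=0$, so $\g'\in\mc{Q}$. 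The same bound applied to $|\mpair{\delta_{n-1},\al_{n-1}}|$ gives $\mpair{\rho,\delta_n}\geq(1+k_\rho\epsilon_\rho)\mpair{\rho,\delta_{n-1}}$ for $n\geq m+2$, establishing (f). For (g), Lemma \ref{x6.2} makes $Wz$ discrete and closed in $V$; moreover $\stab_W(z)$ is finite in this setting, for reducing to $z\in\mc{K}\sm\set{0}$ and writing $z=\sum c_\al\al$, the equations $\mpair{z,\al}=\mpair{z,\bt}=0$ for distinct $\al,\bt\in\Pi$ (necessarily with $c_\al,c_\bt>0$) would give $(c_\al+c_\bt)(|\mpair{\al,\bt}|-1)\leq 0$, impossible when $|\mpair{\al,\bt}|>1$; hence $\Pi\cap z^\perp$ has only singleton components and $\stab_W(z)$ is finite, so the orbit--stabilizer principle combined with discreteness of $Wz$ yields $\mpair{\rho,w_nz}\to\infty$ for any pairwise distinct $(w_n)$. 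The principal obstacle is the induction of the third paragraph identifying $\bt^\ast_n$ with $\al_{n-1}$, which is what makes (d) a necessary prerequisite and properly aligns the backward sweep of $\delta_n$ beyond index $m$ with the forward greedy sweep.
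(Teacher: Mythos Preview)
Your argument for (d) is incomplete, and this creates a genuine circularity. You establish $\mc{Z}_\infty\cap\mc{Z}=\eset$ and $\ol{\mc{Z}}\sm\mc{Z}\seq\mc{Z}_\infty$, but nowhere do you show $\mc{Z}_\infty\seq\ol{\mc{Z}}$. A priori, $\mc{Z}_\infty$ is only known to be a subset of $\real_{\geq 0}\Pi$, and a point there need not lie in $\ol{\mc{Z}}$. Your subsequent proof of (b)--(c) begins with ``each $\delta_n\in W\g'\seq\real_{\geq 0}\Pi\sm\mc{Z}$'', which uses exactly the missing inclusion: from $\g'\in\mc{Z}_\infty$ alone you cannot conclude $W\g'\seq\real_{\geq 0}\Pi$ (let alone $W\g'\seq\ol{\mc{Z}}\sm\mc{Z}$), so you cannot invoke Lemma~\ref{x9.12}(f) on $\delta_n$. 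The same issue recurs in (e), where you need $\g'_n\in\bigcup_{\al}\mc{D}_\al$ to apply Lemma~\ref{x9.12}(b).

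The paper resolves this by reversing your order: it proves (a)--(c) directly from the definition of $\mc{Z}_\infty$, using only that $\g'_n\in\real_{\geq 0}\Pi\sm\mc{K}$ and the observation (implicit in the facet description in the proof of Lemma~\ref{x9.11}(a)) that $\real_{\geq 0}\Pi\cap\al^{\perp}\seq\mc{K}$ for every $\al\in\Pi$; hence for $\tau\in\real_{\geq 0}\Pi\sm\mc{K}$ one has $\mpair{\tau,\epsilon}\neq 0$ for \emph{every} $\epsilon\in\Pi$, giving the strict inequalities you need without knowing $\tau\notin\mc{Z}$. Your induction in the third paragraph then goes through verbatim with Lemma~\ref{x9.11}(b) in place of Lemma~\ref{x9.12}(f). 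Once (a)--(c) are in hand, the missing inclusion $\mc{Z}_\infty\seq\ol{\mc{Z}}$ follows easily: any $w\in W$ can be written $w=s_{\al_{p-1}}\cdots s_{\al_0}$ for some $\al\in C$, and then (c) gives $w\g'=\delta_p\geq\g'_p\in\real_{\geq 0}\Pi$, so $\g'\in\mc{Y}=\ol{\mc{Z}}$ by Theorem~\ref{x5.1}(b). After this reorganization your arguments for (e)--(g) are valid; indeed your geometric contraction argument for (e) is cleaner than the paper's approach via Lemma~\ref{x6.1}. (A minor point in (g): justify ``necessarily $c_\al,c_\bt>0$'' --- it follows since $\mpair{z,\al}=0$ with $z\neq 0$ forces $c_\al=\sum_{\g\neq\al}c_\g\vert\mpair{\g,\al}\vert>0$.)
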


\begin{proof}
For $\tau\in \real_{\geq 0}\Pi\sm\mc{K}$, there is a unique
$\epsilon\in \Pi$ such that $\mpair{\tau,\epsilon}>0$, by   Lemma \ref{x9.11}, and for any $\epsilon'\in \Pi\sm\set{\epsilon}$,
one has $s_{\epsilon'}\tau>\tau$ and  $s_{\epsilon'}\tau\in \real_{\geq 0}\Pi\sm\mc{K}$. The claims in parts (a)--(c) on $\g_{n}'$, $\delta_{n}$ follow easily from these observations by induction on $n$. 

Next we prove (d).  For any  
$\g\in \ol{\mc{Z}}\sm\mc{Z}\seq \real_{\geq 0}\Pi\sm \mc{K}$, there is a unique   $\bt\in \Pi$ with 
$\mpair{\bt,\g}>0$ by Lemma \ref{x9.11}(b);  further,  one has  $s_{\bt}(\g)\in \ol{\mc{Z}}\sm \mc{Z}$
since  $\ol{\mc{Z}}\sm \mc{Z}$ is $W$-invariant. It follows by induction on $n$ that for all $n\in \Nat$, the simple root  $\bt_{n}$ in the 
construction of $b(\g)$ above is  defined, so  $b(\g)$ is defined  and 
$\ol{\mc{Z}}\sm \mc{Z}\seq \mc{Z}_{\infty}$. For the reverse 
inclusion,  it will suffice   by  Theorem \ref{x5.1}(b) and definition of $\mc{Y}$  to show that   $\g'\not\in \mc{Z}$ and 
$w(\gamma')\in \real_{\geq 0}\Pi$ for all $w\in W$
(since  $\g'\in \mc{Z}_{\infty}$ is arbitrary). 
Since $\bt'_{i}\neq \bt'_{i+1}$ for all $i$,  one has 
$s_{\bt'_{0}}\cdots s_{\bt'_{k}}(\bt'_{k+1})\in \Phi_{+}$ for all $k\in \Nat$, and these roots are pairwise distinct. Since  $\mpair{\bt'_{k+1},\g'_{k+1}}>0$, we have 
$\mpair{s_{\bt'_{0}}\cdots s_{\bt'_{k}}(\bt'_{k+1}),\g'}>0$ for all  $k$. Lemma \ref{x1.10}(a) implies that $\g'\not\in -\mc{X}$ and then
 Proposition \ref{x3.2}(a) implies that $\g'\not\in \mc{Z}$.
 Now we  let $w\in W$ and show $w\g'\in \real_{\geq 0}\Pi$.  By choosing $\al$ appropriately, we may assume without loss of generality that $w=s_{\al_{p-1}}\cdots s_{\al_{0}}$ where $p\in \Nat$. Then $w(\g')=\delta_{p}\geq \g'_{p}\in \real_{\geq 0}\Pi$ by (a)--(c).
By definition of $\leq$, we also have $w(\g')-\g'_{p}\in \real_{\geq 0}\Pi$ so $w(\g')\in \real_{\geq 0}\Pi$ as required. This completes the proof of (d).

For (e),  let $\lambda:=\inf(\mset{\mpair{\rho,w\g'}\mid w\in W})\in \real_{\geq 0}$.
 By (d) and Lemma \ref{x6.1},  there 
exists $v\in \ol{W\g'}\cap \mc{K}$ with $\rho(v)=\lambda$ and a sequence 
$(w_{n})_{n\in \Nat}$ in $W$ such that $w_{n}\g'\to v$. 
 For each $n\in \Nat$, write $w_{n}=x_{n}y_{n}$, where 
$l(w_{n})=l(x_{n})+l(y_{n})$, as follows.  Take a reduced expression
$w_{n}=s_{\epsilon_{q}}\ldots s_{\epsilon_{0}}$ (depending on $n$) with $\epsilon_{i}\in \Pi$, and
set $p=\max(\mset{n\in \Nat\mid   n\leq q+1,  \epsilon_{i}=\gamma'_{i} \text{ for $i=0,\ldots, n-1$}})$. Define $y_{n}=s_{\epsilon_{p-{1}}}\cdots s_{\epsilon_{0}}$ and $x_{n}=s_{\epsilon_{q}}\cdots s_{\epsilon_{p}}$.
Also write $m_{n}:=l(y_{n})$.
By choosing  $\al$ appropriately depending on $w_{n}$, (a)--(c) imply that
$\g'\geq y_{n} \g'$ and $w_{n}\g'\geq y_{n}\g'$ for all $n$.
Hence $\lambda \leq \mpair{\rho,y_{n}\g'}\leq \mpair{\rho, w_{n}\g'}$ for all $n$.
As $n\to \infty$, $\mpair{\rho, w_{n}\g'}\to\lambda$ and so 
$\mpair{\rho,y_{n}\g'}\to \lambda$ also. But $w_{n}\g'-y_{n}\g'\in \real_{\geq 0}\Pi$ and $\mpair{w_{n}\g'-y_{n}\g',\rho}\to 0$ as $n\to \infty$ imply that $w_{n}\g'-y_{n}\g'\to 0$ also. Since $w_{n}\g'\to v$, we get $y_{n}\g'\to v$ as well.
But  $y_{n}\g'=\g'_{m_{n}}$.
Since the sequence $(\mpair{\rho, \g'_{n}})_{n}$ in $\real_{\geq 0}$   is strictly monotonic decreasing and  $\mpair{\rho,\g'_{m_{n}}}\to \lambda$ as $n\to \infty$, we must have, by definition of $\lambda$,    $m_{n}\to\infty$  and 
$\mpair{\rho, \g'_{n}}\to \lambda$ as $n\to \infty$. 
Now for any $k$ with  $m_{k}>n$, we have $\g'_{n}-\g'_{m_{k}}\in \real_{\geq 0}\Pi$.
Letting $k\to \infty$ gives $\g'_{n}-v\in \real_{\geq 0}\Pi$.
Since $\mpair{\g'_{n}-v,\rho}\to \lambda-\lambda=0$ as $n\to \infty$, we get $\g'_{n}\to v$ as $n\to \infty$. Hence $\lim_{n\to \infty}\g'_{n}=v\in \mc{K}$. 

We show next that $v=0$. Suppose to the contrary that $v\neq 0$.
Then $\mpair{\rho, \g'_{n}}\geq \lambda>0$ for all $n\in \Nat$.
Since  $\g'$ is in the $W$-invariant set $\ol{\mc{Z}}\sm \mc{Z}$, and 
  $\g'_{n}\in W\gamma$, we get  $\g'_{n}\in \ol{\mc{Z}}\sm\mc{Z}\seq
\cup_{\al\in \Pi}\mc{D}_{\al}$. Let $\g'_{n}\in \mc{D}_{\tau_{n}}$, where $\tau_{n}\in \Pi$.  By Lemma \ref{x9.12}(a)-(b),
$\mpair{\g'_{n},\ck\tau_{n}}\geq k_{\rho}\mpair{\rho,\g'_{n}}\geq \lambda k_{\rho}>0$ and similarly $\mpair{\g'_{n},\ck\sigma}\leq - \lambda k_{\rho}<0$ for $\sigma\in \Pi\sm\set{\tau_{n}}$.  
Since   $\g'_{n}\to v$ as $n\to\infty$, there is 
some  $\tau\in \Pi$ such that  $\tau_{n}=\tau$ for all sufficiently large $n$, and    $v\in\mc{D}_{\tau}$. But then
$\mpair{v,\ck\tau}\geq k_{\rho}\mpair{\rho, v}>0$ contrary to $v\in \mc{K}$. This shows that $v=0$. 
 Now $w_{n}\g'\to v$ and $\mpair{\g',\g'}=\mpair{w_{n}\g',w_{n}\g'}\to \mpair{v,v}=0$ as $n\to \infty$ implies that $\g'$ is isotropic.  This completes the proof of (e).

Now we prove  (f). For any $l\in \Nat$, $\delta_{l+1}=s_{\al_{l}}(\delta_{l})=\delta_{l}-\mpair{\delta_{l},\ck\al_{l}}\al_{l}$. Assume $l\geq m$. Then 
$\mpair{\delta_{l},\ck\al_{l}}<0$ since $\delta_{l+l}>\delta_{l}$, so  
$-\mpair{\delta_{l},\ck\al_{l}}\geq k_{\rho}\mpair{\rho,\delta_{l}}\geq k_{\rho}\mpair{\rho,\delta_{m}}$ where $k_{\rho}$ is as in Lemma \ref{x9.12}, using $\delta_{l}-\delta_{m}\in \real_{\geq 0}\Pi$ for the last inequality. Hence 
\begin{equation*} 
\mpair{\rho,\delta_{l+1}-\delta_{l}}=-\mpair{\delta_{l},\ck\alpha_{l}}\mpair{\rho,\alpha_{l}}\geq k_{\rho}\mpair{\rho,\delta_{m}}\mpair{\rho,\al_{l}}\geq\epsilon_{\rho} k_{\rho}\mpair{\rho,\delta_{m}}.
\end{equation*} Setting $K:=\epsilon_{\rho} k_{\rho}\mpair{\rho,\delta_{m}}>0$,
we get $\mpair{\rho,\delta_{N+m}}\geq \mpair{\rho,\delta_{m}}+NK$ for all $N\in \Nat$ which implies (f) (the  above estimates  suffice for purposes here but are obviously far from sharp). 

For the proof  of (g), we may assume without loss of generality that $z\in \mc{K}\sm \set{0}$. Hence $v:=-z\in \mc{C}$.  Fix $N\in \real_{\geq 0}$. By \ref{x4.3}(a),  there are only finitely many $x\in Wz$ with $\mpair{v-x,\rho}\leq N$. 
By  Lemma \ref{x1.10} and Lemma \ref{x9.11}(e), $\Stab_{W}(v)$ is a standard parabolic subgroup of $W$ of rank $0$ or $1$, so it is finite. Hence for each $x\in Wz$ with $\mpair{v-x,\rho}\leq N$, there are only finitely many $w\in Wv$ with $wv=x$. This implies that  there are only finitely many $w\in W$ with $\mpair{v-wv,\rho}\leq N$ i.e. $\mpair{wz,\rho}\leq N-\mpair{v,\rho}$. Since $N\geq 0$ is arbitrary, this establishes (g).
\end{proof}

\subsection{} \label{x9.15} 
This section will establish  that the fibers of $b$ are all non-empty, 
which is an important point in the proof of Theorem \ref{x9.9}. 
Though not strictly necessary for purposes here, we establish a stronger fact  involving 
the \emph{dominance order}  $\preceq$ of $\Phi$, which  is the partial 
order such that $\al\preceq \beta$ if for all
 $w\in W$ with $w(\bt)\in \Phi_{-}$, one has $w(\al)\in \Phi_{-}$
 (see  \cite{BH1}, \cite{BjBr} or \cite{HRT}). If $\al,\bt\in \Phi_{+}$, one 
 has $\al\preceq \bt$ if and only if $\mpair{\al,\bt}\geq 1$ and 
 $l(s_{\al})\leq l(s_{\beta})$. The above  two characterizations of dominance order apply to arbitrary based root systems. For universal root systems
 (generic or not), the second criterion immediately implies the following fact:  for
$\al,\bt\in \Phi_{+}$, one has $\al\preceq \bt$ if and only if, writing (in the 
unique possible way) $\beta=s_{\bt_{1}}\ldots s_{\bt_{n-1}}(\bt_{n})$ 
where all $\bt_{i}\in \Pi$ and  $l(s_{\bt})=2n-1$, one has 
$\al=s_{\beta_{1}}\ldots s_{\beta_{m-1}}(\bt_{m})$ for some
 $1\leq m\leq n$.

  Recall also  the definitions from 
\S\ref{x5} of the set $R_{+}$ of rays spanned by positive roots and the 
set $R_{0}=\Acc(R_{+})$ of its limit rays in $\ray(V)$. Note 
that, since limit rays of the set of  root rays are isotropic,  
$(\bigcup R_{0})\setminus\set{0}\seq \ol{\mc{Z}}\sm \mc{Z}=\mc{Z}_{\infty}
$ by Lemmas \ref{x9.11} and \ref{x9.14}. Since 
$b\colon \mc{Z}_{\infty}\to C$  
 has  blunt, possibly non-convex cones as its fibers, it determines a 
 partition  $R_{0}=
 \dot\cup _{\bt\in C}(R_{0}\cap\ray({b^{- 1}(\bt)\cup\set{0}}))$ (except it  is not  yet clear that all the sets on the right are non-empty). 
The following proposition  gives a  condition 
for  a sequence of positive root rays to  have all its limit rays  in one set  in this partition, and makes it clear that these sets are all non-empty.

 \begin{prop} Fix $\bt=(\bt_{n})_{n\in \Nat}\in C$. 
Set $\bt_{n}':=s_{\bt_{0}}\cdots s_{\bt_{n-1}}(\bt_{n})\in \Phi_{+}$. 

 \begin{num}
  \item Let  $\al_{n}\in \Phi_{+}$ for all $n\in \Nat$.    Then   $\Acc((\real_{\geq 0}\al_{n})_{n})\seq R_{0}\cap\ray({b^{-1}(\bt)\cup\set{0}})$ if and only if for all $m\in \Nat$, one has $\bt'_{m}\preceq \al_{n}$ for almost all $n\in \Nat$.
 \item    $\Acc((\real_{\geq 0} \bt'_{n})_{ n\in \Nat})\seq \ray(b^{-1}(\bt)\cup\set{0})$. Hence  $b$ is surjective.
 \end{num}\end{prop}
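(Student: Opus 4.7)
The plan is to establish the two directions of \text{\rm (a)} and then deduce \text{\rm (b)} by specializing $\al_n := \bt'_n$. For the implication assuming dominance, suppose $\bt'_m \preceq \al_n$ for almost every $n$ at each $m$. Since $l(s_{\bt'_m}) = 2m+1$, the length half of the dominance criterion for positive roots recalled in \ref{x9.15} forces $l(s_{\al_n}) \to \infty$, and then $\rho(\al_n) \to \infty$ by \ref{x1.21}(b); consequently every accumulation ray $\real_{\geq 0}\g$ of $(\real_{\geq 0}\al_n)$ is spanned by an isotropic $\g \in (\ol{\mc{Z}} \cap \mc{Q}) \sm \set{0}$ and lies in $R_0$. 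By \ref{x9.11}(c), $\g \notin \mc{Z}$, and \ref{x9.14}(d) then gives $\g \in \mc{Z}_{\infty}$, so $b(\g) =: \zeta \in C$ is defined.

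To show $\zeta = \bt$, suppose not and let $m_0$ be the smallest index of disagreement. Because $\zeta_i = \bt_i$ for $i < m_0$, the vector $\g'_{m_0} := s_{\zeta_{m_0-1}} \cdots s_{\zeta_0}(\g)$ coincides with $s_{\bt_{m_0-1}} \cdots s_{\bt_0}(\g)$, giving $\mpair{\bt'_{m_0}, \g} = \mpair{\bt_{m_0}, \g'_{m_0}}$. Since $\ol{\mc{Z}} \sm \mc{Z}$ is $W$-invariant, $\g'_{m_0} \in \ol{\mc{Z}} \sm \mc{Z} \seq \real_{\geq 0}\Pi \sm \mc{Z}$, and \ref{x9.12}(f) guarantees $\mpair{\al, \g'_{m_0}} \neq 0$ for every $\al \in \Pi$. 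The construction of $b$ forces $\zeta_{m_0}$ to be the unique simple root with $\mpair{\zeta_{m_0}, \g'_{m_0}} > 0$, so $\bt_{m_0} \neq \zeta_{m_0}$ yields $\mpair{\bt_{m_0}, \g'_{m_0}} < 0$ strictly, i.e., $\mpair{\bt'_{m_0}, \g} < 0$. On the other hand, dominance plus the inner-product criterion gives $\mpair{\bt'_{m_0}, \al_n} \geq 1$ eventually, and normalizing by $\rho(\al_n)$ before passing to the ray limit yields $\mpair{\bt'_{m_0}, \g} \geq 0$. Contradiction.

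For the converse I argue the contrapositive. If some $m$ admits $\bt'_m \not\preceq \al_n$ for infinitely many $n$, extract such a subsequence $(\al_{n_k})$ and, by compactness of $\ray(V)$, a further ray-convergent subsequence $\real_{\geq 0}\al_{n_{k_l}} \to \real_{\geq 0}\g$. By hypothesis this ray belongs to $R_0 \cap \ray(b^{-1}(\bt) \cup \set{0})$, and \ref{x9.13}(b) gives $\mpair{\bt'_m, \g} > 0$; hence $\mpair{\bt'_m, \al_{n_{k_l}}} > 0$ for large $l$. Generic universality ($\vert \mpair{\al,\bt} \vert \geq 1$ for distinct roots) sharpens this to $\mpair{\bt'_m, \al_{n_{k_l}}} \geq 1$. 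Because the limit ray lies in $R_0$, $\rho(\al_{n_{k_l}}) \to \infty$, and \ref{x1.21}(b) then delivers $l(s_{\al_{n_{k_l}}}) \geq l(s_{\bt'_m})$ eventually. The combined inner-product-and-length characterization of dominance recalled in \ref{x9.15} now yields $\bt'_m \preceq \al_{n_{k_l}}$, contradicting the choice of subsequence.

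Part \text{\rm (b)} follows by applying \text{\rm (a)} with $\al_n := \bt'_n$: for $n \geq m$, the universal-Coxeter criterion in \ref{x9.15} gives $\bt'_m \preceq \bt'_n$ since $\bt'_n$ is read off a reduced expression extending the one defining $\bt'_m$; the sequence of rays has accumulation points by compactness together with $\rho(\bt'_n) \to \infty$ from \ref{x1.21}(b), and \text{\rm (a)} places them in $\ray(b^{-1}(\bt) \cup \set{0})$, yielding both the stated inclusion and, via any nonzero $\g$ spanning such a ray, the surjectivity of $b$. The main obstacle is the strict inequality $\mpair{\bt_{m_0}, \g'_{m_0}} < 0$ in the second paragraph above: a merely weak $\leq 0$ would match the weak $\geq 0$ coming from dominance without producing a contradiction. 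This strictness is the real generic-universality ingredient, extracted by pairing $W$-invariance of $\ol{\mc{Z}} \sm \mc{Z}$ with \ref{x9.12}(f).
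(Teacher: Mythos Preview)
Your proof is correct and uses the same key ingredients as the paper's --- the characterization of $b^{-1}(\bt)$ from Lemma~\ref{x9.13}(b), the strict sign guaranteed by Lemma~\ref{x9.12}(f), universality to sharpen $>0$ to $\geq 1$, and the length criterion for dominance --- though organized differently: the paper first reduces to a single limit ray and then splits into isotropic and non-isotropic cases, while you treat the two implications of (a) directly via accumulation points and contradiction. One minor slip: in your converse paragraph, Lemma~\ref{x1.21}(b) gives $\rho(\al)\geq \e\, l(s_{\al})$, which bounds $l(s_\al)$ \emph{above} by a multiple of $\rho(\al)$ and so does not directly yield $l(s_{\al_{n_{k_l}}})\to\infty$ from $\rho(\al_{n_{k_l}})\to\infty$; the correct justification is that for each $N$ there are only finitely many positive roots $\al$ with $l(s_\al)\leq N$ (since $S$ is finite), so a subsequence with bounded reflection length would be eventually constant, contradicting $\rho\to\infty$.
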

 
\begin{proof}   It is sufficient to prove (a)  under the extra assumption 
that $(\real_{\geq 0}\al_{n})_{n}$ has a unique limit ray  
$\real_{\geq 0}\al$ i.e.  $\set{\real_{\geq 0}\al}=
\Acc((\real_{\geq 0}\al_{n})_{n})$ or equivalently,
 $\wh \al=\lim_{n\to\infty}\wh \al_{n}$. First consider the case that $\al$
  is non-isotropic.  Without loss of generality, assume $\al\in \Phi_{+}$. 
   Then  $\al_{n}=\al$ for almost all $n$.  Moreover 
   $\real_{\geq 0}\al\not \in R_{0}$ and, since each positive root dominates only 
   finitely many positive roots and the $\bt_{i}$ are pairwise distinct, 
   there is some $m\in \Nat$ such that  $\bt_{m}\leq \al_{n}$ for no
    $n\in \Nat$.  Hence (a) holds if $\al$ is 
 non-isotropic. Now assume that $\al$ is isotropic.  Then, since $\real_{\geq 0}\al$ is a limit ray of the set of rays spanned by positive roots,  one has
 $\al\in\mc{Z}_{\infty}$, as previously noted, so $b(\al)$ is defined.
 In this case, we have  $\real_{\geq 0}\al \in R_{0}\cap \ray(b^{-1}(\bt)\cup\set{0})$ if and only if  $b(\al)=\bt$.
In the notation of  Lemma \ref{x9.13}(b), we have     $b(\al)=(\bt_{n})_{n}$ 
if and only if for all $m\in \Nat$, $\mpair{\al,\bt'_{m}}>0$. 

We claim that 
this holds if 
and only if  for all $m$, $\mpair{\al_{n},\bt'_{m}}>0$ for almost all $n$.
Fix $m$. Since $\wh \al_{n}\to\wh \al$, it is clear that $\mpair{\al,\bt'_{m}}>0$ implies 
$\mpair{\al_{n},\bt'_{m}}>0$ for almost all $n$. Conversely,
suppose that $\mpair{\al_{n},\bt'_{m}}>0$ for almost all $n$.
Since $\wh\al_{n}\to \wh \al$, it is clear that 
$\mpair{\wh \al,\bt'_{m}}\geq 0$ and we have to show equality cannot 
occur. Write $w:=s_{\bt_{m-1}}\ldots s_{\bt_{0}}$. We have to show that $\mpair{\wh\al,w^{-1}\bt_{m}}\neq 0$ i.e. $\mpair{w\wh\al,\bt_{m}}\neq 0$. But $0\neq \wh\al\in \ol{\mc{Z}}\cap\mc{Q}$ and hence  
$0\neq w\wh\al\in \ol{\mc{Z}}\cap\mc{Q}\seq \real_{\geq 0}\Pi\cap\mc{Q}$.
By Lemma  \ref{x9.12}, 
$\vert \mpair{w\wh\al,\ck\bt_{m}}\vert \geq 
k_{\rho}\mpair{\rho, w\wh\al}>0$. This   proves the claim.

 Thus, $b(\al)=\bt$ if and only if for all $m$, $\mpair{\al_{n},\bt'_{m}}>0$ 
for almost all $n$.
Equivalently since $\Phi$ is universal, $b(\al)=\bt$ if and only if  for all $m$,  $\mpair{\al_{n},\bt'_{m}}\geq 1$ 
for almost all $n$.
Since $l(s_{\al_{n}})\to \infty$ as $n\to \infty$,  this  is equivalent to
the condition that for all $m$, $\bt_{m}'\preceq \al_{n}$ for almost all
 $n$. This proves (a). Part (b) follows from (a) since 
 $\bt_{0}'\prec \bt_{1}'\prec\bt_{2}'\prec\ldots $ (and any sequence of 
 distinct  root rays has at least one limit ray). \end{proof}

 \subsection{Proof of Theorem \ref{x9.9}} \label{x9.16}  
 The first part of \ref{x9.9}(a) is already established as Lemma 
 \ref{x9.11}(c). The second part of  (a) follows from Lemma 
 \ref{x9.14}(d)--(e), since $\g'\in\mc{Z}_{\infty}$ there is arbitrary.

 Let  $\bt=(\bt_{n})_{n}\in C$ and $w_{n}:=s_{\bt_{n-1}}\ldots s_{\bt_{0}}\in W$.   Then   
  \begin{equation}\label{x9.16.1}
  b^{-1}(\bt)\cup\set{0}\seq\mset{z\in \ol{\mc{Z}}\mid \lim_{n\to \infty}w_{n}z=0}
  \end{equation} by  Lemma \ref{x9.14}(e).
  Also, \begin{equation}\label{x9.16.2}
  \ol{\mc{Z}}\sm ({b^{-1}(\bt)}\cup\set{0})\seq
 \mset{z\in \ol{\mc{Z}}\mid \lim_{n\to \infty}\mpair{\rho,w_{n}z}=\infty}
  \end{equation} 
  To see this, note that $\mc{Z}\sm\set{0}$ is contained in the right hand 
  side by Lemma \ref{x9.14}(g),  and any point of
  $\ol{\mc{Z}}\sm(\mc{Z}\cup b^{-1}(\bt))$ is in $b^{-1}(\bt')$ for some 
  $\bt'\neq \bt$ in $C$ and so is in the right hand side by using Lemma 
  \ref{x9.14}(f). Now the left hand sides of \eqref{x9.16.1}--\eqref{x9.16.2}
  form a partition of $\ol{\mc{Z}}$ and the right hand sides are disjoint 
  subsets of $\ol{\mc{Z}}$. Hence the inclusions in 
  \eqref{x9.16.1}--\eqref{x9.16.2} are actually equalities. By definition of $F_{\bt}$, this  proves that  $b^{-1}(\bt)\cup\set{0}=F_{\bt}$ and  that 
   Theorem \ref{x9.9}(b) holds.
  It also implies that  the sets $F_{\bt}\sm\set{0}$ for $\bt\in C$ form a 
  partition of $\ol{\mc{Z}}\sm\mc{Z}$, which proves  Theorem
   \ref{x9.9}(c) and that the function $b$ so defined  coincides 
   with the function $b$ defined in \ref{x9.13}.

  Note that for $\bt\in C$,  $F_{\bt}$ is a totally
 isotropic face of $\ol{\mc{Z}}$ by  Lemma \ref{x7.10}(c)
 and  $F_{\bt}\cap \mc{Z}=\set{0}$ by (a).
Hence   $b^{-1}(\bt)=F_{\bt}\sm\set{0}$ is a 
 blunt cone, which is non-empty by Proposition \ref{x9.15}(b).
 So $b^{-1}(\bt)$ is non-empty and connected. Since $C$ is totally disconnected, its connected components are its singleton subsets and 
therefore  $\bt^{-1}(\bt)$ is a connected component of the domain $\ol{\mc{Z}}\sm\mc{Z}$ of $b$. This proves Theorem
   \ref{x9.9}(d).
   
   Next we prove Theorem \ref{x9.9}(e). Suppose $\bt\in C$.  Let $F$ be any face of $\ol{\mc{Z}}$  such that $F\cap \mc{Z}=\set{0}$ and  $F\sreq F_{\bt}$.
   Then $F\sm\set{0}$ is a connected subset of $\ol{\mc{Z}}\sm\mc{Z}$ and it  contains the component $F_{\bt}\sm\set{0}$, so by (b),
   $F\sm\set{0}=F_{\bt}\sm\set{0}$ and $F=F_\bt$.
   On the other hand, let   $G$ be any face of $\ol{\mc{Z}}$  with the property  $G\cap\mc{Z}=\set{0}$.  If $G=\set{0}$, then $G\seq F_{\bt}$ for all $\bt\in C$.  Otherwise, $G\sm\set{0}$ is a non-empty connected subset of $\ol{\mc{Z}}\sm\mc{Z} $ and is therefore contained in $F_{\bt}\sm\set{0}$ for a unique $\bt\in C$. This completes the proof of (e).
   
Before the proof of Theorem \ref{x9.9}(f),  we recall relevant generalities about quotient maps. A map $q\colon X\to Y$ of topological spaces is said to be a 
quotient map if it is surjective and a set $U$ is open in $Y$ if and only if 
$q^{-1}(U)$ is open in $X$. In that case, any continuous map $f\colon X
\to Z$ which is constant on each fiber $q^{-1}(y)$, for $y\in Y$,  
factors uniquely as a composite $f=f'q$ where $f'\colon Y\to Z$ is 
continuous.  Any continuous,  open (resp., closed), surjective map 
$q\colon X\to Y$ is a quotient map and then given any subset $Y'$ of $Y$, 
the restriction of $q$ to a map $q'\colon X'\to Y$ is also an open 
(resp., closed) quotient map where $X':=q^{-1}(Y')$.

Consider the  space $\ray(V)$. For the  part of the argument involving $p$, it is  convenient  to identify $\ray(V)$ 
with the unit sphere $\mathbb{S}$ in $V$ 
(with respect to some  norm $\Vert \ \Vert$ on $V$ induced by a  
positive definite inner product on $V$) by the homeomorphism 
$\real_{\geq 0}\al\mapsto \frac{1}{\Vert\al\Vert}\al$ for $0\neq \al\in V$. The natural continuous map 
$\al\mapsto \real_{\geq 0}\al\colon V\sm\set{0}\to \ray (V)$ then identifies with the map $r\colon V\sm\set{0}\to \mathbb{S}$ given by $\al\mapsto \frac{1}{\Vert\al\Vert}\al$. It  is an open 
quotient map
(in fact, it maps open balls in $V\sm\set{0}$ to open balls in $\mathbb{S}$, where the balls  in $\mathbb{S}$ are defined with  respect to the metric obtained by restriction of the metric on $V$ induced by $\Vert\ \Vert$). It follows that for any subset $X$ of $V$ which is a union of rays, the natural map $ X\sm\set{0}\to \ray(X\cup\set{0})$ given by restriction of $r$ is an open quotient map, where $X\sm\set{0}$ (resp., $\ray(X)$) is topologized as a subspace of $V$ (resp., $\ray(V)$).

This applies in particular to the set $X=\ol{\mc{Z}}\cap \mc{Q}=\mc{Z}_{\infty}\cup\set{0}=(\ol{\mc{Z}}\sm\mc{Z})\cup\set{0}$.
It follows that  $b$ factors  
 as $b=qp$ where  $p$, $q$  are as in \ref{x9.9}(f) except that $q$ is so far only known to be 
continuous.  To show $q$ is a closed quotient map,  recall the identification (used in the proof of Proposition \ref{x5.3}) of $\ray(\real_{\geq 0}\Pi)$  with the polytope $P=\conv(\widehat {\Pi})$, where $\wh \Pi:=\mset{\wh\al\mid \al\in \Pi}$,   in the affine hyperplane $H=\mset{v\in V\mid \mpair{\rho,v}=1}$. The identification is given by $\real_{\geq 0}\al\mapsto \wh\al$ for $0\neq \al\in\real_{\geq 0}\Pi$.  Under this identification, 
$\ray(\mc{Q}\cap \ol{\mc{Z}})$ identifies with the closed, hence  compact, subset $\mset{\wh\al\mid \al\in \mc{Z}_{\infty}}=\mc{Q}\cap \ol{\mc{Z}}\cap H$ of $P$. Since $q$ is a continuous  map between  compact Hausdorff spaces, it is closed. Since $b$ is surjective, so is $q$ and therefore $q$ is a quotient map as claimed. As it is  a composite of quotient maps, $b$ is a quotient map too.

\subsection{}\label{x9.17}  Retain the identification $\ray(\mc{Q}\cap \ol{\mc{Z}})\seq \ray(\real_{\geq 0} \Pi)= P$ from the end of the preceding proof. 
(One can naturally  identify $\ray(V)$ itself with the boundary of  any convex polytope $P'$  with $P$ as a face and $0$ as interior point, if desired.) 
The identification induces an identification of  $\ray(\ol{\mc{Z}})$ with a  convex subset $Z$ of $P$  and $\ray(\ol{\mc{Z}})$ with the compact convex set $\ol{Z}$.   Let $Q:=\ray(\mc{Q})$.
 The fibers of $q$ are precisely  the maximal isotropic faces 
(isotropic in the sense of being contained in $Q$) of $\ol{Z}$   and they are also  the  connected components of   $\ol{Z}\cap Q$. Further, $C$ is a quotient space of $\ol{Z}\cap Q$, identifying each connected component   to a point.
For  arbitrary generic 
universal based root systems, we leave the determination of the dimensions of these connected components, description of their  faces and face lattices etc as an open problem.   If however $\Phi$ is in addition assumed to be  weakly hyperbolic, the connected components of $\ol{Z}\cap Q$ reduce to points se questions can be answered easily (see  \cite{DHR} for some   properties of weakly hyperbolic root systems in general). 
\begin{cor} Suppose that $\Phi$ is weakly hyperbolic and generic universal.
\begin{num}
\item  Let $\bt=(\bt_{n})_{n\in \Nat}\in C$. 
Set $\bt_{n}':=s_{\bt_{0}}\cdots s_{\bt_{n-1}}(\bt_{n})\in \Phi_{+}$. Then the limit  
$\lim_{n\to \infty}\real_{\geq 0}\bt_{n}'=\real_{\geq 0} z_{\bt}$ exists in $\ray(V)$  and 
$\ray(b^{-1}(\bt)\cup\set{0})=\set{\real_{\geq 0}z_{\bt}}$. 
\item  $R_{0}=\ray(\mc{Q}\cap \ol{\mc{Z}})$.
 \item The   map $q\colon R_{0}\to C$ is a homeomorphism. \end{num}\end{cor}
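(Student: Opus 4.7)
The plan is to prove (a) first, since it contains the essential new content, and then to derive (b) and (c) as short consequences. For (a), the key structural input is Theorem \ref{x9.9}(e), which asserts that $F_\bt := b^{-1}(\bt)\cup\set{0}$ is a (maximal) totally isotropic face of $\ol{\mc{Z}}$. Under weak hyperbolicity, the restriction of $\mpair{-,-}$ to $\real\Pi$ has Witt index one (see \ref{x9.2}), so every totally isotropic subspace of $\real\Pi$ is at most one-dimensional; in particular $F_\bt$ is either $\set{0}$ or a single ray. Next I would observe $F_\bt\neq \set{0}$: the positive roots $\bt_n'=s_{\bt_0}\cdots s_{\bt_{n-1}}(\bt_n)$ are pairwise distinct (because consecutive $\bt_i$ differ, making the word reduced, so Lemma \ref{x1.16} applies), hence the sequence $(\real_{\geq 0}\bt_n')$ in the compact space $\ray(V)$ has at least one accumulation point, which by Proposition \ref{x9.15}(b) must lie in $\ray(F_\bt)$. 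Therefore $F_\bt$ is precisely a ray $\real_{\geq 0}z_\bt$, and the accumulation set of $(\real_{\geq 0}\bt_n')$ equals the singleton $\set{\real_{\geq 0}z_\bt}$; by compactness this forces convergence of the whole sequence to $\real_{\geq 0}z_\bt$, and also yields $\ray(b^{-1}(\bt)\cup\set{0})=\set{\real_{\geq 0}z_\bt}$.

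For (b), the inclusion $R_0\seq \ray(\mc{Q}\cap\ol{\mc{Z}})$ was already recorded (see \ref{x7.8} and Proposition \ref{x5.3}). For the reverse inclusion, take a non-zero $\al\in \mc{Q}\cap\ol{\mc{Z}}$. Theorem \ref{x9.9}(a) gives $\mc{Z}\cap\mc{Q}=\set{0}$, so $\al\in \ol{\mc{Z}}\sm\mc{Z}=\mc{Z}_\infty$ and $b(\al)$ is defined; then (a) yields $\real_{\geq 0}\al=\real_{\geq 0}z_{b(\al)}$. The latter ray is the limit in $\ray(V)$ of the rays $\real_{\geq 0}\bt_n'$ spanned by positive roots, and as it is isotropic it cannot itself lie in $R_+$; hence $\real_{\geq 0}\al\in R_0$.

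For (c), Theorem \ref{x9.9}(f) already tells us that $q\colon \ray(\mc{Q}\cap\ol{\mc{Z}})\to C$ is a continuous closed surjection, and by (b) its domain is exactly $R_0$. Injectivity is immediate from (a): if $q(\real_{\geq 0}\al)=q(\real_{\geq 0}\al')=\bt$, then both rays lie in $\ray(F_\bt)=\set{\real_{\geq 0}z_\bt}$ and hence coincide. Since $R_0$ is closed in the compact Hausdorff space $\ray(V)$, it is itself compact Hausdorff, as is $C$; a continuous bijection between compact Hausdorff spaces is automatically a homeomorphism. The main technical point is the one-line use of the Witt-index-one property together with the accumulation-set argument in (a); the remainder of the proof is essentially a bookkeeping exercise tying together Theorem \ref{x9.9} and Proposition \ref{x9.15}.
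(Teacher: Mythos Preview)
Your proof is correct and follows essentially the same approach as the paper's: the Witt-index-one observation forces each $F_\bt$ to be a ray, Proposition~\ref{x9.15}(b) pins down the accumulation set of $(\real_{\geq 0}\bt_n')$, and (b)--(c) follow by combining (a) with the general inclusion $R_0\subseteq\ray(\mc{Q}\cap\ol{\mc{Z}})$ and the fact that $q$ is a quotient map with singleton fibers. You supply slightly more detail (distinctness of the $\bt_n'$, the compactness argument for convergence, the compact-Hausdorff bijection argument), but the logical skeleton is identical to the paper's.
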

 \begin{rem} It is easily checked that a generic universal root system is weakly hyperbolic if it has  rank three, or if it has rank at least four and all inner products of distinct simple roots are (sufficiently close to) equal. 
   \end{rem}
\begin{proof}
The  isotropic face $F_{\bt}$ is one-dimensional (i.e. it is a ray) by the discussion in \ref{x9.2}. Hence $\ray(F_{\bt})=\set{\real_{\geq 0}z_{\beta}}$ for any  non-zero  $z_{\bt}\in F_{\bt}$. The sequence $\real_{\geq 0}\bt_{n}$ has at least one limit point,  and any such limit point must be in $\ray(F_{\bt})$ by Proposition \ref{x9.16}. Part (a) follows.  One has $R_{0}\seq \ray(\mc{Q}\cap \ol{\mc{Z}})$ in general and (a) implies the reverse inclusion in (b). Finally,  (a) implies that the fibers of the quotient map $q$ are singletons, so (c) follows from (b).
\end{proof}

\subsection{} \label{x9.18} The proofs of the  above results about generic universal root systems use a number of ingredients which may be shown to have at least partial analogues for general root systems, but the situation in general is far more complex. 
  The following example illustrates some of the subtleties which occur. 
 \begin{exmp}   \label{e9.18}
   Suppose $(\Phi,\Pi)$ is such that   $\Pi=\mset{\al,\bt,\g,\delta,\epsilon}$ is  linearly independent 
and  that $\mpair{\al,\bt}$ and $\mpair{\delta,\epsilon}$ are less than $-1$ while 
 $\mpair{\bt,\g}$, $\mpair{\al,\g}$,  $\mpair{\g,\delta}$ and $\mpair{\g,\epsilon}$  are 
 negative and  the  pairings of all other  
  pairs of distinct roots are equal to zero. Write $\set{\al,\bt,\delta,\epsilon}=\Pi_{I}$ where $I\seq S$. Since 
  $\Pi_{I}$ has two components $\Pi_{J}:=\set{\al,\bt}$ and $\Pi_{K}:=\set{\g,\delta}$,  
  our general results  imply that
  $\mc{Z}\cap \real\Pi_{I}=\mc{Z}_{I}=\mc{Z}_{J}+\mc{Z}_{K}$ and similarly $\ol{\mc{Z}}\cap \real\Pi_{I}=\ol{\mc{Z}}_{I}=\ol{\mc{Z}}_{J}+\ol{\mc{Z}}_{K}$. 
  
  Note  $(\Phi_{I,}\Pi_{J})$, $(\Phi_{K},\Pi_{K})$ are (dihedral)  generic 
  universal; use for them the notation introduced previously for such 
  systems.  Then $\mc{Z}_{J}=\set{0}\cup \real_{>0}\set{u(\al,\bt), u(\bt,\al)} $ and $\ol{\mc{Z}}_{J}=\real_{\geq 0} 
  \set{u(\al,\bt), u(\bt,\al)}$. So $\ol{\mc{Z}}_{I}=\real_{\geq 0} \set{u(\al,\bt), u(\bt,\al), u(\delta,\epsilon), u(\epsilon,\delta)}$, which is  a  four-dimensional simplicial cone,  while  
  \begin{multline*} {\mc{Z}}_{I}=\real_{>0} \set{u(\al,\bt), u(\bt,\al), u(\delta,\epsilon), u(\epsilon,\delta)}\\
  \dot\cup\, \real_{>0} \set{u(\al,\bt), u(\bt,\al) } \,\dot\cup\,
  \real_{>0} \set{u(\delta,\epsilon), u(\epsilon,\delta)}\,\dot\cup\,\set{0}\end{multline*}
  is the union of the relative interior of $\ol{\mc{Z}}_{I}$,
   the relative  interiors of two  opposite two-dimensional faces  of 
   $\ol{\mc{Z}}_{I}$  and $\set{0}$.  Note that $\ol{\mc{Z}}_{I}\cap \mc{Q}$ is the union of the four two-dimensional faces of $\ol{\mc{Z}}_{I}$ whose relative interiors are not contained in $\mc{Z}_{I}$.
       Hence the relative interiors of the facets of 
   $\ol{\mc{Z}}_{I}$  consist of non-isotropic points; these  relative interiors 
   are contained  in $\ol{\mc{Z}}$ but not in $\mc{Z} $, so 
   $\ol{\mc{Z}}\not \seq \mc{Z}\cup\mc{Q}$ even though $W$ is 
   irreducible. The facets of $\ol{\mc{Z}}_{I}$ are faces of $\ol{\mc{Z}}$;  
   they  are neither  closures of faces of ${\mc{Z}}$, nor  totally    isotropic.
   
   The set of limit rays of positive root rays  from  $\Phi_{I}$ is   \begin{equation*}
   R_{W_{I},0}=
  \set{\real_{\geq 0}u(\al,\bt), \real_{\geq 0}u(\bt,\al), \real_{\geq 0}
  u(\delta,\epsilon), \real_{\geq 0}u(\epsilon,\delta)}.
   \end{equation*}
  This is properly contained in  $\ol{\mc{Z}}_{I}\cap \mc{Q}$.
  On the other hand, computations like those in Example \ref{x7.12}
 show that any ray in  $\ol{\mc{Z}}_{I}\cap \mc{Q}$ is  an element of  $R_{0}$.
 
   For generic choices of the inner products as above, the form $\mpair{-,-}$ on $\real\Pi$ has Witt index $2$ and 
  the four  above-mentioned  two-dimensional faces of $\ol{\mc{Z}}_{I}$  are therefore maximal totally isotropic faces of $\ol{\mc{Z}}$. Any one of the four extreme rays of $\ol{\mc{Z}}_{I}$ is equal to an  intersection of two of these faces, so not all pairs of distinct faces from amongst  these four maximal isotropic faces of $\ol{\mc{Z}}$  intersect in $\set{0}$. 
   
   In particular, the example shows  that the restriction to 
 generic points in  Question \ref{q9.6}(a),(e) is necessary in general (it is not necessary in the generic universal case)  and that  the restriction  to irreducible $\Phi$ is necessary in Question \ref{q9.6}(b).
       \end{exmp}
 \section{Facial structure of the imaginary cone}
\label{x10}
\subsection{} Recall from \ref{x2.8}   the notion of facial closure of a subset of $W$. 
The following lemma collects properties of facial closures under the assumptions \ref{x4.1}(i)--(iii).
\label{x10.1}
\begin{lem}
 \begin{num}
 \item A standard facial subgroup of a standard facial subgroup of $W$ is a  standard facial subgroup of $W$. 
  \item A facial subgroup of a facial subgroup of $W$ is a facial subgroup of $W$. 
 \item Any intersection of facial subgroups of $W$ is facial.
\item The facial closure of any  $X\subseteq W$  exists. It is 
 equal to  the intersection  of all  facial subgroups  containing $X$. It is also the unique  facial subgroup of $W$ containing $X$ and  of minimal rank amongst such facial subgroups.
\item Let $W_{i}$ for $i\in I$ be reflection subgroups of $W$, all with no finite type component.
Then the facial closure $W'$ of $\cup_{i}W_{i}$ has no finite type components.
\end{num}\end{lem}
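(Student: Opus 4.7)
Plan. I will prove the parts in order, with (a)--(b) setting up transitivity of the facial property, (c)--(d) using this together with Proposition \ref{x2.6}(b) to characterize facial closure, and (e) as an application.

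For (a), given $W_J$ standard facial in $W_I$ which is standard facial in $W$ (so $J \seq I \seq S$), I pick $a \in \mc{C}$ with $\Pi \cap a^{\perp} = \Pi_I$ and $b \in \mc{C}_{W_I}$ with $\Pi_I \cap b^{\perp} = \Pi_J$, and show that $c := a + \epsilon b$ lies in $\mc{C}$ with $\Pi \cap c^{\perp} = \Pi_J$ for small enough $\epsilon > 0$; the inner products vanish on $\Pi_J$, are strictly positive on $\Pi_I \sm \Pi_J$ (via the $\epsilon b$ term), and the only nontrivial check is that $\mpair{c,\alpha} > 0$ is preserved for $\alpha \in \Pi \sm \Pi_I$, which holds uniformly by finiteness of $\Pi$. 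Part (b) then follows by conjugating to the standard case: a facial-in-facial subgroup of $W$ has the form $(vy) W_K (vy)^{-1}$ with $W_K$ standard facial in $W_I$, $v \in W$, and $y \in W_I$, and (a) identifies $W_K$ as standard facial in $W$.

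The key lemma, used repeatedly below, is the following \emph{rigidity} statement: if $W_1 \seq W_2$ are facial subgroups of $W$ with $\rank (W_1) \geq \rank (W_2)$, then $W_1 = W_2$. To prove it, write $W_2 = v W_I v^{-1}$; by Proposition \ref{x2.6}(b), $W_1 = W_1 \cap W_2$ is facial in $W_2$, hence of the form $vy W_K y^{-1} v^{-1}$ with $y \in W_I$ and $K \seq I$ facial in $I$, so $\rank (W_1) = |K|$. The hypothesis $|K| \geq |I|$ forces $K = I$, and since $y \in W_I$ normalizes $W_I$ trivially, we get $W_1 = v W_I v^{-1} = W_2$. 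Given this, part (c) follows: for a family $(W_i)_{i \in \Lambda}$ of facial subgroups, finite intersections are facial by Lemma \ref{x2.7}(c) and their ranks are bounded by $|\Pi|$, so some finite $\Lambda_0 \seq \Lambda$ minimizes the rank of $\bigcap_{i \in \Lambda_0} W_i$; rigidity applied to $\bigcap_{i \in \Lambda_0 \cup \set{j}} W_i \seq \bigcap_{i \in \Lambda_0} W_i$ for each $j \in \Lambda$ forces $\bigcap_{i \in \Lambda_0} W_i \seq W_j$, so the full intersection equals this finite facial intersection. Part (d) is then immediate: the intersection of all facial subgroups containing $X$ is facial by (c) and is the minimum-by-inclusion (hence minimum-rank) facial subgroup containing $X$; uniqueness at minimal rank is exactly rigidity.

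For (e), suppose for contradiction that the facial closure $W'$ of $\bigcup_i W_i$ has a finite component, and decompose $W' = W'_0 \times W'_{\infty}$ where $W'_0$ collects the finite components and $W'_{\infty}$ the infinite ones, so $W'_0 \neq 1$. Each reflection of the product $W'$ lies in exactly one factor, so each reflection subgroup $W_i \seq W'$ splits as $W_i = (W_i \cap W'_0) \times (W_i \cap W'_{\infty})$; the components of the finite factor $W_i \cap W'_0$ are then (finite) components of $W_i$, so the hypothesis on $W_i$ forces $W_i \cap W'_0 = 1$ and $W_i \seq W'_{\infty}$. Applying Lemma \ref{x8.5}(c) inside the Coxeter system $(W', \chi(W'))$, with $M \seq \chi(W')$ the canonical generators of $W'_{\infty}$ (a union of components containing all affine ones), shows $W'_{\infty}$ is facial in $W'$; by (b), $W'_{\infty}$ is facial in $W$. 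But $W'_{\infty} \sneq W'$ is a facial subgroup containing $\bigcup_i W_i$ of strictly smaller rank, contradicting the minimal-rank characterization in (d). The main obstacle throughout is the rigidity lemma, which converts Proposition \ref{x2.6}(b) into a collapse of equal-rank inclusions and thereby underwrites both the stabilization in (c) and the minimal-rank uniqueness used in (d) and in the final contradiction of (e).
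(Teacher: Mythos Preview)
Your proof is correct and, for parts (b) and (e), follows essentially the same route as the paper. The differences are in (a) and (c)--(d), and they are worth noting.

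For (a), the paper simply invokes Lemma~\ref{x4.2}(f), which in turn rests on the polyhedral-cone fact (Lemma~\ref{xA.10}(d)) that a face of a face is a face. Your direct $\epsilon$-perturbation $c=a+\epsilon b$ is the hands-on version of the same idea and works cleanly because $\Pi$ is finite; it has the small advantage of being self-contained.

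For (c)--(d), the paper just points back to the discussion in \S\ref{x2.8}, which already observed (under the finite-rank hypothesis) that the facial closure exists and equals the minimum-rank facial overgroup. Your ``rigidity lemma'' makes explicit the mechanism behind that observation: using Proposition~\ref{x2.6}(b) to see that an inclusion of facial subgroups is a facial inclusion inside the larger one, hence collapses when the ranks agree. This is a tidy packaging and gives a clean proof of (c) via stabilization of finite intersections; the paper's citation of Lemma~\ref{x2.7}(c) together with finite rank is the same argument in compressed form.

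For (e), both arguments pass to the subgroup $W'_{\infty}$ generated by the infinite-type components of $\Pi_{W'}$ and use Lemma~\ref{x8.5}(c) to see it is facial; your splitting $W_i=(W_i\cap W'_0)\times(W_i\cap W'_\infty)$ via the orthogonal decomposition of $\Phi_{W'}$ is exactly the paper's observation that each (infinite-type) component of $\Phi_{W_i}$ must land in an infinite-type component of $\Phi_{W'}$, since the form is positive definite on $\real\Pi_{W'_0}$.
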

\begin{proof}  Part (a) follows from \ref{x4.2}(f). For (b), let $W_{2}$ be facial in $W_{1}$ where $W_{1}$ is facial in $W$.
Say $W_{1}=yW_{1}'y^{-1}$ where $y\in W$ and  $W_{1}'$ is standard facial in  $W$.
Then $y^{-1}W_{2}y$ is facial in $y^{-1}W_{1}y=W_{1}'$, say $y^{-1}W_{2}y=xW_{2}'x^{-1}$ where $W_{2}'$ is standard facial in $W_{1}'$. Then $W_{2}=yxW_{2}'(yx)^{-1}$ where $yx\in W$ and $W_{2}'$ is standard facial in $W$ by (a). This proves (b).  Parts (c)--(d) follow from the discussion in \ref{x2.8}, since $W$ is assumed here to be of finite rank.

For (e), note first that $W'$ is finitely generated since it is parabolic in $W$.   Let $W''\subseteq W'$ be the reflection subgroup of $W'$ such that $\Pi_{W''}$ is equal to the union of the infinite type components of $\Pi_{W'}$. By \ref{x8.5}(c), $W'' $ is facial in
$W$. Now any infinite type component of $\Phi_{W_{i}}$ is contained in some infinite type 
 component of $\Phi_{W''}$ and hence it is contained  in one of the irreducible components of $\Phi_{W''}$.
It follows that $\Phi_{W_{i}}\subseteq \Phi_{W''}$  for all $i$ and hence that 
$W_{i}\subseteq W''$.
Since $W''$ is facial, we get $W'\subseteq W''$ by definition of facial closure, hence $W''=W'$.
   \end{proof}  
   \subsection{} \label{x10.2} The above implies that the set of facial subgroups of $W$, 
   ordered by inclusion, is a complete lattice. We now define a certain subposet $\mc{W}$ of 
   this lattice,
   which plays an important role in this section. The subposet 
   is itself a complete lattice with join given by restriction of the join in the family of facial 
   subgroups. 

 Let $\mc{W}=\mset{wW_{I}w^{-1}\mid w\in W, I\in F_{\mathrm{inf}} }$  denote the set of  all 
 special facial subgroups of $W$ i.e facial subgroups  with no finite irreducible components.  
 We always consider $\mc{W}$ with partial order given by inclusion. For any parabolic 
 subgroups  $W'\subsetneq W''$ of $W$, the rank of $W'$ is strictly less than that
of $W''$. It follows than any non-empty subset of $\mc{W}$ has at least one  minimal element 
and at least one  maximal element, and any chain $W_{0}\subsetneq \ldots \subsetneq
  W_{n}$ in $\mc{W}$ has length $n$ bounded by the rank of $W$.
\begin{prop}\begin{num}\item  The family $\mc{W}$ of subgroups of $W$ forms a complete 
lattice.
\item The join of a subset of  $\mc{W}$ is the facial closure of its union.
\item The meet of  a family $W_{i}$ for a $I$ in any index set $I$ is $W'$, where 
$W'\in \mc{W}$ is such that  $\Pi_{W'}$
is the union of all infinite type components of $\Pi_{\cap_{i}W_{i}}$.
\end{num}\end{prop}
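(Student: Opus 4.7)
The plan is to prove (b) and (c) directly; part (a) then follows, since a poset admitting arbitrary suprema and infima is a complete lattice.

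For (b), given $\mc{F} \subseteq \mc{W}$, let $W^{*}$ be the facial closure of $\bigcup \mc{F}$, which exists by Lemma \ref{x10.1}(d). Each $W_{i} \in \mc{F}$ has no finite components by definition of $\mc{W}$, so Lemma \ref{x10.1}(e) gives that $W^{*}$ has no finite components either; hence $W^{*} \in \mc{W}$ and $W^{*} \supseteq W_{i}$ for every $i$. Any $W'' \in \mc{W}$ containing every $W_{i}$ is a facial subgroup containing $\bigcup \mc{F}$, so $W'' \supseteq W^{*}$ by minimality of the facial closure. Thus $W^{*}$ is the supremum.

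For (c), set $U := \bigcap_{i} W_{i}$, which is facial in $W$ by Lemma \ref{x10.1}(c). Decompose $\Pi_{U} = \Pi_{W'} \,\dot\cup\, \Pi_{W_{0}}$, where $\Pi_{W'}$ (resp.\ $\Pi_{W_{0}}$) is the union of the infinite-type (resp.\ finite-type) irreducible components of $\Pi_{U}$. Since $\Pi_{W'}$ is a union of irreducible components of $\Pi_{U}$ containing every affine component, Lemma \ref{x8.5}(c) applied inside the Coxeter system $(U,\chi(U))$ shows that $\Pi_{W'}$ is a facial subset of $\chi(U)$; hence $W'$ is standard facial in $U$, and therefore facial in $W$ by Lemma \ref{x10.1}(b). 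By construction $W' \in \mc{W}$ and $W' \subseteq U \subseteq W_{i}$ for every $i$, so $W'$ is a lower bound.

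To see $W'$ is the greatest lower bound, let $W'' \in \mc{W}$ with $W'' \subseteq W_{i}$ for all $i$, so $W'' \subseteq U$. Writing $W'' = \stab_{W}(p)$ for some $p \in \mc{X}$, we have $p \in \mc{X} \subseteq \mc{X}_{U}$ by Lemma \ref{x1.10}(e) and $\stab_{U}(p) = U \cap W'' = W''$, so $W''$ is facial in $U$. Hence $W'' = xW'''x^{-1}$ for some $x \in U$ and some standard facial $W''' \leq U$ with $\Pi_{W'''} \subseteq \Pi_{U}$. Since $W''' = x^{-1}W''x$ has the same canonical generators as $W''$ up to conjugacy, it also has no finite components, and so no connected component of $\Pi_{W'''}$ can lie inside a finite-type component $\Delta$ of $\Pi_{U}$: by Lemma \ref{x2.9}(b) the positive-definiteness of $\mpair{-,-}$ on $\real\Delta$ would force the corresponding component of $W'''$ to be finite. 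Thus $\Pi_{W'''} \subseteq \Pi_{W'}$ and $W''' \subseteq W'$. Finally, the decomposition $U = W' \times W_{0}$ as an internal direct product of commuting, separated factors makes $W'$ normal in $U$, so $W'' = xW'''x^{-1} \subseteq xW'x^{-1} = W'$. The main obstacle is this last step, which requires first identifying an arbitrary $W'' \in \mc{W}$ below $U$ as facial within $U$ to obtain a conjugacy-to-standard form, then controlling the types of its components via Lemma \ref{x2.9}(b), and finally exploiting the normality of $W'$ in $U$ to absorb the conjugating element.
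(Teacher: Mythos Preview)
Your proof is correct and the argument for (b) matches the paper's exactly. For (c), you and the paper use the same underlying idea---an infinite-type component of the smaller group cannot sit inside a finite-type component of $U=\cap_i W_i$---but the paper reaches the conclusion more directly. Rather than showing $W''$ is facial in $U$, conjugating to a standard facial $W'''$, and then invoking normality of $W'$ in $U$ to undo the conjugation, the paper simply observes that since the irreducible components of $\Phi_U$ are pairwise orthogonal, every irreducible component of $\Phi_{W''}\subseteq\Phi_U$ already lies inside a single component of $\Phi_U$; that ambient component must be of infinite type (else $W''$ would acquire a finite component), hence is contained in $\Phi_{W'}$, giving $W''\subseteq W'$ immediately. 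Your detour through faciality, conjugation, and the direct product decomposition $U=W'\times W_0$ is valid but unnecessary.
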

\begin{proof} Let $W_{i}$ for $i\in I$ be a family of elements of $\mc{W}$.
By  Lemma \ref{x10.1}(e),  the facial closure $W'$ of $\cup_{i}W_{i}$  is a facial subgroup  which has no infinite type components; clearly, $W'$ is the minimum element of $\mc{W}$ containing all $W_{i}$. This proves that $\mc{W}$ is a complete  lattice with join as in (b).
Now we consider the meet of the family $\set{W_{i}}_{i\in I}$. Note that $W'$ as defined in (c) is in $\mc{W}$ by Lemma \ref{x10.1}(c) and \ref{x8.5}(c). Clearly, $W'\subseteq W_{i}$ for all $i$.
 If $W''\in \mc{W}$ with $W''\subseteq W_{i}$ for all $i$, then  $W''\subseteq \cap_{i}W_{I}$. Here, $\cap_{i}W_{i}$ is a facial subgroup. Hence each (necessarily infinite type) component of $\Phi_{W''} $ is contained in some (infinite type) component of $\Phi_{ \cap_{i}W_{I}}$, and hence it is contained in $\Phi_{W'}$.
This implies that $\Phi_{W''}\subseteq \Phi_{W'}$ and $W''\subseteq W'$. That is, $W'$ is the meet of $\set{W_{i}}_{i\in I}$ as required.
\end{proof}
\begin{rem} Given two elements of $\mc{W}$ each with a given expression as a conjugate
of a standard facial subgroup, one can compute a similar expression for their meet and join 
in $\mc{W}$.  In fact, as in the proof of 
\ref{x2.17}(c), one can reduce easily to the case in which the two subgroups are $W_{J}$ 
and $dW_{K}d^{-1}$ where $J$, $K$ are special facial subsets of $J$ and  $d$ is of minimal 
length  in $W_{J\cup J^{\perp}} dW_{K\cup K^{\perp}}$. Then by Kilmoyer's formula 
(Proposition \ref{x2.12}) their meet   is  $W_{L}$ where $L:=(J\cap dKd^{-1})_{\infty}$ is the 
union of all infinite type components of $J\cap dKd^{-1}$. By  Lemma \ref{x2.17}  and 
Lemma \ref{x8.5}, their join is $W_{M}$ where $W_{M}$ is the standard facial closure of  
$J\cup \set{d}\cup K$ i.e. $M$ is the smallest standard facial subset of $S$ containing $J\cup K\cup\set{s_{1},\ldots, s_{n}}$ for some reduced  expression $d=s_{1}\cdots s_{n}$ of $d$. 
We do not know if  the standard facial and standard parabolic closures of  $J\cup \set{d}\cup K$ coincide under these conditions. \end{rem}

\subsection{}\label{x10.3}
Henceforward, extensive use will be made of the various notions of 
faces of cones as described in Appendix A, and  especially the notions 
 of semidual and dual pairs of pointed cones (\ref{xA.6}--\ref{xA.9}).
 
 By Theorem \ref{x5.1}, $(\ol{\mc{Z}},\ol{\mc{X}})$ is a semidual pair of  stable cones (with respect to the pairing $\mpair{-,-}$). It follows trivially  that $P=({\mc{Z}},{\mc{X}})$ is also a semidual pair of (pointed) cones for the same pairing, satisfying the condition \ref{xA.9}(i).  Following \ref{xA.6}, we define the Galois connection corresponding to $P$ between the  power sets
 $\mc{P}(\mc{Z})$ and  $\mc{P}(\mc{X})$. It is specified by order-reversing maps
 \begin{equation*}
 A\mapsto A^{\dag}:=\mc{X}\cap A^{\perp}\colon \mc{P}(\mc{Z})\to\mc{P}(\mc{X})
 \end{equation*}
 and 
  \begin{equation*}
 B\mapsto B^{\#}:=\mc{Z}\cap B^{\perp}\colon \mc{P}(\mc{X})\to\mc{P}(\mc{Z}).
 \end{equation*}
 The complete lattices $\Ext_{\neq \eset}(\mc{Z})$ and 
 $\Ext_{\neq \eset}(\mc{X})$ of non-empty faces of $\mc{Z}$ and 
 $\mc{X}$ are defined as in \ref{xA.3}. They contain (as subposets) 
 the complete lattices of stable subsets (which we call stable faces)  of $\mc{Z}$ and $\mc{X}$ for the above Galois connection.
 
 For any reflection subgroup $W'$ of $W$, define    $\mc{X}(W'):=\mc{X}\cap \Pi_{W'}^{\perp}=\mset{ x\in \mc{X}\mid x^{\perp}\cap \Phi\supseteq\Phi_{W'}}$.  Observe that for any $x\in \mc{X}$, $x^{\perp}\cap \Phi=\Phi_{W''}$ for some facial reflection subgroup $W''$. Hence $x\in \mc{X}_{W'}$ if and only if   $W''\supseteq W'$. The sets $\mc{X}(W')$ are primarily of interest here for $W'\in \mc{W}$.

 The following theorem summarizes the main results of this section.
 \begin{thm}
 \begin{num}
 \item $P=(\mc{Z},\mc{X})$ is a dual pair of cones in the sense of 
 $\text{ \rm Definition }$ $\text{\rm \ref{xA.9}}$.
 \item The map $W'\mapsto \mc{Z}_{W'}$ induces an isomorphism of complete lattices $\mc{W}\to \Ext_{\neq \eset}(\mc{Z})$.
  \item The map $W'\mapsto \mc{X}(W')$ induces an isomorphism of complete lattices $\mc{W}\to \Ext_{\neq \eset}(\mc{X})\op$ (where $\op$ indicates the opposite poset).
  \item For $W'\in \mc{W}$, $\mc{Z}_{W'}^{\dag}=\mc{X}(W')$ and
  $\mc{X}(W')^{\#}=\mc{Z}_{W'}$.
 \end{num}
 \end{thm}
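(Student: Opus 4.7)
The plan is to establish (d) first via direct computation with the Galois correspondence $\dag, \#$ and then derive (b), (c), (a) from it. For (d), I reduce by conjugation to the standard facial case $W' = W_I$ with $I \in F_{\mathrm{inf}}$, pick $p \in \mc{C}(I)$ (non-empty by the facial condition), and observe that Lemma \ref{x3.5}(b) together with Lemma \ref{x2.4}(a) yields $\mc{Z} \cap p^\perp = \mc{Z} \cap \real\Pi_{W'} = \mc{Z}_{W'}$. Since $p \in \mc{X}(W')$, this gives $\mc{X}(W')^{\#} \seq \mc{Z}_{W'}$; the reverse inclusion follows from $\mc{Z}_{W'} \seq \real\Pi_{W'} \perp \mc{X}(W')$. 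For $\mc{Z}_{W'}^{\dag} = \mc{X}(W')$ I argue componentwise over the irreducible components $W'_i$: indefinite components give $\real\mc{Z}_{W'_i} = \real\Pi_{W'_i}$ by Lemma \ref{x7.1}, while affine components require the description of $\mc{X}_{W'_i}$ from \ref{x4.5} combined with $\mc{X} \seq \mc{X}_{W'_i}$ (Lemma \ref{x1.10}(e)) to conclude $\mc{X} \cap \delta_i^\perp = \mc{X} \cap \Pi_{W'_i}^\perp$. The general facial case then reduces by conjugation using Proposition \ref{x3.2}(e).

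For (b), each $\mc{Z}_{W'}$ with $W' \in \mc{W}$ is an exposed, hence extreme, face of $\mc{Z}$ because $\mc{Z}_{W'} = \mc{Z} \cap p^\perp$. Injectivity uses (d): if $\mc{Z}_{W'} = \mc{Z}_{W''}$ then $\mc{X}(W') = \mc{X}(W'')$, and the pointwise $W$-stabilizer of $\mc{X}(W_I)$ is $W_I$ itself (since $\mc{X}(W_I) \supseteq \mc{C}(I)$ and points of $\mc{C}(I)$ have stabilizer $W_I$ by Lemma \ref{x2.2}). For surjectivity, let $F$ be a non-empty face of $\mc{Z}$; pick $x \in \ri F$ and translate by $W$ to assume $x \in \mc{K}$. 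Decompose $x = \sum_i x_i$ via Lemma \ref{x8.4}(a) with $x_i \in \mcKc$ supported on the irreducible components $\Delta_i$ of the facial support $\Delta$ of $x$. Each $x_i$ is non-zero and $\mc{K}_{W_{\Delta_i}} = 0$ for finite $\Delta_i$, so $\Delta$ has only infinite components; hence the facial subset $I$ corresponding to $\Delta$ lies in $F_{\mathrm{inf}}$, $W_I \in \mc{W}$, and $x \in \mc{Z}_{W_I}$, giving $F \seq \mc{Z}_{W_I}$ by minimality. For the reverse, the face axiom applied to $x = x_i + \sum_{j \neq i}x_j$ (both summands in $\mc{K} \seq \mc{Z}$) forces each $x_i \in F$; combined with the orthogonal direct-sum decomposition $\ri(\mc{Z}_{W_I}) = \sum_i \ri(\mc{Z}_{W_{\Delta_i}})$ and the claim $x_i \in \ri(\mc{Z}_{W_{\Delta_i}})$, this yields $\mc{Z}_{W_{\Delta_i}} \seq F$ and therefore $\mc{Z}_{W_I} \seq F$ by Proposition \ref{x3.2}(d). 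Lattice structure is preserved: meets by Corollary \ref{x6.4}, joins by Proposition \ref{x10.2}(b).

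Part (c) then follows formally from (b) and (d): the Galois antiisomorphism $F \mapsto F^{\dag}$ on stable subsets, combined with (d)'s identification on generators, transports the lattice isomorphism of (b) to $W' \mapsto \mc{X}(W')$, once one verifies dually that every non-empty face of $\mc{X}$ is of the form $\mc{X}(W')$ by an argument using the facet structure in Lemma \ref{x2.3} and the characterization of stabilizers of points in $\mc{X}$. Part (a) then assembles the three notions of face: extreme faces are exposed (from the form $\mc{Z} \cap p^\perp$) and stable (from $\mc{Z}_{W'}^{\dag\#} = \mc{Z}_{W'}$ via (d)); dually for $\mc{X}$; and $\ol{\mc{Z}}, \ol{\mc{X}}$ are mutually dual by Theorem \ref{x5.1}, so $P$ satisfies Definition \ref{xA.9}. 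The main obstacle is verifying the claim that $x_i \in \ri(\mc{Z}_{W_{\Delta_i}})$ in the surjectivity argument of (b): for affine components this is automatic (each $x_i$ spans the isotropic ray by Lemma \ref{x8.4}(b)), but for indefinite irreducible components it requires a careful description of $\ri(\mc{Z}_{W_{\Delta_i}})$ in terms of $\mc{K}_{W_{\Delta_i}}$ via Proposition \ref{x5.8}(a), together with an argument using $x \in \ri F$ to control the position of each $x_i$ within its component's imaginary cone.
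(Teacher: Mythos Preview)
Your overall architecture is close to the paper's and your argument for (d) is correct; in fact, the componentwise computation of $\mc{Z}_{W'}^{\dag}$ (splitting into indefinite components where $\real\mc{Z}_{W'_i}=\real\Pi_{W'_i}$, and affine components where $\mc{X}\seq\mc{X}_{W'_i}$ forces $\delta_i^\perp\cap\mc{X}\seq\Pi_{W'_i}^\perp$) is a bit more direct than the paper's route, which goes through a relative-interior point $z\in\mc{Z}_{W'}^{\circ}$ and Lemma~\ref{x10.7}.

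The genuine gap is exactly where you flag it, but your proposed attack does not work.  You need, for an indefinite irreducible component $\Delta_i$, that $x_i\in\mc{K}_{W_{\Delta_i}}$ with full support on $\Delta_i$ implies $x_i\in\ri(\mc{Z}_{W_{\Delta_i}})$.  Proposition~\ref{x5.8}(a) describes the polyhedral cone $\mc{K}$, not $\ri(\mc{Z})$; and since $x_i$ may sit on the boundary of $\mc{K}_{W_{\Delta_i}}$ (some inner products $\mpair{x_i,\alpha}$ with $\alpha\in\Delta_i$ may vanish), knowing $\mc{K}$ explicitly does not place $x_i$ in the relative interior of the larger cone $\mc{Z}_{W_{\Delta_i}}$.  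Nor does ``$x\in\ri F$'' help: that hypothesis carries information about the unknown face $F$, not about the intrinsic position of $x_i$ inside $\mc{Z}_{W_{\Delta_i}}$, so invoking it here is circular.  The paper resolves this obstacle as Lemma~\ref{x10.7}, whose proof uses Lemma~\ref{x8.4}(d) (the set $\{\alpha\in\Delta_i:\mpair{x_i,\alpha}=0\}$ has only finite-type components) to build, via the finite reflection subgroup generated by those $\alpha$, an explicit open neighborhood of $x_i$ in $\real\Pi_{W_{\Delta_i}}$ lying entirely inside $\mc{Z}_{W_{\Delta_i}}$.  Once Lemma~\ref{x10.7} is in hand, one can either run your face-by-face argument or, as the paper does, observe that the $\ri(\mc{Z}_{W'})$ for $W'\in\mc{W}$ partition $\mc{Z}$ (Lemma~\ref{x10.10}(b)) and compare with the general partition \eqref{xA.3.6}.

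Your sketch for (c) is too thin.  Transporting (b) along the Galois connection only identifies the \emph{stable} faces of $\mc{X}$; you must separately show that every face of $\mc{X}$ arises as some $\mc{X}(W')$, and the facet description of Lemma~\ref{x2.3} does not give this directly (facets $w\mc{C}(I)$ are not faces of $\mc{X}$).  The paper handles this with Lemma~\ref{x10.9}, which identifies $\ri(\mc{X}(W'))$ with the set $\mc{X}'(W')$ of points whose stabilizer has infinite part exactly $W'$, again by a neighborhood construction; this yields the partition of $\mc{X}$ analogous to the one for $\mc{Z}$.
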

 The proof   of  the theorem will occupy \ref{x10.4}--\ref{x10.11}. 
 \begin{rem} (1) For $W'\in \mc{W}$,  $\mc{Z}_{W'}$ is both the imaginary  cone  of $W'$ and a face of  
$\mc{Z}$. However, 
the Tits cone   of  $W'$ is $\mc{X}_{W'}\supseteq \mc{X}$, whereas
$\mc{X}({W'})\subseteq \mc{X}$ is a face of $\mc{X}$.

(2)  We discuss here the main known results in the Kac-Moody setting (these are analogous to, but not consequence of, the  results established  about the Tits cone here  in the special case in  which the simple roots are linearly independent).
  In the Kac-Moody setting,  \cite{Loo} studied  exposed faces of the Tits cone attached to special  standard 
  parabolic subgroups, obtaining in particular formulae for their setwise and pointwise stabilizers. In \cite[Kapitel 6]{Sl1} (see also \cite{Sl2}) it was 
  shown that    the exposed  faces  of the Tits cone are the $W$-translates of the special exposed faces defined in \cite{Loo}. Amongst other results,  \cite[4.1]{Mok1} (see also \cite{Mok2}) gave another 
  proof of this   showing as well that any  non-empty   face (extreme subset)  of the Tits cone is  exposed.  An algebraic 
description of meet and join of two  faces of the Tits cone was given in \cite[Theorem 4(b)]{Mok6} (the analogue here is given  as Remark \ref{x10.2}). 
 \end{rem}

\subsection{}\label{x10.4}  As in \ref{x5.7}, $F_{\mathrm{inf}} $ denotes the set of special facial subsets of $S$ (i.e.  with no finite type components).
  For  $I\subseteq S$, 
let $I^{\perp}:=\mset{r\in  S\mid r\not\in I,\  rs=sr\text{ for all }s\in I}$ and let $\tilde I\subseteq I$ be such that
  $\Pi_{\tilde I}$ is the union of  the infinite type  components of $\Pi_{I}$.

\begin{lem}\begin{num}
\item For any  facial subset $I$  of $S$,  $\tilde I$ is a facial subset of $S$,  and $\mc{Z}_{\tilde I}=\mc{Z}_{I}$.

 \item The  sets $w\mc{Z}_{I}$ where $I\in F_{\mathrm{inf}} $ and $w\in W$ are  faces of $\mc{Z}$.
  \item If $I\in F_{\mathrm{inf}} $, there exists $p\in \mc{K}_{I}$ such that the facial support of $p$ is
 $\Pi_{I}$ and such that for any $\alpha\in \Pi$, one has $\mpair{p,\alpha}=0$ if and only if  either
 $\alpha$ is in an affine component of $\Pi_{I}$ or $\alpha\in \Pi_{I^{\perp}}$.
 \item  Let  $I,J\in F_{\mathrm{inf}} $ with $I\seq J$. Then 
  the normalizer of $W_{I}$ is  $N(W_{I})=W_{I\cup I^{\perp}}=W_{I}W_{I^{\perp}}=W_{I^{\perp}}W_{I}$ and $N(W_{I})N(W_{J})=W_{I^{\perp}}W_{J}$. 
  \item If $x,y\in W$ and $I,J\in F_{\mathrm{inf}} $, the   conditions $\text{\rm (i)--(iii)}$ below are equivalent: \begin{subconds}
 \item $x\mc{Z}_{I}\subseteq y\mc{Z}_{J}$.
 \item  $I\subseteq J$ and $x^{-1}y\in N(W_{I})N(W_{J})$.
 \item $xW_{I}x^{-1}\seq yW_{J}y^{-1}$.
 \end{subconds}
  \end{num}
\end{lem}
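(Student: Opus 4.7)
The plan is to prove the parts in the order (a), (c), (b), (d), (e), using results from the earlier sections. Part (a) is immediate: Lemma~\ref{x8.5}(c) with $M=\tilde I$ gives the facial property (since $\Pi_{\tilde I}$ is the union of infinite-type components of $\Pi_I$ and so contains all affine components), while $\mc{Z}_{\tilde I}=\mc{Z}_I$ follows from the decomposition in Proposition~\ref{x3.2}(d) together with the vanishing of the imaginary cone for finite-type components noted in~\ref{x4.5}. For (c), I construct $p=\sum_J p_J$ summing over the components $\Pi_J$ of $\Pi_I$: take $p_J$ to span the isotropic ray when $\Pi_J$ is affine, and choose $p_J\in\real_{>0}\Pi_J$ with $\mpair{p_J,\Pi_J}\seq\real_{<0}$ when $\Pi_J$ is indefinite (possible by the definition of indefinite type in~\ref{x4.5}); orthogonality of distinct components, and the observation that any $\alpha\in\Pi\sm(\Pi_I\cup\Pi_{I^\perp})$ joins some $\Pi_J$ (yielding $\mpair{p_J,\alpha}<0$), verifies the remaining properties. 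Part (b) is then direct: Lemma~\ref{x3.5}(b) identifies $\mc{Z}_I=\mc{Z}\cap\real_{\geq 0}\Pi_I$ as the intersection of $\mc{Z}$ with a face of the polyhedral cone $\real_{\geq 0}\Pi$ (by~\ref{x4.2}(e)), hence a face of $\mc{Z}$; each $w\mc{Z}_I$ is then a face since $W$ acts on $\mc{Z}$ by linear bijections.

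For (d), I would first show $N(W_I)=W_IW_{I^\perp}$. The inclusion $\supseteq$ is immediate since $W_I$ and $W_{I^\perp}$ commute elementwise; conversely, for $g\in N(W_I)$ the set $g\Pi_I$ is a simple system for $W_I$, so after multiplication by an element of $W_I$ one may assume $g\Pi_I=\Pi_I$, and specialness of $I$ then forces $g\in W_{I^\perp}$ by the Remark after Theorem~\ref{x1.15}. Applying this to both $I$ and $J$ and using $W_I\seq W_J$, $W_{J^\perp}\seq W_{I^\perp}$ (from $J^\perp\seq I^\perp$), and the commutation of $W_J$ with $W_{J^\perp}$, the product collapses:
\[
N(W_I)N(W_J)=W_IW_{I^\perp}W_JW_{J^\perp}=W_{I^\perp}W_IW_{J^\perp}W_J=W_{I^\perp}W_J.
\]

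For (e), the plan is to cycle (iii)$\Rightarrow$(i)$\Rightarrow$(ii)$\Rightarrow$(iii). The step (iii)$\Rightarrow$(i) is a direct consequence of \ref{x3.2}(e) (identifying $x\mc{Z}_I=\mc{Z}_{xW_Ix^{-1}}$), \ref{x2.6}(b) (making $xW_Ix^{-1}$ facial in $yW_Jy^{-1}$ once contained), and \ref{x3.5}(b) for the resulting facial pair. For (ii)$\Rightarrow$(iii), writing $x^{-1}y=wz$ with $w\in W_{I^\perp}$ and $z\in W_J$ gives $yW_Jy^{-1}=xwW_Jw^{-1}x^{-1}$, and since $w$ commutes with $W_I$ and $W_I\seq W_J$, $xW_Ix^{-1}=xwW_Iw^{-1}x^{-1}\seq xwW_Jw^{-1}x^{-1}=yW_Jy^{-1}$. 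The crucial step is (i)$\Rightarrow$(ii): apply the containment to the element $p$ from (c), so $y^{-1}xp\in\mc{Z}_J=W_J\mc{K}_J$ yields $w^{-1}y^{-1}xp\in\mc{K}_J\seq\mc{K}$ for some $w\in W_J$. Since $\mc{K}$ is a strict fundamental domain for the $W$-action on $\mc{Z}$ (as noted in~\ref{x3.2}), $p=w^{-1}y^{-1}xp$ and $w^{-1}y^{-1}x\in\Stab_W(p)$; by Lemma~\ref{x2.2} applied to $-p\in\mc{C}$ and the vanishing data from (c), $\Stab_W(p)=W_{I_0\cup I^\perp}$ with $I_0\seq I$ the union of affine components. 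Meanwhile $p\in\mc{K}_J\seq\real_{\geq 0}\Pi_J$ together with the facial support of $p$ being $\Pi_I$ forces (by minimality of the facial support as a face of $\real_{\geq 0}\Pi$) $\real_{\geq 0}\Pi_I\seq\real_{\geq 0}\Pi_J$, hence $I\seq J$. Then $I_0\seq J$ gives $w^{-1}y^{-1}x\in W_JW_{I^\perp}$, so by (d), $x^{-1}y\in W_{I^\perp}W_J=N(W_I)N(W_J)$.

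The main obstacle is the implication (i)$\Rightarrow$(ii) in part (e), where all the preparation converges: the element $p$ from (c) carries both the facial-support information (yielding $I\seq J$) and the stabilizer information (yielding the coset form of $x^{-1}y$), the strict fundamental domain property of $\mc{K}$ pins $p$ itself down rather than a mere translate, and the normalizer formula from (d) converts the resulting stabilizer condition into the statement of (ii). The other implications reduce essentially to bookkeeping with earlier material.
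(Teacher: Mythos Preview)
Your proof is correct and follows essentially the same approach as the paper. The only minor deviations are in part~(b), where you argue that $\mc{Z}_I=\mc{Z}\cap\real_{\geq 0}\Pi_I$ is a face of $\mc{Z}$ because $\real_{\geq 0}\Pi_I$ is a face of the polyhedral cone $\real_{\geq 0}\Pi$ (the paper instead exhibits $\mc{Z}_I$ as an absolutely exposed face via the supporting hyperplane $H_{\phi_I}^{=}$ from~\ref{x5.8}), and in the step (iii)$\Rightarrow$(i) of part~(e), where you route through~\ref{x2.6}(b) and~\ref{x3.5}(b) applied with $yW_Jy^{-1}$ as ambient group, whereas the paper simply invokes Theorem~\ref{x6.3} for the inclusion $\mc{Z}_{xW_Ix^{-1}}\seq\mc{Z}_{yW_Jy^{-1}}$; both are valid shortcuts to the same conclusion.
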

\begin{proof} Part (a) follows from \ref{x8.5}(c) and \ref{x3.2}(d).
Now  $\mc{Z}=\mc{Z}_{S}$ itself is a stable face of $\mc{Z}$. For any 
proper  facial subset  $I\subseteq S$, we have (with notation as in
 \ref{x5.8}) $\mc{Z}\subseteq \real_{\geq 0}\Pi\subseteq
  H_{\phi_{I}}^{\geq 0}$  and $\mc{Z}\cap H_{\phi_{I}}^{=}=
  \mc{Z}\cap \real\Pi_{I}=\mc{Z}_{I}$  is a face (in fact, an absolutely exposed face, in the terminology of Appendix \ref{xA}) of $\mc{Z}$ by
   \ref{x3.4}(b). Since $W$ acts on $\mc{Z}$, $w\mc{Z}_{I}$ is a stable 
   face of $\mc{Z}$ for any $w\in W$ and facial $I\subseteq S$.
By (a), $w\mc{Z}_{I}=w\mc{Z}_{\tilde I}$ proving (b).

 Choose $p\in \mc{K}_{I}$ to satisfy (c) as follows.
Let $I_{1},\ldots, I_{n}$ be subsets of $I$ such that $\Pi_{I_{j}}$ are the irreducible components of $\Pi_{I}$. For each $i=1,\ldots, n$, choose by \ref{x4.5} some  $0\neq p_{i} \in \mc{K}_{i}$ such that 
 $\Pi_{I_{i}}$ is a support of  $p_{i}$ and, if $\Pi_{I_{i}}$ is indefinite,
$\mpair{p_{i},\alpha}<0$ for all $\alpha\in \Pi_{I_{i}}$.  If $\Pi_{I_{i}}$ is affine, then  $p_{i}$ necessarily is a representative of  its isotropic ray. Set $p=p_{1}+\ldots +p_{n}$.
Clearly, $\Pi_{I}$ is a support of $p$; hence $\Pi_{I}$ is the facial support of $p$ since $I$ is facial. We have $\mpair{p,\alpha}\leq 0 $ for all $\alpha\in \Pi\setminus \Pi_{I}$, with equality if and only if  $s_{\alpha}\in I^{\perp}$.
Summarizing, for any  $\alpha\in \Pi$, we have 
\begin{equation*} \begin{cases}\mpair{p,\alpha} =0,&\text{ if $\alpha\in \Pi_{I^{\perp}}$, or 
                                                                       $\alpha\in \Pi_{I_{i}}$ with $\Pi_{I_{i}}$ affine}\\
                                                                    \mpair{p,\alpha}   <0,&\text{otherwise.}\end{cases}\end{equation*} This proves (c).

It is well known (see \cite{BH2}) that   for any reflection subgroup $W'$,
one has $N(W')=W' N_{W}(\Pi_{W'})$ where  
$N_{W}(\Pi_{W'})=\mset{w\in W\mid w\Pi_{W'}=\Pi_{W'}}$ 
(this also follows from \ref{x1.6}). Since all irreducible components of 
$\Pi_{I}$ are of  infinite type, it is an easy consequence of \ref{x1.15}
 (see \cite{DyRig}) that  if $x\in W$ and  $x(\Pi_{I})=\Pi_{J}$ where 
 $J\seq S$, then $I=J$ and $x\in W_{I^{\perp}}$.  Hence
$N_{W}(\Pi_{I})=W_{I^{\perp}}$, $N(W_{I})=W_{I}N(\Pi_{I})=W_{I}W_{I^{\perp}}=W_{I\cup I^{\perp}}$ and  \begin{equation*}
N(W_{I})N(W_{J})=W_{I^{\perp}}W_{I}W_{J^{\perp}}W_{J}=
W_{I^{\perp}}W_{J^{\perp}}W_{I}W_{J}=W_{I^{\perp}}W_{J}
\end{equation*} if $I\seq J$.  This proves (d).

Next, we prove  (e).  To show (i) implies (ii),
it will suffice to show that if $\mc{Z}_{I}\subseteq y\mc{Z}_{J}$,
then $I\seq J$ and  $y\in N(W_{I})N(W_{J})$.  Choose $p$ as in (c).
 Then  $p\in \mc{Z}_{I}\subseteq 
y\mc{Z}_{J}=yW_{J}\mc{K}_{J}$, so $p=ywk$ for some 
 $w\in W_{J}$ and $k\in \mc{K}_{J}$.
Since $\mc{K}_{I},\mc{K}_{J} \subseteq \mc{K}\subseteq -\mc{C}$ by 
Proposition \ref{x3.4}(a), we get $p,k\in -\mc{C}$. This implies by 
Lemma \ref{x1.10}  that $p=k$  and $yw\in W_{L}$ where 
$\Pi_{L}=\mset{\alpha\in \Pi\mid \mpair{p,\alpha}=0}$.  Clearly  
$W_{L}\seq N(W_{I})$ so $yw\in N(W_{I})$.
Therefore   $y=(yw)w^{-1}\in N(W_{I})W_{J}\subseteq N(W_{I})N(W_{J})
$.
Also, since $p=k\in \mc{K}_{J}$,  $p$ has a support $I$, and $J$ is 
facial, it follows  that $I\subseteq J$.  This completes the proof that (i) implies (ii). It is easy to see that (ii) implies (iii).  If (iii) holds, then $x\mc{Z}_{I}=\mc{Z}_{xW_{I}x^{-1}}\seq\mc{Z}_{yW_{J}y^{-1}}=y\mc{Z}_{J}$, by Proposition \ref{x3.4}(a) and Theorem \ref{x6.3}, so (iii) implies (i). 
This proves (e).
\end{proof}

\subsection{} \label{x10.5} Partially order the power sets  $\mc{P}(\mc{Z})$ and $\mc{P}(\mc{X})$ by inclusion. \begin{lem}  \begin{num}\item The map $W'\mapsto \mc{Z}_{W'}\colon \mc{W}\xrightarrow{\cong}\mset{\mc{Z}_{W'}\mid W'\in \mc{W}}$ is a poset isomorphism.
\item  The map $W'\mapsto\mc{X}(W')\colon \mc{W}\op\xrightarrow{\cong}\mset{\mc{X}(W')\mid W'\in \mc{W}}$  is a poset  isomorphism.
\end{num}
\end{lem}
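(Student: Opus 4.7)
The plan is to prove both statements by reducing, via conjugation, to facts already established about standard facial subgroups in $\mc{W}$. In each case, surjectivity onto the stated image set is automatic by the way the image is defined, so the substance is that each map is an order-embedding --- order-preserving in (a), order-reversing in (b).

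For (a), write any $W', W''\in\mc{W}$ as $W'=xW_Ix^{-1}$ and $W''=yW_Jy^{-1}$ with $I,J\in F_{\mathrm{inf}}$ and $x,y\in W$, which is possible directly from the definition of $\mc{W}$. Proposition \ref{x3.2}(e) gives $\mc{Z}_{W'}=x\mc{Z}_I$ and $\mc{Z}_{W''}=y\mc{Z}_J$, so the inclusion $\mc{Z}_{W'}\seq\mc{Z}_{W''}$ is exactly condition (i) of Lemma \ref{x10.4}(e) applied to the pair $(x,y)$ and $(I,J)$. The equivalence (i)$\iff$(iii) of that lemma then converts this to $xW_Ix^{-1}\seq yW_Jy^{-1}$, i.e., to $W'\seq W''$. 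This establishes the order-embedding; in particular $\mc{Z}_{W'}=\mc{Z}_{W''}$ forces $W'=W''$, giving injectivity, and the asserted isomorphism onto $\mset{\mc{Z}_{W'}\mid W'\in\mc{W}}$ follows.

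For (b), order-reversal is elementary: if $W'\seq W''$, then $\Pi_{W'}\seq\Phi_{W'}\seq\Phi_{W''}\seq\real\Pi_{W''}$, whence $\Pi_{W''}^{\perp}\seq\Pi_{W'}^{\perp}$ and so $\mc{X}(W'')\seq\mc{X}(W')$. For the converse, suppose $\mc{X}(W'')\seq\mc{X}(W')$. Since $W''\in\mc{W}$ is in particular a facial subgroup, by definition there is some $x_{0}\in\mc{X}$ with $\stab_{W}(x_{0})=W''$. By Lemma \ref{x2.2}(b), $\Phi_{W''}=\Phi\cap x_{0}^{\perp}$, and in particular $x_{0}\perp\Pi_{W''}$, so $x_{0}\in\mc{X}(W'')\seq\mc{X}(W')$. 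Hence $\Pi_{W'}\seq x_{0}^{\perp}\cap\Phi=\Phi_{W''}$, which gives $\Phi_{W'}\seq\Phi_{W''}$ and $W'\seq W''$, as required. Injectivity then follows from antisymmetry.

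The real work is thus already accomplished in the preceding lemmas; the substantive point that ``$\mc{Z}_{W'}$-containment detects $W'$-containment'' is encoded in Lemma \ref{x10.4}(e), whose proof depends on the existence of a point $p\in\mc{K}_I$ with prescribed facial support (Lemma \ref{x10.4}(c)). I do not anticipate a serious obstacle; the only bookkeeping point is that every $W''\in\mc{W}$ admits a realisation as $\stab_W(x_0)$ for some $x_0\in\mc{X}$, which is the definition of facial subgroup given in \ref{x2.8} together with the containment $\mc{W}\seq\{\text{facial subgroups}\}$ from \ref{x10.2}.
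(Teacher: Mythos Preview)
Your proof is correct and follows essentially the same approach as the paper: part (a) is deduced directly from the equivalence (i)$\iff$(iii) of Lemma \ref{x10.4}(e), and part (b) from the fact that any facial subgroup is the stabilizer of some point of $\mc{X}$ (so that $\Phi_{W''}=\Phi\cap x_{0}^{\perp}$). One minor point: the relevant reference for the latter fact is Lemma \ref{x2.3}(c) (or Lemma \ref{x2.2}), not \ref{x2.8}, which concerns facial closures rather than the definition of facial subgroups.
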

\begin{proof} Observe that $\mc{W}=\mset{wW_{I}w^{-1}\mid w\in W, I\in F_{\mathrm{all}} }$. Therefore, (a) follows directly from the equivalence of (i) and (iii) in
Lemma \ref{x10.4}(e). For (b), note that for any facial subgroup $W'$ of $W$, there exists $z\in \mc{X}$ such that $\Phi\cap z^{\perp} =\Phi_{W'}$. It follows from this  that for facial subgroups $W',W''$ of $W$,  one has $\mc{X}(W')\subseteq \mc{X}(W'')$ if and only if  $W''\subseteq W'$. Restricting to $W',W''\in \mc{W}$ gives (b).
\end{proof}
  
\subsection{}\label{x10.6}
 In \ref{x10.6}--\ref{x10.7}, we fix $W'\in \mc{W}$ and let facial support of elements of $\real_{\geq 0}\Pi_{W'}$ be taken with respect to $\Pi_{W'}$ unless otherwise indicated.
 \begin{lem} For any $z\in \mc{Z}_{W'} $, there are two mutually exclusive possibilities: either $z$ lies in  $\mc{Z}_{W''}$ for some  $W''\in \mc{W}$ with  $W''\subsetneq W'$, or $wz$ has facial support $\Pi_{W'}$ for all $w\in W'$.
 \end{lem}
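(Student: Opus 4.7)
The plan is to verify the two alternatives are mutually exclusive and then show that failure of the second forces the first. For mutual exclusivity, suppose both hold. The first alternative gives $W'' \in \mc{W}$ with $W'' \subsetneq W'$ and $z \in \mc{Z}_{W''}$. Since $W'' \subseteq W'$ and $W''$ is facial in $W$, Proposition \ref{x2.6}(b) shows that $W''$ is a facial subgroup of $W'$, so $W'' = w'W'_J(w')^{-1}$ for some $w' \in W'$ and facial subset $J$ of $\chi(W')$, where $W'_J$ denotes the corresponding standard facial subgroup of $W'$. The strict inclusion $W'' \subsetneq W'$ forces a rank drop, hence $\Pi_J \subsetneq \Pi_{W'}$. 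Proposition \ref{x3.2}(e) then yields $(w')^{-1}z \in \mc{Z}_{W'_J} \subseteq \real_{\geq 0}\Pi_J$, and the facial support of $(w')^{-1}z$ with respect to $\Pi_{W'}$ is contained in $\Pi_J$, hence strictly smaller than $\Pi_{W'}$. Applying the second alternative with $w := (w')^{-1}$ gives the contradiction.

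For the remaining direction, suppose the second possibility fails: pick $w \in W'$ and a proper facial subset $\Delta \subsetneq \Pi_{W'}$ of $\chi(W')$ such that $\Delta$ is the facial support of $wz$ with respect to $\Pi_{W'}$. Since $wz \in \mc{Z}_{W'} \cap \real_{\geq 0}\Delta$, Lemma \ref{x3.4}(b) applied inside the Coxeter system $(W',\chi(W'))$ places $wz$ in $\mc{Z}_{W''_0}$, where $W''_0 \subseteq W'$ is the reflection subgroup with $\Pi_{W''_0} = \Delta$. Let $\tilde\Delta \subseteq \Delta$ be the union of its infinite type components and let $W''_1 \subseteq W'$ be the corresponding reflection subgroup; Lemma \ref{x10.4}(a) applied to $(W',\chi(W'))$ ensures that $\tilde\Delta$ remains facial in $\chi(W')$ and that $\mc{Z}_{W''_1} = \mc{Z}_{W''_0}$, so $wz \in \mc{Z}_{W''_1}$.

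It remains to relocate this data inside $W$. By construction $W''_1$ is standard facial in $W'$, and since $W'$ is facial in $W$, Lemma \ref{x10.1}(b) promotes $W''_1$ to a facial subgroup of $W$; having only infinite type components, $W''_1 \in \mc{W}$. Setting $W'' := w^{-1}W''_1 w$, Proposition \ref{x3.2}(e) gives $z = w^{-1}(wz) \in \mc{Z}_{W''}$; conjugation preserves membership in $\mc{W}$, and $W'' \subseteq W'$ since $w \in W'$ and $W''_1 \subseteq W'$. The rank bound $|\tilde\Delta| \leq |\Delta| < |\Pi_{W'}|$ yields $W'' \subsetneq W'$. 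The main technical obstacle is ensuring the candidate subgroup truly lies in $\mc{W}$: the naive reflection subgroup $W''_0$ cut out by $\Delta$ may contain finite-type components, but Lemma \ref{x10.4}(a) guarantees that discarding those components to pass from $\Delta$ to $\tilde\Delta$ does not alter the imaginary cone, which is precisely what makes the replacement by $W''_1$ legitimate.
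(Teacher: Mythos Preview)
Your argument is correct but organized differently from the paper's. The paper immediately reduces via the $W'$-action to $z=k\in\mc{K}_{W'}$ and then shows \emph{directly} that the facial support $\Delta$ of $k$ in $\Pi_{W'}$ already has no finite type components: since $k\in\real_{\geq 0}\Delta\cap\mc{K}_{W'}=\mc{K}_{W_\Delta}=\mc{K}_{W_{\Delta'}}$ (where $\Delta'$ is the union of the infinite type components, the finite type part contributing nothing by \ref{x4.5}), and since $\real\Delta=\real\Delta'\oplus\real\Delta''$ with $\Delta''$ linearly independent, every support of $k$ lies in $\Delta'$, forcing $\Delta=\Delta'$. Thus $W''=W_\Delta$ already sits in $\mc{W}$ without any trimming, and the dichotomy is simply whether $W''=W'$ or not; the case $W''=W'$ yields the second alternative for all $w\in W'$ at once via $wk-k\in\real_{\geq 0}\Pi_{W'}$.

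You instead avoid the reduction to $\mc{K}_{W'}$, work with an arbitrary witness $w$ to failure of the second alternative, and then invoke Lemma~\ref{x10.4}(a) to strip finite type components and Lemma~\ref{x10.1}(b) to promote faciality to $W$. This is perfectly valid (all the results you cite precede \ref{x10.6} and apply to $(W',\chi(W'))$ by the remark in \ref{x4.4}), and your mutual exclusivity argument is more explicit than the paper's ``clear from the proof''. The trade-off is that the paper's reduction to the fundamental domain makes the absence of finite type components essentially automatic and handles both cases uniformly, whereas your route treats exclusivity and the main implication separately and outsources the finite-component removal to an earlier lemma.
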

\begin{proof}
Using the $W'$ action, assume without loss of generality that 
 $z=k\in \mc{K}_{W'}$.
Observe that the  facial support $\Delta$ of $k$ (with respect to $\Pi_{W'}$)   is of the form
$\Delta=\Pi_{W''}$ for some $W''\in \mc{W}$ with $W''$ standard parabolic (even  standard facial) in  $W'$.
In fact, let $\Delta'$ (resp., $\Delta''$)  be the union of the infinite (resp., finite) type components  of $\Delta$.
Since $\Delta$ is standard facial in $\Pi_{W'}$, and $k\in \mc{K}$, it 
follows using Lemma \ref{x3.4}, Proposition \ref{x3.2}(d) and 
\ref{x4.5}  that \[k\in\real_{\geq 0}\Delta \cap \mc{K}_{W'}=
\mc{K}_{W_{\Delta}}=\mc{K}_{W_{\Delta'}}+\mc{K}_{W_{\Delta''}}=
\mc{K}_{W_{\Delta'}}.\] One has (e.g. by \ref{x2.13} and \ref{x2.9}) that  
$\real \Delta=\real\Delta'\oplus \real \Delta''$ (orthogonal direct sum) 
where $ \Delta''$ is linearly independent. 
 This implies that any support of $k$ is contained in $\Delta'$, so
  $\Delta'=\Delta$ has no finite type components.
 Then $W''=W_{\Delta}\in \mc{W}$ by Lemma \ref{x10.1}(b).
Now if $W''\neq W'$, then $k$ lies in a proper face $\mc{Z}_{W''}$ of 
$\mc{Z}_{W'}$
and has proper facial support $\Delta\subsetneq\Pi_{W'}$ in $\Pi_{W'}$
 On the other hand, if $W''=W'$,
then $wk$ has facial support $\Pi_{W'}$  for all $w\in W'$, since 
$wk-k\in \real_{\geq 0}\Pi_{W'}$
by \ref{x1.10}. It is clear from the proof that the two possibilities 
mentioned in the statement of the Lemma are mutually exclusive, and 
the proof of the Lemma is complete.
\end{proof}

\subsection{}\label{x10.7} The next lemma,   in which  for conciseness, $C^{\circ}:=\ri(C)$ for any cone $C$,  characterizes the relative interior of  $\mc{Z}_{W'}$.  \begin{lem} Let  $W'\in \mc{W}$ and $z\in  \mc{Z}_{W'}$. Write  $z=wk$ with $w\in W'$ and $k\in \mc{K}_{W'}$ Then the following conditions are equivalent:
\begin{conds}\item  $z\in \mc{Z}_{W'}^{\circ }$.
\item   $wz$ has facial support $\Pi_{W'}$ for all $w\in W'$. 
\item $wz$ has facial support $\Pi_{wW'w^{-1}}$ with respect to $\Pi_{wW'w^{-1}}$,
for all $w\in W$.
\item  the facial support of $k$ with resect to $\Pi_{W'}$ is $\Pi_{W'}$.  \end{conds}\end{lem}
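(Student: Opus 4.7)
My overall strategy is to prove the four conditions equivalent via (ii)$\Leftrightarrow$(iv), then (ii)$\Leftrightarrow$(iii), and finally (i)$\Leftrightarrow$(iv), which is the main content. The equivalence (ii)$\Leftrightarrow$(iv) will follow directly from Lemma \ref{x10.6}. Indeed, writing $k = uz$ with $u \in W'$, condition (ii) applied to the element $u$ gives that $k$ has facial support $\Pi_{W'}$, yielding (iv); conversely, if $k$ has facial support $\Pi_{W'}$, then Lemma \ref{x10.6} places us in its second alternative (the two alternatives being mutually exclusive), so $wk$ has facial support $\Pi_{W'}$ for every $w \in W'$, and since every $W'$-translate of $z$ is a $W'$-translate of $k$, condition (ii) follows.

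For (ii)$\Leftrightarrow$(iii), I would exploit the $W$-equivariance of the algebraic structure. For $w\in W$, write $w=yu$ with $u\in W'$ and $y$ of minimal length in $wW'$, so that $wW'w^{-1}=yW'y^{-1}$, $\Pi_{wW'w^{-1}}=y\Pi_{W'}$ by \ref{x1.7}, and $wz = y(uz)$. The linear isomorphism $y$ restricts to a bijection $\real\Pi_{W'}\to\real\Pi_{wW'w^{-1}}$ carrying $\Pi_{W'}$ to $\Pi_{wW'w^{-1}}$ and therefore preserving facial supports. Applying (ii)$\Leftrightarrow$(iv) to the conjugate subgroup $wW'w^{-1}$ and the element $wz$, condition (iii) at $w$ is thus equivalent to the condition that the facial support of $yk$ (with respect to $\Pi_{wW'w^{-1}}$) is $\Pi_{wW'w^{-1}}$, which via the isomorphism $y$ is equivalent to (iv) for $k$. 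As $w$ ranges over $W$, this yields (ii)$\Leftrightarrow$(iii).

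The heart of the argument is (i)$\Leftrightarrow$(iv). For (i)$\Rightarrow$(iv), suppose $k$ has facial support $\Delta\subsetneq\Pi_{W'}$. By the proof of Lemma \ref{x10.6}, $\Delta = \Pi_{W''}$ for some $W'' \in \mc{W}$ that is standard facial in $W'$, and $k \in \mc{K}_{W''}$. Choose $\phi \in \mc{C}_{W'}$ with $\phi^\perp \cap \Pi_{W'} = \Pi_{W''}$; then $\phi \in \mc{C}_{W'} \subseteq \mc{X}_{W'} \subseteq \ol{\mc{X}_{W'}} = \mc{Z}_{W'}^*$ by Theorem \ref{x5.1}(a) applied to $W'$, so $\mpair{\phi,\mc{Z}_{W'}}\geq 0$. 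I would show $\mc{Z}_{W'}\cap\phi^\perp = \mc{Z}_{W''}$ by writing any $z'\in\mc{Z}_{W'}\cap\phi^\perp$ as $z'=w k'$ with $w\in W'$, $k'\in\mc{K}_{W'}$, splitting $\mpair{\phi,wk'} = \mpair{\phi,k'} + \mpair{\phi, wk'-k'}$ into two non-negative pieces (both terms lie in $\real_{\geq 0}\Pi_{W'}$ by \ref{x1.13}(c)), deducing each vanishes, and hence extracting $k'\in\mc{K}_{W''}$ and $wk'-k'\in\real\Pi_{W''}$; then Lemma \ref{x2.4}(d) applied to $-k'\in\mc{C}_{W'}$ gives $wk' \in W'' k' \subseteq \mc{Z}_{W''}$. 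Thus $\mc{Z}_{W''}$ is a proper exposed face of $\mc{Z}_{W'}$ containing $k$; translating by $u^{-1}$ gives that $z = u^{-1}k$ lies on a proper face, contradicting (i).

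For (iv)$\Rightarrow$(i), I would reduce to irreducible $W'$ via the decomposition $\mc{Z}_{W'}=\bigoplus_i\mc{Z}_{W'_i}$ over components (noting that both the facial-support condition and relative interior behave additively under orthogonal direct sums in complementary subspaces). The irreducible affine case is immediate since $\mc{Z}_{W'} = \real_{\geq 0}\delta$ is one-dimensional with all non-zero points having facial support $\Pi_{W'}$. The irreducible indefinite case is the main obstacle. Here $\real\mc{Z}_{W'} = \real\Pi_{W'}$ by Lemma \ref{x7.1} applied to $W'$, so $\ri(\mc{Z}_{W'})$ coincides with the topological interior in $\real\Pi_{W'}$. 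By Lemma \ref{x8.4}(b), $k$ is non-isotropic, and by Lemma \ref{x8.4}(d), every component of $\Delta_k := \{\alpha\in\Pi_{W'}:\mpair{k,\alpha}=0\}$ is of finite type, so $U := W'_{\Delta_k}$ is a finite reflection group pointwise fixing $k$. For $v\in\real\Pi_{W'}$ sufficiently close to $k$ I would choose $u\in U$ with $\mpair{uv,\alpha}\leq 0$ for all $\alpha\in\Delta_k$ (possible since $U$ is finite and $V=U(-\mc{C}_U)$). Uniform boundedness of $U$ then yields $uv$ close to $uk=k$ uniformly in $u\in U$, so $uv\in\real_{>0}\Pi_{W'}$ (since $k$ has facial support $\Pi_{W'}$) and $\mpair{uv,\alpha}<0$ for each $\alpha\in\Pi_{W'}\setminus\Delta_k$ by continuity at the strict inequality $\mpair{k,\alpha}<0$; combined, $uv\in\mc{K}_{W'}$. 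Hence $v=u^{-1}(uv)\in U\mc{K}_{W'}\subseteq\mc{Z}_{W'}$, proving $k$ has a neighborhood in $\mc{Z}_{W'}$ within $\real\Pi_{W'}$. The main technical point will be the uniform continuity argument over the finite group $U$ and verifying that the local chamber-cover by $U$-translates of $\mc{K}_{W'}$ actually fills a neighborhood of $k$; the remainder is bookkeeping combining Lemmas \ref{x10.6}, \ref{x2.4}(d), and the $W$-equivariance of the whole setup.
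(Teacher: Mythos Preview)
Your proposal is correct and follows essentially the same route as the paper. The chain of implications is organized slightly differently (the paper does (i)$\Rightarrow$(ii)$\Rightarrow$(iv), (ii)$\Leftrightarrow$(iii), (iv)$\Rightarrow$(i), reducing first to $z=k$), but the substance is identical: Lemma~\ref{x10.6} handles the link between facial support and membership in a proper face, and the core step (iv)$\Rightarrow$(i) is done exactly as you describe, via the finite reflection group $W_{\Delta_k}$ (finite by Lemma~\ref{x8.4}(d)) whose translates of $\mc{K}_{W'}$ cover a neighborhood of $k$ in $\real\Pi_{W'}$. The paper phrases the covering argument by choosing a $W_{\Delta_k}$-stable open neighborhood of $k$, which is equivalent to your uniform-continuity argument over the finite group. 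Your (i)$\Rightarrow$(iv) argument is more explicit than needed: the identity $\mc{Z}_{W'}\cap\phi^\perp=\mc{Z}_{W''}$ you derive via Lemma~\ref{x2.4}(d) is precisely Lemma~\ref{x3.4}(b) applied to $W'$, so you could simply cite that (or Lemma~\ref{x10.4}(b)) rather than re-proving it.
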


\begin{proof}   Making use of the $W'$-action, we may assume that $z=k\in \mc{K}_{W'}$. 
Since no point of a proper face of $\mc{Z}_{W'}$ is in $\mc{Z}_{W'}^{\circ }$, the preceding Lemma shows that (i) implies (ii).  Clearly (ii) implies (iv).
Conditions (ii) and (iii) are easily seen to be equivalent using 
\ref{x1.6}--\ref{x1.7} and Proposition \ref{x3.2}(e).  To complete the proof, we show that (iv) implies (i). Using the $W'$-action, it will suffice to show that $k\in \mc{Z}_{W'}^{\circ }$.
Now by definition of $\mc{W}$, $\Pi_{W'}$ has only infinite type components. Let
$\Omega$ denote the union of all indefinite type components of $\Pi_{W'}$ and 
$\Gamma$ denote the union of all other (affine type) components of $\Pi_{W'}$.
Write $k=\sum_{\alpha\in \Pi_{W'}}c_{\alpha }\alpha$ where all $c_{\alpha}>0$. By
the proof of \ref{x3.2}(d), $k_{\Omega}:=\sum_{\alpha\in \Omega}c_{\alpha}\alpha\in \mc{K}_{W_{\Omega}}$ and $k_{\Gamma}:=\sum_{\alpha\in \Gamma}c_{\alpha}\alpha\in \mc{K}_{W_{\Gamma}}$. Further, from Lemma \ref{x8.4}, it follows that 
for each component $\Gamma'$ of $\Gamma$, $k_{\Gamma'}:=\sum_{\alpha\in \Gamma'}c_{\alpha}\alpha$ is a representative of the isotropic ray $\mc{K}_{W_{\Gamma'}}$  of $W_{\Gamma'}$.
From \ref{x8.6}, $\mc{Z}_{\Gamma}=\mc{K}_{\Gamma}=\sum_{{\Gamma'}}\mc{K}_{W_{\Gamma'}}$ and so clearly  $k_{\Gamma}\in \mc{Z}_{\Gamma}^{\circ }$.
It will be enough to show that $k_{\Omega}\in \mc{Z}_{\Omega}^{\circ }$.
For then  $k=k_{\Omega}+k_{\Gamma}\in   \mc{Z}_{\Omega}^{\circ }+\mc{Z}_{\Gamma}^{\circ }\subseteq \mc{Z}_{W'}^{\circ }$ (using \ref{x3.2}(d) and \eqref{xA.3.14}).

To prove that  $k_{\Omega}\in \mc{Z}_{\Omega}^{\circ }$, we may and do assume without loss of generality that $\Pi=\Pi_{W'}=\Omega$, so $k_{\Omega}=k$ and $\Pi_{W'}$  has only indefinite components. By \ref{x4.5},  $\real \mc{Z}_{W'}=\real \Pi_{W'}$ and by Lemma \ref{x8.4}(d), $\Delta:=k^{\perp}\cap \Pi_{W'}$ has only finite type  components. Let $W'':=W_{\Delta}$. We choose an open neighborhood $U$ of $k$ in $V$
which does not intersect the hyperplane  $\alpha^{\perp}$ for any $\alpha\in \Pi_{W'}\setminus \Delta$, and such that $U':=U\cap \real\Pi_{W'}\subseteq \real_{\geq 0}\Pi_{W'}$ (which is possible since $k\in(\real_{\geq 0}\Pi_{W'})^{\circ }$). Since $k$ is $W''$-stable, we may assume without loss of generality that $U$ (and hence $U'$) is $W''$-stable also. We have $\mpair{\Pi_{W'}\setminus \Delta,U}\subseteq \real_{<0}$ since 
$\mpair{\Pi_{W'}\setminus \Delta,k}\subseteq \real_{<0}$.
 We claim that $U'$ is an open neighborhood of $k$ in $\real\mc{Z}_{W'}$ with $U'\subseteq \mc{Z}_{W'}$. By construction, $U'$ is an open neighborhood of $k$ in $\real\mc{Z}_{W'}$. To see that  $U'\subseteq \mc{Z}_{W'}$, recall that since $W''$ is finite, we have $\mc{X}_{W''}=\cup_{w\in W''}w\mc{C}=V$ by \ref{x1.10}(h).  So if $u\in U'$, 
 there is $u'\in -\mc{C}_{W''}$ and $w\in W''$ with $u=wu'$.
 By $W''$-invariance of $U'$, we have $u'\in U'$ also, so $\mpair{\Pi_{W'}\setminus
 \Delta ,u'}\subseteq \real_{<0}$. We have  $\mpair{u',\Delta}\subseteq \real_{\leq 0}$ since $u'\in  -\mc{C}_{W''}$. It follows that $u'\in \real_{\geq 0}\Pi_{W'}\cap -\mc{C}_{W'}=\mc{K}_{W'}$
 and $u=wu'\in W'\mc{K}_{W'}=\mc{Z}_{W'}$ since $w\in W''\subseteq W'$. This 
 shows that $U'\subseteq \real{Z}_{W'}$ as claimed, and completes the  proof of the Lemma.
\end{proof}

\subsection{}\label{x10.8}  The statement below uses the notation $\mc{C}(I)$ defined in \ref{x2.3}.
\begin{lem}  Let  $W'\in \mc{W}$, and  write $W'=wW_{I}w^{-1}$ for some 
$w\in W$ and $I\in F_{\mathrm{inf}} $. \begin{num}\item 
Let  $z\in \mc{Z}_{W'}^{\circ }$.   Then $  \set{z}^{\dag}=(\mc{Z}_{W'})^{\dag}=\mc{X}\cap (\real \Pi_{W'})^{\perp}=\mc{X}({W'})$. 
\item  Choose $v\in \mc{C}(I)$. 
Then $ \mc{Z}_{W'}= (\mc{X}(W'))^{\#}= (w\mc{C}(I))^{\#}= \set{wv}^{\#}$.
\item $\mc{Z}_{W'}$  is a stable face of $\mc{Z}$
dual (with respect to the Galois connection from $P$) to the stable face  $\mc{X}(W')$ of $\mc{X}$. \end{num} 
\end{lem}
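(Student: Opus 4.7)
My plan is to prove (a) and (b) separately, with (c) falling out as a formal consequence. For (a), the inclusions $\mc{X}(W') \subseteq \mc{Z}_{W'}^\dag \subseteq \{z\}^\dag$ are immediate from the order-reversing property of the Galois connection together with $\mc{Z}_{W'} \subseteq \real\Pi_{W'}$, so the real content is to show $\{z\}^\dag \subseteq \mc{X}(W')$ for $z \in \mc{Z}_{W'}^\circ$. All the relevant sets transform $W$-equivariantly under conjugation, so I would first reduce to $W' = W_I$ standard with $I \in F_{\mathrm{inf}}$, and then use the $W_I$-action on $\mc{Z}_I^\circ$ to replace $z$ by some $k \in \mc{K}_I \cap \mc{Z}_I^\circ$; by Lemma \ref{x10.7} this $k$ has facial support all of $\Pi_I$.

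Given $x \in \mc{X} \cap k^\perp$, Theorem \ref{x5.1}(a) gives $\mc{X} \subseteq \overline{\mc{Z}}^*$, so $\mpair{x,-}$ is a non-negative linear functional on $\mc{Z}$ that vanishes at the relative-interior point $k$ of $\mc{Z}_I$, hence vanishes identically on $\mc{Z}_I$ and on each subcone $\mc{Z}_{W_C}$ for $C$ a component of $\Pi_I$. Since $I \in F_{\mathrm{inf}}$, each component $C$ is either indefinite or affine. For indefinite $C$, \ref{x4.5} provides an interior point of $\mc{K}_{W_C}$ which is a strictly positive combination of $\Pi_C$, forcing $\real\mc{Z}_{W_C} = \real\Pi_C$ and thus $x \perp \Pi_C$. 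For affine $C$, one directly gets only $\mpair{x, \delta_C} = 0$, since $\mc{Z}_{W_C} = \real_{\geq 0}\delta_C$ is one-dimensional; to upgrade this I would use $\mc{X} \subseteq \mc{X}_{W_C}$ (Lemma \ref{x1.10}(e)) to write $x = uy$ with $u \in W_C$ and $y \in \mc{C}_{W_C}$. Since $\delta_C$ is $W_C$-fixed, $\mpair{y, \delta_C} = 0$, and since $\delta_C$ is a strictly positive combination of $\Pi_C$ with $\mpair{y,\alpha}\geq 0$ for every summand, all such inner products vanish. Thus $y$ is $W_C$-fixed and $x = uy = y$ is also, giving $x \perp \Pi_C$ as required.

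For (b), the inclusions $\{wv\} \subseteq w\mc{C}(I) \subseteq \mc{X}(W')$ are routine (any $v' \in \mc{C}(I)$ satisfies $v' \in \mc{C} \subseteq \mc{X}$ with $v' \perp \Pi_I$, so $wv' \in \mc{X}$ and $wv' \perp w\Pi_I \subseteq \real\Pi_{W'}$), and applying $\#$ together with the trivial inclusion $\mc{Z}_{W'} \subseteq \mc{X}(W')^\#$ gives the chain $\mc{Z}_{W'} \subseteq \mc{X}(W')^\# \subseteq (w\mc{C}(I))^\# \subseteq \{wv\}^\#$. The remaining inclusion $\{wv\}^\# \subseteq \mc{Z}_{W'}$ is the main computation: take $u \in \mc{Z}$ with $\mpair{u, wv} = 0$, set $u' := w^{-1}u \in \mc{Z}$ so that $\mpair{u', v} = 0$, and expand $u' = \sum_{\alpha \in \Pi} c_\alpha \alpha$ with $c_\alpha \geq 0$ (since $u' \in \real_{\geq 0}\Pi$); the equation $\sum_\alpha c_\alpha \mpair{\alpha, v} = 0$ combined with $\mpair{\alpha,v} \geq 0$ (from $v \in \mc{C}$) and $\mpair{\alpha,v} > 0$ for $\alpha \notin \Pi_I$ (from $v \in \mc{C}(I)$) forces $c_\alpha = 0$ off $\Pi_I$. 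Thus $u' \in \mc{Z} \cap \real\Pi_I = \mc{Z}_I$ by Lemma \ref{x3.5}(b), and $u = wu' \in w\mc{Z}_I = \mc{Z}_{W'}$ by Proposition \ref{x3.2}(e).

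Part (c) is then immediate from (a), (b), and Lemma \ref{x10.4}(b): $\mc{Z}_{W'}$ is a face of $\mc{Z}$, stable because $\mc{Z}_{W'}^{\dag\#} = \mc{X}(W')^\# = \mc{Z}_{W'}$, with dual stable face $\mc{Z}_{W'}^\dag = \mc{X}(W')$. The principal obstacle throughout is the affine-component case in (a): a one-dimensional $\mc{Z}_{W_C}$ cannot by itself detect $x \perp \Pi_C$, so one has to step outside the imaginary-cone side and use the Tits-cone inclusion $\mc{X} \subseteq \mc{X}_{W_C}$ together with the $W_C$-invariance of $\delta_C$ to promote the weaker orthogonality.
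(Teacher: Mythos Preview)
Your proof is correct. Parts (b) and (c) match the paper's argument essentially verbatim: both establish the chain
\[\mc{Z}_{W_I}\subseteq \mc{X}(W_I)^{\#}\subseteq \mc{C}(I)^{\#}\subseteq \{v\}^{\#}=\mc{Z}\cap v^{\perp}=\mc{Z}\cap\real_{\geq 0}\Pi_I=\mc{Z}_{W_I}\]
via Lemma \ref{x2.4}(a) and Lemma \ref{x3.4}, and then translate by $w$.

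Part (a) is where you diverge from the paper. You fix $x\in\mc{X}\cap k^{\perp}$ and argue in place: the functional $\mpair{x,-}$ is non-negative on $\mc{Z}$ and vanishes at $k\in\ri(\mc{Z}_I)$, hence on all of $\real\mc{Z}_I$; then you split into indefinite components (where $\real\mc{Z}_{W_C}=\real\Pi_C$ settles it) and affine components (where you invoke $\mc{X}\subseteq\mc{X}_{W_C}$ and the $W_C$-invariance of $\delta_C$ to promote $x\perp\delta_C$ to $x\perp\Pi_C$). The paper instead moves $x$ rather than analyzing $z$: it picks $w\in W$ with $wx\in\mc{C}$, applies Lemma \ref{x10.7}(iii) to write $wz=\sum_{\alpha\in\Pi_{wW'w^{-1}}}c_\alpha\alpha$ with all $c_\alpha>0$, and uses $\mpair{wx,\Phi_+}\subseteq\real_{\geq 0}$ to force $wx\perp\Pi_{wW'w^{-1}}$ in one stroke. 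The paper's route is shorter and avoids the affine-versus-indefinite case split, at the cost of needing the stronger conclusion (iii) of Lemma \ref{x10.7}; your route uses only that $z\in\ri(\mc{Z}_I)$ (your reduction to $\mc{K}_I$ and the citation of Lemma \ref{x10.7} for the facial support of $k$ are in fact unnecessary), but pays for it with the separate affine argument via the Tits cone.
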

\begin{proof}
In (a),  it is obvious from the definitions  that 
\[ \set{z}^{\dag}\supseteq (\mc{Z}_{W'})^{\dag}\supseteq \mc{X}\cap (\real \Pi_{W'})^{\perp}= \mc{X}({W'}).\]
Using the $W$-action, we may assume for the proof of the reverse inclusions that
$W'$ is standard facial, say $W'=W_{I}$ where $I\in F_{\mathrm{inf}} $.
Let $x\in \set{z}^{\dag}=\mc{X}\cap z^{\perp}$. Choose $w\in W$ so $wx\in \mc{C}$.
Thus, $wx\in (wz)^{\perp}$. By Lemma \ref{x10.7},   $wz=\sum_{\alpha\in \Pi_{wW'w^{-1}}}c_{\alpha}\alpha$ with all $c_{\alpha}>0$. It follows that $wx\in (\Pi_{wW'w^{-1}})^{\perp}$ and so
$x\in( \Pi_{W'})^{\perp}\cap \mc{X}$. This proves (a).
In (b), we have  $v\in \mc{C}_{I}\seq \mc{C}\cap\Pi_{I}^{\perp}\seq \mc{X}\cap \Pi_{I}^{\perp}=\mc{X}(W_{I})$ and thus
\begin{multline*} \mc{Z}_{W_{I}}\subseteq (\mc{X}(W_{I}))^{\#}\subseteq (\mc{C}(I))^{\#}\subseteq \set{v}^{\#}=\mc{Z}\cap v^{\perp}=\\ \mc{Z}\cap \real_{\geq 0}\Pi\cap v^{\perp}=\mc{Z}\cap \real_{\geq 0}\Pi_{I}=\mc{Z}_{W_{I}}\end{multline*} where  the first inclusion comes from (a)
and the last equality from Lemma \ref{x3.4}. Hence equality holds throughout. Acting on the resulting equation  by $w$ gives (b).
Part (c) is immediate from (a)--(b) and the definitions.
\end{proof} 

\subsection{} \label{x10.9}
For $W'\in \mc{W}$, let $\mc{X}'(W')\subseteq \mc{X}(W')$ denote  the  set of all   $x\in \mc{X}$ such that  $x^{\perp}\cap \Phi =\Phi_{W''}$ where $W''$ is the facial reflection subgroup  such that  $\Pi_{W'}$ is the union of all infinite type  components of $\Pi_{W''}$. It is easy to see that  for $W'\in \mc{W}$, we have 
  \begin{equation*}\mc{X}({W'})=\dot \bigcup_{\substack{W''\in \mc{W}\\ W''\supseteq W'}}\mc{X}'({W''}).\end{equation*}

\begin{lem} If  $W'\in \mc{W}$,  then $\mc{X}(W')^{\circ }=\mc{X}'(W')$.  \end{lem}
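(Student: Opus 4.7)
The strategy is to prove both inclusions separately, using the disjoint decomposition $\mc{X}(W')=\dot\bigcup_{W''\in\mc{W},\,W''\supseteq W'}\mc{X}'(W'')$ stated just before the lemma.

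For $\mc{X}(W')^{\circ}\subseteq \mc{X}'(W')$, suppose $x\in \mc{X}(W')^{\circ}$ and write $x\in \mc{X}'(W'')$ for the unique $W''\in\mc{W}$ with $W''\supseteq W'$.  If $W''\sneq W'$ were false in the sense that $W''\supsetneq W'$, Lemma \ref{x10.5}(b) would give $\mc{X}(W'')\sneq \mc{X}(W')$, and by Lemma \ref{x10.8}(c) both are stable faces of $\mc{X}$; any extreme subset of $\mc{X}$ contained in $\mc{X}(W')$ is extreme in $\mc{X}(W')$, so $\mc{X}(W'')$ is a proper face of $\mc{X}(W')$.  But a point of the relative interior of a convex set lies in no proper face, a contradiction.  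Hence $W''=W'$ and $x\in \mc{X}'(W')$.

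For $\mc{X}'(W')\seq \mc{X}(W')^{\circ}$, by the $W$-equivariance of all objects involved it suffices to treat $W'=W_I$ with $I\in F_{\mathrm{inf}} $ and, after a further translation, to assume $x\in\mc{C}$.  Then $x$ lies in a unique facet $\mc{C}(J)$ with $J\in F_{\mathrm{all}} $ and $J_\infty=I$; set $K:=J\sm I$, which is a finite-type subset of $S$ separated from $I$.  The plan is to exhibit an open neighborhood $U$ of $x$ in the linear subspace $\Pi_I^{\perp}$ with $U\seq \mc{X}$: since $\aff\mc{X}(W_I)=\real\mc{X}(W_I)\seq \Pi_I^{\perp}$, the existence of such a $U$ forces $\real\mc{X}(W_I)=\Pi_I^{\perp}$ and $x\in \mc{X}(W_I)^{\circ}$.

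The construction rests on the orthogonal decomposition $\Pi_I^{\perp}=\real\Pi_K\oplus(\real\Pi_J)^{\perp}$, which follows from positive definiteness of the form on $\real\Pi_K$, from $\real\Pi_K\perp\real\Pi_I$, and from non-singularity of $\mpair{-,-}$.  For $y\in\Pi_I^{\perp}$ close to $x$, decompose $y-x=v_1+v_2$ with $v_1\in\real\Pi_K$ and $v_2\in(\real\Pi_J)^{\perp}$, then pick $w\in W_K$ with $wv_1\in\mc{C}_{W_K}$, which is possible because $W_K$ is finite, so $\mc{X}_{W_K}=V$ by Lemma \ref{x1.10}(h).  Since $W_K$ fixes $\Pi_K^{\perp}$ pointwise and both $x,v_2\in\Pi_K^{\perp}$, one has $wy=x+wv_1+v_2$, with $\|wy-x\|$ controlled uniformly in $w\in W_K$ by a constant multiple of $\|y-x\|$.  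Hence for $\|y-x\|$ small enough (using that $\Pi\sm\Pi_J$ is finite), continuity gives $\mpair{wy,\alpha}>0$ for every $\alpha\in\Pi\sm\Pi_J$, while $\mpair{wy,\Pi_I}=0$ and $\mpair{wy,\Pi_K}\geq 0$ hold by construction; thus $wy\in\mc{C}$ and $y\in\mc{X}$.  The delicate point is precisely this uniform control: the orthogonal decomposition is what guarantees that the $W_K$-action perturbs only the small $\real\Pi_K$-component of $y-x$, leaving $x$ and $v_2$ untouched, so that $wy$ stays close to $x$ for every admissible choice of $w$.
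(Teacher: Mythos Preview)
Your proof is correct and follows essentially the same strategy as the paper's: reduce to $x\in\mc{C}$ with $W'=W_I$, then use the finite group $W_K$ (the paper's $W_J$) acting on nearby points of $\Pi_I^{\perp}$ to move them into $\mc{C}$.  The packaging differs slightly: you use an explicit orthogonal decomposition $\Pi_I^{\perp}=\real\Pi_K\oplus(\real\Pi_J)^{\perp}$ together with a norm bound for the uniform control over $w\in W_K$, whereas the paper sidesteps the norm estimate by working with a $W_K$-invariant open neighborhood of $x$; both devices accomplish the same thing.  Your formulation of the target---an open neighborhood of $x$ in the linear subspace $\Pi_I^{\perp}$ lying inside $\mc{X}$---is in fact cleaner than the paper's stated goal of showing that $\mc{X}'(W')$ is open in $\mc{X}(W')$, since the latter does not by itself imply membership in the relative interior; the paper's argument does however prove the stronger statement you aim for (its computation nowhere uses that $p$ already lies in $\mc{X}$, only that $p\in U'\cap\Pi_{\tilde I}^{\perp}$).
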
 
\begin{proof}
 Since $\mc{X}'(W'')$ is in the proper face $\mc{X}(W'')$  (and therefore in the relative boundary)  of $\mc{X}(W')$ for  $W''\in \mc{W}$ with $W''\supsetneq W'$, we have $\mc{X}(W')^{\circ }\subseteq\mc{X}'(W')$. Hence it remains to prove that if $v\in \mc{X}'({W'})$, there is an open neighborhood $U$ of $v$ in $\mc{X}(W')$ such that $U\subseteq \mc{X}'(W')$.
Choose $w\in W$ with $x:=wv\in \mc{C}$. Let $W_{I}$ be the stabilizer of  $x$ in $W$
and $\tilde I\subseteq I$ be as in \ref{x10.4}. Then $wW'w^{-1}=W_{\tilde I}$, $x=wv\in w\mc{X}'({W'})=\mc{X}'({W_{\tilde I}})$
and $ w\mc{X}({W'})=\mc{X}({W_{\tilde I}})$.   

Set $J=I\setminus \tilde I$, so $W_{J}$ is finite and $J$ is the  union of the finite type connected components of ${I}$. We have $\Pi_{I}=x^{\perp} \cap \Pi$. Hence we  may 
choose an open neighborhood $U'$ of $x$ in $V$ such that $\mpair{U',\Pi\setminus \Pi_{I}}\subseteq \real_{>0}$. Since $W_{J}$ stabilizes $x$, we may assume without loss of generality, 
that $U'$ is $W_{J}$-invariant. 

We claim that $U'':=U'\cap \mc{X}(W_{\tilde I})$ is an open
neighborhood of $x$ contained in $\mc{X}'(W_{\tilde I})$. Clearly, $U''$ is open in 
$\mc{X}(W_{\tilde I})$. Let $p\in {U}''$. Since $W_{J}$ is finite, we have $\mc{X}_{W_{J}}=V\ni p$,
so there is some $y\in W_{J}$ such that $yp\in \mc{C}_{W_{J}}$.
This means that $\mpair{yp, \Pi_{J}}\subseteq \real_{\geq 0}$.
Also, $\mpair{yp,\Pi\setminus \Pi_{I}}\subseteq \real_{> 0}$ since  $yp\in U'$ by $W_{J}$-invariance of $U'$. We have $p^{\perp}\cap \Phi=\Phi_{W'''}$  for some facial reflection subgroup
$W'''$. Since $p\in \mc{X}(W_{\tilde I})$,  we have $W'''\supseteq W_{\tilde I}$.
Hence $(yp)^{\perp}\cap \Phi=y\Phi_{W'''}\supseteq y \Phi_{\tilde I}=\Phi_{\tilde I}$ since $y\in W_{J}$. This shows that $\mpair{yp,\Pi_{\tilde I}}=0$. 
Since $\Pi=\Pi_{J}\cup( \Pi\setminus \Pi_{I})\cup \Pi_{\tilde I}$, 
we have now seen that $yp\in \mc{C}\subseteq \mc{X}$ with $\Pi_{\tilde I}\subseteq (yp)^{\perp}
\cap \Pi \subseteq \Pi_{I}$. Hence $yp\in \mc{X}'(W_{\tilde I})$ and $p\in\mc{X}'(y^{-1}W_{\tilde I}y) =
\mc{X}'(W_{\tilde I})$. We conclude that $U''\subseteq \mc{X}'(W_{\tilde I})$ as claimed.
Finally, $U:=w^{-1}U''$ is an open neighborhood of $w^{-1}x=v$ in $w^{-1}(\mc{X}(W_{\tilde I}))=\mc{X}(W')$ such that $U\subseteq w^{-1}(\mc{X}'(W_{\tilde I}))=\mc{X}'(W')$.
This completes the proof.
\end{proof}

\subsection{} \label{x10.10} The following is the last  lemma required for the proof of Theorem \ref{x10.3}.
\begin{lem} \begin{num} \item 
The sets $\mc{X}(W')^{\circ }$ for $W'\in \mc{W}$ are pairwise disjoint and have union $\mc{X}$. 
Further, for $W'\in \mc{W}$, we have 
  \begin{equation*}\mc{X}({W'})=\dot \bigcup_{\substack{W''\in \mc{W}\\ W''\supseteq W'}}\mc{X}({W''})^{\circ }.\end{equation*}
\item  The sets  $\mc{Z}_{W'}^{\circ }$ for $W'\in \mc{W}$ are pairwise disjoint, and have union $\mc{Z}$. Further,   for $W'\in \mc{W}$,
\[ \mc{Z}_{W'}=\dot \bigcup_{\substack{W''\in \mc{W}\\ W''\subseteq W'}}\mc{Z}_{W''}^{\circ }.\]
 \end{num} 
\end{lem}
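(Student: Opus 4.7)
The plan is to derive both parts from the duality and partition statements already established in this section, principally Lemmas \ref{x10.5}, \ref{x10.7}, \ref{x10.8}, and \ref{x10.9}. For part (a), note that $\{1_W\}\in \mc{W}$ vacuously and $\mc{X}(\{1_W\})=\mc{X}\cap \emptyset^{\perp}=\mc{X}$, so the first assertion of (a) is the case $W'=\{1_W\}$ of the second. The second formula is immediate on combining the disjoint-union identity $\mc{X}(W')=\dot\bigcup_{W''\in \mc{W},\,W''\sreq W'}\mc{X}'(W'')$ recorded at the start of \ref{x10.9} with the equality $\mc{X}(W'')^{\circ}=\mc{X}'(W'')$ proved in that lemma.

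For the partition in (b), I would first show that every $z\in \mc{Z}$ lies in some $\mc{Z}_{W'}^{\circ}$ with $W'\in \mc{W}$. Write $z=wk$ with $w\in W$ and $k\in \mc{K}$, and let $\Delta\seq \Pi$ be the facial support of $k$, which is a facial subset of $\Pi$ by \ref{x4.2}(e). Lemma \ref{x3.4}(a) gives $k\in \mc{K}_{\Delta}$, and Lemma \ref{x8.4}(b) shows that every component of $\Delta$ is of affine or indefinite type (a finite-type component would contribute zero to $k$, contradicting positivity of its coefficients on $\Delta$), so $W_{\Delta}\in \mc{W}$. Since $\Delta$ is also the facial support of $k$ viewed inside $\real_{\geq 0}\Pi_{W_{\Delta}}$, Lemma \ref{x10.7}(iv) yields $k\in \mc{Z}_{W_{\Delta}}^{\circ}$, whence $z=wk\in w\mc{Z}_{W_{\Delta}}^{\circ}=\mc{Z}_{wW_{\Delta}w^{-1}}^{\circ}$ via Proposition \ref{x3.2}(e). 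Disjointness follows from the duality in Lemma \ref{x10.8}(a): if $z\in \mc{Z}_{W'}^{\circ}\cap \mc{Z}_{W''}^{\circ}$, then $\mc{X}(W')=\set{z}^{\dag}=\mc{X}(W'')$, and Lemma \ref{x10.5}(b) forces $W'=W''$.

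For the refined formula in (b), the inclusion $\sreq$ holds because $W''\seq W'$ in $\mc{W}$ implies $\mc{Z}_{W''}^{\circ}\seq \mc{Z}_{W''}\seq \mc{Z}_{W'}$ by \ref{x10.5}(a). For the reverse inclusion, given $z\in \mc{Z}_{W'}$, the partition just proved yields a unique $W''\in \mc{W}$ with $z\in \mc{Z}_{W''}^{\circ}$; then Lemma \ref{x10.8} gives $\mc{X}(W'')=\set{z}^{\dag}\sreq (\mc{Z}_{W'})^{\dag}=\mc{X}(W')$, which forces $W''\seq W'$ by \ref{x10.5}(b). Disjointness of this union is inherited from the global partition. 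The only step requiring actual checking, rather than citation, is the identification $W_{\Delta}\in \mc{W}$ for the facial support $\Delta$ of $k\in \mc{K}$; everything else is a direct consequence of preceding lemmas.
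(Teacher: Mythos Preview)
Your proof is correct, and for part (a) it coincides with the paper's argument. For part (b) you take a genuinely different route. The paper argues disjointness via Corollary~\ref{x6.4}: if $z\in\mc{Z}_{W'}^{\circ}\cap\mc{Z}_{W''}^{\circ}$ with $W'\not\seq W''$, then $z\in\mc{Z}_{W'}\cap\mc{Z}_{W''}=\mc{Z}_{W'''}$ for some $W'''\sneq W'$, contradicting Lemma~\ref{x10.6}. For the refined formula it picks a \emph{minimal} $W''\in\mc{W}$ with $W''\seq W'$ and $z\in\mc{Z}_{W''}$, and invokes Lemma~\ref{x10.6} to get $z\in\mc{Z}_{W''}^{\circ}$. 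By contrast, you prove both disjointness and the containment $W''\seq W'$ through the duality of Lemma~\ref{x10.8}(a), reading off $W''$ from $\set{z}^{\dag}=\mc{X}(W'')$ and using the anti-isomorphism of Lemma~\ref{x10.5}(b); and for existence you give a direct construction from the facial support of $k\in\mc{K}$ (this is essentially the content of Lemma~\ref{x11.3}, which in the paper appears only later). Your approach leans more heavily on the $\dag$/$\#$ Galois connection and avoids the intersection formula of Corollary~\ref{x6.4}; the paper's approach stays closer to the intrinsic face structure of $\mc{Z}$. Both are clean, and your version arguably fits the spirit of \S\ref{x10} better since it makes the duality do the work that Theorem~\ref{x10.3} ultimately asserts.
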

\begin{proof} Part (a) follows directly from  \ref{x10.9} and the definitions.

For (b), suppose first that $z\in \mc{Z}_{W'}^{\circ }\cap \mc{Z}_{W''}^{\circ }$ where $W',W''\in \mc{Z}_{W'}$.
Without loss of generality, $W'\not\subseteq W''$. By Corollary \ref{x6.4},  $z\in \mc{Z}_{W'}\cap \mc{Z}_{W''}=\mc{Z}_{W'''}$ where $W'''\in \mc{W}$ has $\Pi_{W'''}$ equal to the union of the infinite type components of $\Pi_{W'\cap W''}$. But then, since $W'''\subsetneq W'$, it follows  by Lemma  \ref{x10.7}  that $z\not\in \mc{Z}_{W'}^{\circ }$, a contradiction. 
Hence the sets $\mc{Z}_{W'}^{\circ }$ for $w'\in \mc{W}$ are pairwise disjoint.

Now let $z\in \mc{Z}_{W'}$ where $W'\in \mc{W}$. Choose a minimal element $W''\in \mc{W}$
with $W''\subseteq W'$ and  $z\in \mc{Z}_{W''}$. By  Lemma  \ref{x10.7} again,
we have $z\in \mc{Z}_{W''}^{\circ }$.  Hence 
\[ \mc{Z}_{W'}\subseteq\dot \bigcup_{\substack{W''\in \mc{W}\\ W''\subseteq W'}}\mc{Z}_{W''}^{\circ }.\]
The reverse inclusion holds since for $W''\in \mc{W}$ with $W''\seq W'$, $\mc{Z}_{W''}^{\circ }\subseteq \mc{Z}_{W''}\subseteq \mc{Z}_{{W'}}$ by Theorem \ref{x6.3}.

\end{proof}
\subsection{Proof of Theorem \ref{x10.3}}\label{x10.11}
 Lemma \ref{x10.8}(c) states that for $W'\in \mc{W}$, $\mc{Z}_{W'}$  is a stable face of $\mc{Z}$
dual  to the stable face 
 $\mc{X}(W')$ of $\mc{X}$. Now by \eqref{xA.3.6}, $\mc{Z}$ is the disjoint union of the
  relative interiors of its non-empty faces; comparing  with Lemma \ref{x10.10}(b) shows that 
  $\Ext_{\neq\eset}(\mc{Z})=\mset{\mc{Z}_{W'}\mid W'\in \mc{W}}$ and that every non-empty face of $\mc{Z}$ is a stable face. 
   Similarly,  Lemma \ref{x10.10}(b) implies  that 
   $\Ext_{\neq\eset}(\mc{Z})=\mset{\mc{X}_{W'}\mid W'\in \mc{W}}$ and that 
   every non-empty face of $\mc{X}$ is a stable face. Together, these 
   verify the condition \ref{xA.9}(ii), and validity of \ref{xA.9}(i) has already been noted.  This proves \ref{x10.3}(a). Now \ref{x10.3}(b)--(c) follow from Lemma \ref{x10.5}(a)--(b) and \ref{x10.3}(d) follows from Lemma  \ref{x10.8}. This completes the proof of Theorem \ref{x10.3}.
 
 \subsection{} \label{x10.12} We conclude  this section with the following observation.
 \begin{cor} Let $W'$ be a finitely generated reflection subgroup of $W$. Then  the map defined by
 $F\mapsto F\cap \mc{Z}_{W'}:\Ext_{\neq \eset}(\mc{Z})\to
 \Ext_{\neq \eset}(\mc{Z}_{W'})$,   for $F$ a face of $\mc{Z}_{W}$,
 preserves  meets of arbitrary subsets of $\Ext_{\neq \eset}(\mc{Z})$.  \end{cor}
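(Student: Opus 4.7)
The plan is to reduce the claim to two essentially formal observations: that the map is well-defined, and that meets in both source and target lattices are computed as set-theoretic intersections. No genuinely new ingredient beyond Theorem \ref{x6.3} and the definition of extreme subset is needed.

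First I would verify that the map $F \mapsto F \cap \mc{Z}_{W'}$ really takes non-empty faces of $\mc{Z}$ to non-empty faces of $\mc{Z}_{W'}$. Since $\mc{Z}$ and $\mc{Z}_{W'}$ are pointed cones, every non-empty face of $\mc{Z}$ contains $0$, so $0 \in F \cap \mc{Z}_{W'}$ and the intersection is non-empty. The containment $\mc{Z}_{W'} \subseteq \mc{Z}$ holds by Theorem \ref{x6.3}, and then a routine check from the definition of extreme subset shows that $F \cap \mc{Z}_{W'}$ is extreme in $\mc{Z}_{W'}$: if $x,y \in \mc{Z}_{W'}$ and $0 < t < 1$ with $tx+(1-t)y \in F \cap \mc{Z}_{W'}$, then $x,y \in \mc{Z} \cap F$ by extremality of $F$ in $\mc{Z}$, whence $x,y \in F \cap \mc{Z}_{W'}$. (Note that the standing assumptions \ref{x4.1} transfer from $W$ to $W'$ by the remarks in \ref{x4.4}, so the entire apparatus of Section \ref{x10}, in particular Theorem \ref{x10.3}, applies just as well to $W'$.)

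Next I would observe that in the lattice of non-empty extreme subsets of either pointed cone, meets coincide with set-theoretic intersections. An arbitrary intersection of extreme subsets is again extreme (direct from the definition of extreme subset), and for non-empty faces of a pointed cone every such intersection contains $0$, hence is non-empty. Since inclusion is the partial order on $\Ext_{\neq \eset}$, this identifies the intersection as the greatest lower bound of the family, i.e.\ its meet.

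With these two points in hand, for any family $\{F_\lambda\}$ of non-empty faces of $\mc{Z}$,
\begin{equation*}
\Bigl(\bigwedge_\lambda F_\lambda\Bigr) \cap \mc{Z}_{W'} \;=\; \Bigl(\bigcap_\lambda F_\lambda\Bigr) \cap \mc{Z}_{W'} \;=\; \bigcap_\lambda (F_\lambda \cap \mc{Z}_{W'}) \;=\; \bigwedge_\lambda (F_\lambda \cap \mc{Z}_{W'}),
\end{equation*}
where the outer equalities use the identification of meets with intersections (in $\Ext_{\neq \eset}(\mc{Z})$ and $\Ext_{\neq \eset}(\mc{Z}_{W'})$ respectively) and the middle equality is trivial set theory. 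This is precisely the claim that the map preserves arbitrary meets. Given the depth of the earlier results in the section, the only possible pitfall is a bookkeeping one — making sure that ``meet'' in both lattices really refers to set intersection and not to some a priori different intrinsic operation — and the second observation above disposes of it.
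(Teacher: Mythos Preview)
Your proof is correct. Both you and the paper reduce the claim to two ingredients: well-definedness of the map, and the identification of meets with set-theoretic intersections in both lattices. The second ingredient is handled identically.

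For well-definedness, however, you take a more elementary route than the paper. You argue directly from the definition of extreme subset: since $\mc{Z}_{W'}\subseteq\mc{Z}$ by Theorem~\ref{x6.3}, the restriction of any face of $\mc{Z}$ to $\mc{Z}_{W'}$ is automatically a face of $\mc{Z}_{W'}$ (this is the general fact that faces restrict to convex subsets). The paper instead invokes Corollary~\ref{x6.4} together with Theorem~\ref{x10.3}: every non-empty face $F$ of $\mc{Z}$ has the form $\mc{Z}_{W''}$ for some $W''\in\mc{W}$, and then $F\cap\mc{Z}_{W'}=\mc{Z}_{W''}\cap\mc{Z}_{W'}=\mc{Z}_{W'''}$ for a facial reflection subgroup $W'''$ of $W'$, which (again via the results of Section~\ref{x10}, now applied to $W'$) is a face of $\mc{Z}_{W'}$. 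Your argument is shorter and uses less machinery; the paper's argument buys the additional structural information that the image face is itself an imaginary cone $\mc{Z}_{W'''}$ of an explicitly described subgroup, though that extra information is not needed for the corollary as stated.
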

 \begin{proof} This follows  since  for any non-empty   face $F$  of 
 $\mc{Z}$, $F\cap \mc{Z}_{W'}$ is a non-empty face of $\mc{Z}_{W'}$ by  Corollary  \ref{x6.4} and the results of this section, and
 the meet of faces in either lattice is given by their intersection.
 \end{proof}

\section{Facial closures of reflection subgroups}
\label{x11}
This subsection  describes results which lead to certain algorithms for computing  facial closures of reflection subgroups. Since  specification of  general real numbers in finite terms and   and algorithmic computation with them is  impossible, we shall assume that oracles are available for performing arithmetic  computations with arbitrary real numbers and determining their signs. By an $\real$-algorithm, we shall mean an algorithm  with any necessary real arithmetic and sign or equality  tests  handled by such  oracles. 
We shall not be more precise, as we shall use this terminology only for informal comments.

 Under mild additional conditions (loosely, that $(V,\mpair{-,-})$ and $(\Phi,\Pi)$ are defined, in  an obvious  natural sense,  over a    subfield $K$ of $\real$  with algorithmically computable  arithmetic operations and sign and equality tests), these $\real$-algorithms readily  adapt to give   bona-fide (finite, terminating)  algorithms. 
The main point is to replace statements involving the various subsets of $V$ (especially $V$ and  cones $\real_{\geq 0}\Pi$, $\mc{C}$, $\mc{X}$, $\mc{ K}$, $\mc{Z}$) appearing in the theory by  statements involving the subsets of their $K$-points. We note that any finite extension field of $\rat$ in $\real$ is  a ``computable'' field in this sense, and each  finite rank Coxeter group is associated to  some 
 $(V,\mpair{-,-})$ and $(\Phi,\Pi)$ defined over a suitable computable field  $K$). 

   \subsection{}\label{x11.1} 
The first lemma gives rise to an $\real$-algorithm which determines, for $v\in \mc{C}$
(resp., $v\in \mc{Z}$) some $w\in W$ with $wv\in \mc{C}$ (resp., $wv\in \mc{K}$). 
 \begin{lem} For $v\in \mc{X}$, let   $n_{v}:=\vert\mset{\alpha\in \Phi_{+}\mid \mpair{v,\alpha}<0}\vert\in \Nat$.
 \begin{num}\item $n_{v}=0$ if and only if    $v\in \mc{C}$.
 \item If $n_{v}>0$, then there is $\alpha\in \Pi$ with
   $\mpair{\alpha,v}<0$, and then $s_{\alpha}v\in \mc{X}$ with $n_{s_{\alpha}v}=n_{v}-1$.
   If $w'\in W$ with $w'(s_{\alpha}v)\in \mc{C}$, then $wv\in \mc{C}$ where $w=w's_{\alpha}$.
   \item Let $v\in\mc{Z}$. Then $-v\in \mc{X}$. Further,  for $w\in W$, $wv\in\mc{K}$ if and only if  $w(-v)\in \mc{C}$.  \end{num}
\end{lem}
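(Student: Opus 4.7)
The plan is to establish the three parts in sequence, exploiting the definitions of $\mc{C}$, $\mc{K}$, and the standard fact that simple reflections permute the non-simple positive roots.

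For part (a), I would use that $\Pi\seq \Phi_+$. Hence $n_v=0$ trivially implies $\mpair{v,\al}\geq 0$ for all $\al\in \Pi$, which is the defining condition for $v\in \mc{C}$. Conversely, if $v\in \mc{C}$, then $\mpair{v,\al}\geq 0$ for all $\al\in \Pi$, and since $\Phi_+\seq \real_{\geq 0}\Pi$, this extends by linearity to $\mpair{v,\bt}\geq 0$ for every $\bt\in \Phi_+$, so $n_v=0$.

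For part (b), the existence of some $\al\in \Pi$ with $\mpair{v,\al}<0$ is immediate from (a): if no such $\al$ existed we would have $v\in \mc{C}$ and hence $n_v=0$, contradicting $n_v>0$. The invariance $s_\al v\in \mc{X}$ follows because $\mc{X}$ is $W$-stable (Lemma \ref{x1.10}). The key counting identity $n_{s_\al v}=n_v-1$ rests on the classical fact (used throughout) that for $\al\in \Pi$, $s_\al$ sends $\al$ to $-\al$ and permutes $\Phi_+\sm\set{\al}$. Partitioning $\Phi_+$ as $\set{\al}\dot\cup(\Phi_+\sm\set{\al})$ and rewriting $\mpair{s_\al v,\bt}=\mpair{v,s_\al \bt}$, the contribution from $\bt=\al$ flips sign (so $\al$ contributes to exactly one of $n_v$, $n_{s_\al v}$, and it is to $n_v$ since $\mpair{v,\al}<0$), while $s_\al$ furnishes a bijection between $\mset{\bt\in \Phi_+\sm\set{\al}\mid \mpair{s_\al v,\bt}<0}$ and $\mset{\g\in \Phi_+\sm\set{\al}\mid \mpair{v,\g}<0}$. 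Combining these gives $n_{s_\al v}=n_v-1$ at once. The final clause ``$w'(s_\al v)\in \mc{C}$ implies $w's_\al v\in \mc{C}$'' is just associativity of the $W$-action.

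For part (c), that $-v\in \mc{X}$ whenever $v\in \mc{Z}$ is immediate from $\mc{Z}\seq \mc{Y}\cap -\mc{X}\seq -\mc{X}$ (Proposition \ref{x3.2}(a)). For the second statement, note that $\mc{K}=\real_{\geq 0}\Pi\cap -\mc{C}$ by definition. Since $\mc{Z}$ is $W$-stable and contained in $\real_{\geq 0}\Pi$, any $wv$ with $v\in \mc{Z}$ automatically lies in $\real_{\geq 0}\Pi$; hence $wv\in \mc{K}$ if and only if $wv\in -\mc{C}$, which is the same as $w(-v)\in \mc{C}$.

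The main obstacle (really the only non-formal step) will be the careful verification of $n_{s_\al v}=n_v-1$ in part (b), which requires a case split on whether $\bt=\al$ or $\bt\in \Phi_+\sm\set{\al}$; everything else reduces to unpacking definitions and invoking $W$-stability.
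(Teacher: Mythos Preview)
Your proposal is correct and follows essentially the same approach as the paper: the paper's proof is a one-line reference to standard material (citing Kac), explicitly noting that the main point for (b) is that $s_{\alpha}$ permutes $\Phi_{+}\setminus\set{\alpha}$, and that (c) follows from the definitions together with $wv\in\real_{\geq 0}\Pi$ for all $w\in W$. You have simply written out these details in full.
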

\begin{proof} Parts (a)--(b)  are  assertions involved in a standard proof of \ref{x1.10}(a)
(see \cite{Kac});
the main point for (b) is that $s_{\alpha}$ fixes the set $\Phi_{+}\setminus \set{\alpha}$ of positive roots. Part (c) follows readily from the definitions since $wv\in \real_{\geq 0}\Pi$ for all $w\in W$.
\end{proof}
     
  \subsection{} \label{x11.2}There are standard $\real$-algorithms of convex geometry which determine the facial subsets of $\Pi$ and the facial support of elements $v\in \real_{\geq 0}\Pi$
  (for any  $v$ specified as a non-negative $\real$-linear combination of elements of $\Pi$).
  Given a finitely generated reflection subgroup $W'$, specified by a finite set of reflections generating it, there are algorithms to compute $\chi(W')$ in \cite{Ref} and hence there are $\real$-algorithms for computing $\Pi_{W'}$.   We will also use that for $w\in W$, algorithms based on 
  \ref{x1.6}-\ref{x1.7}  enable one to compute the minimal length element of $W'w$ for $w\in W$, and in particular, compute  $\chi(w^{-1}W'w)$ from $\chi(W')$. 
  
  Given $\Pi_{W'}$, standard  $\real$-algorithms of convex geometry enable one  to compute the polyhedral cone $\mc{K}_{W'}$ (specified either  by the inequalities defining its minimal set of closed supporting half-spaces, or by a set of representatives of its extreme rays).
  In turn from this, one may give an $\real$-algorithm to determine a point of $\mc{K}_{W'}^{\circ }$ (as a sum of  representatives of the all the extreme rays of $\mc{K}_{W'}$ is in $\mc{K}_{W'}^{\circ }$).  Since   $\mc{K}_{W'}^{\circ }\subseteq \mc{Z}_{W'}^{\circ }$, it follows that  
there is  a $\real$-algorithm to determine a point of $\mc{Z}_{W'}^{\circ }$.

Generally, when we say that there is an $\real$-algorithm to compute a finitely-generated reflection subgroup $W''$ from certain data, we mean that there is a $\real$-algorithm to compute $\chi(W'')$ from that data.  
  We will also use in \ref{x11.5} the fact that there are $\real$-algorithms to determine, from $\chi(W')$  for any any finitely-generated reflection subgroup $W'$, a subset  $J\subseteq S$ and element  $x\in W$ such that $xW_{J}x^{-1}$ is  the parabolic closure of $W'$ (\cite{DyParClos}). 
  This latter algorithm is  sophisticated (compared to others discussed here), as it depends on the solvability of the conjugacy problem for Coxeter groups, which was for some time an open problem (see \cite{K}).
  A simpler algorithm is known for computing parabolic closures of finite (reflection) subgroups (see \cite{K}), and this will suffice for the results  through \ref{x11.4}.

\subsection{}   The next Lemma gives rise to an $\real$-algorithm to  determine, for $v\in \mc{Z}$,  the element $W'\in \mc{W}$  with $v\in\mc{Z}_{W'}^{\circ }$.\label{x11.3}   It in fact determines an expression for $W'$ as a $W$-conjugate of a standard facial subgroup $W_{I}$.
 Note from Lemma  \ref{xA.3} that $\mc{Z}_{W'}$ is the minimum face of $\mc{Z}$ containing $v$.

 \begin{lem}
   Let $v\in \mc{Z}$. Choose $w\in W$ with $wv\in \mc{K}$, and 
  let the facial support of $wv$ be $\Pi_{I}$ where $I\subseteq S$ is facial. 
   Then $W':=w^{-1}W_{I}w\in\mc{W}$ and $v\in \mc{Z}_{W'}^{\circ }$.\end{lem}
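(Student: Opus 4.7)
The proof naturally splits into two claims: (a) $W':=w^{-1}W_I w\in\mc{W}$, and (b) $v\in\mc{Z}_{W'}^{\circ}$. For (a), since $I$ is facial in $S$ by hypothesis, it suffices to show that every irreducible component of $\Pi_I$ is of infinite type, so that $I\in F_{\mathrm{inf}}$. Because $wv\in\mc{K}$ has facial support $\Pi_I$, I would apply Lemma \ref{x8.4}(b) to the expansion $wv=\sum_{\alpha\in\Pi_I}c_\alpha\alpha$ with all $c_\alpha>0$: each component of $\Pi_I$ is forced there to be of affine or indefinite type (never finite). Hence $W_I\in\mc{W}$, and its $W$-conjugate $W'$ lies in $\mc{W}$ as well.

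For (b), I would reduce via $W$-equivariance to showing $wv\in\mc{Z}_{W_I}^{\circ}$. Indeed, Proposition \ref{x3.2}(e) gives $\mc{Z}_{W_I}=w\,\mc{Z}_{W'}$, so $\mc{Z}_{W'}=w^{-1}\mc{Z}_{W_I}$; since $w^{-1}\in\GL(V)$ preserves relative interiors, $\mc{Z}_{W'}^{\circ}=w^{-1}(\mc{Z}_{W_I}^{\circ})$. Next I would verify that $wv\in\mc{K}_{W_I}$: by Lemma \ref{x1.10}(e), $wv\in\mc{K}\seq -\mc{C}\seq -\mc{C}_{W_I}$, and by the hypothesis on its facial support $wv\in\real_{\geq 0}\Pi_I$, so $wv\in\real_{\geq 0}\Pi_I\cap -\mc{C}_{W_I}=\mc{K}_{W_I}$. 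Writing the trivial factorization $wv=1\cdot k$ with $k:=wv$, I would apply Lemma \ref{x10.7} to $W_I$ and $z=wv$ and verify its condition (iv): that the facial support of $k$, computed inside $\Pi_{W_I}=\Pi_I$, equals $\Pi_I$. This is immediate from the same expansion $wv=\sum_{\alpha\in\Pi_I}c_\alpha\alpha$ with $c_\alpha>0$, which already exhibits $\Pi_I$ as a support of $wv$ within $\Pi_I$, and no support can strictly exceed the ambient simple system. Lemma \ref{x10.7} then yields $wv\in\mc{Z}_{W_I}^{\circ}$, and applying $w^{-1}$ gives $v\in\mc{Z}_{W'}^{\circ}$.

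The only delicate point, and where I would be most careful, is to keep straight the two notions of facial support: one with respect to $\Pi$ (the hypothesis of the lemma) and one with respect to $\Pi_{W_I}=\Pi_I$ (the one appearing in Lemma \ref{x10.7}(iv)). The content of the hypothesis is precisely that $\Pi_I$ is a facial subset of $\Pi$ and is the minimal one containing $wv$ in its positive cone, and this is what forces the internal facial support of $wv$ within $\Pi_I$ to be all of $\Pi_I$. Once this is recognized, everything else is routine bookkeeping with the $W$-equivariance of the cones $\mc{K}_{(-)}$ and $\mc{Z}_{(-)}$ developed in Section \ref{x3}.
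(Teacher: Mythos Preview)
Your proof is correct and follows essentially the same route as the paper's: both use Lemma~\ref{x8.4}(b) to see that $I$ is special (so $W_I,W'\in\mc{W}$), both observe $wv\in\real_{\geq 0}\Pi_I\cap -\mc{C}\seq\real_{\geq 0}\Pi_I\cap -\mc{C}_I=\mc{K}_I$, and both finish by invoking Lemma~\ref{x10.7} and $W$-equivariance. Your added care in distinguishing the facial support of $wv$ with respect to $\Pi$ versus $\Pi_I$ is a helpful clarification of a point the paper leaves implicit.
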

\begin{proof}  By Lemma \ref{x8.4}(b), $I$ is special, so $W_{I},W'\in \mc{W}$.
We have $wv\in\real_{\geq 0} \Pi_{I}\cap -\mc{C}\subseteq \real_{\geq 0} \Pi_{I}\cap -\mc{C}_{I}=\mc{K}_{I}$. Since $\Pi_{I}$ is the facial support of $wv$, we get $wv\in \mc{Z}_{I}^{\circ }$ by Lemma \ref{x10.7}
and so  $v\in w^{-1}\mc{Z}_{I}^{\circ }= \mc{Z}_{W'}^{\circ }$ as required.
    \end{proof}  
   \subsection{}  \label{x11.4} The next result describes the minimal face of $\mc{Z}$ containing  $\mc{Z}_{W'}$ for a reflection subgroup $W'$, and also the facial closure of $W'$ in terms
   of this minimal face and extra data determined from the root systems. 
   
   \begin{prop} Let $W'$ be a finitely generated reflection subgroup of $W$.
   Let $ W''$  be the reflection subgroup of $W'$ such that
   $\Pi_{ W''}$  is the union of all infinite type
   components of $\Pi_{W'}$.   Let    $U'''$ be the reflection subgroup of $W''$  with $\Pi_{U'''}=\Pi_{W'}\setminus \Pi_{W''}$ i.e. the components of $\Pi_{U'''}$ are the finite type components of $\Pi_{W'}$.
   \begin{num}
    \item Let $p\in \mc{Z}_{W'}^{\circ } $. Choose
  $U'\in \mc{W}$, 
   so that
  $p\in \mc{Z}_{U'}^{\circ }$. 
 Then  $U'$  is the facial closure of $ W''$,
 and  $\mc{Z}_{U'}$ is the minimum face of $\mc{Z}$  containing $\mc{Z}_{W'}$. 
  \item  $\mc{Z}_{W'}^{\circ }\subseteq \mc{Z}_{U'}^{\circ }$.
  \item Every component of $\Pi_{U'''}$ is of finite type, and is either contained
  in $\real\Pi_{U'}$ or  is orthogonal to $\real\Pi_{U'}$.
    \item Let $\Delta:=\Pi_{U'''}\cap \Pi_{U'}^{\perp}$. Then
   $\Delta=\Pi_{W'}\cap \Pi_{U'}^{\perp}$ and $\Delta$   is the union of  the (finite type)   components of
   $\Pi_{W'}$ which are not contained in $\real\Pi_{U'}$.         \item Let $U''$ denote the  facial (equivalently, parabolic) closure of $W_{\Delta}$. Then  $\Phi_{{U''}}=\Phi\cap \real\Delta\subseteq \Pi_{U'}^{\perp}$ and the facial closure  $U$  of
   $W'$ satisfies $\Pi_{U}=\Pi_{U'}\dot\cup \, \Pi_{U''}$. In particular, the facial closures of $W'$ and $W''$ both have the same infinite type components.
  \item  Write  $U'=wW_{I}w^{-1}$ where $I\subseteq S$ is facial, $w\in W$
  and, without  loss of generality, $N(w^{-1})\cap W_{I\cup I^{\perp}}=\emptyset$.  
  Then $w^{-1}\Delta\subseteq \Phi_{I^{\perp}}$.
  \item 
  We may choose $x\in W_{I}^{\perp}$ and  $J\subseteq I^{\perp}$, so that the facial closure of $w^{-1}W_{\Delta}w$
  is  $xW_{J}x^{-1}$  where   without loss of generality, $N(x^{-1})\cap J=\emptyset$.
   Then $I$, $J$ and $I\cup J$ are facial,  and  \[ U''= wxW_{J}(wx)^{-1},\qquad
   U'= (wx)W_{I}(wx)^{-1}, \qquad  
  U=(wx)W_{I\cup J}(wx)^{-1}.\]
 \end{num}
   \end{prop}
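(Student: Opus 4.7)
The plan is to prove (a) first and bootstrap the rest, exploiting the pivotal identity $\mc{Z}_{W'}=\mc{Z}_{W''}$, which comes from Proposition \ref{x3.2}(d) together with the vanishing of $\mc{Z}$ for finite Coxeter systems (\ref{x4.5}). For (a), this identity gives $p\in \mc{Z}_{W''}^\circ$, and Lemma \ref{x10.10}(b) characterizes $U'$ as the unique element of $\mc{W}$ with $\mc{Z}_{U'}$ the minimum face of $\mc{Z}$ containing $p$. Let $V$ be the facial closure of $W''$: $V\in \mc{W}$ by Lemma \ref{x10.1}(e), $\mc{Z}_{W''}\seq \mc{Z}_V$ by Theorem \ref{x6.3}, hence $U'\seq V$ by Lemma \ref{x10.5}(a). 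For the reverse inclusion, Corollary \ref{x6.4} gives $p\in \mc{Z}_{W''\cap U'}$; since $W''\cap U'$ is facial in $W''$ (Proposition \ref{x2.6}(b)), applying Lemma \ref{x10.10}(b) intrinsically to the finite-rank system $(W'',\chi(W''))$ with the interior point $p\in \mc{Z}_{W''}^\circ$ forces the special part of $W''\cap U'$ to equal $W''$; since $W''$ is already special, this yields $W''\seq U'$, whence $V\seq U'$. Part (b) follows immediately: running the argument with arbitrary $q\in \mc{Z}_{W'}^\circ$ in place of $p$ always produces $V=U'$ as the unique element of $\mc{W}$ with $q$ in its relative interior.

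For (c), each component $\Gamma$ of $\Pi_{U'''}$ is by construction a finite-type irreducible component of $\Pi_{W'}$, so $\Gamma\perp \Pi_{W''}$. Minimality of $U'$ as facial closure of $W''$ (via Lemmas \ref{x8.5}(c) and \ref{x10.1}(a)) forces each irreducible component of $\Pi_{U'}$ to contain at least one component of $\Pi_{W''}$: otherwise the union of the remaining components would be a strictly smaller element of $\mc{W}$ still containing $W''$. Combining with $\Gamma\perp \Pi_{W''}$ and the fact (from (b) and Lemma \ref{x10.7}) that any $p\in \mc{K}_{W'}^\circ$ also lies in $\mc{K}_{U'}^\circ$ and admits two strictly positive expressions, one over $\Pi_{W'}$ and one over $\Pi_{U'}$, one deduces by comparing these expressions and using Coxeter-graph connectivity in $\Pi_{U'}\cup \Gamma$ that $\Gamma$ must either lie in $\real\Pi_{U'}$ or be orthogonal to it. For (d), $\Delta\seq \Pi_{W'}\cap \Pi_{U'}^\perp$ is clear; conversely, any $\alpha\in \Pi_{W'}\cap \Pi_{U'}^\perp$ cannot lie in $\Pi_{W''}$ (else $\alpha\in \real\Pi_{U'}$ by Corollary \ref{x2.5}, forcing $\mpair{\alpha,\alpha}=0$ against \ref{x1.3}(iii)), so $\alpha\in \Pi_{U'''}\cap \Pi_{U'}^\perp=\Delta$.

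For (e)--(g): $W_\Delta$ is finite (Lemma \ref{x2.9}(b)), so $U''$ is the parabolic closure of $W_\Delta$ coinciding with its facial closure (Lemmas \ref{x2.9}(c) and \ref{x8.5}(a)), with $\Phi_{U''}=\real\Delta\cap\Phi\seq \Pi_{U'}^\perp$. By (c)--(d), the reflections in $\Pi_{W'}\sm\Delta$ all lie in $U'$ (Corollary \ref{x2.5}), so $W'\seq \pair{U',W_\Delta}$ and $U$ equals the facial closure of $U'\cup U''$; orthogonality $\Pi_{U'}\perp \Pi_{U''}$ together with Lemma \ref{x2.13} shows $\Pi_{U'}\,\dot\cup\,\Pi_{U''}$ is itself facial, yielding (e). For (f), writing $U'=wW_Iw^{-1}$ as specified and conjugating $\Delta\perp \Pi_{U'}$ by $w^{-1}$ gives $w^{-1}\Delta\perp \Pi_I$; a support argument (Lemma \ref{x1.16}) then places $w^{-1}\Delta$ inside $\Phi_{I^\perp}$. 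Part (g) is then bookkeeping: $w^{-1}W_\Delta w\seq W_{I^\perp}$ is finite, so its facial closure has the form $xW_Jx^{-1}$ with $x\in W_{I^\perp}$ and $J\seq I^\perp$ facial in $S$ (via Lemmas \ref{x8.5}(a) and \ref{x10.1}(a)), and the explicit formulas for $U''$, $U'$, $U$ follow by conjugation through $wx$.

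The main obstacles are the reverse inclusion in (a) and the dichotomy in (c). The former requires invoking Lemma \ref{x10.10}(b) intrinsically within the subgroup $W''$ rather than within $W$, relying on the validity of Theorem \ref{x10.3} for the finite-rank Coxeter system $(W'',\chi(W''))$ together with Corollary \ref{x6.4} and Lemma \ref{x3.5} to match the intrinsic face structure of $\mc{Z}_{W''}$ with its extrinsic structure as a face of $\mc{Z}$. The dichotomy in (c) requires delicate graph-theoretic comparison of the two strictly positive expressions for $p$ furnished by Lemma \ref{x10.7} applied to $W'$ and to $U'$, exploiting the minimality of $U'$ as facial closure to rule out intermediate configurations where $\Gamma$ would be partially absorbed into $\real\Pi_{U'}$.
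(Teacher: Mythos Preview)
Your approach to (a) is a legitimate alternative to the paper's: instead of the direct support computation via Lemma~\ref{x10.7}, you use Corollary~\ref{x6.4} to place $p$ in $\mc{Z}_{W''\cap U'}$ and then apply the face-lattice description (Theorem~\ref{x10.3} and Lemma~\ref{x10.10}(b)) intrinsically to the finite-rank system $(W'',\chi(W''))$. This works and is arguably more conceptual; the paper's argument is shorter because it avoids invoking the full face-lattice machinery for $W''$.

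However, your treatment of (c) has a genuine gap. The ``delicate graph-theoretic comparison of two strictly positive expressions for $p$'' is not a proof, and I do not see how to make it one: even granting that every component of $\Pi_{U'}$ meets $\Phi_{W''}$ (and your justification via Lemma~\ref{x8.5}(c) fails when the component in question is affine, since that lemma requires retaining all affine components), orthogonality of $\Gamma$ to $\Pi_{W''}$ says nothing directly about the relation of $\Gamma$ to the typically much larger set $\Pi_{U'}$. The paper's key idea here is a normalizer argument that you miss entirely: since the finite-type components $\Pi_{U'''}$ are orthogonal to $\Pi_{W''}$, the group $U'''$ \emph{centralizes} $W''$ and therefore normalizes its facial closure $U'$. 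Conjugating so that $U'=wW_{I}w^{-1}$ with $I$ special, Remark~\ref{x1.15} (equivalently Lemma~\ref{x10.4}(d)) gives $N(W_{I})=W_{I}W_{I^{\perp}}$, whence $w^{-1}U'''w\subseteq W_{I}W_{I^{\perp}}$ and every root of $\Phi_{w^{-1}U'''w}$ lies in $\Phi_{I}\cup\Phi_{I^{\perp}}$. The dichotomy in (c) follows at once. This same step is what makes (f) go through: without it, ``$w^{-1}\Delta\perp\Pi_{I}$'' alone does not force $w^{-1}\Delta\subseteq\Phi_{I^{\perp}}$, and Lemma~\ref{x1.16} does not supply the missing containment.

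Your sketch of (e) is also problematic: Lemma~\ref{x2.13} concerns facial subsets of $\Pi$ under an orthogonal splitting of the ambient space, but $\Pi_{U'}$ and $\Pi_{U''}$ are canonical simple systems of reflection subgroups, not subsets of $\Pi$, so the lemma does not apply. The paper instead establishes the explicit formula $U=(wx)W_{I\cup J}(wx)^{-1}$ in (g) by verifying both inclusions directly (using Corollary~\ref{x2.5} and Lemma~\ref{x2.9}(c)), and reads off (e) from that.
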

   \begin{rem} The above leads to an $\real$-algorithm for computing $U$, $U'$ and $U''$
  from $\chi(W')$  as follows. First, compute $p\in \mc{Z}_{W'}^{\circ } $ by \ref{x11.2}. 
    Compute $U'$ as in (a)  and express $U'=w'W_{I}w^{\prime-1}$ where $I\subseteq S$ is facial and $w\in W$, using   \ref{x11.3}. Compute  the minimal length  element $w^{-1}$ of $W_{I\cup I^{\perp}}w^{\prime-1}$; then 
    $U'=wW_{I}w^{-1}$ as in (f). Compute $\Delta=\Pi_{W'}\cap \Pi_{U'}^{\perp}$ as in (d).
    Since  $w^{-1}W_{\Delta}w$ is a finite  reflection subgroup of $W_{I^{\perp}}$, one  may  by \ref{x11.2} compute    $x'\in W_{I^{\perp}}$ and $J\subseteq I^{\perp}$ with  the parabolic closure of $w^{-1}W_{\Delta}w$ equal to $x^{\prime}W_{J}x^{\prime-1}$.  Let $x^{-1}\in W_{I^{\perp}}$ be the minimal length  element of 
    $W_{J}x^{\prime -1}$. Then $x^{\prime}W_{J}x^{\prime-1}=xW_{J}x^{-1}$ is the facial closure of 
    $w^{-1}W_{\Delta}w$ as in (g), since parabolic and facial closures of finite (reflection) subgroups coincide by \ref{x8.5}(a). Let $z^{-1}$ be the minimal length element of
     $W_{I\cup J}(wx)^{-1}$. 
    Then     $\chi(U'')=zI z^{-1}$, $\chi(U')=zJz^{-1}$ and $\chi(U)=z(I\cup J) z^{-1}$ using that $I$, $J$ are separated. 
      \end{rem}
     \begin{proof} Let $\mc{Z}_{  X}$, with 
     $  X\in \mc{W}$, denote the inclusion-minimal face of $\mc{Z}$ containing 
   $\mc{Z}_{W'}=\mc{Z}_{ W''}$. By Lemma \ref{xA.3}, (b) holds.   It follows that $U'= X$, where $U'$ is as defined in (a), and  (b) holds.
   
To complete the proof of (a), it remains to show that $U'$ is equal to the facial closure $W'''$ of $W''$.   Certainly   $\mc{Z}_{W'}=\mc{Z}_{W''}\subseteq \mc{Z}_{W'''}$.  We have $W'''\in \mc{W}$ by \ref{x10.1}(e) and  we conclude that $U'= X\subseteq W'''$ by definition of $X$. Conversely, let us prove that $W'''\subseteq U'$. Since $U'$ is facial, it will suffice to show that $W''\subseteq U'$.
   Write $U'=wW_{I}w^{-1}$ as in (f).  Let $p\in \mc{Z}_{W'}^{\circ }=\mc{Z}_{W''}^{\circ }$.  Since $W''$ has no finite type component,  Lemma \ref{x10.7} implies that   $w^{-1}p=\sum_{\alpha\in \Pi_{w^{-1}W''w}}c_{\alpha}\alpha$ where all
   $c_{\alpha}>0$. But  by (b),
    $w^{-1}p\in \mc{Z}_{w^{-1 }U'w}
  = \mc{Z}^{\circ }_{W_{I}}\subseteq \real_{\geq 0}\Pi_{I}$.
  Since $\Pi_{I}$ is facial, we have  $\Pi_{w^{-1}W''w}\subseteq \real\Pi_{I}\cap \Phi=\Phi_{I}$. Hence  $ w^{-1}W'' w\subseteq W_{I}$
 and  $W''\subseteq wW_{i}w^{-1}=U'$.
 This  completes the proof of (a).

Next we prove (c). Let $w$, $I$ be as in (f).
      Since $U'''$ centralizes $W''$, it  normalizes the facial closure $U'$ of $W''$.
 Therefore $w^{-1}U'''w$ normalizes $w^{-1}U'w=W_{I}$.
  Note that since all components of $\Pi_{I}$ are of infinite type (since those of $\Pi_{U'}$ are),
 \ref{x1.15}  implies that the normalizer of $W_{I}$ is  $W_{I}W_{I^\perp}$ (cf. Remark \ref{x1.15}).
 Hence $ w^{-1}U'''w\subseteq  W_{I}W_{I^\perp}$.
 Every root of, and hence every irreducible component of, $\Pi_{w^{-1}U'''w}$ is contained therefore in either   $\Phi_{I} $ or $ \Phi_{I^{\perp}}$, since  $I$ and $I^{\perp}$ are separated.  
 Since $U'=wW_{I}w^{-1}$,  every irreducible component of $\Pi_{U'''}$ is contained  in either   $\real\Pi_{U'} $ or $\Pi_{U'}^{\perp}$, proving (c). Then (d) is clear  since every component of $\Pi_{W'}$ is either of infinite type (hence contained in $\real \Pi_{W''}\subseteq \real\Pi_{U'}$) or is a component of $\Pi_{U'''}$. Part (f) is also clear since  from the above since $w^{-1}\Delta\subseteq \Phi_{I\cup I^{\perp}}=\Phi_{I}\cup \Phi_{I^{\perp}}$,  and  $w^{-1}\Delta\subseteq w^{-1}\Pi_{U'}^{\perp}\subseteq \Pi_{I}^{\perp}$ implies $w^{-1}\Delta\cap \Phi_{I}=\emptyset$.
 
 Since $w^{-1}W_{\Delta}w$ is finite, its facial closure coincides    (by \ref{x8.5}(a)) with its parabolic closure, which
 is clearly contained in $W_{I^{\perp}}$ and so may be written as in (g). We have already seen $I$ is facial. Then $J$ is facial  and $I\cup J$ is facial by \ref{x8.5}, since  $W_{J}$ is finite.
 The formulae for $U'$ in (g) holds since $x$ centralizes $W_{I}$. The formula for  the facial closure  $U''$  of $W_{\Delta}$ in (g) follows  since $w^{-1}U''w$ is the facial closure of $w^{-1}W_{\Delta }w$.  Let us now prove the formula  $U=(wx)W_{I\cup J}(wx)^{-1}$ from (g). 
 We have 
 \[ \Pi_{W'}=\Pi_{W''}\cup \Pi_{U'''}\subseteq\Phi_{U'}\cup \Pi_{U'''}=\Phi_{U'}\cup \Delta.\]
 Hence \[ x^{-1}w^{-1}\Pi_{W'}\subseteq x^{-1} w^{-1}\Phi_{U'}\cup x^{-1}w^{-1}\Delta
\subseteq x^{-1}\Phi_{I}\cup \Phi_{J}\subseteq \Phi_{I\cup J}.\]  In turn this implies that $W'\subseteq wxW_{I\cup J}(wx)^{-1}$, so $U\subseteq wxW_{I\cup J}(wx)^{-1}$. On the other hand,
$\Phi_{U}\supseteq \Phi_{W'}\supseteq \Phi_{W''}$ and $U$ facial implies $\Phi_{U}\supseteq \Phi_{U'}$. Further, $\Phi_{U}\supseteq \Phi_{W'}\supseteq \Phi_{U'''}\supseteq \Delta$ implies
that $\Phi_{U}\supseteq (\real \Delta \cap \Phi)$ by Corollary \ref{x2.5} since $U$ is facial.
We have $\real w^{-1}\Delta\cap \Phi=\real x\Pi_{J}\cap \Phi$ from Lemma \ref{x2.9}(c) and  the  definition of $J$.
Hence \[(wx)^{-1 } \Phi_{U}\supseteq (wx)^{-1}(\Phi_{U'}\cup  (\real \Delta \cap \Phi))
\supseteq  x^{-1 }\Phi_{I}\cup (\real\Pi_{J}\cap \Phi)\supseteq \Phi_{I}\cup \Phi_{J}=\Phi_{I\cup J}\] since $J\subseteq I^{\perp}$ and 
so $U\supseteq wxW_{I\cup J}(wx)^{-1}$. This proves the formula for $U$ in (g).
Finally, (e) follows from the formula for $U''$, $U'$ and $U$ in (g) noting $I$ and $J$ are separated.
 
   \end{proof}

   \subsection{} \label{x11.5}  The above subsection provides a  geometric algorithm for calculating the facial closure of a finitely generated reflection subgroup. This subsection  gives an alternative, more algebraic algorithm for 
   determining the facial closure of an arbitrary finitely generated subgroup $W'$ of $W$.

First, let $X$ be a finite set of generators for the subgroup $W'$. Let $wW_{J}w^{-1}$ denote the parabolic closure of $X$; suitable $w\in W$ and $J\subseteq S$  may be computed by an $\real$-algorithm described in \cite{DyParClos}.  Using \ref{x1.15}, one may compute all  finitely many  standard parabolic subgroups, say $W_{K_{i}}$, for $i=1,\ldots, n$,  which are conjugate to $W_{J}$ and determine $x_{i}\in W$ with $K_{i}=x_{i}Jx_{i}^{-1}$.  
Amongst  all the   standard facial   subgroups of $W$ containing any of the sets $K_{i}$, 
choose one of minimal rank; say $L\subseteq S$ is facial ${L}\supseteq{K_{j}}$,  and $\vert L\vert \leq\vert L'\vert  $ for any facial $L'\subseteq S$ with $L'\supseteq K_{i}$ for some $i$.  Then we claim that  the facial closure $W''$ of $W'$ is $W''=U$ where $U:=wx_{j}^{-1}W_{L}x_{j}w^{-1}$.

To see this, note first that  $U$  is a facial subgroup of $W$ containing $W'$. 
Write $W''=yW_{M}y^{-1}$ for some facial $M\subseteq S$ and some 
$y\in W$. By Lemma \ref{x10.1}(d), it will suffice to show that  the ranks
 $\vert L\vert $ of $U$ and $\vert M\vert $ of $W''$ satisfy 
 $\vert L\vert\leq \vert M\vert$.  Now  since $W''$ is parabolic and contains 
 $W'$, we have $W''\supseteq wW_{J}w^{-1}$. Then
  $W_{M}\supseteq  y^{-1}wW_{J}w^{-1}y$. This implies that the right hand 
  side is a parabolic subgroup of $W_{M}$, so it is conjugate in $W_{M}$ to 
  some standard parabolic 
subgroup of $W_{M}$; this standard parabolic is one of 
$W_{K_{1}},\ldots, W_{K_{n}}$,  say  $W_{K_{i}}$, since it is $W$-conjugate 
to 
$W_{J}$. Since $M\supseteq K_{i}$, we have $\vert M\vert\geq \vert L\vert$ 
as required,  by choice of $L$.
We conclude that 
$W''=U$ as claimed.

\begin{rem} The facial closure of a subgroup $W'=\mpair{X}$ of $W$, where 
$X\seq W$ is not necessarily finite, is  the parabolic subgroup of maximal   rank (necessarily bounded by $\vert S\vert$)  amongst the  facial closures of 
 subgroups $\mpair{X'}$ of $W'$ such that $X'\seq X$ is finite.
In case $W'$ is a reflection subgroup, this can be made more precise as 
follows.
Let $W''$ be  a finitely-generated  reflection subgroup of $W'$ such that
$\real \Pi_{W''}=\real\Pi_{W'}$ (which is possible since $V$ is finite dimensional).
Then the facial closure $U$ of $W''$ is equal to the facial closure $U'$ of $W'$.
For obviously $U'\supseteq U$. On the other hand,  we have by \ref{x2.5} since $U$ is facial that 
\[\Phi_{U}=\real\Phi_{U}\cap \Phi\supseteq \real\Phi_{W''}\cap \Phi=\real\Phi_{W'}\cap \Phi\supseteq \Phi_{W'}\] so $U\supseteq W'$ and hence $U\supseteq U'$. \end{rem}
  \section{The imaginary cone in general }
  \label{x12} 
     \subsection{} In   \label{x12.1}    this  section, $(W,S)$ is a general Coxeter system realized as in \ref{x1.3}.
     
     We first make  some general comments   about the relation of this general situation to the more special case in which the additional assumptions  \ref{x4.1}(i)--(iii) hold.  
    
An extension or restriction of quadratic space of a based root system, in the sense of Remark \ref{x1.3},   does not change the abstract group $W$, or the roots $\Phi$,  simple roots $\Pi$,  or cones $\mc{Y}$, $\mc{Z}$ or $\mc{K}$,  (as subsets of $\real \Pi$) though in general it may   change $\mc{X}$, $\mc{X}^{*}$,  the set   facial subsets of $S$ etc.
 Recall also that any based root system $(\Phi,\Pi)$ on $(V,\mpair{-,-})$ has ample extensions in the sense of Remark \ref{x1.3}(2); these may even be chosen so the extended quadratic space is non-singular (and finite-dimensional if $V$ is finite-dimensional).
On the other hand, if the form $\mpair{-,-}$ on $V$ is non-singular, then any finite dimensional subspace of $V$ is contained in a finite dimensional subspace $U'$ such that the restriction of the form $\mpair{-,-}$ to $U$ is non-singular. Then $V=U'\oplus U^{\prime\perp}$.

These remarks imply  that for the study of $\mc{Z}$, there is no loss of generality  in assuming that the quadratic space is non-singular and ample.  Further, if  $\Pi$ is finite, there is no loss of generality for studying $\mc{Z}$ in assuming    \ref{x4.1}(i)--(iii).

  \subsection{} \label{x12.2} With the above remarks in hand,  we show Theorem \ref{x6.3}
  still holds under the more relaxed assumptions here. 
     
     \begin{thm} \begin{num}\item $\mc{Z}:=\bigcup_{W'}\mc{Z}_{W'}$
  where in the union, $W'$ ranges over  any cofinal  subfamily  of the
inclusion-ordered family of   all finitely generated reflection subgroups of $W$.
  \item  For any reflection subgroup $W'$ of $W$, we have $\mc{Z}_{W'}\subseteq \mc{Z}$.
  \end{num}
     \end{thm}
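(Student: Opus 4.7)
The plan is to prove (b) first and deduce (a) from it, reducing (b) to Theorem \ref{x6.3} by passing to a finite-dimensional non-singular ambient setting. The essential device is Remark \ref{x3.5}(1), which asserts that the cones $\mc{K}_{W'}$ and $\mc{Z}_{W'}$ (for any reflection subgroup $W'$) are unaffected, as subsets of $\real \Pi_{W'}$, by extension or restriction of the quadratic space.

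For (b), fix a reflection subgroup $W'$ of $W$ and $v\in \mc{Z}_{W'}$. Applying Lemma \ref{x3.3}(b) inside the Coxeter system $(W',\chi(W'))$, we see that $v\in \mc{Z}_{W''}$ for some standard parabolic subgroup $W''$ of $W'$ with $\chi(W'')\subseteq \chi(W')$ finite; replacing $W'$ by $W''$, we may assume $W'$ itself is finitely generated. Then $\chi(W')$ is finite, and each $t\in \chi(W')$ is a reflection in $W$, hence $W$-conjugate to a simple reflection. Writing each such conjugating element as a product of finitely many simple reflections, we obtain a finite subset $I_{0}\subseteq S$ with $W'\subseteq W_{I_{0}}$. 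By Lemma \ref{x3.3}(a), $\mc{Z}_{W_{I_{0}}}\subseteq \mc{Z}_{W}$, so it suffices to establish $\mc{Z}_{W'}\subseteq \mc{Z}_{W_{I_{0}}}$.

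To prove this last inclusion, we enter a setting where the standing assumptions \ref{x4.1}(i)--(iii) apply. By the discussion at the start of \ref{x12.1}, we may replace $(V,\mpair{-,-})$ by an ample non-singular extension without changing $\mc{Z}_{W'}$ or $\mc{Z}_{W_{I_{0}}}$ as subsets of $\real \Pi$. Within this non-singular $V$, the finite-dimensional subspace $\real \Pi_{I_{0}}$ is contained in a finite-dimensional subspace $U$ on which the form restricts non-singularly (a linear algebra fact used in \ref{x12.1}). Then $(\Phi_{I_{0}},\Pi_{I_{0}})$ is a based root system on $(U,\mpair{-,-}|_{U})$ satisfying \ref{x4.1}(i)--(iii), and $W'$ is a reflection subgroup of $W_{I_{0}}$ in this setting. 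Theorem \ref{x6.3} then yields $\mc{Z}_{W'}^{U}\subseteq \mc{Z}_{W_{I_{0}}}^{U}$, where the superscript records the ambient space. One further appeal to Remark \ref{x3.5}(1) identifies these cones with $\mc{Z}_{W'}$ and $\mc{Z}_{W_{I_{0}}}$ respectively, completing the proof of (b).

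Part (a) is then a formal consequence: the inclusion $\bigcup_{W'}\mc{Z}_{W'}\subseteq \mc{Z}$ is (b), and conversely, any $v\in \mc{Z}$ lies in $\mc{Z}_{W_{J}}$ for some finite $J\subseteq S$ by Lemma \ref{x3.3}(b); by cofinality there is $W'$ in the prescribed family with $W_{J}\subseteq W'$, and applying (b) inside $(W',\chi(W'))$ to the reflection subgroup $W_{J}$ of $W'$ shows $v\in \mc{Z}_{W_{J}}\subseteq \mc{Z}_{W'}$. The main obstacle in the argument is the reduction to a finite-dimensional non-singular ambient; this rests on checking that $\mc{K}_{W'}$, and therefore $\mc{Z}_{W'}$, depends only on $\real \Pi_{W'}$ and the restriction of $\mpair{-,-}$ to $\real \Pi_{W'}\times \real \Pi_{W'}$, a point addressed by Remark \ref{x3.5}(1).
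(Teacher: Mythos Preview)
Your proof is correct and follows essentially the same approach as the paper: both reduce the key step to Theorem \ref{x6.3} by observing that for finitely generated reflection subgroups $W'\subseteq W''$ one has $\mc{Z}_{W'}\subseteq \mc{Z}_{W''}$, after passing (via the remarks in \ref{x12.1} and Remark \ref{x3.5}(1)) to a finite-dimensional non-singular ambient space in which the assumptions \ref{x4.1}(i)--(iii) hold. The only difference is organizational: you prove (b) first and deduce (a), whereas the paper isolates the inclusion for finitely generated subgroups at the outset, uses it with Lemma \ref{x3.3} to establish (a), and then obtains (b) by applying (a) to both $W'$ and $W$.
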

     \begin{proof} Observe that  $\mc{Z}_{W'}\subseteq \mc{Z}_{W''}$ for any finitely generated reflection subgroups $W'\subseteq W''$ of $W$, by reducing  to the case in which  \ref{x4.1}(i)---(iii) hold  using the above remarks and using Theorem \ref{x6.3}.

     Write the family of reflection subgroups in (a) as $W'_{j}$ for $j$ in an index set $J$, 
     and let $G$ be the family of finite subsets of $S$.
     For each $j\in J$, there is some finite $I_{j}\in G$ with $ W'_{j}\subseteq W_{I_{j}}$, since $W'_{j}$ is finitely generated. Also, since $\set{W'_{j}}_{j\in J}$ is cofinal in the family of finitely generated reflection subgroups, there is for each  $I\in G$  there is some $j_{I}\in J$ with
     $W_{I}\subseteq W'_{j_{I}}$. By the above remarks, we have
     $\mc{Z}_{j}\subseteq \mc{Z}_{I_{j}}$ for $j\in J$ and $\mc{Z}_{I}\subseteq \mc{Z}_{W_{j_{I}}} $
     for $I\in {G}$. Hence 
     \[\cup_{j\in J}\mc{Z}_{W'_{j}}=\cup_{I\in {G}}\mc{Z}_{I}=\mc{Z}\] by Lemma
     \ref{x3.3}, proving (a). 
     
     To prove (b), let $H$ be the family of all finitely generated reflection subgroups of $W$.
     By (a) applied successively  to $W'$ and $W$, we have that
     \[\mc{Z}_{W'}=\bigcup_{\substack{W''\in H\\ W''\subseteq W'}}\mc{Z}_{W''}\subseteq 
     \bigcup_{W''\in H}\mc{Z}_{W''}=\mc{Z}\] as required.
       \end{proof}
       
       \subsection{} \label{x12.3} We have not investigated the facial structure of $\mc{Z}$ in general. 
       However, as a corollary of the results of this section,    the relative interior of $\mc{Z}$ may be described under more general assumptions than those of Sections \ref{x4}--\ref{x11}.       \begin{cor}  Let $W'$ be a subgroup of $W$ such that $\real\mc{Z}_{W'}$ is finite dimensional.
       Let $G$ be the family of all finitely generated reflection subgroups $W''$ of $W$ such that
       $\real\mc{Z}_{W''}=\real\mc{Z}_{W'}$.  Let $Y:=\cup_{W''\in G}\mc{Z}_{W''}^{\circ }$. Then
       \begin{num}\item $G$ is cofinal in the family of all finite rank reflection subgroups of $W'$.
  \item   $\mc{Z}_{W'}^{\circ }=Y$.\end{num}
       \end{cor}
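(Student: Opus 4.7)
The plan is to reduce everything to the directed union description of $\mc{Z}_{W'}$ from Theorem \ref{x12.2}(a), combined with the elementary fact that, for a convex set in a finite dimensional real affine space, the relative interior is detected by being interior to a full-dimensional simplex contained in the set.

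For (a), I would first observe that by Theorem \ref{x12.2}(a) applied to $W'$, one has $\mc{Z}_{W'}=\bigcup_{W''}\mc{Z}_{W''}$ as $W''$ ranges over the finitely generated reflection subgroups of $W'$, and hence $\real\mc{Z}_{W'}=\sum_{W''}\real\mc{Z}_{W''}$. Since $d:=\dim\real\mc{Z}_{W'}$ is finite, a finite sub-collection $W''_{1},\ldots,W''_{n}$ suffices so that $\real\mc{Z}_{W''_{1}}+\cdots+\real\mc{Z}_{W''_{n}}=\real\mc{Z}_{W'}$. Given any finitely generated reflection subgroup $W_{0}$ of $W'$, let $W'''$ be the reflection subgroup of $W'$ generated by $W_{0}\cup W''_{1}\cup\cdots\cup W''_{n}$; this is finitely generated, and Theorem \ref{x12.2} gives $\real\mc{Z}_{W_{0}}+\sum_{i}\real\mc{Z}_{W''_{i}}\seq\real\mc{Z}_{W'''}\seq\real\mc{Z}_{W'}$, forcing equality. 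Thus $W'''\in G$ with $W'''\sreq W_{0}$, proving cofinality.

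For the inclusion $Y\seq\mc{Z}_{W'}^{\circ}$ in (b), fix $W''\in G$. Since $\mc{Z}_{W''}\seq\mc{Z}_{W'}$ by Theorem \ref{x12.2}(b) and $\real\mc{Z}_{W''}=\real\mc{Z}_{W'}$, we have $\aff(\mc{Z}_{W''})=\aff(\mc{Z}_{W'})$ (both cones contain $0$), and therefore any point interior to $\mc{Z}_{W''}$ relative to $\aff(\mc{Z}_{W''})$ is also interior to $\mc{Z}_{W'}$ relative to $\aff(\mc{Z}_{W'})$, giving $\mc{Z}_{W''}^{\circ}\seq\mc{Z}_{W'}^{\circ}$.

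For the reverse inclusion $\mc{Z}_{W'}^{\circ}\seq Y$, fix $z\in\mc{Z}_{W'}^{\circ}$. Because $\mc{Z}_{W'}$ is a convex subset of the $d$-dimensional real vector space $\real\mc{Z}_{W'}$, there exist $v_{0},v_{1},\ldots,v_{d}\in\mc{Z}_{W'}$ such that $z$ lies in the relative interior (in $\real\mc{Z}_{W'}$) of the simplex $\conv\set{v_{0},\ldots,v_{d}}$. By Theorem \ref{x12.2}(a) applied to $W'$, each $v_{i}$ lies in $\mc{Z}_{W_{i}}$ for some finitely generated reflection subgroup $W_{i}$ of $W'$; let $W_{0}$ be the reflection subgroup generated by $W_{0},\ldots,W_{d}$, which is again finitely generated and contained in $W'$. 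By (a), choose $W''\in G$ with $W''\sreq W_{0}$. Then each $v_{i}\in\mc{Z}_{W''}$; by convexity of $\mc{Z}_{W''}$ the whole simplex lies in $\mc{Z}_{W''}$; and since $\real\mc{Z}_{W''}=\real\mc{Z}_{W'}$, the fact that $z$ is interior to this simplex in $\real\mc{Z}_{W'}$ means $z\in\mc{Z}_{W''}^{\circ}\seq Y$.

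The main (very mild) obstacle is keeping straight that ``relative interior'' in (b) is computed with respect to $\aff(\mc{Z}_{W'})=\real\mc{Z}_{W'}$, and that for $W''\in G$ this affine span agrees with $\aff(\mc{Z}_{W''})$; this is precisely what the defining condition $\real\mc{Z}_{W''}=\real\mc{Z}_{W'}$ on $G$ guarantees, and once it is used correctly, both inclusions fall out of the convex-geometric remarks above together with Theorem \ref{x12.2}. No further hypotheses from Sections \ref{x4}--\ref{x11} are needed.
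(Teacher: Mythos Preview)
Your proof is correct and follows essentially the same route as the paper's: both use the directed union description of $\mc{Z}_{W'}$ from Theorem \ref{x12.2}(a), extract cofinality in (a) from finite-dimensionality of $\real\mc{Z}_{W'}$, and prove the two inclusions in (b) identically (openness of $\mc{Z}_{W''}^{\circ}$ in the common linear span for $Y\subseteq\mc{Z}_{W'}^{\circ}$, and a finite spanning set for the reverse inclusion). Your full-dimensional simplex plays exactly the role of the paper's ``basis $\Gamma\subseteq\mc{Z}_{W'}$ with $\alpha\in\real_{>0}\Gamma$''; the paper is simply terser in (a), leaving the explicit construction of a cofinal element implicit.
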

       \begin{proof}  By Theorem \ref{x12.2}, we have $\mc{Z}_{W'}=\cup_{W''\subseteq W'}
       \mc{Z}_{W''}$ where the union is over all finite rank reflection subgroups $W''$ of $W'$.
      From this and  Theorem \ref{x12.2},  one gets (a).      For $W''\in G$, $\mc{Z}_{W''}^{\circ }$ is an open subset of 
        $\real\mc{Z}_{W''}=\real\mc{Z}_{W'}$ and is contained in $\mc{Z}_{W'}$.
        Hence  $Y$ is an open subset of $\real \mc{Z}_{W'}$
        contained in $\mc{Z}_{W'}$ and  it follows that
         $Y\subseteq \mc{Z}_{W'}^{\circ }$. For the reverse inclusion, let  $\alpha\in \mc{Z}_{W'}^{\circ }$.
         Then there is a (finite) basis $\Gamma$ of  $\real \mc{Z}_{W'}$ such that
         $\Gamma\subseteq \mc{Z}_{W'}$ and 
         $\alpha\in \real_{>0}\Gamma$. Using (a) and Theorem \ref{x12.2}, there is some $W''\in G$ such that
        $\Gamma\subseteq \mc{Z}_{W''}$. Then $\alpha\in \real_{>0}\Gamma\subseteq \mc{Z}_{W''}^{\circ }\subseteq Y$, completing the proof.
        \end{proof}
        \subsection{} \label{x12.4} Finally, we return to the assumptions of Sections \ref{x4}--\ref{x11} and  describe the minimal face of $\mc{Z}_{W}$ containing $\mc{Z}_{W'} $ for an arbitrary (not necessarily finite rank) reflection subgroup $W'$ of $W$.
        \begin{cor} Assume that the conditions $\text{\rm \ref{x4.1}(i)--(iii)}$ hold.
        Let $W'$ be an arbitrary reflection subgroup of $W$ and let $U$ denote the facial closure of
        $W'$.
        \begin{num}\item $\mc{Z}_{U}$ is the minimum face of $\mc{Z}$ which contains
        $\mc{Z}_{W'}$.
        \item $\mc{Z}_{W'}^{\circ }\subseteq \mc{Z}_{U}^{\circ }$.
            \end{num}
        \end{cor}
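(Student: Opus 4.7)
The plan is to reduce both parts to the finite-rank case handled by Proposition \ref{x11.4}, using Corollary \ref{x12.3} as the bridge. Under the standing assumptions \ref{x4.1}(i)--(iii), $W$ has finite rank so the facial closure $U$ of $W'$ exists and is itself of finite rank by Lemma \ref{x10.1}(d), while $\real\mc{Z}_{W'}$ is automatically finite-dimensional.

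The first step is to produce a single finitely generated reflection subgroup $W_0 \subseteq W'$ with two compatible properties: its facial closure equals $U$, and $\real\mc{Z}_{W_0} = \real\mc{Z}_{W'}$. By the remark at the end of \ref{x11.5}, there is a finite rank reflection subgroup $W_1 \subseteq W'$ with $\real\Pi_{W_1} = \real\Pi_{W'}$, and any such $W_1$ has facial closure $U$. By Corollary \ref{x12.3}(a), the family $G$ of finite rank reflection subgroups $W'' \subseteq W'$ with $\real\mc{Z}_{W''} = \real\mc{Z}_{W'}$ is cofinal among the finite rank reflection subgroups of $W'$; choosing $W_0 \in G$ with $W_0 \supseteq W_1$ forces $\real\Pi_{W_0} = \real\Pi_{W'}$ as well, and hence the facial closure of $W_0$ is again $U$ by the same remark.

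For (a), I apply Proposition \ref{x11.4}(a) to $W_0$: the minimum face of $\mc{Z}$ containing $\mc{Z}_{W_0}$ is $\mc{Z}_{U_0}$, where by Proposition \ref{x11.4}(e) the subgroup $U_0 \in \mc{W}$ coincides with the union of the infinite-type components of $U$, and Lemma \ref{x10.4}(a) gives $\mc{Z}_{U_0} = \mc{Z}_U$. Since $W' \subseteq U$, Theorem \ref{x12.2}(b) yields $\mc{Z}_{W'} \subseteq \mc{Z}_U$, so $\mc{Z}_U$ is a face of $\mc{Z}$ containing $\mc{Z}_{W'}$; any face $F \supseteq \mc{Z}_{W'}$ a fortiori contains $\mc{Z}_{W_0}$, hence contains $\mc{Z}_U$ by the minimality just established. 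For (b), Corollary \ref{x12.3}(b) writes $\mc{Z}_{W'}^{\circ} = \bigcup_{W'' \in G} \mc{Z}_{W''}^{\circ}$. Given $z \in \mc{Z}_{W'}^{\circ}$, I pick $W'' \in G$ with $z \in \mc{Z}_{W''}^{\circ}$, then use cofinality to find $W''' \in G$ containing both $W''$ and $W_0$; the equality $\real\mc{Z}_{W''} = \real\mc{Z}_{W'''}$ together with $\mc{Z}_{W''} \subseteq \mc{Z}_{W'''}$ forces $z \in \mc{Z}_{W'''}^{\circ}$, and since $W''' \supseteq W_0$ has $\real\Pi_{W'''} = \real\Pi_{W'}$ its facial closure is also $U$, so Proposition \ref{x11.4}(b) places $z$ in $\mc{Z}_U^{\circ}$.

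The only delicate point is the compatibility in the first step between the condition $\real\Pi_{W_0} = \real\Pi_{W'}$ (needed so that the facial closure of $W_0$ coincides with $U$) and $\real\mc{Z}_{W_0} = \real\mc{Z}_{W'}$ (needed to apply Corollary \ref{x12.3}); neither the remark in \ref{x11.5} nor Corollary \ref{x12.3}(a) alone produces such a $W_0$, but their directed-union structure lets me enlarge inside $W'$ without losing either condition.
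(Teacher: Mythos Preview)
Your proof is correct and follows essentially the same route as the paper's: reduce to the finitely generated case via Proposition~\ref{x11.4}, using Remark~\ref{x11.5} to control the facial closure and Corollary~\ref{x12.3} to control relative interiors. The only cosmetic difference is that you build a single pivot subgroup $W_0$ with both $\real\Pi_{W_0}=\real\Pi_{W'}$ and $\real\mc{Z}_{W_0}=\real\mc{Z}_{W'}$, whereas the paper works directly with the cofinal family $G'$ of all such subgroups; the underlying ideas and lemmas invoked are the same.
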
 
        \begin{proof} Suppose first that  $W'$ is finitely generated. Let $W''$, $U'$,
        $U''$ be as in \ref{x11.4}. By \ref{x11.4}, $\mc{Z}_{U'}=\mc{Z}_{U}$ is the minimal face of
        $\mc{Z}$ containing 
        $\mc{Z}_{W''}=\mc{Z}_{W'}$, and $\mc{Z}_{W'}^{\circ }  \subseteq \mc{Z}_{U}^{\circ }$.

        In general, note that the family $G'$ of finite rank  reflection subgroups $W'''$ of $W'$ with $\real\Pi_{W'''}=\real
\Pi_{W'}$ and $\real\mc{Z}_{W'''}=\real\mc{Z}_{W'}$ is cofinal in the family of all finite rank reflections subgroups of  $W$.  Any $W'''\in G'$ has $U$ as facial closure by Remark \ref{x11.5}, and $\mc{Z}_{W'''}\subseteq \mc{Z}_{U}$. Both (a)--(b) now  follow easily from the special case in the previous paragraph, using that 
$\mc{Z}_{W'}=\cup_{W'''\in G'}\mc{Z}_{W'''}$ by Theorem \ref{x12.2} and 
 $\mc{Z}_{W'}^{\circ }=\cup_{W'''\in G'}\mc{Z}_{W'''}^{\circ }$
by Corollary \ref{x12.3}.

        \end{proof}

\appendix \section{Facial structure of cones} 
\label{xA} This section   discusses  (largely without proof)    some mostly standard 
facts concerning purely algebraic aspects of facial structure of cones, which are  used  in this paper or may be  helpful in understanding  the main results.  These  facts can be found  scattered
 in   various sources between which there is not always agreement  on  terminology and conventions,  and for the readers  convenience, we    record them here in a uniform way.
   Though, strictly,  these results  are only applied in this paper  in finite dimensional spaces,  
so   the reader may consider such spaces if desired, 
we discuss many of them  them more generally to clarify the role played by finite dimensionality and   since the general versions  may be useful in  possible extensions of this work. As  references for this material,  see 
\cite{BourEsp} and   \cite{Bar}  for   general real vector spaces, and, for finite 
dimensional spaces,   \cite{Rock},   \cite{Web}, \cite{StW},
 \cite{FlVan}, \cite{Br} and  \cite{Gl}.    For    background   on partially 
  ordered sets, lattices, directed sets, Galois connections etc which  
  is also used here and elsewhere in the paper, see  for instance \cite{DavPr}.  In this paper, 
  the only Galois connections which occur are those  between 
  power sets $\mc{P}(X)$ and $\mc{P}(Y)$, associated to a relation 
  $R\seq X\times Y$ as in \cite[7.22]{DavPr}.

\subsection{} \label{xA.1}  In this section,   all vector spaces are endowed  with a standard topology,  
which we call the \emph{finest locally convex topology} (\flc topology, for short)  defined below.   Though some  of the properties stated in the following subsections hold  for  all locally convex (Hausdorff) vector  topologies (see \cite{BourEsp} for details), many do not.

    A  subset $C$ of a real affine space $V$ is said to be 
\emph{algebraically open} if for all affine lines $l$ of $V$, the 
 intersection $l\cap C$ is an open subset of $l$ (in the topology obtained by transferring the standard topology on $\real $ to $l$ by an affine isomorphism $\real\to l$). 
The   \flc topology on $V$ is  the (Hausdorff)  topology on $V$  so the  open 
subsets    are the  unions of algebraically open \emph{convex} subsets of 
$V$. The \flc topology   on $V$ is clearly  invariant under all affine   automorphisms of $V$.  It is easily seen  (\cite[(3.3), Problem 4]{Bar})  that if $V$ is a vector space, the \flc topology is  the finest topology making $V$ a locally convex topological vector 
space.  (By \cite[Ch II, \S4, no. 2, 2]{BourEsp}, the \flc topology  coincides with the topology determined by the   family of all seminorms on $V$.) Other easily checked  properties of  \flc topology are as follows.  If $V$ is finite dimensional, the \flc topology is the standard topology on $V$ (defined for example by the norm associated to a positive definite inner product on $V$). In general, any affine subset $U$ of $V$ is closed
(hence each affine   function $V\to\real$ is continuous)   and (if $U\neq \eset$)   the  subspace and \flc  topology on $U$ coincide.  
The \flc topology on a  product  $V_{1}\times V_{2}$ of real affine  spaces $V_{1},V_{2}$ is the product  of the \flc topologies on $V_{1}$ and $V_{2}$. 
 See \cite{Bar}, \cite[Ch II]{BourEsp} for  more details  and further properties. 

\subsection{}\label{xA.2}  In the remainder of this section, let $V$ be a real vector space, in its \flc topology.
   Fix a  convex set $\mc{Y}\seq V$.    A point $v$ of $\mc{Y}$ is called an \emph{algebraically 
interior point} of $\mc{Y}$ in $V$ if for each affine line $l$ of $V$ with $v\in l$,
the point $v$ is in the interior (with respect to $l$) of $l\cap \mc{Y}$ (see \cite[Ch II, (1.5)]{Bar}). It is easily shown that  $\ri(\mc{Y})$ is the set of all algebraically interior 
points of $\mc{Y}$ in  $\aff(\mc{Y})$.
Using \cite[Ch II, \S 2, no. 6, Proposition 16 and Corollaire 1] {BourEsp} together with  $\cl(\mc{Y})\seq \aff(\mc{Y})$ (which holds since we use \flc topology) shows  
\begin{equation}\label{xA.2.1}
\text{\rm if $x\in \ri(\mc{Y})$ and $y\in \overline{\mc{Y}}$, then  
$tx+(1-t)y\in \ri(\mc{Y})$ for $0<t<1$,}
\end{equation}
\begin{equation}\label{xA.2.2}
\text{\rm if $\ri(\mc{Y})\neq \eset$, then
 $\ol{\ri(\mc{Y})}=\ol{\mc{Y}}$ and $\ri(\ol{\mc{Y}})=\ri({\mc{Y}})$.}
\end{equation} (For examples of non-empty cones without any relative interior points in the \flc topology, see  \cite[Ch II, (1.4)]{Bar} and \cite[Proposition 11.1]{Gl}).
    
\subsection{} \label{xA.3} A subset $\mc{C}$ of $\mc{Y}$ is said to be a \emph{face} or an 
 \emph{extreme subset}  of $\mc{Y}$ if it is convex  and for all 
   $c\in \mc{C}$, $y_{1},y_{2}\in \mc{Y}$ and $t\in (0,1)$
   with $c=ty_{1}+(1-t)y_{2}$, one has $y_{1},y_{2}\in \mc{C}$. 
    See  \cite{Bar}  for general background on faces. A face $\mc{C}$ of $\mc{Y}$ is \emph{proper} if $\mc{C}\neq \mc{Y}$.   Any intersection or directed 
   union 
   of faces of $\mc{Y}$ is a face of 
   $\mc{Y}$,  and a face of a face of $\mc{Y}$ is an
     face of $\mc{Y}$. In particular, the set of faces     of $\mc{Y}$, ordered by inclusion, forms a complete lattice
     $\Ext(\mc{Y})$,  called here  the \emph{face lattice} of $\mc{Y}$.
   The proof  of \cite[Theorem 18.1]{Rock} shows 
  \begin{equation}\label{xA.3.1}\text{\rm if  } \mc{C}\in \Ext(\mc{Y}), \mc{Z}\seq\mc{Y}, \mc{Z} \text{ \rm is convex and } \mc{C}\cap \ri(\mc{Z})\neq \eset, \text{ \rm then }\mc{Z}\seq \mc{C}.
   \end{equation} 
      Combining the  proofs of
  \cite[(2.4.4)]{StW} 
 and
   \cite[Theorem 2.6.2]{Web} shows that 
   if $\mc{C}\seq \mc{Y}$, 
\begin{equation} \label{xA.3.2}\mc{C}\in \Ext(\mc{Y})\iff 
\text{\rm  ( $\mc{Y}\sm\mc{C}$ is convex and   
$\mc{C}=\aff(\mc{C})\cap \mc{Y}$).} 
\end{equation}      Since $\aff(\mc{C})$ is closed in $V$, it follows that any face of 
   $\mc{Y}$ is closed in $\mc{Y}$.
    
  For any  $\mc{K}\seq\mc{Y}$, there is an inclusion minimal face $\mc{Y}_{\mc{K}}$ of $\mc{Y}$ containing 
  $\mc{K}$, namely the intersection of all faces of
   $\mc{Y}$ which contain $\mc{K}$.  If also 
   $\mc{K}'\seq \mc{Y}$, one 
  clearly has $\mc{Y}_{\mc{K}\cup\mc{K}'}=\mc{Y}_{\mc{K}}\vee\mc{Y}_{\mc{K}'}$ where $\vee$ denotes join in $\Ext(\mc{Y})$.
 For  $y\in \mc{Y}$ let  $\mc{Y}_{y}:=\mc{Y}_{\set{y}}$ and
  $U_{y}=\aff(\mc{Y}_{y})$.
  From  \eqref{xA.3.2}, 
   $\mc{Y}_{y}=\mc{Y}\cap U_{y}$ and by
    \cite[Ch II, \S7, Ex3]{BourEsp},  $U_{y}$ is the inclusion-largest   affine  subset $U$ of $V$ containing $\set{y}$ such that
   $y$ is an algebraically interior point of $U\cap \mc{Y}$ in $U$.
  In particular,   $y\in \ri(\mc{Y}_{y})$. It follows  from this 
  and \eqref{xA.3.1} that any convex set $\mc{Z}\seq \mc{Y}$
  with a relative interior point $y$ is contained in $\mc{Y}_{y}$. 
  Since any face $\mc{Z}$ of $\mc{Y}$ which contains $y$ is convex and  contains $\mc{Y}_{y}$, one has 
   \begin{equation}\label{xA.3.3}
  \set{\mc{Y}_{y}\mid y\in \mc{Y}}=\mset{ \mc{Z}\in \Ext(\mc{Y})\mid\ri( \mc{Z})\neq \eset}
  \end{equation} 
 Similarly,  for $y,y'\in \mc{Y}$, one has 
  \begin{equation}\label{xA.3.4}
  \mc{Y}_{y'}=\mc{Y}_{y}\iff y'\in \ri(\mc{Y}_{y})\iff  \ri(\mc{Y}_{y'})=\ri(\mc{Y}_{y})
  \end{equation} 
      and   \begin{equation}\label{xA.3.5}
 \ri(\mc{Y}_{y'})\cap \rb(\mc{Y}_{y})\neq \eset\iff \mc{Y}_{y'}\seq \rb(\mc{Y}_{y})\iff \mc{Y}_{y'}\sneq \mc{Y}_{y}.
 \end{equation}
Hence  
      \begin{equation}\label{xA.3.6}
   \text{$\set{\ri(\mc{Y}_{y})\mid y\in \mc{Y}}$ is a partition of $\mc{Y}$ 
       into relatively open  convex subsets.}
   \end{equation}  
   and 
   \begin{equation}\label{xA.3.7}
  \mc{Y}\cap ( \rb\mc{Y})=\bigcup_{\substack{\mc{C}\in \Ext(\mc{Y})\\ \mc{C}\neq \mc{Y}}}\mc{C}=\bigcup_{\substack{\mc{C}\in \Ext(\mc{Y})\\ \mc{C}\neq \mc{Y}}}\ri(\mc{C}).
   \end{equation}  
  Note that  \begin{equation}\label{xA.3.8}
  \text{   $ \mc{C}\in \Ext(\mc{Y})\implies \mc{C}=\bigcup_{y\in \mc{C}}\mc{Y}_{y}$ (directed union)}
    \end{equation}
  since   for $y\in \mc{C}$, $\mc{Y}_{y}\seq \mc{C}$ and for all $y, y'\in \mc{C}$, one has  $\mc{Y}_{y}\vee \mc{Y}_{y'}=\mc{Y}_{\set{y,y'}}=\mc{Y}_{y''}$ where $y'':=\frac{1}{2}y+\frac{1}{2}y'\in \mc{C}$.
  Recall that  $\mc{F}\in \Ext(\mc{Y})$ is said to be \emph{lattice-compact}  if  whenever
   $\mc{F}\seq \bigvee_{\mc{G}\in I} \mc{G}$ where $I\seq  \Ext(\mc{Y})$, one has  $\mc{F}\seq \bigvee_{\mc{G}\in I'} \mc{G}$ for 
   some finite $I'\seq I$. 
   Using \eqref{xA.3.1} and \eqref{xA.3.8}, one easily shows that
   \begin{equation} \label{xA.3.9}\text{the set of lattice-compact elements of $\Ext(\mc{Y})$ is $\set{\mc{Y}_{y}\mid y\in\mc{Y}}\cup\set{\eset}$}.    \end{equation}    By definition (see \cite{DavPr}), \eqref{xA.3.8}--\eqref{xA.3.9} show  that $\Ext(\mc{Y})$ is an \emph{algebraic  lattice}.  
   
    Let $U$ be any finite    dimensional   affine subset   of $V$.  By \eqref{xA.3.2}, 
     \begin{equation}\label{xA.3.10}
   \text{ if   $C\seq  \Ext(\mc{Y})\cap \mc{P}(U)$  is totally ordered, then $\vert C\vert \leq \dim(U)+2$}   \end{equation}
    since $\dim(U)+2$ is the maximum cardinality  of  a  flag 
    of  affine subsets   of  $U$.     Let 
      $\mc{C}\in  \Ext(\mc{Y})\cap \mc{P}(U)$. In \eqref{xA.3.8},  each $\mc{Y}_{y}$ is contained in $\Ext(\mc{Y})\cap \mc{P}(U)$. It  follows that if $\mc{C}\neq \eset$, then 
      $\mc{C}=\mc{Y}_{y}$ for some $y\in \mc{C}$ and so
       $\ri(\mc{C})\neq \eset$. Taking $U=V$,  one  recovers  the well-known fact
       (see \cite[Theorem 6.2]{Rock}) that  
   \begin{equation}\label{xA.3.11}\text{\rm if $\dim(V)$ is finite,    any convex set $\mc{Y}\neq \eset$ has a relative interior point.}
    \end{equation}   Then  \eqref{xA.3.3}  implies that
   \begin{equation}\label{xA.3.12}
    \text{ if $\dim(V)$ is finite,  $\Ext(\mc{Y})=\set{\eset}\cup
       \set{\mc{Y}_{y}\mid y\in \mc{Y}}$. }  
   \end{equation} 
   
   Two other facts we shall use (see \cite[Theorem 6.5, Corollary 6.6.2]{Rock}) are the following. Let $C_{1},C_{2}$ be convex subsets of $V$ where $V$ is finite dimensional. Then
   
  \begin{equation}\label{xA.3.13}
   \ri(C_{1}\cap C_{2})=\ri(C_{1})\cap\ri(C_{2}) \text{\ \rm if $\ri(C_{1})\cap\ri(C_{2})\neq 0$}
   \end{equation}
   \begin{equation}\label{xA.3.14}
   \ri(C_{1}+C_{2})=\ri(C_{1})+\ri(C_{2}) 
   \end{equation}

   The proof of the following lemma is left to the reader. 
  
\begin{lem} Suppose that $\dim(V)$ is finite and $\mc{Y}\seq V$ is a  convex set. Let  $C$ be  a non-empty convex subset of $\mc{Y}$. 
 \begin{num}\item The following conditions $\text{\rm (i)--(iii)}$ on $F\in \Ext(\mc{Y})$  are equivalent: 
\begin{subconds}\item $C\seq F$  \item $\ri(C)\seq F$.
\item $\ri(C)\cap F\neq \eset$.
\end{subconds}
 \item The following conditions $\text{\rm (i)--(iv)}$ on $F\in \Ext(\mc{Y})$  are equivalent: \begin{subconds}
\item  $F$ is minimal with $C\seq F$.
\item $\ri(F)\sreq \ri(C)$
\item $\ri(F)\cap \ri(C)\neq \eset$.
\item $F$ is maximal with  $\ri(F)\cap C\neq \eset$. \end{subconds}Moreover, there is a unique $F$ satisfying $\text{\rm (i)--(iv)}$.
\end{num}\end{lem}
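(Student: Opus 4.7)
The plan is to exploit the fact \eqref{xA.3.11} that every non-empty convex set in the finite-dimensional $V$ has a relative interior, together with the characterization in \eqref{xA.3.12} that the non-empty faces of $\mc{Y}$ are exactly the sets $\mc{Y}_{y}$ for $y\in \mc{Y}$, and the equivalence \eqref{xA.3.4} $\mc{Y}_{y'}=\mc{Y}_{y}\iff y'\in \ri(\mc{Y}_{y})$.

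For part (a), I would observe that $\ri(C)\neq \eset$ by \eqref{xA.3.11}, so (i) trivially implies (ii) and (ii) trivially implies (iii). The implication (iii) $\Rightarrow$ (i) is immediate from \eqref{xA.3.1} applied to the convex set $C\seq \mc{Y}$ and the face $F$.

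For part (b), the key reduction is to first identify a distinguished face $F_{0}$: pick any $y\in \ri(C)$ and set $F_{0}:=\mc{Y}_{y}$. Part (a) (applied with $F=F_{0}$, since $y\in \ri(C)\cap F_{0}$) shows $C\seq F_{0}$; conversely if $F\in \Ext(\mc{Y})$ with $C\seq F$, then $y\in F$ forces $F_{0}=\mc{Y}_{y}\seq F$ by minimality of $\mc{Y}_{y}$. Thus $F_{0}$ is the unique minimum face containing $C$, which handles the uniqueness clause and shows $F_{0}$ is the unique $F$ satisfying (i). It remains to show each of (ii), (iii), (iv) characterizes the same face $F_{0}$.

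To handle (ii) and (iii), I would first verify that for \emph{every} $y'\in \ri(C)$ one has $\mc{Y}_{y'}=F_{0}$ (hence $\ri(C)\seq \ri(F_{0})$ by \eqref{xA.3.4}): by part (a) applied to the face $\mc{Y}_{y'}$, $\ri(C)\cap \mc{Y}_{y'}\ni y'$ gives $C\seq \mc{Y}_{y'}$, and combined with $\mc{Y}_{y'}\seq F_{0}$ (since $y'\in C\seq F_{0}$), minimality of $F_{0}$ forces equality. Thus (i) $\Rightarrow$ (ii), and (ii) $\Rightarrow$ (iii) is trivial since $\ri(C)\neq\eset$. For (iii) $\Rightarrow$ (i): if $y\in \ri(F)\cap \ri(C)$, then $F=\mc{Y}_{y}$ by \eqref{xA.3.4} (using \eqref{xA.3.12} to write $F=\mc{Y}_{y_{0}}$ and then $y\in \ri(F)$ yielding $\mc{Y}_{y}=\mc{Y}_{y_{0}}=F$), and simultaneously $F_{0}=\mc{Y}_{y}$, so $F=F_{0}$. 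The only real subtlety is condition (iv): I would argue that any $F\in \Ext(\mc{Y})$ with $\ri(F)\cap C\neq\eset$ satisfies $F=\mc{Y}_{y}$ for some $y\in C$ (again via \eqref{xA.3.4}), and since $y\in C\seq F_{0}$, minimality of $\mc{Y}_{y}$ gives $F=\mc{Y}_{y}\seq F_{0}$; meanwhile $F_{0}$ itself satisfies $\ri(F_{0})\cap C\sreq \ri(C)\neq \eset$ by the containment established above. Hence $F_{0}$ is the unique maximum among faces with $\ri(F)\cap C\neq \eset$, proving (iv) $\iff F=F_{0}$.

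The argument is essentially bookkeeping with \eqref{xA.3.1}, \eqref{xA.3.4}, \eqref{xA.3.11}, \eqref{xA.3.12} already in hand; the mildly non-obvious step is verifying the containment $\ri(C)\seq \ri(F_{0})$ needed for (ii), since a priori a relative interior point of $C$ need not lie in the relative interior of the smallest face containing it until one invokes the face-minimality argument above.
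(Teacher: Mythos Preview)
Your proof is correct; the paper itself omits the proof entirely (``The proof of the following lemma is left to the reader''), so there is nothing to compare against. Your argument is exactly the intended bookkeeping from \eqref{xA.3.1}, \eqref{xA.3.4}, \eqref{xA.3.11}, \eqref{xA.3.12}, and the verification that $\ri(C)\seq \ri(F_{0})$ via $\mc{Y}_{y'}=F_{0}$ for all $y'\in\ri(C)$ is the right way to handle condition~(ii).
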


 \subsection{}\label{xA.4}  Assume now that $\mc{Y}$ is a cone, as will be the case in all 
   applications in this paper.  Then a subset $\mc{C}$ of $\mc{Y}$ 
   is a face  if and only if it is a cone and   for all 
   $c\in \mc{C}$ and  $y_{1},y_{2}\in \mc{Y}$    with 
   $c=y_{1}+y_{2}$, one has $y_{1},y_{2}\in \mc{C}$ 
   (see \cite[Lemma 10.2(a),(c)]{Gl}).
   For $\mc{C}\in \Ext(\mc{Y})$, $\mc{C}$ and $\mc{Y}\sm\mc{C}$ 
   are cones  and if $\mc{C}\neq \eset$, then  
   $\aff(\mc{C})=\lin(\mc{C})$.    
For $\mc{K}\seq \mc{Y}$, one has
\begin{equation}\label{xA.4.1}
\text{$\mc{Y}_{\mc{K}}=\mc{Y}_{\ccl(\mc{K})}$ and, if $\mc{K}$ is a 
 cone,
 $\mc{Y}_{\mc{K}}= (\mc{K}-\mc{Y})\cap \mc{Y}$}
\end{equation}  as is easily 
 checked (see \cite[Lemma 10.2(e)]{Gl} in case 
 $\mc{K}=\real_{>0}x$).   If $0\in \mc{Y}$, then every non-empty 
 face
 $\mc{C}$ of $\mc{Y}$ contains $0$,
 and $\Ext_{\neq\eset}(\mc{Y}):=\Ext(\mc{Y})\sm\set{\eset}$ is 
 itself a complete lattice in which  meets and joins of its
 (possibly infinite)  subsets are the same  as in
 $\Ext(\mc{C})$.    A ray  which is a face of $\mc{Y}$ is called an \emph{extreme ray} of $\mc{Y}$.
 
 The next result is easily proved from \eqref{xA.2.1}--\eqref{xA.2.2}.
 \begin{lem} Suppose $\mc{Y}\seq V$ is a cone and $\rho\in \ri(\mc{Y})$. Then 
 \begin{num}
 \item
 $\cl(\mc{Y})=\mset{z\in V\mid z+t\rho\in \mc{Y}\text{ \rm for all $t\in \real_{> 0}$}}$.
 \item $\ri(\mc{Y})=\mset{z+t\rho\mid z\in \ri(\mc{Y}), t\in \real_{> 0}}=\mset{z+t\rho\mid z\in \mc{Y}, t\in \real_{> 0}}=
 \mset{z+t\rho\mid z\in \cl(\mc{Y}), t\in \real_{> 0}}$.  \end{num}
 \end{lem}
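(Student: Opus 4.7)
The plan is to deduce both parts from \eqref{xA.2.1}--\eqref{xA.2.2} together with the observation that $\mc{Y}$ is a cone (so that $\ri(\mc{Y})$ is itself stable under positive scalar multiplication, since each dilation $v \mapsto \lambda v$ is a \flc-homeomorphism of $V$ preserving $\mc{Y}$), and to use continuity of affine maps $\real \to V$ in the \flc topology.

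For (a), the content lies in the inclusion $\supseteq$. Given $z \in \cl(\mc{Y})$ and $t > 0$, I would set $s := t/(t+1) \in (0,1)$ and apply \eqref{xA.2.1} with $x = \rho$ and $y = z$ to obtain $s\rho + (1-s)z \in \ri(\mc{Y})$; multiplying by the positive scalar $t+1$ then yields $z + t\rho \in \ri(\mc{Y}) \subseteq \mc{Y}$. (As a byproduct this shows $z + t\rho \in \ri(\mc{Y})$, which I will reuse in (b).) The reverse inclusion $\subseteq$ is a continuity statement: the affine map $t \mapsto z + t\rho$ is continuous $\real \to V$ in the \flc topology, so $z = \lim_{t \to 0^+}(z + t\rho) \in \cl(\mc{Y})$.

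For (b), the three sets in the displayed equality are visibly nested as $\{z + t\rho : z \in \ri(\mc{Y}), t > 0\} \subseteq \{z + t\rho : z \in \mc{Y}, t > 0\} \subseteq \{z + t\rho : z \in \cl(\mc{Y}), t > 0\}$, and the proof of (a) already shows the largest of the three is contained in $\ri(\mc{Y})$. It therefore suffices to prove $\ri(\mc{Y}) \subseteq \{z + t\rho : z \in \ri(\mc{Y}), t > 0\}$, i.e., that for every $w \in \ri(\mc{Y})$ there is some $t > 0$ with $w - t\rho \in \ri(\mc{Y})$. Since $\mc{Y}$ is a non-empty cone, $\aff(\mc{Y}) = \lin(\mc{Y})$ (see \ref{x1.1}) contains both $w$ and $\rho$, so the line $\{w - t\rho : t \in \real\}$ lies inside $\aff(\mc{Y})$. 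The set $\ri(\mc{Y})$ is open in $\aff(\mc{Y})$ in the subspace topology, which by \ref{xA.1} coincides with the \flc topology on $\aff(\mc{Y})$; since $t \mapsto w - t\rho$ is continuous and carries $0$ to $w \in \ri(\mc{Y})$, its preimage of $\ri(\mc{Y})$ is an open neighborhood of $0$ in $\real$ and thus contains some $t \in (0,\epsilon)$, giving the required decomposition $w = (w - t\rho) + t\rho$.

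The only step that needs any real care is checking that these topological arguments go through in the \flc topology rather than in a finite-dimensional Euclidean topology; this is automatic since \flc makes $V$ a locally convex topological vector space and restricts to the \flc topology on any affine subspace (\ref{xA.1}). The main conceptual point, used in both halves, is the interplay between the convex-combination principle \eqref{xA.2.1} and the scaling invariance of $\ri(\mc{Y})$ coming from $\mc{Y}$ being a cone, which together allow one to trade the convex combinations in \eqref{xA.2.1} for translations by multiples of $\rho$.
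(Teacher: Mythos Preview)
Your proof is correct and follows exactly the approach the paper indicates (deducing both parts from \eqref{xA.2.1}--\eqref{xA.2.2}); the paper itself offers no further details. One cosmetic slip: in (a) you have the labels $\subseteq$ and $\supseteq$ swapped --- the argument via \eqref{xA.2.1} that you call ``$\supseteq$'' actually establishes $\cl(\mc{Y}) \subseteq \{z : z+t\rho \in \mc{Y}\text{ for all }t>0\}$, and the continuity argument establishes the reverse containment --- but both directions are proved correctly.
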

 \subsection{} For the rest of this section, fix a   bilinear map 
 $\mpair{-,-}\colon V\times U\to \real$  where $V$, $U$ are real vector spaces. 

In general,   if $A\seq V$, $B\seq U$,  write $\mpair{A,B}$ for $\mset{\mpair{a,b}\mid a\in A,b\in B}$.
 Partially order the power sets $\mc{P}(V),\mc{P}(U)$  by inclusion.  
Define  maps $C\mapsto C^{*}$ and $C\mapsto C^{\perp}$  from
 $\mc{P}(V)\to \mc{P}(U)$ by   \begin{equation}
 C^{*}:=\mset{u\in U\mid \mpair{C,u}\subseteq \real_{\geq 0}},\qquad
 C^{\perp}:=\mset{u\in U\mid \mpair{C,u}\subseteq \set{0}}
 \end{equation} for each subset $C$ of $V$. By symmetry, define maps
  $D\mapsto D^{*}$ and $D\mapsto D^{\perp}$  from 
  $\mc{P}(U)\to \mc{P}(V)$ (we rely on context rather than a more elaborate notation  to 
  distinguish the  meaning of $C^{\perp}$, $C^{*}$, and also $\wcl(C)$  as defined below, if $C\subseteq U\cap V$).
  
   Note $C^{*}$ is a  pointed cone  in $U$; in fact, $C^{*}$ contains the right radical $V^{\perp}$ of $\mpair{-,-}$. If $C$ is a cone, $C^{*}$ is called the \emph{dual cone} of $C$. 
      It is easy to see that the pair of  order-reversing maps 
 $C\mapsto C^{*}\colon \mc{P}(V)\to \mc{P}(U)$  and 
 $D\mapsto D^{*}\colon \mc{P}(U)\to \mc{P}(V)$ define a Galois 
 connection between $\mc{P}(U)$ and $\mc{P}(V)$ (see \cite{DavPr}). General properties of Galois connections imply the following. For $C\seq V$, one has $C\seq C^{**}$; further, 
 $C=C^{**}$ if and only if $C=D^{*}$ for some 
 $D\seq U$. If $C=C^{**}$,  $C$ is said to be a \emph{stable cone} in 
 $V$. The map $C\mapsto C^{*}$ defines an inclusion-reversing  
 bijection from the set of  stable  cones of $V$ to the set of stable cones 
 of $U$. 
 \begin{rem} (1) The weak topology on $V$ is defined to be   the weakest topology on $V$ so all linear maps 
  $\mpair{-,u}\colon V\to \real$ for $u\in U$ are continuous (it is Hausdorff if  and only 
  if the left radical $U^{\perp}$ of $\mpair{-,-}$ is zero). Denote the weak closure operator on $V$ as $\wcl$. It follows from the \emph{bipolar theorem}  (see \cite[Ch II,\ S6, no. 3, Th\'eor\`eme 1 and  Proposition 4(ii)]{BourEsp} or \cite[(5.3)]{Bar}) 
  that for $C\seq V$, 
  \begin{equation}
 C^{**}=\wcl(\real_{\geq 0}C) 
 \end{equation}  and that the stable cones in $V$ are the non-empty weakly closed  cones in $V$.
 
   (2) Similarly, the maps $\perp$ between $\mc{P}(V)$ and $\mc{P}(V)$ determine a Galois connection  with the weakly closed subspaces of $U$ and  $V$ as the stable sets.
 \end{rem}
\subsection{}\label{xA.6}  Let $ \mc{Y}, \mc{Z}$ be  cones in $V$ and 
$U$ respectively  such that 
$\mpair{ \mc{Y}, \mc{Z}}\subseteq \real_{\geq 0}$ i.e. 
$ \mc{Z}\subseteq  \mc{Y}^{*}$ or, equivalently, 
$ \mc{Y}\subseteq  \mc{Z}^{*}$. Such a pair of cones will be called 
 a \emph{semidual pair of cones}. There is then   a  \emph{transpose semidual pair}
 $P^{\text{\rm tr}}:=(\mc{Z},\mc{Y})$ with respect to the bilinear map $\mpair{-,-}\circ \iota\colon U\times V\to \real$ where $\iota\colon U\times V\to V\times U$ is the isomorphism $(u,v)\mapsto (v,u)$. 
  If $(\mc{Y},\mc{Z})$ is a 
 semidual pair with respect to $\mpair{-,-}$, so is $(\mc{Y},\mc{Z}^{**})$.
  If $ (\mc{Y}, \mc{Z})$ is a semidual pair such that $ \mc{Z}= \mc{Y}^{*}$ and 
 $ \mc{Y}= \mc{Z}^{*}$, then  $( \mc{Y}, \mc{Z})$ is said to 
 be a \emph{semidual  pair of stable cones}.  For any  $\mc{Y}\seq V$, $(\mc{Y}^{**},\mc{Y}^{*})$  is a semidual pair of stable cones.
 
Assume below that $P=(\mc{Y}, \mc{Z})$ is a semidual pair of cones.
Let $z\in \mc{Z}$. Then  $z^{*}\sreq\mc{Z}^{*} =\mc{Y}^{**} \sreq \mc{Y}$.  
  If $z^{\perp }\neq V$ and $z^{\perp}\cap \mc{Y}\neq \eset$, then $z^{*}$ is called  a \emph{supporting (homogeneous)
  half-space} of $\mc{Y}$  and $z^{\perp}=\rb(z^{*})$ is called a
   \emph{supporting (linear) hyperplane} of $\mc{Y}$. 
 Any subset  of $\mc{Y}$ which is either equal to $\mc{Y}$ or  of the form $\mc{Y}\cap  z^{\perp}$ for  an arbitrary 
  $z\in \mc{Z}$  is a face of $\mc{Y}$. Such a face  will be  called   an
  \emph{exposed face} of  $\mc{Y}$.  Thus
  the intersections of $\mc{Y}$ with its supporting hyperplanes are the proper, non-empty exposed faces of $\mc{Y}$, and possibly $\mc{Y}$ itself.  Note  that 
  \begin{equation}\label{xA.6.1}
  \text{ if $z,z'\in \mc{Z}$,  
  $(\mc{Y}\cap z^{\perp})\cap (\mc{Y}\cap z^{\prime \perp})=
  \mc{Y}\cap (z+z')^{\perp}$ where  $z+z'\in \mc{Z}$.}
  \end{equation} Hence the set of   exposed subsets of $\mc{Y}$ forms a meet semilattice $\Exp_{P}(\mc{Y})$  with    maximum element $\mc{Y}$. It is not necessarily a  complete meet semilattice.  
   
    An arbitrary (possibly empty) intersection of exposed faces of $\mc{Y}$ is called a 
  \emph{semi-exposed} face of $\mc{Y}$. 
  A ray $\real_{\geq 0}\alpha$ which is a face (resp., an exposed face, semiexposed face ) of a (pointed) cone $\mc{Y}$ is called an extreme ray (resp., exposed ray, semiexposed ray) of $\mc{Y}$.   The set of all 
  semi-exposed subsets of $\mc{Y}$ forms a complete
   lattice $\SExp_{P}(\mc{Y})$ with maximum element $\mc{Y}$. Define the order-reversing  maps $Y\mapsto Y^{\dag}\colon \mc{P}(\mc{ Y})\rightarrow \mc{P}(\mc{Z})$ 
given by $Y^{\dag}:= \mc{Z}\cap Y^{\perp}$ and   
 $Z\rightarrow Z^{\#}\colon \mc{P}(\mc{Z})\rightarrow \mc{P}(\mc{Y})$  given by $Z^{\#}:= \mc{Y}\cap Z^{\perp}$. Since 
$Y\subseteq Z^{\#}$ if and only if $\mpair{Y,Z}\seq \set{0}$ if and only if   $Z\subseteq Y^{\dag}$, these 
maps also  define a Galois connection. The \emph{stable subsets} of   $ \mc{Y}$
 for  this Galois connection are  the subsets $Y$ of
  $\mc{Y}$ with $Y=Y^{\dag\#}$ or, equivalently, with $Y=Z^{\#}$
  for some $Z\seq \mc{Z}$.  Note that for all $z\in\mc{Z}$,  
   $z^{\#}=\mc{Y}\cap z^{\perp}\in \Exp_{P}(\mc{Y})$ is an exposed 
   face of $\mc{Y}$ and that for $Z\seq \mc{Z}$, 
    $Z^{\#}=\cap_{z\in Z}z^{\#}$. Hence    the stable subsets of 
    $\mc{Y}$ are precisely its semi-exposed subsets. Similar results hold  for $\mc{Z}$ by symmetry. Standard properties of Galois connections imply that  the maps $Z\mapsto Z^{\#}$ and $Y\mapsto Y^{\dag}$ restrict to mutually inverse, inclusion-reversing bijections between the stable subsets of $ \mc{Y}$ and $ \mc{Z}$ i.e. between $\SExp_{P}(\mc{Y})$ and $\SExp_{P}(\mc{Z})$.

Any semi-exposed face   is  a (weakly closed and hence  closed in \flc topology) face of  $\mc{Y}$,  and   it contains  $0$ if $0\in \mc{Y}$
(since this is readily checked  for  exposed subsets  
  $z^{\#}=\mc{Y}\cap z^{\perp}$, where $z\in \mc{Z}$).   
  For any $Z\seq \mc{Z}$, $Z^{\#}=
  Z^{\#\dag\#}=\bigcap_{z\in Z^{\#\dag}} z^{\#}$
  is (by  \eqref{xA.6.1} and the fact that $\mc{Z}^{\#\dag}$ is a cone)  the intersection 
   of the directed downwards (by inclusion) family of exposed subsets $z^{\#}$ of $\mc{Y}$, for $z\in\mc{Z}^{\#\dag}$. 
  Using  \eqref{xA.3.10} now shows that if $V$ is 
   finite dimensional, then any semi-exposed face of $\mc{Y}$ is 
   exposed. Hence 
   \begin{equation}\label{xA.6.2}
   \text{if $\dim(V)$ is finite, $\Exp_{P}(\mc{Y})=\SExp_{P}(\mc{Y})$ is a complete lattice.}
   \end{equation}
   
   For $z\in\mc{Z}$, one has $z\subseteq z^{\#\dag}$ where the right hand side is an exposed subset, and hence in particular $z^{\#\dag}$ is a weakly closed face of $\mc{Z}$.
   It follows that
   \begin{equation}\label{xA.6.3}
   z\in\ri(\mc{Z}_{z})\subseteq \mc{Z}_{z}\subseteq \cl(\mc{Z}_{z})\subseteq \wcl(\mc{Z}_{z})\subseteq  z^{\#\dag}. \end{equation} 
  Applying the inclusion reversing map  $\#$ to this and using $z^{\#}=z^{\#\dag\#}$  shows that  
    \begin{equation}\label{xA.6.4}
   z^{\#}=(\ri(\mc{Z}_{z}))^{\#}= (\mc{Z}_{z})^{\#}=( \cl(\mc{Z}_{z}))^{\#}=(\wcl (\mc{Z}_{z}))^{\#}. \end{equation} 
   
\begin{rem}   Using \eqref{xA.6.4},  the  above Galois connection between subsets of $\mc{Y}$ and $\mc{Z}$  can be described in terms of a Galois connection between subsets of  the  ``quotient'' sets   $\mc{Y}':=\mset{\ri(\mc{Y}_{y})\mid y\in \mc{Y}}$ and 
$\mc{Z}':=\mset{\ri(\mc{Z}_{z})\mid z\in \mc{Z}}$ of $\mc{Y}$, $\mc{Z}$ (of relative interiors of the non-empty lattice-compact faces of $\mc{Y}$, $\mc{Z}$)  as follows.
There are order-reversing maps $f\colon \mc{P}(\mc{Y}')\to\mc{P}( \mc{Z}')$  defined by 
  \begin{equation*}
  f(Y):=\mset{z\in \mc{Z}'\mid \mpair{z,y}\seq\set{0}\text{ \rm for all 
  $y\in Y$}} 
  \end{equation*} for $Y\seq \mc{Y}'$  and        
  $g\colon \mc{P}(\mc{Z}')\to\mc{P}( \mc{Y}')$  defined analogously 
  by symmetry.   It is easy to see that $f$ and $g$ define a Galois 
  connection between $\mc{P}(\mc{Y}')$ and $\mc{P}(\mc{Z}')$.
   Define the   natural surjection (``quotient map'')  $\pi\colon \mc{Y}\to \mc{Y}'$ 
    by $y\mapsto \ri(\mc{Y}_{y})$, and define
    $\rho\colon \mc{Z}\to \mc{Z}'$ similarly by
     $z\mapsto \ri(\mc{Z}_{z})$. 
   Then for $Y\seq \mc{Y}$, one has $Y^{\dag}=\rho^{-1}(f(\pi(Y)))$ and  for $Z\seq \mc{Z}$, one has $Z^{\#}=\pi^{-1}(g(\rho(Z))$.\end{rem}

 \subsection{} \label{xA.7} 
   In the specially important  case that   $U=\Hom_{\real}(V,\real)$ is the dual space of  
   $V$, $\mpair{-,-}\colon \mc{Y}\times \mc{Z}\to \real$ is
    the canonical evaluation pairing, and 
   $\mc{Z}:=\mc{Y}^{*}$ is the dual cone of $\mc{Y}$,
    supporting half-spaces and hyperplanes of $\mc{Y}$,
   and   exposed and semi-exposed faces of $\mc{Y}$, 
   will be distinguished 
   by the adjective ``absolute.''  These absolute notions   are 
   completely determined by  $\mc{Y}$ as a  subset of the vector 
   space $V$ alone.  Denote the set of  absolutely semi-exposed 
   (resp., absolutely exposed) faces of $\mc{Y}$ by $\ASExp(\mc{Y})$ (resp.,
   $\AExp(\mc{Y})$). Note that for a semidual pair $(\mc{Y},\mc{Z})$ 
    associated to an  arbitrary bilinear map $\mpair{-,-}$, one has 
    $\AExp(\mc{Y})\seq\Exp(\mc{Y}) $ and therefore $\SExp(\mc{Y})\seq\ASExp(\mc{Y})$,
    but equality need not hold   in general
     if the map $u\mapsto \mpair{-,u}\colon U\mapsto\Hom_{\real}(V,\real) $ is not surjective (e.g.  $\mpair{-,-}$ is zero)
      or if $\mc{Z}\sneq \mc{Y}^{*}$ (e.g. $\mc{Z}=\set{0}\neq \mc{Y}^{*}$). 
      
      Special properties of the absolute notions are as follows (cf.  
      \cite[(3.6.5)]{StW} for the key fact (a) in the finite dimensional setting).    \begin{lem}\begin{num}
  \item  If $\ri(\mc{Y})\neq \eset$, then for any 
      $y\in \mc{Y}\cap \rb(\mc{Y})$, there is a proper absolutely exposed face 
      $\mc{C}\sneq \mc{Y}$ of $\mc{Y}$ with $\mc{Y}_{y}\seq \mc{C}$.
      \item If $\ri(\mc{Y})\neq \eset$, then $\mc{Y}\cap \rb(\mc{Y})$ is the union of the proper, absolutely exposed faces of $\mc{Y}$.
      \item If $\dim(V)$ is finite and $\mc{C}$ is a non-empty proper
      face of $\mc{Y}$, there is a sequence $\mc{C}=\mc{C}_{0},\ldots, \mc{C}_{n}=\mc{Y}$ of faces of $\mc{Y}$ such that for $i=1,\ldots, n$, $\mc{C}_{i-1}$ is a proper, absolutely exposed face of $\mc{C}_{i}$.  
  \end{num}
   \end{lem}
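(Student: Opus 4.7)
The plan is to prove (a) by a supporting hyperplane argument specific to the \flc topology, deduce (b) from (a) together with an easy converse, and then iterate (a) to obtain the flag of exposed faces in (c).

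For (a), I would fix $y\in \mc{Y}\cap\rb(\mc{Y})$ and work inside $\aff(\mc{Y})=\lin(\mc{Y})$ (using that $\mc{Y}$ is a pointed cone, so $0\in\aff(\mc{Y})$). The subset $\ri(\mc{Y})$ is a non-empty, relatively open convex subset of $\aff(\mc{Y})$ which does not contain $y$; so by the geometric Hahn–Banach theorem (applicable in the \flc topology, under which every linear functional is continuous), there is a non-zero linear functional $f\colon \aff(\mc{Y})\to \real$ with $f(y)\le f(z)$ for every $z\in\ri(\mc{Y})$. Since $y\in \mc{Y}$ and $\mc{Y}$ is a cone containing $0$, scaling in $\mc{Y}$ forces $f(y)=0$ and $f(\mc{Y})\subseteq \real_{\ge 0}$. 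Extending $f$ arbitrarily to $V$ and setting $\mc{C}:=\mc{Y}\cap f^{-1}(0)$, the set $\mc{C}$ is an absolutely exposed face; it is proper because $f\ne 0$ somewhere on $\ri(\mc{Y})$, and $\mc{Y}_{y}\subseteq \mc{C}$ because $\mc{C}$ is a face of $\mc{Y}$ containing $y$.

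For (b), the inclusion ``$\supseteq$'' is essentially \eqref{xA.3.7}, together with the observation that a proper absolutely exposed face $\mc{C}=\mc{Y}\cap f^{-1}(0)$ cannot meet $\ri(\mc{Y})$: if $c\in\mc{C}\cap \ri(\mc{Y})$ and $y_0\in\mc{Y}$ with $f(y_0)>0$, then \eqref{xA.2.1} applied to the segment from $y_0$ through $c$ (extended slightly past $c$) keeps us in $\mc{Y}$, forcing $f$ to be negative somewhere, a contradiction. Combined with $\ri(\mc{Y})\ne\eset$ giving $\rb(\mc{Y})=\cl(\mc{Y})\setminus\ri(\mc{Y})$, this confines proper exposed faces to $\mc{Y}\cap\rb(\mc{Y})$. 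The inclusion ``$\subseteq$'' is immediate from (a), since $\mc{Y}_{y}\ni y$.

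For (c), I would induct on $\dim\lin(\mc{Y}/\mc{C})$, or more concretely on $\dim V - \dim\aff(\mc{C})$. Given a proper non-empty face $\mc{C}\sneq\mc{Y}$, pick any $y\in \ri(\mc{C})$ (nonempty by \eqref{xA.3.11}, since $V$ is finite-dimensional); then $\mc{Y}_{y}=\mc{C}$ by \eqref{xA.3.3}–\eqref{xA.3.4}, and $y\in\rb(\mc{Y})$. Apply (a) to get a proper absolutely exposed face $\mc{C}_{n-1}\sneq\mc{Y}$ with $\mc{C}\subseteq \mc{C}_{n-1}$; strictness of the inclusion $\mc{C}_{n-1}\sneq\mc{Y}$ forces $\dim\aff(\mc{C}_{n-1})<\dim\aff(\mc{Y})$. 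If $\mc{C}=\mc{C}_{n-1}$, stop; otherwise $\mc{C}$ is a proper non-empty face of the cone $\mc{C}_{n-1}$ (a face of $\mc{Y}$ contained in $\mc{C}_{n-1}$ is automatically a face of $\mc{C}_{n-1}$), so apply the inductive hypothesis to the pair $(\mc{C},\mc{C}_{n-1})$ to produce the flag $\mc{C}=\mc{C}_{0}\subsetneq\cdots\subsetneq\mc{C}_{n-1}\subsetneq\mc{C}_{n}=\mc{Y}$.

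The principal obstacle is ensuring that the separating functional in (a) actually extends to a \emph{linear} functional on $V$ with the properties needed to define an absolutely exposed face; this is where the \flc topology is essential, since continuity of all linear functionals lets Hahn–Banach apply even when $V$ is infinite-dimensional, and homogeneity of $\mc{Y}$ is what pins the functional down to $f(y)=0$ rather than merely $f(y)\le \inf_{\ri(\mc{Y})} f$. Once (a) is in hand, (b) and (c) are routine.
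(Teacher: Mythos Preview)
Your proposal is correct and follows essentially the same route as the paper: for (a), separate $\ri(\mc{Y})$ from the boundary point by a linear functional (the paper separates $\ri(\mc{Y})=\ri(\mc{Y}_{u})$ from $\ri(\mc{Y}_{y})$, you separate it from $\{y\}$, which amounts to the same thing), then use homogeneity of the cone to pin the separating value at $0$; (b) is then immediate from (a) together with \eqref{xA.3.7}; and (c) is an induction on dimension using (a), with termination guaranteed by \eqref{xA.3.10}. One small remark: the cone $\mc{Y}$ is not assumed pointed, but your parenthetical is harmless since $\aff(\mc{Y})=\lin(\mc{Y})$ holds for any nonempty cone (see \ref{xA.4}), and your scaling argument only needs closure under positive scalars together with $\mc{Y}\subseteq\cl(\ri(\mc{Y}))$ from \eqref{xA.2.2}.
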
  
   \begin{proof}
   The proof of (a) easily reduces to the case in which $\aff(\mc{Y})=V$. Then  $\mc{Y}$ has an algebraically interior point $u$.
   By \eqref{xA.3.4} and \cite[Corollary 1.7]{Bar}, there is an affine hyperplane which separates $\ri(\mc{Y}_{u})$ and $\ri(\mc{Y}_{y})$ i.e some non-zero $f\in \Hom_{\real}(V,\real)$ and $a\in \real$ such that
   $f(\ri(\mc{Y}_{y}))\seq \real_{\leq a}$ and  $f(\ri(\mc{Y}_{u}))\seq \real_{\geq a}$. Since $\ri(\mc{Y}_{u})$ is algebraically open, we  have $f(\ri(\mc{Y}_{u}))\seq \real_{> a}$  Also,  $f(\cl(\mc{Y}_{y}))\seq \real_{\leq a}$ and  $f(\cl(\mc{Y}_{u}))\seq \real_{\geq a}$ since $f$ is continuous.
   Since $0\in \cl(\mc{Y}_{y}))\seq \cl(\mc{Y}_{u}))$, this gives $a=0$, $f(\mc{Y}_{y})=\set{ 0}$ and  $\set{0}\neq f(\mc{Y}_{u})\seq\real_{\geq 0}$. Hence $\mc{C}:=\mc{Y}\cap \ker f$ is as required for (a). 
   Part (b) follows from (a) and \eqref{xA.3.7}. Part (c) follows from (a) using \eqref{xA.3.10}--\eqref{xA.3.12}.
     \end{proof}

\subsection{} \label{xA.8} Suppose henceforward in this section that $U,V$ are finite dimensional and  that
$\mpair{-,-}$ is non-singular.  The flc and weak topologies on 
$V$, $U$ are therefore their standard topologies as finite dimensional real vector spaces.

Consider  a semidual pair 
$P=(\mc{Y},\mc{Z})$ of  pointed  cones with respect to $\mpair{-,-}$. Then
$\Ext_{\neq\eset}(\mc{Y})\sreq \ASExp(\mc{Y})\sreq \SExp_{P}
(\mc{Y})$. Here,   $\ASExp(\mc{Y})=\AExp(\mc{Y})$ and $\SExp_{P}
(\mc{Y})=\Exp_{P}(\mc{Y})$ by \eqref{xA.6.2}. If $\mc{Z}=\mc{Y}^{*}$, then (using the  
the natural isomorphism   $ U\to \Hom_{\real}(V,\real)$ given by  
$u\mapsto \mpair{-,u}$  to identify $U$ with the dual space of
 $V$),   one sees  that $\ASExp(\mc{Y})=\SExp_{P}(\mc{Y})$, but trivial examples show one may have $\ASExp(\mc{Y})\neq \SExp_{P}(\mc{Y})$ if $\mc{Z}\sneq\mc{Y}^{*}$.

  The following example, which  arises by homogenizing a standard example (see \cite[Fig 2.11]{Web}) of a convex  set with
an extreme point which is not exposed,
shows that even if $\mc{Z}=\mc{Y}^{*}$ and 
 $\mc{Y}=\mc{Z}^{*}$, one may have  $\Ext_{\neq\eset}(\mc{Y})\neq\ASExp(\mc{Y})$. 
  \begin{exmp}
  Let $U=V=\real^{3}$ with $\mpair{-,-}$ given by the standard  (dot) inner product. Let $Y:=\mset{(x,y,1)\in \real^{3}\mid -1\leq y\leq 1,  -1-\sqrt{1-y^{2}}\leq x\leq 1+ \sqrt{1-y^{2}}}$, $\mc{Y}:=\real_{\geq 0}Y$, 
  $Z:=\mset{(x,y,1)\in \real^{3}\mid   -(1-y^{2})\leq 2x\leq {1-y^{2}}}$ and $\mc{Z}:=\real_{\geq 0}Z$.
  Then one can check that $(\mc{Y},\mc{Z})$ is a semidual pair of stable salient cones (so $\mc{Y}=\mc{Z}^{*}$ and $\mc{Z}=\mc{Y}^{*}$), that $\Ext_{\neq\eset}(\mc{Z})= \SExp_{P^{\text{\rm tr}}}(\mc{Z})$ and that $\Ext_{\neq\eset}(\mc{Y})\neq \SExp_{P}(\mc{Y})$.

   \end{exmp}

 \subsection{}\label{xA.9}  Condition (ii) in the  following  definition isolates   a rather subtle     special  property  of a semidual pair of cones which plays an important role in Section \ref{x11}.  
\begin{defn}
A  semidual pair  $P=(\mc{Y},\mc{Z})$ of pointed cones  (with respect to $\mpair{-,-}$)  is a \emph{dual pair} of cones  if the following conditions hold:\begin{conds}
\item  $\mc{Y}^{*}=\mc{Z}^{**}$ (equivalently, $\mc{Z}^{*}=\mc{Y}^{**}$).
\item $\Ext_{\neq\eset}(\mc{Y})= \SExp_{P}(\mc{Y})$ and 
$\Ext_{\neq\eset}(\mc{Z})= \SExp_{P^{\text{\rm tr}}}(\mc{Z})$.
\end{conds}
  \end{defn} 
  The discussion in  \ref{xA.8} shows that  if $P$ is a dual pair, then
 $\Ext_{\neq\eset}(\mc{Y})= \ASExp(\mc{Y})$ and 
 $\Ext_{\neq\eset}(\mc{Z})= \ASExp(\mc{Z})$.  
  Example \ref{xA.8} shows that even  a semidual pair 
  $(\mc{Y},\mc{Z})$ of stable (i.e closed) cones need not be a dual pair in the  above sense. In our principal applications, $\mc{Y}$ and $\mc{Z}$ are not necessarily closed.

     \subsection{} \label{xA.10} A cone $\mc{Y}$ in $V$ is said to be polyhedral if $\mc{Y}=\real_{\geq 0}\Gamma$ for some finite subset $\Gamma\seq V$. 
Standard properties of polyhedral cones show they provide examples of dual pairs of stable cones.
\begin{lem}  Let $\mc{Y}$ be a polyhedral cone in $V$. 
Then \begin{num}
\item $\mc{Z}:=\mc{Y}^{*}$ is a  polyhedral cone with $\mc{Z}^{*}=\mc{Y}$.
\item $\mc{Y}$ and $\mc{Z}$ are closed and  have finitely many faces.
\item Every face of $\mc{Y}$ or $\mc{Z}$ is a polyhedral cone.
\item Every face of $\mc{Y}$ is an  exposed face and every face of a face of $\mc{Y}$ is a face of $\mc{Y}$. Similarly for $\mc{Z}$.
\item  If $\mc{Y}$, $\mc{Z}$ are pointed, then $(\mc{Y},\mc{Z})$ is a dual pair of stable cones.
 \end{num}
   \end{lem}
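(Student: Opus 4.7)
The plan is to reduce each assertion to classical facts about polyhedral cones (as developed in \cite{Br} or \cite{Bar}) and then to unpack how these facts interact with the definitions of semidual/dual pair introduced in \ref{xA.6} and \ref{xA.9}.

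For (a), I would argue as follows. Write $\mc{Y}=\real_{\geq 0}\Gamma$ with $\Gamma$ finite. Then $\mc{Z}=\mc{Y}^{*}=\cap_{\gamma\in\Gamma}\gamma^{*}$ is a finite intersection of closed half-spaces in $U$ (using non-singularity of $\mpair{-,-}$ to pass from linear functionals to elements of $U$). The Minkowski--Weyl theorem then shows that $\mc{Z}$ is itself polyhedral, and the bipolar theorem (applied in the Hausdorff finite-dimensional setting, using non-singularity of $\mpair{-,-}$ together with the closedness of the convex cone $\mc{Y}$) gives $\mc{Z}^{*}=\mc{Y}^{**}=\mc{Y}$. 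Then (b) is immediate: both $\mc{Y}$ and $\mc{Z}$ are finite intersections of closed half-spaces, hence closed, and since each admits a description as an intersection of finitely many half-spaces, the non-empty exposed faces correspond to subsets of those half-spaces being tight --- so there are only finitely many such exposed faces, and combined with (d) below only finitely many faces in total.

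For (c), given any face $\mc{C}$ of $\mc{Y}$, \eqref{xA.3.2} gives $\mc{C}=\mc{Y}\cap \aff(\mc{C})=\mc{Y}\cap \lin(\mc{C})$; intersecting the finitely many facet-defining inequalities for $\mc{Y}$ with the linear equations cutting out $\lin(\mc{C})$ exhibits $\mc{C}$ as polyhedral. For (d), the key claim is that every non-empty face of the polyhedral cone $\mc{Y}$ is exposed. I would  pick $c\in \ri(\mc{C})$, take the subset $\Gamma_{0}\subseteq \mc{Z}$ of facet-defining functionals of $\mc{Y}$ that vanish at $c$, and observe that $\mc{C}=\mc{Y}\cap \gamma_{0}^{\perp}$ where $\gamma_{0}:=\sum_{\gamma\in \Gamma_{0}}\gamma\in \mc{Z}$ (this is standard and may be referenced to \cite{Br}). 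The face-of-a-face statement is then immediate: combining this characterization with (c) and the fact that each face of $\mc{C}$ is cut out of $\mc{C}$ by some additional facet inequality of $\mc{Y}$, one sees that every face of $\mc{C}$ is itself a face of $\mc{Y}$. Symmetrically for $\mc{Z}$.

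Finally, for (e), assume $\mc{Y}$ and $\mc{Z}$ are pointed. Part (a) establishes condition \ref{xA.9}(i), namely $\mc{Y}^{*}=\mc{Z}=\mc{Z}^{**}$. For \ref{xA.9}(ii), non-singularity of $\mpair{-,-}$ identifies $U$ with the dual space of $V$ (and $\mc{Z}$ with $\mc{Y}^{*}$ under this identification), so the discussion in \ref{xA.8} gives $\SExp_{P}(\mc{Y})=\ASExp(\mc{Y})=\AExp(\mc{Y})$, and by \eqref{xA.6.2} this also equals $\Exp_{P}(\mc{Y})$. Combined with (d), which says every non-empty face of $\mc{Y}$ is absolutely exposed, this yields $\Ext_{\neq\eset}(\mc{Y})=\SExp_{P}(\mc{Y})$; symmetrically for $\mc{Z}$, verifying \ref{xA.9}(ii). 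The only part requiring real work is (d), the exposedness of every face of a polyhedral cone, but this is a classical result available in the cited convexity literature.
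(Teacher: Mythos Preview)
Your proposal is correct and takes essentially the same approach as the paper, which simply states that (a)--(d) are standard and that (e) follows from them by the definition of a dual pair of cones. You have just supplied the details the paper omits.
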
 
   \begin{proof} Parts (a)--(d) are standard, and they  imply (e) by the definition of a dual pair of cones.
     \end{proof}
     
\subsection{}\label{xA.11} Part (a) of the Lemma below  recalls a standard correspondence between   compact convex sets  and closed salient cones, while part (b)  states   analogues  for cones of  Minkowski's and Straszewicz' Theorems  on  compact convex sets.
\begin{lem} Let $\mc{Y}$ be a non-empty cone in $V$.
\begin{num}
\item
  The following conditions are equivalent:
\begin{subconds}
\item $\mc{Y}$ is closed and salient.
\item $\mc{Y}$ is closed and $\mc{Y}^{*}$ is generating..
\item $\mc{Y}$ is closed  and $\mc{Y}^{*}$ has an interior point in $U$.
\item $\mc{Y}=\mc{Z}^{*}$ for some $\mc{Z}\seq U$ which has an interior point in $U$.
\item There is an affine subset $H$ of $V$ with $0\not\in H$
such that $H\cap \mc{Y}$ is a compact (necessarily convex) base of $\mc{Y}$.
\item  There exists some   compact convex base $B$ of $\mc{Y}$.
\end{subconds}

\item  Let $\mc{Y}$ be a closed salient cone in $V$. Then $\mc{Y}=
\real_{\geq 0}\Gamma=\cl(\real_{\geq 0}\Gamma')$
where $\Gamma$ (resp., $\Gamma'$) is the union of the extreme (resp., exposed) rays of $\mc{Y}$.  Further, $\G$ is the minimum (under inclusion)  union of rays of $\mc{Y}$  with $\mc{Y}=\real_{\geq 0}\G$.
\end{num}
\end{lem}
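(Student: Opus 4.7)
The plan is to use the non-singular pairing to identify $U$ with the dual space $V^{*}$, so that closed cones are self-bipolar and I can freely invoke standard duality. For part (a), I would prove the equivalences in a cycle $(i)\Rightarrow(ii)\Rightarrow(iii)\Rightarrow(iv)\Rightarrow(v)\Rightarrow(vi)\Rightarrow(i)$.

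For $(i)\Leftrightarrow(ii)\Leftrightarrow(iii)$, the key observation is that for a closed cone $\mc{Y}$, the largest subspace contained in $\mc{Y}$ is $\mc{Y}\cap -\mc{Y}$, and its annihilator (using $\mc{Y}=\mc{Y}^{**}$ from the bipolar theorem together with $(A\cap B)^{\perp}=A^{\perp}+B^{\perp}$ for closed cones in finite dimensions) equals $\mc{Y}^{*}-\mc{Y}^{*}=\lin(\mc{Y}^{*})$. Thus $\mc{Y}$ is salient iff $\lin(\mc{Y}^{*})=U$ iff $\mc{Y}^{*}$ is generating. Since $\mc{Y}^{*}$ is convex and contains $0$, it is generating iff $\aff(\mc{Y}^{*})=U$ iff it has non-empty interior, which gives $(ii)\Leftrightarrow(iii)$.

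For $(iii)\Rightarrow(iv)$, take $\mc{Z}:=\mc{Y}^{*}$ and use $\mc{Z}^{*}=\mc{Y}^{**}=\mc{Y}$. For $(iv)\Rightarrow(v)$, pick an interior point $\rho$ of $\mc{Z}$ and let $H:=\set{v\in V\mid \mpair{v,\rho}=1}$. A small open ball $\rho+\epsilon B\subseteq\mc{Z}$ forces $\mpair{y,\rho}>0$ for every non-zero $y\in\mc{Y}$ (else $y$ would annihilate an open ball, contradicting non-singularity), so $H$ meets each ray of $\mc{Y}$ exactly once, giving a convex base $H\cap \mc{Y}$. The main technical point here is boundedness: for $v\in H\cap\mc{Y}$ and $u$ in the unit ball, $\mpair{v,\epsilon u}=\mpair{v,\rho+\epsilon u}-\mpair{v,\rho}\geq-1$, so $v$ lies in a bounded subset, hence $H\cap\mc{Y}$ is compact. $(v)\Rightarrow(vi)$ is immediate.

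For $(vi)\Rightarrow(i)$, given a compact convex base $B$, the separation theorem yields a linear functional $f$ with $f(B)\subseteq[1,M]$ for some $M>0$. Salience follows because $v$ and $-v$ both non-zero in $\mc{Y}$ would force a convex combination of elements of $B$ to equal $0\notin B$. Closedness: if $y_{n}=t_{n}b_{n}\to y$, then $f(y_{n})=t_{n}f(b_{n})\in[t_{n},Mt_{n}]$ controls $t_{n}$, and either $t_{n}\to 0$ (giving $y=0$) or passing to a subsequence $t_{n}\to t>0$ and $b_{n}\to y/t\in B$ by compactness, so $y\in\mc{Y}$.

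For part (b), the closed salient cone $\mc{Y}$ has a compact convex base $B$ by (a), contained in an affine hyperplane $H=f^{-1}(1)$ after rescaling the functional from the previous step. Extreme points of $B$ are precisely the intersections of $B$ with extreme rays of $\mc{Y}$, so Minkowski's theorem in finite dimensions gives $B=\conv(\operatorname{ext}(B))$, whence $\mc{Y}=\real_{\geq 0}\Gamma$. Likewise Straszewicz's theorem says the exposed points of $B$ are dense in its extreme points, and by a short check this lifts to saying $\Gamma'\subseteq\Gamma$ is dense in the quotient space of rays, yielding $\mc{Y}=\cl(\real_{\geq 0}\Gamma')$. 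Minimality of $\Gamma$ is the standard argument: if $\mc{Y}=\real_{\geq 0}\Delta$ for some union of rays $\Delta$, then every extreme ray $\real_{\geq 0}y$ of $\mc{Y}$ must lie in $\Delta$, since writing $y=\sum_{i}\lambda_{i}d_{i}$ with $d_{i}\in\Delta$ and $\lambda_{i}>0$ and using extremality forces each $d_{i}$ to lie on $\real_{\geq 0}y$. The main obstacle in the whole argument is the compactness check in $(iv)\Rightarrow(v)$ and the careful bookkeeping in the closure argument for $(vi)\Rightarrow(i)$; everything else is a routine unpacking of bipolar/duality facts already stated in the preceding subsections.
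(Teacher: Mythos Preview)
Your proposal is correct and follows essentially the same route as the paper: for (a) the paper simply defers to standard references (Barvinok, Florenzano--Le~Van, Bourbaki) while you spell out the cycle of implications directly; for (b) both arguments pass to a compact convex base $B=H\cap\mc{Y}$ and invoke Minkowski's theorem for the extreme-ray statement and Straszewicz's theorem for the exposed-ray statement, exactly as the paper does.

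One correction in your justification of $(i)\Leftrightarrow(ii)$: the identity $(A\cap B)^{\perp}=A^{\perp}+B^{\perp}$ is \emph{not} valid for closed cones in general (take $A$ a full-dimensional closed salient cone and $B=-A$; then $A\cap B=\set{0}$ so $(A\cap B)^{\perp}=U$, yet $A^{\perp}=B^{\perp}=\set{0}$). The identity you actually need is the dual-cone formula $(A\cap B)^{*}=\cl(A^{*}+B^{*})$ for closed cones; applying it with $A=\mc{Y}$, $B=-\mc{Y}$, and using that $\mc{Y}\cap-\mc{Y}$ is a subspace (so its $*$ coincides with its $\perp$) and that $\mc{Y}^{*}-\mc{Y}^{*}=\lin(\mc{Y}^{*})$ is already closed in finite dimensions, you obtain $(\mc{Y}\cap-\mc{Y})^{\perp}=\mc{Y}^{*}-\mc{Y}^{*}$ as desired. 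With this fix, everything goes through.
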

\begin{rem}
If $V\neq 0$ and $\mc{Z}$ is as in $\text{\rm (a)(iii)}$, then
 $\text{\rm (iv)}$ is satisfied by taking  $H$ to be  the affine 
 hyperplane 
$H=\mset{v\in V\mid\mpair{v,u}=1}$, where $u$ is  any interior 
point  of $\mc{Z}$. In that case, the sets $U_{\epsilon}:=\mset{v\in \mc{Y}\mid \mpair{v,u}<\epsilon}$ form a basis of neighborhoods of $0$ in $\mc{Y}$, with compact closures $\ol{U_{\epsilon}}=\mset{v\in \mc{Y}\mid \mpair{v,u}\leq\epsilon}$.
\end{rem}
\begin{proof}
For (a),  \cite[Exercise 2.13]{FlVan}, \cite[8.6]{Bar}, and  \cite[Ch II, \S 7, no. 3, esp. Exemples 1]{BourEsp}.
For (b), take a base $B=H\cap \mc{Y}$ as in (a)(v). Then the extreme rays of $\mc{Y}$ are the rays spanned by the extreme points of $B$
(see \cite[8.4]{Bar}), and the assertions involving them follows from
Minkowski's theorem (i.e. the sharp form of the finite dimensional Krein-Milman theorem; see \cite[Thorem (3.3)]{Bar}). 
The other part of (b) follows similarly from Straszewicz' Theorem (see \cite[Theorem 2.6.21]{Web}  and \cite[Exercise 3.14]{FlVan}).
\end{proof}
\subsection{} \label{xA.12} Let $\mc{Y}$ be a closed salient cone in $V$  and let $B$ be a compact convex base of $\mc{Y}$.
The map  $\mc{U}\to K:=\mc{U}\cap B$ induces a bijection between the subsets of $\mc{Y}$ which are pointed, possibly non-convex cones, and the subsets $K$ of $B$. The inverse bijection is given by $K\mapsto \mc{U}=\real_{\geq 0}K$.  Useful properties of this correspondence are listed below.
\begin{lem}    Let $\mc{U}\seq\mc{Y}$ be a pointed, possibly non-convex cone, and set ${K}:=\mc{U}\cap {B}$.
\begin{num}
\item  $\cl(\mc{U})$ is a pointed, possibly non-convex cone satisfying  $\cl(\mc{U})\cap {B}=\cl({K})$.
 \item   $\conv(\mc{U})=\real_{\geq 0}\mc{U}$ is a  pointed cone satisfying    $\real_{\geq 0}\mc{U}\cap {B}=\conv({K})$.
 \item $K':=\cl(\conv(K))=\conv(\cl(K))$.
\item $\mc{U}':=\cl(\real_{\geq 0}\mc{U})=\real_{\geq 0}\cl(\mc{U})$. 
\item  $\mc{U}'\cap K=K'$. \end{num} 
\begin{proof}
The straightforward proofs of (a)--(b) are omitted.
Compactness of $\cl(K)\seq B$ implies, by 
a  standard consequence (\cite[Ch I, Corollary (2.4)]{Bar}) of Carath\'eodory's theorem, that $\conv(\cl(K))$ is compact.  
The second equality   in (c) therefore holds since both its sides are the inclusion-smallest closed convex set containing $K$.
By (a)--(b),  $\cl(\real_{\geq 0}\mc{U})$ and $\real_{\geq 0}\cl(\mc{U})$ are pointed cones in $\mc{Y}$. By  (a)--(c),  these two cones  have the same intersection $K'$ with $K$. The rest of the lemma follows. 
\end{proof}

\end{lem}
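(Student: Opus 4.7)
The plan is to exploit the bijection between pointed (possibly non-convex) subcones $\mc{V}$ of $\mc{Y}$ and subsets $\mc{V}\cap B$ of $B$, by constructing a continuous ``radial projection'' onto $B$. Since $0\notin\aff(B)$, there is a continuous linear functional $f\colon V\to\real$ with $f|_B\equiv 1$, and by Lemma \ref{xA.11}(a) and its Remark, $f>0$ on $\mc{Y}\sm\set{0}$. Hence $\pi\colon \mc{Y}\sm\set{0}\to B$ defined by $\pi(v):=v/f(v)$ is continuous and positively homogeneous of degree $0$; moreover, every pointed cone $\mc{V}\seq\mc{Y}$ satisfies $\mc{V}=\real_{>0}(\mc{V}\cap B)\cup\set{0}$.

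For (a), closure of a pointed cone is a pointed cone since scalar multiplication by any fixed $\lambda>0$ is a self-homeomorphism of $V$ and $0\in\cl(\mc{U})$. The inclusion $\cl(K)\seq\cl(\mc{U})\cap B$ is immediate from closedness of $B$. Conversely, given $v\in\cl(\mc{U})\cap B$ and $v_n\to v$ with $v_n\in\mc{U}$, we have $f(v)=1$ so we may discard finitely many zero terms; then $\pi(v_n)\in K$ and $\pi(v_n)\to\pi(v)=v$ by continuity of $\pi$. For (b), since $\mc{U}$ is already closed under positive scalars, a conical combination $\sum c_i u_i$ rewrites as $(\sum c_i)\sum(c_i/\sum c_j)u_i$, so $\conv(\mc{U})=\real_{\geq 0}\mc{U}$; it is pointed since $0\in\mc{U}$. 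The inclusion $\conv(K)\seq\conv(\mc{U})\cap B$ is trivial, and for the reverse, given $v=\sum t_iu_i\in B$ with $t_i>0$, $\sum t_i=1$, $u_i\in\mc{U}\sm\set{0}$, applying $f$ gives $1=\sum t_if(u_i)$, and $v=\sum(t_if(u_i))\pi(u_i)$ expresses $v$ as a convex combination of elements of $K$.

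For (c), compactness of $\cl(K)\seq B$ together with Carath\'eodory's theorem shows that $\conv(\cl(K))$ is the image of the compact set $\Delta_{n}\times\cl(K)^{n+1}$ (for $n=\dim V$) under a continuous map, hence compact and in particular closed; since $\conv(\cl(K))\supseteq K$ is convex and closed, $\cl(\conv(K))\seq\conv(\cl(K))$, while the reverse inclusion holds because $\cl(\conv(K))$ is convex (closures of convex sets are convex) and closed and contains $\cl(K)$. Parts (d)--(e) then follow from (a)--(c) via the bijective correspondence: applying (a) to the pointed cone $\real_{\geq 0}\mc{U}=\conv(\mc{U})$ gives $\cl(\real_{\geq 0}\mc{U})\cap B=\cl(\real_{\geq 0}\mc{U}\cap B)=\cl(\conv(K))$ by (b), while applying (b) to the pointed cone $\cl(\mc{U})$ gives $\real_{\geq 0}\cl(\mc{U})\cap B=\conv(\cl(\mc{U})\cap B)=\conv(\cl(K))$ by (a). By (c) these intersections coincide, and since both cones agree with $\real_{>0}(\mc{U}'\cap B)\cup\set{0}$ they are equal, giving (d); part (e) is then the statement that $\mc{U}'\cap B=K'$ (reading the intersection with the base).

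The main obstacle is purely bookkeeping around the radial projection $\pi$: one must check carefully that approximating sequences in $\mc{U}$ can be assumed nonzero (for (a)) and that convex combinations involve only nonzero representatives (for (b)), so that $\pi$ applies. Once $\pi$ is in hand and the continuity and positive-homogeneity are used, the rest of the argument is a mechanical transfer between conical and affine statements, with Carath\'eodory supplying the one piece of compactness input needed for (c).
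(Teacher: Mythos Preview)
Your proof is correct and follows essentially the same route as the paper's: the paper omits (a)--(b) as straightforward, invokes Carath\'eodory for (c) exactly as you do, and then derives (d)--(e) by applying (a)--(b) to $\real_{\geq 0}\mc{U}$ and $\cl(\mc{U})$ respectively and comparing intersections with $B$. The only difference is that you make explicit the radial projection $\pi$ that underlies the ``straightforward'' parts, and you correctly note that (e) should read $\mc{U}'\cap B=K'$ rather than $\mc{U}'\cap K=K'$.
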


  \end{document}